\providecommand{\U}[1]{\protect \rule{.1in}{.1in}}
\newcommand{\sdfrac}[2]{\mbox{\small$\displaystyle\frac{#1}{#2}$}}
\newtheorem{theorem}{Theorem}[section]
\newtheorem{corollary}[theorem]{Corollary}
\newtheorem{lemma}[theorem]{Lemma}
\newtheorem{proposition}[theorem]{Proposition}
\theoremstyle{definition}
\newtheorem{definition}[theorem]{Definition}
\newtheorem{example}[theorem]{Example}
\newtheorem{examples}[theorem]{Examples}
\newtheorem{remark}[theorem]{Remark}
\newtheorem{note}[theorem]{Note}
\numberwithin{equation}{section}
\begin{document}
\author[D.I. Dais]{Dimitrios I. Dais}
\address{University of Crete, Department of Mathematics and Applied Mathematics,
Division Algebra and Geometry, Voutes Campus, GR-70013, Heraklion, Crete, Greece}
\email{ddais@uoc.gr}
\subjclass[2010]{52B20 (Primary); 14M25 (Secondary)}
\title[On the Twelve-Point Theorem for $\ell$-Reflexive Polygons]{On the Twelve-Point Theorem \\ for $\ell$-Reflexive Polygons}

\begin{abstract}
It is known that, adding the number of lattice points lying on the boundary of
a reflexive polygon and the number of lattice points lying on the boundary of
its polar, always yields $12.$ Generalising appropriately the notion of
reflexivity, one shows that this remains true for \textquotedblleft$\ell
$-reflexive polygons\textquotedblright . In particular, there exist
(for this reason) infinitely many (lattice inequivalent) lattice polygons with
the same property. The first proof of this fact is due to Kasprzyk and Nill
\cite{KaNi}. The present paper contains a second proof (which uses tools only from toric geometry) as well as the description of complementary properties of these polygons and of the invariants of the corresponding toric log del Pezzo surfaces.

\end{abstract}
\maketitle



\section{Introduction\label{INTRO}}

\noindent The purpose of this paper is to give a second proof of the
so-called \textquotedblleft Twelve-Point Theorem\textquotedblright \ for
\textquotedblleft$\ell$-reflexive polygons\textquotedblright \ (see below
Theorem \ref{G12PTTHM}), to explain where $12$ comes from by taking a slightly different approach, and to provide some additional consequences of it from the point of view of toric geometry.\medskip

\noindent$\bullet$ \textbf{Polygons}. Let $P\subset \mathbb{R}^{2}$ be a
(convex) \textit{polygon}, i.e., the convex hull conv$(A)$ of a finite set
$A\subset \mathbb{R}^{2}$ of at least 3 non-collinear points. We denote by
Vert$(P)$ and Edg$(P)$ the set of its vertices and the set of its edges, respectively, and
by $\partial P$ and int$(P)$ its boundary and its interior, respectively. If
the origin $\mathbf{0}\in \mathbb{R}^{2}$ belongs to int$(P),$ then its
\textit{polar polygon} is defined to be
$
P^{\circ}:=\left \{  \left.  \mathbf{x}\in \mathbb{R}^{2}\right \vert
\left \langle \mathbf{x},\mathbf{y}\right \rangle \geq-1,\  \forall \mathbf{y}\in
P\right \}  ,
$
where
$
\left \langle \mathbf{x},\mathbf{y}\right \rangle :=x_{1}y_{1}+x_{2}%
y_{2},\  \text{for }\mathbf{x}=\tbinom{x_{1}}{x_{2}}\in \mathbb{R}^{2}\text{ and
}\mathbf{y}=\tbinom{y_{1}}{y_{2}}\in \mathbb{R}^{2},
$
is nothing but the usual inner product. Since $\mathbf{0}\in$
int$(P^{\circ})$ and $\left(  P^{\circ}\right)  ^{\circ}=P,$ the polarity
induces bijections%
\begin{equation}
\text{Vert}(P)\ni \mathbf{v}\longmapsto \left \{  \left.  \mathbf{x}\in P^{\circ
}\right \vert \left \langle \mathbf{x},\mathbf{v}\right \rangle =-1\right \}
\in \text{Edg}(P^{\circ}),\  \label{POLARITY1}%
\end{equation}
and%
\begin{equation}
\text{Edg}(P)\ni F\longmapsto \left \{  \left.  \mathbf{x}\in P^{\circ
}\right \vert \left \langle \mathbf{x},\mathbf{v}\right \rangle =\left \langle
\mathbf{x},\mathbf{v}^{\prime}\right \rangle =-1\right \}  \in \text{Vert}%
(P^{\circ}), \label{POLARITY2}%
\end{equation}
with $\mathbf{v},\mathbf{v}^{\prime}$ denoting the vertices of $F.$ \medskip

\noindent$\bullet$ \textbf{Lattices}. Since we shall deal with a special sort
of lattice polygons, we first recall some basic properties of lattices (cf.
\cite[Ch. 1, \S 3]{G-L}). Let $\left \Vert \mathbf{x}\right \Vert :=\left \langle
\mathbf{x},\mathbf{x}\right \rangle ^{\frac{1}{2}}$ denote the euclidean norm
of any $\mathbf{x}\in \mathbb{R}^{2}$.

\begin{proposition}
\label{LATTPROP}For any nonempty subset $N$ of $\mathbb{R}^{2}$ the following
conditions are equivalent\emph{:\smallskip \ }\newline \emph{(i)} $N$ is a
discrete subgroup of the additive group $\mathbb{R}^{2}$ \emph{(i.e.,}
$\mathbf{n}-\mathbf{n}^{\prime}\in N$ for all $\mathbf{n},\mathbf{n}^{\prime
}\in N,$ and for every $\mathbf{n}\in N$ there is an $\varepsilon \in
\mathbb{R}_{>0}$, s.t. $\mathbf{B}_{\varepsilon}(\mathbf{n})\cap
N=\{ \mathbf{n}\},$ where $\mathbf{B}_{\varepsilon}(\mathbf{n}):=\left \{
\left.  \mathbf{x}\in \mathbb{R}^{2}\right \vert \left \Vert \mathbf{x}%
-\mathbf{n}\right \Vert \leq \varepsilon \right \}  $\emph{),} and $N$ spans the
entire $\mathbb{R}^{2}$ as $\mathbb{R}$-vector space$.\smallskip$
\newline \emph{(ii)} There exists a set $\left \{  \mathbf{b}_{1},\mathbf{b}%
_{2}\right \}  $ of two $\mathbb{R}$-linear independent vectors $\mathbf{b}%
_{1},\mathbf{b}_{2}\in \mathbb{R}^{2}$ such that
\[
N=\left \{  k_{1}\left.  \mathbf{b}_{1}+k_{2}\mathbf{b}_{2}\right \vert
k_{1},k_{2}\in \mathbb{Z}\right \}  .
\]

\end{proposition}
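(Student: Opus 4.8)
The plan is to prove the two implications separately, the bulk of the work being in $(\text{i})\Rightarrow(\text{ii})$. For $(\text{ii})\Rightarrow(\text{i})$ I would argue formally: if $N=\mathbb{Z}\mathbf{b}_1+\mathbb{Z}\mathbf{b}_2$ with $\mathbf{b}_1,\mathbf{b}_2$ $\mathbb{R}$-linearly independent, then $N$ is patently a subgroup spanning $\mathbb{R}^2$, and the linear automorphism of $\mathbb{R}^2$ sending $\mathbf{b}_1,\mathbf{b}_2$ to the standard basis is a homeomorphism carrying $N$ onto $\mathbb{Z}^2$; since $\mathbf{B}_{1/2}(\mathbf{m})\cap\mathbb{Z}^2=\{\mathbf{m}\}$ for every $\mathbf{m}\in\mathbb{Z}^2$, pulling these balls back produces the neighbourhoods witnessing discreteness of $N$.

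For $(\text{i})\Rightarrow(\text{ii})$ I would first upgrade pointwise discreteness to a uniform statement: since $N$ is a subgroup, $\mathbf{B}_{\varepsilon}(\mathbf{n})\cap N=\mathbf{n}+(\mathbf{B}_{\varepsilon}(\mathbf{0})\cap N)$ for all $\mathbf{n}\in N$, so fixing $\varepsilon_0>0$ with $\mathbf{B}_{\varepsilon_0}(\mathbf{0})\cap N=\{\mathbf{0}\}$, any two distinct points of $N$ are at euclidean distance $>\varepsilon_0$. Hence $N\cap K$ is finite for every bounded $K\subset\mathbb{R}^2$, as $K$ is covered by finitely many balls of radius $\varepsilon_0/2$, each meeting $N$ in at most one point. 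Because $N$ spans $\mathbb{R}^2$, I can then pick $\mathbb{R}$-linearly independent $\mathbf{a}_1,\mathbf{a}_2\in N$; the subgroup $N\cap\mathbb{R}\mathbf{a}_1$ of the line $\mathbb{R}\mathbf{a}_1$ is nonzero and (locally finite, hence) discrete, so it has a vector $\mathbf{b}_1\neq\mathbf{0}$ of minimal norm, and the one-dimensional division algorithm gives $N\cap\mathbb{R}\mathbf{a}_1=\mathbb{Z}\mathbf{b}_1$.

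Now let $\pi\colon\mathbb{R}^2\to\mathbb{R}$ be the linear functional with $\pi(\mathbf{b}_1)=0$ and $\pi(\mathbf{a}_2)=1$, and write each $\mathbf{x}\in\mathbb{R}^2$ as $\mathbf{x}=c(\mathbf{x})\,\mathbf{b}_1+\pi(\mathbf{x})\,\mathbf{a}_2$. Every $\mathbf{n}\in N$ with $0<|\pi(\mathbf{n})|\le 1$ can be translated by an integer multiple of $\mathbf{b}_1$ into the bounded strip $\{\mathbf{x}:0\le c(\mathbf{x})<1,\ |\pi(\mathbf{x})|\le 1\}$ without changing its $\pi$-value; by local finiteness that strip contains only finitely many points of $N$, so $\inf\{|\pi(\mathbf{n})|:\mathbf{n}\in N,\ \pi(\mathbf{n})\neq 0\}$ is positive and is attained, say by $\mathbf{b}_2\in N$. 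Since $\pi(\mathbf{b}_2)\neq 0$, the vectors $\mathbf{b}_1,\mathbf{b}_2$ are $\mathbb{R}$-linearly independent.

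Finally I would check $N=\mathbb{Z}\mathbf{b}_1+\mathbb{Z}\mathbf{b}_2$; one inclusion is clear, and for the other, given $\mathbf{n}\in N$ write $\mathbf{n}=\lambda_1\mathbf{b}_1+\lambda_2\mathbf{b}_2$ with $\lambda_1,\lambda_2\in\mathbb{R}$ and set $\mathbf{n}':=\mathbf{n}-\lfloor\lambda_2\rfloor\mathbf{b}_2\in N$. Using $\pi(\mathbf{b}_1)=0$ one gets $|\pi(\mathbf{n}')|=(\lambda_2-\lfloor\lambda_2\rfloor)\,|\pi(\mathbf{b}_2)|<|\pi(\mathbf{b}_2)|$ whenever $\lambda_2\notin\mathbb{Z}$, so minimality of $|\pi(\mathbf{b}_2)|$ forces $\pi(\mathbf{n}')=0$, i.e.\ $\lambda_2\in\mathbb{Z}$ and $\mathbf{n}'\in N\cap\mathbb{R}\mathbf{b}_1=\mathbb{Z}\mathbf{b}_1$, whence also $\lambda_1\in\mathbb{Z}$. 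The only step that is not mere bookkeeping is the existence of $\mathbf{b}_2$: the hypothesis of discreteness must first be converted into genuine local finiteness, and then one exploits reduction modulo $\mathbb{Z}\mathbf{b}_1$ to confine the competing vectors to a bounded region so that the infimum of their "heights" is actually a minimum. Once $\mathbf{b}_1$ and $\mathbf{b}_2$ are in hand, everything reduces to the division algorithm applied along $\mathbb{R}\mathbf{b}_1$ and in the $\mathbf{b}_2$-direction.
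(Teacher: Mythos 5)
Your argument is correct and complete. Note that the paper itself offers no proof of this proposition: it is recalled as a standard fact with a citation to Gruber and Lekkerkerker, so there is nothing internal to compare against. What you have written is the classical argument (upgrade pointwise discreteness to uniform discreteness via the group structure, deduce local finiteness, extract a shortest vector $\mathbf{b}_{1}$ on a line through a lattice point, then minimise the ``height'' functional $\pi$ over the nonzero values to obtain $\mathbf{b}_{2}$, and finish with the division algorithm in each coordinate), and all the delicate points are handled: the translation into the bounded strip correctly converts the infimum of $|\pi(\mathbf{n})|$ into a minimum over a finite nonempty set, and the identification $\ker\pi\cap N=\mathbb{Z}\mathbf{b}_{1}$ closes the final step.
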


\begin{definition}
\label{DEFLATTICE}A \textit{lattice} in $\mathbb{R}^{2}$ is a nonempty subset
$N$ of $\mathbb{R}^{2}$ which satisfies the conditions of Proposition
\ref{LATTPROP}. A set $\left \{  \mathbf{b}_{1},\mathbf{b}_{2}\right \}  $ as in
\ref{LATTPROP} (ii) is said to be a ($\mathbb{Z}$-)\textit{basis} of $N.$ ($N$
itself can be viewed as a free abelian group ($\mathbb{Z}$-module) of rank $2$
generated by $\left \{  \mathbf{b}_{1},\mathbf{b}_{2}\right \}  .$) If
$\mathbf{b}_{1}=\tbinom{\mathfrak{b}_{11}}{\mathfrak{b}_{21}}$ and
$\mathbf{b}_{2}=\tbinom{\mathfrak{b}_{12}}{\mathfrak{b}_{22}},$ we say that
$\mathcal{B}=\left(
\begin{smallmatrix}
\mathfrak{b}_{11} & \mathfrak{b}_{12}\\
\mathfrak{b}_{21} & \mathfrak{b}_{22}%
\end{smallmatrix}
\right)  $ is the corresponding \textit{basis matrix} of $N.$
\end{definition}

\begin{proposition}
\label{Basicness}If $N\subset \mathbb{R}^{2}$ is a lattice and $\mathbf{b}%
_{1},\mathbf{b}_{2}\in N$ are two $\mathbb{R}$-linear independent vectors,
then $\left \{  \mathbf{b}_{1},\mathbf{b}_{2}\right \}  $ is a basis of $N$ if
and only if \emph{conv}$(\left \{  \mathbf{0},\mathbf{b}_{1},\mathbf{b}%
_{2}\right \}  )\cap N=\left \{  \mathbf{0},\mathbf{b}_{1},\mathbf{b}%
_{2}\right \}  .$
\end{proposition}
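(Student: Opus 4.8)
The plan is to prove the equivalence of the two conditions on the pair $\left\{\mathbf{b}_{1},\mathbf{b}_{2}\right\}$ via a volume (area) argument, using the fact that a triangle $\text{conv}(\{\mathbf{0},\mathbf{b}_1,\mathbf{b}_2\})$ with vertices in a lattice $N$ is a fundamental (unimodular) simplex for $N$ precisely when it contains no lattice points other than its vertices.

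First I would dispose of the easy direction. Suppose $\left\{\mathbf{b}_{1},\mathbf{b}_{2}\right\}$ is a $\mathbb{Z}$-basis of $N$. Then any $\mathbf{x}\in \text{conv}(\{\mathbf{0},\mathbf{b}_1,\mathbf{b}_2\})\cap N$ can be written as $\mathbf{x}=\lambda_1\mathbf{b}_1+\lambda_2\mathbf{b}_2$ with $\lambda_1,\lambda_2\geq 0$ and $\lambda_1+\lambda_2\leq 1$ (barycentric-type coordinates, using that $\mathbf{b}_1,\mathbf{b}_2$ are $\mathbb{R}$-linearly independent), and simultaneously $\mathbf{x}=k_1\mathbf{b}_1+k_2\mathbf{b}_2$ with $k_1,k_2\in\mathbb{Z}$ since $\mathbf{x}\in N$; by uniqueness of the representation in the basis $\{\mathbf{b}_1,\mathbf{b}_2\}$ we get $\lambda_i=k_i\in\mathbb{Z}_{\geq 0}$ with $k_1+k_2\leq 1$, which forces $(k_1,k_2)\in\{(0,0),(1,0),(0,1)\}$, i.e. $\mathbf{x}\in\{\mathbf{0},\mathbf{b}_1,\mathbf{b}_2\}$. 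Hence $\text{conv}(\{\mathbf{0},\mathbf{b}_1,\mathbf{b}_2\})\cap N=\{\mathbf{0},\mathbf{b}_1,\mathbf{b}_2\}$.

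For the converse I would argue contrapositively: assume $\left\{\mathbf{b}_{1},\mathbf{b}_{2}\right\}$ is \emph{not} a basis of $N$, and produce a lattice point of $N$ inside the triangle other than a vertex. Fix a genuine basis $\left\{\mathbf{e}_{1},\mathbf{e}_{2}\right\}$ of $N$ (which exists by Proposition \ref{LATTPROP}), and let $\mathcal{M}\in \text{Mat}_{2}(\mathbb{Z})$ be the integer matrix expressing $\mathbf{b}_{1},\mathbf{b}_{2}$ in this basis. Since $\mathbf{b}_1,\mathbf{b}_2$ are $\mathbb{R}$-linearly independent, $d:=|\det \mathcal{M}|\geq 1$, and $\left\{\mathbf{b}_{1},\mathbf{b}_{2}\right\}$ is a basis iff $d=1$ (a standard fact: $\mathcal{M}$ is invertible over $\mathbb{Z}$ iff $\det\mathcal{M}=\pm 1$). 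So assume $d\geq 2$. The sublattice $N':=\mathbb{Z}\mathbf{b}_1+\mathbb{Z}\mathbf{b}_2\subsetneq N$ has index $d$ in $N$, so by the theory of finite abelian quotients (or Smith normal form) there is some $\mathbf{x}_0\in N\setminus N'$; consider the coset $\mathbf{x}_0+N'$. I claim this coset, equivalently the quotient $N/N'$, has a representative lying in the half-open fundamental parallelogram $\Pi:=\{\lambda_1\mathbf{b}_1+\lambda_2\mathbf{b}_2 : 0\leq\lambda_1,\lambda_2<1\}$: indeed every $\mathbf{x}\in N$ can be translated by an element of $N'$ into $\Pi$ by subtracting $\lfloor\lambda_1\rfloor\mathbf{b}_1+\lfloor\lambda_2\rfloor\mathbf{b}_2$. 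Pick such a representative $\mathbf{x}=\lambda_1\mathbf{b}_1+\lambda_2\mathbf{b}_2\in N\cap\Pi$ with $(\lambda_1,\lambda_2)\neq(0,0)$ (possible since $N'\cap\Pi=\{\mathbf{0}\}$ while $|N/N'|=d\geq 2$).

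The final step is to push this point into the triangle. If $\lambda_1+\lambda_2\leq 1$ then $\mathbf{x}\in\text{conv}(\{\mathbf{0},\mathbf{b}_1,\mathbf{b}_2\})\cap N$ and $\mathbf{x}\notin\{\mathbf{0},\mathbf{b}_1,\mathbf{b}_2\}$ (it cannot equal $\mathbf{b}_i$ since that would force the other coordinate to be $0$ and this one to be $1$, excluded by $\lambda_i<1$), and we are done. If instead $\lambda_1+\lambda_2>1$, then I would replace $\mathbf{x}$ by $\mathbf{b}_1+\mathbf{b}_2-\mathbf{x}=(1-\lambda_1)\mathbf{b}_1+(1-\lambda_2)\mathbf{b}_2$, which lies in $N$ (as $\mathbf{b}_1+\mathbf{b}_2\in N$), has both coefficients in $(0,1]$, their sum equal to $2-(\lambda_1+\lambda_2)<1$, hence lies in the triangle and is again not a vertex; note $\mathbf{x}\neq \tfrac12(\mathbf{b}_1+\mathbf{b}_2)$ is automatic unless $\lambda_1=\lambda_2=\tfrac12$, in which case $\mathbf{x}$ itself already satisfies $\lambda_1+\lambda_2=1\leq 1$ and lies on the edge $\text{conv}(\{\mathbf{b}_1,\mathbf{b}_2\})$, still a non-vertex lattice point. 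In every case $\text{conv}(\{\mathbf{0},\mathbf{b}_1,\mathbf{b}_2\})\cap N\supsetneq\{\mathbf{0},\mathbf{b}_1,\mathbf{b}_2\}$, completing the contrapositive.

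The main obstacle is purely bookkeeping: making the reduction into the fundamental parallelogram precise and then carefully handling the reflection $\mathbf{x}\mapsto\mathbf{b}_1+\mathbf{b}_2-\mathbf{x}$ so that the resulting point is certifiably distinct from all three vertices; none of this is deep, but the edge cases (coordinate equal to $1$, or the midpoint of the hypotenuse) need to be checked explicitly. An alternative, slicker route would be to invoke that $\text{area}(\text{conv}(\{\mathbf{0},\mathbf{b}_1,\mathbf{b}_2\}))=\tfrac12|\det(\mathbf{b}_1\,\mathbf{b}_2)|=\tfrac{d}{2}$ together with Pick's theorem, which immediately gives that $d=1$ forces no interior or non-vertex boundary lattice points and $d\geq 2$ forces at least one; I would likely present the elementary parallelogram argument as primary and remark on the Pick's-theorem shortcut.
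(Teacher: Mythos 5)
Your proof is correct. The paper states Proposition \ref{Basicness} without proof (it is recalled as a standard fact from the theory of lattices), so there is nothing to compare against directly; your argument --- the easy direction via uniqueness of coordinates in a basis, and the converse via a nonzero lattice point in the half-open fundamental parallelogram (whose cardinality is the index $d=|\det\mathcal{M}|\geq 2$, exactly as in Proposition \ref{parallepiped}) followed by the reflection $\mathbf{x}\mapsto\mathbf{b}_1+\mathbf{b}_2-\mathbf{x}$ to land inside the triangle --- is the standard one and fits the paper's toolkit; the edge cases are handled correctly, and the remark about the midpoint is harmless but unnecessary, since you only need \emph{some} non-vertex lattice point, not one distinct from $\mathbf{x}$.
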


\begin{proposition}
\label{BASECHANGE}Let $N\subset \mathbb{R}^{2}$ be a lattice with $\mathcal{B}$
as a basis matrix. Then a matrix $\mathcal{B}^{\prime}\in$ \emph{GL}%
$_{2}(\mathbb{R})$ is a \textit{basis matrix} of $N$ $\Leftrightarrow
\exists \mathcal{A}\in$ \emph{GL}$_{2}(\mathbb{Z}):$ $\mathcal{B}^{\prime
}=\mathcal{BA}.$
\end{proposition}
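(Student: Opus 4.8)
The plan is to derive both implications straight from Definition \ref{DEFLATTICE} (equivalently Proposition \ref{LATTPROP}(ii)), using the elementary fact that an integer matrix lies in GL$_2(\mathbb{Z})$ if and only if its inverse is again an integer matrix, equivalently if and only if its determinant is $\pm 1$. Throughout I write $\mathcal{B}=(\mathbf{b}_1\mid \mathbf{b}_2)$ and $\mathcal{B}'=(\mathbf{b}_1'\mid \mathbf{b}_2')$ for the columns. Note first that any basis matrix is invertible over $\mathbb{R}$: by Proposition \ref{LATTPROP} a $\mathbb{Z}$-basis consists of two $\mathbb{R}$-linearly independent vectors, so $\det\mathcal{B}\neq 0$, and the same will hold for $\mathcal{B}'$ once it is known to be (or assumed to be) a basis matrix.

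For the implication ``$\Leftarrow$'', suppose $\mathcal{B}'=\mathcal{B}\mathcal{A}$ with $\mathcal{A}\in$ GL$_2(\mathbb{Z})$. Then $\det\mathcal{B}'=\det\mathcal{B}\cdot\det\mathcal{A}\neq 0$, so $\mathbf{b}_1',\mathbf{b}_2'$ are $\mathbb{R}$-linearly independent. Since the columns of $\mathcal{B}'$ are $\mathbb{Z}$-linear combinations of $\mathbf{b}_1,\mathbf{b}_2$, they lie in $N$; conversely $\mathcal{A}^{-1}\in$ GL$_2(\mathbb{Z})$ and $\mathcal{B}=\mathcal{B}'\mathcal{A}^{-1}$, so $\mathbf{b}_1,\mathbf{b}_2$ are $\mathbb{Z}$-linear combinations of $\mathbf{b}_1',\mathbf{b}_2'$. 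Hence the subgroup $\mathbb{Z}\mathbf{b}_1'+\mathbb{Z}\mathbf{b}_2'$ equals $\mathbb{Z}\mathbf{b}_1+\mathbb{Z}\mathbf{b}_2=N$, and $\{\mathbf{b}_1',\mathbf{b}_2'\}$ is a $\mathbb{Z}$-basis of $N$.

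For ``$\Rightarrow$'', suppose $\{\mathbf{b}_1',\mathbf{b}_2'\}$ is a basis of $N$. Each $\mathbf{b}_i'\in N$, so by Definition \ref{DEFLATTICE} there is a column vector $\mathbf{a}_i\in\mathbb{Z}^2$ with $\mathbf{b}_i'=\mathcal{B}\mathbf{a}_i$; assembling these gives an integer matrix $\mathcal{A}=(\mathbf{a}_1\mid\mathbf{a}_2)$ with $\mathcal{B}'=\mathcal{B}\mathcal{A}$. By the symmetric argument (now using that $\{\mathbf{b}_1',\mathbf{b}_2'\}$ is a basis and $\mathbf{b}_1,\mathbf{b}_2\in N$) there is an integer matrix $\mathcal{A}'$ with $\mathcal{B}=\mathcal{B}'\mathcal{A}'$. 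Substituting, $\mathcal{B}=\mathcal{B}\mathcal{A}\mathcal{A}'$, and since $\mathcal{B}$ is invertible over $\mathbb{R}$ we may cancel it to obtain $\mathcal{A}\mathcal{A}'=I_2$. Taking determinants, $\det\mathcal{A}\cdot\det\mathcal{A}'=1$ with $\det\mathcal{A},\det\mathcal{A}'\in\mathbb{Z}$, whence $\det\mathcal{A}=\pm 1$ and $\mathcal{A}\in$ GL$_2(\mathbb{Z})$.

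I do not expect a genuine obstacle here; the proof is routine. The one point deserving care is not to conclude $\mathcal{A}\in$ GL$_2(\mathbb{Z})$ from mere invertibility over $\mathbb{R}$ (i.e.\ from $\det\mathcal{A}\neq 0$): one genuinely needs the integral inverse, and that is exactly what the ``symmetric half'' of the $\Rightarrow$ argument produces. A cosmetic alternative is to phrase everything in terms of $\mathcal{A}:=\mathcal{B}^{-1}\mathcal{B}'$ and observe that both $\mathcal{A}$ and $\mathcal{A}^{-1}=\mathcal{B}'^{-1}\mathcal{B}$ have integer entries precisely because columns of each basis matrix lie in $N$ and $N$ is $\mathbb{Z}$-spanned by the columns of the other; this is the same content packaged more compactly.
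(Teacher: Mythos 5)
Your proof is correct. Note that the paper states Proposition \ref{BASECHANGE} without proof, merely recalling it as a standard fact about lattices (with a pointer to Gruber--Lekkerkerker); your argument is the standard one, and both directions are handled properly --- in particular you correctly obtain $\mathcal{A}\in\mathrm{GL}_2(\mathbb{Z})$ from the existence of an \emph{integral} inverse $\mathcal{A}'$ rather than from mere invertibility over $\mathbb{R}$, which is the one place such an argument could go wrong.
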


\begin{definition}
Let $N\subset \mathbb{R}^{2}$ be a lattice with $\mathcal{B}$ as a basis
matrix. The \textit{determinant} of $N$ is defined to be $\det(N):=\left \vert
\det(\mathcal{B})\right \vert .$ (By Proposition \ref{BASECHANGE}, $\det(N)$
does not depend on the particular choice of $\mathcal{B},$ because
$\det(\mathcal{A})\in \left \{  \pm1\right \}  $ for all $\mathcal{A}\in$
GL$_{2}(\mathbb{Z}).$)
\end{definition}

\begin{proposition}
\label{parallepiped}Let $N^{\prime}$ be a sublattice of a lattice
$N\subset \mathbb{R}^{2}.$ Suppose that $\left \{  \mathbf{b}_{1}^{\prime
},\mathbf{b}_{2}^{\prime}\right \}  $ and $\left \{  \mathbf{b}_{1}%
,\mathbf{b}_{2}\right \}  $ are bases of $N^{\prime}$ and $N,$ respectively,
and%
\[
\mathbf{b}_{1}^{\prime}=u_{11}\mathbf{b}_{1}+u_{12}\mathbf{b}_{2}%
,\  \  \mathbf{b}_{2}^{\prime}=u_{21}\mathbf{b}_{1}+u_{22}\mathbf{b}_{2},
\]
are the expressions of $\mathbf{b}_{1}^{\prime},\mathbf{b}_{2}^{\prime}$ as
integer linear combinations of $\mathbf{b}_{1},\mathbf{b}_{2}.$ Then the
number of points of $N$ which belong to the half-open parallelepiped
$\Pi:=\left \{  \xi_{1}\mathbf{b}_{1}^{\prime}+\xi_{2}\mathbf{b}_{2}^{\prime
}\left \vert \xi_{1},\xi_{2}\in \left[  0,1\right)  \right.  \right \}  $ equals%
\[
\sharp(\Pi \cap N)=\left \vert \det(\mathcal{U})\right \vert =\left \vert
N:N^{\prime}\right \vert =\frac{\det(N^{\prime})}{\det(N)},
\]
where $\mathcal{U}:=\left(  u_{ij}\right)  _{1\leq i,j\leq2}$ and $\left \vert
N:N^{\prime}\right \vert $ the index of (the subgroup) $N^{\prime}$ in $N.$
\end{proposition}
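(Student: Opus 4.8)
The plan is to prove the three displayed equalities in turn, the middle step being the crux. I would begin with the rightmost one, $\det(N^{\prime})/\det(N)=\lvert\det(\mathcal{U})\rvert$, which is pure linear algebra: writing $\mathcal{B}=(\mathbf{b}_{1}\mid\mathbf{b}_{2})$ and $\mathcal{B}^{\prime}=(\mathbf{b}_{1}^{\prime}\mid\mathbf{b}_{2}^{\prime})$ for the basis matrices of $N$ and $N^{\prime}$ (columns the respective basis vectors), the relations $\mathbf{b}_{i}^{\prime}=u_{i1}\mathbf{b}_{1}+u_{i2}\mathbf{b}_{2}$ amount precisely to $\mathcal{B}^{\prime}=\mathcal{B}\,\mathcal{U}^{\mathrm{t}}$, where $\mathcal{U}^{\mathrm{t}}$ is the transpose of $\mathcal{U}$. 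Hence $\det(\mathcal{B}^{\prime})=\det(\mathcal{B})\det(\mathcal{U})$, and, taking absolute values, $\det(N^{\prime})/\det(N)=\lvert\det(\mathcal{U})\rvert$; in particular $\det(\mathcal{U})\neq 0$, since $\mathbf{b}_{1}^{\prime},\mathbf{b}_{2}^{\prime}$ are $\mathbb{R}$-linearly independent.

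The main step is $\sharp(\Pi\cap N)=\lvert N:N^{\prime}\rvert$. I would prove it by showing that the reduction map $\Theta\colon\Pi\cap N\rightarrow N/N^{\prime}$, $\mathbf{p}\mapsto\mathbf{p}+N^{\prime}$, is a bijection; equivalently, that $\Pi\cap N$ contains exactly one representative of each coset of $N^{\prime}$ in $N$. For injectivity, suppose $\mathbf{p}=\xi_{1}\mathbf{b}_{1}^{\prime}+\xi_{2}\mathbf{b}_{2}^{\prime}$ and $\mathbf{q}=\eta_{1}\mathbf{b}_{1}^{\prime}+\eta_{2}\mathbf{b}_{2}^{\prime}$ with all $\xi_{i},\eta_{i}\in[0,1)$ and $\mathbf{p}-\mathbf{q}\in N^{\prime}$; since $\{\mathbf{b}_{1}^{\prime},\mathbf{b}_{2}^{\prime}\}$ is a $\mathbb{Z}$-basis of $N^{\prime}$, the coordinates $\xi_{i}-\eta_{i}$ lie in $\mathbb{Z}\cap(-1,1)=\{0\}$, so $\mathbf{p}=\mathbf{q}$. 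For surjectivity, given $\mathbf{n}\in N$ write $\mathbf{n}=\zeta_{1}\mathbf{b}_{1}^{\prime}+\zeta_{2}\mathbf{b}_{2}^{\prime}$ with $\zeta_{i}\in\mathbb{R}$ and put $\mathbf{p}:=(\zeta_{1}-\lfloor\zeta_{1}\rfloor)\mathbf{b}_{1}^{\prime}+(\zeta_{2}-\lfloor\zeta_{2}\rfloor)\mathbf{b}_{2}^{\prime}\in\Pi$; then $\mathbf{n}-\mathbf{p}=\lfloor\zeta_{1}\rfloor\mathbf{b}_{1}^{\prime}+\lfloor\zeta_{2}\rfloor\mathbf{b}_{2}^{\prime}\in N^{\prime}\subseteq N$, so $\mathbf{p}=\mathbf{n}-(\mathbf{n}-\mathbf{p})\in N$, i.e. $\mathbf{p}\in\Pi\cap N$ with $\Theta(\mathbf{p})=\mathbf{n}+N^{\prime}$. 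This gives $\sharp(\Pi\cap N)=\sharp(N/N^{\prime})=\lvert N:N^{\prime}\rvert$. (Geometrically this is just the statement that $\Pi$ is a fundamental domain for the translation action of $N^{\prime}$ on $\mathbb{R}^{2}$, intersected with $N$.)

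Finally I would identify $\lvert N:N^{\prime}\rvert$ with $\lvert\det(\mathcal{U})\rvert$ via the Smith normal form of the integral matrix $\mathcal{U}$: there are $\mathcal{A},\mathcal{C}\in\mathrm{GL}_{2}(\mathbb{Z})$ with $\mathcal{A}\mathcal{U}\mathcal{C}=\operatorname{diag}(d_{1},d_{2})$, $d_{1}\mid d_{2}$, $d_{1}d_{2}=\lvert\det(\mathcal{U})\rvert$. By Proposition \ref{BASECHANGE}, passing to other bases of $N$ and $N^{\prime}$ multiplies $\mathcal{U}$ on the left and on the right by matrices in $\mathrm{GL}_{2}(\mathbb{Z})$ while changing neither $\lvert N:N^{\prime}\rvert$ (which does not involve any basis) nor $\lvert\det(\mathcal{U})\rvert$ (since $\mathrm{GL}_{2}(\mathbb{Z})$-matrices have determinant $\pm 1$); so we may assume $\mathcal{U}=\operatorname{diag}(d_{1},d_{2})$, whence $N/N^{\prime}\cong(\mathbb{Z}/d_{1}\mathbb{Z})\oplus(\mathbb{Z}/d_{2}\mathbb{Z})$ and $\lvert N:N^{\prime}\rvert=d_{1}d_{2}=\lvert\det(\mathcal{U})\rvert$, closing the chain of equalities. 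I expect the only genuinely delicate point to be the surjectivity of $\Theta$: it is precisely the hypothesis $N^{\prime}\subseteq N$ that forces the \textquotedblleft fractional-part\textquotedblright{} point $\mathbf{p}$ to lie in $N$ (and not merely in the $\mathbb{R}$-span of $N^{\prime}$), while the half-openness of $\Pi$ --- coordinates in $[0,1)$ rather than $[0,1]$ --- is what makes $\Theta$ well defined and injective; the rest is bookkeeping.
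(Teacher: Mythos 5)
Your proof is correct. Note that the paper does not actually prove Proposition \ref{parallepiped}: it is recalled in the introduction as a standard fact about planar lattices, with a pointer to Gruber--Lekkerkerker, so there is no argument in the text to compare against. Your three-step chain --- the basis-matrix identity $\mathcal{B}^{\prime}=\mathcal{B}\,\mathcal{U}^{\mathrm{t}}$ for $\det(N^{\prime})/\det(N)=|\det(\mathcal{U})|$, the fundamental-domain bijection $\Pi\cap N\rightarrow N/N^{\prime}$ for $\sharp(\Pi\cap N)=|N:N^{\prime}|$, and the Smith normal form for $|N:N^{\prime}|=|\det(\mathcal{U})|$ --- is the standard textbook proof and fills that gap cleanly; the two points you single out as delicate (the half-openness of $\Pi$ for injectivity, and $N^{\prime}\subseteq N$ for surjectivity) are exactly the right ones.
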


\begin{note}
(i) $\mathbb{Z}^{2}:=\left \{  \left.  \tbinom{\lambda_{1}}{\lambda_{2}%
}\right \vert \lambda_{1},\lambda_{2}\in \mathbb{Z}\right \}  $ is the
\textit{standard} (rectangular) \textit{lattice} in $\mathbb{R}^{2}$ having
$\left(
\begin{smallmatrix}
1 & 0\\
0 & 1
\end{smallmatrix}
\right)  $ as basis matrix (and determinant $=1$).\smallskip \  \newline(ii) The
automorphism group of the $\mathbb{R}$-vector space $\mathbb{R}^{2}$ is Aut$(\mathbb{R}%
^{2}):= \text{GL}(\mathbb{R}^{2})=\left \{  \left.  \Phi_{\mathcal{A}}\right \vert \mathcal{A}\in
\text{GL}_{2}(\mathbb{R})\right \}  ,$ where
\[
\fbox{$%
\begin{array}
[c]{ccc}
& \mathbb{R}^{2}\ni \tbinom{x_{1}}{x_{2}}\longmapsto \Phi_{\mathcal{A}}\left(
\tbinom{x_{1}}{x_{2}}\right)  :=\mathcal{A}\tbinom{x_{1}}{x_{2}}\in
\mathbb{R}^{2}, &
\end{array}
$}%
\]
and
\[
\text{Aut}_{\mathbb{Z}^{2}}(\mathbb{R}^{2}):=\left \{  \Psi \in \text{Aut}%
(\mathbb{R}^{2})\left \vert \Psi(\mathbb{Z}^{2})=\mathbb{Z}^{2}\right.
\right \}  =\left \{  \Phi_{\mathcal{A}}\left \vert \mathcal{A}\in \text{GL}%
_{2}(\mathbb{Z})\right.  \right \}  .
\]
(iii) If $N\subset \mathbb{R}^{2}$ is an arbitrary lattice with $\mathcal{B}$
as a basis matrix, then $N=\Phi_{\mathcal{B}}(\mathbb{Z}^{2}),$ and the
subgroup%
\[
\text{Aut}_{N}(\mathbb{R}^{2}):=\left \{  \Psi \in \text{Aut}(\mathbb{R}%
^{2})\left \vert \Psi(N)=N\right.  \right \}  =\left \{  \Phi_{\mathcal{BAB}%
^{-1}}\left \vert \mathcal{A}\in \text{GL}_{2}(\mathbb{Z})\right.  \right \}
\]
of Aut$(\mathbb{R}^{2})$ consists of the so-called \textit{unimodular }%
$N$-\textit{transformations.}
\end{note}

\begin{definition}
Assume that $N\subset \mathbb{R}^{2}$ is a lattice with $\mathcal{B}$ as a
basis matrix. Identifying the dual $\mathbb{Z}$-module $M:=$ Hom$_{\mathbb{Z}%
}(N,\mathbb{Z})$ with $\left \{  \left.  \mathbf{x}\in \mathbb{R}^{2}\right \vert
\left \langle \mathbf{x},\mathbf{n}\right \rangle \in \mathbb{Z},\forall
\mathbf{n}\in N\right \}  $ we embed it as a lattice in $\mathbb{R}^{2}$ (and
call it \textit{dual lattice}) having $(\mathcal{B}^{\intercal})^{-1}$ as
basis matrix, and determinant $\det(M)=(\det(N))^{-1}.$ (The standard lattice
$\mathbb{Z}^{2}$ is self-dual.)
\end{definition}

\noindent{}$\bullet$ \textbf{Lattice polygons}. A \textit{lattice polygon}
$P\subset \mathbb{R}^{2}$ \textit{w.r.t. a lattice} $N\subset \mathbb{R}^{2}$
(or an $N$-\textit{polygon}, for short) is a polygon with Vert$(P)\subset N.$
Let POL$(N)$ be the set of all $N$-polygons. For $P\in$ POL$(N)$ the number
$\sharp \left(  P\cap N\right)  $ is given by \textit{Pick's formula
\cite{Pick}}$:$
\begin{equation}
\sharp \left(  P\cap N\right)  =\text{area}_{N}(P)+\tfrac{1}{2}\sharp \left(
\partial P\cap N\right)  +1 \label{PICK}.
\end{equation}
Here, area$_{N}(P)$ denotes the Lebesgue measure on $\mathbb{R}^{2}$ normalised
w.r.t. $N,$ so that half-open parallelepiped determined by the members of a
basis of $N$ has area $1.$ (In fact, in terms of the \textquotedblleft
usual\textquotedblright \ area, this equals $\tfrac{\text{area}(P)}{\det(N)}$).
If $kP:=\left \{  k\left.  \mathbf{x}\right \vert \mathbf{x}\in
P\right \}  \  \left(  k\in \mathbb{Z}_{\geq0}\right)  $  denotes the $k$-th dilation of
$P,$ it is known that the \textit{Ehrhart polynomial}
\[
\text{Ehr}_{N}(P;k):=\sharp \left(  kP\cap N\right)  \in \mathbb{Q}\left[
k\right]
\]
of $P$ (w.r.t. $N$) equals
\begin{equation}
\text{Ehr}_{N}(P;k)=\text{area}_{N}(P)k^{2}+\tfrac{1}{2}\sharp \left(  \partial
P\cap N\right)  k+1 \label{Ehrhartpol}%
\end{equation}
and that Ehr$_{N}(P;k)=$ Ehr$_{N}(\Upsilon(P);k)$ for all affine integral
transformations $\Upsilon$ of $\mathbb{R}^{2}$ w.r.t. $N$ (which are composed
of unimodular\textit{ }$N$-transformations and $N$-translations). Furthermore,
according to the reciprocity law \cite[p. 50]{Ehrhart} for Ehr$_{N}%
^{\text{int}}(P;k):=\sharp \left(\text{int}(kP)\cap N\right)  ,$%
\begin{equation}
\text{Ehr}_{N}^{\text{int}}(P;k)=\text{Ehr}_{N}(P;-k)=\text{area}_{N}%
(P)k^{2}-\tfrac{1}{2}\sharp \left(  \partial P\cap N\right)  k+1.
\label{Ehrhartint}%
\end{equation}

\noindent$\bullet$ \textbf{The equivalence relation }\textquotedblleft%
$\backsim_{N}$\textquotedblright. On the set POL$_{\mathbf{0}}(N):=\left \{
\left.  P\in \text{POL}(N)\right \vert \mathbf{0}\in \text{int}(P)\right \}  $ we
define the equivalence relation:%
\[
P_{1}\backsim_{N}P_{2}\underset{\text{def}}{\Longleftrightarrow}\exists \Psi
\in \text{Aut}_{N}(\mathbb{R}^{2}):\Psi \left(  P_{1}\right)  =P_{2}.
\]
If $P_{1}\backsim_{N}P_{2},$ we say that $P_{1}$ and $P_{2}$ are
\textit{equivalent up to $N$-umimodular transformation}. If $P\in$
POL$_{\mathbf{0}}(N),$ we denote by $\left[  P\right]  _{N}:=\left \{  \left.
R\in \text{POL}_{\mathbf{0}}(N)\right \vert R\backsim_{N}P\right \}  $ its
equivalence class.

\begin{definition}
If $P\in$ POL$_{\mathbf{0}}(N),$ then for a fixed basis matrix $\mathcal{B}%
\in$ GL$_{2}(\mathbb{R})$ of $N$ we have $N=\Phi_{\mathcal{B}}(\mathbb{Z}%
^{2})$ with $\Phi_{\mathcal{B}}\in$ Aut$(\mathbb{R}^{2}).$ Thus, we may define
the polygon
\[
P^{\text{st}}:=\Phi_{\mathcal{B}^{-1}}(P)\in \text{POL}_{\mathbf{0}}%
(\mathbb{Z}^{2}).
\]
$P^{\text{st}}$ will be called the \textit{standard model} of $P$ w.r.t.
$\mathcal{B}$. By Proposition \ref{BASECHANGE}, $[P^{\text{st}}]_{\mathbb{Z}%
^{2}}$ does not depend on the particular choice of $\mathcal{B}.$
\end{definition}

\noindent{}If the induced bijection
$
\text{POL}_{\mathbf{0}}(N)/\backsim_{N}\, \ni \left[  P\right]  _{N}%
\longmapsto \lbrack P^{\text{st}}]_{\mathbb{Z}^{2}}\in \, \text{POL}_{\mathbf{0}%
}(\mathbb{Z}^{2})/\backsim_{\, \mathbb{Z}^{2}}%
$
is taken into account, it is sometimes convenient to work with the equivalence class of
$P^{\text{st}}$ instead of that of $P$ (and with the standard lattice
$\mathbb{Z}^{2}$ instead of $N$), e.g., when we draw figures, when we
construct certain polygon classification lists etc. It is worth mentioning
that
\[
\text{Ehr}_{N}(P;k)=\text{Ehr}_{\mathbb{Z}^{2}}(P^{\text{st}};k)\  \text{ for
all }k\in \mathbb{Z}_{\geq0},
\]
because $\sharp \left(  \partial P\cap N\right)  =\sharp(\partial P^{\text{st}%
}\cap \mathbb{Z}^{2})$ and area$_{N}(P)=$ area$_{\mathbb{Z}^{2}}(P^{\text{st}%
}).\medskip$

\noindent{}$\bullet$ \textbf{LDP-polygons}. Let $N\subset \mathbb{R}^{2}$ be a
lattice. A point $\mathbf{n}\in N\mathbb{r}\{ \mathbf{0}\}$ is said to be
\textit{primitive} (w.r.t. $N$) if%
\[
\text{conv}(\{ \mathbf{0},\mathbf{n}\})\cap N=\{ \mathbf{0},\mathbf{n}\}.
\]

\begin{definition}
\label{DEFLOCIND}(i) A polygon $Q\in$ POL$_{\mathbf{0}}(N)$ is called
\textit{LDP-polygon} if it has primitive vertices.\smallskip \  \newline(ii) Let
$Q\subset \mathbb{R}^{2}$ be an LDP-polygon (w.r.t. $N$) and $M:=$
Hom$_{\mathbb{Z}}(N,\mathbb{Z})$ be the dual of our reference lattice $N.$ For
$F\in$ Edg$(Q)$ we denote by $\boldsymbol{\eta}_{F}\in M$ the unique primitive
lattice point which defines an \textit{inward-pointing }normal of $F.$ The
affine hull of $F$ is of the form $\left \{  \left.  \mathbf{y}\in
\mathbb{R}^{2}\right \vert \left \langle -\boldsymbol{\eta}_{F},\mathbf{y}%
\right \rangle =l_{F}\right \}  ,$ for some positive integer $l_{F}.$ This
$l_{F}$ is nothing but the integral distance between $\mathbf{0}$ and $F,$ the
so-called \textit{local index} of $F$ (w.r.t. $Q$). The \textit{index} $\ell$
of $Q$ is defined to be the positive integer $\ell:=$ lcm$\left \{  \left.
l_{F}\right \vert F\in \text{Edg}(Q)\right \}  .$ It is easy to prove that%
\begin{equation}
\fbox{$%
\begin{array}
[c]{ccc}
& \ell=\min \left \{  k\in \mathbb{Z}_{>0}\left \vert \text{Vert}(kQ^{\circ
})\subset M\right.  \right \}  . &
\end{array}
$} \label{LEQUALITY}%
\end{equation}
(Note that the $M$-polygon $\ell Q^{\circ}$ is not necessarily an LDP-polygon
w.r.t. $M.$)
\end{definition}

\noindent{}For every positive integer $\ell$ we define 
\[
\text{LDP}(\ell;N):=\left \{  \left[  Q\right]  _{N}\left \vert Q\in
\text{POL}_{\mathbf{0}}(N)\  \text{is an LDP-polygon of index }\ell \right.
\right \}  .
\]

\begin{theorem}
$\sharp($\emph{LDP}$(\ell;N))<\infty$ for all $\ell \geq1.$
\end{theorem}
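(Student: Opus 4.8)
The plan is to bound the normalised area of an LDP-polygon of index $\ell$ by a quantity depending only on $\ell$, and then invoke the classical finiteness of lattice polygons of bounded area up to unimodular transformation. \emph{Step 1 (reduction).} Under the bijection $[Q]_{N}\mapsto[Q^{\mathrm{st}}]_{\mathbb{Z}^{2}}$, the isomorphism $\Phi_{\mathcal{B}^{-1}}$ sends inward normals of $Q$ to inward normals of $Q^{\mathrm{st}}$ and respects the pairing between $N$ and $M$, so all local indices $l_{F}$ (hence the index $\ell$) and the normalised area are preserved; one may therefore assume $N=M=\mathbb{Z}^{2}$.

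\emph{Step 2 (the area estimate -- the crux).} Let $Q$ be an LDP-polygon of index $\ell$. By $(\ref{LEQUALITY})$ the dilate $\ell Q^{\circ}$ is an $M$-polygon, and $\mathbf{0}\in\mathrm{int}(\ell Q^{\circ})$ since $\mathbf{0}\in\mathrm{int}(Q^{\circ})$. For a primitive $\mathbf{u}\in N$ the linear form $\langle\mathbf{u},-\rangle$ attains its maximum and minimum over $\ell Q^{\circ}$ at vertices of $\ell Q^{\circ}$, which lie in $M$; these values are thus integers, and -- as $\mathbf{0}$ is interior -- the minimum is $\le-1$ and the maximum is $\ge1$. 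Hence some vertex $\mathbf{w}$ of $\ell Q^{\circ}$ satisfies $\langle\mathbf{u},\mathbf{w}\rangle\le-1$, so $\mathbf{u}\notin\mathrm{int}\bigl((\ell Q^{\circ})^{\circ}\bigr)$ (indeed $(1+\varepsilon)\mathbf{u}\notin(\ell Q^{\circ})^{\circ}$ for $\varepsilon>0$). Since $\mathrm{int}\bigl((\ell Q^{\circ})^{\circ}\bigr)$ is convex and contains $\mathbf{0}$, it contains no primitive lattice point, hence no non-zero lattice point at all (a non-zero lattice point $k\mathbf{u}'$ with $\mathbf{u}'$ primitive would drag $\mathbf{u}'$ into the interior too); that is, $\mathrm{int}\bigl((\ell Q^{\circ})^{\circ}\bigr)\cap N=\{\mathbf{0}\}$. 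Now $(\ell Q^{\circ})^{\circ}=\tfrac1\ell(Q^{\circ})^{\circ}=\tfrac1\ell Q$, so the planar convex body $\tfrac1\ell Q$ has $\mathbf{0}$ as its \emph{only} interior lattice point; by Hensley's theorem bounding the volume of a convex body in terms of its number of interior lattice points (in the plane, a single interior lattice point forces normalised area $\le\tfrac92$) one gets $\mathrm{area}_{N}\bigl(\tfrac1\ell Q\bigr)\le\tfrac92$, i.e. $\mathrm{area}_{N}(Q)\le\tfrac92\,\ell^{2}$.

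\emph{Step 3 (conclusion).} Thus $Q^{\mathrm{st}}$ is a lattice polygon with $\mathbf{0}$ in its interior and $\mathrm{area}_{\mathbb{Z}^{2}}(Q^{\mathrm{st}})\le\tfrac92\ell^{2}$. For every edge $F$ with vertices $\mathbf{v},\mathbf{v}'$ one has $l_{F}\ge1$ and $(\text{lattice length of }F)\cdot l_{F}=\lvert\det(\mathbf{v},\mathbf{v}')\rvert=2\,\mathrm{area}_{\mathbb{Z}^{2}}(\mathrm{conv}(\mathbf{0},\mathbf{v},\mathbf{v}'))\le 2\,\mathrm{area}_{\mathbb{Z}^{2}}(Q^{\mathrm{st}})$, and summing over edges shows that the number of edges is also $\le 2\,\mathrm{area}_{\mathbb{Z}^{2}}(Q^{\mathrm{st}})$; by $(\ref{PICK})$ the quantity $\sharp(Q^{\mathrm{st}}\cap\mathbb{Z}^{2})$ is then bounded by a function of $\ell$ alone. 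Since there are, up to $\mathrm{GL}_{2}(\mathbb{Z})$ and translations, only finitely many lattice polygons with a bounded number of lattice points, and since within each such class the polygons having $\mathbf{0}$ as an interior lattice point split into only finitely many $\backsim_{N}$-classes (the translational freedom being bounded by the number of interior lattice points), we conclude $\sharp(\mathrm{LDP}(\ell;N))<\infty$.

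I expect the only genuinely non-routine ingredient to be the observation in Step 2 that $\tfrac1\ell Q$ has $\mathbf{0}$ as its unique interior lattice point; everything else is bookkeeping, the one point requiring a little care being that Step 3 must be carried out modulo the \emph{linear} equivalence $\backsim_{N}$ rather than the affine one, which is why $\mathbf{0}$ is kept as a marked interior point throughout.
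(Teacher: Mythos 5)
Your proof is correct and follows exactly the route the paper intends: the theorem is stated there with no proof beyond the citation of Hensley and Lagarias--Ziegler, and your Step 2 --- showing that $\mathrm{int}(\tfrac{1}{\ell}Q)\cap N=\{\mathbf{0}\}$, i.e.\ that the lattice polygon $Q$ has exactly one interior point of the sublattice $\ell N$ --- is precisely the reduction that makes those references applicable. The one inaccuracy is the parenthetical constant $\tfrac{9}{2}$: Scott's bound $\mathrm{area}\le\tfrac{9}{2}$ is a theorem about \emph{lattice} polygons with one interior lattice point, whereas $\tfrac{1}{\ell}Q$ is only a rational polygon, so you must invoke the convex-body (or sublattice) form of the Hensley/Lagarias--Ziegler volume bound, which yields a larger absolute constant $C$ and hence $\mathrm{area}_{N}(Q)\le C\ell^{2}$ --- this changes nothing in the finiteness conclusion.
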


\noindent{}This can be derived by using (more general) results of Hensley
\cite{Hensley}, and Lagarias \& Ziegler \cite{Lag-Zieg}. LDP-polygons are of
particular interest because their $\backsim_{N}$-classes parametrise the
isomorphism classes of toric log del Pezzo surfaces. (See below
\S \ref{TORICLDPSURF}.) LDP-triangles of index $\leq3$ have been classified
(up to unimodular transformation) in \cite[\S 6]{Dais1} and \cite{Dais2}. More
recently, this classification has been extended considerably in \cite{KKN} via
a certain algorithm, by means of which it is possible to produce the
LDP-polygons of given index $\ell$ (up to unimodular transformation) by fixing
a \textquotedblleft special\textquotedblright \ edge and performing a prescribed
successive addition of vertices. Of course, their cardinality grows rapidly as we increase indices! The classification is complete for $\ell \leq17.$

\begin{theorem}
[\cite{KKN}]\label{KKNTHM}The values of the enumerating function $\ell
\mapsto \sharp($\emph{LDP}$(\ell;N))$ for $\ell \leq17$ are those given in the
following tables:
\[%
\begin{tabular}
[c]{|c||c|c|c|c|c|c|c|c|c|}\hline
$\ell$ & $1$ & $2$ & $3$ & $4$ & $5$ & $6$ & $7$ & $8$ & $9$\\ \hline \hline
$\sharp($\emph{LDP}$(\ell;N))$ & $16$ & $30$ & $99$ & $91$ & $250$ & $379$ &
$429$ & $307$ & $690$\\ \hline
\end{tabular}
\  \  \
\]%
\[%
\begin{tabular}
[c]{|c||c|c|c|c|c|c|c|c|}\hline
$\ell$ & $10$ & $11$ & $12$ & $13$ & $14$ & $15$ & $16$ & $17$\\ \hline \hline
$\sharp($\emph{LDP}$(\ell;N))$ & $916$ & $939$ & $1279$ & $1142$ & $1545$ &
$4312$ & $1030$ & $1892$\\ \hline
\end{tabular}
\  \  \
\]

\end{theorem}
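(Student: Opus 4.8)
The plan is to reduce the count to the standard lattice and then to an effective, provably exhaustive search. Passing to standard models gives a bijection $\mathrm{POL}_{\mathbf{0}}(N)/\!\backsim_N \;\to\; \mathrm{POL}_{\mathbf{0}}(\mathbb{Z}^2)/\!\backsim_{\mathbb{Z}^2}$ that sends LDP-polygons to LDP-polygons and preserves the local indices $l_F$ (the primitive inward normals $\boldsymbol{\eta}_F$ transform by the dual of a unimodular transformation, and the integral distance to $F$ is unchanged), hence preserves the index $\ell$. So it suffices to enumerate, up to $\backsim_{\mathbb{Z}^2}$, all LDP-polygons $Q\subset\mathbb{R}^2$ with respect to $\mathbb{Z}^2$ having a prescribed index $\ell\in\{1,\dots,17\}$.

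First I would normalize a distinguished (``special'') edge. Choose $F_0\in\mathrm{Edg}(Q)$ with $l_{F_0}$ maximal; since primitive vectors of $M=\mathbb{Z}^2$ form a single $\mathrm{GL}_2(\mathbb{Z})$-orbit, a unimodular transformation brings $F_0$ onto the line $\{y_2=-l_{F_0}\}$ with $\boldsymbol{\eta}_{F_0}=\tbinom{0}{1}$ and left vertex $\tbinom{0}{-l_{F_0}}$, and the residual stabilizer (shears and a reflection fixing $\tbinom{0}{1}$) can be used to normalize $F_0$ further, e.g. so that its lattice length is minimal among all such images. This confines $Q$ to a bounded region: because $\mathbf 0\in\mathrm{int}(Q)$ and every edge has local index at most $\ell$, the dilate $\ell Q^{\circ}$ is a lattice $M$-polygon with $\mathbf 0$ in its interior, and the finiteness assertion of Theorem~1.3 — made \emph{effective} through the explicit volume and diameter estimates of Hensley \cite{Hensley} and of Lagarias--Ziegler \cite{Lag-Zieg} — bounds the coordinates of all vertices of $Q$. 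Inside this region one grows $Q$ one vertex at a time in cyclic order, starting from $F_0$; at each step the next vertex ranges over the finitely many primitive lattice points compatible with convexity, with keeping $\mathbf 0$ in the interior, and with all edges constructed so far having local index dividing $\ell$. One then closes the polygon, discards it unless $\mathrm{lcm}\{l_F\mid F\in\mathrm{Edg}(Q)\}=\ell$, and is left with a finite search tree whose leaves are exactly the index-$\ell$ LDP-polygons in the chosen normal position.

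Next I would remove the residual redundancy: since the choice of special edge and of its left vertex is not canonical, distinct leaves may be $\backsim_{\mathbb{Z}^2}$-equivalent. For each polygon produced one computes a canonical form — for instance the lexicographically smallest cyclically ordered vertex list over all of its unimodular images respecting the normalization — and keeps one representative per $\backsim_{\mathbb{Z}^2}$-class. Running this for $\ell=1,\dots,17$ and counting the surviving classes yields the two tables; this is precisely the computation of \cite{KKN}, and the result can be cross-checked against the earlier, independently obtained classifications of LDP-triangles of index $\le 3$ in \cite{Dais1,Dais2}, which reappear as the triangle sub-counts inside the entries $\ell=1,2,3$.

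The step I expect to be the real obstacle is proving that the search is genuinely exhaustive and terminates: one needs effective (not merely finite) bounds on the number of vertices and on their coordinates, together with a clean argument that every index-$\ell$ LDP-polygon really can be carried by a unimodular transformation into the normalized position relative to some special edge, so that no $\backsim_{\mathbb{Z}^2}$-class is missed. The secondary difficulty is practical rather than mathematical: the number of classes grows erratically and fast (already $4312$ at $\ell=15$), so an implementation must prune aggressively via the interior-point and divisibility conditions and must test $\backsim_{\mathbb{Z}^2}$-equivalence efficiently.
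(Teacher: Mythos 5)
Your proposal is essentially the approach the paper relies on: the theorem is quoted from \cite{KKN}, whose algorithm is exactly what you describe — reduce to the standard lattice, normalize a ``special'' edge by a unimodular transformation, grow the polygon by successive addition of vertices inside an effective bounding region obtained from Hensley and Lagarias--Ziegler, and then discard duplicates up to $\backsim_{\mathbb{Z}^2}$. The paper itself gives no independent proof (it only cites \cite{KKN} and cross-checks the small-index triangle counts against \cite{Dais1,Dais2}), so your sketch, including your honest identification of exhaustiveness and termination as the substantive burden carried out in \cite{KKN}, matches the intended justification.
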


\noindent Useful details for the structure of each of these $16+30+\cdots
+1892=15346$ LDP-polygons (vertices of representatives of standard models
w.r.t. a suitable coordinate system, interior and boundary lattice points,
area, local indices, Ehrhart and Hilbert series etc.) are included in the
database \cite{Br-Kas}.\medskip

\noindent $\bullet$ \textbf{Reflexive polygons}. Firstly we focus on
LDP-polygons of index $1.$

\begin{proposition}
\label{CONDREFL}Let $Q\subset \mathbb{R}^{2}$ be an LDP-polygon of index $\ell$
\emph{(}w.r.t. $N$\emph{)}. Then the following conditions are
equivalent\emph{:\smallskip} \newline \emph{(i)} The polar polygon $Q^{\circ}$
of $Q$ is an $M$-polygon, where $M:=$ \emph{Hom}$_{\mathbb{Z}}(N,\mathbb{Z}%
).\smallskip$\newline \emph{(ii)} $\ell=1.\smallskip$ \newline \emph{(iii)}
$l_{F}=1$ for all $F\in$ \emph{Edg}$(Q).\smallskip$ \newline \emph{(iv)}
\emph{int}$(Q)\cap N=\{ \mathbf{0}\}.\smallskip$\newline$\emph{(v)}$
\emph{Ehr}$_{N}(Q;k)=$ \emph{Ehr}$_{N}^{\text{\emph{int}}}(Q;k+1)$ for all
$k\in \mathbb{Z}_{\geq0}.$
\end{proposition}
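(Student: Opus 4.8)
The plan is to prove Proposition \ref{CONDREFL} by establishing a cycle of implications, since several of the equivalences are nearly immediate from the definitions already set up. First I would observe that (ii) $\Leftrightarrow$ (iii) is essentially tautological: by definition the index $\ell$ is $\mathrm{lcm}\{l_F \mid F \in \mathrm{Edg}(Q)\}$, and each local index $l_F$ is a positive integer, so $\ell = 1$ holds precisely when every $l_F = 1$. Next, (iii) $\Rightarrow$ (i): if $l_F = 1$ for every edge $F$, then the affine hull of $F$ is $\{\mathbf{y} \mid \langle -\boldsymbol{\eta}_F, \mathbf{y}\rangle = 1\}$ with $\boldsymbol{\eta}_F \in M$ primitive, and the vertices of $Q^\circ$ are exactly the points $-\boldsymbol{\eta}_F$ (via the polarity bijection \eqref{POLARITY2} applied after noting that the facet inequality $\langle \boldsymbol{\eta}_F, \mathbf{y}\rangle \geq -1$ is exactly the defining inequality of $Q^\circ$ coming from $F$); hence $\mathrm{Vert}(Q^\circ) \subset M$. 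Conversely, (i) $\Rightarrow$ (iii): if $Q^\circ$ is an $M$-polygon, then by the characterization \eqref{LEQUALITY}, namely $\ell = \min\{k \in \mathbb{Z}_{>0} \mid \mathrm{Vert}(kQ^\circ) \subset M\}$, we get $\ell = 1$, and then (iii) follows from the already-established (ii) $\Leftrightarrow$ (iii). So (i), (ii), (iii) are mutually equivalent with almost no work.

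The substantive content is the equivalence of these with (iv) and (v). For (v) $\Leftrightarrow$ (iv) I would use the Ehrhart machinery from the excerpt directly. By \eqref{Ehrhartpol} and \eqref{Ehrhartint}, $\mathrm{Ehr}_N(Q;k) = \mathrm{area}_N(Q)k^2 + \tfrac12 \sharp(\partial Q \cap N)k + 1$ and $\mathrm{Ehr}_N^{\mathrm{int}}(Q;k+1) = \mathrm{area}_N(Q)(k+1)^2 - \tfrac12\sharp(\partial Q \cap N)(k+1) + 1$. Setting these two polynomials in $k$ equal and comparing coefficients (the $k^2$ coefficients already agree) yields, after a short computation, the single scalar condition $2\,\mathrm{area}_N(Q) = \sharp(\partial Q \cap N)$; and combined with Pick's formula \eqref{PICK}, $\sharp(Q \cap N) = \mathrm{area}_N(Q) + \tfrac12\sharp(\partial Q \cap N) + 1$, this is equivalent to $\sharp(Q \cap N) = \sharp(\partial Q \cap N) + 1$, i.e. to $\sharp(\mathrm{int}(Q) \cap N) = 1$. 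Since $\mathbf{0} \in \mathrm{int}(Q) \cap N$ always (as $Q \in \mathrm{POL}_{\mathbf{0}}(N)$), this says exactly $\mathrm{int}(Q) \cap N = \{\mathbf{0}\}$, which is (iv). Conversely, reversing these equalities shows (iv) forces the two Ehrhart polynomials to have identical coefficients, giving (v).

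It remains to connect (iv) with, say, (iii), and this is where I expect the main obstacle to lie — it is the one place where one must genuinely argue about lattice geometry rather than shuffle formal identities. For (iii) $\Rightarrow$ (iv): if every $l_F = 1$, then for each edge $F$ the strip between $\mathbf{0}$ and the affine hull of $F$ contains no interior lattice point other than possibly $\mathbf{0}$ itself; since $Q$ is the intersection of the half-planes $\langle \boldsymbol{\eta}_F, \mathbf{y}\rangle \geq -1$ and $\mathrm{int}(Q)$ is the intersection of the open half-planes $\langle \boldsymbol{\eta}_F, \mathbf{y}\rangle > -1$, an interior lattice point $\mathbf{n}$ would satisfy $-1 < \langle \boldsymbol{\eta}_F, \mathbf{n}\rangle$ for all $F$, hence $\langle \boldsymbol{\eta}_F, \mathbf{n}\rangle \geq 0$ (integrality), so $\mathbf{n}$ lies in the cone-like region cut out by $\langle \boldsymbol{\eta}_F, \cdot\rangle \geq 0$ for all $F$, which — because the $\boldsymbol{\eta}_F$ positively span $M_{\mathbb{R}}$ (as $\mathbf{0} \in \mathrm{int}(Q)$) — forces $\mathbf{n} = \mathbf{0}$. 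For the contrapositive direction (not (iii)) $\Rightarrow$ (not (iv)): if some $l_F \geq 2$, I would take the edge $F$ with its primitive vertices $\mathbf{v}, \mathbf{v}'$ and produce a nonzero interior lattice point; the natural candidate is a point on or near the segment from $\mathbf{0}$ toward $F$ at integral distance $1$ from $\mathbf{0}$, and one shows such a lattice point exists in $\mathrm{int}(Q)$ using that the local index being $\geq 2$ means $F$ does not "see" $\mathbf{0}$ at distance $1$. Making this last step fully rigorous — exhibiting the interior lattice point explicitly and checking it is strictly inside all the other half-planes — is the delicate part; I would handle it by passing to a unimodular $N$-coordinate system in which $\boldsymbol{\eta}_F = \binom{0}{-1}$ and the affine hull of $F$ is $\{y_2 = l_F\}$, so that $\binom{a}{1} \in \mathrm{int}(Q)$ for a suitable $a \in \mathbb{Z}$, and then verifying strict positivity of the remaining facet functionals by a convexity argument (the point lies strictly below $F$ yet is a convex-combination-type interior point). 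Alternatively — and this may be cleaner — I would route (iv) $\Rightarrow$ (iii) through the already-proven (iv) $\Leftrightarrow$ (v) and known reflexivity facts, but the direct lattice-geometric argument above is self-contained given only the excerpt.
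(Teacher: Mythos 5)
Most of your cycle is sound, and in one respect you are more self-contained than the paper: you prove (iv)$\Leftrightarrow$(v) directly by comparing coefficients of the two Ehrhart polynomials (both reduce to the single condition $2\,\text{area}_{N}(Q)=\sharp\left(\partial Q\cap N\right)$), whereas the paper cites Hibi for (v)$\Leftrightarrow$(i). Your (i)$\Leftrightarrow$(ii)$\Leftrightarrow$(iii) and your (iii)$\Rightarrow$(iv) (integrality of $\left\langle \boldsymbol{\eta}_{F},\mathbf{n}\right\rangle$ plus the fact that the inward normals positively span $\mathbb{R}^{2}$) are also fine. The genuine gap is exactly where you suspected it: the direction (iv)$\Rightarrow$(iii). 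Your primary plan --- exhibit a lattice point $\tbinom{a}{1}$ at integral distance $1$ from the origin below an edge $F$ with $l_{F}\geq 2$ --- is not justified and its natural local version fails. Take $F=$ conv$(\{\tbinom{1}{3},\tbinom{2}{3}\})$, so $l_{F}=3$: the slice of $T_{F}=$ conv$(\{\mathbf{0},\tbinom{1}{3},\tbinom{2}{3}\})$ at height $1$ is the interval $(\tfrac{1}{3},\tfrac{2}{3})$, which contains no integer, and the unique interior lattice point of $T_{F}$ sits at height $2$. Nothing in your sketch shows that the ambient polygon $Q$ is wide enough at height $1$ to supply an integer point there, and the fallback you offer (``a convexity argument'', or routing through ``known reflexivity facts'') is not an argument.

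Two repairs are available. The paper's route avoids exhibiting any interior point at all: triangulating $Q$ from the origin gives $2\,\text{area}_{N}(Q)=\sum_{F}\left(\sharp\left(F\cap N\right)-1\right)l_{F}$, while $\sharp\left(\partial Q\cap N\right)=\sum_{F}\left(\sharp\left(F\cap N\right)-1\right)$; since (iv) forces $2\,\text{area}_{N}(Q)=\sharp\left(\partial Q\cap N\right)$ via (\ref{Ehrhartint}), any single $l_{F}>1$ yields a strict inequality and a contradiction. Alternatively, your approach can be salvaged by applying Pick's formula (\ref{PICK}) to the triangle $T_{F}$ itself rather than to a height-$1$ slice: its normalised area is $\tfrac{1}{2}\left(\sharp\left(F\cap N\right)-1\right)l_{F}$ and its boundary carries $\sharp\left(F\cap N\right)+1$ lattice points, whence $\sharp\left(\text{int}(T_{F})\cap N\right)=\tfrac{1}{2}\left(\sharp\left(F\cap N\right)-1\right)\left(l_{F}-1\right)$; this is a non-negative integer, hence $\geq 1$ as soon as $l_{F}\geq 2$, and any such point lies in int$(Q)$ because int$(T_{F})\subset$ int$(Q)$. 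Either fix closes the cycle; as written, your proof does not.
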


\begin{proof}
By the definition of $\ell$ and by (\ref{LEQUALITY}), the equivalences
(i)$\Leftrightarrow$(ii)$\Leftrightarrow$(iii) are obvious. For
(v)$\Leftrightarrow$(i) see Hibi \cite[\S 35]{Hibi-Book}, \cite{Hibi}. The
implication (v)$\Rightarrow$(iv) is apparently true (by applying (v) for
$k=0$). It suffices to show the validity of implication (iv)$\Rightarrow$(ii).
Assuming that int$(Q)\cap N=\{ \mathbf{0}\},$ (\ref{Ehrhartint}) gives%
\begin{equation}
1=\sharp \left(  \text{int}(Q)\cap N\right)  =\text{Ehr}_{N}^{\text{int}%
}(Q;1)=\text{area}_{N}(Q)-\tfrac{1}{2}\sharp \left(  \partial Q\cap N\right)
+1\Rightarrow2\text{area}_{N}(Q)=\sharp \left(  \partial Q\cap N\right)  .
\label{1Equality}%
\end{equation}
Let $F$ be an edge of $Q$ having $\mathbf{n},\mathbf{n}^{\prime}$ as vertices.
We observe that $l_{F}=\frac{1}{\sharp \left(  F\cap N\right)  -1}\cdot
\frac{\left \vert \det(\mathbf{n},\mathbf{n}^{\prime})\right \vert }{\det(N)}.$
Next, we subdivide the triangle $T_{F}:=$ cov$(\left \{  \mathbf{0}%
,\mathbf{n},\mathbf{n}^{\prime}\right \}  )$ into $\sharp \left(  F\cap
N\right)  -1$ subtriangles, each of which having $\mathbf{0}$ and two
consecutive lattice points of $F$ as vertices. Obviously,
\[
2\, \text{area}_{N}(Q)=2\sum_{F\in \text{ Edg}(Q)}\text{area}_{N}(T_{F}%
)=\sum_{F\in \text{ Edg}(Q)}\left(  \sharp \left(  F\cap N\right)  -1\right)
l_{F}.
\]
If there were an edge with local index $>1$ (w.r.t. $Q$), we would have
\[
2\, \text{area}_{N}(Q)=\sum_{F\in \text{ Edg}(Q)}\left(  \sharp \left(  F\cap
N\right)  -1\right)  l_{F}>\sum_{F\in \text{ Edg}(Q)}\left(  \sharp \left(
F\cap N\right)  -1\right)  =\sharp \left(  \partial Q\cap N\right)  ,
\]
contradicting (\ref{1Equality}). Thus, $l_{F}=1$ for all $F\in$ Edg$(Q).$
\end{proof}

\begin{definition}
\label{DEFREFLPOL}An LDP-polygon $Q\subset \mathbb{R}^{2}$ (w.r.t. $N$) is
called \textit{reflexive polygon }(and $(Q,N)$ \textit{reflexive pair}) if it
satisfies the conditions of Proposition \ref{CONDREFL}.
\end{definition}

\begin{note}
(i) If $(Q,N)$ is a reflexive pair\textit{ }and\textit{ }$M$ the dual of $N,$
then $(Q^{\circ},M)$ is again a reflexive pair. (See \cite[Theorem 4.1.6, p.
510]{Batyrev2}.)\smallskip \  \newline(ii) The notion of reflexivity is
extendable to lattice polytopes of \textit{any} dimension $\geq3$ via
conditions \ref{CONDREFL} (i), (iii) and (v) (which remain equivalent). It
was introduced by Batyrev in \cite[\S 4]{Batyrev2}. Condition \ref{CONDREFL}
(iv) is necessary (for a lattice polytope of dimension $\geq3$ to be
reflexive) but is not sufficient: There are several lattice polytopes of
dimension $\geq3$ which have the origin as the only interior lattice point
without being reflexive. (Reflexive polytopes play a pivotal role in the
so-called \textquotedblleft combinatorial mirror symmetry\textquotedblright;
cf. \cite[Chapters 3 and 4]{Cox-Katz} and \cite{Batyrev-Nill}). On the other
hand, in dimension $2$ we meet nice lattice point enumerator identities like
(\ref{12PTFORMULA}).
\end{note}

\begin{theorem}
[Twelve-Point Theorem]\label{12PTTHM}If $(Q,N)$ is a \textit{reflexive pair
and }$M:=$ \emph{Hom}$_{\mathbb{Z}}(N,\mathbb{Z}),$ then%
\begin{equation}
\fbox{$%
\begin{array}
[c]{ccc}
& \sharp \left(  \partial Q\cap N\right)  +\sharp(\partial Q^{\circ}\cap
M)=12. &
\end{array}
$} \label{12PTFORMULA}%
\end{equation}

\end{theorem}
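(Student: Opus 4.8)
The plan is to derive (\ref{12PTFORMULA}) from the Ehrhart-theoretic identities already assembled in the excerpt, applied simultaneously to $Q$ and to its polar $Q^{\circ}$. Write $b := \sharp(\partial Q\cap N)$ and $b^{\circ} := \sharp(\partial Q^{\circ}\cap M)$, and recall from Proposition \ref{CONDREFL}(iv) (together with the Note following Definition \ref{DEFREFLPOL}, which says $(Q^{\circ},M)$ is again reflexive) that $\mathrm{int}(Q)\cap N = \{\mathbf{0}\}$ and $\mathrm{int}(Q^{\circ})\cap M = \{\mathbf{0}\}$. Equation (\ref{1Equality}) in the proof of Proposition \ref{CONDREFL} then gives $2\,\mathrm{area}_N(Q) = b$ and, by the same argument applied to the reflexive pair $(Q^{\circ},M)$, $2\,\mathrm{area}_M(Q^{\circ}) = b^{\circ}$. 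Hence (\ref{12PTFORMULA}) is equivalent to the single clean statement $\mathrm{area}_N(Q) + \mathrm{area}_M(Q^{\circ}) = 6$.

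To prove this, I would pass to the standard model and work vertex by vertex around $\partial Q$. Fix the cyclic list of vertices $\mathbf{v}_1,\dots,\mathbf{v}_d \in N$ of $Q$ (indices mod $d$), each primitive since $Q$ is an LDP-polygon. By the bijection (\ref{POLARITY1}), the edges of $Q^{\circ}$ correspond to the $\mathbf{v}_i$; by (\ref{POLARITY2}), the vertices $\mathbf{w}_i \in M$ of $Q^{\circ}$ correspond to the edges $F_i = \mathrm{conv}(\{\mathbf{v}_i,\mathbf{v}_{i+1}\})$ of $Q$, and since every local index $l_{F_i}=1$ (reflexivity), $\mathbf{w}_i$ is precisely the inward normal $\boldsymbol{\eta}_{F_i}$ normalised by $\langle \mathbf{w}_i,\mathbf{v}_i\rangle = \langle \mathbf{w}_i,\mathbf{v}_{i+1}\rangle = -1$. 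The key local computation is that, in a basis of $N$ in which $\mathbf{v}_i = \binom{1}{0}$ and $\mathbf{v}_{i+1}$ has $x_1$-coordinate $0$ (possible after a unimodular transformation because $l_{F_i}=1$ forces $\mathbf{v}_i,\mathbf{v}_{i+1}$ to span a sublattice of $N$ whose cosets are detected along the edge), one reads off $\mathbf{w}_{i-1},\mathbf{w}_i,\mathbf{w}_{i+1}$ explicitly and finds $\det(\mathbf{w}_{i-1},\mathbf{w}_i) + \det(\mathbf{w}_i,\mathbf{w}_{i+1})$ in terms of $\det(\mathbf{v}_{i-1},\mathbf{v}_i)$, $\det(\mathbf{v}_i,\mathbf{v}_{i+1})$ and the integer expressing $\mathbf{v}_{i-1}+\mathbf{v}_{i+1}$ as a multiple of $\mathbf{v}_i$ modulo the edges. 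Summing the triangle-area decomposition $2\,\mathrm{area}_N(Q) = \sum_i \det(\mathbf{v}_i,\mathbf{v}_{i+1})$ and likewise $2\,\mathrm{area}_M(Q^{\circ}) = \sum_i \det(\mathbf{w}_i,\mathbf{w}_{i+1})$, the cross terms telescope and one is left with a multiple of $d$ plus a correction; matching this against the two copies of (\ref{1Equality}) and Pick's formula (\ref{PICK}) should collapse everything to the constant $12$.

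An alternative, and probably cleaner, route uses the reciprocity law (\ref{Ehrhartint}) directly: for the reflexive pair, $\mathrm{Ehr}_N(Q;k) = \mathrm{area}_N(Q)k^2 + \tfrac12 b\, k + 1$ and (by (\ref{1Equality})) the coefficients satisfy $\tfrac12 b = \mathrm{area}_N(Q)$, so $\mathrm{Ehr}_N(Q;k) = \mathrm{area}_N(Q)(k^2+k)+1$; similarly $\mathrm{Ehr}_M(Q^{\circ};k) = \mathrm{area}_M(Q^{\circ})(k^2+k)+1$. One then needs one genuine geometric input relating the two areas — e.g. that the \emph{dual} subdivision identifies $\partial Q \cap N$ minus vertices with the edge-interior points, and that going around the boundary each vertex of $Q$ contributes a "turning number" whose total is governed by the fact that the normal fan of $Q$ is a complete fan in $M_{\mathbb R}$. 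Concretely, for each vertex $\mathbf{v}_i$ the two adjacent inward normals $\mathbf{w}_{i-1},\mathbf{w}_i$ form a basis of $M$ (again because $l_{F_{i-1}}=l_{F_i}=1$), and the local relation at $\mathbf{v}_i$ reads $\mathbf{v}_{i-1}+\mathbf{v}_{i+1} = a_i \mathbf{v}_i$ for some $a_i \in \mathbb Z$ while dually $\mathbf{w}_{i-1}+\mathbf{w}_i$ relates to $\mathbf{w}$'s by the same integers; the classical identity $\sum_i(3 - a_i) = 12$ for a smooth complete toric surface (equivalently $K^2 + \text{(number of rays)} $ bookkeeping, or just $\sum a_i = 3d - 12$) then yields (\ref{12PTFORMULA}) after summing.

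The main obstacle I anticipate is making the vertex-local normalisation and the bookkeeping of the integers $a_i$ fully rigorous without circularity — in particular, proving the self-dual symmetry that the $a_i$ attached to $Q$ at $\mathbf{v}_i$ are exactly the edge-lengths $\sharp(F_i^{\vee}\cap M)-1$ type data on the polar side, so that $\sum_i a_i$ simultaneously computes $3d - 2\,\mathrm{area}_N(Q)$-ish quantities on both sides. Equivalently, in the toric-geometry formulation one must justify that resolving the (reflexive, hence canonical-Gorenstein) toric surface $X_Q$ and its polar dual are birational to smooth toric surfaces whose anticanonical data add to $12$ — this is the step where the "$12$" genuinely enters, via $c_1^2 + c_2 = 12\chi(\mathcal O)$ for a rational surface, and it will require care to phrase purely combinatorially as promised in the abstract.
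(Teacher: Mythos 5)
Your reduction of (\ref{12PTFORMULA}) to the area identity $\text{area}_N(Q)+\text{area}_M(Q^{\circ})=6$, via (\ref{1Equality}) applied to both reflexive pairs $(Q,N)$ and $(Q^{\circ},M)$, is correct but is only a reformulation: all of the content of the theorem still sits in that area identity, and neither of your two routes actually establishes it. The first route (telescoping the determinants $\det(\mathbf{w}_i,\mathbf{w}_{i+1})$ against $\det(\mathbf{v}_i,\mathbf{v}_{i+1})$) is never carried out --- ``the cross terms telescope and \dots should collapse everything to the constant $12$'' is precisely the step that needs proof. The second route contains a concretely false assertion: it is not true that $l_{F_{i-1}}=l_{F_i}=1$ forces the adjacent inward normals $\mathbf{w}_{i-1},\mathbf{w}_i$ to form a basis of $M$, nor that consecutive vertices of $Q$ form a basis of $N$. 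Reflexivity gives integral distance $1$ from the origin to each edge, not multiplicity $1$ of the cone over the edge. For example $Q=\mathcal{Q}_1=\text{conv}\{\tbinom{1}{0},\tbinom{0}{1},\tbinom{-1}{-1}\}$ has $Q^{\circ}=\mathcal{Q}_{16}$, whose adjacent vertices $\tbinom{-1}{-1},\tbinom{2}{-1}$ have determinant $3$; dually, for $Q=\mathcal{Q}_{15}$ the two neighbours of the vertex $\tbinom{-1}{-1}$ sum to $\tbinom{0}{2}$, which is not an integer multiple of $\tbinom{-1}{-1}$, so the relation $\mathbf{v}_{i-1}+\mathbf{v}_{i+1}=a_i\mathbf{v}_i$ does not even hold at the level of vertices. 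Consequently the classical identity $\sum_i(3-a_i)=12$, valid only for \emph{smooth} complete fans, cannot be invoked as you do.

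The fix is to run that identity not over $\text{Vert}(Q)$ but over all of $\partial Q\cap N$: for a reflexive polygon the fan whose rays pass through \emph{all} boundary lattice points is smooth (the minimal desingularization is crepant), its number of rays is $\sharp(\partial Q\cap N)=e(X(N,\widetilde{\Delta}_Q))$, and the self-intersection of the canonical divisor is unchanged by the crepant resolution and equals $\sharp(\partial Q^{\circ}\cap M)$; Noether's formula $K^2+e=12$ then gives the theorem. This is exactly the argument recorded in Note \ref{ALTPROOF12PTTHM}, and note that the input $K_{X(N,\Delta_Q)}^2=\sharp(\partial Q^{\circ}\cap M)$ is itself nontrivial --- in the paper it is formula (\ref{K2FORMULA}), proved via the polytope--divisor correspondence of Theorem \ref{SIGMAPMINDES}. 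The proof the paper actually gives for Theorem \ref{12PTTHM} at this point in the text is different and fully elementary: case-by-case verification through the classification of the sixteen reflexive polygons (Theorem \ref{CLASSIFRP}), using that polarity either swaps $\mathcal{Q}_i$ with $\mathcal{Q}_{17-i}$ or preserves the class of $\mathcal{Q}_i$ for $7\le i\le 10$. As written, your proposal is a plan with a genuine gap rather than a proof.
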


\noindent{}One proof consists in case-by-case verification of
(\ref{12PTFORMULA}) by passing through the explicit classification of
reflexive polygons, i.e., by the so-called \textquotedblleft exhaustion
method\textquotedblright.

\begin{theorem}
[Classification of reflexive polygons]\label{CLASSIFRP}Let $(Q,N)$ be a
\textit{reflexive pair. Then }$Q$ has at most $6$ vertices, and a
representative of the equivalence class of its standard model (w.r.t. any
basis matrix of $N$) is exactly one of the sixteen $\mathbb{Z}^{2}$-polygons
$\mathcal{Q}_{1},\ldots,\mathcal{Q}_{16}$ illustrated in Figure  \ref{Fig.0}, whose vertices (in an anticlockwise
order) are given in the second columns of the tables:\bigskip
\setlength\extrarowheight{3pt}
{\small
\begin{equation}
\begin{tabular}{|c||c|c|}
\hline
$i$ & \emph{vertices of }$\mathcal{Q}_{i}$ & $\sharp (\partial \mathcal{Q}%
_{i}\cap \mathbb{Z}^{2})$ \\ \hline\hline
$1$ & $%
\begin{array}{c}
\tbinom{1}{0},\tbinom{0}{1},\tbinom{-1}{-1}\smallskip %
\end{array}%
$ & $3$ \\ \hline
$2$ & $%
\begin{array}{c}
\tbinom{1}{0},\tbinom{0}{1},\tbinom{-2}{-1} \smallskip%
\end{array}%
$ & $4$ \\ \hline
$3$ & $%
\begin{array}{c}
\tbinom{1}{0},\tbinom{0}{1},\tbinom{-1}{0},\tbinom{0}{-1} \smallskip%
\end{array}%
$ & $4$ \\ \hline
$4$ & $%
\begin{array}{c}
\tbinom{1}{0},\tbinom{0}{1},\tbinom{-1}{0},\tbinom{-1}{-1} \smallskip%
\end{array}%
$ & $4$ \\ \hline
$5$ & $%
\begin{array}{c}
\tbinom{1}{0},\tbinom{0}{1},\tbinom{-1}{1},\tbinom{-1}{-1} \smallskip%
\end{array}%
$ & $5$ \\ \hline
$6$ & $%
\begin{array}{c}
\tbinom{1}{0},\tbinom{0}{1},\tbinom{-1}{0},\tbinom{-1}{-1},\tbinom{0}{-1} \smallskip%
\end{array}%
$ & $5$ \\ \hline
$7$ & $%
\begin{array}{c}
\tbinom{1}{-1},\tbinom{-1}{2},\tbinom{-1}{-1} \smallskip%
\end{array}%
$ & $6$ \\ \hline
$8$ & $%
\begin{array}{c}
\tbinom{1}{0},\tbinom{-1}{1},\tbinom{-1}{-1},\tbinom{1}{-1}\smallskip%
\end{array}%
$ & $6$ \\ \hline
\end{tabular}%
\begin{array}{c}
\ 
\end{array}%
\begin{tabular}{|c||c|c|}
\hline
$i$ & \emph{vertices of }$\mathcal{Q}_{i}$ & $\sharp (\partial \mathcal{Q}%
_{i}\cap \mathbb{Z}^{2})$ \\ \hline\hline
$9$ & $%
\begin{array}{c}
\tbinom{1}{0},\tbinom{0}{1},\tbinom{-1}{0},\tbinom{-1}{-1},\tbinom{1}{-1}\smallskip%
\end{array}%
$ & $6$ \\ \hline
$10$ & $%
\begin{array}{c}
\tbinom{1}{0},\tbinom{1}{1},\tbinom{0}{1},\tbinom{-1}{0},\tbinom{-1}{-1},%
\tbinom{0}{-1}\smallskip%
\end{array}%
$ & $6$ \\ \hline
$11$ & $%
\begin{array}{c}
\tbinom{-1}{-1},\tbinom{1}{-1},\tbinom{1}{0},\tbinom{0}{1},\tbinom{-1}{1}\smallskip%
\end{array}%
$ & $7$ \\ \hline
$12$ & $%
\begin{array}{c}
\tbinom{-1}{-1},\tbinom{0}{-1},\tbinom{1}{0},\tbinom{-1}{2}\smallskip%
\end{array}%
$ & $7$ \\ \hline
$13$ & $%
\begin{array}{c}
\tbinom{-1}{-1},\tbinom{1}{-1},\tbinom{1}{0},\tbinom{-1}{2}\smallskip%
\end{array}%
$ & $8$ \\ \hline
$14$ & $%
\begin{array}{c}
\tbinom{-1}{-1},\tbinom{1}{-1},\tbinom{1}{1},\tbinom{-1}{1}\smallskip%
\end{array}%
$ & $8$ \\ \hline
$15$ & $%
\begin{array}{c}
\tbinom{-1}{-1},\tbinom{1}{-1},\tbinom{-1}{3}\smallskip%
\end{array}%
$ & $8$ \\ \hline
$16$ & $%
\begin{array}{c}
\tbinom{-1}{-1},\tbinom{2}{-1},\tbinom{-1}{2}\smallskip%
\end{array}%
$ & $9$ \\ \hline
\end{tabular}%
\ \   \label{tablesofRFP}
\end{equation}}

\setlength\extrarowheight{-3pt}

\end{theorem}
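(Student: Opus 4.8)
The plan is to obtain the list by a finite search organized around the quantity $\sharp(\partial Q\cap N)$, which by Pick's formula (\ref{PICK}) applied to a reflexive $Q$ (so $\mathrm{int}(Q)\cap N=\{\mathbf 0\}$) equals $2\,\mathrm{area}_N(Q)$. First I would pass to the standard model and work over $\mathbb Z^2$, so that ``$N$-equivalence'' becomes the action of $\mathrm{GL}_2(\mathbb Z)$. The key finiteness input is that a reflexive polygon contains exactly one interior lattice point; by the classical bound of Hensley / Lagarias--Ziegler (already invoked in the excerpt for $\sharp(\mathrm{LDP}(\ell;N))<\infty$) the area, hence $\sharp(\partial Q\cap\mathbb Z^2)$, is bounded, and in fact one shows $\mathrm{area}(Q)\le\tfrac92$, i.e. $\sharp(\partial Q\cap\mathbb Z^2)\le 9$. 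This already forces at most $6$ vertices, since each vertex contributes at least one boundary point and a lattice polygon with a unique interior point cannot be too ``thin''; more precisely, after the area bound one checks directly that $7$ or more vertices is impossible.

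Next I would set up the enumeration. Fix an edge $F$ of $Q$ with vertices $\mathbf v,\mathbf v'$; after a unimodular transformation we may assume the inward normal is $\boldsymbol\eta_F=\tbinom{0}{1}$ and (using reflexivity, $l_F=1$) that $F$ lies on the line $x_2=-1$, with $\mathbf v=\tbinom{0}{-1}$ the lattice point of $F$ closest to $\tbinom{0}{0}$ among one of the two ``ends'' — that is, normalize so the edge starts at $\tbinom{a}{-1}$ with $a\le 0< $ other endpoint, or simply fix one endpoint at $\tbinom{0}{-1}$ and the lattice length $\sharp(F\cap\mathbb Z^2)-1=:m$. Every other vertex $\tbinom{x_1}{x_2}$ then satisfies $x_2\ge 0$ (it is on the positive side of the line through $F$), and in fact $x_2\le$ a small bound because $\tbinom{0}{0}$ must be the only interior point: a vertex with $x_2\ge 2$ together with $F$ would force a second interior lattice point on the segment joining it to $F$ unless the polygon is extremely narrow. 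Carrying this out one gets a short finite list of possible ``heights'' and horizontal positions for the remaining vertices, which one enumerates by hand, discarding polygons that are not convex, not reflexive ($l_{F'}\ne 1$ for some other edge, equivalently a second interior point appears), or $\mathrm{GL}_2(\mathbb Z)$-equivalent to one already found. The sixteen survivors are exactly $\mathcal Q_1,\dots,\mathcal Q_{16}$, and the boundary-point counts in the tables are then read off directly.

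The step I expect to be the main obstacle — and the one requiring genuine care rather than routine checking — is the bookkeeping of $\mathrm{GL}_2(\mathbb Z)$-equivalences: the naive edge-by-edge search produces each class many times (once per choice of distinguished edge and orientation, and often via several different normalizations), so one must either fix a canonical form (e.g. lexicographically minimal vertex matrix in Hermite-type normal form, or a canonical choice of ``first'' edge as in the KKN algorithm of Theorem \ref{KKNTHM}) or else verify pairwise that the $16$ listed polygons are mutually inequivalent, e.g. by comparing the multiset of data $\bigl(\sharp(\partial Q\cap\mathbb Z^2),\ \sharp(\mathrm{Vert}(Q)),\ \mathrm{area}(Q),\ \text{cyclic sequence of edge lattice-lengths}\bigr)$, which already separates all sixteen cases. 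A clean alternative, which I would actually prefer to write up, is to organize the search by the number of vertices $n\in\{3,4,5,6\}$ and, within each $n$, by the boundary lattice-point count: for triangles one classifies reflexive $Q$ via the area bound and the structure of the three edges directly; for $n\ge 4$ one uses that $Q$ contains a reflexive sub-triangle or that removing a vertex yields a smaller reflexive (or near-reflexive) polygon, giving an inductive descent to the triangle case. Either way, once the finite list is produced, convexity, reflexivity and the boundary counts are immediate verifications on the sixteen explicit coordinate sets, and Figure \ref{Fig.0} simply depicts them.
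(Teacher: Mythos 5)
The paper itself does not prove Theorem \ref{CLASSIFRP}: it states it as a known classification and, in Note \ref{NOETHERHIST}, points to the proofs of Batyrev, Rabinowitz, Koelman, Sato, Nill and Kasprzyk. Your proposal is a sketch of exactly the combinatorial--enumerative route of Rabinowitz/Koelman/Nill/Kasprzyk (classify lattice polygons with a unique interior lattice point, normalise one edge, bound the remaining vertices, enumerate, reduce modulo $\mathrm{GL}_2(\mathbb{Z})$), so in spirit it is a legitimate and standard strategy rather than a deviation from anything in this paper. That said, as written it has concrete gaps. First, the two quantitative inputs that make the search finite are asserted, not established: the bound $\mathrm{area}(Q)\le \tfrac92$ (equivalently $\sharp(\partial Q\cap\mathbb{Z}^2)\le 9$) is \emph{not} what Hensley or Lagarias--Ziegler give you (their bounds are far weaker); the sharp statement is Scott's inequality $b\le 2i+7$ for $i=1$, which needs its own argument, and likewise the bound on the heights $x_2$ of the remaining vertices ("a vertex with $x_2\ge 2$ would force a second interior point unless the polygon is extremely narrow") must accommodate $\mathcal{Q}_{15}$ and $\mathcal{Q}_{16}$, which have vertices at heights $3$ and $2$; the "narrow" cases are precisely where the work lies. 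Second, the claim that the area bound "already forces at most $6$ vertices" is unjustified: with $9$ boundary points a $7$-, $8$- or $9$-gon is not excluded a priori (the paper's own route to $\nu\le 6$, Corollary \ref{Vert6}, goes through the dual polygon and formula (\ref{12PTFORMULA}), which you cannot use here without circularity).

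The one outright false statement is that the multiset $\bigl(\sharp(\partial Q\cap\mathbb{Z}^2),\,\sharp(\mathrm{Vert}(Q)),\,\mathrm{area}(Q),\,\text{cyclic edge lattice-lengths}\bigr)$ "already separates all sixteen cases". It fails for $\mathcal{Q}_3$ and $\mathcal{Q}_4$: both are quadrilaterals of area $2$ with $4$ boundary points and all four edges of lattice length $1$ (indeed both define \emph{smooth} toric surfaces, namely $\mathbb{P}^1_{\mathbb{C}}\times\mathbb{P}^1_{\mathbb{C}}$ and the Hirzebruch surface $\mathbb{F}_1$), yet they are not $\mathrm{GL}_2(\mathbb{Z})$-equivalent. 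To separate them you need a finer invariant --- e.g.\ central symmetry, the self-intersection numbers $-r_i$ of the invariant curves, or the full \textsc{wve}$^2$\textsc{c}-graph of Theorem \ref{CLASSIFTHM} --- or a genuine canonical form as in the KKN algorithm. So the skeleton of your argument is sound, but the area/height bounds and the inequivalence check are precisely the places where a complete proof must do real work, and your proposed shortcut for the latter does not close the case.
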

\newpage

\begin{figure}[th]
	\includegraphics[height=11cm, width=8cm]{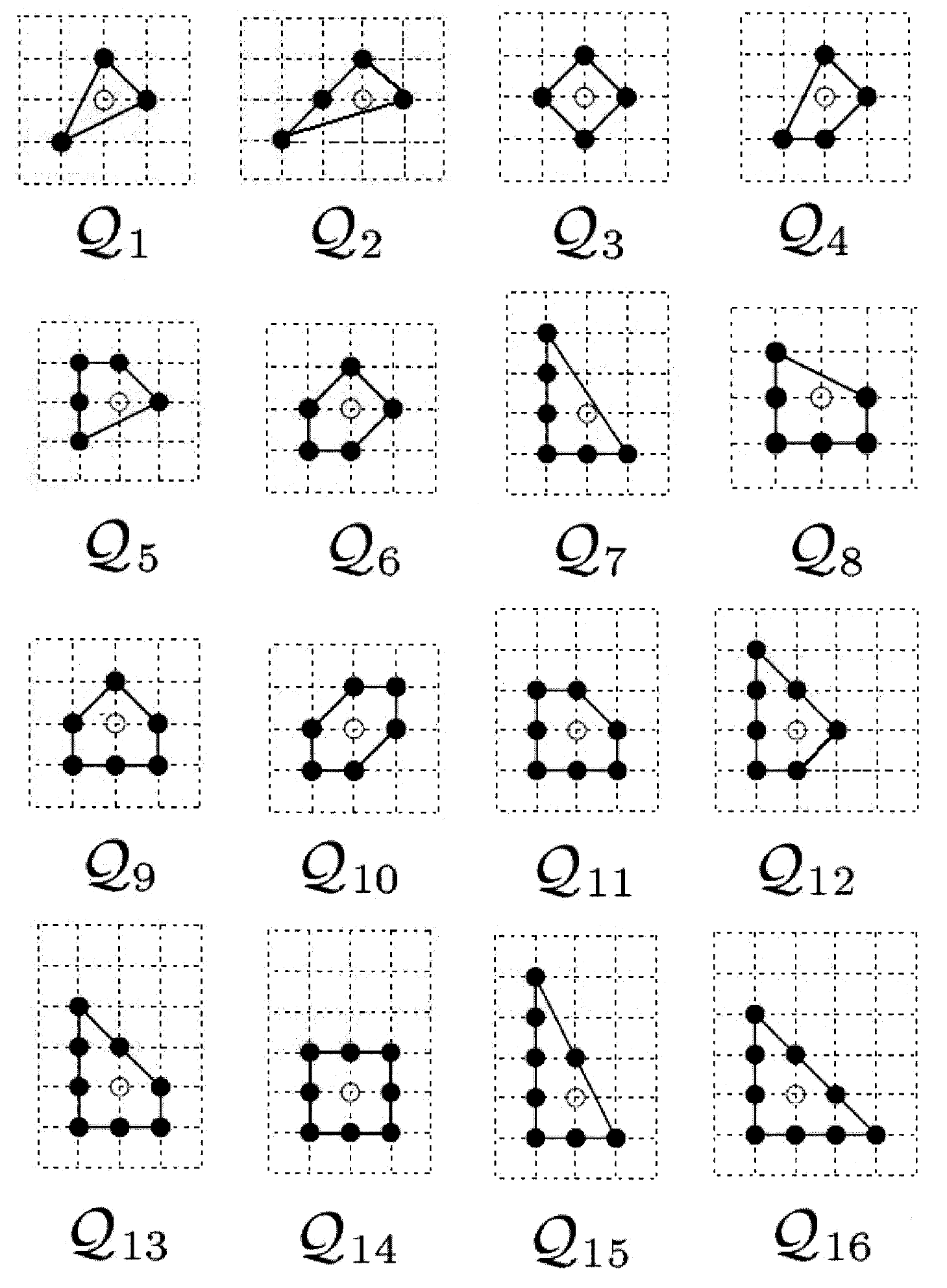}
	\caption{}\label{Fig.0}%
\end{figure}

\noindent{}\textit{Proof of Theorem \ref{12PTTHM} via Theorem \ref{CLASSIFRP}%
}. One checks directly that
\begin{equation}
\mathcal{Q}_{17-i}=\mathcal{Q}_{i}^{\circ},\text{ for }i\in \{1,2,3,4,5,6\},
\label{AUXEQ1}%
\end{equation}
and that the vertices of the polars of $\mathcal{Q}_{7},\ldots,\mathcal{Q}%
_{10}$ are the following:
\setlength\extrarowheight{3pt}
\[
\begin{tabular}
[c]{|c||c|}\hline
$i$ & vertices of\emph{ }$\mathcal{Q}_{i}^{\circ}$\\\hline\hline
$7$ & $%
\begin{array}
[c]{c}%
\tbinom{-3}{-2},\tbinom{1}{0},\tbinom{0}{1}\smallskip
\end{array}
$\\\hline
$8$ & $%
\begin{array}
[c]{c}%
\tbinom{-1}{-2},\tbinom{1}{0},\tbinom{0}{1},\tbinom{-1}{0}\smallskip
\end{array}
$\\\hline
$9$ & $%
\begin{array}
[c]{c}%
\tbinom{-1}{-1},\tbinom{1}{-1},\tbinom{1}{0},\tbinom{0}{1},\tbinom{-1}%
{0}\smallskip
\end{array}
$\\\hline
$10$ & $%
\begin{array}
[c]{c}%
\tbinom{-1}{0},\tbinom{0}{-1},\tbinom{1}{-1},\tbinom{1}{0},\tbinom{0}%
{1},\tbinom{-1}{1}\smallskip
\end{array}
$\\\hline
\end{tabular}
\ \
\]
\setlength\extrarowheight{-3pt}
Defining
\[
\mathcal{A}_{1}:=\left(
\begin{smallmatrix}
2 & 1\\
1 & 1
\end{smallmatrix}
\right)  ,\  \mathcal{A}_{2}:=\left(
\begin{smallmatrix}
0 & 1\\
1 & 1
\end{smallmatrix}
\right)  ,\  \mathcal{A}_{3}:=\left(
\begin{smallmatrix}
1 & 0\\
0 & 1
\end{smallmatrix}
\right)  ,\  \mathcal{A}_{4}:=\left(
\begin{smallmatrix}
0 & -1\\
1 & 0
\end{smallmatrix}
\right)
\]
we see that
\begin{equation}
\Phi_{\mathcal{A}_{i}}(\mathcal{Q}_{i+6})=\mathcal{Q}_{i+6}^{\circ
}\Longrightarrow \lbrack \mathcal{Q}_{i+6}]_{\mathbb{Z}^{2}}=[\mathcal{Q}%
_{i+6}^{\circ}]_{\mathbb{Z}^{2}},\text{ for }i\in \{1,2,3,4\}. \label{AUXEQ2}%
\end{equation}
\textit{ }The entries of the third columns of tables (\ref{tablesofRFP}),
combined with (\ref{AUXEQ1}) and (\ref{AUXEQ2}), give
\[
\sharp(\partial \mathcal{Q}_{i}\cap \mathbb{Z}^{2})+\sharp(\partial
\mathcal{Q}_{i}^{\circ}\cap \mathbb{Z}^{2})=12,\text{ for all }i\in
\{1,\ldots,16\},
\]
and therefore (\ref{12PTFORMULA}) is true.\hfill{}$\square$

\begin{note}\label{NOETHERHIST}
(i) Although the above proof of (\ref{12PTFORMULA}) is elementary, it is not
very enlightening because it does not explain where $12$ comes from. Moreover,
the \textit{earliest} known proof of Theorem \ref{CLASSIFRP} (due to Batyrev
\cite{Batyrev1} (reproduced in \cite[Part A, Theorem 6.1.6]{Batyrev3})) makes
use of formula (\ref{12PTFORMULA})! (This happens implicitly also in
\cite[Theorem 6.10, pp. 108-109]{Dais1}.)\smallskip \newline(ii) The first
purely combinatorial proof of Theorem \ref{CLASSIFRP} is due to Rabinowitz
\cite{Rabin} who classified lattice polygons with exactly one interior lattice
point. (In \cite[Proposition 3, p. 89]{Rabin}, the author failed to include
$[\mathcal{Q}_{13}]_{\mathbb{Z}^{2}}$ but this inaccuracy can be removed
easily by his own techniques.) For other proofs of Theorem \ref{CLASSIFRP}
(which do not use (\ref{12PTFORMULA})) the reader is referred (in
chronological order) to Koelman \cite[Theorem 4.2.4, p. 86]{Koelman}, Sato
\cite[Theorem 6.22, p. 401]{Sato}, Nill \cite[Proposition 3.4.1, pp.
55-57]{Nill}, and Kasprzyk \cite[Proposition 5.2.4, pp. 59-60]{Kasprzyk}.\smallskip \newline (iii) In fact, as it follows from the work of Batyrev
mentioned in (i), the most natural interpretation for the presence of the
number $12$ in (\ref{12PTFORMULA}) arises from the application of the
celebrated \textit{Noether's formula }for the Euler--Poincar\'{e}
characteristic of the structure sheaf of the minimally desingularized compact
toric surface which is associated with a reflexive polygon. (See also
\cite[Theorem 10.5.10, pp. 510-511]{CLS} and Note \ref{ALTPROOF12PTTHM}
below.) Max Noether \cite{Noether} discovered this remarkable formula in
$1870$'s in the framework of the theory of adjoints of algebraic surfaces. For
comments on its early history and for a modern direct proof see Gray
\cite[\S 2]{Gray}, Hulek \cite[\S 3]{Hulek} and Piene \cite{Piene}. Besides,
Noether's formula can be viewed, alternatively, as the Hirzebruch--Riemann--Roch
formula \cite[p. 154]{Hirzebruch2} in (complex) dimension $2.$ (The
coefficient of $t^{2}$ in the expansion of $\frac{t}{\exp(t)-1}$ as a Taylor
series equals $1/12$.)$\smallskip$ \newline(iv) Poonen \& Rodriguez-Villegas
\cite{P-RV} added two new proofs of Theorem \ref{12PTTHM}: (a) by stepping into
the space of reflexive polygons, and (b) by exploiting basic properties of the
universal cover of SL$_{2}(\mathbb{R})$ and of the corresponding modular
(cusp) forms of weight $12.$ (See also Castryck \cite[\S 2]{Castryck}.) Elementary geometric
proofs (using reduction to the parallelogram case and Scott's inequality
\cite{Scott}, respectively) are due to Cencelj, Repov\v{s} \& Skopenkov
\cite{CRS}, and Burns \& O'Keeffe \cite{BOK}. On the other hand, Hille \& Skarke have shown in
\cite[Theorem 1.2]{Hille-Skarke} that there is a one-to-one correspondence
between the set $\left \{  \mathcal{Q}_{1},\ldots,\mathcal{Q}_{16}\right \}  $
and certain relations of the generators $\left(
\begin{smallmatrix}
1 & 1\\
0 & 1
\end{smallmatrix}
\right)  $ and $\left(
\begin{smallmatrix}
1 & 0\\
-1 & 1
\end{smallmatrix}
\right)  $ of SL$_{2}(\mathbb{Z})$ of length $12.$ Higashitani \& Masuda \cite{Higashitani-Masuda} calculated the rotation number of \textit{unimodular sequences} (see also \textrm{\v{Z}ivaljevi\'{c}} \cite{Zivaljevic} for another approach), gave an alternative proof of the \textit{generalised} Twelve-Point Theorem (for \textit{legal loops}) of Poonen \& Rodriguez-Villegas \cite[\S 9.1]{P-RV}, and studied the Ehrhart polynomials of \textit{lattice multipolygons}. Finally, Batyrev \& Schaller \cite[Corollary 5.3]{Batyrev-Schaller} (and independently, Douai \cite[\S8]{Douai}) proved that the so-called \textit{stringy
Libgober--Wood identity} for reflexive polygons is equivalent to (\ref{12PTFORMULA}). 
\end{note}

\begin{remark}\label{NORMIERUNG} For $i\in \{1,...,16\}$ let us define the $\mathbb{Z}^{2}$-polygons
	$\overline{\mathcal{Q}}_{i}$ as follows:
	\setlength\extrarowheight{3pt}
{\scriptsize	\[
	\begin{tabular}
	[c]{|c||c|c|c|c|c|c|c|}\hline
	$i$ & vertices of\emph{ }$\overline{\mathcal{Q}}_{i}$ & $i$ & vertices
	of\emph{ }$\overline{\mathcal{Q}}_{i}$ & $i$ & vertices of\emph{ }%
	$\overline{\mathcal{Q}}_{i}$ & $i$ & vertices of\emph{ }$\overline
	{\mathcal{Q}}_{i}$\\ \hline \hline
	$1$ & $%
	\begin{array}
	[c]{c}%
	\tbinom{0}{1},\tbinom{-1}{-2},\tbinom{1}{1}\smallskip
	\end{array}
	$ & $5$ & $%
	\begin{array}
	[c]{c}%
	\tbinom{0}{1},\tbinom{-1}{0},\tbinom{-1}{-2}\smallskip,\tbinom{1}{1}\smallskip
	\end{array}
	$ & $9$ & $%
	\begin{array}
	[c]{c}%
	\tbinom{0}{1},\tbinom{-1}{0},\tbinom{0}{-1}\smallskip,\tbinom{1}{-1}%
	,\tbinom{1}{1}\smallskip
	\end{array}
	$ & $13$ & $%
	\begin{array}
	[c]{c}%
	\tbinom{0}{1},\tbinom{-2}{-1},\tbinom{1}{-1}\smallskip,\tbinom{1}{1}%
	\smallskip \! \!
	\end{array}
	$\\ \hline
	$2$ & $%
	\begin{array}
	[c]{c}%
	\tbinom{0}{1},\tbinom{-1}{-1},\tbinom{2}{1}\smallskip
	\end{array}
	$ & $6$ & $%
	\begin{array}
	[c]{c}%
	\! \tbinom{0}{1},\tbinom{-1}{0},\tbinom{-1}{-1}\smallskip,\tbinom{0}%
	{-1},\tbinom{1}{1}\! \! \smallskip
	\end{array}
	$ & $10$ & $%
	\begin{array}
	[c]{c}%
	\! \tbinom{0}{1},\tbinom{-1}{0},\tbinom{-1}{-1}\smallskip,\tbinom{0}%
	{-1},\tbinom{1}{0},\tbinom{1}{1}\! \! \! \smallskip
	\end{array}
	$ & $14$ & $%
	\begin{array}
	[c]{c}%
	\tbinom{0}{1},\tbinom{-2}{-1},\tbinom{0}{-1}\smallskip,\tbinom{2}{1}\smallskip
	\end{array}
	$\\ \hline
	$3$ & $%
	\begin{array}
	[c]{c}%
	\tbinom{0}{1},\tbinom{-1}{-1},\tbinom{0}{-1}\smallskip,\tbinom{1}{1}%
	\smallskip \! \!
	\end{array}
	$ & $7$ & $%
	\begin{array}
	[c]{c}%
	\tbinom{0}{1},\tbinom{-3}{-2},\tbinom{2}{1}\smallskip
	\end{array}
	$ & $11$ & $%
	\begin{array}
	[c]{c}%
	\tbinom{0}{1},\tbinom{-1}{0},\tbinom{-1}{-1}\smallskip,\tbinom{1}{-1}%
	,\tbinom{1}{1}\smallskip
	\end{array}
	$ & $15$ & $%
	\begin{array}
	[c]{c}%
	\tbinom{0}{1},\tbinom{-2}{-1},\tbinom{4}{1}\smallskip
	\end{array}
	$\\ \hline
	$4$ & $%
	\begin{array}
	[c]{c}%
	\tbinom{0}{1},\tbinom{-1}{0},\tbinom{0}{-1}\smallskip,\tbinom{1}{1}%
	\smallskip \! \!
	\end{array}
	$ & $8$ & $%
	\begin{array}
	[c]{c}%
	\tbinom{0}{1},\tbinom{-2}{-1},\tbinom{0}{-1}\smallskip,\tbinom{1}{1}\smallskip
	\end{array}
	$ & $12$ & $%
	\begin{array}
	[c]{c}%
	\tbinom{0}{1},\tbinom{-1}{0},\tbinom{1}{-2}\smallskip,\tbinom{1}{1}\smallskip
	\end{array}
	$ & $16$ & $%
	\begin{array}
	[c]{c}%
	\tbinom{0}{1},\tbinom{-3}{-2},\tbinom{3}{1}\smallskip
	\end{array}
	$\\ \hline
	\end{tabular}
	\]}
	\setlength\extrarowheight{-3pt}
\hspace{-0.3cm} It is straightforward to see that $[\overline{\mathcal{Q}}_{i}]_{\mathbb{Z}%
		^{2}}=[\mathcal{Q}_{i}]_{\mathbb{Z}^{2}}$ for all $i\in \{1,...,16\},$
	\[ 
	[\overline{\mathcal{Q}}_{17-j}]_{\mathbb{Z}^{2}}=[\overline{\mathcal{Q}}%
	_{j}^{\circ}]_{\mathbb{Z}^{2}}\ \ \text{for}\ \ j\in \{1,2,3,4,5,6\}, \ \text{and} \ \ [\overline
	{\mathcal{Q}}_{j+6}]_{\mathbb{Z}^{2}}=[\overline{\mathcal{Q}}_{j+6}^{\circ
	}]_{\mathbb{Z}^{2}}\ \ \text{for}\ \ j\in \{1,2,3,4\}.
\]  This \textit{particular choice}
	of representatives from each of the $16$ available equivalence classes is such that $\tbinom{0}{1}\in$ Vert$(\overline{\mathcal{Q}}_{i})$
	\textit{for all} $i\in \{1,...,16\},$ and will be convenient for the
	formulation of Theorem \ref{GLOBALTHM}.  
\end{remark}

\noindent{}$\bullet$ $\ell$-\textbf{Reflexive polygons}. Motivated by
condition \ref{CONDREFL} (iii) one generalises Definition \ref{DEFREFLPOL} as follows:

\begin{definition}
\label{DEFLREFLEXIVE}Let $Q\subset \mathbb{R}^{2}$ be an LDP-polygon of index
$\ell$ (w.r.t. $N$). $Q$ is called $\ell$-\textit{reflexive polygon} (and
$(Q,N)$ $\ell$-\textit{reflexive pair}) if $l_{F}=\ell$ \textit{for all}
$F\in$ Edg$(Q).$ (The terms \textit{reflexive} and \thinspace$1$%
-\textit{reflexive} polygon (or pair) coincide.)
\end{definition}

\begin{proposition}
\label{NOBOUNDARYPNTS}If $(Q,N)$ is an $\ell$-\textit{reflexive pair, then}%
\begin{equation}
\sharp \left(  \partial Q\cap N\right)  =\frac{2\, \text{\emph{area}}_{N}%
(Q)}{\ell}. \label{BOUNDARYAREA}%
\end{equation}

\end{proposition}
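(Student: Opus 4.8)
\noindent The plan is to recycle the area bookkeeping already carried out in the proof of Proposition~\ref{CONDREFL} (the implication (iv)$\Rightarrow$(ii)), turning the inequality used there into an equality by exploiting that now \emph{every} edge of $Q$ has local index exactly $\ell$.

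First I would fix an edge $F\in\text{Edg}(Q)$ with vertices $\mathbf{n},\mathbf{n}'\in N$ and form the triangle $T_{F}:=\text{conv}(\{\mathbf{0},\mathbf{n},\mathbf{n}'\})$. Joining $\mathbf{0}$ to each of the $\sharp(F\cap N)$ lattice points lying on $F$ subdivides $T_{F}$ into $\sharp(F\cap N)-1$ subtriangles, each having $\mathbf{0}$ as one vertex and a lattice segment of $F$ of $N$-length $1$ as opposite side. Since the affine hull of $F$ is $\{\mathbf{y}\in\mathbb{R}^{2}\mid\langle-\boldsymbol{\eta}_{F},\mathbf{y}\rangle=l_{F}\}$ with $l_{F}=\ell$ by $\ell$-reflexivity, each such subtriangle has normalised area $\tfrac{1}{2}\ell$, so $\text{area}_{N}(T_{F})=\tfrac{1}{2}\bigl(\sharp(F\cap N)-1\bigr)\ell$.

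Next, because $\mathbf{0}\in\text{int}(Q)$, the triangles $T_{F}$, $F\in\text{Edg}(Q)$, cover $Q$ and overlap only along segments issuing from $\mathbf{0}$, so their normalised areas sum to $\text{area}_{N}(Q)$. Summing the previous identity over all edges then gives $2\,\text{area}_{N}(Q)=\ell\sum_{F\in\text{Edg}(Q)}\bigl(\sharp(F\cap N)-1\bigr)$. Finally I would invoke the elementary counting identity $\sum_{F\in\text{Edg}(Q)}\bigl(\sharp(F\cap N)-1\bigr)=\sharp(\partial Q\cap N)$ --- valid because $\sum_{F}\sharp(F\cap N)$ counts every non-vertex boundary lattice point once and every vertex twice, while $\sharp\text{Edg}(Q)=\sharp\text{Vert}(Q)$, so subtracting $1$ per edge removes exactly the vertex overcount --- to conclude $2\,\text{area}_{N}(Q)=\ell\cdot\sharp(\partial Q\cap N)$, i.e. (\ref{BOUNDARYAREA}).

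The whole computation is routine; the single step deserving care is the claim that every elementary subtriangle has normalised area $\tfrac{1}{2}\ell$. I would settle it by applying a unimodular $N$-transformation carrying the primitive edge-direction of $F$ to $\tbinom{1}{0}$ and a suitable completion of it to a basis to $\tbinom{0}{1}$: then $F$ lies on the line $\{x_{2}=\ell\}$ (or $\{x_{2}=-\ell\}$), two consecutive lattice points of $F$ read $\tbinom{c}{\ell},\tbinom{c+1}{\ell}$, and the $2\times2$ determinant computation yields area $\ell/2$ at once. Alternatively one may quote the formula $l_{F}=\frac{1}{\sharp(F\cap N)-1}\cdot\frac{|\det(\mathbf{n},\mathbf{n}')|}{\det(N)}$ together with $\text{area}_{N}(T_{F})=\tfrac{1}{2}\frac{|\det(\mathbf{n},\mathbf{n}')|}{\det(N)}$, both recorded in the proof of Proposition~\ref{CONDREFL}.
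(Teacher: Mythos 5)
Your proposal is correct and follows essentially the same route as the paper: decompose $Q$ into the triangles $T_{F}$, use $l_{F}=\frac{1}{\sharp(F\cap N)-1}\cdot\frac{|\det(\mathbf{n},\mathbf{n}')|}{\det(N)}=\ell$ to get $\text{area}_{N}(T_{F})=\tfrac{1}{2}\ell(\sharp(F\cap N)-1)$, and sum over edges using $\sum_{F}(\sharp(F\cap N)-1)=\sharp(\partial Q\cap N)$. The extra justification you supply for the elementary subtriangle area is sound but the paper simply quotes the local-index formula.
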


\begin{proof}
If $F\in$ Edg$(Q)$ with $\mathbf{n},\mathbf{n}^{\prime}$ as vertices, and
$T_{F}:=$ cov$(\left \{  \mathbf{0},\mathbf{n},\mathbf{n}^{\prime}\right \}  ),$
then $l_{F}=\frac{1}{\sharp \left(  F\cap N\right)  -1}\cdot \frac{\left \vert
\det(\mathbf{n},\mathbf{n}^{\prime})\right \vert }{\det(N)}=\ell$ (by
definition), and%
\[
\text{area}_{N}(Q)=\sum_{F\in \text{ Edg}(Q)}\text{area}_{N}(T_{F})=\frac{\ell
}{2}\left(  \sum_{F\in \text{ Edg}(Q)}\left(  \sharp \left(  F\cap N\right)
-1\right)  \right)  =\frac{\ell}{2}\sharp \left(  \partial Q\cap N\right)
\]
because area$_{N}(T_{F})=\frac{1}{2}\ell \left(  \sharp \left(  F\cap N\right)
-1\right)  $ for all $F\in$ Edg$(Q).$
\end{proof}

\begin{corollary}
\label{ODDNESS1}If $(Q,N)$ is an $\ell$-\textit{reflexive pair and }%
$\sharp \left(  \partial Q\cap N\right)  $ is odd, then\textit{ }$\ell$ is odd.
\end{corollary}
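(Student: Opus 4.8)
The plan is to read off a parity relation by pairing the area formula (\ref{BOUNDARYAREA}) of Proposition \ref{NOBOUNDARYPNTS} with Pick's formula (\ref{PICK}). Rewrite (\ref{BOUNDARYAREA}) as $\ell\cdot \sharp(\partial Q\cap N)=2\,\text{area}_{N}(Q)$. On the other hand, Pick's formula (\ref{PICK}) gives $2\,\text{area}_{N}(Q)=2\sharp(Q\cap N)-\sharp(\partial Q\cap N)-2$; in particular $2\,\text{area}_{N}(Q)$ is an integer. Substituting and moving $\sharp(\partial Q\cap N)$ to the left yields
\[
(\ell+1)\,\sharp(\partial Q\cap N)=2\bigl(\sharp(Q\cap N)-1\bigr)\in 2\mathbb{Z}.
\]
Thus the product $(\ell+1)\,\sharp(\partial Q\cap N)$ is even; if $\sharp(\partial Q\cap N)$ is odd, then necessarily $\ell+1$ is even, i.e.\ $\ell$ is odd, which is the claim.

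I expect no real obstacle: the entire content is the observation that (\ref{BOUNDARYAREA}) and (\ref{PICK}) together forbid $\ell+1$ and $\sharp(\partial Q\cap N)$ from being simultaneously odd, which is precisely the contrapositive of the corollary. The only point deserving a word of care is the integrality of $2\,\text{area}_{N}(Q)$; this is immediate from (\ref{PICK}) (alternatively, one may pass to the standard model $Q^{\text{st}}$ so that $\det(N)=1$ and invoke the usual integrality of twice the Euclidean area of a $\mathbb{Z}^{2}$-polygon, using that $\sharp(\partial Q\cap N)=\sharp(\partial Q^{\text{st}}\cap \mathbb{Z}^{2})$ and $\text{area}_{N}(Q)=\text{area}_{\mathbb{Z}^{2}}(Q^{\text{st}})$).

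Should one prefer an argument that bypasses Pick's formula, I would instead work modulo $2$ with the standard model. If $\ell$ were even, then for each $F\in \text{Edg}(Q^{\text{st}})$ with vertices $\mathbf{n},\mathbf{n}^{\prime}$ the integer $|\det(\mathbf{n},\mathbf{n}^{\prime})|=(\sharp(F\cap \mathbb{Z}^{2})-1)\,\ell$ is even, hence $\mathbf{n}\equiv \mathbf{n}^{\prime}\ (\mathrm{mod}\ 2)$; running over the boundary cycle, all vertices of $Q^{\text{st}}$ are congruent mod $2$. Writing $\mathbf{n}^{\prime}-\mathbf{n}=(\sharp(F\cap \mathbb{Z}^{2})-1)\,\mathbf{e}_{F}$ with $\mathbf{e}_{F}$ primitive, the congruence $\mathbf{n}^{\prime}\equiv \mathbf{n}\ (\mathrm{mod}\ 2)$ together with the primitivity of $\mathbf{e}_{F}$ (which forces $\mathbf{e}_{F}\not\equiv \mathbf{0}\ (\mathrm{mod}\ 2)$) shows that $\sharp(F\cap \mathbb{Z}^{2})-1$ is even for every edge $F$; summing over $\text{Edg}(Q^{\text{st}})$ gives that $\sharp(\partial Q\cap N)=\sum_{F}(\sharp(F\cap \mathbb{Z}^{2})-1)$ is even. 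This is the contrapositive of the corollary.
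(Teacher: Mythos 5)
Your first argument is exactly the paper's proof: it combines Pick's formula (\ref{PICK}) with (\ref{BOUNDARYAREA}) to obtain $(\ell+1)\,\sharp(\partial Q\cap N)=2\bigl(\sharp(Q\cap N)-1\bigr)$, which is just the paper's identity $\sharp(Q\cap N)-1=\tfrac{\ell+1}{2}\sharp(\partial Q\cap N)$ cleared of denominators, and the parity conclusion is drawn in the same way. Your alternative mod~$2$ argument on the standard model is a correct bonus route (the step ``$\det(\mathbf{n},\mathbf{n}')$ even $\Rightarrow\mathbf{n}\equiv\mathbf{n}'\ (\mathrm{mod}\ 2)$'' tacitly uses that the vertices of an LDP-polygon are primitive, hence nonzero in $\mathbb{F}_2^2$, which is indeed guaranteed here), but the main proof coincides with the paper's.
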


\begin{proof}
By (\ref{PICK}) and (\ref{BOUNDARYAREA}) we have
\[
\sharp \left(  Q\cap N\right)  -1=\text{area}_{N}(Q)+\tfrac{1}{2}\sharp \left(
\partial Q\cap N\right)  =(\tfrac{\ell+1}{2})\sharp \left(  \partial Q\cap
N\right)  ,
\]
Hence, if $\sharp \left(  \partial Q\cap N\right)  $ is odd, then\textit{
}$\ell$ has to be odd.
\end{proof}

\noindent{}Next, we introduce the notation RP$(\ell;N):=\left \{  \left.
\left[  Q\right]  _{N}\in \text{LDP}(\ell;N)\right \vert Q\text{ is an }%
\ell \text{-reflexive polygon}\right \}  ,$ and for every integer $\nu \geq3$ we
set
\[
\text{RP}_{\nu}(\ell;N):=\left \{  \left.  \left[  Q\right]  _{N}\in
\text{RP}(\ell;N)\right \vert \sharp(\text{Vert}(Q))=\nu \right \}  .
\]
As will be seen in the sequel, there are no $\ell$-reflexive polygons
having more than $6$ vertices and there are no $\ell$-reflexive polygons with
$\ell$ even (see Corollaries \ref{Vert6} and \ref{ODDNESS2}). For the time
being, taking into account the precise polygon data
from \cite{Br-Kas} we deduce the following:

\begin{corollary}
The values of the enumerating \ functions $\ell \mapsto \sharp($\emph{RP}$_{\nu
}(\ell;N))$ and $\ell \mapsto \sharp($\emph{RP}$(\ell;N))$ for $\nu
\in \{3,4,5,6\}$ and for odd $\ell \leq25$ are those given in the table\emph{:}%
\setlength\extrarowheight{2pt}
\begin{equation*}
\begin{tabular}{|c||c|c|c|c|c|c|c|c|c|c|c|c|c|}
\hline
$\ell $ & $1$ & $3$ & $5$ & $7$ & $9$ & $11$ & $13$ & $15$ & $17$ & $19$ & $%
21$ & $23$ & $25$ \\ \hline\hline
$\sharp ($\emph{RP}$_{3}(\ell ;N))$ & $5$ & $0$ & $1$ & $6$ & $0$ & $14$ & $%
20$ & $0$ & $28$ & $34$ & $0$ & $42$ & $5$ \\ \hline
$\sharp ($\emph{RP}$_{4}(\ell ;N))$ & $7$ & $0$ & $7$ & $15$ & $0$ & $33$ & $%
43$ & $0$ & $61$ & $69$ & $0$ & $87$ & $27$ \\ \hline
$\sharp ($\emph{RP}$_{5}(\ell ;N))$ & $3$ & $0$ & $3$ & $6$ & $0$ & $12$ & $%
15$ & $0$ & $21$ & $24$ & $0$ & $30$ & $15$ \\ \hline
$\sharp ($\emph{RP}$_{6}(\ell ;N))$ & $1$ & $1$ & $1$ & $2$ & $1$ & $2$ & $3$
& $1$ & $3$ & $4$ & $2$ & $4$ & $3$ \\ \hline
$\sharp ($\emph{RP}$(\ell ;N))$ & $16$ & $1$ & $12$ & $29$ & $1$ & $61$ & $81
$ & $1$ & $113$ & $131$ & $2$ & $163$ & $50$ \\ \hline
\end{tabular}%
\ \ \ 
\end{equation*}
\setlength\extrarowheight{-2pt}
\end{corollary}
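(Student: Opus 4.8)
\medskip
\noindent\textbf{Plan of proof.} The plan is to reduce this to a finite, index-by-index enumeration whose conceptual content is supplied by three \emph{a priori} restrictions, after which only bookkeeping remains. First, by Corollary~\ref{ODDNESS2} (established below) there is no $\ell$-reflexive polygon with $\ell$ even, so only odd indices occur in the table. Second, by Corollary~\ref{Vert6} (also below) every $\ell$-reflexive polygon has at most $6$ vertices, so $\nu$ need only range over $\{3,4,5,6\}$. Third, $\mathrm{RP}(\ell;N)=\bigsqcup_{\nu=3}^{6}\mathrm{RP}_{\nu}(\ell;N)$, so the last row of the table is the column-wise sum of the four rows above it. It therefore suffices, for each odd $\ell\le 25$, to produce a complete list of $\backsim_{N}$-representatives of $\ell$-reflexive polygons and to bin it according to $\sharp(\mathrm{Vert}(Q))$.

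For $\ell\le 17$ I would simply sift the database \cite{Br-Kas}, which records, for every class in $\mathrm{LDP}(\ell;N)$ (cf. Theorem~\ref{KKNTHM}), the vertices of a standard model together with all local indices $l_{F}$; retaining exactly those classes with $l_{F}=\ell$ for every edge $F$ --- i.e., the $\ell$-reflexive ones in the sense of Definition~\ref{DEFLREFLEXIVE} --- and sorting them by number of vertices yields the first nine columns. For the four remaining odd indices $\ell\in\{19,21,23,25\}$, for which the global $\mathrm{LDP}$-classification is not tabulated, I would carry out the analogous enumeration directly, e.g.\ via the vertex-addition procedure of \cite{KKN} specialised to the $\ell$-reflexive situation: fix a ``special'' edge at integral distance $\ell$ from $\mathbf{0}$ and successively prepend primitive lattice points, always retaining convexity, $\mathbf{0}\in\mathrm{int}(Q)$, and $l_{F}=\ell$ on each edge. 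This search terminates because $\mathrm{RP}(\ell;N)\subseteq\mathrm{LDP}(\ell;N)$, which is finite by the finiteness theorem for LDP-polygons (Hensley \cite{Hensley}; Lagarias--Ziegler \cite{Lag-Zieg}); more explicitly, Proposition~\ref{NOBOUNDARYPNTS} gives $\mathrm{area}_{N}(Q)=\tfrac{\ell}{2}\,\sharp(\partial Q\cap N)$, while the generalised Twelve-Point Theorem (Theorem~\ref{G12PTTHM}) bounds $\sharp(\partial Q\cap N)$, hence $\mathrm{area}_{N}(Q)$, in terms of $\ell$ alone, which caps the number of candidate classes. Reducing each survivor to a canonical standard model to discard $\backsim_{N}$-duplicates, then binning by $\nu$, produces the last four columns.

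The step I expect to be the main obstacle is not any individual computation but the completeness-and-non-redundancy bookkeeping intrinsic to such a search: one must certify that the candidate space of each index has genuinely been exhausted and that no two recorded representatives are equivalent under $\mathrm{Aut}_{N}(\mathbb{R}^{2})$. For $\ell\le 17$ this is subsumed in the correctness of \cite{Br-Kas}; for $\ell\in\{19,21,23,25\}$ it amounts to making the area bound of the previous paragraph fully explicit (so that the algorithm provably halts) and to implementing a reliable affine normal form for the equivalence test. Natural consistency checks along the way are the row-sum identity $\sharp(\mathrm{RP}(\ell;N))=\sum_{\nu=3}^{6}\sharp(\mathrm{RP}_{\nu}(\ell;N))$, agreement with \cite{Br-Kas} for $\ell\le 17$, the parity constraint of Corollary~\ref{ODDNESS1} combined with Proposition~\ref{NOBOUNDARYPNTS}, and the duality among $\ell$-reflexive classes underlying the generalised Twelve-Point Theorem.
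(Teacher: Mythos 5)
Your plan is sound and, for $\ell\le 17$, coincides with what the paper actually does: the corollary is justified there by a single sentence referring to the polygon data in \cite{Br-Kas}, so the ``proof'' is a database lookup exactly as you describe. The divergence is in how the columns $\ell\in\{19,21,23,25\}$ are handled. You treat these as requiring a fresh run of the vertex-addition search of \cite{KKN}, bounded via Proposition~\ref{NOBOUNDARYPNTS} and Theorem~\ref{G12PTTHM}; the paper instead relies on the fact that the $\ell$-reflexive sub-classification (as opposed to the full LDP classification, which is only complete for $\ell\le 17$ per Theorem~\ref{KKNTHM}) is available in \cite{Br-Kas} for much larger indices, coming from the enumeration in \cite{KaNi}. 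Your route is workable but heavier than necessary: the clean effective enumeration for a fixed odd $\ell$ is the one furnished by Theorems~\ref{CHLATT} and~\ref{GLOBALTHM}, which show that every class in RP$(\ell;N)$ is represented by $\Phi_{\mathcal{A}_{\ell,k}}(\overline{\mathcal{Q}}_{j})$ for some $j\in\{1,\dots,16\}$ (with $\overline{\mathcal{Q}}_{j}$ as in Remark~\ref{NORMIERUNG}) and some $k$ coprime to $\ell$; this caps the candidate list at $16\,\phi(\ell)$ polygons, makes termination and completeness automatic, and reduces the problem to an $\ell$-reflexivity check plus de-duplication under $\backsim_{\mathbb{Z}^{2}}$ — precisely the point where you correctly identify the need for a reliable normal form. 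Your structural reductions (oddness of $\ell$ via Corollary~\ref{ODDNESS2}, $\nu\le 6$ via Corollary~\ref{Vert6}, and the row-sum identity) are all legitimate and non-circular, since those results are proved in the paper independently of this table; the only caveat is that, as with any such corollary, the proposal is a verification scheme rather than the verification itself, and the numerical content ultimately rests on executing the enumeration (or trusting \cite{Br-Kas}).
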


\noindent{}In addition, examples \ref{EXAMPLESLREF} show that $%
{\textstyle \bigcup \limits_{\ell \text{ odd}}}
$RP$_{\nu}(\ell;N)$ is an infinite set for all $\nu \in \{3,4,5,6\}.$

\begin{definition}
\label{DEFDUAL}Let $Q\subset \mathbb{R}^{2}$ be an $\ell$-reflexive polygon
(w.r.t. the lattice $N$) \textit{and }$M:=$ Hom$_{\mathbb{Z}}(N,\mathbb{Z}).$
The $M$-polygon%
\setlength\extrarowheight{2pt}
\[
\fbox{$%
\begin{array}
[c]{ccc}
& Q^{\ast}:=\ell Q^{\circ}\subset \mathbb{R}^{2} &
\end{array}
$}%
\]
\setlength\extrarowheight{-2pt}
will be called the \textit{dual} of $Q.$ (The polar and the dual of $Q$
coincide only if $\ell=1.$)
\end{definition}

\begin{proposition}
\label{QQSTAR}If $(Q,N)$ is an $\ell$-\textit{reflexive pair and }$M$ the dual
of $N,$ then $(Q^{\ast},M)$ is again an $\ell$-reflexive pair.
\end{proposition}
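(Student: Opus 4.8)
The plan is to verify the three defining ingredients of an $\ell$-reflexive pair for $(Q^{\ast},M)$ directly: first that $Q^{\ast}=\ell Q^{\circ}$ is an LDP-polygon w.r.t. $M$, i.e.\ lies in $\mathrm{POL}_{\mathbf 0}(M)$ and has $M$-primitive vertices; second that its index equals $\ell$; and third that \emph{every} edge of $Q^{\ast}$ has local index exactly $\ell$. The key structural input is the polarity dictionary (\ref{POLARITY1})--(\ref{POLARITY2}): since $\mathbf 0\in\mathrm{int}(Q)$, polarity matches $\mathrm{Vert}(Q)$ with $\mathrm{Edg}(Q^{\circ})$ and $\mathrm{Edg}(Q)$ with $\mathrm{Vert}(Q^{\circ})$, and hence (after scaling by $\ell$) with $\mathrm{Vert}(Q^{\ast})$. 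The crucial observation is the reciprocal relationship between the normal data of $Q$ and the vertex data of $Q^{\ast}$.

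First I would unwind the definitions. An edge $F\in\mathrm{Edg}(Q)$ lies in the affine hyperplane $\{\mathbf y:\langle-\boldsymbol\eta_F,\mathbf y\rangle=l_F=\ell\}$ with $\boldsymbol\eta_F\in M$ primitive and inward-pointing; equivalently $\{\mathbf y\in Q:\langle\boldsymbol\eta_F/\ell,\mathbf y\rangle=-1\}=F$, so by (\ref{POLARITY2}) the vertex of $Q^{\circ}$ dual to $F$ is exactly $\boldsymbol\eta_F/\ell$, and the corresponding vertex of $Q^{\ast}=\ell Q^{\circ}$ is $\boldsymbol\eta_F$ itself. Since $\boldsymbol\eta_F$ is primitive in $M$ by construction, every vertex of $Q^{\ast}$ is $M$-primitive, so $Q^{\ast}$ is an LDP-polygon w.r.t.\ $M$. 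Dually, a vertex $\mathbf v\in\mathrm{Vert}(Q)$ (which is $N$-primitive) corresponds by (\ref{POLARITY1}) to an edge $\widetilde F$ of $Q^{\circ}$ lying in $\{\mathbf x:\langle\mathbf x,\mathbf v\rangle=-1\}$; scaling, the corresponding edge of $Q^{\ast}$ lies in $\{\mathbf x:\langle\mathbf x,\mathbf v\rangle=-\ell\}=\{\mathbf x:\langle-\mathbf v,\mathbf x\rangle=\ell\}$. Because $\mathbf v\in N=\mathrm{Hom}_{\mathbb Z}(M,\mathbb Z)$ is primitive, $-\mathbf v$ is a primitive inward normal (w.r.t.\ $N$, the dual of $M$) to that edge of $Q^{\ast}$, and the associated local index is exactly $\ell$. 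Running this over all $\mathbf v\in\mathrm{Vert}(Q)$ shows $l_{\widetilde F}=\ell$ for every edge $\widetilde F\in\mathrm{Edg}(Q^{\ast})$, whence (by Definition \ref{DEFLOCIND}) the index of $Q^{\ast}$ is $\mathrm{lcm}\{\ell,\dots,\ell\}=\ell$, and (by Definition \ref{DEFLREFLEXIVE}) $(Q^{\ast},M)$ is $\ell$-reflexive.

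The only point needing a little care — and the main potential obstacle — is pinning down the constant $l_F=\ell$ (not merely $l_F\mid\ell$) on \emph{every} edge: this is where the hypothesis that $(Q,N)$ is $\ell$-reflexive, rather than merely an LDP-polygon of index $\ell$, is used, via the computation above that reads off the local index of the edge of $Q^{\ast}$ dual to $\mathbf v$ directly from the scaling factor $\ell$ in $\langle-\mathbf v,\mathbf x\rangle=\ell$, together with the primitivity of $\mathbf v$. One should also check that $\mathbf 0\in\mathrm{int}(Q^{\ast})$, but this is immediate: $\mathbf 0\in\mathrm{int}(Q)$ forces $\mathbf 0\in\mathrm{int}(Q^{\circ})$ by $(Q^{\circ})^{\circ}=Q$, and dilation by $\ell>0$ preserves this. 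Finally, the fact that the reference lattice for $Q^{\ast}$ is $M$ and its dual is $N$ (so that "$M$ the dual of $N$" is symmetric, $N=\mathrm{Hom}_{\mathbb Z}(M,\mathbb Z)$ being the standard identification) is exactly what makes the argument self-dual, and in particular recovers $(Q^{\ast})^{\ast}=\ell\cdot\ell^{-1}Q=Q$ — a sanity check worth stating.
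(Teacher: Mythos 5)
Your proof is correct and follows essentially the same route as the paper: identify $\mathrm{Vert}(Q^{\ast})$ with the primitive inward normals $\boldsymbol{\eta}_{F}$ of the edges of $Q$, note $\mathbf{0}\in\mathrm{int}(Q^{\circ})\subseteq\mathrm{int}(Q^{\ast})$, and read off that every edge of $Q^{\ast}$ lies at integral distance $\ell$ from the origin because the vertices of $Q$ are primitive in $N=\mathrm{Hom}_{\mathbb Z}(M,\mathbb Z)$. (Only a cosmetic sign slip: in the paper's convention the inward-pointing normal of the edge with affine hull $\{\mathbf{x}:\langle\mathbf{x},\mathbf{v}\rangle=-\ell\}$ is $\mathbf{v}$ itself, not $-\mathbf{v}$; this does not affect the argument.)
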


\begin{proof}
Since the affine hull of every $F\in$ Edg$(Q)$ is of the form $\left \{
\left.  \mathbf{y}\in \mathbb{R}^{2}\right \vert \left \langle \boldsymbol{\eta
}_{F},\mathbf{y}\right \rangle =-\ell \right \}  $, we have%
\[
Q=\bigcap \limits_{F\in \text{Edg}(Q)}\left \{  \left.  \mathbf{y}\in
\mathbb{R}^{2}\right \vert \left \langle \boldsymbol{\eta}_{F},\mathbf{y}%
\right \rangle \geq-\ell \right \}  \Rightarrow \text{Vert}(Q^{\ast})=\left \{
\boldsymbol{\eta}_{F}\left \vert F\in \text{Edg}(Q)\right.  \right \}  \subset
M,
\]
i.e., Vert$(Q^{\ast})$ consists of primitive lattice points, and
$\mathbf{0}\in$ int$(Q^{\circ})\subseteq$ int$(Q^{\ast}).$ Since the affine
hull of any edge of $Q^{\ast}$ is of the form $\left \{  \left.  \mathbf{x}%
\in \mathbb{R}^{2}\right \vert \left \langle \mathbf{x},-\mathbf{n}\right \rangle
=\ell \right \}  $ for some $\mathbf{n}\in$ Vert$(Q),$ the integral distance
between $\mathbf{0}$ and the edge equals $\ell.$ Hence, $Q^{\ast}$ is an
LDP-polygon of index $\ell$ w.r.t. $M.$
\end{proof}

\begin{note}
In analogy to (\ref{POLARITY1}), (\ref{POLARITY2}), we establish bijections:%
\begin{equation}
\text{Vert}(Q)\ni \mathbf{n}\longmapsto \left \{  \left.  \mathbf{x}\in Q^{\ast
}\right \vert \left \langle \mathbf{x},\mathbf{n}\right \rangle =-\ell \right \}
\in \text{Edg}(Q^{\ast}),\  \label{LDUALITY1}%
\end{equation}
and%
\begin{equation}
\text{Edg}(Q)\ni F\longmapsto \left \{  \left.  \mathbf{x}\in Q^{\ast
}\right \vert \left \langle \mathbf{x},\mathbf{n}\right \rangle =\left \langle
\mathbf{x},\mathbf{n}^{\prime}\right \rangle =-\ell \right \}  =\boldsymbol{\eta
}_{F}\in \text{Vert}(Q^{\ast}), \label{LDUALITY2}%
\end{equation}
where $\mathbf{n},\mathbf{n}^{\prime}$
denote the vertices of $F.$
\end{note}

\noindent{}Based on the involution%
\[
\left \{  \ell \text{-reflexive pairs}\right \}  \ni(Q,N)\longmapsto(Q^{\ast
},M)\in \left \{  \ell \text{-reflexive pairs}\right \},
\]
on (\ref{LDUALITY1}) and (\ref{LDUALITY2}), and on Theorems \ref{CHLATT} and \ref{GLOBALTHM}, we shall give a second proof of the following:

\begin{theorem}
[Twelve-Point Theorem for $\ell$-Reflexive Pairs, \cite{KaNi}]\label{G12PTTHM}%
If $(Q,N)$ is an $\ell$-reflexive pair, then%
\setlength\extrarowheight{2pt}
\begin{equation}
\fbox{$%
\begin{array}
[c]{ccc}
& \sharp \left(  \partial Q\cap N\right)  +\sharp(\partial Q^{\ast}\cap
M)=12. &
\end{array}
$} \label{G12PTFORMULA}%
\end{equation}
\setlength\extrarowheight{-2pt}
\end{theorem}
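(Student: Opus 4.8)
The aim is to deduce (\ref{G12PTFORMULA}) from the ordinary Twelve-Point Theorem \ref{12PTTHM}, so that the number $12$ again traces back to Noether's formula for the minimal desingularization of a toric surface (Notes \ref{NOETHERHIST}(iii) and \ref{ALTPROOF12PTTHM}). One could first try to run that argument verbatim: attach to $(Q,N)$ the toric log del Pezzo surface $X$ whose fan is the face (spanning) fan $\Sigma_{Q}$ of $Q$, pass to the minimal resolution $\widetilde{X}$, and use $12=12\,\chi(\mathcal{O}_{\widetilde{X}})=K_{\widetilde{X}}^{2}+e(\widetilde{X})$. When $\ell=1$ this works on the nose: writing $m_{F}:=\sharp(F\cap N)-1$, each edge cone of $\Sigma_{Q}$ is of type $A_{m_{F}-1}$ (its two primitive generators lie at integral height $1$), so the resolution is crepant and its rays run exactly through the $\sharp(\partial Q\cap N)$ boundary lattice points of $Q$; hence $e(\widetilde{X})=\sharp(\partial Q\cap N)$ and $K_{\widetilde{X}}^{2}=(-K_{X})^{2}=2\,\text{area}_{M}(Q^{\circ})=\sharp(\partial Q^{\circ}\cap M)$ by (\ref{1Equality}) applied to the reflexive polygon $Q^{\circ}$. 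For $\ell>1$, however, the edge cones of $\Sigma_{Q}$ are honest cyclic quotient singularities, the minimal resolution is no longer crepant, and it picks up rays through lattice points at heights $1,\dots,\ell-1$ which do \emph{not} lie on $\partial Q$; the excess in $e(\widetilde{X})$ must then be balanced against a deficit in $K_{\widetilde{X}}^{2}$, and controlling this balance is precisely the combinatorial content of Theorems \ref{CHLATT} and \ref{GLOBALTHM}. I would therefore invoke those theorems instead.

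Concretely, replacing $Q$ by its standard model and applying the change-of-lattice Theorem \ref{CHLATT}, I may assume $N=\mathbb{Z}^{2}$ and that $[Q]_{\mathbb{Z}^{2}}$ is placed in the normal form compatible with that of the reflexive representatives $\overline{\mathcal{Q}}_{1},\dots,\overline{\mathcal{Q}}_{16}$ of Remark \ref{NORMIERUNG} (in particular $\tbinom{0}{1}\in\text{Vert}(Q)$); this changes neither $\sharp(\partial Q\cap N)$, nor $\sharp(\partial Q^{\ast}\cap M)$, nor the $\ell$-reflexivity of the pair. By the global Theorem \ref{GLOBALTHM} the class $[Q]_{\mathbb{Z}^{2}}$ is then ``of type $i$'' for a unique $i\in\{1,\dots,16\}$ — i.e.\ $Q$ arises from $\overline{\mathcal{Q}}_{i}$ by the prescribed $\ell$-dependent construction — and that construction preserves the number of boundary lattice points, so $\sharp(\partial Q\cap N)=\sharp(\partial\overline{\mathcal{Q}}_{i}\cap\mathbb{Z}^{2})$. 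Next I apply the involution $(Q,N)\mapsto(Q^{\ast},M)$ of Proposition \ref{QQSTAR}: using the edge/vertex correspondences (\ref{LDUALITY1})--(\ref{LDUALITY2}) between $\text{Edg}(Q)$, $\text{Vert}(Q^{\ast})$, $\text{Edg}(Q^{\ast})$, $\text{Vert}(Q)$, one checks that passing to the dual pair intertwines the typing of Theorem \ref{GLOBALTHM} with the polar duality of reflexive polygons, so that $Q^{\ast}$ is of the type of $\overline{\mathcal{Q}}_{i}^{\circ}$; by Remark \ref{NORMIERUNG} that type is $17-i$ when $i\le 6$ or $i\ge 11$, and $i$ itself when $7\le i\le 10$. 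A second application of Theorem \ref{GLOBALTHM}, now to $(Q^{\ast},M)$, gives $\sharp(\partial Q^{\ast}\cap M)=\sharp(\partial\overline{\mathcal{Q}}_{i}^{\circ}\cap\mathbb{Z}^{2})$.

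Adding the two equalities and invoking the ordinary Twelve-Point Theorem \ref{12PTTHM} for the reflexive pair $(\overline{\mathcal{Q}}_{i},\mathbb{Z}^{2})$ yields
\[
\sharp(\partial Q\cap N)+\sharp(\partial Q^{\ast}\cap M)
=\sharp(\partial\overline{\mathcal{Q}}_{i}\cap\mathbb{Z}^{2})+\sharp(\partial\overline{\mathcal{Q}}_{i}^{\circ}\cap\mathbb{Z}^{2})=12,
\]
which is (\ref{G12PTFORMULA}); equivalently, one just reads off the sixteen possible values from the data already tabulated in (\ref{tablesofRFP}) and (\ref{AUXEQ1})--(\ref{AUXEQ2}). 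The step I expect to be the real obstacle is the duality-compatibility used in the second paragraph: one must be certain that ``the reflexive polygon underlying the dual $Q^{\ast}=\ell Q^{\circ}$'' is precisely the polar of ``the reflexive polygon underlying $Q$'', so that the two invocations of Theorem \ref{GLOBALTHM} line up through $\overline{\mathcal{Q}}_{i}\leftrightarrow\overline{\mathcal{Q}}_{i}^{\circ}$. This is where the symmetry of the change of lattice in Theorem \ref{CHLATT} (set up simultaneously for $N$ and for $M$) interacts with the correspondences (\ref{LDUALITY1})--(\ref{LDUALITY2}) to do the genuine work; once Theorems \ref{CHLATT} and \ref{GLOBALTHM} are available, everything else is bookkeeping.
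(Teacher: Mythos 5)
Your proof is correct, but it takes a genuinely shorter, purely combinatorial route where the paper deliberately takes the long geometric one. Both arguments rest on the same two pillars: the lattice change of Theorem \ref{CHLATT} (so that $(Q,\Lambda_{Q})$ and $(Q^{\ast},\Lambda_{Q^{\ast}})\cong(Q^{\circ},\text{Hom}_{\mathbb{Z}}(\Lambda_{Q},\mathbb{Z}))$ form a polar-dual $1$-reflexive couple) and the classical Theorem \ref{12PTTHM} applied to that couple. The divergence is in how one transfers $\sharp(\partial Q\cap\Lambda_{Q})$ to $\sharp(\partial Q\cap N)$ and $\sharp(\partial Q^{\circ}\cap\text{Hom}_{\mathbb{Z}}(\Lambda_{Q},\mathbb{Z}))$ to $\sharp(\partial Q^{\ast}\cap M)$: you do it directly — and here a small correction is in order: what makes $\Phi_{\mathcal{A}_{\ell,k}}$ (a determinant-$\ell$ map, which certainly does not preserve lattice-point counts in general) ``preserve the number of boundary lattice points'' is not the construction of Theorem \ref{GLOBALTHM} but the fact from Theorem \ref{CHLATT} that $\Lambda_{Q}$ is generated by $\partial Q\cap N$, whence $\partial Q\cap\Lambda_{Q}=\partial Q\cap N$ as sets, and likewise on the dual side. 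The paper instead routes the first transfer through the Euler number of the minimal desingularization over $\Lambda_{Q}$ (computing $e(X(\Lambda_{Q},\widehat{\Delta}_{Q}))=\tfrac{2\,\text{area}_{N}(Q)}{\ell}=\sharp(\partial Q\cap N)$ via (\ref{QIFORMULA}) and (\ref{BOUNDARYAREA})), and the second through crepancy of $\widehat{f}$, the degree-$\ell$ canonical cyclic cover of Proposition \ref{KAQUADRAT} ($K^{2}_{X(\Lambda_{Q},\Delta_{Q})}=\ell K^{2}_{X(N,\Delta_{Q})}$), and Proposition \ref{K2PROP} ($\ell K^{2}_{X(N,\Delta_{Q})}=\sharp(\partial Q^{\ast}\cap M)$), so that Noether's formula remains visibly the source of the $12$. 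Your version is essentially the Kasprzyk--Nill lattice-change proof with an added (and strictly unnecessary) detour through the sixteen representatives $\overline{\mathcal{Q}}_{1},\ldots,\overline{\mathcal{Q}}_{16}$ and their polar pairing — once $(Q,\Lambda_{Q})$ is known to be $1$-reflexive with polar $(Q^{\ast},\Lambda_{Q^{\ast}})$, Theorem \ref{12PTTHM} applies without any classification. What your route buys is brevity; what the paper's route buys is exactly the algebro-geometric explanation of the constant $12$ that \cite{KaNi} asked for, together with the by-products (Propositions \ref{KAQUADRAT}, \ref{K2PROP}, the characteristic differences of \S\ref{CHARDIFFSEC}) that a purely combinatorial reduction never produces.
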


\begin{examples}
\label{EXAMPLESLREF}Let $\ell$ be a positive odd integer. We define $\ell
$-reflexive polygons w.r.t. the standard lattice $\mathbb{Z}^{2}$ as
follows:\smallskip \  \newline(i) If $3\nmid \ell$ and $5\nmid \ell,$ then
\begin{equation}
Q:=\text{conv}\left(  \left \{  \tbinom{5}{-2\ell},\tbinom{-1}{\ell}%
,\tbinom{-4}{\ell}\right \}  \right)  \label{EXAMPLE1}%
\end{equation}
is an $\ell$-reflexive triangle having%
\begin{equation}
Q^{\ast}:=\ell Q^{\circ}=\text{conv}\left(  \left \{  \tbinom{-\ell}%
{-2},\tbinom{0}{-1},\tbinom{\ell}{3}\right \}  \right)  \label{EXSTAR1}%
\end{equation}
as its dual. Obviously,
\[
\sharp \left(  \partial Q\cap \mathbb{Z}^{2}\right)  =\text{gcd}(6,3\ell
)+\text{gcd}(3,0)+\text{gcd}(9,3\ell)=3\cdot3=9,\  \  \sharp \left(  \partial
Q^{\ast}\cap \mathbb{Z}^{2}\right)  =3.
\]
(ii) If $3\nmid \ell,$ then%
\begin{equation}
Q:=\text{conv}\left(  \left \{  \tbinom{3}{-\ell},\tbinom{-1}{\ell},\tbinom
{-3}{\ell},\tbinom{1}{-\ell}\right \}  \right)  \label{EXAMPLE2}%
\end{equation}
is an $\ell$-reflexive quadrilateral with
\begin{equation}
Q^{\ast}=\text{conv}\left(  \left \{  \tbinom{-\ell}{-2},\tbinom{0}{-1}%
,\tbinom{\ell}{2},\tbinom{0}{1}\right \}  \right)  \label{EXSTAR2}%
\end{equation}
and
\[
\sharp \left(  \partial Q\cap \mathbb{Z}^{2}\right)  =8,\  \  \sharp \left(
\partial Q^{\ast}\cap \mathbb{Z}^{2}\right)  =4.
\]
(iii) If $3\nmid \ell,$ then
\begin{equation}
Q:=\text{conv}\left(  \left \{  \tbinom{3}{-2\ell},\tbinom{1}{0},\tbinom
{-1}{\ell},\tbinom{-2}{\ell},\tbinom{-1}{0}\right \}  \right)  \label{EXAMPLE3}%
\end{equation}
is an $\ell$-reflexive pentagon with%
\begin{equation}
Q^{\ast}=\text{conv}\left(  \left \{  \tbinom{-\ell}{-1},\tbinom{-\ell}%
{-2},\tbinom{0}{-1},\tbinom{\ell}{1},\tbinom{\ell}{2}\right \}  \right)
\label{EXSTAR3}%
\end{equation}
and%
\[
\sharp \left(  \partial Q\cap \mathbb{Z}^{2}\right)  =7,\  \  \sharp \left(
\partial Q^{\ast}\cap \mathbb{Z}^{2}\right)  =5.
\]
(iv) The hexagon
\begin{equation}
Q:=\text{conv}\left(  \left \{  \tbinom{1}{0},\tbinom{-1}{\ell},\tbinom
{-2}{\ell},\tbinom{-1}{0},\tbinom{1}{-\ell},\tbinom{2}{-\ell}\right \}
\right)  \label{EXAMPLE4}%
\end{equation}
is $\ell$-reflexive having%
\begin{equation}
Q^{\ast}=\text{conv}\left(  \left \{  \tbinom{-\ell}{-2},\tbinom{0}{-1}%
,\tbinom{\ell}{1},\tbinom{\ell}{2},\tbinom{0}{1},\tbinom{-\ell}{-1}\right \}
\right)  \label{EXSTAR4}%
\end{equation}
as its dual, with $\left[  Q\right]  _{\mathbb{Z}^{2}}=\left[  Q^{\ast
}\right]  _{\mathbb{Z}^{2}}$ (because anticlockwise rotation through $\pi/2$
maps $Q$ onto $Q^{\ast}$), and%
\[
\sharp \left(  \partial Q\cap \mathbb{Z}^{2}\right)  =\sharp \left(  \partial
Q^{\ast}\cap \mathbb{Z}^{2}\right)  =6.
\]

\end{examples}

\noindent For $\ell = 3$ this is illustrated in Figure \ref{FigHex}. (Here, the $\ell = 3$ case gives an interesting example, because the associated toric log del Pezzo surface is the only log del Pezzo surface among those with Fano index $1$, anticanonical degree $\ge 2$ and singularities of type $(2,3)$ (in our notation), the regular locus of which has \textit{non-trivial} fundamental group. See Corti $\&$ Heuberger \cite[Proposition 1.8 (b), pp. 83-84]{Cor-He}.)

\begin{figure}[h]
	\includegraphics[height=7cm, width=7.5cm]{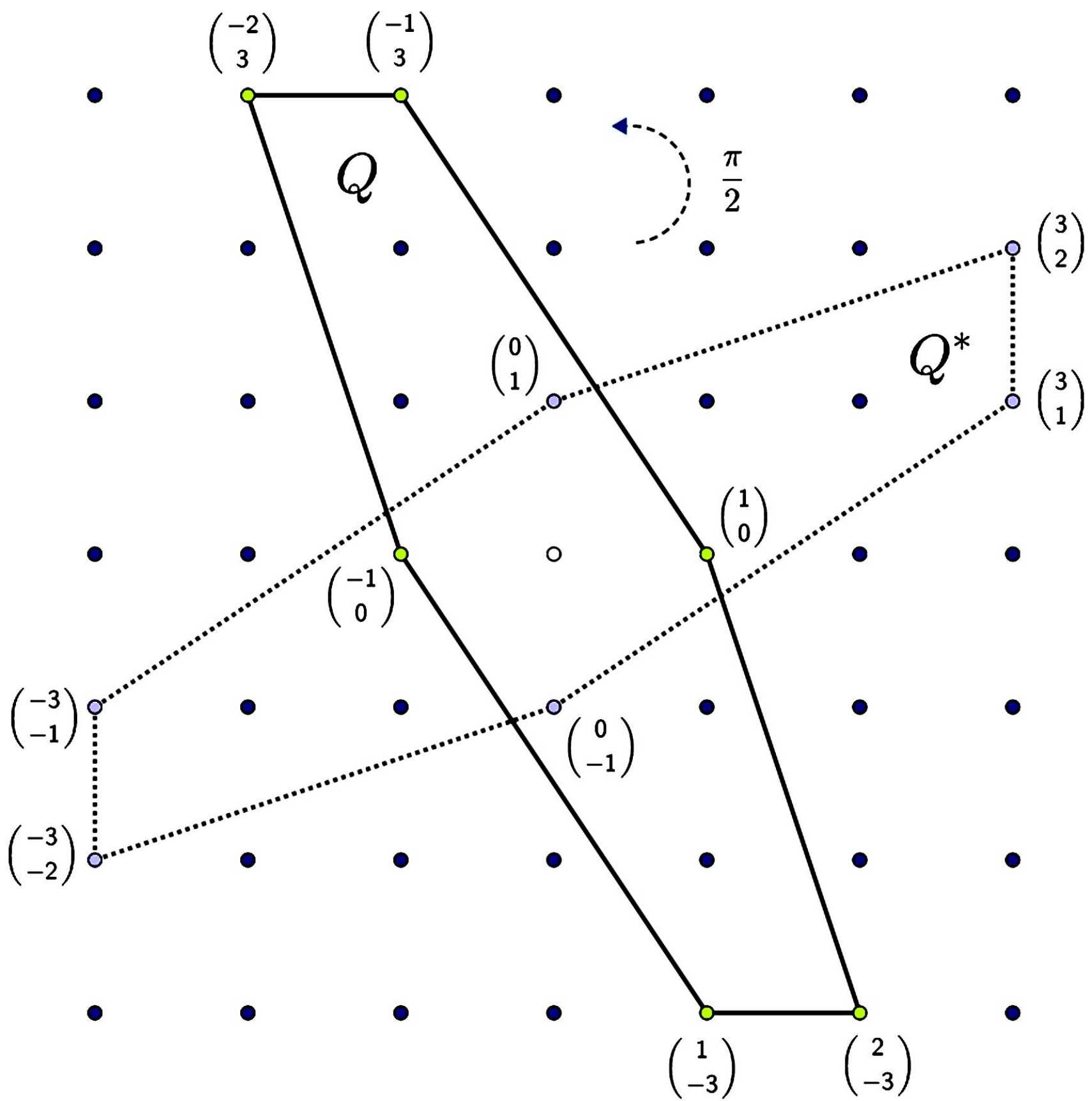}
	\caption{}\label{FigHex}%
\end{figure}

\noindent Since $\ell$ does not admit upper bound, the \textquotedblleft
exhaustion method\textquotedblright \ is apparently not the right method to
verify formula (\ref{G12PTFORMULA}). The first proof of Theorem \ref{G12PTTHM}
given by Kasprzyk and Nill in \cite{KaNi} is purely combinatorial, clear and short, and makes use of the so-called $\ell$-\textit{reflexive loops}. Nevertheless, it offers no essential information about the connection with the \textquotedblleft classical\textquotedblright approach mentioned in \ref{NOETHERHIST} (iii). In \cite[\S1.6]{KaNi} the question was raised, whether there is also another
\textit{direct} argument arising from algebraic geometry in the case of $\ell$-reflexive polygons. Here, maintaining the technique of \textit{lattice change} from \cite[\S2.2]{KaNi} in our toolbox, we shall provide such an argument and a second proof of Theorem \ref{G12PTTHM}: Its disadvantage lies in that it is by no means short (as one has to translate everything into the language of toric varieties, and this requires several steps). On the other hand, among its main advantages are included: (a) Noether's formula remains \textit{again} at least one assured reason for the appearance of $12$ (in combination with other useful formulae in the $\ell>1$ case), and (b) several other results are obtained by transferring the \textit{duality concept} from the $\ell$-reflexive polygons to the corresponding log del Pezzo surfaces.   

More precisely, the paper is organised as follows: In Section \ref{LCONES} we focus on the two non-negative, relatively prime integers $p=p_{\sigma}$ and $q=q_{\sigma}$ parametrising the $N$-cones $\sigma$ and characterising the two-dimensional toric singularities.  Moreover, we describe briefly the
minimal desingularization procedure by means of the negative-regular continued
fraction expansion of $\frac{q}{q-p}$ and by determining the exceptional prime
divisors after the Hilbert basis computation of the corresponding cone. In
section \ref{CTS} we recall the interrelation between  lattice polygons and
compact toric surfaces with a fixed ample divisor, and explain how one
computes the area and the number of lattice points lying on the boundary of a
lattice polygon via intersection numbers. (See Theorems \ref{AmplenessCORR} and \ref{SIGMAPMINDES}.) In
\S\ref{GRAPHS}-\S\ref{TORICLDPSURF} we indicate the manner in which we classify (up to isomorphism)
compact toric surfaces via the \textsc{wve}$^{2}$\textsc{c}-\textit{graphs} and, in particular, toric log del
Pezzo surfaces via LDP-\textit{polygons}.  Giving priority to those log del
Pezzo surfaces which are associated with $\ell$-\textit{reflexive polygons} we
present in \S\ref{COVTR} the geometric meaning of the \textit{lattice change} from \cite[\S2.2]{KaNi}
(which, in a sense, seems to be the standard method of reducing $\ell
$-reflexivity to $1$-reflexivity): One may patch together canonical cyclic
covers over the singularities in order to construct a finite holomorphic map
of degree $\ell$ and to represent the surfaces under consideration as
\textit{global quotients} of \textit{Gorenstein} del Pezzo surfaces by
finite cyclic groups of order $\ell.$ Results of this geometric interpretation
(e.g., Proposition \ref{KAQUADRAT}, concerning the relation between the self-intersection numbers of
the canonical divisors), combined with Noether's formula and other information derived from the desingularization, give rise to a new
proof of Theorem \ref{G12PTTHM} in \S\ref{FIRSTPROOF} and to various consequences of it
(upper bound for $\sharp($Vert$(Q)),$ a proof of \textquotedblleft
oddness\textquotedblright \ of $\ell,$ a new approach of Suyama's formula,
number-theoretic identities involving types of singularities, combinatorial
triples, Dedekind sums etc.). In section \ref{CHARDIFFSEC} we discuss
certain new phenomena which occur in the $\ell>1$ case, and give typical
examples. For instance, the \textit{characteristic differences}
{\small
\[%
\begin{array}
[c]{cc}
& \sharp \left(  \partial Q^{\ast}\cap M\right)  -K_{X(N,\widetilde{\Delta}%
	_{Q})}^{2}=e(X(N,\widetilde{\Delta}_{Q}))-\sharp \left(  \partial Q\cap
N\right) \bigskip  \\
\text{and} & \sharp \left(  \partial Q\cap N\right)  -K_{X(M,\widetilde{\Delta
	}_{Q^{\ast}})}^{2}=e(X(M,\widetilde{\Delta}_{Q^{\ast}}))-\sharp \left(
\partial Q^{\ast}\cap M\right)
\end{array}
\]}
\hspace{-0.2cm}no longer vanish (as in the $\ell=1$ case, where each \thinspace
$1$\thinspace-reflexive polygon has only the origin in its interior), but
are equal to the number of lattice points lying on the boundary of $\mathbf{I}(Q^{\ast})$ and $\mathbf{I}(Q),$ i.e., of the polygons
defined as convex hulls of the (at least $4$, non-collinear) interior lattice
points of $Q^{\ast}$ and $Q,$ respectively. Finally, in \S\ref{BATYRMS} we verify (in the lowest dimension) the existence of a large number of families of \textit{combinatorial mirror pairs} (of certain smooth curves of \textit{high} genus, owing to this new wider framework of duality) and in \S\ref{FINALE} we state some open questions. 

We use tools from discrete and classical toric geometry (adopting the standard terminology
from \cite{CLS}, \cite{Ewald}, \cite{Fulton}, and \cite{Oda}), and some basic facts and formulae from
intersection theory (see \cite[Ch. I]{Beauville},  \cite[\S4]{Dais1}, \cite[\S 2.2-2.4, \S 7.1, and \S 15.2]{FultonITH}, and \cite[\S II(b)]{Mumford}), working over $\mathbb{C}$ and within the analytic category (with complex (analytic) spaces as objects, holomorphic maps as morphisms and biholomorphic maps as isomorphisms).

\section{Two-dimensional lattice cones and toric surfaces\label{LCONES}}

{}\noindent{}$\bullet$ $N$-\textbf{cones}. A $2$-dimensional strongly convex
polyhedral cone in $\mathbb{R}^{2}$ (with the origin $\mathbf{0}\in
\mathbb{R}^{2}$ as its apex and $\mathbf{x}_{1},\mathbf{x}_{2}\in
\mathbb{R}^{2}\mathbb{r}\{ \mathbf{0}\}$ as generators) is a subset $\sigma$ of
$\mathbb{R}^{2}$ of the form
\[
\sigma=\mathbb{R}_{\geq0}\mathbf{x}_{1}+\mathbb{R}_{\geq0}\mathbf{x}%
_{2}:=\left \{  \lambda_{1}\left.  \mathbf{x}_{1}+\lambda_{1}\mathbf{x}%
_{2}\right \vert \lambda_{1},\lambda_{2}\in \mathbb{R}_{\geq0}\right \}  ,
\]
where $\mathbf{x}_{1},\mathbf{x}_{2}$ are $\mathbb{R}$-linearly independent,
and $\sigma \cap \left(  -\sigma \right)  =\{ \mathbf{0}\}.$

\begin{definition}
Let $N$ be a lattice in $\mathbb{R}^{2}$ (as defined in \ref{DEFLATTICE}). A
$2$-dimensional strongly convex polyhedral cone $\sigma=\mathbb{R}_{\geq
0}\mathbf{n}_{1}+\mathbb{R}_{\geq0}\mathbf{n}_{2}\subset \mathbb{R}^{2},$
generated by $\mathbf{n}_{1},\mathbf{n}_{2}\in N,$ will be referred to as
$N$-\textit{cone} having $\mathbf{0}\in \mathbb{R}^{2}$ as its apex
($0$-dimensional face) and $\mathbb{R}_{\geq0}\mathbf{n}_{1}:=\left \{
\lambda \left.  \mathbf{n}_{1}\right \vert \lambda \in \mathbb{R}_{\geq0}\right \}
$ and $\mathbb{R}_{\geq0}\mathbf{n}_{2}:=\left \{  \lambda \left.
\mathbf{n}_{2}\right \vert \lambda \in \mathbb{R}_{\geq0}\right \}  $ as its
\textit{rays} ($1$-dimensional faces). If for $j\in \{1,2\},$ $\mathbf{\breve
{n}}_{j}$ is the unique primitive point (w.r.t. $N$) belonging to the ray
$\mathbb{R}_{\geq0}\mathbf{n}_{j},$then we shall say that $\mathbf{\breve{n}%
}_{j}$ is the \textit{minimal generator} of $\mathbb{R}_{\geq0}\mathbf{n}_{j}$
and that $\mathbf{\breve{n}}_{1},\mathbf{\breve{n}}_{2}$ are the
\textit{minimal generators} of $\sigma.$ (Since $\sigma=\mathbb{R}_{\geq
0}\mathbf{\breve{n}}_{1}+\mathbb{R}_{\geq0}\mathbf{\breve{n}}_{2},$ one can
always replace arbitrary generators of $\sigma$ by the minimal ones). On the
set of $N$-cones we define (as we did on POL$_{\mathbf{0}}(N)$ in
\S \ref{INTRO}) the equivalence relation:%
\[
\sigma_{1}\backsim_{N}\sigma_{2}\underset{\text{def}}{\Longleftrightarrow
}\exists \Psi \in \text{Aut}_{N}(\mathbb{R}^{2}):\Psi \left(  \sigma_{1}\right)
=\sigma_{2}.
\]
If $\sigma_{1}\backsim_{N}\sigma_{2},$ we say that $\sigma_{1}$ and
$\sigma_{2}$ are \textit{equivalent up to umimodular transformation}. If
$\sigma$ is an $N$-cone, we denote by $\left[  \sigma \right]  _{N}:=\left \{
\left.  N\text{-cones }\tau \right \vert \tau \backsim_{N}\sigma \right \}  $ its
equivalence class.
\end{definition}

\begin{definition}
If $\sigma$ is an $N$-cone, then for a fixed basis matrix $\mathcal{B}\in$
GL$_{2}(\mathbb{R})$ of the lattice $N$ we have $N=\Phi_{\mathcal{B}%
}(\mathbb{Z}^{2})$ with $\Phi_{\mathcal{B}}\in$ Aut$(\mathbb{R}^{2}).$ Thus,
we may define the \textit{standard model} of $\sigma$ w.r.t. $\mathcal{B},$
i.e., the $\mathbb{Z}^{2}$-cone $\sigma^{\text{st}}:=\Phi_{\mathcal{B}^{-1}%
}(\sigma).$ By Proposition \ref{BASECHANGE}, $[\sigma^{\text{st}}%
]_{\mathbb{Z}^{2}}$ does not depend on the particular choice of $\mathcal{B}.$
\end{definition}

\noindent{}If the induced bijection $\left[  \sigma \right]
_{N}\longmapsto \lbrack \sigma^{\text{st}}]_{\mathbb{Z}^{2}}$ is taken into account, it is in many
cases convenient to work with the equivalence class of $\sigma^{\text{st}}$
instead of that of $\sigma$ (and with the standard lattice $\mathbb{Z}^{2}$
instead of $N$).

\begin{definition}
Let $\sigma=\mathbb{R}_{\geq0}\mathbf{n}_{1}+\mathbb{R}_{\geq0}\mathbf{n}_{2}$
be an $N$-cone with $\mathbf{n}_{1},\mathbf{n}_{2}$ as minimal generators. The
\textit{multiplicity} of $\sigma$ is the positive integer%
\begin{equation}
\text{mult}_{N}(\sigma):=\frac{\left \vert \det(\mathbf{n}_{1},\mathbf{n}%
_{2})\right \vert }{\det(N)}=\frac{\det(N^{\prime})}{\det(N)}=\sharp(\Pi \cap
N), \label{MULTDEF}%
\end{equation}
where $\Pi:=\left \{  \xi_{1}\mathbf{n}_{1}+\xi_{2}\mathbf{n}_{2}\left \vert
\xi_{1},\xi_{2}\in \left[  0,1\right)  \right.  \right \}  $ and $N^{\prime}$
the sublattice of $N$ having $\{ \mathbf{n}_{1},\mathbf{n}_{2}\}$ as basis (see
Proposition \ref{parallepiped}).
\end{definition}

\begin{proposition}
\noindent{}\label{PQDESCR1}Let $\sigma=\mathbb{R}_{\geq0}\mathbf{n}%
_{1}+\mathbb{R}_{\geq0}\mathbf{n}_{2}$ be an $N$-cone with $\mathbf{n}%
_{1}=\tbinom{n_{11}}{n_{21}},\mathbf{n}_{2}=\tbinom{n_{12}}{n_{22}}\in N$ as
minimal generators, and $\mathcal{B}=\left(
\begin{smallmatrix}
\mathfrak{b}_{11} & \mathfrak{b}_{12}\\
\mathfrak{b}_{21} & \mathfrak{b}_{22}%
\end{smallmatrix}
\right)  $ a basis matrix of $N.$ Denote by
\[
\overline{\mathbf{n}}_{1}=\tbinom{\overline{n}_{11}}{\overline{n}_{21}}%
:=\Phi_{\mathcal{B}^{-1}}\left(  \mathbf{n}_{1}\right)  \in \mathbb{Z}%
^{2}\  \  \text{and}\  \  \overline{\mathbf{n}}_{2}=\tbinom{\overline{n}_{12}%
}{\overline{n}_{22}}:=\Phi_{\mathcal{B}^{-1}}\left(  \mathbf{n}_{2}\right)
\in \mathbb{Z}^{2}%
\]
the minimal generators of its standard model $\sigma^{\text{\emph{st}}}%
:=\Phi_{\mathcal{B}^{-1}}(\sigma)$ w.r.t\emph{.} $\mathcal{B}$, and consider
$\kappa,\lambda \in \mathbb{Z},$ such that
\begin{equation}
\kappa \overline{n}_{11}-\lambda \overline{n}_{21}=1. \label{KAPPALAMDA1}%
\end{equation}
$\emph{(i)}$ If $q:=\left \vert \det(\overline{\mathbf{n}}_{1},\overline
{\mathbf{n}}_{2})\right \vert $ and if $p$ denotes the unique integer with%
\begin{equation}
0\leq p<q\  \  \  \text{and}\  \  \  \kappa \overline{n}_{12}-\lambda \overline
{n}_{22}\equiv p\left(  \operatorname{mod}q\right)  , \label{KAPPALAMDA2}%
\end{equation}
then \emph{gcd}$\left(  p,q\right)  =1,$ and there exists a primitive point
$\overline{\mathbf{n}}_{1}^{\prime}\in \mathbb{Z}^{2},$ such that
$\overline{\mathbf{n}}_{2}=p\overline{\mathbf{n}}_{1}+q\overline{\mathbf{n}%
}_{1}^{\prime},$ where $\left \{  \overline{\mathbf{n}}_{1},\overline
{\mathbf{n}}_{1}^{\prime}\right \}  $ is a basis of $\mathbb{Z}^{2}.\smallskip$
\newline \emph{(ii)} If $\varepsilon$ is the sign of $\det(\overline
{\mathbf{n}}_{1},\overline{\mathbf{n}}_{2})$ and $\mathcal{M}_{\sigma
}:=\left(
\begin{smallmatrix}
\frac{\varepsilon}{q}\left(  \overline{n}_{22}-\overline{n}_{21}p\right)  &
\frac{\varepsilon}{q}\left(  \overline{n}_{11}p-\overline{n}_{12}\right) \\
-\varepsilon \overline{n}_{21} & \varepsilon \overline{n}_{11}%
\end{smallmatrix}
\right)  \in$ \emph{GL}$_{2}(\mathbb{Z}),$ then%
\[
\Phi_{\mathcal{M}_{\sigma}}(\sigma^{\text{\emph{st}}})=\mathbb{R}_{\geq
0}\tbinom{1}{0}+\mathbb{R}_{\geq0}\tbinom{p}{q},\  \  \text{i.e.,}\  \  \left[
\sigma^{\text{\emph{st}}}\right]  _{\mathbb{Z}^{2}}=\left[  \mathbb{R}_{\geq
0}\tbinom{1}{0}+\mathbb{R}_{\geq0}\tbinom{p}{q}\right]  _{\mathbb{Z}^{2}}.
\]
\emph{(iii) }If $\mathbf{n}_{1}^{\prime}:=\Phi_{\mathcal{B}^{-1}}%
(\overline{\mathbf{n}}_{1}^{\prime}),$ then $\left \{  \mathbf{n}%
_{1},\mathbf{n}_{1}^{\prime}\right \}  $ is a basis of $N,$ and $\mathbf{n}%
_{2}=p\mathbf{n}_{1}+q\mathbf{n}_{1}^{\prime}$ with $q=$ \emph{mult}%
$_{N}(\sigma).$ The above integers $p=:p_{\sigma}$ and $q=:q_{\sigma}$
associated with $\sigma$ do not depend on the particular choice of
$\mathcal{B}.$
\end{proposition}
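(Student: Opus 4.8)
The plan is to produce the unimodular transformation $\Phi_{\mathcal{M}_\sigma}$ of part~(ii) explicitly, and then to read off parts~(i) and~(iii), and the independence statement, as formal consequences. Throughout I use that $\Phi_{\mathcal{B}^{-1}}\colon N\to\mathbb{Z}^2$ is a lattice isomorphism carrying $\mathbf{n}_1,\mathbf{n}_2$ to $\overline{\mathbf{n}}_1,\overline{\mathbf{n}}_2$; in particular $\overline{\mathbf{n}}_1,\overline{\mathbf{n}}_2$ are primitive points of $\mathbb{Z}^2$ (being images of the minimal generators of $\sigma$), so $\gcd(\overline{n}_{11},\overline{n}_{21})=1$ and a pair $\kappa,\lambda$ as in~(\ref{KAPPALAMDA1}) exists, while $\det(\overline{\mathbf{n}}_1,\overline{\mathbf{n}}_2)=\varepsilon q$ with $q>0$ and $\varepsilon\in\{\pm1\}$.

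The only point that requires a genuine computation is the \emph{integrality} of $\mathcal{M}_\sigma$: its second row is visibly integral, and for the first row it suffices to check $q\mid(\overline{n}_{11}p-\overline{n}_{12})$ and $q\mid(\overline{n}_{22}-\overline{n}_{21}p)$. Both follow by combining the three relations at hand: $\overline{n}_{11}\overline{n}_{22}-\overline{n}_{12}\overline{n}_{21}=\varepsilon q\equiv0\pmod{q}$, the congruence $p\equiv\kappa\overline{n}_{12}-\lambda\overline{n}_{22}\pmod{q}$ of~(\ref{KAPPALAMDA2}), and $\kappa\overline{n}_{11}-\lambda\overline{n}_{21}=1$; e.g.\ multiplying the congruence by $\overline{n}_{11}$ and replacing $\overline{n}_{11}\overline{n}_{22}$ by $\overline{n}_{12}\overline{n}_{21}$ modulo $q$ gives $\overline{n}_{11}p\equiv\overline{n}_{12}(\kappa\overline{n}_{11}-\lambda\overline{n}_{21})=\overline{n}_{12}\pmod{q}$, and the symmetric manipulation with $\overline{n}_{21}$ yields the other divisibility. (Equivalently and more conceptually: since $\overline{\mathbf{n}}_1$ is primitive it extends to a $\mathbb{Z}$-basis of $\mathbb{Z}^2$, so some matrix in $\mathrm{GL}_2(\mathbb{Z})$ sends $\overline{\mathbf{n}}_1\mapsto\tbinom10$ and, after a possible sign flip in the second coordinate and a shear fixing $\tbinom10$, sends $\overline{\mathbf{n}}_2\mapsto\tbinom pq$ — one checks, starting from $\bigl(\begin{smallmatrix}\kappa&-\lambda\\-\overline{n}_{21}&\overline{n}_{11}\end{smallmatrix}\bigr)$, that the $p$ so produced is exactly the one singled out by~(\ref{KAPPALAMDA2}); being determined by its action on the $\mathbb{R}$-basis $\overline{\mathbf{n}}_1,\overline{\mathbf{n}}_2$, that matrix must coincide with $\mathcal{M}_\sigma$, which is thereby integral.) Granting integrality, a direct multiplication using $\overline{n}_{11}\overline{n}_{22}-\overline{n}_{12}\overline{n}_{21}=\varepsilon q$ gives $\mathcal{M}_\sigma\overline{\mathbf{n}}_1=\tbinom10$ and $\mathcal{M}_\sigma\overline{\mathbf{n}}_2=\tbinom pq$; taking determinants of these two columns yields $\det(\mathcal{M}_\sigma)\cdot\varepsilon q=q$, so $\det(\mathcal{M}_\sigma)=\varepsilon\in\{\pm1\}$ and $\mathcal{M}_\sigma\in\mathrm{GL}_2(\mathbb{Z})$. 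Since $\Phi_{\mathcal{M}_\sigma}$ is linear and carries the two rays of $\sigma^{\mathrm{st}}$ onto $\mathbb{R}_{\geq0}\tbinom10$ and $\mathbb{R}_{\geq0}\tbinom pq$, it carries $\sigma^{\mathrm{st}}$ onto $\mathbb{R}_{\geq0}\tbinom10+\mathbb{R}_{\geq0}\tbinom pq$, proving~(ii).

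Part~(i) then follows: $\tbinom pq=\Phi_{\mathcal{M}_\sigma}(\overline{\mathbf{n}}_2)$ is primitive, being the image of the primitive point $\overline{\mathbf{n}}_2$ under a unimodular transformation, so $\gcd(p,q)=1$; and setting $\overline{\mathbf{n}}_1':=\tfrac{1}{q}(\overline{\mathbf{n}}_2-p\overline{\mathbf{n}}_1)$ — which lies in $\mathbb{Z}^2$ by the two divisibilities above — we have $\overline{\mathbf{n}}_2=p\overline{\mathbf{n}}_1+q\overline{\mathbf{n}}_1'$ and $\det(\overline{\mathbf{n}}_1,\overline{\mathbf{n}}_1')=\tfrac{1}{q}\det(\overline{\mathbf{n}}_1,\overline{\mathbf{n}}_2)=\varepsilon$, so $(\overline{\mathbf{n}}_1\mid\overline{\mathbf{n}}_1')\in\mathrm{GL}_2(\mathbb{Z})$ and $\{\overline{\mathbf{n}}_1,\overline{\mathbf{n}}_1'\}$ is a $\mathbb{Z}$-basis of $\mathbb{Z}^2$ (cf.\ Proposition~\ref{parallepiped}); in particular $\overline{\mathbf{n}}_1'$ is primitive, and it is the unique vector with $\overline{\mathbf{n}}_2=p\overline{\mathbf{n}}_1+q\overline{\mathbf{n}}_1'$ since $q\neq0$. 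For~(iii), transporting these relations back to $N$ by the lattice isomorphism $\Phi_{\mathcal{B}}\colon\mathbb{Z}^2\to N$ and putting $\mathbf{n}_1':=\Phi_{\mathcal{B}}(\overline{\mathbf{n}}_1')$ shows that $\{\mathbf{n}_1,\mathbf{n}_1'\}$ is a basis of $N$ and $\mathbf{n}_2=p\mathbf{n}_1+q\mathbf{n}_1'$, while $|\det(\mathbf{n}_1,\mathbf{n}_2)|=|\det\mathcal{B}|\cdot|\det(\overline{\mathbf{n}}_1,\overline{\mathbf{n}}_2)|=\det(N)\cdot q$ identifies $q$ with $\mathrm{mult}_N(\sigma)$ via~(\ref{MULTDEF}). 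Finally, $q=\mathrm{mult}_N(\sigma)$ is intrinsic to $(\sigma,N)$, and from $\mathbf{n}_2=p\mathbf{n}_1+q\mathbf{n}_1'$ one reads $\mathbf{n}_2\equiv p\mathbf{n}_1\pmod{qN}$; since $\mathbf{n}_1$ is primitive it extends to a basis of $N$, and writing $\mathbf{n}_2$ in coordinates w.r.t.\ that basis shows that the residue class of $p$ modulo $q$ — hence, together with $0\leq p<q$, the integer $p$ itself — is uniquely determined by this congruence, independently of $\mathcal{B}$.

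I expect the only real obstacle to be the two divisibility relations $q\mid(\overline{n}_{11}p-\overline{n}_{12})$ and $q\mid(\overline{n}_{22}-\overline{n}_{21}p)$ of the second paragraph: all the content of the proposition is packed into the integrality of $\mathcal{M}_\sigma$, and once that is in place parts~(ii), (i), (iii) are formal manipulations with $\Phi_{(-)}$ and determinants, the sole remaining nicety being the well-definedness of $p$ (and not merely of $q$), which is handled by the last congruence argument.
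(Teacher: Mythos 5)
Your proof is correct, and it reorganises the argument in a way that differs from the paper's at two points. First, the paper disposes of part (i) by citing an external lemma (\cite[Lemma 2.1]{Dais2}) and of part (ii) with ``can be checked directly''; you instead make the integrality of $\mathcal{M}_{\sigma}$ the single load-bearing computation --- via the two divisibilities $q\mid(\overline{n}_{11}p-\overline{n}_{12})$ and $q\mid(\overline{n}_{22}-\overline{n}_{21}p)$, which I have verified --- and then read off \emph{both} (ii) and (i) from it, since the same divisibilities that make $\mathcal{M}_{\sigma}$ integral make $\overline{\mathbf{n}}_{1}':=\tfrac{1}{q}(\overline{\mathbf{n}}_{2}-p\overline{\mathbf{n}}_{1})$ a lattice point, and $\gcd(p,q)=1$ falls out of the primitivity of $\Phi_{\mathcal{M}_{\sigma}}(\overline{\mathbf{n}}_{2})=\tbinom{p}{q}$. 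This buys a self-contained proof at the cost of one short modular computation. Second, for the independence of $p$ from $\mathcal{B}$ the paper performs an explicit recomputation: it writes $\mathcal{B}'=\mathcal{BA}$, manufactures a new B\'ezout pair $\kappa,\lambda$ from $\widetilde{\kappa},\widetilde{\lambda}$ and the entries of $\mathcal{A}$, and checks that the congruence (\ref{KAPPALAMDA2}) reproduces the same $p$. Your argument is basis-free: any admissible choice yields $\mathbf{n}_{2}\equiv p\,\mathbf{n}_{1}\ (\operatorname{mod}qN)$, and since $\mathbf{n}_{1}$ is primitive its class in $N/qN$ has annihilator exactly $q\mathbb{Z}$, so the residue of $p$ mod $q$ --- hence $p$ itself, given $0\leq p<q$ --- is pinned down. (One should note, as you implicitly do, that the intrinsic datum is the coefficient of $\mathbf{n}_{1}$ when the complementary coefficient is normalised to $+q$ rather than $-q$; replacing $\mathbf{n}_{1}'$ by $-\mathbf{n}_{1}'+c\,\mathbf{n}_{1}$ only shifts $p$ by multiples of $q$, so this causes no ambiguity.) Your route is cleaner and makes the invariance of $(p,q)$ conceptually transparent, whereas the paper's is a direct coordinate verification.
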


\begin{proof}
For (i) see \cite[Lemma 2.1, p. 222]{Dais2}. (ii) can be checked directly.
(Note that det$(\mathcal{M}_{\sigma})=\varepsilon.$)\smallskip \  \newline(iii)
Obviously, $\Phi_{\mathcal{B}}\left(  \left \{  \overline{\mathbf{n}}%
_{1},\overline{\mathbf{n}}_{1}^{\prime}\right \}  \right)  =\left \{
\mathbf{n}_{1},\mathbf{n}_{1}^{\prime}\right \}  $ is a basis of $N$ and
\[
\mathbf{n}_{2}=\Phi_{\mathcal{B}}(\overline{\mathbf{n}}_{2})=p\Phi
_{\mathcal{B}}(\overline{\mathbf{n}}_{1})+q\Phi_{\mathcal{B}}(\overline
{\mathbf{n}}_{1}^{\prime})=p\mathbf{n}_{1}+q\mathbf{n}_{1}^{\prime}.
\]
On the other hand, since%
\[
\overline{\mathbf{n}}_{1}=\Phi_{\mathcal{B}^{-1}}\left(  \mathbf{n}%
_{1}\right)  =\tfrac{1}{\det(\mathcal{B})}\allowbreak \tbinom{n_{11}%
\mathfrak{b}_{22}-n_{21}\mathfrak{b}_{12}}{n_{21}\mathfrak{b}_{11}%
-n_{11}\mathfrak{b}_{21}}\text{ and }\overline{\mathbf{n}}_{2}=\Phi
_{\mathcal{B}^{-1}}\left(  \mathbf{n}_{2}\right)  =\tfrac{1}{\det
(\mathcal{B})}\allowbreak \tbinom{n_{12}\mathfrak{b}_{22}-n_{22}\mathfrak{b}%
_{12}}{n_{22}\mathfrak{b}_{11}-n_{12}\mathfrak{b}_{21}},
\]
we have%
\[
q=\left \vert \det(\overline{\mathbf{n}}_{1},\overline{\mathbf{n}}%
_{2})\right \vert =\tfrac{1}{\left \vert \det(\mathcal{B})\right \vert ^{2}%
}\left \vert \det(\mathbf{n}_{1},\mathbf{n}_{2})\right \vert \left \vert
\det(\mathcal{B})\right \vert =\frac{\left \vert \det(\mathbf{n}_{1}%
,\mathbf{n}_{2})\right \vert }{\det(N)}=\text{ mult}_{N}\left(  \sigma \right)
.
\]
Therefore, $q=q_{\sigma}$ does not depend on the choice of $\mathcal{B}.$ In
addition, if $\mathcal{B}^{\prime}$ is another basis matrix of $N,$ then
$\mathcal{B}^{\prime}=\mathcal{BA}$ for some $\mathcal{A}=\left(
\begin{smallmatrix}
\mathfrak{a}_{11} & \mathfrak{a}_{12}\\
\mathfrak{a}_{21} & \mathfrak{a}_{22}%
\end{smallmatrix}
\right)  \in$ GL$_{2}(\mathbb{Z})$ (see Proposition \ref{BASECHANGE}). Let
\[
\widetilde{\mathbf{n}}_{1}:=\Phi_{(\mathcal{BA})^{-1}}\left(  \mathbf{n}%
_{1}\right)  =\Phi_{\mathcal{A}^{-1}}\left(  \overline{\mathbf{n}}_{1}\right)
=\tfrac{1}{\det(\mathcal{A})}\allowbreak \tbinom{\mathfrak{a}_{22}\overline
{n}_{11}-\mathfrak{a}_{12}\overline{n}_{21}}{\mathfrak{a}_{11}\overline
{n}_{21}-\mathfrak{a}_{21}\overline{n}_{11}}\text{ and }\widetilde{\mathbf{n}%
}_{2}:=\Phi_{\mathcal{A}^{-1}}\left(  \overline{\mathbf{n}}_{2}\right)
=\tfrac{1}{\det(\mathcal{A})}\allowbreak \tbinom{\mathfrak{a}_{22}\overline
{n}_{12}-\mathfrak{a}_{12}\overline{n}_{22}}{\mathfrak{a}_{11}\overline
{n}_{22}-\mathfrak{a}_{21}\overline{n}_{12}}%
\]
be the minimal generators of the standard model of $\sigma$ w.r.t\emph{.}
$\mathcal{B}^{\prime}.$ We consider $\widetilde{\kappa},\widetilde{\lambda}%
\in \mathbb{Z},$ such that
\[
\widetilde{\kappa}\tfrac{\left(  \mathfrak{a}_{22}\overline{n}_{11}%
-\mathfrak{a}_{12}\overline{n}_{21}\right)  }{\det(\mathcal{A})}%
-\widetilde{\lambda}\tfrac{\left(  \mathfrak{a}_{11}\overline{n}%
_{21}-\mathfrak{a}_{21}\overline{n}_{11}\right)  }{\det(\mathcal{A})}%
=\tfrac{\widetilde{\kappa}\mathfrak{a}_{22}+\widetilde{\lambda}\mathfrak{a}%
_{21}}{\det(\mathcal{A})}\overline{n}_{11}-\tfrac{\widetilde{\kappa
}\mathfrak{a}_{12}+\widetilde{\lambda}\mathfrak{a}_{11}}{\det(\mathcal{A}%
)}\overline{n}_{21}=1.
\]
The integers $\kappa:=\tfrac{\widetilde{\kappa}\mathfrak{a}_{22}%
+\widetilde{\lambda}\mathfrak{a}_{21}}{\det(\mathcal{A})}$ and $\lambda
:=\tfrac{\widetilde{\kappa}\mathfrak{a}_{12}+\widetilde{\lambda}%
\mathfrak{a}_{11}}{\det(\mathcal{A})}$ satisfy (\ref{KAPPALAMDA1}). Hence,%
\[
\kappa \overline{n}_{12}-\lambda \overline{n}_{22}=\widetilde{\kappa}%
\tfrac{\left(  \mathfrak{a}_{22}\overline{n}_{12}-\mathfrak{a}_{12}%
\overline{n}_{22}\right)  }{\det(\mathcal{A})}-\widetilde{\lambda}%
\tfrac{\left(  \mathfrak{a}_{11}\overline{n}_{22}-\mathfrak{a}_{21}%
\overline{n}_{12}\right)  }{\det(\mathcal{A})}\equiv p\left(
\operatorname{mod}q\right)  ,
\]
and $p=p_{\sigma}$ is also independent of the choice of $\mathcal{B}.$
\end{proof}

\begin{note}
\label{pcomment}It should be stressed that $p=p_{\sigma}$ \textit{does depend}
on which minimal generators of $\sigma$ is regarded as first and
which as second (because of the defining conditions (\ref{KAPPALAMDA1}) and
(\ref{KAPPALAMDA2})). For this reason, by writting $\sigma=\mathbb{R}_{\geq
0}\mathbf{n}_{1}+\mathbb{R}_{\geq0}\mathbf{n}_{2},$ with $\mathbf{n}%
_{1},\mathbf{n}_{2}$ as its minimal generators, their ordering will always be
implicit (and $p_{\sigma}$ well-defined). Proposition \ref{PQDESCR2} gives the
precise description of what happens by interchanging $\mathbf{n}_{1}$ and
$\mathbf{n}_{2}$ or, more generally, by replacing $\sigma$ with a
$\tau \backsim_{N}\sigma.$
\end{note}

\begin{definition}
Let $p,q$ two integers with $0\leq p<q$ and gcd$(p,q)=1.$ The \textit{socius
}$\widehat{p}$ of $p$ (w.r.t. $q$) is defined to be the unique integer
satisfying the conditions:
\[
0\leq \widehat{p}<q,\  \  \text{gcd}(\widehat{p},q)=1,\text{ and \ }p\widehat{p}%
\equiv1(\operatorname{mod}q).
\]

\end{definition}

\begin{proposition}
\label{PQDESCR2}If $\sigma,\tau$ are two $N$-cones, then the following
conditions are equivalent\emph{:} \smallskip \newline \emph{(i)} $\left[
\sigma \right]  _{N}=\left[  \tau \right]  _{N}$ \emph{(i.e.,} $\sigma$ and
$\tau$ are \textit{equivalent up to umimodular transformation}%
\emph{).\smallskip} \newline \emph{(ii)} For the integers $p_{\sigma},p_{\tau
},q_{\sigma},q_{\tau}$ associated with $\sigma,\tau$ \emph{(}by Proposition
\emph{\ref{PQDESCR1})} we have $q_{\tau}=q_{\sigma},$ and either $p_{\tau
}=p_{\sigma}$ or $p_{\tau}=\widehat{p}_{\sigma}.$
\end{proposition}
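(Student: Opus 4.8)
The plan is to reduce the asserted equivalence to a statement about standard $\mathbb{Z}^{2}$-cones and then settle it by elementary integral linear algebra. For integers $a,b$ with $b\geq1$, $0\leq a<b$ and $\gcd(a,b)=1$, put $C_{a,b}:=\mathbb{R}_{\geq0}\tbinom{1}{0}+\mathbb{R}_{\geq0}\tbinom{a}{b}$. Since $\sigma$ and $\tau$ are cones with respect to one and the same lattice $N$, the bijective correspondence $[\,\cdot\,]_{N}\mapsto[\,(\cdot)^{\text{st}}\,]_{\mathbb{Z}^{2}}$ (recalled in this section) reduces (i) to $[\sigma^{\text{st}}]_{\mathbb{Z}^{2}}=[\tau^{\text{st}}]_{\mathbb{Z}^{2}}$, which by Proposition \ref{PQDESCR1}(ii) reads $[C_{p_{\sigma},q_{\sigma}}]_{\mathbb{Z}^{2}}=[C_{p_{\tau},q_{\tau}}]_{\mathbb{Z}^{2}}$. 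Hence it is enough to prove that, for two such standard cones, $C_{a,b}\backsim_{\mathbb{Z}^{2}}C_{a',b'}$ holds \emph{if and only if} $b=b'$ and $a'\in\{a,\widehat{a}\}$, where $\widehat{a}$ is the socius of $a$ with respect to $b$.

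For (ii)$\Rightarrow$(i): the sub-case $q_{\tau}=q_{\sigma}$, $p_{\tau}=p_{\sigma}$ is trivial, so put $q:=q_{\sigma}=q_{\tau}$ and assume $p_{\tau}=\widehat{p}_{\sigma}$. I would apply the recipe of Proposition \ref{PQDESCR1} to the cone $C_{p_{\sigma},q}$ a second time, but now with its two minimal generators listed in the reverse order, i.e. with $\tbinom{p_{\sigma}}{q}$ first and $\tbinom{1}{0}$ second. Picking $\kappa,\lambda\in\mathbb{Z}$ with $\kappa p_{\sigma}-\lambda q=1$ (the analogue of (\ref{KAPPALAMDA1}); such $\kappa,\lambda$ exist because $\gcd(p_{\sigma},q)=1$ by Proposition \ref{PQDESCR1}(i)), one finds the $q$-parameter $\left\vert\det(\tbinom{p_{\sigma}}{q},\tbinom{1}{0})\right\vert=q$ and the $p$-parameter congruent modulo $q$ to $\kappa\cdot1-\lambda\cdot0=\kappa\equiv\widehat{p}_{\sigma}$, hence equal to $\widehat{p}_{\sigma}$. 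Since re-ordering the generators leaves the cone unchanged as a subset of $\mathbb{R}^{2}$ (cf. Note \ref{pcomment}), Proposition \ref{PQDESCR1}(ii) applied once for each ordering yields $[C_{p_{\sigma},q}]_{\mathbb{Z}^{2}}=[C_{\widehat{p}_{\sigma},q}]_{\mathbb{Z}^{2}}$, which is (i).

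For (i)$\Rightarrow$(ii), suppose $\Phi_{\mathcal{A}}(C_{a,b})=C_{a',b'}$ for some $\mathcal{A}\in\text{GL}_{2}(\mathbb{Z})$. As $\det(\mathcal{A})=\pm1$, the multiplicity (\ref{MULTDEF}) is invariant under $\Phi_{\mathcal{A}}$, and since $\text{mult}_{\mathbb{Z}^{2}}(C_{a,b})=\left\vert\det(\tbinom{1}{0},\tbinom{a}{b})\right\vert=b$ (and likewise $\text{mult}_{\mathbb{Z}^{2}}(C_{a',b'})=b'$), this forces $b'=b$. Now $\Phi_{\mathcal{A}}$ sends primitive lattice points to primitive lattice points, so it carries the minimal generators $\tbinom{1}{0},\tbinom{a}{b}$ of $C_{a,b}$ onto the minimal generators $\tbinom{1}{0},\tbinom{a'}{b}$ of $C_{a',b}$, in one of exactly two ways. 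If $\mathcal{A}\tbinom{1}{0}=\tbinom{1}{0}$ and $\mathcal{A}\tbinom{a}{b}=\tbinom{a'}{b}$, then the first column of $\mathcal{A}$ is $\tbinom{1}{0}$, and comparing the second coordinates of $\mathcal{A}\tbinom{a}{b}=\tbinom{a'}{b}$ shows the $(2,2)$-entry of $\mathcal{A}$ to be $1$; thus $\mathcal{A}=\left(\begin{smallmatrix}1 & c\\0 & 1\end{smallmatrix}\right)$ and $a'=a+cb$, so $a'=a$ because $0\leq a,a'<b$. If $\mathcal{A}\tbinom{1}{0}=\tbinom{a'}{b}$ and $\mathcal{A}\tbinom{a}{b}=\tbinom{1}{0}$, then $\mathcal{A}=\left(\begin{smallmatrix}a' & c\\b & d\end{smallmatrix}\right)$; the second coordinate of $\mathcal{A}\tbinom{a}{b}=\tbinom{1}{0}$ gives $b(a+d)=0$, hence $d=-a$, and the first coordinate gives $a'a+cb=1$, i.e. $a'a\equiv1\ (\operatorname{mod}b)$, so $a'=\widehat{a}$. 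In either case $b'=b$ and $a'\in\{a,\widehat{a}\}$, which is (ii).

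Most of this is routine; the one genuinely delicate point is that the parameter $p$ of a cone depends on the \emph{ordering} of its minimal generators, and this is exactly what produces the second alternative $p_{\tau}=\widehat{p}_{\sigma}$ in (ii). I expect that step — together with keeping track of the primitivity of all four minimal generators and of which of the two ray-permutations is in force in the last argument — to be the only place demanding real care; there is no deeper obstacle.
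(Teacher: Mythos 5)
Your proof is correct and follows essentially the same route as the paper: both reduce via Proposition \ref{PQDESCR1} to the standard cones $\mathbb{R}_{\geq0}\tbinom{1}{0}+\mathbb{R}_{\geq0}\tbinom{p}{q}$ and then split according to whether the unimodular map fixes or interchanges the two rays, the socius arising exactly from the interchange. Your derivation of (ii)$\Rightarrow$(i) by re-running the recipe of Proposition \ref{PQDESCR1} on the reversed generator ordering is just an implicit form of the paper's explicit swap matrix $\mathcal{D}$, so there is no substantive difference.
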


\begin{proof}
Let $\sigma^{\text{st}},\tau^{\text{st}}$ be the standard models of
$\sigma,\tau$ w.r.t. an arbitrary basis matrix $\mathcal{B}$ of the lattice
$N,$ and $\mathcal{M}_{\sigma},\mathcal{M}_{\tau}\in$ GL$_{2}(\mathbb{Z})$ the
corresponding matrices defined in \ref{PQDESCR1}\emph{ }(iii), so that%
\[
\Phi_{\mathcal{M}_{\sigma}}(\sigma^{\text{st}})=\mathbb{R}_{\geq0}\tbinom
{1}{0}+\mathbb{R}_{\geq0}\tbinom{p_{\sigma}}{q_{\sigma}}\  \text{ and \ }%
\Phi_{\mathcal{M}_{\tau}}(\tau^{\text{st}})=\mathbb{R}_{\geq0}\tbinom{1}%
{0}+\mathbb{R}_{\geq0}\tbinom{p_{\tau}}{q_{\tau}}.
\]
(i)$\Rightarrow$(ii) If $\left[  \sigma \right]  _{N}=\left[  \tau \right]
_{N},$ then there is a matrix $\mathcal{A}\in$ GL$_{2}(\mathbb{Z})$ such that
$\Phi_{\mathcal{BAB}^{-1}}(\sigma)=\tau \Rightarrow \Phi_{\mathcal{A}}%
(\sigma^{\text{st}})=\tau^{\text{st}}.$ Hence,%
\[
\Phi_{\mathcal{M}_{\tau}\mathcal{AM}_{\sigma}^{-1}}\left(  \mathbb{R}_{\geq
0}\tbinom{1}{0}+\mathbb{R}_{\geq0}\tbinom{p_{\sigma}}{q_{\sigma}}\right)
=\mathbb{R}_{\geq0}\tbinom{1}{0}+\mathbb{R}_{\geq0}\tbinom{p_{\tau}}{q_{\tau}%
},
\]
and either
\[
\Phi_{\mathcal{M}_{\tau}\mathcal{AM}_{\sigma}^{-1}}\left(  \tbinom{1}%
{0}\right)  =\tbinom{1}{0}\  \text{ and \ }\Phi_{\mathcal{M}_{\tau}%
\mathcal{AM}_{\sigma}^{-1}}\left(  \tbinom{p_{\sigma}}{q_{\sigma}}\right)
=\tbinom{p_{\tau}}{q_{\tau}}%
\]
or%
\[
\Phi_{\mathcal{M}_{\tau}\mathcal{AM}_{\sigma}^{-1}}\left(  \tbinom{1}%
{0}\right)  =\tbinom{p_{\tau}}{q_{\tau}}\  \text{ and \ }\Phi_{\mathcal{M}%
_{\tau}\mathcal{AM}_{\sigma}^{-1}}\left(  \tbinom{p_{\sigma}}{q_{\sigma}%
}\right)  =\tbinom{1}{0}.
\]
Thus, either
\[
\mathcal{M}_{\tau}\mathcal{AM}_{\sigma}^{-1}=\left(
\begin{smallmatrix}
1 & \frac{p_{\tau}-p_{\sigma}}{q_{\sigma}} \medskip \\
0 & \frac{q_{\tau}}{q_{\sigma}}%
\end{smallmatrix}
\right)  \  \text{ or \ }\mathcal{M}_{\tau}\mathcal{AM}_{\sigma}^{-1}=\left(
\begin{smallmatrix}
p_{\tau} & \frac{1-p_{\sigma}p_{\tau}}{q_{\sigma}} \medskip \\
q_{\tau} & -\frac{p_{\sigma}q_{\tau}}{q_{\sigma}}%
\end{smallmatrix}
\right)  .
\]
In the first case $\det(\mathcal{M}_{\tau}\mathcal{AM}_{\sigma}^{-1})$ has to
be equal to $1,$ which means that $q_{\tau}=q_{\sigma}$ and $p_{\tau
}-p_{\sigma}\equiv0(\operatorname{mod}q),$ i.e., $p_{\tau}=p_{\sigma}$
(because $0\leq p_{\sigma},p_{\tau}<q_{\sigma}=q_{\tau}$). In the second case,
$\det(\mathcal{M}_{\tau}\mathcal{AM}_{\sigma}^{-1})=-1,$ i.e., $q_{\tau
}=q_{\sigma}$ and $1-p_{\sigma}p_{\tau}\equiv0(\operatorname{mod}q)\Rightarrow
p_{\tau}=\widehat{p}_{\sigma}.\smallskip$ \newline(ii)$\Rightarrow$(i) We set
$\mathcal{A}:=\mathcal{M}_{\tau}\mathcal{DM}_{\sigma}^{-1}$ with
$\mathcal{D}\in$ GL$_{2}(\mathbb{Z})$ being defined as follows:%
\[
\mathcal{D}:=\left(
\begin{smallmatrix}
1 & 0\\
0 & 1
\end{smallmatrix}
\right)  \text{ if \ }p_{\tau}=p_{\sigma},\  \text{ and \ }\mathcal{D}:=\left(
\begin{smallmatrix}
p_{\tau} & \frac{1-p_{\sigma}p_{\tau}}{q_{\sigma}} \medskip \\
q_{\sigma} & -p_{\sigma}%
\end{smallmatrix}
\right)  \text{ if \ }p_{\tau}=\widehat{p}_{\sigma}.
\]
Obviously, $\Phi_{\mathcal{BAB}^{-1}}(\sigma)=\tau$ with $\mathcal{A}\in$
GL$_{2}(\mathbb{Z}),$ i.e., $\left[  \sigma \right]  _{N}=\left[  \tau \right]
_{N}.$
\end{proof}

\begin{definition}
Let $\sigma$ be an $N$-cone. Since the two integers $p=p_{\sigma}$ and
$\ q=q_{\sigma}$ associated with $\sigma$ (by Proposition \ref{PQDESCR1})\emph{
}parametrise uniquely the equivalence class $\left[  \sigma \right]  _{N}$ up
to replacement of\emph{\ }$p$ by its socius $\widehat{p},$ we shall henceforth
say that $\sigma$ is \textit{of type} $(p,q)$ (or simply that $\sigma$ is a
$(p,q)$-\textit{cone}).
\end{definition}

\begin{definition}
If $\sigma$ is an $N$-cone$,$ then $\sigma^{\vee}:=\left \{  \left.
\mathbf{x}\in \mathbb{R}^{2}\right \vert \left \langle \mathbf{x},\mathbf{y}%
\right \rangle \geq0,\forall \mathbf{y}\in \sigma \right \}  $ is called the
\textit{dual cone} of $\sigma.$
\end{definition}

\begin{proposition}
\label{DUALCNEIDENT}If $\sigma$ is an $N$-cone and $M:=$ \emph{Hom}%
$_{\mathbb{Z}}(N,\mathbb{Z}),$ then $\sigma^{\vee}$ is an $M$-cone, and
$(\sigma^{\vee})^{\vee}=\sigma.$ In particular, if $\sigma$ is a
$(p,q)$-cone\textit{ }with\textit{ }$q>1,$\textit{ then }$\sigma^{\vee}$ is a
$(q-p,q)$-cone.
\end{proposition}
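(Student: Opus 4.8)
\emph{Plan.} The first two assertions are routine cone duality; the substance is the ``in particular'' clause, which I would extract from the basis description in Proposition \ref{PQDESCR1}(iii) by passing to the dual $\mathbb{Z}$-basis.

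\emph{Step 1 ($\sigma^\vee$ is an $M$-cone).} Write $\sigma=\mathbb{R}_{\geq0}\mathbf{n}_1+\mathbb{R}_{\geq0}\mathbf{n}_2$ with $\mathbf{n}_1,\mathbf{n}_2$ the (primitive, $\mathbb{R}$-linearly independent) minimal generators. By linearity of $\langle\cdot,\cdot\rangle$,
\[
\sigma^\vee=\left\{\mathbf{x}\in\mathbb{R}^2\mid \langle\mathbf{x},\mathbf{n}_1\rangle\geq0,\ \langle\mathbf{x},\mathbf{n}_2\rangle\geq0\right\},
\]
the intersection of two closed half-planes whose boundary lines $\mathbf{n}_1^{\perp},\mathbf{n}_2^{\perp}$ are distinct; hence $\sigma^\vee$ is a $2$-dimensional strongly convex polyhedral cone whose two rays run along $\mathbf{n}_2^{\perp}$ and $\mathbf{n}_1^{\perp}$. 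Since $\mathbf{n}_j$ is primitive in $N$ it extends to a $\mathbb{Z}$-basis of $N$, so the homomorphism $M\to\mathbb{Z},\ \mathbf{m}\mapsto\langle\mathbf{m},\mathbf{n}_j\rangle$ is onto; thus its kernel $\mathbf{n}_j^{\perp}\cap M$ is a rank-one direct summand of $M$ and therefore contains a primitive vector of $M$ spanning it. Choosing signs, these two primitive vectors of $M$ are the minimal generators of the rays of $\sigma^\vee$, so $\sigma^\vee$ is an $M$-cone.

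\emph{Step 2 (type of $\sigma^\vee$, and biduality).} By Proposition \ref{PQDESCR1}(iii) I may fix a $\mathbb{Z}$-basis $\{\mathbf{n}_1,\mathbf{n}_1'\}$ of $N$ with the minimal generators of $\sigma$ equal to $\mathbf{n}_1$ and $\mathbf{n}_2=p\mathbf{n}_1+q\mathbf{n}_1'$, where $q=q_\sigma=\text{mult}_N(\sigma)$, $p=p_\sigma$ and (since $q>1$ and $\gcd(p,q)=1$) $1\leq p<q$. Let $\{\mathbf{m}_1,\mathbf{m}_1'\}$ be the dual $\mathbb{Z}$-basis of $M$, so that $\langle\mathbf{m}_1,\mathbf{n}_1\rangle=\langle\mathbf{m}_1',\mathbf{n}_1'\rangle=1$ and $\langle\mathbf{m}_1,\mathbf{n}_1'\rangle=\langle\mathbf{m}_1',\mathbf{n}_1\rangle=0$. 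For $\mathbf{x}=a\mathbf{m}_1+b\mathbf{m}_1'$ one computes $\langle\mathbf{x},\mathbf{n}_1\rangle=a$ and $\langle\mathbf{x},\mathbf{n}_2\rangle=pa+qb$, whence by Step 1
\[
\sigma^\vee=\left\{a\mathbf{m}_1+b\mathbf{m}_1'\mid a\geq0,\ pa+qb\geq0\right\}=\mathbb{R}_{\geq0}\,\mathbf{m}_1'+\mathbb{R}_{\geq0}\,(q\mathbf{m}_1-p\mathbf{m}_1'),
\]
the two displayed generators being primitive in $M$ (the first is a basis vector; the second has coprime coordinates $(q,-p)$ in the basis $\{\mathbf{m}_1,\mathbf{m}_1'\}$). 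Now $\{\mathbf{m}_1',\,\mathbf{m}_1-\mathbf{m}_1'\}$ is again a $\mathbb{Z}$-basis of $M$ and $q\mathbf{m}_1-p\mathbf{m}_1'=(q-p)\mathbf{m}_1'+q(\mathbf{m}_1-\mathbf{m}_1')$ with $0\leq q-p<q$; so applying the description of Proposition \ref{PQDESCR1}(iii) to the ordered minimal generators $\bigl(\mathbf{m}_1',\ q\mathbf{m}_1-p\mathbf{m}_1'\bigr)$ of $\sigma^\vee$ (with $\mathbf{m}_1-\mathbf{m}_1'$ completing the basis) gives $q_{\sigma^\vee}=q$ and $p_{\sigma^\vee}=q-p$, i.e.\ $\sigma^\vee$ is a $(q-p,q)$-cone. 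Finally, running the computation of Step 1 once more with the roles of $(N,\sigma)$ and $(M,\sigma^\vee)$ exchanged, in the very same coordinates, returns $(\sigma^\vee)^\vee=\{c\mathbf{n}_1+d\mathbf{n}_1'\mid d\geq0,\ qc-pd\geq0\}=\sigma$ on the nose; alternatively this is the biduality theorem for polyhedral cones \cite{CLS}.

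\emph{Anticipated obstacle.} The only real care needed is the bookkeeping highlighted in Note \ref{pcomment}: an unlucky choice of basis-completing vector produces the coefficient $-p$, and swapping the two minimal generators of $\sigma^\vee$ produces the socius $\widehat{q-p}$, all of which merely re-express ``type $(q-p,q)$''; one must commit to the single normalized choice made above in order to land exactly on the pair $(q-p,q)$. The hypothesis $q>1$ is used precisely so that $q-p$ is the representative in $\{0,1,\dots,q-1\}$; for $q=1$ both $\sigma$ and $\sigma^\vee$ are smooth cones, which is why that case is excluded.
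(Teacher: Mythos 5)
Your proposal is correct and, for the substantive ``in particular'' clause, follows exactly the paper's route: express $\sigma$ via Proposition \ref{PQDESCR1}(iii), pass to the dual basis, compute $\sigma^{\vee}=\mathbb{R}_{\geq0}\mathbf{m}_{1}^{\prime}+\mathbb{R}_{\geq0}(q\mathbf{m}_{1}-p\mathbf{m}_{1}^{\prime})$, and rewrite the second generator as $(q-p)\mathbf{m}_{1}^{\prime}+q(\mathbf{m}_{1}-\mathbf{m}_{1}^{\prime})$ relative to the basis $\{\mathbf{m}_{1}^{\prime},\mathbf{m}_{1}-\mathbf{m}_{1}^{\prime}\}$. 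The only difference is that you verify the preliminary assertions ($\sigma^{\vee}$ an $M$-cone, biduality) directly where the paper simply cites \cite[\S 1.2]{Fulton}, which is a matter of presentation rather than of method.
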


\begin{proof}
For the first assertion see \cite[\S 1.2]{Fulton}. If $\sigma$ is a
$(p,q)$-cone\textit{ }with\textit{ }$q>1,$ then by Propostion \ref{PQDESCR1}
\[
\sigma=\mathbb{R}_{\geq0}\mathbf{n}_{1}+\mathbb{R}_{\geq0}\mathbf{n}%
_{2}=\mathbb{R}_{\geq0}\mathbf{n}_{1}+\mathbb{R}_{\geq0}\mathbf{(}%
p\mathbf{n}_{1}+q\mathbf{n}_{1}^{\prime}),
\]
where $\left \{  \mathbf{n}_{1},\mathbf{n}_{1}^{\prime}\right \}  $ is a basis
of $N.$ Let $\left \{  \mathbf{m}_{1},\mathbf{m}_{1}^{\prime}\right \}  $ be the
dual basis of $M$ (i.e., $\left \langle \mathbf{m}_{1},\mathbf{n}%
_{1}\right \rangle =\left \langle \mathbf{m}_{1}^{\prime},\mathbf{n}_{1}%
^{\prime}\right \rangle =1$ and $\left \langle \mathbf{m}_{1},\mathbf{n}%
_{1}^{\prime}\right \rangle =\left \langle \mathbf{m}_{1},\mathbf{n}_{1}%
^{\prime}\right \rangle =0$). Then
\begin{equation}
\sigma^{\vee}=\mathbb{R}_{\geq0}\mathbf{m}_{1}^{\prime}+\mathbb{R}_{\geq
0}(q\mathbf{m}_{1}-p\mathbf{m}_{1}^{\prime})=\mathbb{R}_{\geq0}\mathbf{m}%
_{1}^{\prime}+\mathbb{R}_{\geq0}((q-p)\mathbf{m}_{1}^{\prime}+q(\mathbf{m}%
_{1}-\mathbf{m}_{1}^{\prime})) \label{GENSIGMADUAL}%
\end{equation}
where $\left \{  \mathbf{m}_{1}^{\prime},\mathbf{m}_{1}-\mathbf{m}_{1}^{\prime
}\right \}  $ is a basis of $M.$ Thus the dual cone $\sigma^{\vee}$ of $\sigma$
is a $(q-p,q)$-cone (because $0<q-p<q$ and gcd$(q-p,q)=1$).
\end{proof}

\noindent{}$\bullet$ \textbf{Hibert basis}. $\sigma \cap N$ is an additive
commutative monoid for any $N$-cone $\sigma.$ It is known (by Gordan's Lemma
\cite[Proposition 1.1 (iii), p. 3]{Oda}) that $\sigma \cap N$ is finitely
generated (as a semigroup), and that among all generating systems there is a
system Hilb$_{N}\left(  \sigma \right)  $ of minimal cardinality, the so-called
\textit{Hilbert basis} of $\sigma$, which is uniquely determined (up to
reordering of its elements) by the following characterisation:
\[
\text{Hilb}_{N}\left(  \sigma \right)  =\left \{  \mathbf{n}\in \sigma \cap \left(
N\smallsetminus \left \{  \mathbf{0}\right \}  \right)  \  \left \vert \
\begin{array}
[c]{l}%
\mathbf{n}\  \text{cannot be expressed as sum of two }\\
\text{other vectors belonging\emph{\ }to }\sigma \cap \left(  N\smallsetminus
\left \{  \mathbf{0}\right \}  \right)
\end{array}
\right.  \right \}  .
\]
\noindent{}$\bullet$ \textbf{Affine toric surfaces}. Let $\sigma$ be an
$N$-cone and $M$ the dual lattice. We set $S_{\sigma}:=\sigma^{\vee}\cap M.$
Let $\mathbb{C}[S_{\sigma}]:=%
{\textstyle \bigoplus \limits_{\mathbf{m}\in S_{\sigma}}}
\mathbb{C}\mathbf{e}(\mathbf{m})$ be the $\mathbb{C}$-algebra with basis
$\left \{  \mathbf{e}(\mathbf{m})\left \vert \mathbf{m}\in S_{\sigma}\right.
\right \}  $ consisting of formal elements which fulfill the exponential law:%
\[
\mathbf{e}(\mathbf{m})\mathbf{e}(\mathbf{m}^{\prime})=\mathbf{e}%
(\mathbf{m}+\mathbf{m}^{\prime}),\  \forall \left(  \mathbf{m},\mathbf{m}%
^{\prime}\right)  \in S_{\sigma}\times S_{\sigma}\  \ [\text{with }%
\mathbf{e}(\mathbf{0})=:1_{\mathbb{C}[S_{\sigma}]}].
\]
Since $S_{\sigma}$ is finitely generated (as a semigroup), $\exists
\mathbf{m}_{1},\ldots,\mathbf{m}_{k}\in S_{\sigma}$ such that $S_{\sigma
}=\mathbb{Z}_{\geq0}\mathbf{m}_{1}+\cdots+\mathbb{Z}_{\geq0}\mathbf{m}_{k}.$
Thus $\mathbb{C}[S_{\sigma}]$ is generated by $\{ \mathbf{e}(\mathbf{m}%
_{1}),\ldots,\mathbf{e}(\mathbf{m}_{k})\}$ and %
$
\mathbb{C}[S_{\sigma}]\cong \mathbb{C}[z_{1},\ldots,z_{k}]/\mathcal{I},
$
where $\mathcal{I}$ denotes the kernel of the $\mathbb{C}$-algebra epimorphism
$\mathbb{C}[z_{1},\ldots,z_{k}]\longrightarrow \mathbb{C}[S_{\sigma}]$ which
maps $z_{j}$ onto $\mathbf{e}(\mathbf{m}_{j})$ for all $j\in \{1,\ldots,k\}.$

\begin{definition}
We denote by $U_{\sigma}$  (or, more precisely, by $U_{{\sigma},N}$, whenever it is necessary to stress that $\sigma$ is an
$N$-cone) \textit{the affine
toric surface} Spec$(\mathbb{C}[S_{\sigma}])$ which is associated with $\sigma$ and has $\mathbb{C}[S_{\sigma
}]$ as coordinate ring. ($U_{\sigma}$ is a $2$-dimensional \textit{normal}
complex analytic variety embedded in $\mathbb{C}^{k}$ as vanishing locus of finitely
many binomials which generate $\mathcal{I}$ ; see \cite[Proposition 1.2, pp.
4-5]{Oda}. To work with the embedding of $U_{\sigma}$ into an affine complex
space of minimal dimension it is enough to replace the arbitrary generating
system $\{ \mathbf{m}_{1},\ldots,\mathbf{m}_{k}\}$ of $S_{\sigma}$ by
Hilb$_{M}\left(  \sigma^{\vee}\right)  .$)
\end{definition}

\noindent{}Next, we use the identifications%
\[
\text{Hom}_{\text{semigr.}}(S_{\sigma},\mathbb{C})\overset{\text{(a)}%
}{\longleftrightarrow}\text{Hom}_{\mathbb{C}\text{-alg.}}(\mathbb{C}%
[S_{\sigma}],\mathbb{C})\overset{\text{(b)}}{\longleftrightarrow}\left \{
\text{points of }U_{\sigma}\right \}  \overset{\text{(c)}}{\longleftrightarrow
}\text{Max-Spec}(\mathbb{C}[S_{\sigma}])
\]
with (a) induced by%
\[
\text{Hom}_{\text{semigr.}}(S_{\sigma},\mathbb{C})\ni u\longmapsto \theta
_{u}\in \text{Hom}_{\mathbb{C}\text{-alg.}}(\mathbb{C}[S_{\sigma}%
],\mathbb{C}),\  \  \theta_{u}(\mathbf{e}(\mathbf{m})):=u(\mathbf{m}%
),\forall \mathbf{m}\in S_{\sigma},
\]
(b) induced by%
\[
\text{Hom}_{\mathbb{C}\text{-alg.}}(\mathbb{C}[S_{\sigma}],\mathbb{C}%
)\ni \theta_{u}\longmapsto(u(\mathbf{m}_{1}),\ldots,u(\mathbf{m}_{k}%
))\in \left \{  \text{points of }U_{\sigma}\right \}  ,
\]
and (c) by%
\[
\left \{  \text{points of }U_{\sigma}\right \}  \ni \mathfrak{p}\longmapsto
\mathfrak{m}_{\mathfrak{p}}:=\left \{  \left.  \varphi \in \mathcal{O}%
_{U_{\sigma},\mathfrak{p}}\right \vert \varphi \left(  \mathfrak{p}\right)
=0\right \}  \in \text{Max-Spec}(\mathbb{C}[S_{\sigma}]):=\left \{
\begin{array}
[c]{c}%
\text{maximal ideals}\\
\text{of }\mathbb{C}[S_{\sigma}]
\end{array}
\right \}  ,
\]
where $\mathcal{O}_{U_{\sigma},\mathfrak{p}}\cong \mathbb{C}[S_{\sigma
}]_{\mathfrak{p}}$ is the local ring of $U_{\sigma}$ at $\mathfrak{p}$ (i.e.,
the stalk of the structure sheaf $\mathcal{O}_{U_{\sigma}}$ of $U_{\sigma}$ at
$\mathfrak{p}$). The standard action of the algebraic torus%
\begin{equation}
 \mathbb{T}:=\mathbb{T}_{N}:=\text{Hom}_{\mathbb{C}\text{-alg.}}(\mathbb{C}[M],\mathbb{C}%
)=\text{Hom}_{\text{semigr.}}(M,\mathbb{C})=\text{Hom}_{\text{gr.}%
}(M,\mathbb{C}^{\times})=N\otimes_{\mathbb{Z}}(\mathbb{C}^{\times})\cong%
(\mathbb{C}^{\times})^{2} \label{ALGTORUS}%
\end{equation}
on (the set of points Hom$_{\text{semigr.}}(S_{\sigma},\mathbb{C})$ of)
$U_{\sigma}$ can be conceived as multiplication of semigroup homomorphisms:%
\begin{equation}
\mathbb{T}\times U_{\sigma}\ni(t,u)\longmapsto t\cdot u\in U_{\sigma}.
\label{TACTION}%
\end{equation}
We denote by orb$(\sigma)\in$ Hom$_{\text{semigr.}}(S_{\sigma},\mathbb{C})$ (or, more precisely, by $\text{orb}_{N}{(\sigma)}$, whenever it is necessary to stress that $\sigma$ is an
$N$-cone)
the unique point of $U_{\sigma}$ remaining fixed under (\ref{TACTION}), i.e.,
the semigroup homomorphism mapping $\mathbf{m}\in S_{\sigma}$ onto $1$
whenever $\left \langle \mathbf{m},\mathbf{y}\right \rangle =0$ for all
$\mathbf{y}\in \sigma,$ and onto $0$ otherwise. By Propositions \ref{PQDESCR2}
and \ref{ISO} the type $(p,q)$ of $\sigma$ (up to replacement of\emph{\ }$p$
by its socius $\widehat{p}$) determines the isomorphism class of the
germ\emph{\ }$\left(  U_{\sigma}\text{\emph{, }orb}\left(  \sigma \right)
\right)  .$

\begin{proposition}
\label{ISO}If $\sigma,\tau$ are two $N$-cones, then the following conditions
are equivalent\emph{:\smallskip} \newline \emph{(i)} $\left[  \sigma \right]
_{N}=\left[  \tau \right]  _{N}$\emph{.\smallskip} \newline \emph{(ii)} There is
a $\mathbb{T}_{N}$-equivariant analytic isomorphism $U_{\sigma}\overset{\cong%
}{\longrightarrow}U_{\tau}$ mapping \emph{orb}$\left(  \sigma \right)  $ onto
\emph{orb}$\left(  \tau \right)  .$
\end{proposition}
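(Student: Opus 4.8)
The plan is to move everything to the level of the affine semigroups $S_{\sigma}=\sigma^{\vee}\cap M$, $S_{\tau}=\tau^{\vee}\cap M$ and their semigroup algebras $\mathbb{C}[S_{\sigma}]$, $\mathbb{C}[S_{\tau}]$ (which, $U_{\sigma},U_{\tau}$ being affine and normal, are canonically the rings of global holomorphic functions on $U_{\sigma},U_{\tau}$), using the standard fact that a $\mathbb{T}_{N}$-equivariant morphism $U_{\sigma}\to U_{\tau}$ amounts to a ``monomial'' $\mathbb{C}$-algebra homomorphism $\mathbb{C}[S_{\tau}]\to\mathbb{C}[S_{\sigma}]$, i.e. the restriction of a $\mathbb{C}$-algebra homomorphism $\mathbb{C}[M]\to\mathbb{C}[M]$ sending each $\mathbf{e}(\mathbf{m})$ to a nonzero scalar times $\mathbf{e}(\phi^{\ast}(\mathbf{m}))$ for a suitable group homomorphism $\phi^{\ast}\colon M\to M$. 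For (i)$\Rightarrow$(ii): if $\Psi=\Phi_{\mathcal{A}}\in\mathrm{Aut}_{N}(\mathbb{R}^{2})$ satisfies $\Psi(\sigma)=\tau$, then its transpose $\Psi^{\top}$, characterised by $\langle\Psi^{\top}(\mathbf{m}),\mathbf{n}\rangle=\langle\mathbf{m},\Psi(\mathbf{n})\rangle$, lies in $\mathrm{Aut}_{M}(\mathbb{R}^{2})$ and maps $\tau^{\vee}$ bijectively onto $\sigma^{\vee}$, hence $S_{\tau}$ bijectively onto $S_{\sigma}$; extending $\mathbf{e}(\mathbf{m})\mapsto\mathbf{e}(\Psi^{\top}(\mathbf{m}))$ $\mathbb{C}$-linearly gives a $\mathbb{C}$-algebra isomorphism $\mathbb{C}[S_{\tau}]\xrightarrow{\ \cong\ }\mathbb{C}[S_{\sigma}]$, hence (via the identification of points of $U_{\bullet}$ with $\mathrm{Hom}_{\mathrm{semigr.}}(S_{\bullet},\mathbb{C})$) an analytic isomorphism $f\colon U_{\sigma}\xrightarrow{\ \cong\ }U_{\tau}$, $u\mapsto u\circ\Psi^{\top}$, which satisfies $f(t\cdot u)=\Psi_{\ast}(t)\cdot f(u)$ for the automorphism $\Psi_{\ast}$ of $\mathbb{T}_{N}$ induced by $\Psi$ and is therefore equivariant in the toric sense. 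Finally $\mathrm{orb}(\sigma)$ is by definition the \emph{unique} $\mathbb{T}_{N}$-fixed point of $U_{\sigma}$ (and likewise for $\tau$), so any equivariant isomorphism, in particular $f$, must send $\mathrm{orb}(\sigma)$ to $\mathrm{orb}(\tau)$.

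For (ii)$\Rightarrow$(i), let $f\colon U_{\sigma}\xrightarrow{\cong}U_{\tau}$ be $\mathbb{T}_{N}$-equivariant, say $f(t\cdot x)=\phi(t)\cdot f(x)$ for a group automorphism $\phi$ of $\mathbb{T}_{N}$. Being equivariant, $f$ permutes the $\mathbb{T}_{N}$-orbits; the open dense orbit of $U_{\sigma}$ (a copy of $\mathbb{T}_{N}$, associated with the face $\{\mathbf{0}\}$ of $\sigma$) is the unique two-dimensional orbit, hence is carried onto the open dense orbit of $U_{\tau}$, and $f$ restricts to an equivariant analytic isomorphism $g\colon\mathbb{T}_{N}\xrightarrow{\cong}\mathbb{T}_{N}$. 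Evaluating $g(t\cdot x)=\phi(t)\cdot g(x)$ at the identity shows that $g$ is the translation by $g(e)$ composed with the \emph{analytic} group automorphism $\phi$; since every holomorphic group automorphism of $\mathbb{T}_{N}\cong(\mathbb{C}^{\times})^{2}$ is algebraic, $\phi$ corresponds to a unimodular automorphism $\psi$ of $N$, with induced map $\psi^{\top}=\phi^{\ast}$ on $M$. Now, inside $\mathbb{C}[M]$ the subalgebra $\mathbb{C}[S_{\sigma}]$ (resp.\ $\mathbb{C}[S_{\tau}]$) is exactly the set of Laurent monomials extending to holomorphic functions on $U_{\sigma}$ (resp.\ $U_{\tau}$), and $f^{\ast}\colon\mathbb{C}[S_{\tau}]\xrightarrow{\cong}\mathbb{C}[S_{\sigma}]$ is the restriction of the automorphism of $\mathbb{C}[M]$ induced by $g$, which sends $\mathbf{e}(\mathbf{m})$ to a nonzero scalar times $\mathbf{e}(\phi^{\ast}(\mathbf{m}))$. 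Hence $\phi^{\ast}(S_{\tau})=S_{\sigma}$, whence $\phi^{\ast}(\tau^{\vee})=\sigma^{\vee}$ (each $\bullet^{\vee}$ being the $\mathbb{R}_{\geq0}$-cone generated by $S_{\bullet}$), and dualising, with $(\sigma^{\vee})^{\vee}=\sigma$, $(\tau^{\vee})^{\vee}=\tau$ from Proposition \ref{DUALCNEIDENT}, yields $\psi(\sigma)=\tau$ with $\psi\in\mathrm{Aut}_{N}(\mathbb{R}^{2})$, i.e. $[\sigma]_{N}=[\tau]_{N}$.

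The semigroup-and-monomial manipulations are routine; the step I expect to be the real obstacle lies in the middle of the (ii)$\Rightarrow$(i) argument: one must show both that a $\mathbb{T}_{N}$-equivariant \emph{analytic} isomorphism of the open tori is automatically algebraic (a translation composed with a unimodular automorphism of $N$) and, more delicately, that the resulting lattice automorphism carries $\tau^{\vee}$ \emph{exactly} onto $\sigma^{\vee}$ rather than meeting only some looser compatibility. The first point rests on the classical fact that every holomorphic group homomorphism $(\mathbb{C}^{\times})^{2}\to(\mathbb{C}^{\times})^{2}$ is monomial, hence given by an integer matrix; the second follows because $U_{\sigma}$ being affine and normal, $\mathbb{C}[S_{\sigma}]$ is \emph{intrinsically} its ring of global holomorphic functions, so the extension of $g$ to $f$ forces $f^{\ast}$ to match the monomial subalgebras $\mathbb{C}[S_{\tau}]$ and $\mathbb{C}[S_{\sigma}]$ of $\mathbb{C}[M]$ on the nose. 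This is just the two-dimensional instance of the classical dictionary between $N$-cones modulo $\mathrm{Aut}_{N}(\mathbb{R}^{2})$ and affine toric surfaces modulo $\mathbb{T}_{N}$-equivariant isomorphism (cf.\ \cite[\S1.3]{Oda}, \cite[\S1.2]{Fulton}, \cite[\S1.3]{CLS}); in the write-up I would merely spell it out in our setting, using Propositions \ref{PQDESCR1}, \ref{PQDESCR2} and \ref{DUALCNEIDENT} where convenient, and record that the hypothesis $\mathrm{orb}(\sigma)\mapsto\mathrm{orb}(\tau)$ is in fact automatic, each of $U_{\sigma},U_{\tau}$ possessing a unique $\mathbb{T}_{N}$-fixed point.
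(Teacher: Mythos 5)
Your argument is correct: both implications are the standard cone--torus dictionary, the dual map $\Psi^{\top}$ giving the semigroup isomorphism $S_{\tau}\to S_{\sigma}$ in one direction, and the algebraicity of holomorphic automorphisms of $(\mathbb{C}^{\times})^{2}$ forcing $f^{\ast}$ to be monomial (hence $\phi^{\ast}(S_{\tau})=S_{\sigma}$ and, after dualising, $\psi(\sigma)=\tau$) in the other; your remark that the fixed-point condition is automatic for full-dimensional cones is also right. The paper itself offers no argument here but simply cites Ewald, Ch.~VI, Theorem~2.11, whose proof is essentially the one you have written out, so your proposal matches the intended route.
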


\begin{proof}
It follows from \cite[Ch. VI, Theorem 2.11, pp. 222-223]{Ewald}.
\end{proof}

\begin{proposition}
\label{BASICNESS}Let $\sigma$ be an $N$-cone of type $(p,q).$ The following
conditions are equivalent\emph{:\smallskip} \newline \emph{(i)} $q=1$
\emph{(}and consequently, $p=0$\emph{).\smallskip} \newline \emph{(ii)} The
minimal generators of $\sigma$ constitute a basis of $N.\smallskip$
\newline \emph{(iii)} $U_{\sigma}=U_{{\sigma},N}\cong \mathbb{C}^{2}.$
\end{proposition}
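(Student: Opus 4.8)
The plan is to prove the cycle (i)$\Rightarrow$(ii)$\Rightarrow$(iii)$\Rightarrow$(i), of which only the last implication carries real content. For (i)$\Leftrightarrow$(ii), write $\sigma=\mathbb{R}_{\geq0}\mathbf{n}_{1}+\mathbb{R}_{\geq0}\mathbf{n}_{2}$ with $\mathbf{n}_{1},\mathbf{n}_{2}$ its minimal generators and set $N^{\prime}:=\mathbb{Z}\mathbf{n}_{1}+\mathbb{Z}\mathbf{n}_{2}\subseteq N$. By Proposition \ref{PQDESCR1}(iii), (\ref{MULTDEF}) and Proposition \ref{parallepiped}, $q=q_{\sigma}=\text{mult}_{N}(\sigma)=\sharp(\Pi\cap N)=\left\vert N:N^{\prime}\right\vert$; hence $q=1$ if and only if $N=N^{\prime}$, i.e. (using also Proposition \ref{Basicness}) if and only if $\{\mathbf{n}_{1},\mathbf{n}_{2}\}$ is a $\mathbb{Z}$-basis of $N$, and in that case $0\leq p<q$ forces $p=0$.

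For (ii)$\Rightarrow$(iii), let $\{\mathbf{m}_{1},\mathbf{m}_{2}\}\subset M$ be the basis dual to $\{\mathbf{n}_{1},\mathbf{n}_{2}\}$. Since $\langle a_{1}\mathbf{m}_{1}+a_{2}\mathbf{m}_{2},\mathbf{n}_{j}\rangle=a_{j}$, one checks that $\sigma^{\vee}=\mathbb{R}_{\geq0}\mathbf{m}_{1}+\mathbb{R}_{\geq0}\mathbf{m}_{2}$, so $S_{\sigma}=\sigma^{\vee}\cap M=\mathbb{Z}_{\geq0}\mathbf{m}_{1}+\mathbb{Z}_{\geq0}\mathbf{m}_{2}$ is the free commutative monoid on two generators. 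Via the exponential law the $\mathbb{C}$-algebra epimorphism $\mathbb{C}[z_{1},z_{2}]\twoheadrightarrow\mathbb{C}[S_{\sigma}]$, $z_{i}\mapsto\mathbf{e}(\mathbf{m}_{i})$, carries distinct monomials to distinct basis vectors $\mathbf{e}(\mathbf{m})$ and is therefore an isomorphism; hence $U_{\sigma}=\operatorname{Spec}(\mathbb{C}[S_{\sigma}])\cong\mathbb{C}^{2}$.

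The implication (iii)$\Rightarrow$(i) is the one that needs an argument, because a bare analytic isomorphism $U_{\sigma}\cong\mathbb{C}^{2}$ carries no torus action, so Proposition \ref{ISO} is not available. I would argue through the Zariski cotangent space at orb$(\sigma)$. As $\sigma$ is $2$-dimensional, the only $\mathbf{m}\in S_{\sigma}$ orthogonal to all of $\sigma$ is $\mathbf{0}$, so orb$(\sigma)$ corresponds to the maximal ideal $\mathfrak{m}=\bigoplus_{\mathbf{m}\in S_{\sigma}\smallsetminus\{\mathbf{0}\}}\mathbb{C}\,\mathbf{e}(\mathbf{m})$ of $\mathbb{C}[S_{\sigma}]$; the $S_{\sigma}$-grading shows that $\mathfrak{m}^{2}$ is spanned by those $\mathbf{e}(\mathbf{m})$ for which $\mathbf{m}$ is a sum of two nonzero elements of $S_{\sigma}$, so by the characterisation of the Hilbert basis recalled above, $\mathfrak{m}/\mathfrak{m}^{2}$ has $\mathbb{C}$-basis $\{\mathbf{e}(\mathbf{m})\mid\mathbf{m}\in\text{Hilb}_{M}(\sigma^{\vee})\}$ and $\dim_{\mathbb{C}}\mathfrak{m}/\mathfrak{m}^{2}=\sharp\,\text{Hilb}_{M}(\sigma^{\vee})$. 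Since $U_{\sigma}\cong\mathbb{C}^{2}$ is smooth of dimension $2$, its local ring at orb$(\sigma)$ is regular, which forces $\sharp\,\text{Hilb}_{M}(\sigma^{\vee})=2$; as the Hilbert basis of a $2$-dimensional cone always contains the two primitive generators of its rays, these two vectors must already generate the monoid $S_{\sigma}$, and a short linear-independence argument then shows the half-open parallelogram they span meets $M$ only in $\mathbf{0}$, i.e. $\text{mult}_{M}(\sigma^{\vee})=1$. Were $q>1$, Proposition \ref{DUALCNEIDENT} would give that $\sigma^{\vee}$ is a $(q-p,q)$-cone with $\text{mult}_{M}(\sigma^{\vee})=q>1$, a contradiction; hence $q=1$ and $p=0$. (Alternatively one may simply invoke the standard smoothness criterion for affine toric varieties from \cite{Oda}, \cite{Fulton}.)

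The main obstacle is precisely this last step — converting the abstract isomorphism $U_{\sigma}\cong\mathbb{C}^{2}$ into the combinatorial equality $q=1$; the cotangent-space computation above is the cleanest self-contained route, while the remaining implications are routine bookkeeping with the lattice-index formulae.
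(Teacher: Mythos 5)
Your proof is correct. The equivalence (i)$\Leftrightarrow$(ii) is handled exactly as in the paper, via $q=\mathrm{mult}_{N}(\sigma)=\left\vert N:N^{\prime}\right\vert$ and Proposition \ref{Basicness}. Where you genuinely diverge is in (ii)$\Leftrightarrow$(iii): the paper simply cites Oda's smoothness criterion (\cite[Theorem 1.10, p. 15]{Oda}) for this equivalence, whereas you supply a self-contained argument --- the free-monoid computation for (ii)$\Rightarrow$(iii), and the Zariski cotangent space computation $\dim_{\mathbb{C}}\mathfrak{m}/\mathfrak{m}^{2}=\sharp\,\mathrm{Hilb}_{M}(\sigma^{\vee})$ for (iii)$\Rightarrow$(i). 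That cotangent-space argument is essentially the standard proof of the criterion you are reproving, so you buy self-containedness at the cost of rederiving a textbook fact; the only point one could press you on is the silent passage from an analytic isomorphism $U_{\sigma}\cong\mathbb{C}^{2}$ to regularity of the algebraic local ring at $\mathrm{orb}(\sigma)$, which is the usual comparison of completions and is harmless. Your observation that Proposition \ref{ISO} is not applicable here --- because a bare analytic isomorphism need not be $\mathbb{T}_{N}$-equivariant --- is correct and is precisely the reason the implication (iii)$\Rightarrow$(i) needs its own argument rather than a formal reduction; finishing via $\mathrm{mult}_{M}(\sigma^{\vee})=1$ and Proposition \ref{DUALCNEIDENT} (by contradiction when $q>1$) is a slightly roundabout but valid way to return to $q=1$.
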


\begin{proof}
Since $q=$ mult$_{N}(\sigma)$ (by \ref{PQDESCR1} (iii)), $q=1$ if and only if
the triangle having the origin and the two minimal generators of $\sigma$ as
vertices does not contain any additional lattice point (see (\ref{MULTDEF})).
Hence, the equivalence of (i) and (ii) follows from Proposition
\ref{Basicness}. For the proof of the equivalence of conditions (ii) and (iii)
see \cite[Theorem 1.10, p. 15]{Oda}.
\end{proof}

\noindent{}If the conditions of Proposition \ref{BASICNESS} are satisfied,
then $\sigma$ is said to be a \textit{basic }$N$-\textit{cone}. The non-basic
$N$-cones are characterised by the following:

\begin{proposition}
\label{Sing-Prop}Let $\sigma$ be an $N$-cone of type $(p,q).$ If $q>1,$ then
$p\geq1$ and $\emph{orb}(\sigma)\in U_{\sigma}$ is a cyclic quotient
singularity. \emph{(It is often called} cyclic quotient singularity of type\footnote{In Reid's terminology \cite[p. 370]{Reid-YPG}, it is called \textit{cyclic quotient singularity of type} $\frac{1}{q}(q-p,1).$} $(p,q)$.\emph{)} In particular,
\[
U_{\sigma}=U_{{\sigma},N}\cong \mathbb{C}^{2}/G=\emph{Spec}(\mathbb{C}[z_{1},z_{2}%
]^{G}),\text{ where }G\subset \emph{GL}_{2}\left(\mathbb{C}\right)
\]
is the cyclic group of order $q$ which is generated by \emph{diag}$(\zeta
_{q}^{-p},\zeta_{q})$ \emph{(}with $\zeta_{q}:=$ \emph{exp}$(2\pi \sqrt{-1}%
/q)$\emph{)} and acts on $\mathbb{C}^{2}=$ \emph{Spec}$(\mathbb{C}[z_{1}%
,z_{2}])$ linearly and faithfully.
\end{proposition}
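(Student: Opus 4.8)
The plan is to reduce to the standard cone $\mathbb{R}_{\geq0}\tbinom{1}{0}+\mathbb{R}_{\geq0}\tbinom{p}{q}$ and then to present the associated affine toric surface as a finite quotient of $\mathbb{C}^2$, obtained by passing to the sublattice spanned by the minimal generators. First, $p\geq1$ is immediate: Proposition~\ref{PQDESCR1} gives $\gcd(p,q)=1$, while $p=0$ would force $q=\gcd(0,q)=1$, contradicting $q>1$. Second, since the isomorphism type of the germ $(U_\sigma,\mathrm{orb}(\sigma))$ is determined by the type $(p,q)$ of $\sigma$ (Propositions~\ref{PQDESCR1}, \ref{PQDESCR2}, \ref{ISO}), we may assume $N=M=\mathbb{Z}^2$ and $\sigma=\mathbb{R}_{\geq0}\mathbf{n}_1+\mathbb{R}_{\geq0}\mathbf{n}_2$ with $\mathbf{n}_1=\tbinom{1}{0}$ and $\mathbf{n}_2=\tbinom{p}{q}$.

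The main step is to set $N':=\mathbb{Z}\mathbf{n}_1+\mathbb{Z}\mathbf{n}_2\subseteq N$. Since $|\det(\mathbf{n}_1,\mathbf{n}_2)|=q$, the index is $[N:N']=q$, and $G:=N/N'$ is cyclic of order $q$, generated by the class of $\tbinom{0}{1}$ (as $k\tbinom{0}{1}\in N'$ forces $q\mid k$). With respect to $N'$ the cone $\sigma$ is basic, hence $U_{\sigma,N'}\cong\mathbb{C}^2$ by Proposition~\ref{BASICNESS}; concretely, writing $M':=\mathrm{Hom}_{\mathbb{Z}}(N',\mathbb{Z})$ and letting $\mathbf{w}_1=\tbinom{1}{-p/q}$, $\mathbf{w}_2=\tbinom{0}{1/q}$ be the basis of $M'$ dual to $\{\mathbf{n}_1,\mathbf{n}_2\}$, one checks $\sigma^\vee\cap M'=\mathbb{Z}_{\geq0}\mathbf{w}_1+\mathbb{Z}_{\geq0}\mathbf{w}_2$, so $\mathbb{C}[\sigma^\vee\cap M']=\mathbb{C}[z_1,z_2]$ with $z_i:=\mathbf{e}(\mathbf{w}_i)$. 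The inclusion $N'\hookrightarrow N$ dualizes to $M\subseteq M'$, and the $(M'/M)$-grading of $\mathbb{C}[\sigma^\vee\cap M']$ that places $\mathbf{e}(m)$ in degree $m+M$ has degree-zero component $\mathbb{C}[\sigma^\vee\cap M]=\mathbb{C}[S_\sigma]$; under the perfect pairing $G\times(M'/M)\to\mathbb{Q}/\mathbb{Z}\to\mathbb{C}^\times$ (the last arrow being $t\mapsto\exp(2\pi\sqrt{-1}\,t)$) this grading coincides with the eigenspace decomposition of the induced linear $G$-action on $\mathbb{C}[z_1,z_2]$. Therefore $\mathbb{C}[S_\sigma]=\mathbb{C}[z_1,z_2]^G$, i.e.\ $U_\sigma=U_{\sigma,N}\cong\mathbb{C}^2/G$; this ``quotient by a finite-index sublattice'' presentation is the standard one, cf.\ \cite[\S2.2]{Fulton} and \cite[\S1.3]{Oda}.

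It remains to read off the $G$-action. The generator $g\in G$, i.e.\ the class of $\tbinom{0}{1}$, scales $z_i=\mathbf{e}(\mathbf{w}_i)$ by $\exp(2\pi\sqrt{-1}\,\langle\tbinom{0}{1},\mathbf{w}_i\rangle)$, so $z_1\mapsto\zeta_q^{-p}z_1$ and $z_2\mapsto\zeta_q z_2$; thus $G=\langle\mathrm{diag}(\zeta_q^{-p},\zeta_q)\rangle\subset\mathrm{GL}_2(\mathbb{C})$ acts linearly and faithfully (faithfully, since $\zeta_q$ has order $q=\sharp G$). Finally, $\mathrm{orb}(\sigma)$ — the semigroup homomorphism sending every nonzero element of $\sigma^\vee\cap M$ to $0$ — corresponds to the maximal ideal generated by the non-constant $G$-invariant monomials, hence to the image of $\mathbf{0}\in\mathbb{C}^2$ in $\mathbb{C}^2/G$; it is a singular point, because $q>1$ forces $U_\sigma\not\cong\mathbb{C}^2$ by Proposition~\ref{BASICNESS}, while $\mathrm{orb}(\sigma)$ is the only possible singular point of the affine toric surface $U_\sigma$. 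So $\mathrm{orb}(\sigma)$ is the asserted cyclic quotient singularity of type $(p,q)$. The one genuinely delicate point, I expect, is the bookkeeping in the middle paragraph: matching the $(M'/M)$-grading of $\mathbb{C}[\sigma^\vee\cap M']$ with the character action of $G=N/N'$, and verifying that the dual-basis coordinates turn $\sigma^\vee\cap M'$ into the positive quadrant; the rest is a short direct check or a citation.
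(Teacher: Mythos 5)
Your proof is correct: the reduction to the standard cone $\mathbb{R}_{\geq0}\tbinom{1}{0}+\mathbb{R}_{\geq0}\tbinom{p}{q}$ via Propositions \ref{PQDESCR1}, \ref{PQDESCR2} and \ref{ISO}, the passage to the index-$q$ sublattice $N'=\mathbb{Z}\mathbf{n}_1+\mathbb{Z}\mathbf{n}_2$ with $G=N/N'$ generated by the class of $\tbinom{0}{1}$, and the identification of $\mathbb{C}[S_\sigma]$ with the degree-zero component of the $(M'/M)$-graded ring $\mathbb{C}[z_1,z_2]$ all check out, including the dual-basis computation giving the eigenvalues $\zeta_q^{-p}$ and $\zeta_q$ and the argument (via Proposition \ref{BASICNESS}) that $\mathrm{orb}(\sigma)$ is genuinely singular when $q>1$. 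The paper offers no argument of its own here — it simply cites Oda, Proposition 1.24 — and what you have written is exactly the standard sublattice-quotient proof found in that reference, so the only remark to make is that you have supplied the details the paper delegates to the literature.
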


\begin{proof}
\noindent See Oda \cite[Proposition 1.24,
p.30]{Oda}.
\end{proof}

\begin{proposition}
\label{GORENSTPROP}Let $\sigma$ be a non-basic $N$-cone of type $(p,q).$ The
following conditions are equivalent\emph{:\smallskip}\newline \emph{(i)}
$\emph{orb}(\sigma)\in U_{\sigma}$ is a Gorenstein singularity \emph{(}i.e.,
$\mathcal{O}_{U_{\sigma},\emph{orb}(\sigma)}$ is a Gorenstein local
ring\emph{).\smallskip} \newline \emph{(ii)} $p=1.$
\end{proposition}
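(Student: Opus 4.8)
The plan is to reduce to the standard model of $\sigma$ and then apply the standard toric criterion for Gorenstein-ness. By Propositions \ref{PQDESCR1} and \ref{PQDESCR2} the pair $(p,q)$ (up to replacing $p$ by its socius $\widehat p$) determines $[\sigma]_N$, and by Proposition \ref{ISO} it therefore determines the isomorphism class of the germ $(U_\sigma,\mathrm{orb}(\sigma))$. So I may assume $N=\mathbb{Z}^2$, $M=\mathbb{Z}^2$, and $\sigma=\mathbb{R}_{\geq0}\mathbf{n}_1+\mathbb{R}_{\geq0}\mathbf{n}_2$ with minimal (hence primitive) generators $\mathbf{n}_1=\tbinom{1}{0}$ and $\mathbf{n}_2=\tbinom{p}{q}$; since $\sigma$ is non-basic, $q>1$, whence $1\leq p<q$ by Proposition \ref{Sing-Prop}.

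Next I would recall the Gorenstein criterion in the toric setting. The surface $U_\sigma$ is normal, hence Cohen--Macaulay, and being simplicial-toric it is $\mathbb{Q}$-Gorenstein; so $\mathrm{orb}(\sigma)$ is a Gorenstein singularity if and only if a canonical (Weil) divisor of $U_\sigma$ is Cartier there. A canonical divisor is $K_{U_\sigma}=-D_1-D_2$, the negative of the sum of the two torus-invariant prime divisors $D_i$ attached to the rays $\mathbb{R}_{\geq0}\mathbf{n}_i$. On the affine toric variety $U_\sigma$ (where $\mathrm{Pic}(U_\sigma)=0$) a torus-invariant Weil divisor is Cartier precisely when it is the divisor of a character $\mathbf{e}(\mathbf{m}_\sigma)$, $\mathbf{m}_\sigma\in M$; combined with $\mathrm{ord}_{D_i}(\mathbf{e}(\mathbf{m}))=\langle\mathbf{m},\mathbf{n}_i\rangle$, this yields the well-known fact that $\mathrm{orb}(\sigma)$ is Gorenstein if and only if there exists $\mathbf{m}_\sigma\in M$ with $\langle\mathbf{m}_\sigma,\mathbf{n}_1\rangle=\langle\mathbf{m}_\sigma,\mathbf{n}_2\rangle=1$ (see \cite[Ch.\ 4 and Ch.\ 8]{CLS}, or \cite[\S1.4]{Oda}). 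Equivalently, one may argue via the cyclic-quotient description of Proposition \ref{Sing-Prop}: the group $G$ contains no pseudo-reflections (no nontrivial power of $\mathrm{diag}(\zeta_q^{-p},\zeta_q)$ fixes a coordinate axis, since $\gcd(p,q)=1$), so by Watanabe's theorem $\mathbb{C}^2/G$ is Gorenstein iff $G\subset\mathrm{SL}_2(\mathbb{C})$, i.e.\ iff $\det(\mathrm{diag}(\zeta_q^{-p},\zeta_q))=\zeta_q^{\,1-p}=1$.

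Finally I would solve the resulting linear condition. Writing $\mathbf{m}_\sigma=\tbinom{a}{b}\in\mathbb{Z}^2$, the equations $\langle\mathbf{m}_\sigma,\mathbf{n}_1\rangle=1$ and $\langle\mathbf{m}_\sigma,\mathbf{n}_2\rangle=1$ become $a=1$ and $pa+qb=1$, i.e.\ $qb=1-p$. Since $1\leq p<q$ we have $1-q<1-p\leq0$, so $q\mid(1-p)$ forces $1-p=0$, that is $p=1$ (with $b=0$). Conversely, if $p=1$ then $\mathbf{m}_\sigma=\tbinom{1}{0}$ satisfies both equations; equivalently, $\zeta_q^{\,1-p}=1$. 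Hence (i) $\Leftrightarrow$ (ii). The only step that is not purely mechanical is the second paragraph, where one must invoke the correct shape of the Gorenstein criterion (normality $\Rightarrow$ Cohen--Macaulay; simpliciality $\Rightarrow$ reduction to ``$K$ Cartier''; triviality of $\mathrm{Pic}(U_\sigma)$ $\Rightarrow$ ``Cartier iff divisor of a character''); after that, the verification is the two-line computation above.
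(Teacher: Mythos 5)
Your proof is correct, and in fact it contains two complete arguments. Your ``equivalently, one may argue via the cyclic-quotient description'' aside \emph{is} the paper's proof verbatim: the author invokes Proposition \ref{Sing-Prop} to write $U_\sigma\cong\mathbb{C}^2/G$ with $G=\langle\mathrm{diag}(\zeta_q^{-p},\zeta_q)\rangle$ and then applies Watanabe's theorem, so that Gorenstein-ness becomes $\zeta_q^{\,1-p}=1$, forced to mean $p=1$ since $0<p<q$. (Your remark that $G$ contains no pseudo-reflections because $\gcd(p,q)=1$ is a hypothesis of Watanabe's theorem that the paper leaves tacit, so including it is a small improvement.) Your primary route --- reduce to the standard model $\mathbb{R}_{\geq0}\tbinom{1}{0}+\mathbb{R}_{\geq0}\tbinom{p}{q}$, characterise Gorenstein as ``$K_{U_\sigma}$ Cartier'' using Cohen--Macaulayness and $\mathrm{Pic}(U_\sigma)=0$, and solve $\langle\mathbf{m}_\sigma,\mathbf{n}_1\rangle=\langle\mathbf{m}_\sigma,\mathbf{n}_2\rangle=1$ --- is a genuinely different and equally standard derivation; it has the advantage of staying entirely inside the toric divisor formalism the paper sets up later (Theorem \ref{CARTIERDIVTHM} with $\lambda_\varrho=-1$ gives exactly your condition), at the cost of needing the chain ``normal $\Rightarrow$ CM in dimension $2$'' and the triviality of the Picard group of an affine toric surface, whereas the Watanabe route needs only the quotient presentation already established in Proposition \ref{Sing-Prop}. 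Both arguments terminate in the same one-line divisibility $q\mid(1-p)$ with $0<p<q$, so there is no gap.
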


\begin{proof}
Since the quotient space $U_{\sigma}\cong \mathbb{C}^{2}/G,$ $G:=\left \langle
\text{diag}(\zeta_{q}^{-p},\zeta_{q})\right \rangle ,$ is Gorenstein (as
complex variety) if and only if $G\subset$ $\text{SL}_{2}\left(\mathbb{C}\right)$
(see \cite{Watanabe}), condition (i) is equivalent to $\zeta_{q}^{1-p}=1,$
i.e., to $p=1$ (because $0<p<q$).
\end{proof}

\noindent{}$\bullet$ $N$-\textbf{fans}. A set $\Delta$ consisting of finitely
many $N$-cones and their $0$- and $1$-dimensional faces (i.e., the origin and
their rays) will be referred to as an $N$-\textit{fan} if for any $N$-cones
$\sigma_{1},\sigma_{2}$ belonging to $\Delta$ with $\sigma_{1}\neq \sigma_{2},$
the intersection $\sigma_{1}\cap \sigma_{2}$ is either the singleton
$\{ \mathbf{0}\}$ or a common ray of $\sigma_{1}$ and $\sigma_{2}.$ (We shall
denote by $\Delta \left(  1\right)  $ and $\Delta \left(  2\right)  $ the set of
rays and the set of $N$-cones of $\Delta,$ respectively, and by $\left \vert
\Delta \right \vert $ the \textit{support} of $\Delta,$ i.e., the union of its
elements.) If $\Delta$ is an $N$-fan, then using the so-called Glueing Lemma
(for the affine toric surfaces $U_{\sigma},\sigma \in \Delta \left(  2\right)  $)
one defines the (normal) \textit{toric surface} $X(N,\Delta)$
\textit{associated with} $\Delta.$ The actions of the algebraic torus
(\ref{ALGTORUS}) on the affine toric surfaces $U_{\sigma},\sigma \in
\Delta \left(  2\right)  ,$ defined in (\ref{TACTION}) are compatible with the
patching isomorphisms, giving the natural action of $\mathbb{T}$ on
$X(N,\Delta)$ (which extends the multiplication in $\mathbb{T}$). All the
orbits w.r.t. it are either of the form orb$(\sigma),$ $\sigma \in \Delta \left(
2\right)  ,$ with orb$(\sigma)$ the unique $\mathbb{T}$-fixed point of
$U_{\sigma}\hookrightarrow X(N,\Delta)$ as defined before, or of the form
orb$(\varrho):=$ Hom$_{\text{gr.}}(\varrho^{\perp}\cap M,\mathbb{Cr}\{0\}),$
$\varrho \in \Delta \left(  1\right)  $ (which are $1$-dimensional subvarieties
of $X(N,\Delta)$) with $M:=$ Hom$_{\mathbb{Z}}(N,\mathbb{Z})$ and
$\varrho^{\perp}:=\left \{  \left.  \mathbf{x}\in \mathbb{R}^{2}\right \vert
\left \langle \mathbf{x},\mathbf{y}\right \rangle =0,\forall \mathbf{y}\in
\varrho \right \}  $, or, finally, orb$(\{ \mathbf{0}\})=\mathbb{T}.$ If $D$ is a
Weil divisor on $X(N,\Delta),$ then $D\sim D^{\prime}$ for some $\mathbb{T}%
$-invariant Weil divisor $D^{\prime}$ (with \textquotedblleft$\sim
$\textquotedblright \ meaning linearly equivalent). It is known that every Weil
divisor on $X(N,\Delta)$ is a $\mathbb{Q}$-Cartier divisor (see \cite[p.
65]{Fulton}). We denote by Div$_{\text{W}}^{\mathbb{T}}(X(N,\Delta))$ and
Div$_{\text{C}}^{\mathbb{T}}(X\left(  N,\Delta \right)  )$ the groups of
$\mathbb{T}$-invariant Weil and Cartier divisors on $X(N,\Delta)$,
respectively. The first of them is described as follows:%

\begin{equation}
\text{Div}_{\text{W}}^{\mathbb{T}}(X(N,\Delta))=%
{\textstyle \bigoplus \limits_{\varrho \in \Delta \left(  1\right)  }}
\mathbb{Z\,}\mathbf{V}_{\Delta}(\varrho)\  \  \ [\text{where }\mathbf{V}%
_{\Delta}(\varrho)\text{ denotes the topological closure of orb}(\varrho)].
\label{DIVW}%
\end{equation}

\begin{theorem}
[{$\mathbb{T}$-invariant Cartier divisors, \cite[p. 62]{Fulton}}%
]\label{CARTIERDIVTHM}If $D=%
{\textstyle \sum \limits_{\varrho \in \Delta \left(  1\right)  }}
\mathbb{\lambda}_{\varrho}\mathbb{\,}\mathbf{V}_{\Delta}(\varrho)\in$
\emph{Div}$_{\text{\emph{W}}}^{\mathbb{T}}(X(N,\Delta))$ $\left(
\mathbb{\lambda}_{\varrho}\in \mathbb{Z}\right)  ,$ then%
\begin{equation}
D\in \text{\emph{Div}}_{\text{\emph{C}}}^{\mathbb{T}}(X\left(  N,\Delta \right)
)\Longleftrightarrow \left \{
\begin{array}
[c]{c}%
\text{\emph{for each} }\sigma \in \Delta \left(  2\right)  \text{ \emph{there is
an} }\mathbf{l}_{\sigma}\in M \smallskip \\
\text{ \emph{such that} }\left \langle \mathbf{l}_{\sigma},\mathbf{n}_{\varrho
}\right \rangle =-\lambda_{\varrho},\  \forall \varrho \in \Delta \left(  1\right)
\cap \sigma
\end{array}
\right \}  , \label{CARTIERCRIT}%
\end{equation}
with $\mathbf{n}_{\varrho}$ denoting the minimal generator of $\varrho
\in \Delta \left(  1\right)  .$
\end{theorem}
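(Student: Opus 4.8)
The plan is to localise on the affine charts $U_{\sigma}$, $\sigma\in\Delta(2)$, and to exploit the computation of the principal divisor of a character. Recall that for $\mathbf{m}\in M$ the element $\mathbf{e}(\mathbf{m})$, viewed as a rational function on $X(N,\Delta)$, is $\mathbb{T}$-semi-invariant, and that the order of vanishing of $\mathbf{e}(\mathbf{m})$ along the prime divisor $\mathbf{V}_{\Delta}(\varrho)=\overline{\text{orb}(\varrho)}$ equals $\langle\mathbf{m},\mathbf{n}_{\varrho}\rangle$, so that
\[
\text{div}(\mathbf{e}(\mathbf{m}))=\sum_{\varrho\in\Delta(1)}\langle\mathbf{m},\mathbf{n}_{\varrho}\rangle\,\mathbf{V}_{\Delta}(\varrho).
\]
I would also record that on the chart $U_{\sigma}$ the $\mathbb{T}$-invariant prime divisors are precisely the $\mathbf{V}_{\Delta}(\varrho)$ with $\varrho\in\Delta(1)\cap\sigma$ (all other $\mathbf{V}_{\Delta}(\varrho)$ being disjoint from $U_{\sigma}$), so that $D|_{U_{\sigma}}=\sum_{\varrho\in\Delta(1)\cap\sigma}\lambda_{\varrho}\,\mathbf{V}_{\Delta}(\varrho)$; and that, since being Cartier is a local property and $X(N,\Delta)$ is covered by the charts $U_{\sigma}$, $\sigma\in\Delta(2)$, together with their open subvarieties $U_{\tau}$ ($\tau$ a face of $\sigma$), the divisor $D$ is Cartier if and only if each $D|_{U_{\sigma}}$, $\sigma\in\Delta(2)$, is Cartier.

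For the implication ``$\Leftarrow$'': given $\mathbf{l}_{\sigma}\in M$ with $\langle\mathbf{l}_{\sigma},\mathbf{n}_{\varrho}\rangle=-\lambda_{\varrho}$ for all $\varrho\in\Delta(1)\cap\sigma$, the displayed formula applied with $\mathbf{m}=-\mathbf{l}_{\sigma}$ and restricted to $U_{\sigma}$ gives $\text{div}(\mathbf{e}(-\mathbf{l}_{\sigma}))|_{U_{\sigma}}=\sum_{\varrho\in\Delta(1)\cap\sigma}\lambda_{\varrho}\,\mathbf{V}_{\Delta}(\varrho)=D|_{U_{\sigma}}$. Hence $D|_{U_{\sigma}}$ is principal, in particular Cartier, for every $\sigma\in\Delta(2)$, and therefore $D$ is Cartier.

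For ``$\Rightarrow$'': if $D$ is Cartier, then $D|_{U_{\sigma}}$ is Cartier, hence principal in the local ring of $U_{\sigma}$ at $\text{orb}(\sigma)$. The key point is that a local equation for $D|_{U_{\sigma}}$ at $\text{orb}(\sigma)$ may be chosen to be $\mathbb{T}$-semi-invariant, i.e.\ of the form $c\,\mathbf{e}(\mathbf{m})$ with $c\in\mathbb{C}^{\times}$ and $\mathbf{m}\in M$: one decomposes an arbitrary local equation into its $M$-homogeneous components --- $\mathbb{C}[S_{\sigma}]$, and hence its localisation, being $M$-graded --- and uses the $\mathbb{T}$-invariance of $D$ to see that a single homogeneous component already cuts out $D$ near $\text{orb}(\sigma)$. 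Since $\text{orb}(\sigma)$ lies in the closure of $\text{orb}(\varrho)$ for every $\varrho\in\Delta(1)\cap\sigma$, the equality $D|_{U_{\sigma}}=\text{div}(\mathbf{e}(\mathbf{m}))$ then holds on all of $U_{\sigma}$; comparing the coefficient of $\mathbf{V}_{\Delta}(\varrho)$ on the two sides yields $\lambda_{\varrho}=\langle\mathbf{m},\mathbf{n}_{\varrho}\rangle$ for $\varrho\in\Delta(1)\cap\sigma$, so $\mathbf{l}_{\sigma}:=-\mathbf{m}$ satisfies (\ref{CARTIERCRIT}).

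The main obstacle is exactly the semi-invariance step in ``$\Rightarrow$'': upgrading ``$D|_{U_{\sigma}}$ is Cartier'' (a priori only locally principal) to ``$D|_{U_{\sigma}}$ is the divisor of a single character''. Concretely this amounts to the triviality of the Picard group of the affine toric surface $U_{\sigma}$ together with the fact that every $\mathbb{T}$-invariant Cartier divisor on it is $\text{div}(\mathbf{e}(\mathbf{m}))$ for some $\mathbf{m}\in M$; it rests on the $M$-grading of $\mathbb{C}[S_{\sigma}]$ and on the presence of the $\mathbb{T}$-fixed point $\text{orb}(\sigma)$ in $U_{\sigma}$. Alternatively, one may simply invoke Fulton \cite[p. 62]{Fulton}, from which the present theorem is quoted.
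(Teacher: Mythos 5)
The paper states this result without proof, citing Fulton, so there is no in-text argument to compare against; your proof is correct and is essentially the standard argument from the cited source (Fulton, p.~62; see also Cox--Little--Schenck, Theorem 4.2.8): localisation to the charts $U_{\sigma}$, $\sigma\in\Delta(2)$, the formula $\operatorname{div}(\mathbf{e}(\mathbf{m}))=\sum_{\varrho}\langle\mathbf{m},\mathbf{n}_{\varrho}\rangle\,\mathbf{V}_{\Delta}(\varrho)$, and, for the forward implication, the $M$-grading of $\mathbb{C}[S_{\sigma}]$ together with the presence of the fixed point $\operatorname{orb}(\sigma)$ to replace a local equation by a single character. You correctly isolate the only delicate step (upgrading ``locally principal'' to ``divisor of a character'' via graded Nakayama at $\operatorname{orb}(\sigma)$) and your sign bookkeeping matches the convention $\langle\mathbf{l}_{\sigma},\mathbf{n}_{\varrho}\rangle=-\lambda_{\varrho}$ in (\ref{CARTIERCRIT}).
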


\noindent{}The group Div$_{\text{C}}^{\mathbb{T}}(X\left(  N,\Delta \right)  )$
can be also described in terms of $\Delta$-support functions. A $\Delta
$-\textit{support function} (\textit{integral w.r.t.} $N$) is a function
$h:\left \vert \Delta \right \vert \longrightarrow \mathbb{R}$ which is linear on
each $N$-cone belonging to $\Delta,$ with $h\left(  \left \vert \Delta
\right \vert \cap N\right)  \subset \mathbb{Z}.$ Let SF$\left(  N,\Delta \right)
$ denote the additive group of all $\Delta$-support functions. Then
\begin{equation}
\text{SF}\left(  N,\Delta \right)  \ni h\longmapsto D_{h}:=-\sum_{\varrho
\in \Delta \left(  1\right)  }h(\mathbf{n}_{\varrho})\mathbf{V}_{\Delta}%
(\varrho)\in \text{Div}_{\text{C}}^{\mathbb{T}}(X\left(  N,\Delta \right)  )
\label{CORRSF1}%
\end{equation}
is a group isomorphism having%
\begin{equation}
\text{Div}_{\text{C}}^{\mathbb{T}}(X\left(  N,\Delta \right)  )\ni
D=\sum_{\varrho \in \Delta \left(  1\right)  }\lambda_{\varrho}\mathbf{V}%
_{\Delta}(\varrho)\longmapsto h_{D}\in \text{SF}\left(  N,\Delta \right)
\label{CORRSF2}%
\end{equation}
as its inverse (i.e., $D=D_{h_{D}}$ and $h=h_{D_{h}},$ cf. \cite[Theorem
4.2.12, p. 184]{CLS}), where
\begin{equation}
\left.  h_{D}\right \vert _{\sigma}(\mathbf{y}):=\left \langle \mathbf{l}%
_{\sigma},\mathbf{y}\right \rangle ,\  \forall \sigma \in \Delta \left(  2\right)
,\  \text{with\ }h_{D}(\mathbf{n}_{\varrho})=-\lambda_{\varrho},\  \forall
\varrho \in \Delta \left(  1\right)  . \label{HDDEF}%
\end{equation}

\begin{note}
[Canonical divisor and the Euler number]\label{NoteCAND}(i) The canonical (divisorial) sheaf
$\omega_{X(N,\Delta)}$ on $X(N,\Delta)$ is isomorphic to $\mathcal{O}%
_{X(N,\Delta)}(-\sum_{\varrho \in \Delta \left(  1\right)  }\mathbf{V}_{\Delta
}(\varrho))$ (see \cite[\S 4.3, pp. 85-86]{Fulton} or \cite[Theorem 8.2.3, pp.
366-367]{CLS}). So we may define the Weil divisor (\ref{KFORMULA}) as
\textit{canonical divisor} of $X(N,\Delta)$:%
\begin{equation}
K_{X(N,\Delta)}:=-\sum_{\varrho \in \Delta \left(  1\right)  }\mathbf{V}_{\Delta
}(\varrho). \label{KFORMULA}%
\end{equation}
(ii) The topological \textit{Euler number}
\[
e(X(N,\Delta)):=\sum_{j=0}^{4}\left(  -1\right)  ^{j}\dim_{\mathbb{R}}%
H^{j}(X(N,\Delta),\mathbb{R})
\]
of $X(N,\Delta)$ equals $\sharp \Delta \left(  2\right)  $ (see \cite[p.
59]{Fulton}).
\end{note}

\noindent{}Let $\Delta$ be an $N$-fan. If $\Delta^{\prime}$ is a
\textit{refinement} of $\Delta$ (i.e., if $\Delta^{\prime}$ is an $N$-fan with
$\left \vert \Delta^{\prime}\right \vert =\left \vert \Delta \right \vert $ and
each $N$-cone of $\Delta$ is a union of $N$-cones of $\Delta^{\prime}$), then
the induced $\mathbb{T}$-equivariant holomorphic map%
\begin{equation}
f:X(N,\Delta^{\prime})\longrightarrow X(N,\Delta) \label{INDUCEDMAP}%
\end{equation}
is proper and birational (see \cite[Corollary 1.17, p. 23]{Oda}). The singular
locus of $X(N,\Delta)$ is
\[
\text{Sing}(X(N,\Delta))=\left \{  \text{orb}(\sigma)\left \vert \sigma \in
\Delta \left(  2\right)  ,\  \sigma \text{ non-basic}\right.  \right \}  .
\]
In the case in which Sing$(X(N,\Delta))\neq \varnothing,$ it is always possible
to construct (by suitable successive $N$-cone subdivisions) a refinement
$\Delta^{\prime}$ of $\Delta$ such that Sing$(X(N,\Delta^{\prime
}))=\varnothing,$ i.e., such that (\ref{INDUCEDMAP}) is a resolution of the
singular points of $X(N,\Delta)$ (a \textit{desingularization} of
$X(N,\Delta)$). The so-called \textit{minimal }desingularization
$f:X(N,\widetilde{\Delta})\longrightarrow X(N,\Delta)$ of a toric surface
$X(N,\Delta)$ (which is unique, up to factorisation by an isomorphism) is that
one arising from the coarsest refinement $\widetilde{\Delta}$ of $\Delta$ with
Sing$(X(N,\widetilde{\Delta}))=\varnothing.$ \medskip

\noindent $\bullet$ \textbf{Intersection numbers.} If $X(N,\Delta)$ is smooth, then the
\textit{intersection number} $D_{1}\cdot D_{2}\in \mathbb{Z}$ of two divisors
$D_{1},D_{2}$ on $X(N,\Delta)$ with compactly supported intersection is
defined in the usual sense (see \cite[2.4.9, p. 40]{FultonITH}). If
$X(N,\Delta)$ is singular and compact, and $D_{1},D_{2}$ two $\mathbb{Q}$-Weil
divisors on $X(N,\Delta),$ we set%
\begin{equation}
D_{1}\cdot D_{2}:=f^{\star}(D_{1})\cdot f^{\star}(D_{2})\in \mathbb{Q},
\label{INTNUMBSING}%
\end{equation}
where $f:X(N,\Delta^{\prime})\longrightarrow X(N,\Delta)$ is an arbitrary
$\mathbb{T}$-equivariant desingularization of $X(N,\Delta)$ and $f^{\star
}(D_{j})$ the pullback of $D_{j},$ $j\in \{1,2\},$ via $f$ in the sense of
Mumford (see \cite[pp. 17-18]{Mumford} and \cite[\S 1]{Dais1}). It is easy to
prove that (\ref{INTNUMBSING}) does not depend on the choice of the
desingularization (cf. Fulton \cite[7.1.16, p. 125]{FultonITH}).\medskip

\noindent{}$\bullet$ \textbf{Continued fractions and minimal desingularization
of} $U_{\sigma}.$ Let $\sigma=\mathbb{R}_{\geq0}\mathbf{n}_{1}+\mathbb{R}%
_{\geq0}\mathbf{n}_{2}$ be an $N$-cone with $\mathbf{n}_{1},\mathbf{n}_{2}$ as
minimal generators. The affine toric surface $U_{\sigma}$ can be viewed as
$X(N,\Delta_{\sigma})$ with $\Delta_{\sigma}:=\left \{  \left \{  \mathbf{0}%
\right \}  ,\mathbb{R}_{\geq0}\mathbf{n}_{1},\mathbb{R}_{\geq0}\mathbf{n}%
_{2},\sigma \right \}  .$ If $\sigma$ is \textit{non-basic }of type $(p,q)$ (as
in Proposition \ref{Sing-Prop}), we consider the negative-regular continued
fraction expansion
\begin{equation}
\dfrac{q}{q-p}=\left[  \! \left[  b_{1},b_{2},\ldots,b_{s}\right]  \! \right]
:=b_{1}-\frac{1}{b_{2}-\cfrac{1}{%
\begin{array}
[c]{cc}%
\ddots & \\
& b_{s-1}-\dfrac{1}{b_{s}}%
\end{array}
}}\ , 
\label{HJCONTFRAC}
\end{equation}
of $\tfrac{q}{q-p}$ (with $b_{s}\geq2$) and define recursively $\mathbf{u}%
_{0},\mathbf{u}_{1},\ldots,\mathbf{u}_{s},\mathbf{u}_{s+1}\in N$ by setting
\begin{equation}
\mathbf{u}_{0}:=\mathbf{n}_{1},\mathbf{u}_{1}:=\frac{1}{q}((q-p)\mathbf{n}%
_{1}+\mathbf{n}_{2}),\  \text{and \ }\mathbf{u}_{j+1}:=b_{j}\mathbf{u}%
_{j}-\mathbf{u}_{j-1},\  \  \forall j\in \{1,\ldots,s\}. \label{LATPOINTSSIGMA1}%
\end{equation}
It is easy to calculate $b_{1},b_{2},\ldots,b_{s}$ (see, e.g., \cite[Lemma 3.4 (i)]{DHH} and \cite{Karmazyn}) and to verify that%
\begin{equation}
\mathbf{u}_{s}=\frac{1}{q}(\mathbf{n}_{1}+(q-\widehat{p})\mathbf{n}%
_{2}),\text{ }\mathbf{u}_{s+1}=\mathbf{n}_{2},\text{ and }b_{j}\geq2\ \ \text{for
all }j\in \{1,\ldots,s\}. \label{LATPOINTSSIGMA2}%
\end{equation}

\begin{note}
(i) $p,\widehat{p},q$ and the sum $b_{1}+\cdots+b_{s}\,$\ are related to each
other via the formula%
\begin{equation}
12\, \text{DS}\left(  p,q\right)  =\sum_{j=1}^{s}\left(  3-b_{j}\right)
+\frac{1}{q}\left(  p+\widehat{p}\, \right)  -2, \label{DSFORMULA}%
\end{equation}
where
\[
\text{DS}\left(  p,q\right)  :=\sum_{\iota=1}^{q-1}\left(  \! \! \! \left(
\frac{\iota}{q}\right)  \! \! \! \right)  \left(  \! \! \! \left(  \frac{p\iota}%
{q}\right)  \! \! \! \right)
\]
is the \textit{Dedekind sum} of the pair $(p,q).$ (For an $x\in \mathbb{Q},$
$\left(  \! \left(  x\right)  \! \right)  :=$ $x-\left \lfloor x\right \rfloor
-\frac{1}{2}$ if $x\notin \mathbb{Z}$ and is $0$ if $x\in \mathbb{Z}%
.$)\smallskip \  \newline(ii) Since $\sigma^{\vee}=\mathbb{R}_{\geq0}%
\mathbf{m}_{1}^{\prime}+\mathbb{R}_{\geq0}(q\mathbf{m}_{1}-p\mathbf{m}%
_{1}^{\prime})$ is a $(q-p,q)$-cone (see (\ref{GENSIGMADUAL})), considering
the negative-regular continued fraction expansion
\[
\dfrac{q}{p}=\dfrac{q}{q-(q-p)}=\left[  \! \left[  b_{1}^{\vee},b_{2}^{\vee
},\ldots,b_{t}^{\vee}\right]  \! \right]
\]
of $\tfrac{q}{p}$ we may define analogously $\mathbf{u}_{0}^{\vee}%
,\mathbf{u}_{1}^{\vee},\ldots,\mathbf{u}_{t}^{\vee},\mathbf{u}_{t+1}^{\vee}\in
M:=$ Hom$_{\mathbb{Z}}(N,\mathbb{Z})$ by setting
\begin{equation}
\mathbf{u}_{0}^{\vee}:=\mathbf{m}_{1}^{\prime},\text{ }\mathbf{u}_{1}^{\vee
}:=\mathbf{m}_{1},\  \text{and \ }\mathbf{u}_{j+1}^{\vee}:=b_{j}^{\vee
}\mathbf{u}_{j}^{\vee}-\mathbf{u}_{j-1}^{\vee},\  \  \forall j\in \{1,\ldots,t\},
\label{LATPOINTSSIGMADUAL1}%
\end{equation}
with%
\begin{equation}
\mathbf{u}_{t}^{\vee}=\frac{1}{q}(\mathbf{m}_{1}^{\prime}+\widehat
{p}(q\mathbf{m}_{1}-p\mathbf{m}_{1}^{\prime})),\text{ }\mathbf{u}_{t+1}^{\vee
}=q\mathbf{m}_{1}-p\mathbf{m}_{1}^{\prime},\text{ and }b_{j}^{\vee}\geq2\text{
for all }j\in \{1,\ldots,t\}. \label{LATPOINTSSIGMADUAL2}%
\end{equation}
It is known (cf. \cite[p. 29]{Oda}) that%
\begin{equation}
\left(  b_{1}+\cdots+b_{s}\right)  -s=\left(  b_{1}^{\vee}+\cdots+b_{t}^{\vee
}\right)  -t=s+t-1. \label{BBDUALREL}%
\end{equation}

\end{note}

\begin{proposition}
\label{HILBB2DIM}If we define
\begin{equation}
\Theta_{\sigma}:=\text{ \emph{conv}}\left(  \sigma \cap \left(  N\mathbb{r}%
\left \{  \mathbf{0}\right \}  \right)  \right)  ,\  \text{\emph{resp.,}%
}\  \  \Theta_{\sigma^{\vee}}:=\text{ \emph{conv}}\left(  \sigma^{\vee}%
\cap \left(  M\mathbb{r}\left \{  \mathbf{0}\right \}  \right)  \right)  ,
\label{BIGTHETAS}%
\end{equation}
and denote by $\partial \Theta_{\sigma}^{\mathbf{cp}}$ \emph{(}resp., by
$\partial \Theta_{\sigma^{\vee}}^{\mathbf{cp}}$\emph{)} the part of the
boundary $\partial \Theta_{\sigma}$ \emph{(}resp., of $\partial \Theta
_{\sigma^{\vee}}$\emph{)} containing only its compact edges, then%
\[
\emph{Hilb}_{N}\left(  \sigma \right)  =\partial \Theta_{\sigma}^{\mathbf{cp}%
}\cap N=\left \{  \mathbf{u}_{j}\  \left \vert \ 0\leq j\leq s\right.
+1\right \}  ,\smallskip \text{ }\emph{Hilb}_{M}\left(  \sigma^{\vee}\right)
=\partial \Theta_{\sigma^{\vee}}^{\mathbf{cp}}\cap M=\left \{  \mathbf{u}%
_{j}^{\vee}\  \left \vert \ 0\leq j\leq t\right.  +1\right \}  .
\]
\emph{(See Figure \ref{Fig.2}.) }
\end{proposition}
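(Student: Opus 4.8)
The plan is to reduce to one combinatorial statement and apply it twice: for \emph{any} non-basic $N$-cone $\sigma$ of type $(p,q)$ one has $\mathrm{Hilb}_N(\sigma)=\partial\Theta_\sigma^{\mathbf{cp}}\cap N=\{\mathbf{u}_j\mid 0\leq j\leq s+1\}$. Since $\sigma^{\vee}$ is, by Proposition \ref{DUALCNEIDENT}, a $(q-p,q)$-cone with $q>1$, whose associated continued fraction is $q/p=q/(q-(q-p))=[\![b_1^{\vee},\dots,b_t^{\vee}]\!]$ and whose associated points are the $\mathbf{u}_j^{\vee}$, the assertion for $\sigma^{\vee}$ is literally this claim applied to $\sigma^{\vee}$, so both halves of the Proposition follow. (If $\sigma$ — hence $\sigma^{\vee}$ — is basic, the claim is trivial, the two minimal generators being a $\mathbb{Z}$-basis.) As $\mathrm{Hilb}_N(\sigma)$, $\Theta_\sigma$ and the recursion (\ref{LATPOINTSSIGMA1}) are covariant under unimodular $N$-transformations, I would pass to the basis $\{\mathbf{n}_1,\mathbf{n}_1'\}$ of Proposition \ref{PQDESCR1}(iii); substituting $\mathbf{n}_2=p\mathbf{n}_1+q\mathbf{n}_1'$ in (\ref{LATPOINTSSIGMA1}) gives $\mathbf{u}_1=\mathbf{n}_1+\mathbf{n}_1'$.

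I would first record two facts about the $\mathbf{u}_j$. (1) For $0\leq j\leq s$ the pair $\{\mathbf{u}_j,\mathbf{u}_{j+1}\}$ is a $\mathbb{Z}$-basis of $N$: this holds for $j=0$ because $\{\mathbf{u}_0,\mathbf{u}_1\}=\{\mathbf{n}_1,\mathbf{n}_1+\mathbf{n}_1'\}$, and (\ref{LATPOINTSSIGMA1}) sends the pair $(\mathbf{u}_{j-1},\mathbf{u}_j)$ to $(\mathbf{u}_j,\mathbf{u}_{j+1})$ by a matrix in $\mathrm{GL}_2(\mathbb{Z})$. By Proposition \ref{Basicness} then $\mathrm{conv}(\{\mathbf{0},\mathbf{u}_j,\mathbf{u}_{j+1}\})\cap N=\{\mathbf{0},\mathbf{u}_j,\mathbf{u}_{j+1}\}$; in particular $[\mathbf{u}_j,\mathbf{u}_{j+1}]$ carries no lattice point besides its endpoints, so the polygonal arc $\Gamma:=\bigcup_{j=0}^{s}[\mathbf{u}_j,\mathbf{u}_{j+1}]$ satisfies $\Gamma\cap N=\{\mathbf{u}_0,\dots,\mathbf{u}_{s+1}\}$. (2) Reading (\ref{LATPOINTSSIGMA1}) in the basis $\{\mathbf{u}_{j-1},\mathbf{u}_j\}$ one has $\mathbf{u}_{j-1}=\tbinom{1}{0}$, $\mathbf{u}_j=\tbinom{0}{1}$, $\mathbf{u}_{j+1}=\tbinom{-1}{b_j}$ with $b_j\geq2$ by (\ref{LATPOINTSSIGMA2}); this shows at once that the $N$-cones $\sigma_j:=\mathbb{R}_{\geq0}\mathbf{u}_j+\mathbb{R}_{\geq0}\mathbf{u}_{j+1}$, $0\leq j\leq s$, meet only along common rays and subdivide $\sigma$ (this is the minimal toric resolution of $U_\sigma$ and is classical; cf. \cite[\S1.6]{Oda}), and that, since $0\leq 2\leq b_j$ places $\mathbf{u}_j$ weakly on the $\mathbf{0}$-side of the chord $[\mathbf{u}_{j-1},\mathbf{u}_{j+1}]$, the arc $\Gamma$ is \emph{convex}, flat exactly at the indices with $b_j=2$.

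The core step is the identity $\partial\Theta_\sigma^{\mathbf{cp}}=\Gamma$. For each $j$ let $\mathbf{m}_{F_j}\in M$ be the primitive vector (the sum of the two basis vectors dual to $\{\mathbf{u}_j,\mathbf{u}_{j+1}\}$) with $\langle\mathbf{m}_{F_j},\mathbf{u}_j\rangle=\langle\mathbf{m}_{F_j},\mathbf{u}_{j+1}\rangle=1$; convexity of $\Gamma$ says precisely that $\langle\mathbf{m}_{F_j},\mathbf{u}_i\rangle\geq1$ for all $i,j$, whence also $\mathbf{m}_{F_j}\in\sigma^{\vee}$ (because $\mathbf{u}_0=\mathbf{n}_1$, $\mathbf{u}_{s+1}=\mathbf{n}_2$ generate $\sigma$). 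Now every $\mathbf{n}\in\sigma\cap(N\smallsetminus\{\mathbf{0}\})$ lies in some $\sigma_k$ and there equals $a\mathbf{u}_k+b\mathbf{u}_{k+1}$ with $a,b\in\mathbb{Z}_{\geq0}$, $(a,b)\neq(0,0)$ (integrality since $\{\mathbf{u}_k,\mathbf{u}_{k+1}\}$ is a $\mathbb{Z}$-basis); hence $\langle\mathbf{m}_{F_j},\mathbf{n}\rangle\geq1$ for every $j$ (for $j=k$ it equals $a+b\geq1$; for $j\neq k$ it already holds at $\mathbf{u}_k$ and $\mathbf{u}_{k+1}$). Thus $\sigma\cap(N\smallsetminus\{\mathbf{0}\})$, and so $\Theta_\sigma=\mathrm{conv}(\sigma\cap(N\smallsetminus\{\mathbf{0}\}))$, is contained in the convex polyhedron $P:=\bigcap_{j=0}^{s}\{\mathbf{x}\mid\langle\mathbf{m}_{F_j},\mathbf{x}\rangle\geq1\}$; conversely, by the convexity of $\Gamma$ one has $P=\mathrm{conv}(\{\mathbf{u}_0,\dots,\mathbf{u}_{s+1}\})+\sigma\subseteq\Theta_\sigma+\sigma=\Theta_\sigma$ (using that $\sigma$ is the recession cone of $\Theta_\sigma$). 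Therefore $\Theta_\sigma=P$, its compact boundary is exactly $\Gamma$, and $\partial\Theta_\sigma^{\mathbf{cp}}\cap N=\Gamma\cap N=\{\mathbf{u}_0,\dots,\mathbf{u}_{s+1}\}$ by fact (1).

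It remains to see $\mathrm{Hilb}_N(\sigma)=\partial\Theta_\sigma^{\mathbf{cp}}\cap N$. For ``$\supseteq$'': if $\mathbf{n}$ is a lattice point on a compact face of $\Theta_\sigma=P$, then $\langle\mathbf{m}_{F_j},\mathbf{n}\rangle=1$ for some $j$ while $\langle\mathbf{m}_{F_j},\cdot\rangle\geq1$ on $\Theta_\sigma\supseteq\sigma\cap(N\smallsetminus\{\mathbf{0}\})$, so a decomposition $\mathbf{n}=\mathbf{n}'+\mathbf{n}''$ into nonzero elements of $\sigma\cap N$ would force $1\geq2$; hence $\mathbf{n}$ is irreducible, i.e. $\mathbf{n}\in\mathrm{Hilb}_N(\sigma)$. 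For ``$\subseteq$'': if $\mathbf{n}\in\sigma\cap(N\smallsetminus\{\mathbf{0}\})$ is none of the $\mathbf{u}_i$, then writing $\mathbf{n}=a\mathbf{u}_k+b\mathbf{u}_{k+1}\in\sigma_k$ as above, $(a,b)\notin\{(1,0),(0,1)\}$ forces $a+b\geq2$, and $\mathbf{n}=\mathbf{u}_k+\bigl((a-1)\mathbf{u}_k+b\mathbf{u}_{k+1}\bigr)$ (if $a\geq1$), resp. $\mathbf{n}=\mathbf{u}_{k+1}+(b-1)\mathbf{u}_{k+1}$ (if $a=0$), exhibits $\mathbf{n}$ as reducible; so $\mathbf{n}\notin\mathrm{Hilb}_N(\sigma)$. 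This finishes the proof. I do not expect a deep obstacle — the argument is a careful concatenation of standard lattice-cone facts — and the two genuinely load-bearing inputs are the $\mathbb{Z}$-basis property of consecutive $\mathbf{u}_j$'s (which I would cite from the theory of the minimal toric resolution rather than re-derive) and the inequality $b_j\geq2$, which supplies the convexity of $\Gamma$. The one point demanding care is the uniform treatment of \emph{runs} of indices with $b_j=2$, where consecutive edges of $\Theta_\sigma$ fuse so that some $\mathbf{u}_j$ fail to be vertices of $\Theta_\sigma$; phrasing the result through $P$ and through $\partial\Theta_\sigma^{\mathbf{cp}}\cap N$ (rather than through $\mathrm{Vert}(\Theta_\sigma)$) makes this harmless.
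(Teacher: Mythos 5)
Your strategy is sound, and since the paper disposes of this proposition by citing Oda and Dais--Haus--Henk rather than proving it, a self-contained argument along these classical lines is a legitimate alternative route. The reduction of the dual half to the primal one via Proposition \ref{DUALCNEIDENT}, the $\mathbb{Z}$-basis property of consecutive pairs $\{\mathbf{u}_j,\mathbf{u}_{j+1}\}$, the convexity of $\Gamma$ extracted from $b_j\geq2$, and the final irreducibility/reducibility dichotomy for $\mathrm{Hilb}_N(\sigma)$ are all correct.

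There is, however, a genuine error in your core step. The polyhedron $P:=\bigcap_{j=0}^{s}\{\mathbf{x}\mid\langle\mathbf{m}_{F_j},\mathbf{x}\rangle\geq1\}$ is \emph{not} equal to $\mathrm{conv}(\{\mathbf{u}_0,\dots,\mathbf{u}_{s+1}\})+\sigma$, hence not equal to $\Theta_\sigma$. Its recession cone is $\bigcap_{j}\{\langle\mathbf{m}_{F_j},\cdot\rangle\geq0\}=\bigl(\mathrm{cone}(\{\mathbf{m}_{F_0},\dots,\mathbf{m}_{F_s}\})\bigr)^{\vee}$, and since every $\mathbf{m}_{F_j}$ lies in the \emph{interior} of $\sigma^{\vee}$ (you yourself establish $\langle\mathbf{m}_{F_j},\mathbf{n}_1\rangle\geq1>0$ and $\langle\mathbf{m}_{F_j},\mathbf{n}_2\rangle\geq1>0$), this dual cone strictly contains $\sigma$. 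Concretely, for the $(1,2)$-cone $\sigma=\mathbb{R}_{\geq0}\tbinom{1}{0}+\mathbb{R}_{\geq0}\tbinom{1}{2}$ one has $s=1$, $b_1=2$, $\mathbf{u}_0=\tbinom{1}{0}$, $\mathbf{u}_1=\tbinom{1}{1}$, $\mathbf{u}_2=\tbinom{1}{2}$, and $\mathbf{m}_{F_0}=\mathbf{m}_{F_1}=\tbinom{1}{0}$, so $P=\{\mathbf{x}\mid x_1\geq1\}$ is a half-plane with no vertices and no compact edges, whereas $\Theta_\sigma=\sigma\cap\{x_1\geq1\}$ has $\Gamma=[\tbinom{1}{0},\tbinom{1}{2}]$ as its compact boundary. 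Taken literally, your identification $\Theta_\sigma=P$ would yield $\partial\Theta_\sigma^{\mathbf{cp}}=\varnothing$ here and the proposition would fail. The repair is routine: replace $P$ by $P\cap\sigma$ throughout. Your inequality $\langle\mathbf{m}_{F_j},\mathbf{n}\rangle\geq1$ gives $\Theta_\sigma\subseteq P\cap\sigma$; the recession cone of $P\cap\sigma$ is $\sigma$ and its vertices are among the $\mathbf{u}_j$ (each cut out by two independent active constraints drawn from the lines $H_j:=\{\langle\mathbf{m}_{F_j},\cdot\rangle=1\}$ and the two rays of $\sigma$), whence $P\cap\sigma=\mathrm{conv}(\{\mathbf{u}_0,\dots,\mathbf{u}_{s+1}\})+\sigma\subseteq\Theta_\sigma$; and since each $\sigma\cap H_j$ is compact (as $\mathbf{m}_{F_j}\in\mathrm{int}(\sigma^{\vee})$) while the two faces of $\Theta_\sigma$ supported on the rays of $\sigma$ are unbounded, one gets $\partial\Theta_\sigma^{\mathbf{cp}}=\bigcup_{j}(\Theta_\sigma\cap H_j)=\Gamma$. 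With this correction the rest of your argument goes through unchanged.
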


\begin{proof}
\noindent This follows from \cite[pp. 26-29]{Oda} and \cite[Theorem 3.16, pp.
226-228]{DHH}.
\end{proof}

\begin{figure}[h]
	\includegraphics[height=5cm, width=10cm]{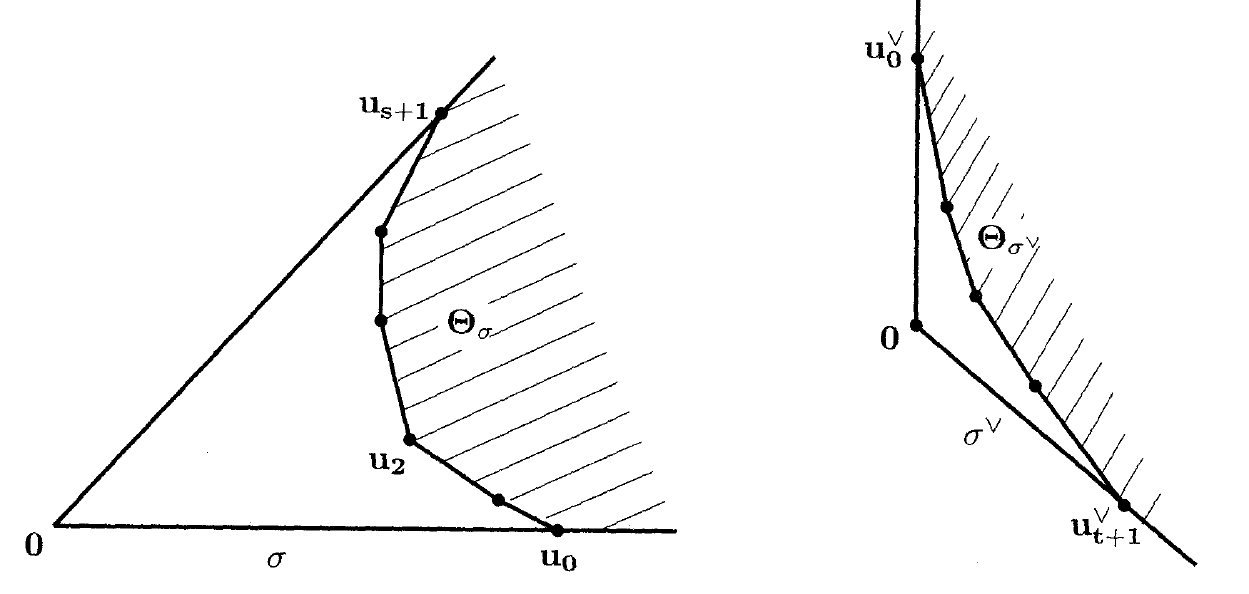}
	\caption{}\label{Fig.2}%
\end{figure}

\begin{theorem}
[Toric version of Hirzebruch's desingularization]\label{HIRZVER}The
refinement
\[
\widetilde{\Delta}_{\sigma}:=\left \{  \left \{  \mathbb{R}_{\geq0}%
\, \mathbf{u}_{j}+\mathbb{R}_{\geq0}\, \mathbf{u}_{j+1}\  \left \vert \ 1\leq
j\leq s\right.  +1\right \}  \  \  \text{\emph{together with their faces}%
}\right \}
\]
of the $N$-fan $\Delta_{\sigma}$ (having the Hilbert basis elements of
$\sigma$ as minimal generators of its rays) contains only basic $N$-cones, and
constitutes the coarsest refinement of $\Delta_{\sigma}$ with this property.
Therefore, it gives rise to the construction of the \emph{minimal}
$\mathbb{T}$-\emph{equivariant resolution}
\[
f:X(N,\widetilde{\Delta}_{\sigma})\longrightarrow X(N,\Delta_{\sigma
})=U_{\sigma}%
\]
of the singular point \emph{orb}$(\sigma)\in U_{\sigma}.$ The exceptional
prime divisors w.r.t. $f$ are $E_{j}:=\mathbf{V}_{\widetilde{\Delta}_{\sigma}%
}\left(  \mathbb{R}_{\geq0}\, \mathbf{u}_{j}\right)  \cong \mathbb{P}%
_{\mathbb{C}}^{1},$ $j\in \left \{  1,\ldots,s\right \}  ,$ and have
self-intersection number $E_{j}^{2}:=E_{j}\cdot E_{j}=-b_{j}.$
\end{theorem}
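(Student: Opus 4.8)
The plan is to derive everything from the recursion (\ref{LATPOINTSSIGMA1})--(\ref{LATPOINTSSIGMA2}) together with the Hilbert-basis description of Proposition \ref{HILBB2DIM}. First I would check that all the $\mathbf{u}_j$ actually lie in $N$: inserting $\mathbf{n}_2=p\mathbf{n}_1+q\mathbf{n}_1'$ (from Proposition \ref{PQDESCR1}(iii), with $\{\mathbf{n}_1,\mathbf{n}_1'\}$ a $\mathbb{Z}$-basis of $N$) into the formula for $\mathbf{u}_1$ gives $\mathbf{u}_1=\tfrac1q((q-p)\mathbf{n}_1+\mathbf{n}_2)=\mathbf{n}_1+\mathbf{n}_1'\in N$, and then $\mathbf{u}_j\in N$ for all $j$ by the recursion. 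The decisive elementary point is that $\mathbf{u}_{j+1}=b_j\mathbf{u}_j-\mathbf{u}_{j-1}$ forces $\det(\mathbf{u}_j,\mathbf{u}_{j+1})=\det(\mathbf{u}_{j-1},\mathbf{u}_j)$, hence $\left|\det(\mathbf{u}_j,\mathbf{u}_{j+1})\right|=\left|\det(\mathbf{u}_0,\mathbf{u}_1)\right|=\left|\det(\mathbf{n}_1,\mathbf{n}_1')\right|=\det(N)$ for every $j$. This single identity simultaneously shows that each $\mathbf{u}_j$ is primitive (otherwise $\left|\det(\mathbf{u}_j,\mathbf{u}_{j+1})\right|$ would be a proper multiple of $\det(N)$) and that $\{\mathbf{u}_{j-1},\mathbf{u}_j\}$ is a $\mathbb{Z}$-basis of $N$; so $\mathbf{u}_{j-1},\mathbf{u}_j$ are the minimal generators of $\mathbb{R}_{\geq0}\mathbf{u}_{j-1}+\mathbb{R}_{\geq0}\mathbf{u}_j$ and by Proposition \ref{BASICNESS} (equivalence of $q=1$, basicness, and $\mathrm{mult}_N=1$, via (\ref{MULTDEF})) this cone is basic.

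Next I would assemble these cones into a fan. By Proposition \ref{HILBB2DIM} the points $\mathbf{u}_0,\ldots,\mathbf{u}_{s+1}$ are precisely the lattice points on the compact part $\partial\Theta_\sigma^{\mathbf{cp}}$ of $\partial\Theta_\sigma$, occurring in this order along the broken line running from $\mathbf{u}_0=\mathbf{n}_1$ to $\mathbf{u}_{s+1}=\mathbf{n}_2$ (see Figure \ref{Fig.2}); hence $\sigma$ is the union of the consecutive basic cones $\mathbb{R}_{\geq0}\mathbf{u}_{j-1}+\mathbb{R}_{\geq0}\mathbf{u}_j$, any two of which meet in a common ray or in $\{\mathbf{0}\}$. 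Thus $\widetilde{\Delta}_\sigma$ is an $N$-fan with $|\widetilde{\Delta}_\sigma|=|\Delta_\sigma|=\sigma$ all of whose $2$-cones are basic, so $X(N,\widetilde{\Delta}_\sigma)$ is smooth; the induced $\mathbb{T}$-equivariant map (\ref{INDUCEDMAP}), $f\colon X(N,\widetilde{\Delta}_\sigma)\to X(N,\Delta_\sigma)=U_\sigma$, is then proper and birational, i.e.\ a resolution of the singular point $\mathrm{orb}(\sigma)$.

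For the minimality (``coarsest'') assertion, which I expect to be the main obstacle, the idea is to use that a Hilbert-basis element cannot lie in the relative interior of any basic cone. Let $\Delta'$ be any refinement of $\Delta_\sigma$ all of whose $2$-cones are basic, fix $j\in\{1,\ldots,s\}$, and let $\tau\in\Delta'(2)$ contain $\mathbf{u}_j$. If $\mathbf{u}_j$ were interior to $\tau$, then writing $\{\mathbf{v}_1,\mathbf{v}_2\}$ for the minimal generators of $\tau$ (a $\mathbb{Z}$-basis of $N$, since $\tau$ is basic) we would have $\mathbf{u}_j=a\mathbf{v}_1+b\mathbf{v}_2$ with $a,b\in\mathbb{Z}_{\geq1}$, so $\mathbf{u}_j=\mathbf{v}_1+\bigl((a-1)\mathbf{v}_1+b\mathbf{v}_2\bigr)$ would be a sum of two elements of $\sigma\cap(N\smallsetminus\{\mathbf{0}\})$, contradicting $\mathbf{u}_j\in\mathrm{Hilb}_N(\sigma)$ (Proposition \ref{HILBB2DIM}). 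Hence $\mathbb{R}_{\geq0}\mathbf{u}_j\in\Delta'(1)$ for $1\leq j\leq s$; together with $\mathbb{R}_{\geq0}\mathbf{u}_0=\mathbb{R}_{\geq0}\mathbf{n}_1$ and $\mathbb{R}_{\geq0}\mathbf{u}_{s+1}=\mathbb{R}_{\geq0}\mathbf{n}_2$, which lie in every refinement of $\Delta_\sigma$, this gives $\widetilde{\Delta}_\sigma(1)\subseteq\Delta'(1)$, so $\Delta'$ refines $\widetilde{\Delta}_\sigma$. Therefore $\widetilde{\Delta}_\sigma$ is the coarsest all-basic refinement of $\Delta_\sigma$, and $f$ is the minimal $\mathbb{T}$-equivariant resolution of $U_\sigma$.

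Finally, for the exceptional data: $f$ contracts exactly the prime $\mathbb{T}$-invariant divisors $\mathbf{V}_{\widetilde{\Delta}_\sigma}(\varrho)$ with $\varrho\in\widetilde{\Delta}_\sigma(1)\smallsetminus\Delta_\sigma(1)$, that is $E_j:=\mathbf{V}_{\widetilde{\Delta}_\sigma}(\mathbb{R}_{\geq0}\mathbf{u}_j)$ for $1\leq j\leq s$; each such ray is a face of precisely the two $2$-cones $\mathbb{R}_{\geq0}\mathbf{u}_{j-1}+\mathbb{R}_{\geq0}\mathbf{u}_j$ and $\mathbb{R}_{\geq0}\mathbf{u}_j+\mathbb{R}_{\geq0}\mathbf{u}_{j+1}$, so $E_j\cong\mathbb{P}^1_{\mathbb{C}}$ by the standard orbit-closure dictionary on smooth toric surfaces. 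The self-intersection then comes straight out of the wall relation across $\mathbb{R}_{\geq0}\mathbf{u}_j$, which is nothing but $\mathbf{u}_{j-1}+\mathbf{u}_{j+1}=b_j\mathbf{u}_j$ from (\ref{LATPOINTSSIGMA1}): the usual formula for the self-intersection of a $\mathbb{T}$-invariant curve on a smooth toric surface (obtained from the support-function description (\ref{CORRSF2})--(\ref{HDDEF}); cf.\ \cite[\S2.5]{Fulton}) yields $E_j\cdot E_j=-b_j$. Apart from bookkeeping, the only genuinely delicate ingredients are the determinant invariance of the first paragraph and the Hilbert-basis extremality argument used for coarseness; all the rest is the standard toric dictionary recalled in \S\ref{LCONES}.
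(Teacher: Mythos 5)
Your argument is correct, and every step checks out: the determinant identity $\det(\mathbf{u}_j,\mathbf{u}_{j+1})=\det(\mathbf{u}_{j-1},\mathbf{u}_j)$ propagated from $\{\mathbf{u}_0,\mathbf{u}_1\}=\{\mathbf{n}_1,\mathbf{n}_1+\mathbf{n}_1'\}$ does simultaneously give integrality, primitivity and basicness; the Hilbert-basis extremality argument correctly forces every all-basic refinement to contain the rays $\mathbb{R}_{\geq0}\mathbf{u}_j$ (and in dimension $2$ containing the rays is indeed equivalent to being a refinement); and the wall relation $\mathbf{u}_{j-1}+\mathbf{u}_{j+1}=b_j\mathbf{u}_j$ yields $E_j^2=-b_j$ by the standard self-intersection formula for invariant curves on smooth toric surfaces. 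The one point of comparison worth making is that the paper does not prove this theorem at all: its ``proof'' consists of two citations, to Hirzebruch's original analytic construction (resolving the normalisation of $z_1^q-z_2z_3^{q-p}=0$) and to Oda's purely toric treatment. What you have written is essentially a self-contained reconstruction of the Oda route, leaning only on Propositions \ref{PQDESCR1} and \ref{HILBB2DIM} and the recursion (\ref{LATPOINTSSIGMA1})--(\ref{LATPOINTSSIGMA2}), which is exactly the kind of argument the paper delegates to the reference; the only ingredient you still import as a black box is the ordering statement of Proposition \ref{HILBB2DIM} (itself proved in the paper only by citation), which you use both to see that the consecutive basic cones tile $\sigma$ and to know that the $\mathbf{u}_j$ exhaust $\mathrm{Hilb}_N(\sigma)$.
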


\begin{proof}
\noindent See Hirzebruch \cite[pp. 15-20]{Hirzebruch1} who constructs
$X(N,\widetilde{\Delta}_{\sigma})$ by resolving the unique singularity lying
over the origin of $\mathbb{C}^{3}$ in the normalisation of the hypersurface
$\left \{  \left.  \left(  z_{1},z_{2},z_{3}\right)  \in \mathbb{C}%
^{3}\right \vert \,z_{1}^{q}-z_{2}z_{3}^{q-p}=0\right \}  ,$ and Oda \cite[pp.
24-30]{Oda} for a proof which uses only tools from toric geometry.
\end{proof}

\section{Compact toric surfaces and lattice polygons\label{CTS}}

\noindent{}Let $N$ be a lattice in $\mathbb{R}^{2}$ and $M:=$ Hom$_{\mathbb{Z}%
}(N,\mathbb{Z}).$ An $N$-fan $\Delta$ is said to be \textit{complete} whenever
$\left \vert \Delta \right \vert =\mathbb{R}^{2}.$

\begin{theorem}
[{cf. \cite[Theorem 1.11, p. 16]{Oda}}]If $\Delta$ is an $N$-fan, then the
toric surface $X\left(  N,\Delta \right)  $ is compact if and only if $\Delta$
is complete.
\end{theorem}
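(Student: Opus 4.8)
The statement is classical (see \cite{Oda}, \cite{Fulton}, \cite{CLS}); the argument I would give, phrased in the language set up above, is as follows.

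\smallskip
\noindent\textit{Necessity of completeness.} Suppose $X(N,\Delta)$ is compact and, for contradiction, that $\left|\Delta\right|\neq\mathbb{R}^{2}$. Being a finite union of closed cones, $\left|\Delta\right|$ is closed, so its complement is open and nonempty and contains a lattice point $\mathbf{v}\in N\smallsetminus\{\mathbf{0}\}$ with $\mathbf{v}\notin\left|\Delta\right|$. Consider the one-parameter subgroup $\lambda_{\mathbf{v}}\colon\mathbb{C}^{\times}\to\mathbb{T}\hookrightarrow X(N,\Delta)$ sending $z$ to the semigroup homomorphism $\mathbf{m}\mapsto z^{\langle\mathbf{m},\mathbf{v}\rangle}$. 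By compactness, $\lambda_{\mathbf{v}}(z)$ has a limit point $x_{0}\in X(N,\Delta)$ as $z\to0$, and $x_{0}$ lies in $U_{\sigma}$ for some $\sigma\in\Delta$. Since $U_{\sigma}$ is embedded in an affine space as a zero locus of binomials, this limit point has coordinates $x_{0}(\mathbf{m})\in\mathbb{C}$ for $\mathbf{m}\in S_{\sigma}=\sigma^{\vee}\cap M$, and for each such $\mathbf{m}$ the quantity $z^{\langle\mathbf{m},\mathbf{v}\rangle}$ must stay bounded as $z\to0$ along a suitable sequence; hence $\langle\mathbf{m},\mathbf{v}\rangle\geq0$ for all $\mathbf{m}\in\sigma^{\vee}\cap M$, i.e. $\mathbf{v}\in(\sigma^{\vee})^{\vee}=\sigma$. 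As $\sigma\subseteq\left|\Delta\right|$ (and $\sigma=\{\mathbf{0}\}$ would force the excluded equality $\mathbf{v}=\mathbf{0}$), this is absurd. Therefore $\mathbf{v}\in\left|\Delta\right|$ for every $\mathbf{v}\in N$, so $\left|\Delta\right|$ contains the ray through each lattice point; these rays being dense among all rays and $\left|\Delta\right|$ being closed, we get $\left|\Delta\right|=\mathbb{R}^{2}$.

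\smallskip
\noindent\textit{Sufficiency of completeness.} Assume $\left|\Delta\right|=\mathbb{R}^{2}$; the plan is to show $X(N,\Delta)$ is proper over $\mathbb{C}$ (equivalently, compact in the analytic topology) via the valuative criterion. First, $X(N,\Delta)$ is separated, by the standard argument that for $\sigma_{1},\sigma_{2}\in\Delta(2)$ one has $U_{\sigma_{1}}\cap U_{\sigma_{2}}=U_{\sigma_{1}\cap\sigma_{2}}$, embedded diagonally as a closed subvariety of $U_{\sigma_{1}}\times U_{\sigma_{2}}$; so only the \emph{existence} of valuative extensions is at issue. Let $R$ be a discrete valuation ring with fraction field $K\supseteq\mathbb{C}$ and valuation $\mathrm{ord}\colon K^{\times}\to\mathbb{Z}$, and let $g\colon\mathrm{Spec}\,K\to X(N,\Delta)$ be a $\mathbb{C}$-morphism. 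If the image of $g$ is not contained in $\mathbb{T}$, then $g$ factors through a closed torus-invariant subvariety — either some $\mathbf{V}_{\Delta}(\varrho)$ ($\varrho\in\Delta(1)$), which by completeness of $\Delta$ is a copy of $\mathbb{P}_{\mathbb{C}}^{1}$, or a single point $\mathrm{orb}(\sigma)$ ($\sigma\in\Delta(2)$) — in either case a proper variety, so $g$ already extends over $\mathrm{Spec}\,R$ there. We may therefore assume $g$ corresponds to a group homomorphism $\psi\colon M\to K^{\times}$. Then $\mathbf{m}\mapsto\mathrm{ord}(\psi(\mathbf{m}))$ is a homomorphism $M\to\mathbb{Z}$, hence equals $\langle\,\cdot\,,\mathbf{v}\rangle$ for a unique $\mathbf{v}\in N$; by completeness, $\mathbf{v}\in\sigma$ for some $\sigma\in\Delta(2)$, and then $\mathrm{ord}(\psi(\mathbf{m}))=\langle\mathbf{m},\mathbf{v}\rangle\geq0$ for every $\mathbf{m}\in S_{\sigma}=\sigma^{\vee}\cap M$. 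Thus $g^{\sharp}$ carries $\mathbb{C}[S_{\sigma}]$ into $R$, i.e. $g$ extends to a morphism $\mathrm{Spec}\,R\to U_{\sigma}\hookrightarrow X(N,\Delta)$; uniqueness follows from separatedness. Hence $X(N,\Delta)$ is proper, thus compact.

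\smallskip
\noindent\textit{Where the main obstacle lies.} The necessity half is wholly elementary — a boundedness estimate for $z^{\langle\mathbf{m},\mathbf{v}\rangle}$ together with the density of rational directions. In the sufficiency half the one non-routine point is importing the valuative criterion into the toric setting: the separatedness of $X(N,\Delta)$ and the reduction to arcs whose generic point lands in $\mathbb{T}$ (using that the remaining closed torus-invariant subvarieties are proper — copies of $\mathbb{P}_{\mathbb{C}}^{1}$ and points); after that, the single observation $\mathbf{v}\in\left|\Delta\right|$ closes the argument. One may, of course, simply invoke \cite{Oda}, \cite{Fulton}, or \cite{CLS}.
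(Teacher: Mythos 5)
Your argument is correct; note that the paper itself offers no proof of this statement, merely citing Oda's Theorem 1.11, and what you have written is precisely the standard argument found in the cited sources (necessity via limit points of one-parameter subgroups $\lambda_{\mathbf{v}}$ for $\mathbf{v}\in N\smallsetminus\left\vert \Delta\right\vert$, sufficiency via separatedness plus the valuative criterion, with the reduction to arcs generically in $\mathbb{T}$ handled by properness of the orbit closures). The only steps you leave implicit --- the separation lemma $S_{\sigma_{1}}+S_{\sigma_{2}}=S_{\sigma_{1}\cap\sigma_{2}}$ for cones meeting along a common face, and the fact that $\mathbf{V}_{\Delta}(\varrho)\cong\mathbb{P}_{\mathbb{C}}^{1}$ because the star of a ray in a complete fan is a complete fan in $N/(N\cap\mathbb{R}\varrho)$ --- are standard and correctly invoked.
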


\noindent{}$\bullet$ \textbf{Nef and ample Cartier divisors on compact toric
surfaces}. From now on we shall work with a fixed complete $N$-fan $\Delta.$
For a given $D\in$ Div$_{\text{C}}^{\mathbb{T}}(X\left(  N,\Delta \right)  )$,
we set
\begin{equation}
\fbox{$%
\begin{array}
[c]{ccc}
&
\begin{array}
[c]{r}%
P_{D}:=P_{D,\Delta}:=\left \{  \left.  \mathbf{x}\in \mathbb{R}^{2}\right \vert
\left \langle \mathbf{x},\mathbf{y}\right \rangle \geq h_{D}(\mathbf{y)}%
,\forall \mathbf{y}\in \mathbb{R}^{2}\right \}  \bigskip \\
=\left \{  \left.  \mathbf{x}\in \mathbb{R}^{2}\right \vert \left \langle
\mathbf{x},\mathbf{n}_{\varrho}\right \rangle \geq h_{D}(\mathbf{n}_{\varrho
}\mathbf{)},\forall \varrho \in \Delta \left(  1\right)  \right \}
\end{array}
&
\end{array}
$} \label{PEDE}%
\end{equation}
with $h_{D}\in$ SF$\left(  N,\Delta \right)  $ as defined in (\ref{HDDEF}). (We
write $P_{D,\Delta}$ instead of $P_{D}$ if we wish to emphasise which our
reference fan is.) $P_{D}$ is bounded and its affine hull has dimension
$\leq2.$ Moreover, there is a \textit{unique} set $\left \{  \left.
\mathbf{l}_{\sigma}\right \vert \sigma \in \Delta \left(  2\right)  \right \}
\subset M$ of lattice points satisfying (\ref{CARTIERCRIT}), and%
\[
H^{0}(X\left(  N,\Delta \right)  ,\mathcal{O}_{X\left(  N,\Delta \right)
}(D))=\bigoplus \limits_{\mathbf{m}\in P_{D}\cap M}\mathbb{C}\mathbf{e}%
(\mathbf{m}).
\]
We denote by%
\[
\text{UCSF}\left(  N,\Delta \right)  :=\left \{  h\in \text{SF}\left(
N,\Delta \right)  \left \vert
\begin{array}
[c]{c}%
h(t\mathbf{y}+\left(  1-t\right)  \mathbf{y}^{\prime})\geq th(\mathbf{y}%
)+(1-t)h(\mathbf{y}^{\prime}),\medskip \\
\text{for all }\mathbf{y},\mathbf{y}^{\prime}\in \mathbb{R}^{2}\text{ and }%
t\in \lbrack0,1]
\end{array}
\right.  \right \}
\]
and
\[
\text{SUCSF}\left(  N,\Delta \right)  :=\left \{  h\in \text{UCSF}\left(
N,\Delta \right)  \left \vert
\begin{array}
[c]{c}%
\text{for all }\sigma_{1},\sigma_{2}\in \Delta \left(  2\right)  \text{ with
}\sigma_{1}\neq \sigma_{2}, \medskip\\
\left.  h\right \vert _{\sigma_{1}},\left.  h\right \vert _{\sigma_{2}}\text{
are different linear functions}%
\end{array}
\right.  \right \}
\]
the sets of upper convex and strictly upper convex $\Delta$-support functions, respectively.

\begin{theorem}
[Neffity Criterion]\label{NEFFITY}If $D=%
{\textstyle \sum \limits_{\varrho \in \Delta \left(  1\right)  }}
\mathbb{\lambda}_{\varrho}\mathbb{\,}\mathbf{V}_{\Delta}(\varrho)\in$
\emph{Div}$_{\text{\emph{C}}}^{\mathbb{T}}(X(N,\Delta))$ $\left(
\mathbb{\lambda}_{\varrho}\in \mathbb{Z}\right)  ,$ then the following
conditions are equivalent\emph{:\smallskip} \newline \emph{(i)} $D$ is
basepoint free, i.e., $\mathcal{O}_{X\left(  N,\Delta \right)  }(D)$ is
generated by its global sections.\smallskip \  \newline \emph{(ii)} $h_{D}\in$
\emph{UCSF}$\left(  N,\Delta \right)  .\smallskip$ \newline \emph{(iii)}
$\mathbf{l}_{\sigma}\in P_{D},\  \forall \sigma \in \Delta \left(  2\right)
.\smallskip$ \newline \emph{(iv)} $P_{D}=$ \emph{conv}$\left(  \left \{  \left.
\mathbf{l}_{\sigma}\right \vert \sigma \in \Delta \left(  2\right)  \right \}
\right)  .\smallskip$ \newline \emph{(v)} $\left \{  \left.  \mathbf{l}_{\sigma
}\right \vert \sigma \in \Delta \left(  2\right)  \right \}  $ is the vertex set of
$P_{D}$ (possibly with repetitions)$.\smallskip$ \newline \emph{(vi) }%
$h_{D}(\mathbf{y})=\min \left \{  \left \langle \mathbf{m},\mathbf{y}%
\right \rangle \left \vert \mathbf{m}\in P_{D}\cap M\right.  \right \}
=\min \left \{  \left \langle \mathbf{l}_{\sigma},\mathbf{y}\right \rangle
\left \vert \sigma \in \Delta \left(  2\right)  \right.  \right \}  ,\  \forall
\mathbf{y}\in \mathbb{R}^{2}.\smallskip$ \newline \emph{(vii)} $D\cdot
\mathbf{V}_{\Delta}(\varrho)\geq0,\  \forall \varrho \in \Delta \left(  1\right)
.\smallskip$ \newline \emph{(viii)} $D$ is \textquotedblleft
nef\textquotedblright \ (numerically effective), i.e., the intersection number
of $D$ with any (irreducible compact) curve on $X\left(  N,\Delta \right)  $ is non-negative.
\end{theorem}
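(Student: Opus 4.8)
The plan is to run a cycle of implications that separates into a cohomological--combinatorial part, handled through the support function $h_{D}$ and the lattice points $\mathbf{l}_{\sigma}$, and an intersection-theoretic part, attached via the wall-crossing formula; the hinge is the equivalence (i)\,$\Leftrightarrow$\,(iii). For that hinge I would argue as follows. Since $D$ is Cartier, on each affine chart $U_{\sigma}$ one has $\left. D\right|_{U_{\sigma}}=\text{div}(\mathbf{e}(-\mathbf{l}_{\sigma}))$ (because $\left. h_{D}\right|_{\sigma}=\langle\mathbf{l}_{\sigma},\cdot\rangle$), so $\Gamma(U_{\sigma},\mathcal{O}_{X(N,\Delta)}(D))$ is the free rank-$1$ module $\mathbf{e}(\mathbf{l}_{\sigma})\,\mathbb{C}[S_{\sigma}]$, whereas the global sections are $\bigoplus_{\mathbf{m}\in P_{D}\cap M}\mathbb{C}\,\mathbf{e}(\mathbf{m})$. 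If every $\mathbf{l}_{\sigma}$ lies in $P_{D}$, then $\mathbf{e}(\mathbf{l}_{\sigma})$ is a global section generating $\mathcal{O}_{X(N,\Delta)}(D)$ on all of $U_{\sigma}$, hence the sheaf is globally generated. Conversely, a global section $\sum_{\mathbf{m}}c_{\mathbf{m}}\mathbf{e}(\mathbf{m})$ generating $\mathcal{O}_{X(N,\Delta)}(D)$ at the fixed point $\text{orb}(\sigma)$, divided by the local generator $\mathbf{e}(\mathbf{l}_{\sigma})$, must contain a monomial $\mathbf{e}(\mathbf{m}-\mathbf{l}_{\sigma})$ that does not vanish at $\text{orb}(\sigma)$; since $\sigma$ is two-dimensional this forces $\mathbf{m}-\mathbf{l}_{\sigma}\in\sigma^{\perp}\cap M=\{\mathbf{0}\}$, hence $\mathbf{l}_{\sigma}=\mathbf{m}\in P_{D}$. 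This establishes (i)\,$\Leftrightarrow$\,(iii).

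Next I would deduce the remaining convex-geometric conditions from (iii). The relation $\mathbf{l}_{\sigma}\in P_{D}$ says precisely $\langle\mathbf{l}_{\sigma},\cdot\rangle\geq h_{D}$ on $\mathbb{R}^{2}$; since $h_{D}$ agrees with one such linear function on each maximal cone, (iii) is equivalent to $h_{D}=\min_{\sigma\in\Delta(2)}\langle\mathbf{l}_{\sigma},\cdot\rangle$ --- which is the second equality in (vi) --- and, being a pointwise minimum of finitely many linear functionals, to $h_{D}$ being upper convex, i.e.\ to (ii). The first equality in (vi) then follows from $\{\mathbf{l}_{\sigma}\}\subseteq P_{D}\cap M$ together with $\langle\mathbf{m},\cdot\rangle\geq h_{D}$ for every $\mathbf{m}\in P_{D}$. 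Conditions (iv) and (v) are polytope bookkeeping on top of (vi): every vertex of the polytope $P_{D}$ is the unique minimiser on $P_{D}$ of some functional $\langle\cdot,\mathbf{y}_{0}\rangle$, and by (vi) that minimiser equals $\mathbf{l}_{\sigma}$ for the cone $\sigma\ni\mathbf{y}_{0}$, so $\text{Vert}(P_{D})\subseteq\{\mathbf{l}_{\sigma}\}$ and $P_{D}=\text{conv}(\{\mathbf{l}_{\sigma}\})$; conversely, for $\mathbf{y}_{0}$ generic in $\text{int}(\sigma)$ (off the finitely many lines $(\mathbf{l}_{\sigma'}-\mathbf{l}_{\sigma})^{\perp}$) the point $\mathbf{l}_{\sigma}$ is the unique minimiser, hence a vertex, which gives (v); and (v)\,$\Rightarrow$\,(iii) is immediate.

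It remains to bring in (vii) and (viii). A globally generated $\mathcal{O}_{X(N,\Delta)}(D)$ on the compact surface $X(N,\Delta)$ is the pullback $\varphi^{\star}\mathcal{O}_{\mathbb{P}_{\mathbb{C}}^{n}}(1)$ of a very ample (so nef) sheaf under the induced holomorphic map $\varphi$, hence nef, so $D\cdot C\geq 0$ for every irreducible compact curve $C$ (reading the intersection number through (\ref{INTNUMBSING}) in the singular case); this is (viii), and (viii)\,$\Rightarrow$\,(vii) is immediate since each $\mathbf{V}_{\Delta}(\varrho)\cong\mathbb{P}_{\mathbb{C}}^{1}$ is a compact curve. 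For (vii)\,$\Rightarrow$\,(ii) I would use the wall-crossing formula for $D\cdot\mathbf{V}_{\Delta}(\varrho)$: with $\sigma,\sigma''$ the two maximal cones sharing the ray $\varrho$ and $\mathbf{u}'$ the minimal generator of the remaining ray of $\sigma$, one computes $D\cdot\mathbf{V}_{\Delta}(\varrho)=\langle\mathbf{l}_{\sigma''}-\mathbf{l}_{\sigma},\mathbf{u}'\rangle$ when $X(N,\Delta)$ is smooth, and the same quantity up to a strictly positive factor in general (computed on the minimal desingularization of \S\ref{LCONES}); since $\mathbf{l}_{\sigma''}-\mathbf{l}_{\sigma}$ vanishes on $\varrho$, this is exactly the defect of concavity of $h_{D}$ across the wall $\varrho$. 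Hence (vii) says $h_{D}$ is concave across every wall, and since $\Delta$ is complete (so $\left|\Delta\right|=\mathbb{R}^{2}$ is convex) this local condition upgrades to global concavity, i.e.\ to (ii).

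The step I expect to be the main obstacle is precisely the last one above: pinning down the wall-crossing intersection formula with the correct signs and normalisation --- in particular tracking it through the Mumford pullback $f^{\star}$ to the minimal resolution of \S\ref{LCONES} when $\sigma,\sigma''$ are non-basic --- and then identifying it cleanly with the concavity defect of $h_{D}$ at $\varrho$. The remaining ingredients (that a section of a Cartier divisor on $U_{\sigma}$ is a character; that concavity across every wall of a complete planar fan forces global concavity; that the pullback of a nef sheaf is nef) are standard and may be cited from \cite{Oda}, \cite{Fulton}, \cite{CLS}, \cite{FultonITH}.
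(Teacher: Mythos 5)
Your proposal is correct. Note that the paper does not actually prove this theorem: it simply cites \cite[Theorem 2.7]{Oda} and \cite[Theorems 6.1.7 and 6.3.12]{CLS}, and your argument is a faithful reconstruction of the standard proof found in those references --- the local computation $\Gamma(U_{\sigma},\mathcal{O}(D))=\mathbf{e}(\mathbf{l}_{\sigma})\,\mathbb{C}[S_{\sigma}]$ together with $\sigma^{\perp}\cap M=\{\mathbf{0}\}$ for the hinge (i)$\Leftrightarrow$(iii), the convex-geometry bookkeeping for (ii), (iv), (v), (vi), and the wall-crossing formula for (vii)$\Rightarrow$(ii). The two points you leave slightly implicit are both the standard one-line arguments: for (ii)$\Rightarrow$(iii) one takes $\mathbf{y}_{0}\in\mathrm{int}(\sigma)$ and applies upper convexity to $(1-t)\mathbf{y}_{0}+t\mathbf{y}$ for small $t>0$ to see that $\langle\mathbf{l}_{\sigma},\cdot\rangle\geq h_{D}$ globally; and for the singular wall-crossing step the projection formula $f^{\star}(D)\cdot\mathbf{V}_{\widetilde{\Delta}}(\varrho)=D\cdot\mathbf{V}_{\Delta}(\varrho)$ reduces everything to the smooth formula on the resolution, where the functional $\mathbf{l}_{\sigma''}-\mathbf{l}_{\sigma}$ vanishes on $\varrho$ and hence has constant sign on $\sigma\smallsetminus\varrho$, exactly as you indicate.
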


\begin{proof}
See \cite[Theorem 2.7, pp. 76-77]{Oda} and \cite[Theorems 6.1.7, pp. 266-267,
and 6.3.12, p. 291]{CLS}.
\end{proof}

\begin{theorem}
[Ampleness Criterion]\label{AMPLENESS}If $D=%
{\textstyle \sum \limits_{\varrho \in \Delta \left(  1\right)  }}
\mathbb{\lambda}_{\varrho}\mathbb{\,}\mathbf{V}_{\Delta}(\varrho)\in$
\emph{Div}$_{\text{\emph{C}}}^{\mathbb{T}}(X(N,\Delta))$ $\left(
\mathbb{\lambda}_{\varrho}\in \mathbb{Z}\right)  ,$ then the following
conditions are equivalent\emph{:\smallskip} \newline \emph{(i)} $D$ is
ample.\smallskip \  \newline \emph{(ii)} $h_{D}\in$ \emph{SUCSF}$\left(
N,\Delta \right)  .\smallskip$ \newline \emph{(iii)} $\mathbf{l}_{\sigma}\in
P_{D},\  \forall \sigma \in \Delta \left(  2\right)  ,$ and $\mathbf{l}_{\sigma
_{1}}\neq \mathbf{l}_{\sigma_{2}}$ for all $\sigma_{1},\sigma_{2}\in
\Delta \left(  2\right)  $ with $\sigma_{1}\neq \sigma_{2}.\smallskip$
\newline \emph{(iv)} $P_{D}$ is an $M$-polygon with \emph{Vert}$(P_{D}%
)=\left \{  \left.  \mathbf{l}_{\sigma}\right \vert \sigma \in \Delta \left(
2\right)  \right \}  $ (without repetitions)$.\smallskip$ \newline \emph{(v)}
$D\cdot \mathbf{V}_{\Delta}(\varrho)>0,\  \forall \varrho \in \Delta \left(
1\right)  .$
\end{theorem}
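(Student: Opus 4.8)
The plan is to derive Theorem~\ref{AMPLENESS} from the Neffity Criterion (Theorem~\ref{NEFFITY}) by upgrading each of its non-strict conditions to its strict counterpart. Since an ample divisor is nef, condition (i) at once implies condition (viii) of Theorem~\ref{NEFFITY}, so under (i) we may freely use that $h_D\in\mathrm{UCSF}(N,\Delta)$, that $\mathbf{l}_\sigma\in P_D$ for every $\sigma$, that $P_D=\mathrm{conv}(\{\mathbf{l}_\sigma\mid\sigma\in\Delta(2)\})$, that $h_D(\mathbf{y})=\min_\sigma\langle\mathbf{l}_\sigma,\mathbf{y}\rangle$, and that $D\cdot\mathbf{V}_\Delta(\varrho)\ge0$ for all $\varrho$. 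The only genuinely new input is ``strictness'', and the cleanest route is to prove the combinatorial equivalences $(\mathrm{iii})\Leftrightarrow(\mathrm{ii})\Leftrightarrow(\mathrm{v})$ and $(\mathrm{iii})\Leftrightarrow(\mathrm{iv})$ first, without invoking ampleness, and then to settle $(\mathrm{v})\Leftrightarrow(\mathrm{i})$.

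First I would treat the combinatorial part. Fix a wall $\tau=\sigma_{1}\cap\sigma_{2}\in\Delta(1)$ of two maximal cones. Pulling $h_D$ back to the minimal desingularization $X(N,\widetilde\Delta)$ (which leaves $h_D$, $P_D$ and the relevant intersection numbers unchanged, by (\ref{INTNUMBSING}) and the remarks preceding Theorem~\ref{HIRZVER}), a standard support-function computation identifies the sign of $D\cdot\mathbf{V}_\Delta(\tau)$ with the sign of the ``jump'' of $h_D$ across $\tau$; this jump vanishes exactly when the two linear pieces $h_D|_{\sigma_1}$ and $h_D|_{\sigma_2}$ agree, i.e.\ by (\ref{HDDEF}) when $\mathbf{l}_{\sigma_1}=\mathbf{l}_{\sigma_2}$, so $D\cdot\mathbf{V}_\Delta(\tau)>0\Leftrightarrow\mathbf{l}_{\sigma_1}\neq\mathbf{l}_{\sigma_2}$. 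Because $D\cdot\mathbf{V}_\Delta(\tau)\ge0$ already holds once $h_D\in\mathrm{UCSF}(N,\Delta)$ (Theorem~\ref{NEFFITY}), requiring strict positivity at \emph{every} wall is precisely the assertion that $h_D$ is strictly upper convex, which gives $(\mathrm{v})\Leftrightarrow(\mathrm{ii})$; and, using $h_D|_\sigma=\langle\mathbf{l}_\sigma,\cdot\rangle$ once more, (ii) is literally ``$h_D\in\mathrm{UCSF}(N,\Delta)$ together with $\mathbf{l}_{\sigma_1}\neq\mathbf{l}_{\sigma_2}$ for all $\sigma_1\neq\sigma_2$'', which is $(\mathrm{iii})$. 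For $(\mathrm{iii})\Leftrightarrow(\mathrm{iv})$, Theorem~\ref{NEFFITY} already gives $P_D=\mathrm{conv}(\{\mathbf{l}_\sigma\})$ with $\{\mathbf{l}_\sigma\}$ its vertex set counted with repetitions, so one only has to see that pairwise distinctness of the $\mathbf{l}_\sigma$ is the same as each $\mathbf{l}_\sigma$ being an honest, non-repeated vertex of a genuine $M$-polygon. This follows by walking once around the cyclically ordered maximal cones $\sigma_0,\sigma_1,\ldots,\sigma_{d-1}$ of the complete planar fan $\Delta$: consecutive cones share a wall, the two associated points either coincide or are joined by the edge of $P_D$ with inner normal along that wall, and a convex polygon cannot revisit a vertex before the loop closes; hence ``consecutive pairs distinct'' forces ``all pairs distinct'', each $\mathbf{l}_\sigma$ extreme, and (since $d\ge3$) $P_D$ two-dimensional.

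It remains to prove $(\mathrm{i})\Leftrightarrow(\mathrm{v})$. The implication $(\mathrm{i})\Rightarrow(\mathrm{v})$ is the definition of ampleness applied to the curves $\mathbf{V}_\Delta(\varrho)$, which are irreducible and complete (as $\Delta$ is complete) and are exactly the torus-invariant curves of $X(N,\Delta)$. For $(\mathrm{v})\Rightarrow(\mathrm{i})$ I would use the toric Nakai--Moishezon principle: a complete toric surface is normal and projective, its cone of effective curve classes (Mori cone) is generated by the finitely many classes $[\mathbf{V}_\Delta(\varrho)]$, $\varrho\in\Delta(1)$, and $D$ pairs non-negatively with each of them (Theorem~\ref{NEFFITY}) and strictly positively by hypothesis; writing the class of an arbitrary irreducible curve $C$ as a non-negative, non-zero combination of those generators therefore yields $D\cdot C>0$, while $D^{2}=2\,\mathrm{area}_M(P_D)>0$ because the combinatorial part above shows $P_D$ is two-dimensional, so Nakai--Moishezon for normal projective surfaces gives ampleness. (Alternatively one may cite Oda \cite{Oda} or \cite{CLS} here, or argue entirely combinatorially, as in \emph{loc.\ cit.}, that strict upper convexity of $h_D$ makes $P_D$ a full-dimensional $M$-polygon and that for $k\gg0$ the lattice points of $kP_D$ give a closed embedding separating points and tangent directions.) The main obstacle is exactly this last implication $(\mathrm{v})\Rightarrow(\mathrm{i})$: passing from positivity of $D$ on the torus-invariant curves alone to genuine ampleness requires the description of the Mori cone of a complete toric surface (equivalently, the combinatorial construction of the projective embedding) together with Nakai--Moishezon; every other step is a formal consequence of Theorem~\ref{NEFFITY}, the identity $h_D|_\sigma=\langle\mathbf{l}_\sigma,\cdot\rangle$, the wall intersection formula, and elementary convexity.
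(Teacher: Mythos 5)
Your proposal is correct and amounts to writing out in full the two standard results that the paper's proof simply cites: the equivalence of (i)--(iv) (Oda, Lemma 2.12) via the support-function/wall-crossing analysis built on Theorem \ref{NEFFITY}, and the Toric Nakai Criterion for (i)$\Leftrightarrow$(v) (Oda, Theorem 2.18). Since the paper's own proof is purely by reference to exactly these facts, your route is essentially the same, just with the details of the cited arguments (wall jumps of $h_D$, the Mori cone generated by torus-invariant curves, Nakai--Moishezon) filled in correctly.
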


\begin{proof}
The equivalence of the conditions (i), (ii), (iii) and (iv) follows from
\cite[Lemma 2.12, p. 82]{Oda}. (i)$\Leftrightarrow$(v) is the so-called Toric
Nakai Criterion (see \cite[Theorem 2.18, p. 86, and Remark on p. 87]{Oda}).
\end{proof}

\begin{note}\label{PROJECTIVE}
(i) If $\ D\in$ Div$_{\text{C}}^{\mathbb{T}}(X\left(  N,\Delta \right)  )$ and
$M\cap P_{D}=\{ \mathbf{m}_{0},\mathbf{m}_{1},\ldots,\mathbf{m}_{k}\},$ then
(according to a result of Ewald \& Wessels \cite{EWALD-WESSELS}) $D$ is ample
if and only if $D$ is very ample, i.e., if and only if $D$ is nef and the
map\
\[
X\left(  N,\Delta \right)  \ni \mathfrak{p}\longmapsto \left[  \mathbf{e}%
(\mathbf{m}_{0})(\mathfrak{p}):\mathbf{e}(\mathbf{m}_{1})(\mathfrak{p}%
):\ldots:\mathbf{e}(\mathbf{m}_{k})(\mathfrak{p})\right]  \in \mathbb{P}%
_{\mathbb{C}}^{k}%
\]
is a closed embedding.\smallskip \  \newline (ii) Compact toric surfaces are
\textit{projective} because there exist always ample $\mathbb{T}$-invariant
Cartier divisors on them (see \cite[Proposition 6.3.25, p. 297]{CLS}).
\end{note}

\begin{theorem}
The self-intersection number of a nef divisor $D\in$ \emph{Div}%
$_{\text{\emph{C}}}^{\mathbb{T}}(X(N,\Delta))$ is%
\begin{equation}
D^{2}=2\, \text{\emph{area}}_{M}(P_{D}). \label{DSQUAREPD}%
\end{equation}

\end{theorem}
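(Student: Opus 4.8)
The plan is to reduce the identity to the case of a \emph{smooth} compact toric surface via the minimal desingularization, then to treat the smooth case first for an ample $D$ — by comparing the Riemann--Roch polynomial of $\mathcal{O}_{X}(mD)$ with the Ehrhart polynomial of $P_{D}$ — and finally to pass from ample to nef by a perturbation argument.

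\smallskip

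\noindent\textbf{Step 1 (reduction to the smooth case).} Let $f:X(N,\widetilde{\Delta})\longrightarrow X(N,\Delta)$ be the minimal $\mathbb{T}$-equivariant desingularization. The pullback $f^{\star}D$ is again a $\mathbb{T}$-invariant Cartier divisor; its $\widetilde{\Delta}$-support function equals $h_{D}$ (viewed on the refinement), so $P_{f^{\star}D}=P_{D}$ as $M$-polygons, and $f^{\star}D$ is nef because $h_{f^{\star}D}=h_{D}$ is upper convex (Theorem \ref{NEFFITY}). By the very definition (\ref{INTNUMBSING}) of the intersection product on $X(N,\Delta)$ one has $D^{2}=(f^{\star}D)^{2}$. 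Hence it suffices to prove (\ref{DSQUAREPD}) when $X:=X(N,\Delta)$ is smooth and complete.

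\smallskip

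\noindent\textbf{Step 2 (the ample case on a smooth surface).} Assume first that $D$ is ample, so that $P_{D}$ is a genuine $2$-dimensional $M$-polygon (Theorem \ref{AMPLENESS}). For every integer $m\geq 0$ the divisor $mD$ is ample, whence $H^{j}(X,\mathcal{O}_{X}(mD))=0$ for $j>0$ by Demazure vanishing (see, e.g., \cite[Theorem 9.2.3]{CLS}); combining this with the description of global sections of toric line bundles recalled before Theorem \ref{NEFFITY} (applied to $mD$, whose polygon is $mP_{D}$) gives
\[
\chi\bigl(X,\mathcal{O}_{X}(mD)\bigr)=\dim_{\mathbb{C}}H^{0}\bigl(X,\mathcal{O}_{X}(mD)\bigr)=\sharp\bigl(mP_{D}\cap M\bigr)=\text{Ehr}_{M}(P_{D};m).
\]
On the other hand, the Riemann--Roch theorem on the smooth projective surface $X$ yields
\[
\chi\bigl(X,\mathcal{O}_{X}(mD)\bigr)=\tfrac{1}{2}\,mD\cdot(mD-K_{X})+\chi(\mathcal{O}_{X})=\tfrac{1}{2}\,D^{2}\,m^{2}-\tfrac{1}{2}\,(D\cdot K_{X})\,m+1 .
\]
Equating the $m^{2}$-coefficients of these two polynomials in $m$, and using (\ref{Ehrhartpol}) (with $M$ in the role of $N$) to read off the leading coefficient $\text{area}_{M}(P_{D})$ of $\text{Ehr}_{M}(P_{D};m)$, we obtain $\tfrac{1}{2}D^{2}=\text{area}_{M}(P_{D})$, which is (\ref{DSQUAREPD}). (Alternatively, one can avoid Riemann--Roch altogether and verify (\ref{DSQUAREPD}) by a direct computation of $D^{2}=\sum\lambda_{\varrho}\lambda_{\varrho'}\,\mathbf{V}_{\Delta}(\varrho)\cdot\mathbf{V}_{\Delta}(\varrho')$ against the shoelace formula for $2\,\text{area}_{M}(P_{D})$, using the cyclic ordering of the rays and the relations $\mathbf{n}_{i-1}+\mathbf{n}_{i+1}=-(\mathbf{V}_{\Delta}(\varrho_{i})^{2})\,\mathbf{n}_{i}$ on a smooth complete toric surface.)

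\smallskip

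\noindent\textbf{Step 3 (from ample to nef).} Fix an ample $\mathbb{T}$-invariant Cartier divisor $A$ on the smooth complete surface $X$ (which exists by Note \ref{PROJECTIVE}). For each integer $m\geq 1$ the divisor $D+mA$ is ample and Cartier, $h_{D+mA}=h_{D}+m h_{A}$, and therefore $P_{D+mA}=P_{D}+mP_{A}$ as a Minkowski sum (by the support-function description in Theorem \ref{NEFFITY}). Step 2 applied to $D+mA$ gives
\[
D^{2}+2m\,(D\cdot A)+m^{2}A^{2}=(D+mA)^{2}=2\,\text{area}_{M}(P_{D}+mP_{A}),
\]
and the right-hand side is a polynomial in $m$ of degree $\leq 2$ (mixed-area expansion of $\text{area}_{M}(P_{D}+mP_{A})$). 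Two polynomials in $m$ agreeing for all integers $m\geq 1$ are identical, so evaluation at $m=0$ gives $D^{2}=2\,\text{area}_{M}(P_{D})$, as desired.

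\smallskip

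\noindent The conceptual heart of the argument is Step 2 — the comparison of the Riemann--Roch polynomial with the Ehrhart polynomial; Steps 1 and 3 are routine bookkeeping. The one point that requires a little care is that for a merely nef $D$ the polygon $P_{D}$ may collapse to a segment or a point (then both sides of (\ref{DSQUAREPD}) are $0$), so that the $2$-dimensional Ehrhart formula (\ref{Ehrhartpol}) is \emph{not} directly available — which is exactly what forces the detour through the ample perturbations $D+mA$ in Step 3.
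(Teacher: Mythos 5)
Your proof is correct and follows essentially the same route as the paper, which simply invokes the (generalised) Riemann--Roch formula $\int_{X(N,\Delta)}[D]^{2}=2\,\text{area}_{M}(P_{D})$ and cites \cite[Theorems 13.4.1 (b) and 13.4.3]{CLS} for the details; your Step 2 is exactly the Riemann--Roch versus Ehrhart leading-coefficient comparison underlying that reference. Steps 1 and 3 correctly supply the reductions (pullback to the minimal desingularization, and the ample perturbation $D+mA$ with the Minkowski/mixed-area polynomiality) that the cited source handles internally, so nothing is missing.
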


\begin{proof}
This follows directly from the highest power term in the (generalised)
Riemann-Roch formula:
\[
\int \nolimits_{X(N,\Delta)}\left[  D\right]  ^{2}=2\, \text{area}_{M}(P_{D}).
\]
For details see \cite[Theorems 13.4.1 (b), pp. 655-656, and 13.4.3, p.
657]{CLS}.
\end{proof}

\noindent{}$\bullet$ \textbf{Lattice polygons and normal fans}. For given
$D\in$ Div$_{\text{C}}^{\mathbb{T}}(X\left(  N,\Delta \right)  )$ we have
defined $P_{D}=P_{D,\Delta}$ in (\ref{PEDE}) which is an $M$-polygon whenever
$D$ is ample. Starting with an $M$-polygon$\ P$ one can, conversely, construct
a compact toric surface $X\left(  N,\Sigma_{P}\right)  $ and a distinguished
ample divisor $D_{P}$ on it.

\begin{definition}
Let $P$ be an $M$-polygon. For every $\mathbf{m}\in$ Vert$(P)$ we define the
$M$-cone
\begin{equation}
\varpi_{\mathbf{m}}:=\left \{  \lambda(\mathbf{x}-\mathbf{m})\left \vert
\  \lambda \in \mathbb{R}_{\geq0},\  \mathbf{x}\in P\right.  \right \}  .
\label{PMCONES}%
\end{equation}
It is easy to see that
\setlength\extrarowheight{2pt}
\[
\fbox{$%
\begin{array}
[c]{ccc}
& \Sigma_{P}:=\left \{  \text{ the }N\text{-cones }\left \{  \varpi_{\mathbf{m}%
}^{\vee}\left \vert \  \mathbf{m}\in \text{Vert}(P)\right.  \right \}  \text{
together with their faces}\right \}  &
\end{array}
$}%
\]
\setlength\extrarowheight{-2pt}
is a complete $N$-fan. $\Sigma_{P}$ is called the \textit{normal fan} of $P.$
Denoting by $\boldsymbol{\eta}_{F}\in N$ the (primitive) inward-pointing
normal of an $F\in$ Edg$(P)$ (cf. \ref{DEFLOCIND} (ii)) we observe that
\[
\varpi_{\mathbf{m}}=\left \{  \mathbf{x}\in \mathbb{R}^{2}\left \vert
\  \left \langle x,\boldsymbol{\eta}_{F}\right \rangle \geq0\text{ and
}\left \langle x,\boldsymbol{\eta}_{F^{\prime}}\right \rangle \geq0\right.
\right \}  \  \text{ and \ }\varpi_{\mathbf{m}}^{\vee}=\mathbb{R}_{\geq
0}\boldsymbol{\eta}_{F}+\mathbb{R}_{\geq0}\boldsymbol{\eta}_{F^{\prime}},
\]
where $F,F^{\prime}$ are the edges of $P$ having $\mathbf{m}$ as their common
vertex. Now writing $P$ in the form%
\[
P=\bigcap \limits_{F\in \text{Edg}(P)}\left \{  \left.  \mathbf{x}\in
\mathbb{R}^{2}\right \vert \left \langle \mathbf{x},\boldsymbol{\eta}%
_{F}\right \rangle \geq-\lambda_{F}\right \}  \text{ \ (with }\lambda_{F}%
\in \mathbb{Z},\forall F\in \text{Edg}(P)\text{)}%
\]
we set%
\setlength\extrarowheight{2pt}
\[
\fbox{$%
\begin{array}
[c]{ccc}
& D_{P}:=%
{\displaystyle \sum \limits_{F\in \text{Edg}(P)}}
\lambda_{F}\mathbf{V}_{\Sigma_{P}}(\mathbb{R}_{\geq0}\boldsymbol{\eta}_{F}%
)\in \text{Div}_{\text{C}}^{\mathbb{T}}(X\left(  N,\Sigma_{P}\right)  ). &
\end{array}
$}%
\]
\setlength\extrarowheight{-2pt}
\end{definition}

\begin{proposition}
\label{SUPPFCTPOL}$D_{P}$ is ample and its support function $h_{P}:=h_{D_{P}%
}:\mathbb{R}^{2}\rightarrow \mathbb{R}$ \emph{(}often called \emph{the}
\emph{support function of }$P$\emph{)} is defined as follows:%
\begin{equation}
h_{P}(\mathbf{y}):=\min \left \{  \left \langle \mathbf{x},\mathbf{y}%
\right \rangle \left \vert \mathbf{x}\in P\right.  \right \}  ,\  \forall
\mathbf{y\in}\mathbb{R}^{2}, \label{SUPPFCTP}%
\end{equation}
with $h_{P}(\boldsymbol{\eta}_{F})=-\lambda_{F},\forall F\in$ \emph{Edg}$(P).$
Moreover, $P=P_{D_{P}},$ and%
\begin{equation}
H^{0}(X\left(  N,\Sigma_{P}\right)  ,\mathcal{O}_{X\left(  N,\Sigma
_{P}\right)  }(D_{P}))=\bigoplus \limits_{\mathbf{m}\in P\cap M}\mathbb{C}%
\mathbf{e}(\mathbf{m}). \label{H0BASIS}%
\end{equation}

\end{proposition}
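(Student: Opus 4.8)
The plan is to compute the support function $h_{D_P}$ explicitly, and then to read off ampleness, the identity $P=P_{D_P}$, and (\ref{H0BASIS}) from it. Recall from (\ref{HDDEF}) and (\ref{CARTIERCRIT}) that the support function of a $\mathbb{T}$-invariant Cartier divisor is, on each maximal cone $\sigma$, the linear functional $\langle\mathbf{l}_{\sigma},\cdot\rangle$ determined by its prescribed values $-\lambda_{\varrho}$ on the minimal generators $\mathbf{n}_{\varrho}$ of the rays of $\sigma$. For $D_P$ the rays of $\Sigma_{P}$ are exactly the $\mathbb{R}_{\geq0}\boldsymbol{\eta}_{F}$, $F\in\mathrm{Edg}(P)$, with primitive generator $\boldsymbol{\eta}_{F}$ and coefficient $\lambda_{F}$, so $h_{D_P}(\boldsymbol{\eta}_{F})=-\lambda_{F}$. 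First I would fix a vertex $\mathbf{m}\in\mathrm{Vert}(P)$ lying on the two edges $F,F'$ and observe that, since $\mathbf{m}\in F\cap F'$, it satisfies $\langle\mathbf{m},\boldsymbol{\eta}_{F}\rangle=-\lambda_{F}$ and $\langle\mathbf{m},\boldsymbol{\eta}_{F'}\rangle=-\lambda_{F'}$; as $\boldsymbol{\eta}_{F},\boldsymbol{\eta}_{F'}$ are linearly independent and span the maximal cone $\varpi_{\mathbf{m}}^{\vee}$, this forces $\mathbf{l}_{\varpi_{\mathbf{m}}^{\vee}}=\mathbf{m}$. Hence $h_{D_P}$ restricted to $\varpi_{\mathbf{m}}^{\vee}$ is the linear functional $\langle\mathbf{m},\cdot\rangle$.

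The second step is to recognise this piecewise-linear function as the minimum in (\ref{SUPPFCTP}). By (\ref{PMCONES}) the cone $\varpi_{\mathbf{m}}$ is the cone of feasible directions of $P$ at $\mathbf{m}$, so its dual $\varpi_{\mathbf{m}}^{\vee}$ is the normal cone of $P$ at $\mathbf{m}$, i.e. the set of $\mathbf{y}$ for which $\mathbf{x}\mapsto\langle\mathbf{x},\mathbf{y}\rangle$ attains its minimum over $P$ precisely at $\mathbf{m}$; thus $\min\{\langle\mathbf{x},\mathbf{y}\rangle\mid\mathbf{x}\in P\}=\langle\mathbf{m},\mathbf{y}\rangle$ for all $\mathbf{y}\in\varpi_{\mathbf{m}}^{\vee}$. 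Consequently the function $\mathbf{y}\mapsto\min\{\langle\mathbf{x},\mathbf{y}\rangle\mid\mathbf{x}\in P\}$ is linear with $M$-integral slope $\mathbf{m}$ on each maximal cone of the (complete) fan $\Sigma_{P}$, hence is a $\Sigma_{P}$-support function, and it takes the value $-\lambda_{F}$ on each $\boldsymbol{\eta}_{F}$ because the minimum of $\langle\cdot,\boldsymbol{\eta}_{F}\rangle$ over $P$ is attained along the edge $F$ and equals $-\lambda_{F}$. By the uniqueness built into (\ref{HDDEF}) (equivalently, by the isomorphism (\ref{CORRSF1})) this support function must be $h_{D_P}$, which is exactly (\ref{SUPPFCTP}).

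Finally I would read off the remaining assertions. As a pointwise minimum of linear functions, $h_{D_P}$ is concave, hence lies in $\mathrm{UCSF}(N,\Sigma_{P})$; and on two distinct maximal cones $\varpi_{\mathbf{m}}^{\vee},\varpi_{\mathbf{m}'}^{\vee}$ it is given by $\langle\mathbf{m},\cdot\rangle$ and $\langle\mathbf{m}',\cdot\rangle$ with $\mathbf{m}\neq\mathbf{m}'$ distinct vertices of $P$, so these linear functions differ. Thus $h_{D_P}\in\mathrm{SUCSF}(N,\Sigma_{P})$, and $D_P$ is ample by Theorem \ref{AMPLENESS}. In particular $D_P$ is nef, so Theorem \ref{NEFFITY}(iv), combined with $\mathbf{l}_{\varpi_{\mathbf{m}}^{\vee}}=\mathbf{m}$, gives $P_{D_P}=\mathrm{conv}(\{\mathbf{l}_{\sigma}\mid\sigma\in\Sigma_{P}(2)\})=\mathrm{conv}(\mathrm{Vert}(P))=P$; then (\ref{H0BASIS}) is precisely the general description of $H^{0}(X(N,\Delta),\mathcal{O}_{X(N,\Delta)}(D))$ recorded after (\ref{PEDE}), specialised to $\Delta=\Sigma_{P}$, $D=D_{P}$, $P_{D_P}=P$. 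The one point that requires care — the ``hard part'', such as it is — is the sign-convention bookkeeping that pins down $\mathbf{l}_{\varpi_{\mathbf{m}}^{\vee}}=\mathbf{m}$ and makes the piecewise-linear gluing consistent with (\ref{HDDEF}); once that is settled, everything else is formal.
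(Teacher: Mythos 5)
Your argument is correct; note that the paper itself does not prove this proposition but simply cites \cite[Propositions 4.2.14 and 6.1.10 (a)]{CLS}, so what you have written is a self-contained reconstruction of the standard proof rather than a different route. All the essential points check out: since $\mathbf{m}\in F\cap F'$ lies on the affine hulls $\langle\cdot,\boldsymbol{\eta}_{F}\rangle=-\lambda_{F}$ and $\langle\cdot,\boldsymbol{\eta}_{F'}\rangle=-\lambda_{F'}$, and $\boldsymbol{\eta}_{F},\boldsymbol{\eta}_{F'}$ are linearly independent, the Cartier datum on $\varpi_{\mathbf{m}}^{\vee}$ is forced to be $\mathbf{l}_{\varpi_{\mathbf{m}}^{\vee}}=\mathbf{m}\in M$ (this simultaneously verifies that $D_{P}$ is Cartier via (\ref{CARTIERCRIT}), which is worth saying explicitly); the identification of $\varpi_{\mathbf{m}}^{\vee}$ with the normal cone of $P$ at $\mathbf{m}$ gives $\min\{\langle\mathbf{x},\mathbf{y}\rangle\mid\mathbf{x}\in P\}=\langle\mathbf{m},\mathbf{y}\rangle$ for $\mathbf{y}\in\varpi_{\mathbf{m}}^{\vee}$ (your phrase ``attains its minimum precisely at $\mathbf{m}$'' is literally true only for $\mathbf{y}$ in the interior of that cone, but the displayed identity is what you use and it holds on the whole cone); concavity of a pointwise minimum of linear functions gives upper convexity, and strictness follows because distinct vertices have distinct normal cones and distinct slopes $\mathbf{m}\neq\mathbf{m}'$, whence ampleness by Theorem \ref{AMPLENESS}. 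Two small simplifications you could make: $P=P_{D_{P}}$ drops out immediately from (\ref{PEDE}) once you know $h_{D_{P}}(\boldsymbol{\eta}_{F})=-\lambda_{F}$, since the inequalities $\langle\mathbf{x},\boldsymbol{\eta}_{F}\rangle\geq-\lambda_{F}$ are exactly the defining half-planes of $P$, so the detour through Theorem \ref{NEFFITY} (iv) is not needed; and (\ref{H0BASIS}) is then indeed just the general formula for $H^{0}$ recorded after (\ref{PEDE}).
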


\begin{proof}
See \cite[Propositions 4.2.14, pp. 186-187, and 6.1.10 (a), pp. 269-270]{CLS}.
\end{proof}

\noindent{}Next, we consider the set of pairs%
\[
\mathfrak{Y}:=\left \{  \left(  X\left(  N,\Delta \right)  ,D\right)  \left \vert
\Delta \text{ a complete }N\text{-fan and }D\in \text{Div}_{\text{C}%
}^{\mathbb{T}}(X\left(  N,\Delta \right)  )\text{ ample}\right.  \right \}  .
\]

\begin{theorem}
\label{AmplenessCORR}If $\left(  X\left(  N,\Delta \right)  ,D\right)
\in \mathfrak{Y},$ then $\Delta=\Sigma_{P_{D}}$ and $D=D_{P_{D}}.$ Thus,
\[
\fbox{\ $%
	\begin{array}
	[c]{c}%
	\begin{array}
	[c]{c}%
	\\
	\emph{POL}(M)\ni P\longmapsto \left(  X\left(  N,\Sigma_{P}\right)
	,D_{P}\right)  \in \mathfrak{Y\ }\ \emph{and}\ \  \mathfrak{Y}\ni \left(  X\left(
	N,\Delta \right)  ,D\right)  \longmapsto P_{D}\in \emph{POL}(M)\medskip\\
	\end{array}
	\end{array}
	\ $}%
\]

are bijections which are inverses of each other.
\end{theorem}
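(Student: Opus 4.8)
The plan is to show that the two assignments are mutually inverse; one direction is essentially contained in Proposition~\ref{SUPPFCTPOL}, while the other reduces to identifying the normal fan $\Sigma_{P_D}$ with the abstract fan $\Delta$ by comparing support functions.

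First I would treat the composition $\mathrm{POL}(M)\to\mathfrak{Y}\to\mathrm{POL}(M)$. Given an $M$-polygon $P$, Proposition~\ref{SUPPFCTPOL} already asserts that $D_P$ is ample — so $(X(N,\Sigma_P),D_P)$ really lies in $\mathfrak{Y}$ — and that $P=P_{D_P}$. Hence this composition is the identity, which in particular makes $P\mapsto(X(N,\Sigma_P),D_P)$ injective and $(X(N,\Delta),D)\mapsto P_D$ surjective.

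The substance lies in the reverse composition: for every $(X(N,\Delta),D)\in\mathfrak{Y}$ I must prove $\Delta=\Sigma_{P_D}$ and $D=D_{P_D}$. Since $D$ is ample (hence nef), Neffity Criterion~\ref{NEFFITY}(vi) yields
\[
h_D(\mathbf{y})=\min\{\langle\mathbf{x},\mathbf{y}\rangle\mid\mathbf{x}\in P_D\}=h_{P_D}(\mathbf{y}),\qquad\forall\,\mathbf{y}\in\mathbb{R}^2,
\]
so the support function of $D$ coincides with the support function of the polygon $P_D$. Now by definition $\Sigma_{P_D}(2)=\{\varpi_{\mathbf{m}}^{\vee}\mid\mathbf{m}\in\mathrm{Vert}(P_D)\}$, and from (\ref{PMCONES}) one reads off $\varpi_{\mathbf{m}}^{\vee}=\{\mathbf{y}\mid\langle\mathbf{x},\mathbf{y}\rangle\ge\langle\mathbf{m},\mathbf{y}\rangle\ \forall\,\mathbf{x}\in P_D\}$, i.e. $\varpi_{\mathbf{m}}^{\vee}$ is exactly the maximal cone on which $h_{P_D}(\mathbf{y})=\langle\mathbf{m},\mathbf{y}\rangle$. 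On the other hand, since $D$ is ample the Ampleness Criterion~\ref{AMPLENESS} tells us $h_D$ is strictly upper convex on $\Delta$, so its maximal cones of linearity are precisely the $\sigma\in\Delta(2)$, with $h_D|_{\sigma}(\mathbf{y})=\langle\mathbf{l}_{\sigma},\mathbf{y}\rangle$ and with $\{\mathbf{l}_{\sigma}\mid\sigma\in\Delta(2)\}=\mathrm{Vert}(P_D)$ (no repetitions). Comparing these two descriptions of the linearity domains of the \emph{same} function $h_D=h_{P_D}$ gives $\sigma=\varpi_{\mathbf{l}_{\sigma}}^{\vee}$ for each $\sigma\in\Delta(2)$, hence $\Delta(2)=\Sigma_{P_D}(2)$; since a complete $N$-fan in $\mathbb{R}^2$ is determined by its set of two-dimensional cones (the rays being their edges), $\Delta=\Sigma_{P_D}$.

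Finally, once $\Delta=\Sigma_{P_D}$ is known, $D$ and $D_{P_D}$ are $\mathbb{T}$-invariant Cartier divisors on one and the same toric surface, and both have support function $h_D=h_{P_D}=h_{D_{P_D}}$, the last equality again by Proposition~\ref{SUPPFCTPOL}. Because the correspondence (\ref{CORRSF1})--(\ref{CORRSF2}) between $\Delta$-support functions and $\mathbb{T}$-invariant Cartier divisors is a group isomorphism, $D=D_{P_D}$. This closes the reverse composition and establishes that the two maps are inverse bijections. I expect the only delicate point to be the middle paragraph: one must argue carefully that the abstractly defined dual cone $\varpi_{\mathbf{m}}^{\vee}$ literally \emph{is} the maximal linearity domain of $h_{P_D}$ attached to the vertex $\mathbf{m}$, and that matching the two families of such domains produces an honest equality of fans rather than merely a common refinement; the remaining steps are direct invocations of the Neffity and Ampleness Criteria and of Proposition~\ref{SUPPFCTPOL}.
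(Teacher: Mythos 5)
Your proposal is correct and follows essentially the same route as the paper, which simply cites Proposition~\ref{SUPPFCTPOL}, Theorem~\ref{AMPLENESS} and \cite[Theorem 6.2.1]{CLS}: your argument is precisely an unpacking of that CLS theorem, identifying $\sigma\in\Delta(2)$ with $\varpi_{\mathbf{l}_\sigma}^{\vee}$ as the common maximal linearity domain of $h_D=h_{P_D}$. The one point you rightly flag as delicate (that strict upper convexity forces $\sigma$ to be the \emph{full} linearity domain, not merely contained in it) is exactly the content hidden in the citation, and your sketch of it is sound.
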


\begin{proof}
It follows from Proposition \ref{SUPPFCTPOL}, Theorem \ref{AMPLENESS} and
\cite[Theorem 6.2.1, p. 277]{CLS}.
\end{proof}

\begin{theorem}
\label{SIGMAPMINDES}Let $P$ be an $M$-polygon. If $f:X(N,\widetilde{\Sigma
_{P}})\longrightarrow X(N,\Sigma_{P})$ is the minimal desingularization of
$X(N,\Sigma_{P}),$ then the pullback $f^{\star}(D_{P})$ of the ample divisor
$D_{P}$ is the unique nef divisor on $X(N,\widetilde{\Sigma_{P}})$ for which
$P=P_{D_{P}}=P_{f^{\star}(D_{P})}$ \emph{(}or, more precisely, $P=P_{D_{P}%
,\Sigma_{P}}=P_{f^{\star}(D_{P}),\widetilde{\Sigma_{P}}}$\emph{), }and for
which%
\begin{equation}
\chi(\mathcal{O}_{X(N,\widetilde{\Sigma_{P}})}(kf^{\star}(D_{P}%
)))=\text{\emph{Ehr}}_{M}(P;k),\  \text{ \emph{for all} }k\in \mathbb{Z}_{\geq
0}. \label{hiEhrh}%
\end{equation}
\emph{(\ref{hiEhrh})} gives
\begin{equation}
D_{P}^{2}=f^{\star}(D_{P})^{2}=2\, \text{\emph{area}}_{M}(P) \label{DPSQUARE}%
\end{equation}
and%
\begin{equation}
-f^{\star}(D_{P})\cdot K_{X(N,\widetilde{\Sigma_{P}})}=\sharp \left(  \partial
P\cap M\right)  . \label{DPTHETAP}%
\end{equation}

\end{theorem}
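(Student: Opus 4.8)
The plan is to reduce everything to the Riemann--Roch statement \eqref{hiEhrh}, which is the crux, and then to extract \eqref{DPSQUARE} and \eqref{DPTHETAP} by comparing the two sides of \eqref{hiEhrh} as polynomials in $k$. First I would observe that the pullback $f^{\star}(D_P)$ is nef: by Theorem \ref{NEFFITY} neffity is equivalent to the support function being upper convex, and since $f$ merely refines $\Sigma_P$ to $\widetilde{\Sigma_P}$ without changing the support, $h_{f^{\star}(D_P)}=h_{D_P}$ as a function on $\mathbb{R}^2=|\Sigma_P|=|\widetilde{\Sigma_P}|$; hence $h_{f^{\star}(D_P)}\in\mathrm{UCSF}(N,\widetilde{\Sigma_P})$ because restriction of an upper convex function to a subdivision stays upper convex. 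This also shows $P_{f^{\star}(D_P),\widetilde{\Sigma_P}}=P_{D_P,\Sigma_P}=P$ directly from the defining formula \eqref{PEDE}, since that formula depends only on $h_D$ and not on how the plane is subdivided. Uniqueness of a nef divisor with prescribed polytope $P$ on $X(N,\widetilde{\Sigma_P})$ follows from Theorem \ref{NEFFITY}(iv): a nef divisor is recovered from the vertex data $\{\mathbf{l}_\sigma\}$, hence from $P$, up to linear equivalence; combined with the fact that on a complete toric surface a $\mathbb{T}$-invariant divisor is determined by its support function (the isomorphism \eqref{CORRSF1}), this pins down $f^{\star}(D_P)$.

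Next I would establish \eqref{hiEhrh}. Since $X(N,\widetilde{\Sigma_P})$ is smooth and complete (hence projective, by Note \ref{PROJECTIVE}(ii)) and $f^{\star}(D_P)$ is nef, all higher cohomology of $\mathcal{O}(kf^{\star}(D_P))$ vanishes for $k\ge 0$ (Demazure vanishing for nef toric divisors), so $\chi(\mathcal{O}(kf^{\star}(D_P)))=h^0(\mathcal{O}(kf^{\star}(D_P)))$. By the projection formula and $f_\star\mathcal{O}_{X(N,\widetilde{\Sigma_P})}=\mathcal{O}_{X(N,\Sigma_P)}$ (a consequence of $f$ being a resolution of a normal variety with rational — here even toric — singularities), one has $h^0(X(N,\widetilde{\Sigma_P}),\mathcal{O}(kf^{\star}(D_P)))=h^0(X(N,\Sigma_P),\mathcal{O}(kD_P))$. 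By \eqref{H0BASIS} applied to the ample divisor $kD_P$ (whose associated polytope is $kP$, since $h_{kD_P}=kh_{D_P}$ scales the polytope), this dimension equals $\sharp(kP\cap M)=\mathrm{Ehr}_M(P;k)$. This gives \eqref{hiEhrh}.

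Finally, \eqref{DPSQUARE} and \eqref{DPTHETAP} come from Hirzebruch--Riemann--Roch on the smooth surface $X(N,\widetilde{\Sigma_P})$: for a line bundle $L$,
\[
\chi(\mathcal{O}(L))=\tfrac12 L\cdot(L-K)+\chi(\mathcal{O}_{X})=\tfrac12 L^2-\tfrac12 L\cdot K+1,
\]
where $\chi(\mathcal{O}_X)=1$ for any smooth complete toric surface. Applying this with $L=kf^{\star}(D_P)$ yields $\chi(\mathcal{O}(kf^{\star}(D_P)))=\tfrac12 f^{\star}(D_P)^2\,k^2-\tfrac12 f^{\star}(D_P)\cdot K_{X(N,\widetilde{\Sigma_P})}\,k+1$. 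Comparing this with the right-hand side of \eqref{hiEhrh}, namely the Ehrhart polynomial \eqref{Ehrhartpol} equal to $\mathrm{area}_M(P)k^2+\tfrac12\sharp(\partial P\cap M)k+1$, and equating coefficients of $k^2$ and $k^1$ gives $f^{\star}(D_P)^2=2\,\mathrm{area}_M(P)$ and $-f^{\star}(D_P)\cdot K_{X(N,\widetilde{\Sigma_P})}=\sharp(\partial P\cap M)$; the equality $D_P^2=f^{\star}(D_P)^2$ is the definition \eqref{INTNUMBSING} of intersection numbers via pullback, or alternatively already contained in \eqref{DSQUAREPD}. The main obstacle is the cohomological input for \eqref{hiEhrh} — specifically Demazure vanishing for nef divisors and $f_\star\mathcal{O}=\mathcal{O}$ for the toric resolution — which the excerpt does not state explicitly; I would either cite \cite[Theorems 9.2.3, 9.3.1]{CLS} for these, or argue combinatorially that $h^0$ of a nef toric divisor equals the number of lattice points of its polytope (which is insensitive to refinement of the fan, since the polytope is), thereby proving \eqref{hiEhrh} without invoking higher vanishing at all.
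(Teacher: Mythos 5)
Your proposal is correct and follows essentially the same route as the paper: nefness of $f^{\star}(D_P)$ via invariance of the support function under refinement of the fan, Demazure vanishing to reduce $\chi$ to $h^{0}$, identification of $h^{0}(kf^{\star}(D_P))$ with $\sharp(kP\cap M)$ via \eqref{H0BASIS}, and then Riemann--Roch with coefficient comparison against \eqref{Ehrhartpol} to extract \eqref{DPSQUARE} and \eqref{DPTHETAP}. The only cosmetic difference is that you justify $h^{0}$ upstairs $=h^{0}$ downstairs through $f_{\star}\mathcal{O}=\mathcal{O}$ and the projection formula, whereas the paper passes through $H^{0}(X(N,\Sigma_P),\mathcal{O}(kD_P))=H^{0}(X(N,\Sigma_P),\mathcal{O}(D_{kP}))$ and the explicit lattice-point basis; both are valid.
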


\begin{proof}
For the first assertion see \cite[Proposition 6.2.7, p. 281]{CLS}. ($D_{P}$ has strictly upper convex support function and therefore $f^{\star}(D_{P})$ has upper convex support function, and $P=P_{D_{P}%
	,\Sigma_{P}}=P_{f^{\star}(D_{P}),\widetilde{\Sigma_{P}}}$ because by Theorem \ref{NEFFITY} the $M$-polygon associated with a nef divisor is determined by its support function.) Now let $k$
be an arbitrary non-negative integer. By the Demazure Vanishing Theorem (cf.
\cite[Theorem 2.7 (d), pp. 76-77]{Oda} or \cite[Theorem 9.2.3, p. 410]{CLS})
we obtain
\begin{equation}
\dim_{\mathbb{C}}H^{j}(X(N,\widetilde{\Sigma_{P}}),\mathcal{O}_{X(N,\widetilde
{\Sigma_{P}})}(kf^{\star}(D_{P}))))=0,\text{ for }j=1,2. \label{DEMVANISHING}%
\end{equation}
Thus, the Euler-Poincar\'{e} characteristic
\[
\chi(X(N,\widetilde{\Sigma_{P}}),\mathcal{O}_{X(N,\widetilde{\Sigma_{P}}%
)}(kf^{\star}(D_{P})))=\sum_{j=0}^{2}(-1)^{j}\dim_{\mathbb{C}}H^{j}%
(X(N,\widetilde{\Sigma_{P}}),\mathcal{O}_{X(N,\widetilde{\Sigma_{P}}%
)}(kf^{\star}(D_{P}))))
\]
of the sheaf $\mathcal{O}_{X(N,\widetilde{\Sigma_{P}})}(kf^{\star}(D_{P}))$ is
computed via (\ref{DEMVANISHING}) and (\ref{H0BASIS}) as follows:%
\begin{align*}
\chi(\mathcal{O}_{X(N,\widetilde{\Sigma_{P}})}(kf^{\star}(D_{P})))  &
=\dim_{\mathbb{C}}H^{0}(X(N,\widetilde{\Sigma_{P}}),\mathcal{O}%
_{X(N,\widetilde{\Sigma_{P}})}(kf^{\star}(D_{P}))))\\
&  =\dim_{\mathbb{C}}H^{0}(X(N,\Sigma_{P}),\mathcal{O}_{X(N,\Sigma_{P}%
)}(kD_{P}))\\
&  =\dim_{\mathbb{C}}H^{0}(X(N,\Sigma_{P}),\mathcal{O}_{X(N,\Sigma_{P}%
)}(D_{kP}))\\
&  =\sharp \left(  kP\cap M\right)  ,
\end{align*}
and (\ref{hiEhrh}) is therefore true. On the other hand, Riemann-Roch Theorem
for the projective smooth toric surface $X(N,\widetilde{\Sigma_{P}})$ gives
\begin{equation}
\chi(\mathcal{O}_{X(N,\widetilde{\Sigma_{P}})}(kf^{\star}(D_{P})))=\tfrac
{1}{2}f^{\star}(D_{P})^{2}k^{2}-\tfrac{1}{2}(f^{\star}(D_{P})\cdot
K_{X(N,\widetilde{\Sigma_{P}})})k+1 \label{RR2}%
\end{equation}
(cf. \cite[Ex. 15.2.2, p. 289]{FultonITH}). (\ref{DPSQUARE}) and
(\ref{DPTHETAP}) follow from (\ref{hiEhrh}) and (\ref{Ehrhartpol}) after
coefficient comparison. (To prove (\ref{DPSQUARE}) one may alternatively use
(\ref{DSQUAREPD}) for the divisor $f^{\star}(D_{P}).$)
\end{proof}

\section{\textsc{wve}$^{2}$\textsc{c}-graphs and classification up to
isomorphism\label{GRAPHS}}

\noindent{}Given two complete $N$-fans $\Delta,$ $\Delta^{\prime},$ under
which conditions are the corresponding compact toric surfaces $X\left(
N,\Delta \right)  $ and $X\left(  N,\Delta^{\prime}\right)  $ biholomorphically equivalent, i.e.,
\textit{isomorphic} in the analytic category? The answer to this question requires the use of the so-called
\textquotedblleft \textsc{wve}$^{2}$\textsc{c}-graphs\textquotedblright, the
weights of which are the types of the $N$-cones of $\Delta$ and $\Delta
^{\prime},$ and some additional characteristic integers determined by the
minimal desingularizations\ of $X\left(  N,\Delta \right)  $ and $X\left(
N,\Delta^{\prime}\right)  $ (see below Theorem \ref{CLASSIFTHM}). Let $\Delta$
be a complete $N$-fan, and let $\sigma_{i}=\mathbb{R}_{\geq0}\mathbf{n}%
_{i}+\mathbb{R}_{\geq0}\mathbf{n}_{i+1},$ $i\in \{1,\ldots,\nu \},$ be its
$N$-cones (with $\nu \geq3$ and $\mathbf{n}_{i}$ primitive for all
$i\in \{1,\ldots,\nu \}$), enumerated in such a way that $\mathbf{n}_{1}%
,\ldots,\mathbf{n}_{\nu}$ go \textit{anticlockwise} around the origin exactly
once in this order. (\textit{Convention}. We set $\mathbf{n}_{\nu
+1}:=\mathbf{n}_{1}$ and $\mathbf{n}_{0}:=\mathbf{n}_{\nu}.$ In general, in
definitions and formulae involving enumerated sets of numbers or vectors in
which the index set $\{1,\ldots,\nu \}$ is meant as a cycle, we shall read the
indices $i$ \textquotedblleft$\operatorname{mod}\nu$\textquotedblright, even
if it is not mentioned explicitly.) By (\ref{DIVW}) we have
\begin{equation}
\text{Div}_{\text{W}}^{\mathbb{T}}(X(N,\Delta))=%
{\textstyle \bigoplus \limits_{i=1}^{\nu}}
\mathbb{Z\,}C_{i},\  \text{where }C_{i}:=\mathbf{V}_{\Delta}(\mathbb{R}_{\geq
0}\, \mathbf{n}_{i}),\  \forall i\in \{1,\ldots,\nu \}. \label{DEFCIS}%
\end{equation}
Suppose that $\sigma_{i}$ is a $(p_{i},q_{i})$-cone for all $i\in
\{1,\ldots,\nu \}$ and introduce the notation%
\begin{equation}
I_{\Delta}:=\left \{  \left.  i\in \{1,\ldots,\nu \} \  \right \vert \ q_{i}%
>1\right \}  ,\  \ J_{\Delta}:=\left \{  \left.  i\in \{1,\ldots,\nu
\} \  \right \vert \ q_{i}=1\right \}  , \label{IJNOT}%
\end{equation}
to separate the indices corresponding to non-basic from those corresponding to
basic $N$-cones. Obviously, Sing$(X\left(  N,\Delta \right)  )=\left \{  \left.
\text{orb}(\sigma_{i})\  \right \vert \ i\in I_{\Delta}\right \}  .$ For all
$i\in I_{\Delta}$ consider the negative-regular continued fraction expansions
\begin{equation}
\frac{q_{i}}{q_{i}-p_{i}}=\left[  \! \! \left[  b_{1}^{(i)},b_{2}^{(i)}%
,\ldots,b_{s_{i}}^{(i)}\right]  \! \! \right]  , \label{EXPPQCF}%
\end{equation}
define recursively, in accordance with what is already mentioned in
(\ref{LATPOINTSSIGMA1}) and (\ref{LATPOINTSSIGMA2}) for a single non-basic
$N$-cone, lattice points $\mathbf{u}_{0}^{\left(  i\right)  },\mathbf{u}%
_{1}^{\left(  i\right)  },\ldots,\mathbf{u}_{s_{i}}^{\left(  i\right)
},\mathbf{u}_{s_{i}+1}^{\left(  i\right)  }\in N$ by
\begin{equation}
\mathbf{u}_{0}^{\left(  i\right)  }:=\mathbf{n}_{i},\mathbf{u}_{1}^{\left(
i\right)  }:=\frac{1}{q_{i}}((q_{i}-p_{i})\mathbf{n}_{i}+\mathbf{n}%
_{i+1}),\  \text{and \ }\mathbf{u}_{j+1}^{(i)}:=b_{j}^{(i)}\mathbf{u}_{j}%
^{(i)}-\mathbf{u}_{j-1}^{(i)},\  \  \forall j\in \{1,\ldots,s_{i}\}, \label{UIJS}%
\end{equation}
with%
\begin{equation}
\mathbf{u}_{s_{i}}^{(i)}=\frac{1}{q_{i}}(\mathbf{n}_{i}+(q_{i}-\widehat{p}%
_{i})\mathbf{n}_{i+1}),\text{ }\mathbf{u}_{s_{i}+1}^{\left(  i\right)
}=\mathbf{n}_{i+1},\text{ and }b_{j}^{(i)}\geq2,\  \  \forall j\in
\{1,\ldots,s_{i}\}, \label{BJBIGGER1}%
\end{equation}
and, finally, define the complete $N$-fan%
\[
\widetilde{\Delta}:=\left \{
\begin{array}
[c]{c}%
\text{ the }N\text{-cones }\left \{  \left.  \sigma_{i}\  \right \vert \ i\in
J_{\Delta}\right \}  \text{ and }\\
\left \{  \left.  \mathbb{R}_{\geq0}\, \mathbf{u}_{j}^{(i)}+\mathbb{R}_{\geq
0}\, \mathbf{u}_{j+1}^{(i)}\  \right \vert \ i\in I_{\Delta},\ j\in
\{0,1,\ldots,s_{i}\} \right \}  ,\\
\text{together with their faces}%
\end{array}
\right \}  .
\]
By construction, the induced $\mathbb{T}$-equivariant proper birational map
$f:X(N,\widetilde{\Delta})\longrightarrow X(N,\Delta)$ is the\textit{ }minimal
desingularization of $X(N,\Delta)$ (as we just patch together the minimal
desingularizations of $U_{\sigma_{i}}$'s, $i\in I_{\Delta},$ established in
Theorem \ref{HIRZVER})$.$ Setting
\begin{equation}
\left \{
\begin{array}
[c]{ll}%
E_{j}^{(i)}:=\mathbf{V}_{\widetilde{\Delta}}(\mathbb{R}_{\geq0}\, \mathbf{u}%
_{j}^{(i)}), & \forall i\in I_{\Delta}\text{ and }\forall j\in \{1,\ldots
,s_{i}\},\\
\  & \\
\overline{C}_{i}:=\mathbf{V}_{\widetilde{\Delta}}(\mathbb{R}_{\geq
0}\, \mathbf{n}_{i}), & \forall i\in \{1,\ldots,\nu \},
\end{array}
\right.  \label{DEFEIJCS}%
\end{equation}
we observe that $\overline{C}_{i}$ is the strict transform of $C_{i}$ w.r.t.
$f,$ $E^{(i)}:=\sum_{j=1}^{s_{i}}E_{j}^{(i)}$ the exceptional divisor
replacing the singular point orb$(\sigma_{i})$ via $f,$ and
\[
\text{Div}_{\text{C}}^{\mathbb{T}}(X(N,\widetilde{\Delta}))\otimes
_{\mathbb{Z}}\mathbb{Q=}\left(
{\textstyle \bigoplus \limits_{i=1}^{\nu}}
\mathbb{Q\,}\overline{C}_{i}\right)  \oplus \left(
{\textstyle \bigoplus \limits_{i\in I_{\Delta}}}
{\textstyle \bigoplus \limits_{j=1}^{s_{i}}}
\mathbb{Q\,}E_{j}^{(i)}\right)  .
\]
The \textit{discrepancy divisor}\ w.r.t. $f$ is%
\begin{equation}
K_{X(N,\widetilde{\Delta})}-f^{\star}(K_{X(N,\Delta)})\sim \sum_{i\in
I_{\Delta}}K(E^{(i)}), \label{TORDISCREP}%
\end{equation}
where each of the $K(E^{(i)})$'s is a $\mathbb{Q}$-Cartier divisor (the
\textit{local canonical divisor} of $X(N,\widetilde{\Delta})$ at
orb$(\sigma_{i})$ in the sense of \cite[\S 1]{Dais1}) supported in the union
${\bigcup \limits_{j=1}^{s_{i}}}E_{j}^{(i)}$. If $K_{X(N,\widetilde{\Delta}%
)}\sim f^{\star}(K_{X(N,\Delta)}),$ then $f$ is said to be \textit{crepant}$.$

\begin{proposition}
\label{CREPANTPROP}$f$ is crepant if and only if $X(N,\Delta)$ has at worst
Gorenstein singularities.
\end{proposition}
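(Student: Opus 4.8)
The plan is to turn the statement into a local computation at each singular point $\text{orb}(\sigma_{i})$, $i\in I_{\Delta}$, via the explicit toric minimal desingularization of Theorem \ref{HIRZVER}, and then to read off the discrepancies from support functions. First I would record which $\Delta$-support functions the divisors in (\ref{TORDISCREP}) correspond to: by (\ref{CORRSF1})--(\ref{HDDEF}) and Note \ref{NoteCAND}(i), $K_{X(N,\Delta)}=D_{h}$ where $h$ is linear on each cone of $\Delta$ and $h(\mathbf{n}_{\varrho})=1$ for all $\varrho\in\Delta(1)$, while $K_{X(N,\widetilde{\Delta})}=D_{h'}$ where $h'$ is the analogous function for the refinement $\widetilde{\Delta}$, i.e. $h'(\mathbf{n}_{\varrho})=1$ for all $\varrho\in\widetilde{\Delta}(1)$. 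Since the Mumford pullback of a $\mathbb{T}$-invariant $\mathbb{Q}$-Cartier divisor along the refinement map $f$ is represented by the same piecewise-linear function, $f^{\star}(K_{X(N,\Delta)})=D_{h}$ as a $\mathbb{T}$-invariant $\mathbb{Q}$-divisor on $X(N,\widetilde{\Delta})$, and therefore
\[
K_{X(N,\widetilde{\Delta})}-f^{\star}(K_{X(N,\Delta)})=D_{h'-h}=\sum_{i\in I_{\Delta}}\sum_{j=1}^{s_{i}}a_{j}^{(i)}\,E_{j}^{(i)},\qquad a_{j}^{(i)}:=h(\mathbf{u}_{j}^{(i)})-1,
\]
the coefficient of every strict transform $\overline{C}_{i}$ being $h(\mathbf{n}_{i})-1=0$. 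Writing $h|_{\sigma_{i}}=\langle\mathbf{l}_{i},\cdot\rangle$, the element $\mathbf{l}_{i}\in\mathbb{R}^{2}$ is the unique one with $\langle\mathbf{l}_{i},\mathbf{n}_{i}\rangle=\langle\mathbf{l}_{i},\mathbf{n}_{i+1}\rangle=1$.

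Next I would reduce ``$f$ crepant'' to ``$a_{j}^{(i)}=0$ for all $i\in I_{\Delta}$ and $1\le j\le s_{i}$''. The divisor displayed above is $\mathbb{T}$-invariant and supported on rays of $\widetilde{\Delta}$ distinct from the $\nu\ge 3$ rays $\mathbb{R}_{\geq0}\mathbf{n}_{1},\ldots,\mathbb{R}_{\geq0}\mathbf{n}_{\nu}$; if it were $\mathbb{Q}$-linearly equivalent to $0$ it would be a $\mathbb{Q}$-combination of the divisors $\text{div}(\mathbf{e}(\mathbf{m}))=\sum_{\varrho\in\widetilde{\Delta}(1)}\langle\mathbf{m},\mathbf{n}_{\varrho}\rangle\mathbf{V}_{\widetilde{\Delta}}(\varrho)$, $\mathbf{m}\in M$, and such a combination annihilating $\mathbf{n}_{1},\ldots,\mathbf{n}_{\nu}$ (which span $\mathbb{R}^{2}$) must be trivial, so the divisor itself vanishes. (Alternatively: it is $f$-exceptional and $f$-numerically trivial, hence zero by negative-definiteness of the exceptional intersection form.) Thus $f$ is crepant iff all $a_{j}^{(i)}$ vanish, and it suffices to prove, for a fixed $i\in I_{\Delta}$, that this happens iff $\text{orb}(\sigma_{i})$ is Gorenstein.

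Finally I would prove this last equivalence. For the forward direction, $a_{j}^{(i)}=0$ for all $j$ forces in particular $\langle\mathbf{l}_{i},\mathbf{u}_{1}^{(i)}\rangle=1$; together with $\langle\mathbf{l}_{i},\mathbf{u}_{0}^{(i)}\rangle=\langle\mathbf{l}_{i},\mathbf{n}_{i}\rangle=1$ and the fact that $\{\mathbf{u}_{0}^{(i)},\mathbf{u}_{1}^{(i)}\}$ is a $\mathbb{Z}$-basis of $N$ (the cone $\mathbb{R}_{\geq0}\mathbf{u}_{0}^{(i)}+\mathbb{R}_{\geq0}\mathbf{u}_{1}^{(i)}$ being basic by Theorem \ref{HIRZVER}) this says $\mathbf{l}_{i}$ takes integral values on a basis, hence $\mathbf{l}_{i}\in M$; by the Cartier criterion (Theorem \ref{CARTIERDIVTHM}) $K_{X(N,\Delta)}$ is then Cartier at $\text{orb}(\sigma_{i})$, and a direct check (in the coordinates of Proposition \ref{PQDESCR1}, where $\mathbf{l}_{i}$ has second coordinate $\tfrac{1-p_{i}}{q_{i}}$, which is integral iff $p_{i}=1$) identifies this with $p_{i}=1$, i.e. with Gorenstein-ness by Proposition \ref{GORENSTPROP}. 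For the converse, if $\text{orb}(\sigma_{i})$ is Gorenstein then $p_{i}=1$ (Proposition \ref{GORENSTPROP}), so $\tfrac{q_{i}}{q_{i}-p_{i}}=\left[\!\left[2,\ldots,2\right]\!\right]$ with $s_{i}=q_{i}-1$ entries; applying $\mathbf{l}_{i}$ to the recursion (\ref{UIJS}) gives $\langle\mathbf{l}_{i},\mathbf{u}_{j+1}^{(i)}\rangle=2\langle\mathbf{l}_{i},\mathbf{u}_{j}^{(i)}\rangle-\langle\mathbf{l}_{i},\mathbf{u}_{j-1}^{(i)}\rangle$, so $j\mapsto\langle\mathbf{l}_{i},\mathbf{u}_{j}^{(i)}\rangle$ is an arithmetic progression whose endpoint values $\langle\mathbf{l}_{i},\mathbf{u}_{0}^{(i)}\rangle=\langle\mathbf{l}_{i},\mathbf{u}_{s_{i}+1}^{(i)}\rangle=1$ (recall $\mathbf{u}_{s_{i}+1}^{(i)}=\mathbf{n}_{i+1}$) force it to be constantly $1$, whence every $a_{j}^{(i)}=0$. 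Running this over all $i\in I_{\Delta}$ gives the proposition. The point demanding the most care is the first step --- verifying that the Mumford pullback is computed by the same support function, so that the discrepancy coefficients are exactly $h(\mathbf{u}_{j}^{(i)})-1$ --- together with the use of the basis $\{\mathbf{u}_{0}^{(i)},\mathbf{u}_{1}^{(i)}\}$, which is what upgrades the numerical identity $a_{1}^{(i)}=0$ to the integrality $\mathbf{l}_{i}\in M$; the rest is standard continued-fraction bookkeeping.
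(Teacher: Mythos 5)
Your proof is correct, and in outline it follows the same strategy as the paper: localize at each singular point $\text{orb}(\sigma_{i})$, $i\in I_{\Delta}$, and show that the local contribution to the discrepancy vanishes precisely when $p_{i}=1$, which by Proposition \ref{GORENSTPROP} is the Gorenstein condition. The difference is one of self-containedness: the paper's proof is essentially a citation, delegating the equivalence ``$p_{i}=1$ for all $i$ $\Leftrightarrow$ $K(E^{(i)})\sim 0$ for all $i$'' to the explicit description of the local canonical divisors in \cite[Proposition 4.4]{Dais1}, whereas you actually carry out that computation. Your two key technical moves are both sound and worth highlighting. First, the identification of the Mumford pullback $f^{\star}(K_{X(N,\Delta)})$ with the divisor of the \emph{same} support function read on the refinement $\widetilde{\Delta}$ is legitimate because every Weil divisor on a toric surface is $\mathbb{Q}$-Cartier (as recorded in \S\ref{LCONES}), so the pullback is the honest $\mathbb{Q}$-Cartier pullback and is computed by restriction of the piecewise-linear data; this gives the discrepancy coefficients $a_{j}^{(i)}=\langle\mathbf{l}_{i},\mathbf{u}_{j}^{(i)}\rangle-1$ in closed form. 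Second, your reduction of ``linearly equivalent to zero'' to ``equal to zero'' for a $\mathbb{T}$-invariant divisor supported away from the rays $\mathbb{R}_{\geq0}\mathbf{n}_{1},\ldots,\mathbb{R}_{\geq0}\mathbf{n}_{\nu}$ (which span $\mathbb{R}^{2}$) is exactly the point that makes the statement about the actual divisor rather than its class; either of your two arguments (principal $\mathbb{T}$-invariant divisors are divisors of characters, or negative-definiteness of the exceptional lattice) closes it. The endgame --- using basicness of $\mathbb{R}_{\geq0}\mathbf{u}_{0}^{(i)}+\mathbb{R}_{\geq0}\mathbf{u}_{1}^{(i)}$ to upgrade $a_{1}^{(i)}=0$ to $\mathbf{l}_{i}\in M$ and hence $p_{i}=1$, and the arithmetic-progression argument for the converse via $\tfrac{q_{i}}{q_{i}-1}=[\![2,\ldots,2]\!]$ with $s_{i}=q_{i}-1$ --- is exactly the content that the cited reference supplies, so your version can be read as an inlined proof of the fact the paper takes off the shelf.
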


\begin{proof}
By Proposition \ref{GORENSTPROP}, $X(N,\Delta)$ has at worst Gorenstein
singularities if and only if $p_{i}=1$ for all $i\in I_{\Delta}.$ This can be
shown to be equivalent to $K(E^{(i)})\sim0,$ for all $i\in I_{\Delta},$ by
using the explicit description of $K(E^{(i)})$'s given in \cite[Proposition
4.4, pp. 94-95]{Dais1}.
\end{proof}

\begin{definition}
[The additional characteristic integers $r_{i}$]\label{DEFRIS}For every index
$i\in \{1,\ldots,\nu \}$ we introduce integers $r_{i}$ uniquely determined by
the conditions:%
\begin{equation}
r_{i}\mathbf{n}_{i}=\left \{
\begin{array}
[c]{ll}%
\mathbf{u}_{s_{i-1}}^{(i-1)}+\mathbf{u}_{1}^{(i)}, & \text{if }i\in I_{\Delta
}^{\prime}, \medskip\\
\mathbf{n}_{i-1}+\mathbf{u}_{1}^{(i)}, & \text{if\emph{\ }}i\in I_{\Delta
}^{\prime \prime}, \medskip \\
\mathbf{u}_{s_{i-1}}^{(i-1)}+\mathbf{n}_{i+1}, & \text{if }i\in J_{\Delta
}^{\prime},\medskip \\
\mathbf{n}_{i-1}+\mathbf{n}_{i+1}, & \text{if }i\in J_{\Delta}^{\prime \prime},
\end{array}
\right.  \label{CONDri}%
\end{equation}
where%
\[
I_{\Delta}^{\prime}:=\left \{  \left.  i\in I_{\Delta}\  \right \vert
\ q_{i-1}>1\right \}  ,\  \ I_{\Delta}^{\prime \prime}:=\left \{  \left.  i\in
I_{\Delta}\  \right \vert \ q_{i-1}=1\right \}  ,
\]
and%
\[
J_{\Delta}^{\prime}:=\left \{  \left.  i\in J_{\Delta}\  \right \vert
\ q_{i-1}>1\right \}  ,\  \ J_{\Delta}^{\prime \prime}:=\left \{  \left.  i\in
J_{\Delta}\  \right \vert \ q_{i-1}=1\right \}  ,
\]
with $I_{\Delta},J_{\Delta}$ as in (\ref{IJNOT}). The triples $(p_{i}%
,q_{i},r_{i}),$ $i\in \{1,\ldots,\nu \},$ will be referred to as the
\textit{combinatorial triples} of $\Delta.$
\end{definition}

\begin{note}
\noindent{}(i) The self-intersection number $(E_{j}^{(i)})^{2}\,$of
$E_{j}^{(i)}$ equals $-b_{j}^{(i)}$ for all $i\in I_{\Delta}$ and all
$j\in \{1,\ldots,s_{i}\}$ (cf. Theorem \ref{HIRZVER}). On the other hand, the
opposite $-r_{i}$ of the integer $r_{i}$ defined by (\ref{CONDri}) is nothing
but the self-intersection number $\overline{C}_{i}^{2}$ of $\overline{C}_{i}$
for all $i\in \{1,\ldots,\nu \}.$ For the proof of this fact, as well as for the
computation of the intersection numbers of the rest of pairs of divisors
(\ref{DEFEIJCS}) which constitute the given $\mathbb{Q}$-basis of
Div$_{\text{C}}^{\mathbb{T}}(X(N,\widetilde{\Delta}))\otimes_{\mathbb{Z}%
}\mathbb{Q},$ we refer to \cite[Lemma 4.4, pp. 93-94]{Dais1}.\smallskip
\  \newline(ii) The (fractional) intersection numbers $C_{i}\cdot C_{i^{\prime
}}\in \mathbb{Q}$ of any pair $C_{i},C_{i^{\prime}}$ of generators of
Div$_{\text{W}}^{\mathbb{T}}(X(N,\Delta))$ (with $i,i^{\prime}\in
\{1,\ldots,\nu \}$) can be expressed in terms of the coordinates of the
combinatorial triples of $\Delta$ and the socii of their first coordinates
(see \cite[Lemma 4.7, pp. 97-98]{Dais1}).
\end{note}

\begin{definition}
A \textit{circular graph }is a plane graph whose vertices are points on a
circle and whose edges are the corresponding arcs (on this circle, each of
which connects two consecutive vertices). We say that a circular graph
$\mathfrak{G}$ is $\mathbb{Z}$-\textit{weighted at its vertices} and
\textit{double} $\mathbb{Z}$-\textit{weighted} \textit{at its edges} (and call
it \textsc{wve}$^{2}$\textsc{c}-\textit{graph}, for short) if it is
accompanied by two maps
\[
\left \{  \text{Vertices of }\mathfrak{G}\right \}  \longmapsto \mathbb{Z}%
,\  \left \{  \text{Edges of }\mathfrak{G}\right \}  \longmapsto \mathbb{Z}^{2},
\]
assigning to each vertex an integer and to each edge a pair of integers,
respectively. To the complete $N$-fan $\Delta$ (as described above) we
associate an anticlockwise directed \textsc{wve}$^{2}$\textsc{c}-graph
$\mathfrak{G}_{\Delta}$ with
\[
\left \{  \text{Vertices of }\mathfrak{G}_{\Delta}\right \}  =\{ \mathbf{v}%
_{1},\ldots,\mathbf{v}_{\nu}\} \text{\ }\  \text{and\  \ }\left \{  \text{Edges of
}\mathfrak{G}_{\Delta}\right \}  =\{ \overline{\mathbf{v}_{1}\mathbf{v}_{2}%
},\ldots,\overline{\mathbf{v}_{\nu}\mathbf{v}_{1}}\},
\]
$(\mathbf{v}_{\nu+1}:=\mathbf{v}_{1}),$ by defining its \textquotedblleft
weights\textquotedblright \ as follows:
\[
\mathbf{v}_{i}\longmapsto-r_{i},\text{ \ }\  \  \overline{\mathbf{v}%
_{i}\mathbf{v}_{i+1}}\longmapsto \left(  p_{i},q_{i}\right)  ,\  \forall
i\in \{1,\ldots,\nu \}.
\]
The \textit{reverse graph} $\mathfrak{G}_{\Delta}^{\text{rev}}$ of
$\mathfrak{G}_{\Delta}$ is the directed \textsc{wve}$^{2}$\textsc{c}-graph
which is obtained by changing the double weight $\left(  p_{i},q_{i}\right)  $
of the edge $\overline{\mathbf{v}_{i}\mathbf{v}_{i+1}}$ into $(\widehat{p}%
_{i},q_{i})$ and reversing the initial anticlockwise direction of
$\mathfrak{G}_{\Delta}$ into clockwise direction (see Figure \ref{Fig.1}).
\end{definition}

\begin{figure}[h]
	\includegraphics[height=5cm, width=10cm]{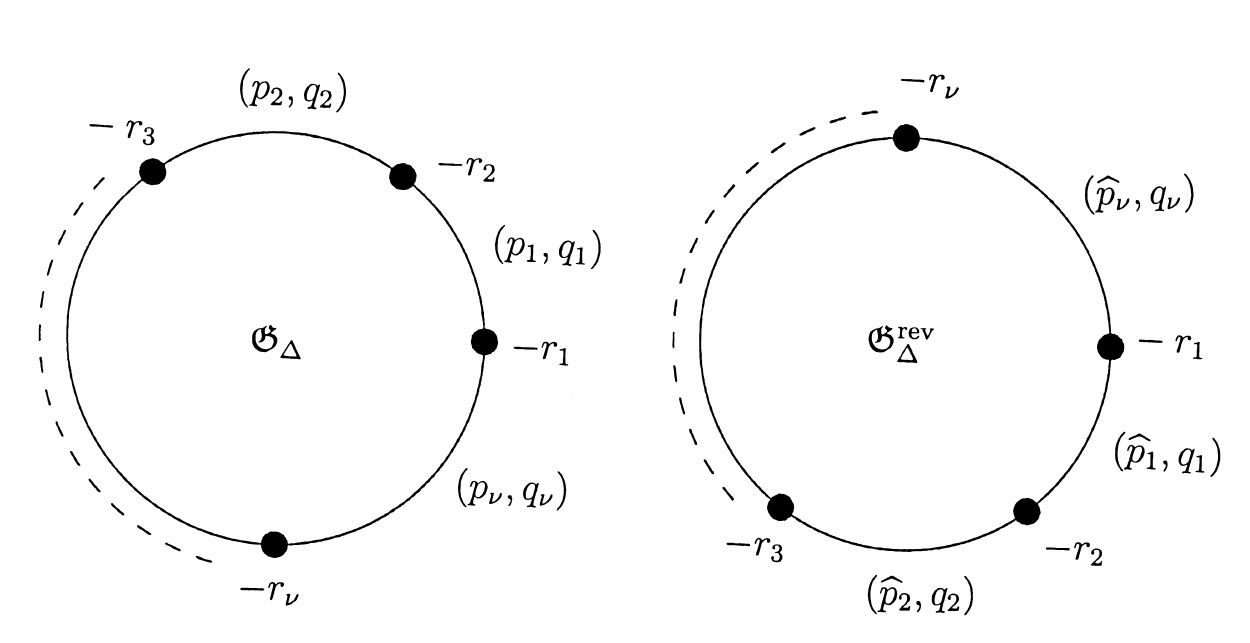}
	\caption{}\label{Fig.1}%
\end{figure}

\begin{theorem}
[Classification up to isomorphism]\label{CLASSIFTHM}Let $\Delta,$
$\Delta^{\prime}$ be two complete $N$-fans. Then the following conditions are
equivalent\emph{:\smallskip \ }\newline \emph{(i)} The compact toric surfaces
$X\left(  N,\Delta \right)  $ and $X\left(  N,\Delta^{\prime}\right)  $ are
isomorphic.\smallskip \  \newline \emph{(ii)} Either $\mathfrak{G}_{\Delta
^{\prime}}\underset{\text{\emph{gr.}}}{\cong}\mathfrak{G}_{\Delta}$ or
$\mathfrak{G}_{\Delta^{\prime}}\underset{\text{\emph{gr}.}}{\cong}%
\mathfrak{G}_{\Delta}^{\text{\emph{rev}}}.$
\end{theorem}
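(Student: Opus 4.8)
The plan is to reduce the statement to two inputs: that for a complete toric surface an abstract (analytic) isomorphism can always be promoted to a toric one, and that the \textsc{wve}$^{2}$\textsc{c}-graph is a lossless encoding of the minimal desingularization together with the marking of the original boundary curves.

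\emph{First step: from abstract to equivariant isomorphism.} I would begin by establishing that $X(N,\Delta)\cong X(N,\Delta')$ in the analytic category if and only if $\Phi_{\mathcal{A}}(\Delta)=\Delta'$ for some $\mathcal{A}\in \mathrm{GL}_{2}(\mathbb{Z})$. The implication ``$\Leftarrow$'' is immediate, since a lattice automorphism induces a $\mathbb{T}$-equivariant biholomorphism $X(N,\Delta)\to X(N,\Phi_{\mathcal{A}}(\Delta))$. For ``$\Rightarrow$'' one uses that the automorphism group of a complete toric surface (which is automatically simplicial) is a linear algebraic group having $\mathbb{T}=\mathbb{T}_{N}$ as a maximal torus of its identity component; an isomorphism conjugates $\mathbb{T}$ onto another maximal torus of $\mathrm{Aut}^{0}(X(N,\Delta'))$, all maximal tori are conjugate, and after composing with a suitable torus element the isomorphism becomes $\mathbb{T}$-equivariant up to the automorphism of $\mathbb{T}$ it induces; hence it arises from a lattice automorphism $\Phi_{\mathcal{A}}$, which matches the orbit stratifications and therefore the two fans. (These are standard facts of toric geometry; see \cite{Oda} and \cite{CLS}.) From here the theorem becomes a purely combinatorial statement about fans.

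\emph{Second step: the graph determines the fan up to $\mathrm{GL}_{2}(\mathbb{Z})$, giving (ii)$\Rightarrow$(i).} Passing to the minimal desingularization $f:X(N,\widetilde{\Delta})\to X(N,\Delta)$, the smooth complete fan $\widetilde{\Delta}$ has a cyclic sequence of rays whose torus-invariant curves have self-intersection numbers $(a_{1},\dots,a_{n})$, and such a smooth complete fan is determined up to $\mathrm{GL}_{2}(\mathbb{Z})$ by that cyclic sequence, read up to cyclic rotation and reversal (classical; see \cite{Oda}). Now $\mathfrak{G}_{\Delta}$ recovers exactly this sequence: by Theorem \ref{HIRZVER} the rays subdividing $\sigma_{i}$ for $i\in I_{\Delta}$, together with their self-intersections $-b_{j}^{(i)}$, are read off the type $(p_{i},q_{i})$ via the continued fraction expansion (\ref{EXPPQCF}), while by the Note following Definition \ref{DEFRIS} the vertex weight $-r_{i}$ equals the self-intersection $\overline{C}_{i}^{\,2}$ of the strict transform of $C_{i}$; splicing the $-b_{j}^{(i)}$'s into the cyclic list of $-r_{i}$'s at the appropriate edges yields $(a_{1},\dots,a_{n})$. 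The graph moreover records which rays of $\widetilde{\Delta}$ are the original $\mathbf{n}_{i}$ (the vertices) and which are exceptional (hidden inside the edges), so $\Delta$ is recovered from $\widetilde{\Delta}$ by deleting the exceptional rays and amalgamating cones. Since an isomorphism of \emph{directed} circular graphs is a cyclic relabelling, $\mathfrak{G}_{\Delta'}\cong\mathfrak{G}_{\Delta}$ gives cyclically rotated sequences, hence $\Phi_{\mathcal{A}}(\Delta)=\Delta'$ with $\det\mathcal{A}=+1$; replacing $\mathfrak{G}_{\Delta}$ by $\mathfrak{G}_{\Delta}^{\mathrm{rev}}$ reverses the cyclic order and yields the same conclusion with $\det\mathcal{A}=-1$. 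Either way the first step gives $X(N,\Delta)\cong X(N,\Delta')$.

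\emph{Third step: the converse (i)$\Rightarrow$(ii), and the main obstacle.} Starting from $\Phi_{\mathcal{A}}(\Delta)=\Delta'$: if $\det\mathcal{A}=+1$ the anticlockwise ordering of the rays, the cone types $(p_{i},q_{i})$ (ordering included, by Proposition \ref{PQDESCR1}), and the self-intersection numbers $-r_{i}$ are all preserved, so $\mathfrak{G}_{\Delta'}\cong\mathfrak{G}_{\Delta}$; if $\det\mathcal{A}=-1$ the image rays run clockwise, so reading off $\mathfrak{G}_{\Delta'}$ traverses the $\mathbf{n}_{i}$ in reverse, which interchanges the two minimal generators of each $\sigma_{i}$ and therefore, by Proposition \ref{PQDESCR2}, replaces $p_{i}$ by its socius $\widehat{p}_{i}$, while the $-r_{i}$ are orientation-independent — precisely the recipe defining $\mathfrak{G}_{\Delta}^{\mathrm{rev}}$. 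The conceptual crux of the whole argument is the first step, the torus-conjugacy passage from an analytic isomorphism to an equivariant one; once that is in hand, the rest is a careful but routine unwinding of the continued-fraction machinery of \S\ref{LCONES} and of the combinatorics of smooth complete toric surfaces. Within the combinatorial part, the point requiring genuine care is matching the $\det=\pm1$ dichotomy with the $\mathfrak{G}_{\Delta}$ versus $\mathfrak{G}_{\Delta}^{\mathrm{rev}}$ dichotomy, for which the socius identity in Proposition \ref{PQDESCR2} does the essential work; one should also verify the degenerate cases (basic cones $q_{i}=1$, which contribute edges hiding no exceptional rays, and small $\nu$) separately.
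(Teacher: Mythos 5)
The paper does not actually prove Theorem \ref{CLASSIFTHM} here but defers to \cite[\S 5]{Dais1}, and your argument follows the same standard route used there: promote an abstract (analytic, hence by projectivity algebraic) isomorphism to an equivariant one via the maximal-torus conjugacy argument, pass to the unique minimal desingularizations, and invoke Oda's classification of smooth compact toric surfaces by weighted circular graphs, the \textsc{wve}$^{2}$\textsc{c}-graph being a faithful encoding of that weight sequence together with the marking of the strict transforms $\overline{C}_{i}$ versus the exceptional curves $E_{j}^{(i)}$. Your matching of the $\det\mathcal{A}=\pm1$ dichotomy with $\mathfrak{G}_{\Delta}$ versus $\mathfrak{G}_{\Delta}^{\text{rev}}$ through Proposition \ref{PQDESCR2} (replacement of $p_{i}$ by its socius $\widehat{p}_{i}$ when the orientation is reversed) is exactly the point that makes the reverse graph the correct object, so the proposal is correct and essentially coincides with the cited proof.
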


\noindent Here \textquotedblleft$\, \overset{\text{gr.}}{\cong}\,$%
\textquotedblright \ indicates graph-theoretic isomorphism (i.e., a bijection
between the sets of vertices which preserves the corresponding weights). For
further details and for the proof of Theorem \ref{CLASSIFTHM} see
\cite[\S 5]{Dais1}. (\textit{Conventions for the drawings}. When we draw
concrete \textsc{wve}$^{2}$\textsc{c}-graphs in the plane we attach, for
simplification's sake, only the weight $-r_{i}$ at $\mathbf{v}_{i}$ without
mentioning $\mathbf{v}_{i}$ itself, for $i\in \{1,\ldots,\nu \},$ and the double
weight $\left(  p_{i},q_{i}\right)  $ at the edge $\overline{\mathbf{v}%
_{i}\mathbf{v}_{i+1}},$ for $i\in I_{\Delta},$ while we leave edges
$\overline{\mathbf{v}_{i}\mathbf{v}_{i+1}},$ $i\in J_{\Delta},$ without any
decoration in order to switch to the notation for the $\mathbb{Z}$-weighted
circular graphs introduced by Oda in \cite[pp. 42-46]{Oda} which are used for
the study of \textit{smooth} compact toric surfaces.)

\section{Toric log del Pezzo surfaces\label{TORICLDPSURF}}

\noindent{}A compact complex surface is called \textit{log del Pezzo surface}
if \ (a) it has at worst log-terminal singularities, i.e., quotient
singularities, and (b) there is a positive integer multiple of its
anticanonical divisor which is a Cartier ample divisor. The \textit{index} of
a log del Pezzo surface is defined to be the least positive integer having
property (b). Every smooth compact toric surface possesses a unique
\textit{anticanonical model} (in the sense of Sakai \cite{Sakai}) which has to
be a toric log del Pezzo surface; and conversely, every toric log del Pezzo
surface is the anticanonical model of its minimal desingularization (see
\cite[Theorem 6.5, p. 106]{Dais1}).

\begin{definition}
Let $Q\in$ POL$_{\mathbf{0}}(N)$ be an LDP-polygon $($see \ref{DEFLOCIND}
(i)). For each $F\in$ Edg$(Q)$ we define the $N$-cone $\sigma_{F}:=\left \{
\lambda \mathbf{x}\left \vert \mathbf{x}\in F\text{ and }\lambda \in
\mathbb{R}_{\geq0}\right.  \right \}  $ supporting $F,$ and the complete
$N$-fan%
\setlength\extrarowheight{2pt}
\[
\fbox{$%
\begin{array}
[c]{ccc}
& \Delta_{Q}:=\left \{  \left.  \text{the }N\text{-cones }\sigma_{F}\text{
together with their faces}\right \vert F\in \text{Edg}(Q)\right \}  . &
\end{array}
$}%
\]
\setlength\extrarowheight{-2pt}
$X\left(  N,\Delta_{Q}\right)  $ is said to be the compact toric surface
\textit{associated with} $Q,$ and $\mathfrak{G}_{\Delta_{Q}}$ the \textsc{wve}%
$^{2}$\textsc{c}-\textit{graph of} $Q.$
\end{definition}

\begin{proposition}
\label{LDPPROP1}Let $\Delta$ be a complete $N$-fan. Then the following
conditions are equivalent\emph{:\smallskip} \newline \emph{(i)} $X\left(
N,\Delta \right)  $ is a log del Pezzo surface of index $\ell$.\smallskip
\  \newline \emph{(ii)} There exists an LDP-polygon $Q$ of index $\ell$ w.r.t.
$N$ \emph{(}see \emph{\ref{DEFLOCIND} (ii))} such that $\Delta=\Delta_{Q}.$
\end{proposition}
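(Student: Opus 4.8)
The plan is to prove the two implications separately. Both directions rest on three ingredients already available: the bijection $\mathfrak{Y}\leftrightarrow\mathrm{POL}(M)$ of Theorem \ref{AmplenessCORR}, the Ampleness Criterion (Theorem \ref{AMPLENESS}), and the elementary polarity fact that for \emph{any} polygon $P$ with $\mathbf{0}\in\mathrm{int}(P)$ the normal fan $\Sigma_P$ coincides with the fan $\Delta_{P^{\circ}}$ whose maximal cones are the cones $\mathbb{R}_{\geq0}G$ over the edges $G$ of $P^{\circ}$. The latter is checked directly from (\ref{PMCONES}): for $\mathbf{m}\in\mathrm{Vert}(P)$ the dual cone $\varpi_{\mathbf{m}}^{\vee}=\{\mathbf{x}\mid\langle\mathbf{x},\mathbf{y}\rangle\geq\langle\mathbf{x},\mathbf{m}\rangle,\ \forall\mathbf{y}\in P\}$ is exactly $\mathbb{R}_{\geq0}G$ for the edge $G=\{\mathbf{x}\in P^{\circ}\mid\langle\mathbf{x},\mathbf{m}\rangle=-1\}$ assigned to $\mathbf{m}$ by the polarity bijection (\ref{POLARITY1}), and replacing $P^{\circ}$ by a positive multiple changes neither this cone nor the rays of the fan.

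For (ii)$\Rightarrow$(i): let $Q$ be an LDP-polygon of index $\ell$ and set $X:=X(N,\Delta_Q)$. It is compact ($\Delta_Q$ complete) with only cyclic quotient singularities (Proposition \ref{Sing-Prop}), so condition (a) holds. The rays of $\Delta_Q$ are the $\mathbb{R}_{\geq0}\mathbf{n}$, $\mathbf{n}\in\mathrm{Vert}(Q)$, with primitive generators, so by (\ref{KFORMULA}) and (\ref{HDDEF}) the divisor $D_k:=-kK_X$ has $h_{D_k}(\mathbf{n})=-k$ for all $\mathbf{n}\in\mathrm{Vert}(Q)$, whence $P_{D_k}=\{\mathbf{x}\mid\langle\mathbf{x},\mathbf{n}\rangle\geq-k,\ \forall\mathbf{n}\in\mathrm{Vert}(Q)\}=kQ^{\circ}$. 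Using the description of the affine hull of $F\in\mathrm{Edg}(Q)$ from Definition \ref{DEFLOCIND}, namely $\langle\boldsymbol{\eta}_F,\mathbf{n}\rangle=-l_F$ for the two vertices $\mathbf{n}$ of $F$, the Cartier criterion (\ref{CARTIERCRIT}) forces the (unique $\mathbb{R}$-)solution $\mathbf{l}_{\sigma_F}=\tfrac{k}{l_F}\boldsymbol{\eta}_F$, which lies in $M$ iff $l_F\mid k$; hence $D_k$ is Cartier iff $\ell=\mathrm{lcm}\{l_F\}$ divides $k$. For $k=\ell$ one has $P_{D_\ell}=\ell Q^{\circ}$, an $M$-polygon by (\ref{LEQUALITY}), with vertex set $\{\tfrac{\ell}{l_F}\boldsymbol{\eta}_F\mid F\in\mathrm{Edg}(Q)\}=\{\mathbf{l}_{\sigma_F}\}$ and no repetitions by the polarity bijection (\ref{POLARITY2}) for $Q$; so $D_\ell$ is ample by Theorem \ref{AMPLENESS}(iv). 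Since positive multiples of ample divisors are ample, $\ell$ is the least $k$ with $-kK_X$ Cartier and ample, i.e. $X$ is log del Pezzo of index $\ell$.

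For (i)$\Rightarrow$(ii): assume $X(N,\Delta)$ is log del Pezzo of index $\ell$, so $D:=-\ell K_{X(N,\Delta)}$ is Cartier and ample. By Theorem \ref{AmplenessCORR}, $\Delta=\Sigma_{P_D}$ with $P_D\in\mathrm{POL}(M)$, and (\ref{PEDE})–(\ref{HDDEF}) give $P_D=\{\mathbf{x}\mid\langle\mathbf{x},\mathbf{n}_\varrho\rangle\geq-\ell,\ \forall\varrho\in\Delta(1)\}$ with $\mathbf{0}\in\mathrm{int}(P_D)$ (since $0>-\ell$). Put $Q:=\ell\,P_D^{\circ}$. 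From $P_D=\bigl(\mathrm{conv}\{\tfrac{1}{\ell}\mathbf{n}_\varrho\}\bigr)^{\circ}$ and $(R^{\circ})^{\circ}=R$ (the convex hull of the $\mathbf{n}_\varrho$ contains $\mathbf{0}$ in its interior, as $\Delta$ is complete) we get $Q=\mathrm{conv}\{\mathbf{n}_\varrho\mid\varrho\in\Delta(1)\}$, with $\mathbf{0}\in\mathrm{int}(Q)$. The polarity bijections identify $\mathrm{Vert}(Q)$ with $\mathrm{Edg}(P_D)$, and the Ampleness Criterion identifies $\mathrm{Edg}(P_D)$ with $\Delta(1)$: tracing this through, the vertex of $Q$ dual to the edge $\{\mathbf{x}\in P_D\mid\langle\mathbf{x},\mathbf{n}_\varrho\rangle=-\ell\}$ of $P_D$ is $\mathbf{n}_\varrho$ itself. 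Hence $\mathrm{Vert}(Q)=\{\mathbf{n}_\varrho\mid\varrho\in\Delta(1)\}$ consists of primitive points, so $Q$ is an LDP-polygon, and by the normal-fan/face-fan identity of the first paragraph $\Delta_Q=\Delta_{P_D^{\circ}}=\Sigma_{P_D}=\Delta$. Finally, applying the already-proved implication (ii)$\Rightarrow$(i) to $Q$ shows $X(N,\Delta_Q)=X(N,\Delta)$ is log del Pezzo of index equal to the index of $Q$; as the index is an invariant of the surface, $Q$ has index $\ell$.

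The step I expect to be the main obstacle is the identification $\Delta_Q=\Delta$ in (i)$\Rightarrow$(ii): one must guarantee that passing to $\mathrm{conv}\{\mathbf{n}_\varrho\}$ neither merges two ray generators into one vertex nor absorbs one into the interior of an edge, and this is precisely what the ampleness of $-\ell K$ — via Theorem \ref{AMPLENESS}(iv) and Theorem \ref{AmplenessCORR} — supplies, together with enough care in the polar/normal-fan bookkeeping to match the cones-over-edges of $Q$ with the maximal cones of $\Delta$. Pinning the index down to be exactly $\ell$ (rather than a proper divisor) likewise hinges on the minimality clause in the definition of the index combined with the divisibility $l_F\mid k$ from the Cartier computation.
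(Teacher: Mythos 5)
Your proof is correct and follows essentially the same route as the paper: identify $Q$ as the polar of $\frac{1}{\ell}P_{-\ell K}$, use the primitivity of the ray generators and the Ampleness Criterion to see that $Q$ is an LDP-polygon with $\Delta_Q=\Delta$, and pin the index down via the local indices $l_F$ and (\ref{LEQUALITY}). The only (harmless) differences are organizational: you write out the implication (ii)$\Rightarrow$(i) in full where the paper merely says it is ``similar,'' and you deduce that the index of $Q$ equals $\ell$ by bootstrapping through that implication rather than by citing (\ref{LEQUALITY}) directly.
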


\begin{proof}
Suppose that $\ell:=\min \left \{  k\in \mathbb{Z}_{>0}\left \vert -kK_{X\left(
N,\Delta \right)  }\in \text{Div}_{\text{C}}^{\mathbb{T}}(X(N,\Delta))\text{ and
is ample}\right.  \right \}  .$ By Theorem \ref{CARTIERDIVTHM} and
(\ref{KFORMULA}),
\[
-\ell K_{X\left(  N,\Delta \right)  }=\ell(\sum_{\varrho \in \Delta \left(
1\right)  }\mathbf{V}_{\Delta}(\varrho))\in \text{Div}_{\text{C}}^{\mathbb{T}%
}(X(N,\Delta))
\]
means that there is a unique set $\left \{  \left.  \mathbf{l}_{\sigma
}\right \vert \sigma \in \Delta \left(  2\right)  \right \}  \subset M:=$
Hom$_{\mathbb{Z}}\left(  N,\mathbb{Z}\right)  $ such that $\left \langle
\mathbf{l}_{\sigma},\mathbf{n}_{\varrho}\right \rangle =-\ell \ $for $\varrho
\in \Delta \left(  1\right)  \cap \sigma.$ From the implication (i)$\Rightarrow
$(iv) in Theorem \ref{AMPLENESS} (applied for the divisor $D=-\ell
K_{X(N,\Delta_{Q})}$) we deduce that $P_{-\ell K_{X\left(  N,\Delta \right)  }%
}$ is an $M$-polygon with vertex set Vert$(P_{-\ell K_{X\left(  N,\Delta
\right)  }})=\left \{  \left.  \mathbf{l}_{\sigma}\right \vert \sigma \in
\Delta \left(  2\right)  \right \}  $ (without repetitions). We observe that the
polygon $\frac{1}{\ell}P_{-\ell K_{X\left(  N,\Delta \right)  }}:=$
conv$\left(  \left \{  \frac{1}{\ell}\left.  \mathbf{l}_{\sigma}\right \vert
\sigma \in \Delta \left(  2\right)  \right \}  \right)  $ contains $\mathbf{0}$ in
its interior. Since $\left \langle \frac{1}{\ell}\mathbf{l}_{\sigma}%
,\mathbf{n}_{\varrho}\right \rangle =-1\ $for $\varrho \in \Delta \left(
1\right)  \cap \sigma,$ its polar polygon is
\[
(\frac{1}{\ell}P_{-\ell K_{X\left(  N,\Delta \right)  }})^{\circ}%
=\text{conv}(\left \{  \left.  \mathbf{n}_{\varrho}\right \vert \varrho \in
\Delta \left(  1\right)  \right \}  )\in \text{POL}_{\mathbf{0}}(N)
\]
(by (\ref{POLARITY1}) and (\ref{POLARITY2})). Setting $Q:=(\frac{1}{\ell
}P_{-\ell K_{X\left(  N,\Delta \right)  }})^{\circ}$ we see that $Q$ is an
LDP-polygon because $\mathbf{n}_{\varrho}$ is primitive for all $\varrho
\in \Delta \left(  1\right)  .$ Moreover, by our hypothesis, $\ell=\min \left \{
k\in \mathbb{Z}_{>0}\left \vert \text{Vert}(kQ^{\circ})\subset M\right.
\right \}  .$ Thus, the index of $Q$ equals $\ell,$ $\Delta=\Delta_{Q},$ and
(i)$\Rightarrow$(ii) is true. The proof of the reverse implication
(ii)$\Rightarrow$(i) is similar.
\end{proof}

\begin{proposition}
\label{LDPPROP2} Let $Q,Q^{\prime}\in$ \emph{POL}$_{\mathbf{0}}(N)$ be two
LDP-polygons. Then the following are equivalent\emph{:\smallskip}
\newline \emph{(i) }$X\left(  N,\Delta_{Q}\right)  $ and $X(N,\Delta
_{Q^{\prime}})$ are isomorphic.\smallskip \  \newline \emph{(ii)} $\left[
Q\right]  _{N}=[Q^{\prime}]_{N}.$
\end{proposition}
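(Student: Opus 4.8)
The plan is to reduce both implications to a single statement about fans: \emph{condition (ii) holds if and only if there is a unimodular transformation $\Psi\in\text{Aut}_N(\mathbb{R}^2)$ with $\Psi(\Delta_Q)=\Delta_{Q'}$}. This equivalence is elementary once one notes that, for an LDP-polygon $Q$, the vertices of $Q$ are exactly the minimal generators of the rays of $\Delta_Q$ (each vertex is primitive by definition of an LDP-polygon, and the rays of $\Delta_Q$ are the half-lines $\mathbb{R}_{\geq0}\mathbf{v}$, $\mathbf{v}\in\text{Vert}(Q)$): if $\Psi(Q)=Q'$ then $\Psi$ carries vertices to vertices and edges to edges, hence $\Psi(\sigma_F)=\sigma_{\Psi(F)}$ for all $F\in\text{Edg}(Q)$ and $\Psi(\Delta_Q)=\Delta_{Q'}$; conversely, if $\Psi(\Delta_Q)=\Delta_{Q'}$ then $\Psi$ permutes rays, hence minimal generators, so $\Psi(\text{Vert}(Q))=\text{Vert}(Q')$ and $\Psi(Q)=\text{conv}(\Psi(\text{Vert}(Q)))=Q'$.

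For (ii)$\Rightarrow$(i) I would then invoke the functoriality of the toric construction. Writing $\Psi=\Phi_{\mathcal{BAB}^{-1}}$ with $\mathcal{A}\in\text{GL}_2(\mathbb{Z})$, the associated lattice automorphism of $N$ and its inverse transpose on $M$ identify $S_\sigma$ with $S_{\Psi(\sigma)}$ for each cone, hence give $\mathbb{C}$-algebra isomorphisms $\mathbb{C}[S_\sigma]\cong\mathbb{C}[S_{\Psi(\sigma)}]$ compatible with the gluing data of $\Delta_Q$ and $\Delta_{Q'}$, and therefore a $\mathbb{T}_N$-equivariant isomorphism $X(N,\Delta_Q)\cong X(N,\Delta_{Q'})$; alternatively one patches the local equivariant isomorphisms supplied by Proposition~\ref{ISO}.

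The substance lies in (i)$\Rightarrow$(ii). By Theorem~\ref{CLASSIFTHM} an isomorphism $X(N,\Delta_Q)\cong X(N,\Delta_{Q'})$ forces $\mathfrak{G}_{\Delta_{Q'}}\overset{\text{gr.}}{\cong}\mathfrak{G}_{\Delta_Q}$ or $\mathfrak{G}_{\Delta_{Q'}}\overset{\text{gr.}}{\cong}\mathfrak{G}_{\Delta_Q}^{\text{rev}}$; I would upgrade this to $\Psi(\Delta_Q)=\Delta_{Q'}$ for some $\Psi\in\text{Aut}_N(\mathbb{R}^2)$, which by the first paragraph finishes the proof. Concretely, the graph $\mathfrak{G}_{\Delta_Q}$ records, cyclically, the combinatorial triples $(p_i,q_i,r_i)$ of $\Delta_Q$; fixing a primitive vector as $\mathbf{n}_1$ and a unimodular normalization of the cone $\sigma_1$ of type $(p_1,q_1)$ (permissible by Proposition~\ref{PQDESCR1}), the remaining primitive ray generators $\mathbf{n}_2,\mathbf{n}_3,\dots$ are then forced one after another by the relations (\ref{UIJS}) and (\ref{CONDri}) (each $\mathbf{n}_{i+1}$ being pinned down by $\mathbf{n}_{i-1}$, $\mathbf{n}_i$, $r_i$ together with the continued-fraction data of $\sigma_{i-1}$), and the cycle closes up consistently because the fan is complete; passing to the reverse graph corresponds to precomposing with an orientation-reversing element of $\text{GL}_2(\mathbb{Z})$, which by Proposition~\ref{PQDESCR2} reverses the cyclic order and replaces each $p_i$ by its socius. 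This reconstruction is precisely what the proof of Theorem~\ref{CLASSIFTHM} in \cite[\S5]{Dais1} establishes, so in the write-up I would cite it from there.

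The hard part is exactly this reconstruction step: one must check that the cyclic list of combinatorial triples --- up to rotation, and up to reversal-with-socii --- determines the sequence of primitive ray generators up to a single element of $\text{GL}_2(\mathbb{Z})$, in particular that the inductive construction is consistent all the way around the circle and that the two admissible choices of starting data differ only by a unimodular map. If one prefers a self-contained argument rather than quoting \cite[\S5]{Dais1}, an equivalent route is to pass to the unique minimal desingularizations $X(N,\widetilde{\Delta_Q})\to X(N,\Delta_Q)$ and $X(N,\widetilde{\Delta_{Q'}})\to X(N,\Delta_{Q'})$: an isomorphism of the log del Pezzo surfaces lifts canonically to an isomorphism of these smooth complete toric surfaces, Oda's classification \cite[pp.\ 42--46]{Oda} renders their fans unimodularly equivalent, and since the minimal-resolution contraction is intrinsic the equivalence descends to $\Psi(\Delta_Q)=\Delta_{Q'}$.
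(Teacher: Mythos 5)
Your proof is correct and follows essentially the same route as the paper: the paper likewise reduces (i)$\Leftrightarrow$(ii) to Theorem \ref{CLASSIFTHM} by observing that $[Q]_{N}=[Q^{\prime}]_{N}$ is equivalent to $\mathfrak{G}_{\Delta_{Q^{\prime}}}\overset{\text{gr.}}{\cong}\mathfrak{G}_{\Delta_{Q}}$ or $\mathfrak{G}_{\Delta_{Q}}^{\text{rev}}$ (according to the sign of $\det(\mathcal{A})$), deferring the reconstruction of the fan from its weighted graph to \cite[\S 5]{Dais1}. Your explicit identification of that reconstruction as the nontrivial step, and the alternative descent through the minimal desingularizations, are sound but amount to the same argument.
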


\begin{proof}
We have $\left[  Q\right]  _{N}=[Q^{\prime}]_{N}$ if and only if there exist a
basis matrix $\mathcal{B}$ of $N$ and a matrix $\mathcal{A}\in$ GL$\left(
2,\mathbb{Z}\right)  $ such that%
\[
\Phi_{\mathcal{BAB}^{-1}}(Q)=Q^{\prime}\Rightarrow \Phi_{\mathcal{AB}^{-1}%
}(Q)=\Phi_{\mathcal{B}^{-1}}(Q^{\prime})\Rightarrow \Phi_{\mathcal{A}%
}(Q^{\text{st}})=Q^{\prime\!\text{ st}}\Rightarrow \lbrack Q^{\text{st}%
}]_{\mathbb{Z}^{2}}=[Q^{\prime}]_{\mathbb{Z}^{2}},
\]
where $Q^{\text{st}},Q^{\prime\!\text{ st}}$ are the standard models of $Q,$ and
$Q^{\prime},$ respectively, w.r.t. $\mathcal{B}$. It is a easy to verify that
this is equivalent to
\[
\mathfrak{G}_{\Delta_{Q^{\prime \text{ st}}}}\underset{\text{gr.}}{\cong%
}\left \{
\begin{array}
[c]{ll}%
\mathfrak{G}_{\Delta_{Q^{\text{st}}}},\smallskip & \text{if }\det
(\mathcal{A})=1,\\
\mathfrak{G}_{\Delta_{Q^{\text{st}}}}^{\text{rev}}, & \text{if }%
\det(\mathcal{A})=-1,
\end{array}
\right.  \Longleftrightarrow \mathfrak{G}_{\Delta_{Q^{\prime}}}\underset
{\text{gr.}}{\cong}\left \{
\begin{array}
[c]{ll}%
\mathfrak{G}_{\Delta_{Q}},\smallskip & \text{if }\det(\mathcal{A})=1,\\
\mathfrak{G}_{\Delta_{Q}}^{\text{rev}}, & \text{if }\det(\mathcal{A})=-1.
\end{array}
\right.
\]
Thus, (ii)$\Leftrightarrow$(i) can be seen to be true by making use of Theorem
\ref{CLASSIFTHM}.
\end{proof}

\begin{note}
\label{NOTELF}(i) By Propositions \ref{LDPPROP1} and \ref{LDPPROP2} the
following map is a bijection:%
\[
\left \{
\begin{array}
[c]{c}%
\text{ equivalence classes }\medskip \\
\text{of LDP-polygons}\medskip \\
\text{of index }\ell \text{ w.r.t. \textquotedblleft}\backsim_{N}%
\text{\textquotedblright}%
\end{array}
\right \}  \ni \left[  Q\right]  _{N}\longmapsto \left[  X\left(  N,\Delta
_{Q}\right)  \right]  \in \left \{
\begin{array}
[c]{c}%
\text{ isomorphism classes of toric } \medskip\\
\text{log Del Pezzo surfaces}\medskip \\
\text{of index }\ell \text{ }%
\end{array}
\right \}  .
\]
Thus, the classification of toric log del Pezzo surfaces of index $\ell$
\textit{up to isomorphism} is equivalent to the classification of LDP-polygons
of index $\ell$ \textit{up to unimodular transformation}.\smallskip
\  \newline(ii) Let $Q\in$ POL$_{\mathbf{0}}(N)$ be an LDP-polygon. Enumerating
the edges, say, $F_{1},\ldots,F_{\nu}$ (and the vertices $\mathbf{n}%
_{1},\ldots,\mathbf{n}_{\nu}$) of $\ Q$ anticlockwise (as in \S \ref{GRAPHS}),
with $F_{i}:=$ conv$(\left \{  \mathbf{n}_{i},\mathbf{n}_{i+1}\right \}  )$ and
$T_{F_{i}}:=$ conv$(\left \{  \mathbf{0},\mathbf{n}_{i},\mathbf{n}%
_{i+1}\right \}  ),$ $i\in \left \{  1,\ldots,\nu \right \}  ,$ and assuming that
the $N$-cone $\sigma_{i}:=\sigma_{F_{i}}=\mathbb{R}_{\geq0}\mathbf{n}%
_{i}+\mathbb{R}_{\geq0}\mathbf{n}_{i+1}$ supporting $F_{i}$ is of type
$(p_{i},q_{i}),$ we obtain
\begin{equation}
q_{i}=\text{mult}_{N}(\sigma_{i})=\frac{\det(\mathbf{n}_{i},\mathbf{n}_{i+1}%
)}{\det(N)}=2\, \text{area}_{N}(T_{F_{i}}),\text{ }\forall i\in \{1,\ldots
,\nu \}. \label{QIFORMULA}%
\end{equation}
By Proposition \ref{PQDESCR1} there exist a basis matrix $\mathcal{B}$ of $N$
and a matrix $\mathcal{M}_{\sigma_{i}}\in$ GL$\left(  2,\mathbb{Z}\right)  $
such that
\[
\Phi_{\mathcal{M}_{\sigma_{i}}\mathcal{B}^{-1}}(\sigma_{i})=\Phi
_{\mathcal{M}_{\sigma_{i}}}(\sigma_{i}^{\text{st}})=\mathbb{R}_{\geq0}%
\tbinom{1}{0}+\mathbb{R}_{\geq0}\tbinom{p_{i}}{q_{i}},
\]
where $\sigma_{i}^{\text{st}}$ is the standard model of $\sigma_{i}$ w.r.t.
$\mathcal{B}.$ Taking into account that
\[
\text{mult}_{N}(\sigma_{i})=\text{mult}_{\mathbb{Z}^{2}}(\sigma_{i}%
^{\text{st}})=\text{mult}_{\mathbb{Z}^{2}}(\Phi_{\mathcal{M}_{\sigma_{i}}%
}(\sigma_{i}^{\text{st}}))=q_{i},
\]
the local index $l_{F_{i}}$ of $F_{i}$ w.r.t. $Q$ (as defined in
\ref{DEFLOCIND} (ii)) is given by the formula
\begin{equation}
l_{F_{i}}=\frac{\text{mult}_{N}(\sigma_{i})}{\sharp \left(  F_{i}\cap N\right)
-1}=\frac{\text{mult}_{\mathbb{Z}^{2}}(\Phi_{\mathcal{M}_{\sigma_{i}}}%
(\sigma_{i}^{\text{st}}))}{\sharp \left(  \text{conv}\left(  \left \{
\tbinom{1}{0},\tbinom{p_{i}}{q_{i}}\right \}  \right)  \cap \mathbb{Z}%
^{2}\right)  -1}=\frac{q_{i}}{\text{gcd}(q_{i},p_{i}-1)}.
\label{LOCALINDEXFORMULA}%
\end{equation}

\end{note}

\noindent{}Since we are mainly interested in the geometric properties of the
toric log del Pezzo surfaces $X\left(  N,\Delta_{Q}\right)  $ and $X\left(
M,\Delta_{Q^{\ast}}\right)  $ which are associated with $\ell$%
-\textit{reflexive} polygons $Q$ and their duals $Q^{\ast}:=\ell Q^{\circ},$
respectively, being defined in \ref{DEFLREFLEXIVE} and \ref{DEFDUAL}, let us
first determine the \textsc{wve}$^{2}$\textsc{c}-graphs $\mathfrak{G}%
_{\Delta_{Q}}$ and $\mathfrak{G}_{\Delta_{Q^{\ast}}}$ for the examples
mentioned in \ref{EXAMPLESLREF}. (For  the \textsc{wve}$^{2}$\textsc{c}-graphs of \textit{all} $1$-reflexive polygons cf. \cite[Figures 8, 9 and 10, pp. 108-109]{Dais1}.)

\begin{examples}
	\label{EXGRAPHS}(i) The \textsc{wve}$^{2}$\textsc{c}-graph $\mathfrak{G}%
	_{\Delta_{Q}}$ of the $\ell$-reflexive triangle (\ref{EXAMPLE1}) is shown in
	Figure \ref{Fig.3}. For $\ell\geq7$ we set%
	\[
	\ell^{\prime}:=\left\{
	\begin{array}
	[c]{ll}%
	\frac{1}{5}(6\ell-1),\smallskip & \text{if }\ell\equiv1(\operatorname{mod}5)\\
	\frac{1}{5}(3\ell-1),\smallskip & \text{if }\ell\equiv2(\operatorname{mod}5)\\
	\frac{1}{5}(8\ell+1),\smallskip & \text{if }\ell\equiv3(\operatorname{mod}5)\\
	\frac{1}{5}(9\ell-1), & \text{if }\ell\equiv4(\operatorname{mod}5)
	\end{array}
	\right.  \text{ \ and \ }\ell^{\prime\prime}:=\left\{
	\begin{array}
	[c]{ll}%
	\frac{1}{4}(9\ell-5),\smallskip & \text{if }\ell\equiv1(\operatorname{mod}4)\\
	\frac{1}{4}(3\ell-5), & \text{if }\ell\equiv3(\operatorname{mod}4)
	\end{array}
	\right.
	\]%
\end{examples}

\begin{figure}[h]
	\includegraphics[height=4.5cm, width=11.5cm]{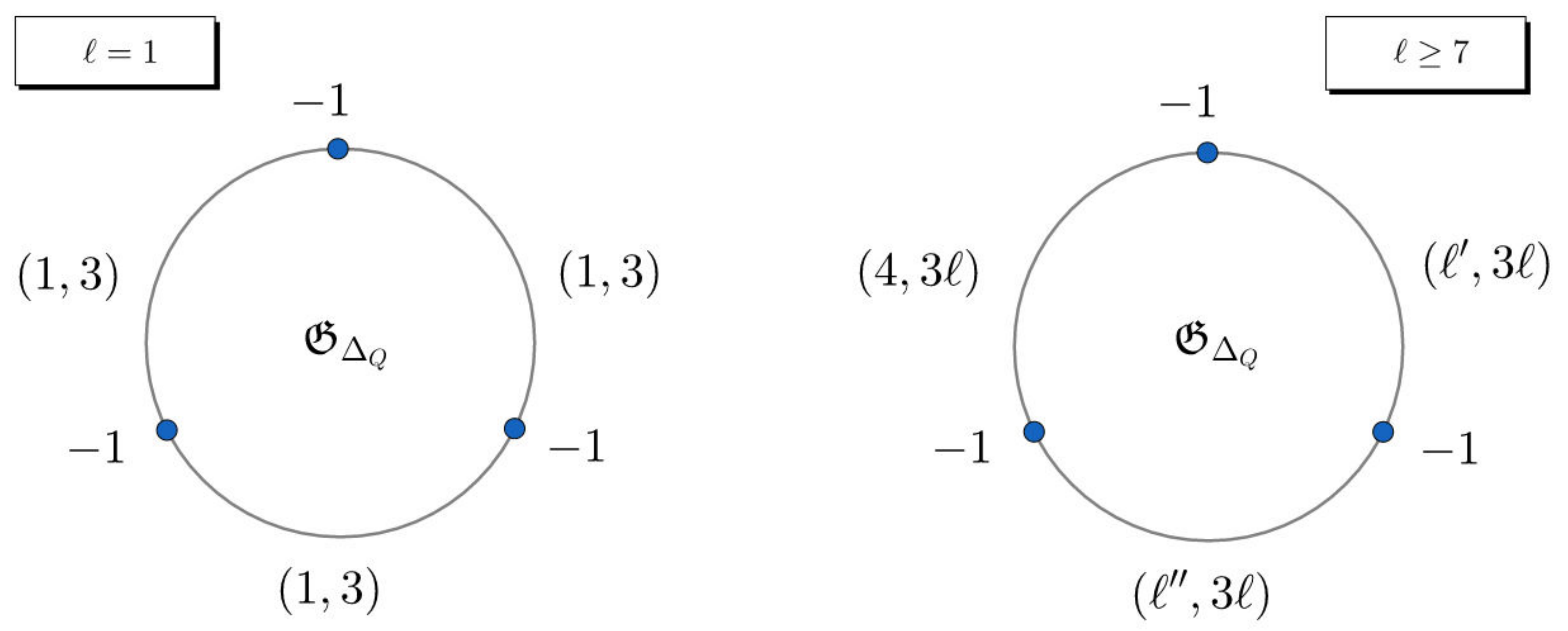}
	\caption{}\label{Fig.3}%
\end{figure}

\noindent The \textsc{wve}$^{2}$\textsc{c}-graph $\mathfrak{G}_{\Delta_{Q^{\ast}}}$ of
its dual (\ref{EXSTAR1}) is shown in Figure \ref{Fig.4}, where for $\ell\geq7$ we set
$\ell^{\prime\prime\prime}:=\frac{2}{3}(\ell-1)$ if $\ell\equiv
1(\operatorname{mod}3)$ and $\ell^{\prime\prime\prime}:=\frac{1}{3}(\ell-2)$
if $\ell\equiv2(\operatorname{mod}3).$\smallskip

\begin{figure}[h]
	\includegraphics[height=4.5cm, width=11.5cm]{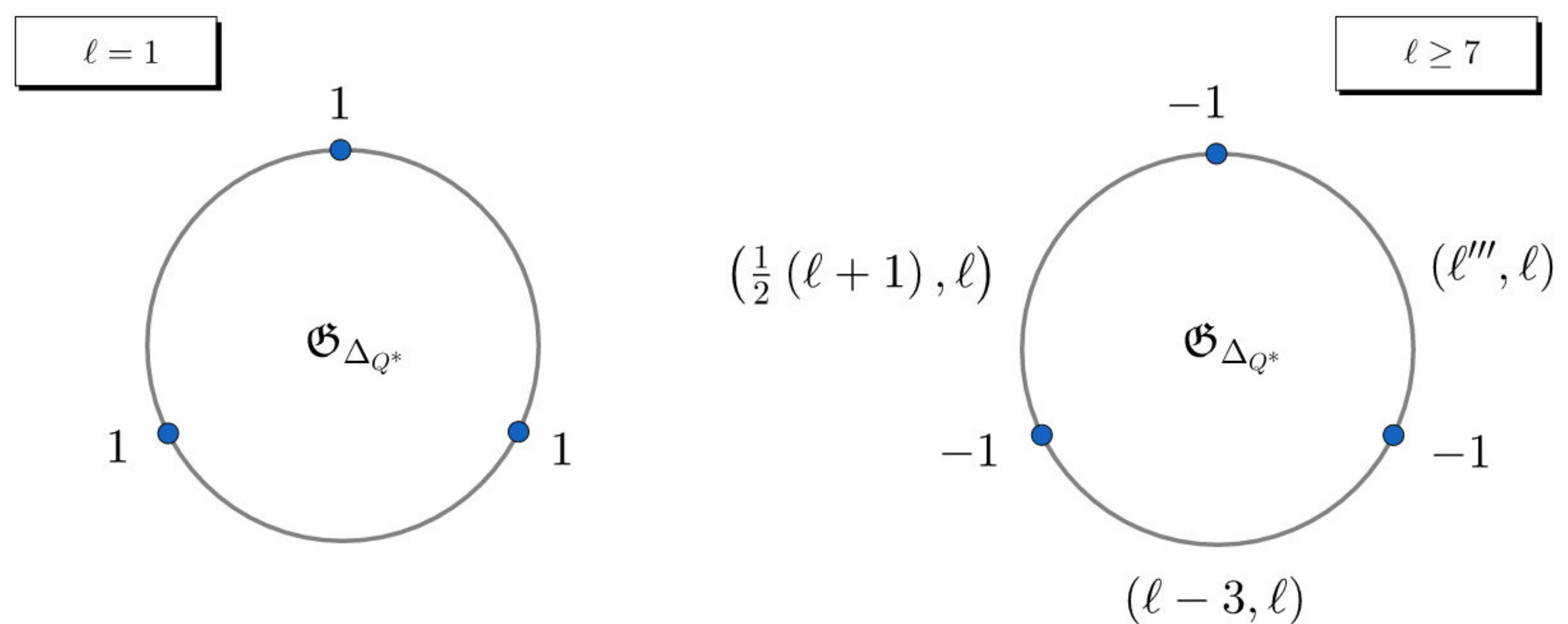}
	\caption{}\label{Fig.4}%
\end{figure}

\noindent (ii) The \textsc{wve}$^{2}$\textsc{c}-graph $\mathfrak{G}_{\Delta_{Q}}$ of the
$\ell$-reflexive quadrilateral (\ref{EXAMPLE2}) is illustrated in Figure \ref{Fig.5}, where
for $\ell\geq5$ we set $\ell^{\prime}:=\frac{4 \ell -1}{3}$ whenever $\ell\equiv1(\operatorname{mod}3)$
and  $\ell^{\prime}:=\frac{2 \ell -1}{3}$ whenever $\ell\equiv2(\operatorname{mod}3).$

\begin{figure}[h]
	\includegraphics[height=4.2cm, width=11.5cm]{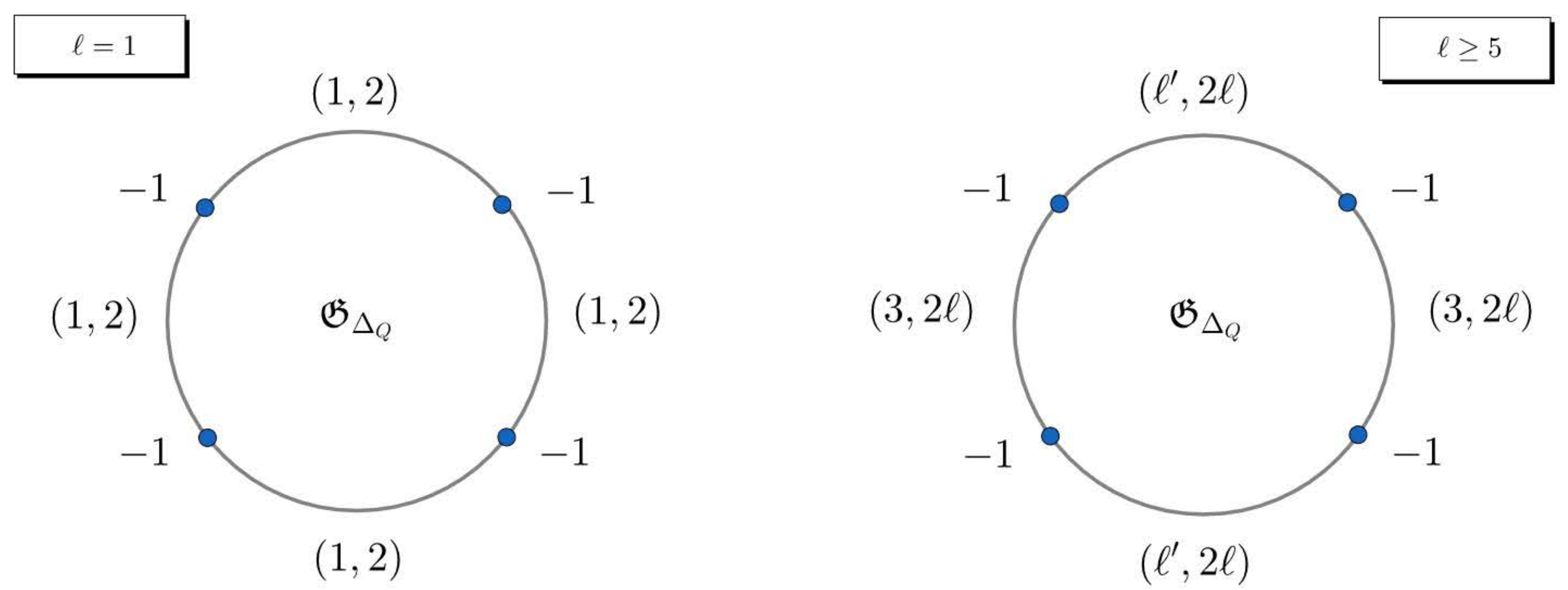}
	\caption{}\label{Fig.5}%
\end{figure}

For the \textsc{wve}$^{2}$\textsc{c}-graph $\mathfrak{G}_{\Delta_{Q^{\ast}}}$
of its dual (\ref{EXSTAR2}) see Figure \ref{Fig.6}.

\begin{figure}[h]
	\includegraphics[height=4.3cm, width=11.5cm]{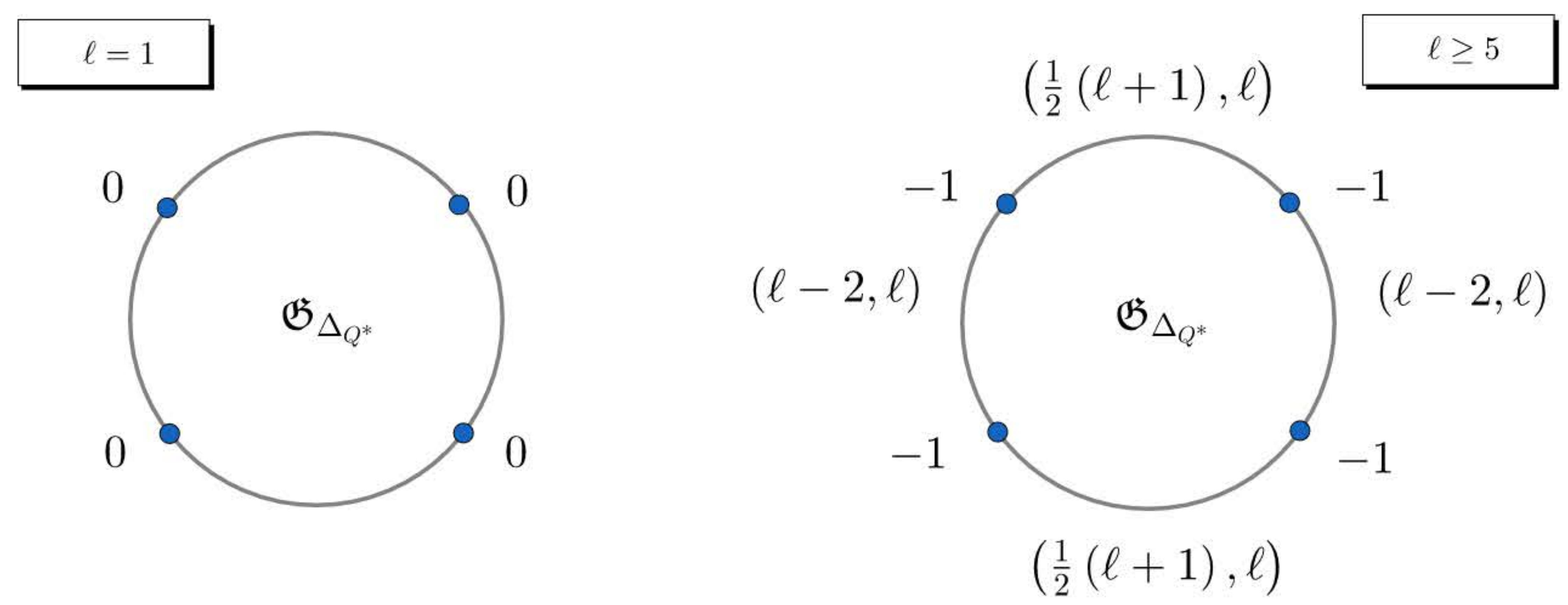}
	\caption{}\label{Fig.6}%
\end{figure}
\newpage

\noindent (iii) The \textsc{wve}$^{2}$\textsc{c}-graph $\mathfrak{G}_{\Delta_{Q}}$ of
the $\ell$-reflexive pentagon (\ref{EXAMPLE3}) is illustrated in Figure \ref{Fig.7}.
For $\ell\geq5$ we set $\ell^{\prime}:=\frac{1}{3}(2\ell+1)$ if $\ell
\equiv1(\operatorname{mod}3)$ and $\ell^{\prime}:=\frac{1}{3}(4\ell+1)$ if
$\ell\equiv2(\operatorname{mod}3).$

\begin{figure}[h]
	\includegraphics[height=4.5cm, width=12.5cm]{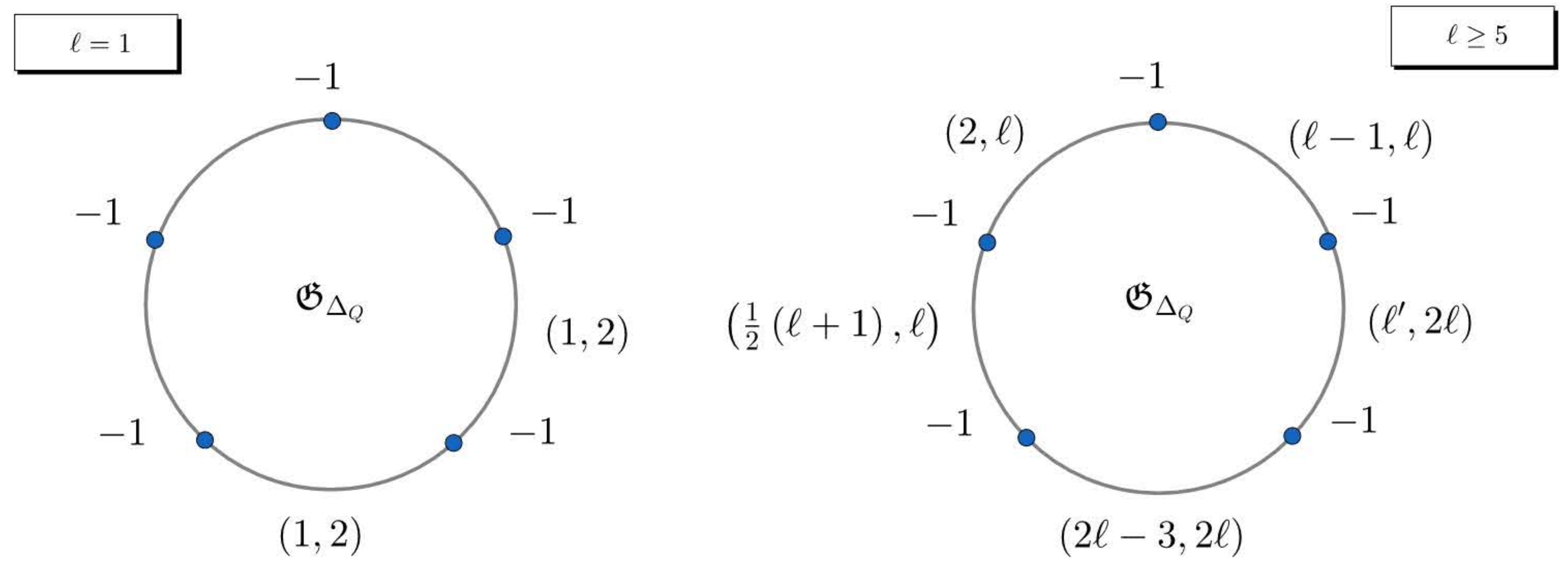}
	\caption{}\label{Fig.7}%
\end{figure}

\noindent The \textsc{wve}$^{2}$\textsc{c}-graph $\mathfrak{G}_{\Delta_{Q^{\ast}}}$ of
its dual (\ref{EXSTAR3}) is given in Figure \ref{Fig.8}.%

\begin{figure}[h]
	\includegraphics[height=4.5cm, width=12.7cm]{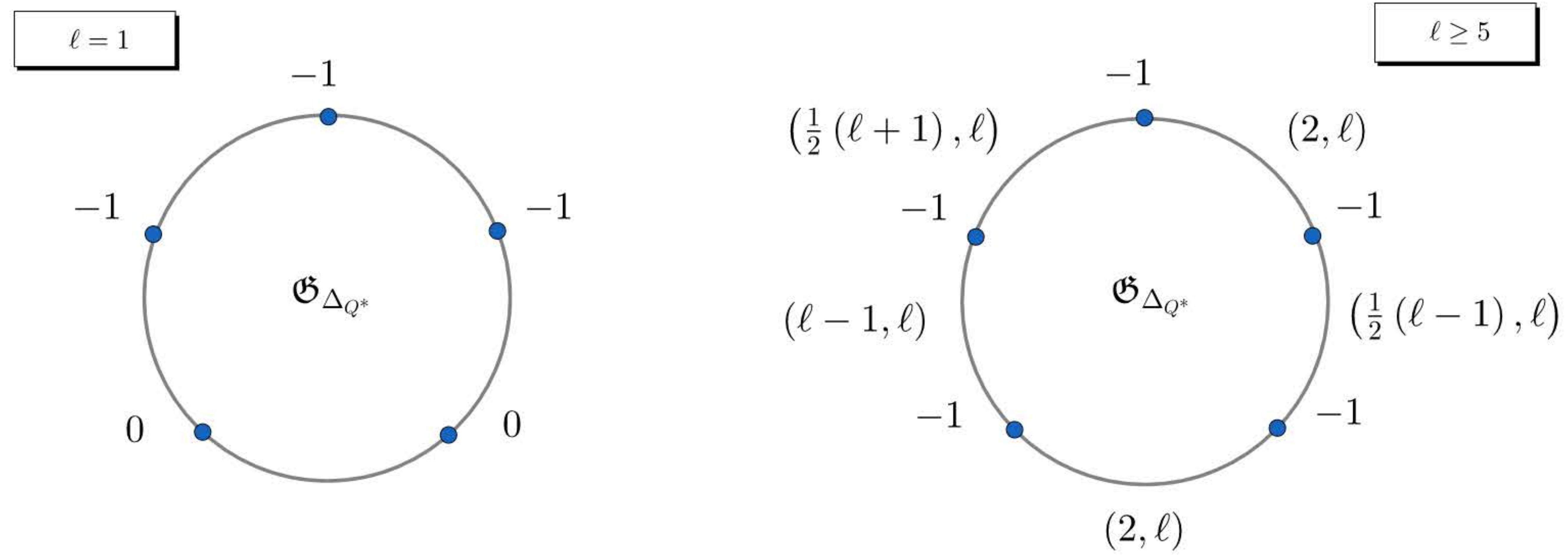}
	\caption{}\label{Fig.8}%
\end{figure}

\noindent (iv) The \textsc{wve}$^{2}$\textsc{c}-graph $\mathfrak{G}_{\Delta_{Q}}$ of the
$\ell$-reflexive hexagon (\ref{EXAMPLE4}) is shown in Figure \ref{Fig.9}. Note that for
its dual (\ref{EXSTAR4}) we have $\mathfrak{G}_{\Delta_{Q^{\ast}}}%
\underset{\text{gr.}}{\cong}\mathfrak{G}_{\Delta_{Q}}^{\text{rev}}$ (If
$\ell\geq3,$ the socii of $2,\frac{1}{2}(\ell+1),\ell-1$ w.r.t. $\ell$ are
$\frac{1}{2}(\ell+1),2,$ and $\ell-1,$ respectively.)

\begin{figure}[h]
	\includegraphics[height=4.5cm, width=12.7cm]{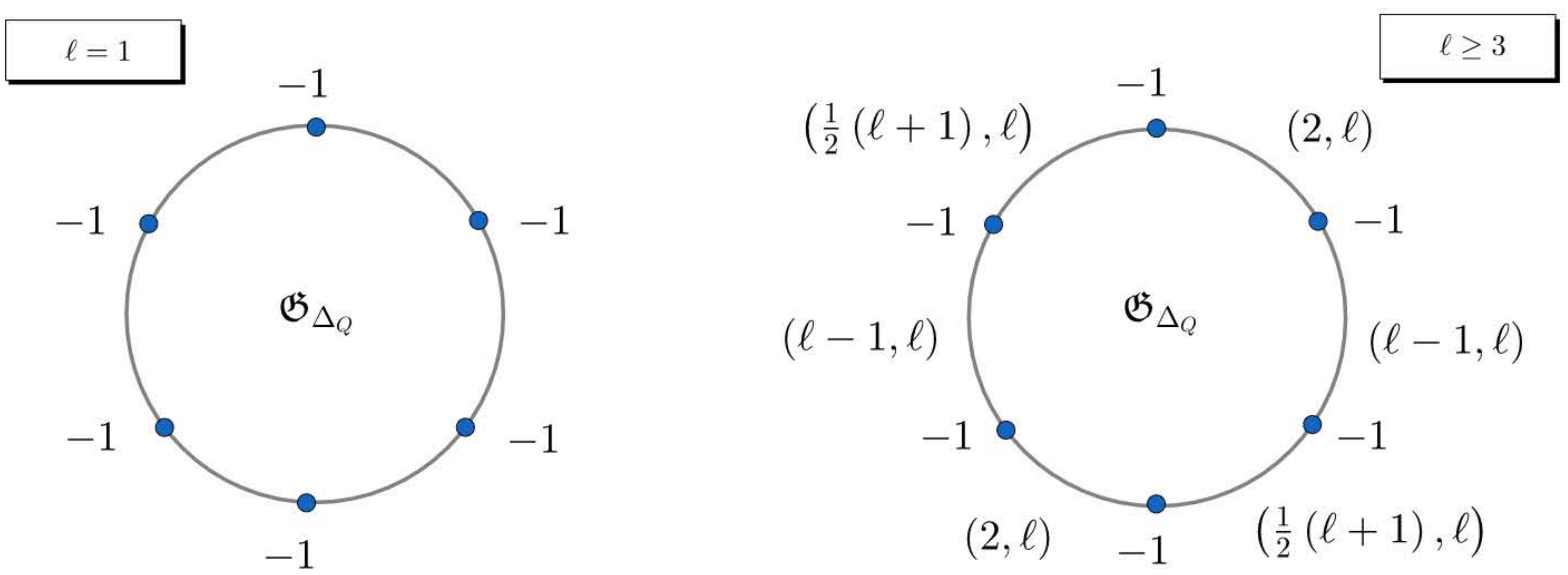}
	\caption{}\label{Fig.9}%
\end{figure}

\noindent{}If $Q$ is an $\ell$-reflexive polygon, examples \ref{EXGRAPHS}
suggest that there should be a particular connection betweeen the
combinatorial triples of $\Delta_{Q}$ and $\Delta_{Q^{\ast}}$ (and,
consequently, between the \textsc{wve}$^{2}$\textsc{c}-graphs $\mathfrak{G}%
_{\Delta_{Q}}$ and $\mathfrak{G}_{\Delta_{Q^{\ast}}}$) due to bijections
(\ref{LDUALITY1}) and (\ref{LDUALITY2}). This will be clarified below in
Propositions \ref{EXPRESSIONQSTAR}, \ref{QISTAREXPRESSION} and \ref{PISTAREXPRESSION}.

\section{Lattice change and cyclic covering trick whenever $\ell>1$}\label{COVTR}
\noindent $\bullet$ \textbf{Degree.} Let $f:X\longrightarrow Y$ be a proper holomorphic
map between two complex (analytic) spaces. $f$ is called \textit{finite} if it
is closed (as map) and for every $y\in Y$ the fibre $f^{-1}(\{y\})$ consists
of finitely many points.  $f$ is called \textit{generically finite} if there
is a non-empty open subset $V\subset Y$ such that $\left.  f\right \vert
_{f^{-1}(V)}:f^{-1}(V)\longrightarrow V$ is finite. If $X$ and $Y$ are complex
\textit{varieties}, $f$ generically finite and $f(X)$ dense in $Y,$ then the
field extension defined by $f^{\star}:\mathbb{C}(Y)\hookrightarrow
\mathbb{C}(X)$ is finite and
$
\text{deg}(f):=[\mathbb{C}(X):f^{\star}(\mathbb{C}(Y))]
$
is said to be \textit{the degree of} $f.$ (Note that, in this case, the set
$\left \{  y\in V\left \vert \sharp(f^{-1}(\{y\}))=\text{ deg}(f)\right.
\right \}$ is dense in $V.$) 
\medskip
 
\noindent $\bullet$ \textbf{\'{E}tale holomorphic maps}. For any complex space $X$ let
us denote by $\mathcal{O}_{X}$ its \textit{structure sheaf} and by $\Omega
_{X}^{1}$ \textit{the sheaf of germs of holomorhic} $1$-\textit{forms} on $X$
(or \textit{the cotangent sheaf on} $X,$ cf. \cite[\S 2.9 and \S 2.21]{Fischer}). If $X$
and $Y$ are two complex spaces, $\ f:X\longrightarrow Y$ a holomorphic map and
$Df:f^{\star}\Omega_{Y}^{1}\longrightarrow \Omega_{X}^{1}$ the associated
homomorphism (which is determined by means of the Jacobian), then one defines
\textit{the sheaf} $\Omega_{X|Y}^{1}:=$ Coker$(Df)$ \textit{of germs of
	relative} $1$-\textit{forms w.r.t. }$f.$ The holomorphic map $f$ is called
\textit{flat at} $x\in X$ if the stalk $\mathcal{O}_{X,x}$ is a flat
$\mathcal{O}_{Y,f(x)}$-module. ($\mathcal{O}_{X,x}$ becomes an $\mathcal{O}%
_{Y,f(x)}$-module via the natural homomorphism $\mathcal{O}_{Y,f(x)}%
\longrightarrow \mathcal{O}_{X,x}.$)  $f$ is called \textit{flat} if it is flat
at all points of $X.$ $f$ is said to be  \textit{\'{e}tale} \textit{at a
	point} $x\in X$ if it is flat at $x$ and simultaneously \textit{unramified at} $x$, i.e.,  $\mathfrak{m}_{Y,f(x)}\mathcal{O}%
_{X,x}=$ $\mathfrak{m}_{X,x}$ (where $\mathfrak{m}_{X,x}$ and $\mathfrak{m}%
_{Y,f(x)}$ denote the maximal ideals of the local rings $\mathcal{O}_{X,x}$
and $\mathcal{O}_{Y,f(x)},$ respectively). $f$ is called \textit{\'{e}tale} if
it is \'{e}tale at all points of $X.$ $f\, \ $is, in particular, \'{e}tale if
and only if it is flat and $\Omega_{X|Y}^{1}=0.$
\medskip 

\noindent $\bullet$ \textbf{Analytic spectrum.} Let $X$ be a complex space and $\mathcal{G}$ be an arbitrary sheaf of
$\mathcal{O}_{X}$-modules (\textit{an} $\mathcal{O}_{X}$-\textit{module}, for
short). $\mathcal{G}$ is said to be \textit{of finite
	type} at $x\in X$ if there is an open neighborhood $\mathcal{U}_{x}$ of $x$
and a $\left.  \mathcal{G}\right \vert _{\mathcal{U}_{x}}$-epimorhism
$\mathcal{O}_{X}^{\kappa_{x}}\rightarrow \left.  \mathcal{G}\right \vert
_{\mathcal{U}_{x}}$ for a positive integer $\kappa_{x}.$ $\mathcal{G}$ is
called \textit{of finite type }on $X$ if it is of finite type at all points
$x\in X.$ $\mathcal{G}$ is \textit{coherent} if $\mathcal{G}$ is of finite
type on $X$ and, in addition, for every $x\in X$ and every finite system
$\mathfrak{s}_{1},...,\mathfrak{s}_{\kappa}\in \mathcal{G}(\mathfrak{U}_{x})$
of sections over an open neighborhood $\mathfrak{U}_{x}$ of $x$ the sheaf of
relations Rel$_{x}(\mathfrak{s}_{1},...,\mathfrak{s}_{\kappa})$ (which is the
kernel of the $\left.  \mathcal{G}\right \vert _{\mathfrak{U}_{x}}$-homomorhism
$\mathcal{O}_{\mathfrak{U}_{x}}^{\kappa}\rightarrow \left.  \mathcal{G}%
\right \vert _{\mathfrak{U}_{x}}$ determined by $\mathfrak{s}_{1}%
,...,\mathfrak{s}_{\kappa}$) is of finite type at $x.$ If $\mathcal{G}$
happens to be a sheaf of $\mathcal{O}_{X}$-algebras (\textit{an}
$\mathcal{O}_{X}$-\textit{algebra}, for short), i.e., if $\mathcal{G}_{x}$ is
an $\mathcal{O}_{X,x}$-module and at the same time endowed with a \textit{ring
	structure} for all $x\in X,$ then the following is of particular importance.

\begin{theorem}\label{SPECANCON}
	Let $X$ be a complex space and $\mathcal{G}$ be a coherent $\mathcal{O}_{X}%
	$-algebra. Then there exists a unique (up to analytic isomorphism) complex
	space \emph{Specan}$(\mathcal{G}),$ the so-called \emph{analytic spectrum }of
	$\mathcal{G},$ as well as a finite holomorphic map $\pi:\emph{Specan}%
	(\mathcal{G})\longrightarrow X,$ such that\smallskip \  \newline \emph{(i)} there
	is an isomorphism $\pi_{\ast}(\mathcal{O}_{\emph{Specan}(\mathcal{G})}%
	)\cong \mathcal{G},$ and\smallskip \  \newline \emph{(ii) }there is a bijection
	$\pi^{-1}(x)\leftrightarrow \emph{Max}$\emph{-}$\emph{Spec}(\mathcal{G}_{x})$
	between the set of points of the fibre of $\pi$ over $x$ and the set of
	maximal ideals of the stalk of $\mathcal{G}$ at $x,$ for all $x\in X$.
\end{theorem}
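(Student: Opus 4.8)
The plan is to construct $\mathrm{Specan}(\mathcal{G})$ from a local model and then glue. First I would work over a sufficiently small open subset $U\subseteq X$ on which $\mathcal{G}|_{U}$ admits a finite system $g_{1},\dots ,g_{k}$ of sections generating it as an $\mathcal{O}_{U}$-module; such a system exists locally because $\mathcal{G}$, being coherent, is in particular of finite type over $\mathcal{O}_{X}$. Sending $T_{i}\mapsto g_{i}$ yields an $\mathcal{O}_{U}$-algebra epimorphism $\varphi :\mathcal{O}_{U}[T_{1},\dots ,T_{k}]\twoheadrightarrow \mathcal{G}|_{U}$, and I set $\mathcal{J}:=\ker (\varphi )$. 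Here coherence of $\mathcal{G}$ is decisive: after shrinking $U$, the ideal sheaf $\mathcal{J}$ is generated by finitely many sections $h_{1},\dots ,h_{m}$, which one may take to consist of the multiplication relations $T_{i}T_{j}-\sum _{l}a_{ij}^{l}T_{l}$ --- where $g_{i}g_{j}=\sum _{l}a_{ij}^{l}g_{l}$ holds in $\mathcal{G}$ with $a_{ij}^{l}\in \mathcal{O}_{X}(U)$ --- together with a finite generating set of the sheaf of $\mathcal{O}_{U}$-linear relations (syzygies) among $g_{1},\dots ,g_{k}$, the latter being finitely generated precisely by coherence. I then define the local model of $\mathrm{Specan}(\mathcal{G})$ over $U$ to be the closed complex subspace $V(h_{1},\dots ,h_{m})\subseteq U\times \mathbb{C}^{k}$ cut out by the ideal these sections generate, with $\pi $ the restriction of the first projection $U\times \mathbb{C}^{k}\to U$.

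The local properties are then checked directly. By construction $\pi _{\ast }\mathcal{O}_{V(h_{1},\dots ,h_{m})}=\mathcal{O}_{U}[T_{1},\dots ,T_{k}]/\mathcal{J}\cong \mathcal{G}|_{U}$ as $\mathcal{O}_{U}$-algebras, which gives (i) locally. For finiteness of $\pi $ I would use that each $g_{i}$, being an element of the module-finite $\mathcal{O}_{X,x}$-algebra $\mathcal{G}_{x}$, satisfies a monic polynomial equation over $\mathcal{O}_{X,x}$; such monic relations lie in $\mathcal{J}$, hence in the ideal generated by the $h_{j}$ after shrinking, and they simultaneously force the $T_{i}$-coordinates to remain locally bounded over $U$ (properness) and the fibres of $\pi $ to be finite --- so $\pi $ is a finite holomorphic map. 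For (ii), fix $x\in X$ and a local model over an open $U\ni x$; the set-theoretic fibre $\pi ^{-1}(x)$ is the zero-locus in $\{x\}\times \mathbb{C}^{k}$ of the reductions of the $h_{j}$ modulo $\mathfrak{m}_{X,x}$, hence is in natural bijection with the $\mathbb{C}$-algebra homomorphisms $\mathcal{G}_{x}/\mathfrak{m}_{X,x}\mathcal{G}_{x}\to \mathbb{C}$, i.e.\ with Max-Spec$(\mathcal{G}_{x}/\mathfrak{m}_{X,x}\mathcal{G}_{x})$. Since $\mathcal{G}_{x}$ is module-finite, and so integral, over the local ring $\mathcal{O}_{X,x}$, every maximal ideal of $\mathcal{G}_{x}$ contracts to $\mathfrak{m}_{X,x}$; hence Max-Spec$(\mathcal{G}_{x}/\mathfrak{m}_{X,x}\mathcal{G}_{x})=$ Max-Spec$(\mathcal{G}_{x})$, which is (ii).

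Finally I would globalise. The construction is independent of the chosen module generators: on the overlap of two opens carrying generating systems, the identity of $\mathcal{G}$ induces through the two presentations a canonical isomorphism of the corresponding closed subspaces of $U\times \mathbb{C}^{k}$, resp.\ of $U\times \mathbb{C}^{k'}$, commuting with the projections to $U$ --- the familiar ``relative $\mathrm{Spec}$ is independent of the presentation'' argument, now in the analytic category. Hence the local models glue, over a cover of $X$, to a complex space $\mathrm{Specan}(\mathcal{G})$ carrying a finite holomorphic map $\pi $ to $X$, and the local isomorphisms of (i) glue to a global one; uniqueness up to analytic isomorphism follows since any complex space finite over $X$ with direct image $\mathcal{G}$ is, locally over $X$, realised as a closed subspace of some $U\times \mathbb{C}^{k}$ that must coincide with the above $V$. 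The step I expect to be the real obstacle is the coherence argument in the first paragraph: genuinely exploiting that $\mathcal{G}$ is \emph{coherent} --- not merely of finite type --- as an $\mathcal{O}_{X}$-module, so as to know that $\ker (\varphi )$ is locally of finite type and that the explicit multiplication relations together with a finite syzygy system already generate it; this rests on the Oka coherence theorem for $\mathcal{O}_{X}$ and the closure properties of coherent sheaves, and a complete treatment is given in the standard references on complex analytic geometry (cf.\ \cite{Fischer}).
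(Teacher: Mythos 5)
The paper does not actually prove this statement: it is quoted as a standard result, with the construction deferred to Fischer \cite[pp.\ 59--62]{Fischer}, Kaup--Kaup \cite[45.B.1]{Kaup-Kaup} and Houzel \cite{Houzel}. Your proposal reconstructs precisely the argument those references carry out --- local presentation $\mathcal{O}_U[T_1,\dots ,T_k]\twoheadrightarrow \mathcal{G}|_U$, kernel generated by the multiplication relations together with a finite syzygy system (coherence entering exactly where you say it does), monic integral equations giving properness and finiteness of $\pi$, identification of the fibre with $\text{Max-Spec}(\mathcal{G}_x/\mathfrak{m}_{X,x}\mathcal{G}_x)=\text{Max-Spec}(\mathcal{G}_x)$ via lying-over, and gluing --- so it is the same approach and is essentially sound. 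One small point to tidy: reducing a polynomial modulo the quadratic relations $T_iT_j-\sum_l a_{ij}^l T_l$ leaves an \emph{affine}-linear expression $c_0+\sum_l c_l T_l$, so the coherent relation sheaf you need is the kernel of $\mathcal{O}_U^{k+1}\to \mathcal{G}|_U$, $(c_0,\dots ,c_k)\mapsto c_0\cdot 1+\sum_l c_l g_l$ (equivalently, adjoin $g_0=1$ to your generating system); as written, relations involving the constant term escape your generating set for $\mathcal{J}$.
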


\noindent For a rough local description of this \textquotedblleft
spectrum\textquotedblright \ in the analytic category we refer to \cite[pp.
59-62]{Fischer} and \cite[45.B.1, p. 172]{Kaup-Kaup}, and for more details on
the construction and the main properties of $\pi$ to Houzel \cite{Houzel}.\medskip 

\noindent {}$\bullet $ \textbf{Normal complex varieties which are $\mathbb{Q}$-Gorenstein.} If $X$
is a normal complex variety, then its Weil divisors can be
described by means of ``divisorial'' sheaves.

\begin{lemma}\label{LEMHART}
	\emph{(\cite[1.6]{Hartshorne}). }For a coherent $\mathcal{O%
	}_{X}$-module $\mathcal{F}$ the following conditions are equivalent\emph{:}\newline
	\emph{(i)} $\mathcal{F}$ is \emph{reflexive} \emph{(}i.e., $\mathcal{F}\cong 
	\mathcal{F}^{\vee \vee },$ with $\mathcal{F}^{\vee }:=Hom_{\mathcal{O}%
		_{X}}\left( \mathcal{F},\mathcal{O}_{X}\right) $ denoting the dual of $%
	\mathcal{F}$\emph{) }and has rank one.\newline
	\emph{(ii)} If $X^{0}$ is a non-singular open subvariety of $X$ with \emph{%
		codim}$_{X}\left( X\mathbb{r}X^{0}\right) \geq 2,$ then $\mathcal{F}\left|
	_{X^{0}}\right. $ is invertible and 
	\begin{equation*}
	\mathcal{F}\cong \iota _{\star }\left( \mathcal{F}\left| _{X^{0}}\right.
	\right) \cong \iota _{\star }\iota ^{\star }\left( \mathcal{F}\right) ,
	\end{equation*}
	where $\iota :X^{0}\hookrightarrow X$ denotes the inclusion map.
\end{lemma}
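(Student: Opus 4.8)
The plan is to reduce everything to two purely local facts and then argue formally along the open immersion $\iota\colon X^{0}\hookrightarrow X$. First I would fix once and for all a non-singular open $X^{0}\subseteq X$ with $\operatorname{codim}_{X}(X\smallsetminus X^{0})\geq 2$ (the regular locus works, since $X$ is normal), and note that for every open $U\subseteq X$ the complement of $U\cap X^{0}$ in $U$ still has codimension $\geq 2$. The two facts I would record are: \emph{(a)} on a regular local ring every reflexive module of rank one is free, so on the regular scheme $X^{0}$ a rank-one reflexive $\mathcal{O}_{X^{0}}$-module is automatically invertible (this is the Auslander--Buchsbaum theorem that a regular local ring is a UFD, whence divisorial ideals are principal); and \emph{(b)} for an \emph{arbitrary} coherent $\mathcal{O}_{X}$-module $\mathcal{G}$ the dual $\mathcal{G}^{\vee}=Hom_{\mathcal{O}_{X}}(\mathcal{G},\mathcal{O}_{X})$ satisfies Serre's condition $S_{2}$, because $\mathcal{O}_{X}$ does ($X$ being normal); consequently the canonical morphism $\mathcal{G}^{\vee}\to\iota_{\star}\iota^{\star}\mathcal{G}^{\vee}$ is an isomorphism — equivalently, sections of a dual sheaf extend uniquely across closed subsets of codimension $\geq 2$. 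Applying \emph{(b)} to $\mathcal{G}=\mathcal{F}^{\vee}$ shows that a reflexive sheaf $\mathcal{F}=(\mathcal{F}^{\vee})^{\vee}$ already satisfies $\mathcal{F}\cong\iota_{\star}\iota^{\star}\mathcal{F}$.

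\noindent\textbf{Direction (i)$\Rightarrow$(ii).} Given $\mathcal{F}$ reflexive of rank one, I would use that restriction to an open subset commutes with dualizing, so $\iota^{\star}\mathcal{F}$ is a rank-one reflexive sheaf on the regular scheme $X^{0}$, hence invertible by \emph{(a)}; the isomorphism $\mathcal{F}\cong\iota_{\star}(\mathcal{F}|_{X^{0}})=\iota_{\star}\iota^{\star}\mathcal{F}$ is then exactly the extension property in \emph{(b)}.

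\noindent\textbf{Direction (ii)$\Rightarrow$(i).} Given $\mathcal{F}\cong\iota_{\star}\iota^{\star}\mathcal{F}$ with $\iota^{\star}\mathcal{F}$ invertible, rank one is immediate since $X^{0}$ is dense. For reflexivity I would study the canonical morphism $\alpha\colon\mathcal{F}\to\mathcal{F}^{\vee\vee}$: on $X^{0}$ it is $\iota^{\star}\mathcal{F}\to(\iota^{\star}\mathcal{F})^{\vee\vee}$, an isomorphism because an invertible sheaf is reflexive. Then I would run the naturality square relating $\alpha$ to $\iota_{\star}\iota^{\star}\alpha$: the left vertical arrow $\mathcal{F}\to\iota_{\star}\iota^{\star}\mathcal{F}$ is an isomorphism by hypothesis, the right one $\mathcal{F}^{\vee\vee}\to\iota_{\star}\iota^{\star}\mathcal{F}^{\vee\vee}$ is an isomorphism by \emph{(b)} (as $\mathcal{F}^{\vee\vee}$ is a dual sheaf), and $\iota_{\star}\iota^{\star}\alpha$ is an isomorphism because $\iota^{\star}\alpha$ is and $\iota_{\star}$ preserves isomorphisms; hence $\alpha$ is an isomorphism and $\mathcal{F}$ is reflexive of rank one.

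\noindent\textbf{Main obstacle.} Everything above is formal once \emph{(a)} and \emph{(b)} are available, so the real content is exactly those two statements. Fact \emph{(b)} — that $\mathcal{G}\mapsto\mathcal{G}^{\vee}$ takes values in $S_{2}$-sheaves on a normal variety — is proved by a depth estimate resting on the fact that $\mathcal{O}_{X,x}$ has depth $\geq 2$ whenever $x$ has codimension $\geq 2$ (part of Serre's $R_{1}+S_{2}$ criterion for normality), and fact \emph{(a)} is the identification of rank-one reflexive modules with divisorial fractional ideals, which are principal over a regular local ring. Both are local and algebraic and transfer verbatim to the analytic setting, since analytic local rings are Noetherian and are regular (resp. normal) precisely when the corresponding geometric condition holds; the original reference is Hartshorne \cite[\S1]{Hartshorne}.
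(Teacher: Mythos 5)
Your proposal is correct. The paper does not prove this lemma at all --- it simply quotes it from Hartshorne \cite[1.6]{Hartshorne} --- and your two reductions (rank-one reflexive $=$ divisorial, hence principal over a regular local ring; duals of coherent sheaves have depth $\geq 2$ along codimension-$\geq 2$ subsets of a normal variety, giving the extension isomorphism) together with the naturality square for $\alpha\colon\mathcal{F}\to\mathcal{F}^{\vee\vee}$ reconstruct exactly the standard argument of that reference, with the transfer to the analytic category correctly flagged as resting on the Noetherianity and normality/regularity of the analytic local rings.
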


\noindent {}The \textit{divisorial sheaves} are exactly those satisfying the above conditions. Since a divisorial sheaf is torsion free, there is
a non-zero section $\mathfrak{s} \in H^{0}\left( X,\mathcal{M}_{X}\otimes _{\mathcal{O}%
	_{X}}\mathcal{F}\right) $, with 
$
H^{0}\left( X,\mathcal{M}_{X}\otimes _{\mathcal{O}_{X}}\mathcal{F}\right) \cong 
\mathbb{C}\left( X\right) \cdot \mathfrak{s} ,
$
and $\mathcal{F}$ can be considered as a subsheaf of the constant sheaf $%
\mathcal{M}_{X}$ of meromorphic functions of $X,$ i.e., as a special \textit{fractional
	ideal sheaf.} Let $\mathcal{M}^{\ast}_{X}$ and  $\mathcal{O}^{\ast}_{X}$ be the sheaves of germs of not identically vanishing meromorphic functions and of nowhere vanishing holomorphic functions on $X$, respectively.

\begin{proposition}
	\emph{(\cite[Appendix of \S 1]{Reid}) }The correspondence 
	\begin{equation*}
	\left\{ 
	\begin{array}{c}
	\text{classes of Weil divisors on}\ X \\ 
	\text{(w.r.t. rational equivalence)}%
	\end{array}
	\right\}
	 \ni \{D\}\overset{\mathfrak{d}}{\longmapsto }\mathcal{O}%
	_{X}\left( D\right)\in \left\{ 
	\begin{array}{c}
	\text{divisorial coherent } \\ 
	\text{subsheaves of }\mathcal{M}_{X}%
	\end{array}
	\right\} \ /\ H^{0}\left( X,\mathcal{O}_{X}^{\ast }\right)
	\end{equation*}
	with $\mathcal{O}_{X}\left( D\right) $ defined by sending every non-empty
	open set $\mathcal{U}$ of $X$ onto 
	\begin{equation*}
	\Gamma(\mathcal{U},\mathcal{O}_{X}(D)):=\mathcal{O}_{X}\left( D\right) \left(\mathcal{U}\right) :=\left\{
	\varphi \in \mathcal{M}^{\ast}_{X}(\mathcal{U})\ \left| \ \left( \text{\emph{%
			div}}\left( \varphi \right) +D\right) \left| _{\mathcal{U}}\right. \geq 0\right.
	\right\} \cup \{\mathbf{0}\},
	\end{equation*}
	is a bijection, and induces a $\mathbb{Z}$-module isomorphism. In fact, to
	avoid torsion, one defines this $\mathbb{Z}$-module structure by setting 
	\begin{equation*}
	\mathfrak{d} \left(\{ D_{1}+D_{2}\}\right) :=(\mathcal{O}_{X}\left( D_{1}\right)
	\otimes \mathcal{O}_{X}\left( D_{2}\right) )^{\vee \vee }\ \text{\emph{and} }\ %
	\mathfrak{d} \left( \{j D\}\right):=\mathcal{O}_{X}\left( j D\right) ^{\vee \vee },
	\end{equation*}
	for any Weil divisors $D,D_{1},D_{2}$ and $j \in \mathbb{Z}.$
	
\end{proposition}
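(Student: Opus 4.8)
The plan is to transport the whole correspondence onto the non-singular locus $X^{0}$ of $X$, where it is the classical bijection between Cartier-divisor classes and invertible sheaves, and then to use Lemma~\ref{LEMHART} to carry the statement back to the (normal, possibly singular) variety $X$. Throughout, $\iota:X^{0}\hookrightarrow X$ denotes the inclusion, and normality of $X$ is used in the form $\operatorname{codim}_{X}(X\setminus X^{0})\geq 2$.

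First I would verify that $\mathfrak{d}$ is well defined on the level of sheaves, i.e.\ that $\mathcal{O}_{X}(D)$ is a divisorial sheaf for every Weil divisor $D$. On $X^{0}$ the divisor $D\vert_{X^{0}}$ is Cartier and $\mathcal{O}_{X}(D)\vert_{X^{0}}$ is the associated invertible sheaf. Since $X$ is normal it satisfies Serre's condition $S_{2}$, and the defining presheaf of $\mathcal{O}_{X}(D)$ is unchanged when an analytic subset of codimension $\geq 2$ is removed from an open set (the divisor inequality is tested only at the generic points of prime divisors, all of which lie in $X^{0}$, and meromorphic functions extend across such subsets by normality); hence $\mathcal{O}_{X}(D)=\iota_{\star}\iota^{\star}\mathcal{O}_{X}(D)=\iota_{\star}(\mathcal{O}_{X^{0}}(D\vert_{X^{0}}))$. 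Granting coherence of this direct image, Lemma~\ref{LEMHART} gives that $\mathcal{O}_{X}(D)$ is reflexive of rank one, i.e.\ divisorial. The same Lemma handles compatibility with the $\mathbb{Z}$-module structure: on $X^{0}$ one has $\mathcal{O}_{X^{0}}((D_{1}+D_{2})\vert_{X^{0}})\cong\mathcal{O}_{X^{0}}(D_{1}\vert_{X^{0}})\otimes\mathcal{O}_{X^{0}}(D_{2}\vert_{X^{0}})$ and likewise $\mathcal{O}_{X^{0}}((jD)\vert_{X^{0}})\cong\mathcal{O}_{X^{0}}(D\vert_{X^{0}})^{\otimes j}$, and since every sheaf in sight equals $\iota_{\star}$ of its restriction to $X^{0}$, the prescribed formulas $\mathfrak{d}(\{D_{1}+D_{2}\})=(\mathcal{O}_{X}(D_{1})\otimes\mathcal{O}_{X}(D_{2}))^{\vee\vee}$ and $\mathfrak{d}(\{jD\})=\mathcal{O}_{X}(jD)^{\vee\vee}$ follow by applying $\iota_{\star}$ (double-dualization being harmless on $X^{0}$, where everything is invertible).

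Next I would check that $\mathfrak{d}$ descends to classes and is a bijection. If $D-D'=\operatorname{div}(\psi)$ with $\psi\in\mathbb{C}(X)^{\ast}$, then multiplication by $\psi$ carries $\Gamma(\mathcal{U},\mathcal{O}_{X}(D))$ isomorphically onto $\Gamma(\mathcal{U},\mathcal{O}_{X}(D'))$ for every open $\mathcal{U}$, directly from the defining inequality $(\operatorname{div}(\varphi)+D)\vert_{\mathcal{U}}\geq 0$; so the residue class of $\mathcal{O}_{X}(D)$ depends only on $\{D\}$. For injectivity, if $\mathcal{O}_{X}(D)$ and $\mathcal{O}_{X}(D')$ represent the same class, their restrictions to $X^{0}$ are isomorphic invertible sheaves, whence $D\vert_{X^{0}}\sim D'\vert_{X^{0}}$ by classical theory on the smooth variety $X^{0}$; since restriction induces an isomorphism $\mathrm{Cl}(X)\overset{\sim}{\longrightarrow}\mathrm{Cl}(X^{0})$ (the deleted locus carries no prime divisor and, by normality, meromorphic functions on $X^{0}$ are meromorphic on $X$), this forces $D\sim D'$. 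For surjectivity, given a divisorial coherent subsheaf $\mathcal{F}\subseteq\mathcal{M}_{X}$, Lemma~\ref{LEMHART} makes $\mathcal{F}\vert_{X^{0}}$ invertible, hence equal to $\mathcal{O}_{X^{0}}(D_{0})$ for some Cartier divisor $D_{0}$ on $X^{0}$; let $D$ be the unique Weil divisor on $X$ with $D\vert_{X^{0}}=D_{0}$. Then $\mathcal{F}$ and $\mathcal{O}_{X}(D)$ have the same restriction to $X^{0}$ and each equals $\iota_{\star}$ of that restriction, so $\mathcal{F}=\mathcal{O}_{X}(D)$ and $\mathfrak{d}(\{D\})$ is the class of $\mathcal{F}$.

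The step I expect to be the real obstacle --- essentially the only non-formal point --- is the coherence of $\iota_{\star}(\mathcal{O}_{X^{0}}(D\vert_{X^{0}}))$, equivalently of an arbitrary divisorial subsheaf of $\mathcal{M}_{X}$ presented as $\iota_{\star}$ of an invertible sheaf on $X^{0}$: in the analytic category the direct image of a coherent sheaf under an open immersion need not be coherent, and one must use that $X$ is normal and $X\setminus X^{0}$ is thin of codimension $\geq 2$ (via the second Riemann extension theorem locally, or a Frisch--Guenot / Siu--Trautmann-type coherence statement for direct images across thin sets). Once coherence is secured, everything else is divisor bookkeeping on $X^{0}$ together with repeated appeals to Lemma~\ref{LEMHART} to propagate conclusions over all of $X$.
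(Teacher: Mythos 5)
Your argument is correct in substance, but note that the paper offers no proof of this proposition at all: it is quoted verbatim from Reid's Appendix of \S 1 of \emph{Canonical 3-folds}, so there is no in-text argument to compare against. What you wrote is the standard reduction to the smooth locus, and it is the right one: the divisor inequality defining $\Gamma(\mathcal{U},\mathcal{O}_{X}(D))$ is tested only at codimension-one points, all of which lie in $X^{0}$, and normality lets meromorphic functions (and the class group) pass back and forth across the codimension-$\geq 2$ singular locus, after which Lemma \ref{LEMHART} converts every statement about invertible sheaves on $X^{0}$ into the corresponding statement about divisorial sheaves on $X$. Your handling of well-definedness on classes, injectivity via $\mathrm{Cl}(X)\cong\mathrm{Cl}(X^{0})$, surjectivity, and the reflexive-hull module structure is all sound.

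You have also correctly isolated the one non-formal point. In Reid's algebraic setting coherence of $\mathcal{O}_{X}(D)$ is essentially free (it is quasi-coherent and locally a fractional ideal of finite type), but the present paper works in the analytic category, where $\iota_{\star}$ of a coherent sheaf across a thin set need not be coherent; one genuinely needs either the Riemann--Levi extension theorems for (mero)morphic functions on normal complex spaces together with a local fractional-ideal presentation of $\mathcal{O}_{X}(D)$, or a Frisch--Guenot/Siu--Trautmann extension theorem for coherent sheaves of depth $\geq 2$ along the removed set, exactly as you indicate. Since you name the correct tools and explain why they apply (normality, $\operatorname{codim}\geq 2$), I regard this as an acceptable appeal to the literature rather than a gap; a fully self-contained write-up would spell out that local extension step. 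One minor caveat: the quotient by $H^{0}(X,\mathcal{O}_{X}^{\ast})$ on the right-hand side is best read, as you implicitly do, as identifying abstractly isomorphic divisorial subsheaves of $\mathcal{M}_{X}$ (a global isomorphism of such subsheaves is multiplication by a global meromorphic function, unique up to a unit), which is what makes your injectivity step mesh with linear equivalence on the left.
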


\noindent Let now $\Omega^{1}_{\text{Reg}(X)}$ be the cotangent sheaf on Reg$\left( X\right): =X\mathbb{r}$Sing$\left( X\right) 
\overset{\iota}{\hookrightarrow}$ $X,$ and for $%
j\geq2$ let us set 
$
\Omega_{\text{Reg}(X)}^{j}:=\bigwedge\limits^{j}\ \Omega^{1}_{\text{%
		Reg}(X)}\ .
$
The canonical divisor $K_{X}$ of $X$ is that one, the class of which is mapped by $\mathfrak{d}$ onto the \textit{canonical divisorial sheaf} 
$
\omega_{X}:=\iota_{\ast}\left( \Omega_{\text{Reg}(X)}^{\dim_{%
		\mathbb{C}}\left( X\right) }\right) .
$ Note that $\omega _{X}=\omega _{X}^{\left[
	1 \right] }:=\mathcal{O}_{X}\left( K_{X}\right)$ and that 
$\omega _{X}^{\left[j \right] }:=\mathcal{O}_{X}\left( jK_{X}\right)=(\omega _{X}^{\otimes j})^{\vee \vee }= \iota_{\ast}((\Omega_{\text{Reg}%
	(X)}^{\text{dim}_{\mathbb{C}}(X)})^{\otimes j})$
 for all $j \in \mathbb{Z}.$

\begin{definition}
	$X$ is called $\mathbb{Q}$-\textit{Gorenstein} if its canonical divisorial sheaf $\omega _{X}=\mathcal{O}_{X}\left( K_{X}\right)$ is such that $K_{X}$ is $\mathbb{Q}$-Cartier divisor. If $X$ is $\mathbb{Q}$-Gorenstein, then we set index$(X)$:= min$\left \{  j\in \mathbb{Z}_{\geq1}\left \vert jK_{X}\text{ is
		Cartier}\right.  \right \}$.
\end{definition}

\noindent $\bullet$ \textbf{Canonical cyclic coverings}. Given a point $x_{0}$ of a normal $\mathbb{Q}$-Gorenstein complex variety $X,$ we
consider an affine neighborhood $U$ of $x_{0}$ representing the set germ at $x_{0},$
and a nowhere vanishing section $\mathfrak{s}\in H^{0}%
(U,\mathcal{O}_{U}\left(-\text{index}(U)K_{U}\right)$ such that%
\[
H^{0}%
(U,\mathcal{O}_{U}\left(-\text{index}(U)K_{U}\right)=H^{0}(U,\omega_{U}^{[\text{-index}(U)]})\cong \mathcal{O}_{U}\cdot
\mathfrak{s}\cong \mathcal{O}_{U}.
\]
If $\alpha \in \omega_{U}^{[i]},\beta \in \omega_{U}^{[j]}$ and $\mathfrak{v}_{U}:\omega_{U}^{[i]}\otimes \omega_{U}^{[j]}\longrightarrow \omega_{U}^{[i+j]}$ is the natural map, then the coherent $\mathcal{O}_{U}$-module
\[\mathcal{R}_{U}:=\mathcal{O}_{U}\oplus \omega_{U}\oplus \omega_{U}^{[2]}%
\oplus \cdots \oplus \omega_{U}^{[\text{index}(U)-1]},\] equipped with the
multiplication \textquotedblleft$\odot$\textquotedblright \ being induced by
setting
\[
\alpha \odot \beta:=\left \{
\begin{array}
[c]{ll}%
\mathfrak{v}_{U}(\alpha \otimes \beta)\in \omega_{U}^{[i+j]}, & \text{if }%
i+j\leq \text{index}(U)-1\medskip,\\
\mathfrak{v}_{U}(\alpha \otimes \beta)\cdot \mathfrak{s}\in \omega
_{U}^{[i+j-\text{index}(U)]}, & \text{if }i+j\ge\text{index}(U),
\end{array}
\right.
\]
becomes an $\mathcal{O}_{U}$-algebra. Let $\pi_{U}:$ Specan$(\mathcal{R}%
_{U})\longrightarrow U$ be the corresponding finite holomorphic map
constructed by Theorem \ref{SPECANCON}. Wahl (in the algebraic category, cf.
\cite[Appendix, pp. 260-262]{Wahl}) and Reid \cite[Appendix of \S 1, pp.
281-285]{Reid} were the first who initiated the use of $\pi_{U}$ in order to replace $U$ by $U^{\text{can}}:= \text{Specan}(\mathcal{R}%
_{U})$ \textit{of index} $1$ in the case in which $x_{0}$ is \textit{singular}. 

\begin{theorem}\label{WAHLREID}	\emph{(\cite[1.9]{Reid}, \cite[\S 3.6]{Reid-YPG} and \cite[4-5-1 \& 4-5-2, pp. 183-186]{Matsuki})}
The pair $(U^{\emph{can}},\pi_{U})$ has (and is up to an analytic isomorphism uniquely determined by)
	the following properties\emph{:\smallskip} \newline \emph{(i)} $U^{\emph{can}}$ is
	a normal complex variety and the fiber $\pi_{U}^{-1}(\{x_{0}\})$ over $x_{0}$ is a singleton
	\emph{(}say $\{y_{0}\}$\emph{)}.\smallskip \  \newline \emph{(ii)} The field
	extension $\mathbb{C}(U^{\emph{can}})$ of $\mathbb{C}(U)$ is Galois with Galois
	group  $G_{U}\cong \mathbb{Z}/(\emph{index}%
	(U))\mathbb{Z}$ and with a
	generator $\mathfrak{g}$ of $G_{U}$ acting on $\mathcal{R}_{U}$ as follows:
	\[
	(\mathfrak{g},\alpha_{0}+\alpha_{1}+\alpha_{2}+\cdots+\alpha_{\emph{index}(U)-1}%
	)\longmapsto \alpha_{0}+\alpha_{1}\zeta_{\emph{index}(U)}+\alpha_{2}%
	\zeta_{\emph{index}(U)}^{2}\cdots+\alpha_{\emph{index}(U)-1}\zeta
	_{\emph{index}(U)}^{\emph{index}(U)-1}%
	\]
	\emph{(}with $\zeta_{\emph{index}(U)}:=\exp(2\pi \sqrt{-1}/\emph{index}(U)),\alpha_{0}\in \mathcal{O}_{U}\ \text{and}\ \alpha_{i}\in  \omega_{U}^{[i]}\ \text{for all}\ i\in\{1,...,\ \emph{index}(U)-1\}$\emph{)}.
	\smallskip \newline \emph{(iii)} $\pi_{U}$ is
	\'{e}tale in codimension\footnote{This means \'{e}tale outside a subvariety of codimension $\ge 2$.\smallskip} $1$. \smallskip \newline \emph{(iv)} $ \mathcal{O}_{U^{\emph{can}}}(K_{U^{\emph{can}}%
	})\cong \mathcal{O}_{U^{\emph{can}}},$ i.e., $K_{U^{\emph{can}}}$ is a Cartier divisor,
	and $U^{\emph{can}}$ is a $\mathbb{Q}$-Gorenstein affine complex variety of index
	$1.\smallskip$ \newline \emph{(v)} There is a non-vanishing section
	$\mathfrak{s}^{\prime}\in H^{0}(U^{\emph{can}},\mathcal{O}_{U^{\emph{can}}%
	}(K_{U^{\emph{can}}}))$ around the point $y_{0}$ which is semi-invariant w.r.t. the
	action of $G_{U}$ and on which $G_{U}$ acts faithfully. \smallskip \newline \emph{($\pi_{U}: U^{\text{can}}\longrightarrow U$ is said to be the} canonical cyclic cover of $U$ \emph{of degree deg}$(\pi_{U})=\emph{index}(U)$.\emph{)}
\end{theorem}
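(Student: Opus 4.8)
The plan is to run the classical construction of the index-one (canonical) cyclic cover, carried out in the analytic category. Write $n:=\mathrm{index}(U)$ and, after shrinking the affine neighbourhood $U$ of $x_{0}$ if necessary, assume that $U$ represents the germ, that the class of $K_{U}$ in $\mathrm{Cl}(U)$ has \emph{exact} order $n$ (so $\omega_{U}^{[j]}$ fails to be invertible whenever $0<j<n$ and $n\nmid j$), and that the chosen nowhere-vanishing section $\mathfrak{s}$ trivialises $\omega_{U}^{[-n]}$ globally on $U$, whence $\omega_{U}^{[n]}\cong\mathcal{O}_{U}$. First I would check that $\mathcal{R}_{U}=\bigoplus_{i=0}^{n-1}\omega_{U}^{[i]}$ is indeed a coherent $\mathcal{O}_{U}$-algebra: each $\omega_{U}^{[i]}=\mathcal{O}_{U}(iK_{U})^{\vee\vee}$ is a divisorial, hence coherent, sheaf (Lemma \ref{LEMHART}), the sum is finite, and the product ``$\odot$'' is well defined and associative because the natural pairings $\mathfrak{v}_{U}$ are associative and commute with multiplication by the \emph{fixed} section $\mathfrak{s}$ (so the ``wrap-around'' in degrees $\ge n$ is consistent). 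Applying Theorem \ref{SPECANCON} to $\mathcal{R}_{U}$ yields the finite holomorphic map $\pi_{U}\colon U^{\mathrm{can}}=\mathrm{Specan}(\mathcal{R}_{U})\to U$ with $\pi_{U\ast}\mathcal{O}_{U^{\mathrm{can}}}\cong\mathcal{R}_{U}$. The rule ``$\mathfrak{g}$ scales the degree-$i$ summand by $\zeta_{n}^{\,i}$'' is an algebra automorphism of $\mathcal{R}_{U}$ — here one uses that $\mathfrak{s}$ has ``degree'' $-n\equiv 0$, so the multiplication, wrap-around included, is equivariant — and its invariant subalgebra is precisely the degree-$0$ part $\mathcal{O}_{U}$; hence $G_{U}\cong\mathbb{Z}/n\mathbb{Z}$ acts on $U^{\mathrm{can}}$ with $\pi_{U}$ the quotient map.

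Next I would deal with (iii) and (i). Over $\mathrm{Reg}(U)$ the sheaf $\omega_{U}$ restricts to a line bundle $L$ with $L^{\otimes n}\cong\mathcal{O}$ via $\mathfrak{s}$, and $\mathcal{R}_{U}\!\mid_{\mathrm{Reg}(U)}=\bigoplus_{i=0}^{n-1}L^{\otimes i}$ with the usual cyclic-cover multiplication; since $\mathfrak{s}$ is nowhere vanishing on all of $U$, the associated cover is \emph{unbranched} over $\mathrm{Reg}(U)$, i.e. $\pi_{U}$ is \'{e}tale over $\mathrm{Reg}(U)$, and as $\mathrm{Sing}(U)$ has codimension $\ge 2$ and $\pi_{U}$ is finite, $\pi_{U}$ is \'{e}tale in codimension $1$ — this is (iii). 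For normality I would invoke Serre's criterion: $\pi_{U\ast}\mathcal{O}_{U^{\mathrm{can}}}\cong\mathcal{R}_{U}$ is a finite direct sum of reflexive sheaves on the normal variety $U$, hence reflexive, hence $S_{2}$, so $U^{\mathrm{can}}$ is $S_{2}$; it is regular in codimension $1$ because it is smooth over $\mathrm{Reg}(U)$ while $\pi_{U}^{-1}(\mathrm{Sing}(U))$ has codimension $\ge 2$. The generic fibre of $\pi_{U}$ is the Kummer extension $\mathbb{C}(U)(\mathfrak{t})$ with $\mathfrak{t}^{\,n}\in\mathbb{C}(U)^{\times}$ representing the $\omega_{U}^{[n]}$-trivialisation; since $[K_{U}]$ has exact order $n$ and $\mathbb{C}$ contains the $n$-th roots of unity, this is a field, cyclic Galois of degree $n$ over $\mathbb{C}(U)$ with $G_{U}$ acting as stated — giving (ii) — and in particular $U^{\mathrm{can}}$ is irreducible, hence connected. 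Finally $\pi_{U}^{-1}(x_{0})$ is a single $G_{U}$-orbit; were it of size $m>1$, the intermediate cover $U^{\mathrm{can}}/H\to U$ (with $H\subset G_{U}$ the point stabiliser) would be finite and \'{e}tale over $x_{0}$, so its direct image $\mathcal{R}_{U}^{H}=\bigoplus_{k=0}^{m-1}\omega_{U}^{[kn/m]}$ would be $\mathcal{O}_{U}$-locally free near $x_{0}$; being a graded (hence $\mathcal{O}_{U}$-module) summand of a free module over the local ring $\mathcal{O}_{U,x_{0}}$, the piece $\omega_{U}^{[n/m]}$ would then be invertible with $0<n/m<n$, contradicting $\mathrm{index}(U)=n$. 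Hence $\pi_{U}^{-1}(x_{0})=\{y_{0}\}$, completing (i).

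It remains to prove (iv) and (v), which is where the construction earns its name. Since $\pi_{U}$ is finite and \'{e}tale in codimension $1$ between normal varieties, $\omega_{U^{\mathrm{can}}}$ agrees with the reflexive pull-back $\pi_{U}^{[\ast]}\omega_{U}:=(\pi_{U}^{\ast}\omega_{U})^{\vee\vee}$, and the reflexive projection formula gives $\pi_{U\ast}\omega_{U^{\mathrm{can}}}\cong\bigl(\pi_{U\ast}\mathcal{O}_{U^{\mathrm{can}}}\otimes\omega_{U}\bigr)^{\vee\vee}\cong\bigoplus_{i=0}^{n-1}\bigl(\omega_{U}^{[i]}\otimes\omega_{U}\bigr)^{\vee\vee}\cong\bigoplus_{i=0}^{n-1}\omega_{U}^{[i+1]}\cong\bigoplus_{i=0}^{n-1}\omega_{U}^{[i]}$, the last step using $\omega_{U}^{[n]}\cong\mathcal{O}_{U}$; tracking the $G_{U}$-action (trivial on the base factor $\omega_{U}$) shows the $\zeta_{n}^{\,j}$-eigensheaf of $\pi_{U\ast}\omega_{U^{\mathrm{can}}}$ is $\omega_{U}^{[j+1]}$. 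Comparing with $\pi_{U\ast}\mathcal{O}_{U^{\mathrm{can}}}\cong\mathcal{R}_{U}$ as $\mathcal{O}_{U^{\mathrm{can}}}$-modules yields $\omega_{U^{\mathrm{can}}}\cong\mathcal{O}_{U^{\mathrm{can}}}$, so $K_{U^{\mathrm{can}}}$ is Cartier and $U^{\mathrm{can}}$ is $\mathbb{Q}$-Gorenstein of index $1$ — this is (iv). For (v): the $\zeta_{n}^{\,n-1}$-eigenspace of $H^{0}(U^{\mathrm{can}},\omega_{U^{\mathrm{can}}})$ is $H^{0}(U,\omega_{U}^{[n]})\cong\mathcal{O}_{U}(U)\cdot\mathfrak{s}^{-1}$; the section $\mathfrak{s}'$ corresponding to $\mathfrak{s}^{-1}$ is nowhere vanishing near $y_{0}$, generates $\omega_{U^{\mathrm{can}}}$, and is semi-invariant with eigenvalue $\zeta_{n}^{-1}$, a primitive $n$-th root of unity, so $G_{U}$ acts faithfully on $\mathbb{C}\mathfrak{s}'$. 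Uniqueness of $(U^{\mathrm{can}},\pi_{U})$ up to analytic isomorphism among pairs satisfying (i)--(v) follows by reconstructing the eigensheaf decomposition of $\rho_{\ast}\mathcal{O}_{V}$ from the Galois property in (ii) and pinning down the twisting line via (iv) and (v).

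The step I expect to be the main obstacle is the identification $\pi_{U\ast}\omega_{U^{\mathrm{can}}}\cong\mathcal{R}_{U}$ in the last paragraph: it is the point where ``\'{e}tale in codimension $1$'' and the reflexive projection formula must be combined carefully, and where both the chosen section $\mathfrak{s}$ and the \emph{exact} value $n=\mathrm{index}(U)$ are genuinely used; the preliminary shrinking of $U$ so that $\mathrm{Cl}(U)$ detects the exact order of $[K_{U}]$ (needed already for the singleton-fibre argument and for the Kummer extension to be a field) is a smaller but indispensable point.
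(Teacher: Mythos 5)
The paper offers no proof of this theorem: it is stated as a quotation from \cite{Reid}, \cite{Reid-YPG} and \cite{Matsuki}, so there is no internal argument to compare against, and your write-up is essentially the standard index-one cyclic-cover construction from precisely those sources, carried out correctly and in the right order (coherence and equivariance of $\odot$, \'{e}taleness over $\mathrm{Reg}(U)$ from the nowhere-vanishing trivialisation, normality via Serre's criterion, irreducibility of $\mathsf{T}^{n}-\varphi$ and the singleton fibre both forced by the exact order $n$ of $[K_{U}]$, and triviality of $\omega_{U^{\mathrm{can}}}$ via the reflexive projection formula). The two points you flag as delicate are indeed the ones to keep an eye on, and your treatment of them is sound: the intermediate quotient $U^{\mathrm{can}}/H$ is \'{e}tale over $x_{0}$ because $G_{U}/H$ acts with trivial stabiliser at $[y_{0}]$, so $\omega_{U}^{[n/m]}$ becomes a rank-one direct summand of a free module over $\mathcal{O}_{U,x_{0}}$ and hence invertible, contradicting $\mathrm{index}(U)=n$; and the isomorphism $\pi_{U\ast}\omega_{U^{\mathrm{can}}}\cong\mathcal{R}_{U}$ of $\mathcal{R}_{U}$-modules is realised by the generator $1\in\omega_{U}^{[n]}\cong\mathcal{O}_{U}$ sitting in graded degree $n-1$, which is exactly the semi-invariant section $\mathfrak{s}'$ of (v), on whose line $\mathfrak{g}$ acts by the primitive root $\zeta_{n}^{\,n-1}$.
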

\begin{remark} (i) In particular, $\pi_{U}:U^{\text{can}}\longrightarrow U$ is surjective\footnote{Since  $\pi_{U}$ is finite and surjective, we have, in particular, $\text{dim}_{\mathbb{C}}(U^{\text{can}})= \text{dim}_{\mathbb{C}}(U)$.} and can be viewed as the quotient map by an appropriate
	identification $U\cong U^{\text{can}}/G_{U}.$ \smallskip \newline (ii) If $\varphi \in \mathbb{C}(U)$ is such that div$(\varphi)+K_{U}=0,$ then the
	polynomial $\mathsf{T}^{\text{index}(U)}-\varphi$ \ is irreducible in
	$\mathbb{C}(U)[\mathsf{T}],$ the Galois extension
	\[
	\mathbb{C}(U)[\sqrt[\uproot{3}\text{index}(U)]{\varphi}]=\mathbb{C}%
	(U)[\mathsf{T}]/(\mathsf{T}^{\text{index}(U)}-\varphi)
	\]
of $\mathbb{C}(U)$	has $G_{U}$ as Galois group, and 
	\[U^{\text{can}}\cong \text{Spec}(%
	\bigoplus_{j=0}^{\text{index}(U)-1}
	 \Gamma(U,\omega_{U}^{\left[  j\right]  })
	\cdot(\sqrt[\uproot{3}\text{index}(U)]{\varphi})^{j}),\  \  \forall j\in \{0,1,...,\text{index}%
	(U)-1\}.
	\] 
\end{remark}

\noindent $\bullet$ \textbf{Back to our specific toric log del Pezzo surfaces.} 
Let $\ell$ be an integer $> 1$, and $(Q,N)$, $(Q^{\ast},M)$ two $\ell
$-reflexive pairs, where $M:=$
Hom$_{\mathbb{Z}}(N,\mathbb{Z}),$ with  $X(N,\Delta_{Q})$ and $X(M,\Delta_{Q^{\ast}})$ the corresponding toric log del
Pezzo surfaces. Assume that
$\mathbf{n}_{1},\ldots,\mathbf{n}_{\nu}$ are the vertices of $Q$ ordered anticlockwise,
and for $i\in \left \{  1,\ldots,\nu \right \}$ define $F_{i}:=$ conv$(\left \{  \mathbf{n}_{i},\mathbf{n}_{i+1}\right \}  )$
to be the $i$-th edge of $Q$ (as in \ref{NOTELF} (ii)) and $\sigma
_{i}:=\sigma_{F_{i}}=\mathbb{R}_{\geq0}\mathbf{n}_{i}+\mathbb{R}_{\geq
	0}\mathbf{n}_{i+1}$ the $N$-cone of type $(p_{i},q_{i})$ supporting $F_{i}.$  
It is easy to verify that \[U_{\sigma_{i},N}\cong \text{Spec}(\mathbb{C}[z_{1},z_{2}]^{G_{i}}),\ \text{where}\ G_{i}:=\left \langle\text{diag}(\zeta_{q_{i}}^{-p_{i}},\zeta_{q_{i}})\right \rangle \subset \text{GL}_{2}\left(\mathbb{C}\right)\ \text{(as in Proposition \ref{Sing-Prop}),}\] is
$\mathbb{Q}$-Gorenstein and that index$(U_{\sigma_{i},N})=l_{F_{i}},$ where
$l_{F_{i}}=q_{i}/{\text{gcd}(q_{i},p_{i}-1)}$ is the \textit{local index}
of $F_{i}$ (w.r.t. $Q$) as defined in \ref{DEFLOCIND} (ii). (See \cite[Notes 3.19, p. 89, and 4.5 (b), p.96]{Dais1}.) 

\begin{lemma}\label{TRICKYLEMMA}
	For every $i\in \left \{  1,...,\nu \right \}$  let $\Lambda_{F_{i}}\subseteq N$ be the sublattice generated by the lattice points of $F_{i}.$ The canonical cyclic cover
	\begin{equation}
	\pi^{ }_{U_{\sigma_{i},N}}:\emph{Specan}(\mathcal{R}_{U_{\sigma_{i},N}%
	})\longrightarrow U_{\sigma_{i},N}\cong \mathbb{C}^{2}/G_{i}=\emph{Spec}(\mathbb{C}[z_{1},z_{2}]^{G_{i}%
	}) \label{CANCYCLC}
	\end{equation}
	has degree $l_{F_{i}}$, with
	\[
	\emph{Specan}%
	(\mathcal{R}_{U_{\sigma_{i},N}})\cong U_{\sigma_{i},\Lambda_{F_{i}}}\cong \mathbb{C}^{2}/G_{i}^{\prime}=\emph{Spec}(\mathbb{C}[z_{1},z_{2}]^{G_{i}^{\prime
	}}),\  \text{where} \ G_{i}^{\prime}:=G_{i}\cap \text{ \emph{SL}}_{2}(\mathbb{C}),
	\]
	and can be
	viewed as the quotient map by the identification 
\[
U_{\sigma_{i},N}\cong U_{\sigma_{i},\Lambda_{F_{i}}}/(N/\Lambda_{F_{i}})\ \ \text{
	with}\ \ N/\Lambda_{F_{i}}\cong G_{i}/G^{\prime}_{i}\cong \mathbb{Z}/l_{F_{i}}\mathbb{Z}.
\]%
Moreover, $\pi_{U_{\sigma_{i},N}}^{-1}(\{$\emph{orb}$_{N}(\sigma_{i})\})=\{$%
\emph{orb}$_{\Lambda_{F_{i}}}(\sigma_{i})\},$ where the point \emph{orb}$_{\Lambda_{F_{i}}%
}(\sigma_{i})\in  U_{\sigma_{i},\Lambda_{F_{i}}} $ is either nonsingular or a Gorenstein cyclic quotient
singularity of type $(1,\tfrac{q_{i}}{l_{F_{i}}}).$

\end{lemma}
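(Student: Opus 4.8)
The plan is to realise the canonical cyclic cover of $U_{\sigma_{i},N}$ explicitly as the affine toric surface attached to $\sigma_{i}$ with respect to the sublattice $\Lambda_{F_{i}}\subseteq N$, and then to appeal to the uniqueness clause of Theorem \ref{WAHLREID}. Throughout I write $k_{i}:=\sharp(F_{i}\cap N)-1$ and $\mathbf{w}_{i}:=\tfrac{1}{k_{i}}(\mathbf{n}_{i+1}-\mathbf{n}_{i})\in N$, so that the lattice points of $F_{i}$ are $\mathbf{n}_{i}+j\mathbf{w}_{i}$, $j\in\{0,1,\ldots,k_{i}\}$, and $\Lambda_{F_{i}}=\mathbb{Z}\mathbf{n}_{i}+\mathbb{Z}\mathbf{w}_{i}$; recall also from the discussion preceding the lemma that $\mathrm{index}(U_{\sigma_{i},N})=l_{F_{i}}=q_{i}/\gcd(q_{i},p_{i}-1)$.

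First I would carry out the lattice bookkeeping. Using Proposition \ref{PQDESCR1}, write $\mathbf{n}_{i+1}=p_{i}\mathbf{n}_{i}+q_{i}\mathbf{n}_{i}'$ with $\{\mathbf{n}_{i},\mathbf{n}_{i}'\}$ a basis of $N$ and $q_{i}=\mathrm{mult}_{N}(\sigma_{i})$; then $k_{i}=\gcd(p_{i}-1,q_{i})$, $\mathbf{w}_{i}=\tfrac{p_{i}-1}{k_{i}}\mathbf{n}_{i}+l_{F_{i}}\mathbf{n}_{i}'$, and a Smith normal form computation (equivalently Proposition \ref{parallepiped}) gives $[N:\Lambda_{F_{i}}]=l_{F_{i}}$ with $N/\Lambda_{F_{i}}$ cyclic of order $l_{F_{i}}=q_{i}/k_{i}$. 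Since $\{\mathbf{n}_{i},\mathbf{w}_{i}\}$ is a basis of $\Lambda_{F_{i}}$ and $\mathbf{n}_{i+1}=\mathbf{n}_{i}+k_{i}\mathbf{w}_{i}$, the cone $\sigma_{i}$ is of type $(1,k_{i})$ with respect to $\Lambda_{F_{i}}$; by Propositions \ref{Sing-Prop} and \ref{GORENSTPROP} this gives $U_{\sigma_{i},\Lambda_{F_{i}}}\cong\mathbb{C}^{2}/G_{i}^{\prime}$ with $G_{i}^{\prime}=\langle\mathrm{diag}(\zeta_{k_{i}}^{-1},\zeta_{k_{i}})\rangle\subset\mathrm{SL}_{2}(\mathbb{C})$ of order $k_{i}=q_{i}/l_{F_{i}}$, hence a Gorenstein affine variety of index $1$, and $\mathrm{orb}_{\Lambda_{F_{i}}}(\sigma_{i})$ is nonsingular when $k_{i}=1$ and otherwise a Gorenstein cyclic quotient singularity of type $(1,k_{i})=(1,q_{i}/l_{F_{i}})$. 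A direct comparison of generators (using $p_{i}\equiv1\,(\operatorname{mod}k_{i})$ and $\zeta_{q_{i}}^{l_{F_{i}}}=\zeta_{k_{i}}$) identifies $G_{i}^{\prime}$ with $G_{i}\cap\mathrm{SL}_{2}(\mathbb{C})$.

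Next I would analyse the finite toric morphism $\pi\colon U_{\sigma_{i},\Lambda_{F_{i}}}\longrightarrow U_{\sigma_{i},N}$ induced by $\Lambda_{F_{i}}\hookrightarrow N$ (on coordinate rings, $\mathbb{C}[\sigma_{i}^{\vee}\cap M]\hookrightarrow\mathbb{C}[\sigma_{i}^{\vee}\cap\Lambda_{F_{i}}^{\vee}]$). It is surjective of degree $[N:\Lambda_{F_{i}}]=l_{F_{i}}$ and is the quotient map for the natural action of $N/\Lambda_{F_{i}}\cong G_{i}/G_{i}^{\prime}\cong\mathbb{Z}/l_{F_{i}}\mathbb{Z}$ (realised as the kernel of $\mathbb{T}_{\Lambda_{F_{i}}}\to\mathbb{T}_{N}$). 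Since $\mathbf{n}_{i}$ and $\mathbf{n}_{i+1}$ remain primitive in $\Lambda_{F_{i}}$, there is no ramification along either of the two $\mathbb{T}$-invariant prime divisors; as both surfaces are smooth off the torus-fixed points and the branch locus is pure of codimension one, $\pi$ is \'{e}tale away from $\mathrm{orb}_{N}(\sigma_{i})$, in particular \'{e}tale in codimension $1$. Finally, a toric morphism carries $\mathrm{orb}(\sigma_{i})$ to $\mathrm{orb}(\sigma_{i})$ and the preimage of a torus orbit is a union of torus orbits mapping onto it, so $\pi^{-1}(\{\mathrm{orb}_{N}(\sigma_{i})\})=\{\mathrm{orb}_{\Lambda_{F_{i}}}(\sigma_{i})\}$.

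It then remains to identify $(U_{\sigma_{i},\Lambda_{F_{i}}},\pi)$ with $(\mathrm{Specan}(\mathcal{R}_{U_{\sigma_{i},N}}),\pi_{U_{\sigma_{i},N}})$. By the previous two paragraphs $\pi$ is a finite surjection from a normal variety, with singleton fibre over $\mathrm{orb}_{N}(\sigma_{i})$, Galois with group $\mathbb{Z}/l_{F_{i}}\mathbb{Z}=\mathbb{Z}/(\mathrm{index}(U_{\sigma_{i},N}))\mathbb{Z}$, \'{e}tale in codimension $1$, and with $K_{U_{\sigma_{i},\Lambda_{F_{i}}}}$ Cartier of index $1$; moreover the torus-semi-invariant generator $\mathbf{e}(\mathbf{m})$ of $\omega_{U_{\sigma_{i},\Lambda_{F_{i}}}}$, where $\mathbf{m}\in\Lambda_{F_{i}}^{\vee}$ satisfies $\langle\mathbf{m},\mathbf{n}_{i}\rangle=\langle\mathbf{m},\mathbf{n}_{i+1}\rangle=1$, is the non-vanishing section required by property (v): a short computation (using $\gcd(\tfrac{p_{i}-1}{k_{i}},l_{F_{i}})=1$) shows that the class of $\mathbf{m}$ generates $\Lambda_{F_{i}}^{\vee}/M$, so $N/\Lambda_{F_{i}}$ acts faithfully on it. Since $U_{\sigma_{i},N}$ is affine and represents the germ at $\mathrm{orb}_{N}(\sigma_{i})$, Theorem \ref{WAHLREID} together with its uniqueness clause then yields $\mathrm{Specan}(\mathcal{R}_{U_{\sigma_{i},N}})\cong U_{\sigma_{i},\Lambda_{F_{i}}}$ over $U_{\sigma_{i},N}$, with $\pi_{U_{\sigma_{i},N}}=\pi$ of degree $l_{F_{i}}$; combined with the last sentence of the preceding paragraph this gives all the remaining assertions of the lemma. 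The step requiring most care — the expected main obstacle — is verifying that $(U_{\sigma_{i},\Lambda_{F_{i}}},\pi)$ satisfies \emph{all} of the characterising properties (i)--(v) of Theorem \ref{WAHLREID}, in particular the faithful semi-invariant section in (v); everything else is routine toric bookkeeping.
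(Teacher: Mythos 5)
Your proposal is correct, but the final identification step takes a genuinely different route from the paper. Both arguments share the same lattice bookkeeping (computing $[N:\Lambda_{F_i}]=l_{F_i}$, recognising $\sigma_i$ as a cone of type $(1,q_i/l_{F_i})$ w.r.t.\ $\Lambda_{F_i}$, and identifying $G_i'=G_i\cap\mathrm{SL}_2(\mathbb{C})$), and both produce the finite toric quotient $U_{\sigma_i,\Lambda_{F_i}}\to U_{\sigma_i,N}$ by $N/\Lambda_{F_i}$. The divergence is in how one matches this map with $\pi_{U_{\sigma_i,N}}$. The paper starts from the abstract object $\mathrm{Specan}(\mathcal{R}_{U_{\sigma_i,N}})$, observes it is a \emph{Gorenstein} affine toric surface, hence of the form $U_{\sigma_i,L_i}\cong\mathbb{C}^2/H_i$ for some sublattice $L_i$ with $H_i\subseteq G_i'$, and then pins down $L_i=\Lambda_{F_i}$ by a covering-space argument: restricting to the complements of the fixed points gives an \'etale cover, and comparing the resulting short exact sequence of fundamental groups with $1\to H_i\to G_i\to G_i''\to 1$ forces $|H_i|=q_i/l_{F_i}$. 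You instead verify that your explicitly constructed $(U_{\sigma_i,\Lambda_{F_i}},\pi)$ satisfies \emph{all} of the characterising properties (i)--(v) of Theorem~\ref{WAHLREID} and invoke its uniqueness clause. Your route avoids the fundamental-group computation and the ``w.l.o.g.'' normalisation of $L_i$ and $H_i$, at the price of having to check condition (v) honestly -- and you correctly identify this as the delicate point: the faithfulness of the $N/\Lambda_{F_i}$-action on the semi-invariant generator $\mathbf{e}(\mathbf{m})$ of $\omega_{U_{\sigma_i,\Lambda_{F_i}}}$ reduces to $\gcd\bigl(\tfrac{p_i-1}{k_i},l_{F_i}\bigr)=1$, which holds by definition of $k_i=\gcd(p_i-1,q_i)$. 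Your purity-of-branch-locus argument for \'etaleness in codimension one is sound (one could alternatively note that $G_i$ contains no quasi-reflections, so $G_i/G_i'$ acts freely off the image of the origin). Both proofs are valid; yours is arguably the more self-contained toric argument, while the paper's yields as a by-product the explicit description of $\pi_1$ of the punctured germ.
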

\begin{proof} Consider an arbitrary $i\in \{1,...,\nu \}.$ Firstly, index$(U_{\sigma_{i}%
		,N})=l_{F_{i}}=\left \vert N:\Lambda_{F_{i}}\right \vert .$ Secondly,
	\[
	\text{diag}(\zeta_{q_{i}}^{-p_{i}\ell},\zeta_{q_{i}}^{\ell})=\text{diag}%
	(\zeta_{\frac{q_{i}}{l_{F_{i}}}}^{-p_{i}},\zeta_{\frac{q_{i}}{l_{F_{i}}}%
	})=\text{diag}(\zeta_{\frac{q_{i}}{l_{F_{i}}}}^{-1},\zeta_{\frac{q_{i}%
		}{l_{F_{i}}}}),
	\]
	and therefore
	\[
	G_{i}^{\prime}:=G_{i}\cap \text{SL}_{2}(\mathbb{Z})=\text{Ker}(G_{i}%
	\overset{\text{det}}{\longrightarrow}\mathbb{C}^{\times})=\left \langle
	\text{diag}(\zeta_{\frac{q_{i}}{l_{F_{i}}}}^{-1},\zeta_{\frac{q_{i}}{l_{F_{i}%
	}}})\right \rangle ,\text{ \ with \ }\left \vert G_{i}^{\prime}\right \vert
	=\frac{q_{i}}{l_{F_{i}}}.
	\]
	Since $G_{i}\cong \mathbb{Z}/q_{i}\mathbb{Z},$ $G_{i}^{\prime}\cong%
	\mathbb{Z}/\frac{q_{i}}{l_{F_{i}}}\mathbb{Z}$ and $G_{i}/G_{i}^{\prime}%
	\cong \mathbb{Z}/l_{F_{i}}\mathbb{Z},$ the diagram 
	\[
	\xymatrixrowsep{0.2in}
	\xymatrixcolsep{0.2in}
	\xymatrix{
		&  &  & \{0\} \ar[d] & \\
		\{0\} \ar[r] & \hspace{0.2cm} \mathbb{Z}\mathbf{n}_{i}\oplus \mathbb{Z}\mathbf{n}_{i+1} \ar@{=}[d] \hspace{0.15cm} \ar@{^{(}->}[r] & \begin{array}
		[c]{c}%
		\Lambda_{F_{i}}\smallskip%
		\end{array} \vspace{0.3cm} \ar@{^{(}->}[d]  \ar@{->>}[r]
		&  \begin{array}
		[c]{c}%
		\mathbb{Z}/\frac{q_{i}}{l_{F_{i}}}\mathbb{Z}
		\end{array} \vspace{0.5cm} \ar@{^{(}->}[d] \ar[r] & \{0\}\\
		\{0\} \ar[r] & \begin{array}
		[c]{c}%
		\mathbb{Z}\mathbf{n}_{i}\oplus \mathbb{Z}\mathbf{n}_{i+1}\smallskip%
		\end{array} \ar@{^{(}->}[d] \ar@{^{(}->}[r] & N \ar@{=}[d] \ar@{->>}[r] & \mathbb{Z}/q_{i}\mathbb{Z} \ar@{->>}[d] \ar[r] & \{0\}\\
		\{0\} \ar[r] & \Lambda_{F_{i}}  \hspace{0.1cm} \ar@{^{(}->}[r] & N \ar@{->>}[r] & \mathbb{Z}/l_{F_{i}}\mathbb{Z}  \ar[d] \ar[r] &
		\{0\}\\
		&  &  & \{0\} &}
	\]
	(the three rows and the last column of which are short exact sequences of
	additive groups) combined with \cite[Proposition 1.13.18 and Ex. 1.3.20, pp. 44-46]{CLS}
	gives $U_{\sigma_{i},\Lambda_{F_{i}}}\cong \mathbb{C}^{2}/G_{i}^{\prime},$
	$U_{\sigma_{i},N}\cong \mathbb{C}^{2}/G_{i},$ and%
	\[
	U_{\sigma_{i},N}\cong U_{\sigma_{i},\Lambda_{F_{i}}}/(G_{i}/G_{i}^{\prime
	})\cong \mathbb{C}^{2}/G_{i}^{\prime}/(G_{i}/G_{i}^{\prime}).
	\]
	Now we apply Theorem \ref{WAHLREID} for $U_{\sigma_{i},N}.$ For every $j\in
	\{0,1,...,l_{F_{i}}-1\}$ the divisor $-jK_{U_{\sigma_{i},N}}$ is
	$\mathbb{T}_{N}$-invariant and $\Gamma(U_{\sigma_{i},N},\omega_{U_{\sigma
			_{i},N}}^{\left[  j\right]  })$ is a reflexive $\mathbb{C}[\sigma_{i}^{\vee
	}\cap M]$-module of rank $1.$ Therefore $\mathcal{R}_{U_{\sigma_{i},N}}:=%
	{\textstyle \bigoplus \nolimits_{j=0}^{l_{F_{i}}-1}}
	\omega_{U_{\sigma_{i},N}}^{\left[  j\right]  }\, \ $is a $\mathbb{T}_{N}%
	$-invariant $\mathcal{O}_{U_{\sigma_{i},N}}$-algebra, Specan$(\mathcal{R}%
	_{U_{\sigma_{i},N}})$ is an affine \textit{toric} surface which is
	$\mathbb{Q}$-Gorenstein and of index $1$ (which means that it is a
	two-dimensional Gorenstein variety\footnote{If a $\mathbb{Q}$-Gorenstein
		variety of index $1$ is Cohen-Macauley, then it is a \textit{Gorenstein
			variety}, i.e., the local ring at each of its points is a Gorenstein ring.}),
	and the canonical cover map (\ref{CANCYCLC}) is equivariant. Setting $\varphi_{i}%
	:=-\frac{d\mathbf{e}(\mathbf{m}_{i})}{\mathbf{e}(\mathbf{m}_{i})}\wedge
	\frac{d\mathbf{e}(\mathbf{m}_{i}^{\prime})}{\mathbf{e}(\mathbf{m}_{i}^{\prime
		})},$ with $\{ \mathbf{m}_{i},\mathbf{m}_{i}^{\prime}\}$ a basis of $M,$ we
	have $\operatorname{div}(\varphi_{i})=-K_{U_{\sigma_{i},N}}$ (cf. Oda \cite[p. 71]{Oda}),
	$\  \mathsf{T}^{l_{F_{i}}}-\varphi_{i}$ is irreducible in $\mathbb{C}%
	$($U_{\sigma_{i},N}$)$[\mathsf{T}]$ and the Galois extension $\mathbb{C}%
	$($U_{\sigma_{i},N}$)$[\sqrt[\uproot{5}{\scriptstyle l_{F_{i}}}]{\varphi_{i}}]$ of $\mathbb{C}%
	$($U_{\sigma_{i},N}$) has a cyclic Galois group, say $G_{i}^{\prime \prime
	}\cong \mathbb{Z}/l_{F_{i}}\mathbb{Z}$ (because deg$(\pi_{U_{\sigma_{i},N}%
	})=l_{F_{i}}$). Since
	\[
	\text{Specan}(\mathcal{R}_{U_{\sigma_{i},N}})\cong \text{Spec}\left(
	{\textstyle \bigoplus \limits_{j=0}^{l_{F_{i}}-1}}
	\Gamma(U_{\sigma_{i},N},\omega_{U_{\sigma_{i},N}}^{\left[  j\right]  }%
	)\cdot(\sqrt[\uproot{5}{\scriptstyle l_{F_{i}}}]{\varphi_{i}})^{j}\right)
	\]
	is a \textit{Gorenstein} toric affine surface, it suffices for our purposes to recall
	that it has to appear as the quotient of $\mathbb{C}^{2}$ by a finite cyclic
	subgroup $H_{i}$ of SL$_{2}(\mathbb{C})$ acting diagonally. W.l.o.g. we may
	assume that Specan$(\mathcal{R}_{U_{\sigma_{i},N}})\cong \mathbb{C}^{2}%
	/H_{i}\cong U_{\sigma_{i},L_{i}}$ (i.e., the toric affine surface associated
	with the \textit{same} cone $\sigma_{i}$ but with respect to \textit{another}
	lattice $L_{i}\subset$ $\mathbb{R}^{2},$ such that $\left \vert L_{i}%
	:\mathbb{Z}\mathbf{n}_{i}\oplus \mathbb{Z}\mathbf{n}_{i+1}\right \vert =\left \vert H_{i}\right \vert%
	$)$,$ that $H_{i}\subseteq G_{i}^{\prime}$ and that $\pi_{U_{\sigma_{i},N}%
	}^{-1}(\{$orb$_{N}(\sigma_{i})\})=\{$orb$_{L_{i}}(\sigma_{i})\}.$ Using the
	equivariant holomorphic map determined by the dotted arrow in the diagram:%
\[
\xymatrix{
	\mathbb{C}^{2} \ar@{<->}[r]^{\cong \hspace{0.8cm}} \ar@{->>}[d] & U_{\sigma_{i},\mathbb{Z}\mathbf{n}_{i}\oplus \mathbb{Z}%
		\mathbf{n}_{i+1}} \ar@{->>}[d] \ar@{=}[rr]  &  & U_{\sigma_{i},\mathbb{Z}\mathbf{n}_{i}\oplus
		\mathbb{Z}\mathbf{n}_{i+1}} \ar@{->>}[d] \ar@{<->}[r]^{\hspace{0.8cm}\cong} & \mathbb{C}^{2} \ar@{->>}[d]\\
	\mathbb{C}^{2}/G_{i}^{\prime} \ar@{->>}[d] \ar@{<->}[r]^{\cong} & U_{\sigma_{i},\Lambda_{F_{i}}} \ar@{->>}[d] &  &
	U_{\sigma_{i},L_{i}} \ar@{->>}[d] \ar@{->>}[dll]  | {\pi_{U_{\sigma_{i},N}}} \ar@{-->}[ll]  \ar@{<->}[r]^{\cong}  & \mathbb{C}^{2}/H_{i} \\
	\mathbb{C}^{2}/G_{i} \ar@{<->}[r]^{\cong} & U_{\sigma_{i},N}  \ar@{<->}[rr]^{\cong}  &  & U_{\sigma_{i},L_{i}}%
	/G_{i}^{\prime \prime}
}
\]
	we verify that $\Lambda_{F_{i}}/L_{i}\cong G_{i}^{\prime}/H_{i}.$ On the other
	hand, the restriction
	\[
	\xi_{i}:=\left.  \pi_{U_{\sigma_{i},N}}\right \vert _{U_{\sigma_{i},L_{i}%
		}\mathbb{r\{}\text{orb}_{L_{i}}(\sigma_{i})\}}:U_{\sigma_{i},L_{i}%
	}\mathbb{r\{}\text{orb}_{L_{i}}(\sigma_{i})\} \twoheadrightarrow U_{\sigma
		_{i},N}\mathbb{r}\{ \text{orb}_{N}(\sigma_{i})\}
	\]
	is an \'{e}tale holomorphic map (and, in particular, a topological, i.e., an
	unramified covering map), and
	\[
	G_{i}^{\prime \prime}\cong \pi_{1}(U_{\sigma_{i},N}\mathbb{r}\{ \text{orb}%
	_{N}(\sigma_{i})\})/\xi_{i\, \ast}(\pi_{1}(U_{\sigma_{i},L_{i}}\mathbb{r\{}%
	\text{orb}_{L_{i}}(\sigma_{i})\}))
	\]
	(where $\pi_{1}(...)$ denotes the fundamental group of these pathwise connected spaces, cf. \cite[Theorem 2.8, p. 18]{Morishita}).  Furthermore, the composite of the \'{e}tale holomorphic
	maps%
\[
\xymatrix{
	\mathbb{C}^{2}\mathbb{r}\{ \mathbf{0}\} \ar@{->>}[r] & U_{\sigma_{i},L_{i}}\mathbb{r\{}%
	\text{orb}_{L_{i}}(\sigma_{i})\} \ar@{->>}[r] & U_{\sigma_{i},N}\mathbb{r}\{ \text{orb}%
	_{N}(\sigma_{i})\}
}
\]
	(where $\mathbb{C}^{2}\mathbb{r}\{ \mathbf{0}\}$ is the universal cover of
	$U_{\sigma_{i},L_{i}}\mathbb{r\{}$orb$_{L_{i}}(\sigma_{i})\}$ which is simply
	connected) gives the following short exact sequence of fundamental groups:%
	\[
	{\small
		\xymatrixrowsep{0.2in}
		\xymatrixcolsep{0.2in}
		\xymatrix{
			\pi_{1}(\mathbb{C}^{2}\mathbb{r}\{ \mathbf{0}\}) \ar[r] \ar@{=}[d] & \pi_{1}(U_{\sigma_{i},L_{i}%
			}\mathbb{r\{}\text{orb}_{L_{i}}(\sigma_{i})\}) \hspace{0.1cm}\ar@{^{(}->}[r]^{\hspace{0.1cm}\xi_{i\ast}} \ar@{=}[d] & \pi_{1}(U_{\sigma_{i}%
				,N}\mathbb{r}\{ \text{orb}_{N}(\sigma_{i})\}) \ar@{->>}[r]  \ar@{=}[d] & G_{i}^{\prime \prime} \ar[r] \ar@{=}[d] & \{1\}  \ar@{=}[d] \\
			\{1\} \ar[r] & H_{i} \ar@{^{(}->}[r] & G_{i} \ar@{->>}[r]  & G_{i}^{\prime \prime} \ar[r] & \{1\}
	}}
	\]
	Since $H_{i},G_{i}$ and $G_{i}^{\prime \prime}$ are \textit{finite} groups, we
	have $l_{F_{i}}=\left \vert G_{i}^{\prime \prime}\right \vert =\left \vert
	G_{i}\right \vert /\left \vert H_{i}\right \vert =\frac{q_{i}}{\left \vert
		H_{i}\right \vert }\Rightarrow \left \vert H_{i}\right \vert =\frac{q_{i}%
	}{l_{F_{i}}}=\left \vert G_{i}^{\prime}\right \vert ,$ and we conclude that
	$H_{i}=G_{i}^{\prime}$ and $L_{i}=\Lambda_{F_{i}}.$ Finally, it is by construction obvious that the orbit   orb$_{\Lambda_{F_{i}}}(\sigma_{i})\in U_{\sigma_{i}%
		,\Lambda_{F_{i}}}\cong \mathbb{C}^{2}/G_{i}^{\prime}$ \ is either a smooth
	point (whenever $G_{i}^{\prime}$ is trivial) or a cyclic quotient singularity of type
	 $(1,\tfrac{q_{i}}{l_{F_{i}}})$ (whenever \ $\left \vert G_{i}^{\prime}\right \vert
	>1$). 
	\end{proof}
\noindent Now let $\Lambda_{Q}\subseteq N$ be the sublattice generated by the boundary
lattice points of $Q$ and $\Lambda_{Q^{\ast}}\subseteq M$ be the sublattice
generated by the boundary lattice points of $Q^{\ast}.$

\begin{theorem} \emph{(Kasprzyk \& Nill} \cite[\S 2]{KaNi}\emph{)}\label{CHLATT}
	We have \emph{Hom}$_{\mathbb{Z}}(\Lambda_{Q},\mathbb{Z})=\frac{1}{\ell}%
	\Lambda_{Q^{\ast}}$ and
	\[
	\left \vert N:\Lambda_{Q}\right \vert =\ell=\left \vert M:\Lambda_{Q^{\ast}%
	}\right \vert .
	\]
	In addition, $(Q,\Lambda_{Q})$ and $(Q^{\ast},\Lambda_{Q^{\ast}})$
	\emph{(}with $Q^{\ast}=\ell Q^{\circ}$\emph{)} are $1$-reflexive pairs, where
	$(Q^{\ast},\Lambda_{Q^{\ast}})$ is to be identified with $(Q^{\circ}%
	,\emph{Hom}_{\mathbb{Z}}(\Lambda_{Q},\mathbb{Z}))$.
\end{theorem}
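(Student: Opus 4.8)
\textit{Proof.} The plan is to pin down the single lattice $\Lambda_{Q}$: once $|N:\Lambda_{Q}|=\ell$ is established, $1$-reflexivity of $(Q,\Lambda_{Q})$ and the duality identity follow from Proposition \ref{CONDREFL} together with routine index and determinant bookkeeping. Enumerate the vertices $\mathbf{n}_{1},\ldots,\mathbf{n}_{\nu}$ of $Q$ anticlockwise, put $F_{i}:=\text{conv}(\{\mathbf{n}_{i},\mathbf{n}_{i+1}\})$, $k_{i}:=\sharp(F_{i}\cap N)-1$, and let $\mathbf{w}_{i}:=\tfrac{1}{k_{i}}(\mathbf{n}_{i+1}-\mathbf{n}_{i})\in N$ be the primitive direction of $F_{i}$, so that $F_{i}\cap N=\{\mathbf{n}_{i}+j\mathbf{w}_{i}\mid 0\leq j\leq k_{i}\}$. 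Let $\sigma_{i}$ be the $N$-cone supporting $F_{i}$ and $q_{i}:=\text{mult}_{N}(\sigma_{i})=|\det(\mathbf{n}_{i},\mathbf{n}_{i+1})|/\det(N)$ (see (\ref{QIFORMULA})); by the local-index formula in the proof of Proposition \ref{NOBOUNDARYPNTS} we have $\ell=l_{F_{i}}=q_{i}/k_{i}$ for every $i$, hence $q_{i}=k_{i}\ell$. Writing $\Lambda_{F_{i}}\subseteq N$ for the sublattice generated by $F_{i}\cap N$, we get $\Lambda_{F_{i}}=\mathbb{Z}\mathbf{n}_{i+1}+\mathbb{Z}\mathbf{w}_{i}$ and $\Lambda_{Q}=\sum_{i}\Lambda_{F_{i}}$, and since $|\det(\mathbf{n}_{i+1},\mathbf{w}_{i})|=\tfrac{1}{k_{i}}|\det(\mathbf{n}_{i},\mathbf{n}_{i+1})|=\tfrac{q_{i}}{k_{i}}\det(N)=\ell\det(N)$ we obtain $|N:\Lambda_{F_{i}}|=\ell$ for all $i$ (this is also recorded inside the proof of Lemma \ref{TRICKYLEMMA}).

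The crucial point is that all the $\Lambda_{F_{i}}$ coincide. Two consecutive edges $F_{i}$ and $F_{i+1}$ share the vertex $\mathbf{n}_{i+1}$, which is primitive in $N$; one may therefore pass to the rank-one quotient $N/\mathbb{Z}\mathbf{n}_{i+1}\cong\mathbb{Z}$. There the image of $\Lambda_{F_{i}}$ is the cyclic subgroup generated by $\overline{\mathbf{w}}_{i}$, which by the third isomorphism theorem has index $|N:\Lambda_{F_{i}}|=\ell$; likewise the image of $\Lambda_{F_{i+1}}$ has index $\ell$. Since $\mathbb{Z}$ has exactly one subgroup of index $\ell$, these two images agree, and pulling back (both sublattices contain $\mathbb{Z}\mathbf{n}_{i+1}$) gives $\Lambda_{F_{i}}=\Lambda_{F_{i+1}}$. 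Going once around $Q$ yields $\Lambda_{F_{1}}=\cdots=\Lambda_{F_{\nu}}=\Lambda_{Q}$, whence $|N:\Lambda_{Q}|=\ell$. Applying the same reasoning to the $\ell$-reflexive pair $(Q^{\ast},M)$ (Proposition \ref{QQSTAR}) gives $|M:\Lambda_{Q^{\ast}}|=\ell$, with $\Lambda_{Q^{\ast}}$ equal to the sublattice of $M$ generated by any single edge of $Q^{\ast}$.

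Next, $(Q,\Lambda_{Q})$ is a $1$-reflexive pair. Its vertices lie in $\Lambda_{Q}$ and remain primitive there (being primitive in the overlattice $N$), $\mathbf{0}\in\text{int}(Q)$, and for each edge $F_{i}$ one has $F_{i}\cap\Lambda_{Q}=F_{i}\cap N$ (since $F_{i}\cap N\subseteq\Lambda_{F_{i}}=\Lambda_{Q}$), whereas $\text{mult}_{\Lambda_{Q}}(\sigma_{i})=|\det(\mathbf{n}_{i},\mathbf{n}_{i+1})|/\det(\Lambda_{Q})=q_{i}/\ell=k_{i}$; therefore the local index of $F_{i}$ with respect to $\Lambda_{Q}$ equals $k_{i}/k_{i}=1$, and $Q$ is reflexive w.r.t.\ $\Lambda_{Q}$ by Proposition \ref{CONDREFL}. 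The same computation applied to $(Q^{\ast},M)$ shows that $(Q^{\ast},\Lambda_{Q^{\ast}})$ is $1$-reflexive.

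Finally, for the duality identity, recall from bijection (\ref{LDUALITY1}) that the edge $G_{j}$ of $Q^{\ast}$ dual to the vertex $\mathbf{n}_{j}$ of $Q$ has affine hull $\{\mathbf{x}\mid\langle\mathbf{x},\mathbf{n}_{j}\rangle=-\ell\}$; hence $\Lambda_{G_{j}}$ lies in $K_{j}:=\{\mathbf{m}\in M\mid\langle\mathbf{m},\mathbf{n}_{j}\rangle\equiv0\pmod{\ell}\}$, and as $\mathbf{n}_{j}$ is primitive the reduction $\mathbf{m}\mapsto\langle\mathbf{m},\mathbf{n}_{j}\rangle\bmod\ell$ is onto $\mathbb{Z}/\ell\mathbb{Z}$, so $K_{j}$ has index $\ell$ in $M$; by the second paragraph $\Lambda_{Q^{\ast}}=\Lambda_{G_{j}}$ also has index $\ell$, whence $\Lambda_{Q^{\ast}}=K_{j}$. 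Now take $\mathbf{m}^{\ast}\in\text{Hom}_{\mathbb{Z}}(\Lambda_{Q},\mathbb{Z})$; from $|\text{Hom}_{\mathbb{Z}}(\Lambda_{Q},\mathbb{Z}):M|=|N:\Lambda_{Q}|=\ell$ we get $\ell\mathbf{m}^{\ast}\in M$, and $\langle\ell\mathbf{m}^{\ast},\mathbf{n}_{j}\rangle=\ell\langle\mathbf{m}^{\ast},\mathbf{n}_{j}\rangle\equiv0\pmod{\ell}$ because $\mathbf{n}_{j}\in\Lambda_{Q}$ forces $\langle\mathbf{m}^{\ast},\mathbf{n}_{j}\rangle\in\mathbb{Z}$; thus $\ell\mathbf{m}^{\ast}\in\Lambda_{Q^{\ast}}$, i.e.\ $\text{Hom}_{\mathbb{Z}}(\Lambda_{Q},\mathbb{Z})\subseteq\tfrac{1}{\ell}\Lambda_{Q^{\ast}}$. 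Both lattices have determinant $\det(M)/\ell$ (from $\det(\Lambda_{Q})=\ell\det(N)$, $\det(\Lambda_{Q^{\ast}})=\ell\det(M)$ and $\det(M)=\det(N)^{-1}$), so the inclusion is an equality, and the homothety $\mathbf{x}\mapsto\tfrac{1}{\ell}\mathbf{x}$ carries $(Q^{\ast},\Lambda_{Q^{\ast}})=(\ell Q^{\circ},\ell\,\text{Hom}_{\mathbb{Z}}(\Lambda_{Q},\mathbb{Z}))$ to $(Q^{\circ},\text{Hom}_{\mathbb{Z}}(\Lambda_{Q},\mathbb{Z}))$, the asserted identification. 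I expect the coincidence of the $\Lambda_{F_{i}}$ to be the only genuinely non-formal step: it is exactly there that primitivity of the shared vertex is needed to reduce modulo $\mathbb{Z}\mathbf{n}_{i+1}$, after which everything is index and determinant bookkeeping. (The case $\ell=1$ is immediate, since then $\Lambda_{F_{i}}=N$ for all $i$.) \hfill$\square$
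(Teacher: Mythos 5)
Your proof is correct. Note that the paper itself does not reprove this statement -- it is quoted from Kasprzyk--Nill \cite[\S 2]{KaNi} and only the local ingredient $\left\vert N:\Lambda_{F_{i}}\right\vert =l_{F_{i}}$ resurfaces inside the proof of Lemma \ref{TRICKYLEMMA} -- so what you have written is a complete, self-contained argument where the paper relies on a citation. Your two computations that carry the proof are sound: first, $\Lambda_{F_{i}}=\mathbb{Z}\mathbf{n}_{i+1}+\mathbb{Z}\mathbf{w}_{i}$ has index $q_{i}/k_{i}=l_{F_{i}}=\ell$ in $N$; second, and this is the genuinely non-formal step you correctly isolate, the coincidence $\Lambda_{F_{i}}=\Lambda_{F_{i+1}}$ follows because both sublattices contain the primitive shared vertex $\mathbf{n}_{i+1}$, hence correspond to index-$\ell$ subgroups of $N/\mathbb{Z}\mathbf{n}_{i+1}\cong\mathbb{Z}$, of which there is exactly one; this is where $\ell$-reflexivity (all local indices \emph{equal}) is used, since for a general LDP-polygon the consecutive edge-lattices would have different indices and the argument would collapse. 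The remaining steps -- primitivity of vertices persisting in the sublattice, $F_{i}\cap\Lambda_{Q}=F_{i}\cap N$, the local index dropping to $k_{i}/k_{i}=1$ so that Proposition \ref{CONDREFL}(iii) applies, the identification $\Lambda_{Q^{\ast}}=K_{j}$ by comparing indices, and the determinant count forcing $\mathrm{Hom}_{\mathbb{Z}}(\Lambda_{Q},\mathbb{Z})=\frac{1}{\ell}\Lambda_{Q^{\ast}}$ -- are all routine and correctly executed. This is essentially the same lattice-change mechanism as in \cite[\S 2]{KaNi}, so nothing further is needed.
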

The \textquotedblleft beauty\textquotedblright \ of being $\ell$-reflexive is
mainly embodied in the following property: All \textit{local indices} of the
edges $F_{i}$ of $Q$ coincide with the \textit{index} $\ell$ of the toric log
del Pezzo surface $X(N,\Delta_{Q}),$ and this allows us to patch together the
canonical cyclic covers over the affine neighborhoods of its singularities in
order to create a single \textit{global} finite holomorphic map $\pi_{Q}$ of
\textit{degree} $\ell$ and represent $X(N,\Delta_{Q})$ as a \textit{global}
quotient space. 
\begin{theorem}\label{GLOBALTHM}
	There is an equivariant 	\emph{(}w.r.t. the actions of
	the algebraic tori $\mathbb{T}_{\Lambda_{Q}}$ and $\mathbb{T}_{N}$\emph{)} finite holomorphic map
	\begin{equation}
	\pi_{Q}:X(\Lambda_{Q},\Delta_{Q})\longrightarrow X(N,\Delta_{Q}) \label{BIGCOVMAP}
	\end{equation}
 which has degree $\ell$ and coincides with the
	quotient map by the identification%
	\[
	X(N,\Delta_{Q})\cong X(\Lambda_{Q},\Delta_{Q})/(N/\Lambda_{Q})\  \text{
	}with\  \text{ \emph{Ker}}[\mathbb{T}_{\Lambda_{Q}}\rightarrow \mathbb{T}%
	_{N}]\cong N/\Lambda_{Q}\cong \mathbb{Z}/\ell \,\mathbb{Z}.
	\]
	Moreover, there exist bases $\mathcal{B}$ and $\mathcal{B}^{\diamondsuit}$ of the
	lattices $\Lambda_{Q}$ and $N,$ respectively, as well as a $k\in
	\{1,...,\ell -1\}$ with \emph{gcd}$(k,\ell)=1$ and exactly one $j\in \{1,...,16\},$ such
	that $\Phi_{\mathcal{A}_{\ell,k}}(\overline{\mathcal{Q}}_{j})=Q^{\diamondsuit},$ where
	$\overline{\mathcal{Q}}_{1},...,\overline{\mathcal{Q}}_{16}$ are the
	representatives of the $16$ equivalent classes of the $1$-reflexive
	$\mathbb{Z}^{2}$-polygons given in the table of \emph{\ref{NORMIERUNG},} 
	\[
	\Phi_{\mathcal{A}_{\ell,k}}:\mathbb{R}^{2}\longrightarrow \mathbb{R}^{2}%
	,\  \tbinom{x_{1}}{x_{2}}\longmapsto \Phi_{\mathcal{A}_{\ell,k}}\left(  \tbinom{x_{1}%
	}{x_{2}}\right)  :=\mathcal{A}_{\ell,k}\tbinom{x_{1}}{x_{2}},\ \text{
		with}\footnote{Note that det$(\mathcal{A}_{\ell, k})=\ell.$}\ \ \mathcal{A}%
	_{\ell,k}:=\left(
	\begin{smallmatrix}
	\ell & 0\\
	k & 1
	\end{smallmatrix}
	\right)  ,
	\]
	$Q=\Phi_{\mathcal{B}}(\overline{\mathcal{Q}}_{j}),$ and $Q^{\diamondsuit}%
	:=\Phi_{\mathcal{B}^{\diamondsuit \,-1}}(Q).$ Hence, the dotted arrow (which denotes
	the $\mathbb{T}_{\mathbb{Z}^{2}}$-equivariant holomorphic map induced by
	$\Phi_{\mathcal{A}_{\ell,k}}$) in the following diagram%
	\[
	\xymatrix{
		X(\mathbb{Z}^{2},\Delta_{\overline{\mathcal{Q}}_{j}}) \ar@{-->}[rr] &  & X(\mathbb{Z}%
		^{2},\Delta_{\mathcal{Q}^{\diamondsuit}}\!)\\
		X(\Lambda_{Q},\Delta_{Q}) \ar@{<->}[u]^{\cong} \ar[rr]^{\pi_{Q}} &  & \ar@{<->}[u]_{\cong} X(N,\Delta_{Q})
	}
	\]
	can be viewed again as a quotient map.%
\end{theorem}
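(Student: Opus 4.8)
The plan is to produce $\pi_Q$ as the toric morphism attached to the lattice inclusion $\Lambda_Q\hookrightarrow N$, and then to read off the polygon-theoretic part from Theorem~\ref{CHLATT} combined with the classification of reflexive polygons (Theorem~\ref{CLASSIFRP} and Remark~\ref{NORMIERUNG}).

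\emph{Construction of $\pi_Q$.} First I would observe that $\Delta_Q$ is at the same time a complete $N$-fan and a complete $\Lambda_Q$-fan: each cone $\sigma_F$, $F\in\mathrm{Edg}(Q)$, is spanned by two vertices of $Q$, and these are boundary lattice points, hence lie in $\Lambda_Q$. Thus the identity of $\mathbb{R}^2$ is a morphism of fans from $(\Lambda_Q,\Delta_Q)$ to $(N,\Delta_Q)$ and induces a $\mathbb{T}$-equivariant toric morphism $\pi_Q\colon X(\Lambda_Q,\Delta_Q)\to X(N,\Delta_Q)$. Since by Theorem~\ref{CHLATT} the inclusion has finite index $\ell$ and the two fans literally coincide, $\pi_Q$ is finite of degree $\ell$ and exhibits $X(N,\Delta_Q)$ as the quotient of $X(\Lambda_Q,\Delta_Q)$ by $\mathrm{Ker}[\mathbb{T}_{\Lambda_Q}\to\mathbb{T}_N]\cong N/\Lambda_Q\cong\mathbb{Z}/\ell\mathbb{Z}$ (standard toric facts, cf.\ \cite[\S3.3, \S5.1]{CLS}). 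This map is the promised global object: for every edge $F_i$ one has $\Lambda_{F_i}\subseteq\Lambda_Q$, while by Lemma~\ref{TRICKYLEMMA} and Theorem~\ref{CHLATT} $[N:\Lambda_{F_i}]=l_{F_i}=\ell=[N:\Lambda_Q]$, so $\Lambda_{F_i}=\Lambda_Q$ for all $i$; consequently $\pi_Q$ restricts over each $U_{\sigma_i,N}$ to the canonical cyclic cover of Lemma~\ref{TRICKYLEMMA}, and these covers patch.

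\emph{Choice of bases and the integer $k$.} By Theorem~\ref{CHLATT} the pair $(Q,\Lambda_Q)$ is $1$-reflexive, so by Theorem~\ref{CLASSIFRP} and Remark~\ref{NORMIERUNG} there is a \emph{unique} $j\in\{1,\dots,16\}$ together with a basis matrix $\mathcal{B}=(\mathbf{b}_1\,|\,\mathbf{b}_2)$ of $\Lambda_Q$ with $Q=\Phi_{\mathcal{B}}(\overline{\mathcal{Q}}_j)$ — one starts from any basis realising the class $[\overline{\mathcal{Q}}_j]_{\mathbb{Z}^2}$ and post-composes with an element of $\mathrm{GL}_2(\mathbb{Z})$ to land on $\overline{\mathcal{Q}}_j$ itself. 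The heart of the argument is that $\mathbf{b}_1=\Phi_{\mathcal{B}}(\tbinom{1}{0})$ and $\mathbf{b}_2=\Phi_{\mathcal{B}}(\tbinom{0}{1})$ are \emph{both} primitive w.r.t.\ $N$. For $\mathbf{b}_2$ this is clear, since $\tbinom{0}{1}$ is a vertex of $\overline{\mathcal{Q}}_j$ and hence $\mathbf{b}_2$ is a vertex of the LDP-polygon $Q$. For $\mathbf{b}_1$ I would use the fact — verified once and for all from the table of Remark~\ref{NORMIERUNG} — that every $\overline{\mathcal{Q}}_j$ has an edge joining $\tbinom{0}{1}$ to a vertex $\tbinom{c_j}{1}$ with $c_j\ge1$. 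Writing $F$ for the corresponding edge of $Q$, with vertices $\mathbf{b}_2$ and $\mathbf{b}_2+c_j\mathbf{b}_1$, the supporting cone $\sigma_F=\mathbb{R}_{\ge0}\mathbf{b}_2+\mathbb{R}_{\ge0}(\mathbf{b}_2+c_j\mathbf{b}_1)$ has these two vertices as its minimal generators, so $\mathrm{mult}_N(\sigma_F)=|\det(\mathbf{b}_2,\mathbf{b}_2+c_j\mathbf{b}_1)|/\det(N)=c_j\det(\Lambda_Q)/\det(N)=c_j\ell$; since $Q$ is $\ell$-reflexive, $l_F=\ell$, and $l_F=\mathrm{mult}_N(\sigma_F)/(\sharp(F\cap N)-1)$ forces $\sharp(F\cap N)-1=c_j$, whence the primitive $N$-direction along $F$ is $\mathbf{b}_1$.

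\emph{Producing $\mathcal{B}^{\diamondsuit}$ and closing the diagram.} Since $\mathbf{b}_2$ is $N$-primitive, extend it to a basis $\{\mathbf{e},\mathbf{b}_2\}$ of $N$ and write $\mathbf{b}_1=x\mathbf{e}+y\mathbf{b}_2$; as $\{\mathbf{b}_1,\mathbf{b}_2\}$ is a basis of $\Lambda_Q$ one gets $|x|=[N:\Lambda_Q]=\ell$, and as $\mathbf{b}_1$ is $N$-primitive one gets $\gcd(x,y)=1$, hence $\gcd(\ell,y)=1$. Replacing $\mathbf{e}$ by $\pm\mathbf{e}+m\mathbf{b}_2$ for a suitable $m$ rewrites this as $\mathbf{b}_1=\ell\mathbf{b}_1^{\diamondsuit}+k\mathbf{b}_2$ with $\mathbf{b}_1^{\diamondsuit}\in N$, $k\in\{1,\dots,\ell-1\}$ and $\gcd(k,\ell)=\gcd(y,\ell)=1$ (this is where $\ell>1$ enters: $k=0$ would force $\ell\mid y$). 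Then $\mathcal{B}^{\diamondsuit}:=(\mathbf{b}_1^{\diamondsuit}\,|\,\mathbf{b}_2)$ is a basis matrix of $N$ (because $|\det(\mathbf{b}_1^{\diamondsuit},\mathbf{b}_2)|=|\det(\mathbf{e},\mathbf{b}_2)|=\det(N)$) and $\mathcal{B}=\mathcal{B}^{\diamondsuit}\mathcal{A}_{\ell,k}$, so $Q^{\diamondsuit}=\Phi_{\mathcal{B}^{\diamondsuit\,-1}}(Q)=\Phi_{\mathcal{B}^{\diamondsuit\,-1}\mathcal{B}}(\overline{\mathcal{Q}}_j)=\Phi_{\mathcal{A}_{\ell,k}}(\overline{\mathcal{Q}}_j)$. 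Finally $\Phi_{\mathcal{B}^{-1}}$ and $\Phi_{\mathcal{B}^{\diamondsuit\,-1}}$ give the two vertical toric isomorphisms of the displayed square; they carry the inclusion $\Lambda_Q\hookrightarrow N$ to $\Phi_{\mathcal{A}_{\ell,k}}\colon\mathbb{Z}^2\to\mathbb{Z}^2$, which maps $\Delta_{\overline{\mathcal{Q}}_j}$ onto $\Delta_{Q^{\diamondsuit}}$ (as $\Phi_{\mathcal{A}_{\ell,k}}(\overline{\mathcal{Q}}_j)=Q^{\diamondsuit}$) and hence induces the dotted arrow; because $\mathbb{Z}^2/\mathcal{A}_{\ell,k}\mathbb{Z}^2\cong\mathbb{Z}/\ell\mathbb{Z}$, that arrow is again finite of degree $\ell$ and a quotient map, and the square commutes by construction of $\pi_Q$. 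I expect the primitivity of $\mathbf{b}_1$ in $N$ to be the main obstacle: it is precisely the point where $\ell$-reflexivity (all local indices equal to $\ell$) is used in an essential way, and it is also the reason the particular representatives $\overline{\mathcal{Q}}_j$ with $\tbinom{0}{1}$ as a vertex were selected in Remark~\ref{NORMIERUNG}.
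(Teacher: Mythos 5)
Your proposal is correct. For the construction of $\pi_{Q}$ it follows essentially the paper's route: the paper builds $\pi_{Q}$ by patching the local canonical cyclic covers $\pi_{U_{\sigma_{i},N}}$ of Lemma \ref{TRICKYLEMMA} and then quotes \cite[Corollary 1.16]{Oda} / \cite[Proposition 3.3.7]{CLS}, whereas you start from the lattice inclusion $\Lambda_{Q}\hookrightarrow N$ (noting that $\Delta_{Q}$ is simultaneously an $N$-fan and a $\Lambda_{Q}$-fan) and then check, via $\Lambda_{F_{i}}\subseteq\Lambda_{Q}$ together with $\left\vert N:\Lambda_{F_{i}}\right\vert =l_{F_{i}}=\ell=\left\vert N:\Lambda_{Q}\right\vert$, that the global map restricts to those local covers; the two constructions coincide and invoke the same standard toric facts. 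The genuine difference is in the second half. The paper disposes of the existence of $\mathcal{B},\mathcal{B}^{\diamondsuit}$ and $k$ in one sentence, appealing to the Hermite normal form and referring to \cite[Corollary 13]{KaNi} for details, while you give a self-contained derivation. Its key step --- the $N$-primitivity of $\mathbf{b}_{1}=\Phi_{\mathcal{B}}(\tbinom{1}{0})$ --- is extracted correctly: every $\overline{\mathcal{Q}}_{j}$ has an edge from $\tbinom{0}{1}$ to $\tbinom{c_{j}}{1}$ on the line $\{y=1\}$, so the corresponding edge $F$ of $Q$ has $\operatorname{mult}_{N}(\sigma_{F})=c_{j}\ell$, and $l_{F}=\ell$ together with (\ref{LOCALINDEXFORMULA}) forces $\sharp(F\cap N)-1=c_{j}$, i.e., $\mathbf{b}_{1}$ is the primitive $N$-step along $F$. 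This is exactly the point where the normalisation $\tbinom{0}{1}\in\mathrm{Vert}(\overline{\mathcal{Q}}_{j})$ of Remark \ref{NORMIERUNG} pays off, and your bookkeeping with $\mathbf{b}_{1}=\ell\mathbf{b}_{1}^{\diamondsuit}+k\mathbf{b}_{2}$, $\gcd(k,\ell)=1$, $k\neq0$ because $\ell>1$, yields $\mathcal{B}=\mathcal{B}^{\diamondsuit}\mathcal{A}_{\ell,k}$ and, as a by-product, the cyclicity of $N/\Lambda_{Q}$. What your version buys is independence from the external reference; what it costs is the one-time verification over the sixteen tables of Remark \ref{NORMIERUNG} that the adjacent vertex $\tbinom{c_{j}}{1}$ with $c_{j}\geq1$ always exists (which does hold).
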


\begin{proof}Since $Q$ is $\ell$-reflexive, we have $l_{F_{i}}=\ell$ and $U_{\sigma_{i}%
		,\Lambda_{F_{i}}}=U_{\sigma_{i},\Lambda_{Q}},$ and for the canonical cyclic
	covers $\pi_{U_{\sigma_{i},N}}$ which are constructed by Lemma \ref{TRICKYLEMMA} we obtain
	\[
	\left.  \text{ }\pi_{U_{\sigma_{i},N}}\right \vert _{U_{\sigma_{i},\Lambda_{Q}%
		}\cap U_{\sigma_{i+1},\Lambda_{Q}}}=\left.  \pi_{U_{\sigma_{i+1},N}%
	}\right \vert _{U_{\sigma_{i},\Lambda_{Q}}\cap U_{\sigma_{i+1},\Lambda_{Q}}},
	\]
	for all $i\in \{1,...,\nu \}.$ Since $\left \{  \left.  U_{\sigma_{i},\Lambda
		_{Q}}\right \vert i\in \{1,...,\nu \} \right \}  $ is an open covering of
	$X(\Lambda_{Q},\Delta_{Q}),$ we may patch them together by setting%
	\[
	\pi_{Q}(\mathfrak{x}):=\pi_{U_{\sigma_{i},N}}(\mathfrak{x}),\  \  \forall
	\mathfrak{x}\in U_{\sigma_{i},\Lambda_{Q}}.
	\]
	$\pi_{Q}$ is by definition a finite holomorphic map of degree $\ell=$
	$\left \vert N:\Lambda_{Q}\right \vert ,$ with%
	\[
	\text{Ker}[\mathbb{T}_{\Lambda_{Q}}\longrightarrow \mathbb{T}_{N}%
	]=\text{Hom}_{\mathbb{Z}}(\text{Hom}_{\mathbb{Z}}(\Lambda_{Q},\mathbb{Z}%
	)/M,\mathbb{C}^{\times})\cong N/\Lambda_{Q}\cong \mathbb{Z}/\ell \, \mathbb{Z},
	\]
	and it suffices to apply 
	\cite[Corollary 1.16, pp. 22-23]{Oda} or \cite[Proposition 3.3.7, pp. 127-128]{CLS}.  On the other hand, $(Q,\Lambda_{Q})$ is an $1$-reflexive pair, and utilising suitable bases $\mathcal{B}$ and $\mathcal{B}^{\diamondsuit}$ of the
	lattices $\Lambda_{Q}$ and $N,$ respectively, we may transfer $Q$ to $\mathbb{Z}^{2}$-polygons. To define carefully the matrix $\mathcal{A}_{\ell,k}$, so that $\Phi_{\mathcal{A}_{\ell,k}}$ maps $\overline{\mathcal{Q}}_{j}$ onto  $Q^{\diamondsuit}$, one has to make use of the Hermite normal form. (For details see \cite[Corollary 13]{KaNi}.) 
\end{proof}
\begin{example}	The $\mathbb{Z}^{2}$-triangle $Q:=$ conv$\left(  \left \{  \binom{0}{1}%
	,\binom{14}{3},\binom{-21}{-5}\right \}  \right)  $ is $7$-reflexive, and via
	$\mathcal{A}_{7,1}$ we get $\Phi_{\mathcal{A}_{7,1}}(\overline{\mathcal{Q}%
	}_{7})=Q.$ The toric del Pezzo surface $X(\mathbb{Z}^{2},\Delta_{Q})$ has
	three cyclic quotient singularities: One of type $(5,14)$, one of type
	$(16,21),$ and one of type $(5,7)$. \ $X(\mathbb{Z}^{2},\Delta_{\overline
		{\mathcal{Q}}_{7}})$ inherits a Gorenstein cyclic quotient singularity of type
	$(1,2)$ over the first, a Gorenstein cyclic quotient singularity of type
	$(1,3)$ over the second, and a smooth point over the third.	
\end{example}

\begin{remark} Clearly, Theorem \ref{GLOBALTHM} gives $\sharp($RP$(\ell;N))\leq16\, \phi(\ell),$ where
	$\phi$ is Euler's totient function, but this is only a rough upper bound.
	In fact, $\sharp($RP$(\ell;N))$ depends essentially on number-theoretic
	restrictions on the weights of the possible \textsc{wve}$^{2}$\textsc{c}-graphs. In practice, for the
	classification of $\ell$-reflexive polygons and for the construction of precise
	tables like those in \cite{Br-Kas}, one has to perform ad-hoc tests to distinguish lattice-inequivalent polygons. (Cf. Grinis \& Kasprzyk \cite{GR_KAS} for a more
	general discussion on the normal forms of lattice polytopes.) 
	\end{remark}

\begin{lemma}
	Let $Y$ and $Z$ be two normal projective surfaces and $\pi:Y\longrightarrow Z$
	be a generically finite and surjective holomorphic map of degree $d.$ If $D_{1},D_{2}$ are two $\mathbb{Q}$-Weil
	divisors on $Z,$ then%
	\begin{equation}
	D_{1}\cdot D_{2}=\frac{1}{d}(\pi^{\star}(D_{1})\cdot \pi^{\star}(D_{2}%
	)),\label{INTNUMBCOV}%
	\end{equation}
	where $\pi^{\star}(D_{j})$ is the pullback of $D_{j},$ $j\in \{1,2\},$ via $\pi$
	\emph{(}in the sense of \cite[p. 32]{FultonITH}\emph{)}.
\end{lemma}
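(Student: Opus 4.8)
The plan is to reduce everything to the classical projection formula for intersection numbers under a proper surjective morphism. First I would recall the standard fact (see \cite[\S 1.4 and \S 2.3]{FultonITH}, or \cite[p. 32]{FultonITH}) that for a generically finite surjective morphism $\pi:Y\longrightarrow Z$ of degree $d$ between projective varieties, and for any cycle class $\alpha$ on $Y$, one has $\pi_{\star}\pi^{\star}(\alpha)=d\,\alpha$; and that the projection formula $\pi_{\star}(\pi^{\star}(\beta)\cdot\gamma)=\beta\cdot\pi_{\star}(\gamma)$ holds whenever the relevant intersections are defined. The key point is that this is stated for Cartier divisors and for the operational (refined) pullback of \cite[Ch.\ 2]{FultonITH}, so I need to pass from $\mathbb{Q}$-Weil divisors to a setting where these apply.

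The reduction goes as follows. Since $Y$ and $Z$ are normal projective surfaces, every Weil divisor on either is $\mathbb{Q}$-Cartier is \emph{not} automatic in general, but here we only need the intersection product as \emph{defined} in \eqref{INTNUMBSING}: pick $\mathbb{T}$-equivariant (or arbitrary) desingularizations, or simply work on a common resolution. Concretely, I would choose a resolution $g:\widetilde{Z}\longrightarrow Z$ and a resolution $\widetilde{Y}$ of $Y$ dominating the fibre product, giving $\widetilde{\pi}:\widetilde{Y}\longrightarrow\widetilde{Z}$ still generically finite of degree $d$ (degree is a birational invariant of the function-field extension, so $\deg\widetilde{\pi}=\deg\pi=d$). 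On smooth surfaces the divisors $g^{\star}(D_{1}), g^{\star}(D_{2})$ are honest $\mathbb{Q}$-Cartier, their intersection number computes $D_{1}\cdot D_{2}$ by definition of \eqref{INTNUMBSING}, and the pullbacks $\pi^{\star}(D_{j})$ (which by definition in the Mumford/Fulton sense are computed on a resolution of $Y$) pull back compatibly: $\widetilde{\pi}^{\star}(g^{\star}(D_{j}))$ represents $\pi^{\star}(D_{j})$ on $\widetilde{Y}$. Then the ordinary projection formula on the smooth surface $\widetilde{Z}$ gives
\[
\widetilde{\pi}^{\star}(g^{\star}(D_{1}))\cdot\widetilde{\pi}^{\star}(g^{\star}(D_{2}))
=\widetilde{\pi}_{\star}\bigl(\widetilde{\pi}^{\star}(g^{\star}(D_{1}))\cdot\widetilde{\pi}^{\star}(g^{\star}(D_{2}))\bigr)
\]
evaluated via $\widetilde{\pi}_{\star}\widetilde{\pi}^{\star}(g^{\star}(D_{1}))=d\,g^{\star}(D_{1})$ together with the projection formula $\widetilde{\pi}_{\star}(\widetilde{\pi}^{\star}(\eta)\cdot\xi)=\eta\cdot\widetilde{\pi}_{\star}(\xi)$, yielding $d\,\bigl(g^{\star}(D_{1})\cdot g^{\star}(D_{2})\bigr)=d\,(D_{1}\cdot D_{2})$. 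Rearranging gives \eqref{INTNUMBCOV}.

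The step I expect to require the most care is the compatibility of the two notions of pullback: the divisor $\pi^{\star}(D_{j})$ on $Y$ is defined (as in \eqref{INTNUMBSING} and \cite[\S1]{Dais1}) by pulling the Weil divisor back to a resolution of $Y$ in Mumford's sense, and I must check that lifting further to $\widetilde{Y}$ and the commuting square $\widetilde{\pi}\circ(\text{resolution of }Y\text{-map})=g\circ\widetilde{\pi}$ (up to choosing $\widetilde{Y}$ to dominate everything) identifies $\widetilde{\pi}^{\star}g^{\star}(D_{j})$ with the lift of $\pi^{\star}(D_{j})$. This is a routine but slightly fiddly diagram chase using that Mumford pullback is functorial for composites of proper birational maps and that intersection numbers are independent of the chosen resolution (\cite[7.1.16, p.\ 125]{FultonITH}, and the remark following \eqref{INTNUMBSING}). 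Once that identification is in place, the formula \eqref{INTNUMBCOV} is immediate, and in our applications $\pi$ will be the finite map $\pi_{Q}$ of Theorem \ref{GLOBALTHM}, for which it is even flat, so one could alternatively invoke the projection formula for finite flat morphisms directly without passing to resolutions.
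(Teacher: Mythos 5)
Your proposal is correct and follows essentially the same route as the paper: resolve $Z$, pass to a resolution of (the normalization of) the fibre product to obtain a generically finite surjective map $\widetilde{\pi}$ of degree $d$ between smooth surfaces, apply the projection formula there (the paper simply cites this smooth-surface step as \cite[Proposition I.8 (ii)]{Beauville}), and verify the compatibility of the Mumford pullbacks via functoriality and the independence of the intersection number from the chosen resolution. The only caveat concerns your closing aside: the quotient map $\pi_{Q}$ of Theorem \ref{GLOBALTHM} is finite and \'{e}tale in codimension one but need not be flat at the singular points, so the reduction to resolutions cannot in general be bypassed there.
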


\begin{proof}
	Denoting by $\rho:\widetilde{Z}\longrightarrow Z$ the minimal
	desingularization of $Z,$ by $\delta:Y^{\prime}\longrightarrow \widetilde
	{Z}\times_{Z}Y$ the normalisation of the fiber product $\widetilde{Z}%
	\times_{Z}Y,$ and by $\gamma:\widetilde{Y}\longrightarrow Y^{\prime}$ the
	minimal desingularization of $Y^{\prime},$ we obtain a commutative diagram of
	the form:
	\[
	\xymatrix{
		\widetilde{Y} \ar[ddrrrr]_{{\psi:=\varepsilon_{1}\circ\delta\circ\gamma}\hspace{0.5cm}} \ar[rr]^{\gamma}&  & Y^{\prime} \ar[rr]^{{\delta}\hspace{0.5cm}} &  & \widetilde{Z}\times_{Z}Y \ar@{}[ddrr] |{\circlearrowright} \ar[dd]_{{\varepsilon}_{1}} \ar[rr]^{{\varepsilon}_{2}} &  & Y \ar[dd]^{\pi}\\
		&  &  &  &  &  & \\
		&  &  &  & \widetilde{Z} \ar[rr]_{\rho} &  & Z}
	\]
	Since both $\widetilde{Z}$ and $\widetilde{Y}$ are \textit{smooth}, and
	$\psi:\widetilde{Y}\longrightarrow \widetilde{Z}$ is generically finite and
	surjective (of degree $d$), we have%
	\[%
	\begin{array}
	[c]{ll}%
	D_{1}\cdot D_{2}:=\rho^{\star}(D_{1})\cdot \rho^{\star}(D_{2})\medskip &
	\text{(by \cite[pp. 17-18]{Mumford})}\\
	=\frac{1}{d}({\psi}^{\star}(\rho^{\star}(D_{1}))\cdot {\psi}^{\star}(\rho^{\star}(D_{2}))) &
	\text{(by \cite[Proposition I.8 (ii), pp. 4-5]{Beauville}).}%
	\end{array}
	\]
	On the other hand,%
	\[
	\begin{array}
	[c]{ll}%
	{\psi}^{\star}(\rho^{\star}(D_{1}))\cdot {\psi}^{\star}(\rho^{\star}(D_{2}))=(\rho\circ
	\psi)^{\star}(D_{1})\cdot(\rho\circ \psi)^{\star}(D_{2})\medskip \\
	=(\pi \circ \varepsilon_{2} \circ \delta \circ \gamma)^{\star}(D_{1})\cdot
	(\pi \circ \varepsilon_{2} \circ \delta \circ \gamma)^{\star}(D_{2})\medskip \\
	=( \varepsilon_{2} \circ \delta \circ \gamma)^{\star}(\pi^{\star}(D_{1}))\cdot
	( \varepsilon_{2} \circ \delta \circ \gamma)^{\star}(\pi^{\star}(D_{2}))\medskip \\
	=\pi^{\star}(D_{1})\cdot \pi^{\star}(D_{2}) \ \  \text{(by \cite[pp. 17-18]{Mumford} and \cite[7.1.16, p. 125]{FultonITH}) }%
	\end{array}
	\]
	and therefore (\ref{INTNUMBCOV}) is true. \end{proof}
\begin{proposition}\label{KAQUADRAT}
	The self-intersection number of the canonical divisor of $X(N,\Delta_{Q})$ is
	\begin{equation}
	K_{X(N,\Delta_{Q})}^{2}=\frac{1}{\ell}K_{X(\Lambda_{Q},\Delta_{Q})}%
	^{2}.\label{KSQUARE1}%
	\end{equation}
	Correspondingly, the self-intersection number of the canonical divisor of
	$X(M,\Delta_{Q^{\ast}})$ is%
	\begin{equation}
	K_{X(M,\Delta_{Q^{\ast}})}^{2}=\frac{1}{\ell}K_{X(\Lambda_{Q^{\ast}}%
		,\Delta_{Q^{\ast}})}^{2}.\label{KSQUARE2}%
	\end{equation}
\end{proposition}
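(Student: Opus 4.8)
The plan is to derive both (\ref{KSQUARE1}) and (\ref{KSQUARE2}) from the Lemma just proved, applied to the global covering maps furnished by Theorem \ref{GLOBALTHM}. First I would check that the hypotheses of that Lemma hold for $\pi_Q$: since $\Delta_Q$ is complete with respect to both $N$ and $\Lambda_Q$, the toric surfaces $X(N,\Delta_Q)$ and $X(\Lambda_Q,\Delta_Q)$ are compact, hence normal and projective by Note \ref{PROJECTIVE} (ii); and $\pi_Q\colon X(\Lambda_Q,\Delta_Q)\longrightarrow X(N,\Delta_Q)$ is finite — in particular generically finite — and surjective of degree $\ell$. Thus (\ref{INTNUMBCOV}) is available with $Y=X(\Lambda_Q,\Delta_Q)$, $Z=X(N,\Delta_Q)$ and $d=\ell$.

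The decisive ingredient is the identity $\pi_Q^{\star}(K_{X(N,\Delta_Q)})=K_{X(\Lambda_Q,\Delta_Q)}$. Recall from the proof of Theorem \ref{GLOBALTHM} that $\pi_Q$ is obtained by glueing the canonical cyclic covers $\pi_{U_{\sigma_i,N}}$ of Lemma \ref{TRICKYLEMMA}, each of which is \'{e}tale in codimension $1$ by Theorem \ref{WAHLREID} (iii); on the big torus $\pi_Q$ restricts to the surjective homomorphism $\mathbb{T}_{\Lambda_Q}\rightarrow \mathbb{T}_N$ with finite kernel $N/\Lambda_Q$, which is unramified. Hence the non-\'{e}tale locus of $\pi_Q$ has codimension $\ge 2$, so the ramification divisor of $\pi_Q$ vanishes and the ramification formula $K_{X(\Lambda_Q,\Delta_Q)}\sim \pi_Q^{\star}(K_{X(N,\Delta_Q)})+R_{\pi_Q}$ collapses to the claimed equality (equivalently, $\pi_Q$ is crepant; note also that $X(\Lambda_Q,\Delta_Q)$ is Gorenstein, being the toric log del Pezzo surface of the $1$-reflexive pair $(Q,\Lambda_Q)$ by Theorem \ref{CHLATT}). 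Substituting $D_1=D_2=K_{X(N,\Delta_Q)}$ into (\ref{INTNUMBCOV}) then gives
\[
K_{X(N,\Delta_Q)}^{2}=\tfrac{1}{\ell}\bigl(\pi_Q^{\star}(K_{X(N,\Delta_Q)})\cdot \pi_Q^{\star}(K_{X(N,\Delta_Q)})\bigr)=\tfrac{1}{\ell}\,K_{X(\Lambda_Q,\Delta_Q)}^{2},
\]
which is (\ref{KSQUARE1}).

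For (\ref{KSQUARE2}) I would appeal to the symmetry of the set-up. By Proposition \ref{QQSTAR} the pair $(Q^{\ast},M)$ is itself $\ell$-reflexive, with $M$ the dual of $N$ and $\Lambda_{Q^{\ast}}\subseteq M$ the sublattice generated by the boundary lattice points of $Q^{\ast}$; hence Lemma \ref{TRICKYLEMMA} and Theorems \ref{CHLATT} and \ref{GLOBALTHM} apply verbatim with $(Q^{\ast},M,\Lambda_{Q^{\ast}})$ in place of $(Q,N,\Lambda_Q)$, producing a finite surjective covering $\pi_{Q^{\ast}}\colon X(\Lambda_{Q^{\ast}},\Delta_{Q^{\ast}})\longrightarrow X(M,\Delta_{Q^{\ast}})$ of degree $\ell$ which is \'{e}tale in codimension $1$. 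Running the argument of the previous paragraph for $\pi_{Q^{\ast}}$ yields (\ref{KSQUARE2}).

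The one point that genuinely needs care is the vanishing of the ramification divisor of $\pi_Q$ (and of $\pi_{Q^{\ast}}$); everything else is a formal manipulation of (\ref{INTNUMBCOV}). That vanishing is not accidental but is exactly the \'{e}tale-in-codimension-$1$ property of the local canonical covers recorded in Theorem \ref{WAHLREID} (iii), combined with the compatibility of these covers on overlaps established in the proof of Theorem \ref{GLOBALTHM} — which is precisely the place where $\ell$-reflexivity (all local indices equal to the index $\ell$) enters, since it is what permits the separate degree-$\ell$ covers to be glued into a single global one.
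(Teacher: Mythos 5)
Your proof is correct and follows essentially the same route as the paper: both hinge on the identity $K_{X(\Lambda_{Q},\Delta_{Q})}\sim\pi_{Q}^{\star}(K_{X(N,\Delta_{Q})})$, deduced from the fact that $\pi_{Q}$ is \'{e}tale off a codimension-$2$ set, followed by an application of the intersection-number formula (\ref{INTNUMBCOV}) with $D_{1}=D_{2}=K_{X(N,\Delta_{Q})}$ and the symmetric argument for $Q^{\ast}$. The only cosmetic difference is that you justify the pullback identity via the vanishing of the ramification divisor, whereas the paper obtains it by comparing the sheaves $\Omega^{2}$ on the regular loci and pushing forward using the reflexivity of the canonical divisorial sheaf (Lemma \ref{LEMHART}); these are equivalent formalisations of the same point.
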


\begin{proof} Let $\iota:$ Reg$(X(\Lambda_{Q},\Delta_{Q}))\hookrightarrow X(\Lambda
	_{Q},\Delta_{Q})$ and $\iota^{\prime}:$ Reg$(X(N,\Delta_{Q}))\hookrightarrow
	X(N,\Delta_{Q})$ be the natural inclusions of the regular loci of
	$X(\Lambda_{Q},\Delta_{Q})$ and $X(N,\Delta_{Q})$ into themselves. Obviously,
	\[
	\pi_{Q}^{-1}(\text{Reg}(X(N,\Delta_{Q})))\subseteq \text{Reg}(X(\Lambda
	_{Q},\Delta_{Q}))\text{ and codim}_{X(\Lambda_{Q},\Delta_{Q})}(X(\Lambda
	_{Q},\Delta_{Q})\mathbb{r}\pi_{Q}^{-1}(\text{Reg}(X(N,\Delta_{Q}))))=2.
	\]
	Since
	\[
	\left.  \pi_{Q}\right \vert _{\pi_{Q}^{-1}(\text{Reg}(X(N,\Delta_{Q})))}%
	:\pi_{Q}^{-1}(\text{Reg}(X(N,\Delta_{Q})))\longrightarrow \text{Reg}%
	(X(N,\Delta_{Q}))
	\]
	is an \'{e}tale holomorphic map, we have
	\[
	\Omega_{\pi_{Q}^{-1}(\text{Reg}(X(N,\Delta_{Q})))|\text{Reg}(X(N,\Delta_{Q}%
		))}^{1}=0\Longrightarrow \Omega_{\pi_{Q}^{-1}(\text{Reg}(X(N,\Delta_{Q})))}%
	^{1}\cong \pi_{Q}^{\star}(\Omega_{\text{Reg}(X(N,\Delta_{Q}))}^{1}).
	\]
	Passing to ${\textstyle \bigwedge \nolimits^{\! \!2}}\ldots$ and taking into
	account \ref{LEMHART} (ii) this implies
	\[%
	\begin{array}
	[c]{l}%
	\mathcal{O}_{X(\Lambda_{Q},\Delta_{Q})}(K_{X(\Lambda_{Q},\Delta_{Q})}%
	)=\omega_{X(\Lambda_{Q},\Delta_{Q})}=\iota_{\star}(\Omega_{\text{Reg}%
		(X(\Lambda_{Q},\Delta_{Q}))}^{2})=\iota_{\star}(\Omega_{\pi_{Q}^{-1}%
		(\text{Reg}(X(N,\Delta_{Q})))}^{2})\bigskip \\
	\iota_{\star}(\pi_{Q}^{\star}(\Omega_{\text{Reg}(X(N,\Delta_{Q}))}^{2}))=\pi
	_{Q}^{\star}(\iota_{\star}^{\prime}(\Omega_{\text{Reg}(X(N,\Delta_{Q}))}%
	^{2}))=\pi_{Q}^{\star}(\omega_{X(N,\Delta_{Q})})=\mathcal{O}_{X(\Lambda
		_{Q},\Delta_{Q})}(\pi_{Q}^{\star}(K_{X(N,\Delta_{Q})}),
	\end{array}
	\]
	i.e., $K_{X(\Lambda_{Q},\Delta_{Q})}\sim \pi_{Q}^{\star}(K_{X(N,\Delta_{Q})}).$ 
Furthermore, both $X(N,\Delta_{Q})$ and $X(\Lambda_{Q},\Delta_{Q})$ are projective. (See \ref{PROJECTIVE} (ii).) Thus (\ref{INTNUMBCOV}) can be applied for the finite holomorphic map (\ref{BIGCOVMAP}) of degree $\ell$ and for the $\mathbb{Q}$-Weil
divisor $D_{1}=D_{2}=K_{X(N,\Delta_{Q})}$ giving 	
	\[
	K_{X(N,\Delta_{Q})}^{2}=\frac{1}{\ell}\pi_{Q}^{\star}(K_{X(N,\Delta_{Q})}%
	)^{2}=\frac{1}{\ell}K_{X(\Lambda_{Q},\Delta_{Q})}^{2},
	\] 
	i.e., (\ref{KSQUARE1}). The proof of the equality (\ref{KSQUARE2}) is similar. \end{proof}

\section{Second proof and consequences of Theorem \ref{G12PTTHM}%
\label{FIRSTPROOF}}

\noindent{}$\bullet$ \textbf{Notation and basic facts}. Let $\ell$ be a
positive integer. Throughout this section we shall work with fixed $\ell
$-reflexive pairs\textit{ }$(Q,N)$ and $(Q^{\ast},M),$ where $M:=$
Hom$_{\mathbb{Z}}(N,\mathbb{Z}),$ and with the corresponding toric log del
Pezzo surfaces $X(N,\Delta_{Q})$ and $X(M,\Delta_{Q^{\ast}}).$ Let
$\mathbf{n}_{1}=\tbinom{n_{1,1}}{n_{2,1}},\ldots,\mathbf{n}_{\nu}%
=\tbinom{n_{1,\nu}}{n_{2,\nu}}$ be the vertices of $Q$ ordered anticlockwise,
and $F_{i}:=$ conv$(\left \{  \mathbf{n}_{i},\mathbf{n}_{i+1}\right \}  ),$
$i\in \left \{  1,\ldots,\nu \right \}  ,$ be the edges of $Q$ (as in
\S \ref{GRAPHS} and in \ref{NOTELF} (ii)). In these terms, the bijections
(\ref{LDUALITY1}) and (\ref{LDUALITY2}) become
\[
\text{Vert}(Q)\ni \tbinom{n_{1,i}}{n_{2,i}}=\mathbf{n}_{i}\longmapsto
F_{i}^{\ast}:=\text{ conv}(\left \{  \mathbf{m}_{i-1},\mathbf{m}_{i}\right \}
)\in \text{ Edg}(Q^{\ast}),
\]
and%
\[
\text{Edg}(Q)\ni F_{i}\longmapsto \mathbf{m}_{i}:=\boldsymbol{\eta}_{F_{i}%
}=\left(
\begin{smallmatrix}
n_{1,i} & n_{2,i}\\
n_{1,i+1} & n_{2,i+1}%
\end{smallmatrix}
\right)  ^{-1}\tbinom{-\ell}{-\ell}=\tfrac{\ell}{\det \left(  \mathbf{n}%
_{i},\mathbf{n}_{i+1}\right)  }\tbinom{n_{2,i}-n_{2,i+1}}{n_{1,i+1}-n_{1,i}%
}\in \text{ Vert}(Q^{\ast}),
\]
respectively. (By definition, $F_{i},F_{i}^{\ast}$ preserve the involution,
i.e., $\boldsymbol{\eta}_{F_{i}^{\ast}}=\mathbf{n}_{i},$ for all $i\in \left \{
1,\ldots,\nu \right \}  .$ Note that the vertices $\mathbf{m}_{1},\ldots
,\mathbf{m}_{\nu}$ of $Q^{\ast}$ are \ also equipped with anticlockwise
order.) Next, for $i\in \left \{  1,\ldots,\nu \right \}  $ denote by $\sigma
_{i}:=\sigma_{F_{i}}=\mathbb{R}_{\geq0}\mathbf{n}_{i}+\mathbb{R}_{\geq
0}\mathbf{n}_{i+1}$ the $N$-cone supporting $F_{i},$ by $\sigma_{i}^{\ast
}:=\sigma_{F_{i}^{\ast}}=\mathbb{R}_{\geq0}\mathbf{m}_{i-1}+\mathbb{R}_{\geq
0}\mathbf{m}_{i}$ the $M$-cone supporting $F_{i}^{\ast},$ and assume that
$\sigma_{i}$ is a $(p_{i},q_{i})$-cone with $q_{i}=2\,$area$_{N}(T_{F_{i}})$
(see (\ref{QIFORMULA})), and that $\sigma_{i}^{\ast}$ is a $(p_{i}^{\ast
},q_{i}^{\ast})$-cone with $q_{i}^{\ast}=2\,$area$_{M}(T_{F_{i}^{\ast}}).$

\begin{definition}
[Auxiliary cones]\label{AUXCONES}For $i\in \left \{  1,\ldots,\nu \right \}  $ the $N$-cone%
\[
\tau_{i}:=\mathbb{R}_{\geq0}(\frac{\ell}{q_{i-1}}(\mathbf{n}_{i-1}%
-\mathbf{n}_{i}))+\mathbb{R}_{\geq0}(\frac{\ell}{q_{i}}(\mathbf{n}%
_{i+1}-\mathbf{n}_{i}))
\]
will be called \textit{the auxiliary cone} associated with the vertex
$\mathbf{n}_{i}$ of $Q.$ Analogously, the $M$-cone
\[
\tau_{i}^{\ast}:=\mathbb{R}_{\geq0}(\frac{\ell}{q_{i}^{\ast}}(\mathbf{m}%
_{i-1}-\mathbf{m}_{i}))+\mathbb{R}_{\geq0}(\frac{\ell}{q_{i+1}^{\ast}%
}(\mathbf{m}_{i+1}-\mathbf{m}_{i}))
\]
will be the auxiliary cone associated with $\mathbf{m}_{i}\in$ Vert$(Q^{\ast}).$
(Their generators given here are the minimal ones.)$\allowbreak$
\end{definition}

\begin{lemma}
\label{SIGMATAU}$\sigma_{i}^{\ast}=\tau_{i}^{\vee}$ and $\sigma_{i}=(\tau
_{i}^{\ast})^{\vee}$ for all $i\in \left \{  1,\ldots,\nu \right \}  .$
\end{lemma}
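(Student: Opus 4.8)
The plan is to identify the auxiliary cone $\tau_i$ with the tangent cone of $Q$ at the vertex $\mathbf{n}_i$, that is, with the $N$-cone $\varpi_{\mathbf{n}_i}$ of the normal-fan construction (see \eqref{PMCONES}, applied to $Q$ regarded as an $N$-polygon, with the roles of $N$ and $M$ interchanged), and then simply to dualise. The numbers $\ell/q_{i-1}$ and $\ell/q_i$ are positive, so replacing the two given generators of $\tau_i$ by positive multiples of themselves changes neither cone, whence
\[
\tau_i=\mathbb{R}_{\geq0}(\mathbf{n}_{i-1}-\mathbf{n}_i)+\mathbb{R}_{\geq0}(\mathbf{n}_{i+1}-\mathbf{n}_i)=\varpi_{\mathbf{n}_i},
\]
since $F_{i-1}=\operatorname{conv}(\{\mathbf{n}_{i-1},\mathbf{n}_i\})$ and $F_i=\operatorname{conv}(\{\mathbf{n}_i,\mathbf{n}_{i+1}\})$ are precisely the two edges of $Q$ meeting at $\mathbf{n}_i$, with $\mathbf{n}_{i-1}-\mathbf{n}_i$ and $\mathbf{n}_{i+1}-\mathbf{n}_i$ their direction vectors into $Q$. (Strong convexity of $\tau_i$ is automatic: $Q$ has at least three non-collinear vertices, so these two directions are $\mathbb{R}$-linearly independent.)

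Applying $(\ \cdot\ )^{\vee}$ and using the identity $\varpi_{\mathbf{m}}^{\vee}=\mathbb{R}_{\geq0}\boldsymbol{\eta}_{F}+\mathbb{R}_{\geq0}\boldsymbol{\eta}_{F'}$ (with $F,F'$ the two edges at $\mathbf{m}$) from the definition of the normal fan, one gets $\tau_i^{\vee}=\varpi_{\mathbf{n}_i}^{\vee}=\mathbb{R}_{\geq0}\boldsymbol{\eta}_{F_{i-1}}+\mathbb{R}_{\geq0}\boldsymbol{\eta}_{F_i}$; since $\boldsymbol{\eta}_{F_{i-1}}=\mathbf{m}_{i-1}$ and $\boldsymbol{\eta}_{F_i}=\mathbf{m}_i$ this is exactly the $M$-cone $\sigma_i^{\ast}=\mathbb{R}_{\geq0}\mathbf{m}_{i-1}+\mathbb{R}_{\geq0}\mathbf{m}_i$ supporting $F_i^{\ast}$, proving $\sigma_i^{\ast}=\tau_i^{\vee}$. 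As a self-contained alternative one verifies this by hand: from $\langle\mathbf{m}_{i-1},\mathbf{n}_{i-1}\rangle=\langle\mathbf{m}_{i-1},\mathbf{n}_i\rangle=\langle\mathbf{m}_i,\mathbf{n}_i\rangle=\langle\mathbf{m}_i,\mathbf{n}_{i+1}\rangle=-\ell$ and $\langle\mathbf{m}_j,\mathbf{n}\rangle\geq-\ell$ for every $\mathbf{n}\in Q$ one obtains $\mathbf{m}_{i-1},\mathbf{m}_i\in\tau_i^{\vee}$, and moreover each of $\mathbf{m}_{i-1},\mathbf{m}_i$ is orthogonal to one of the two generators of $\tau_i$, so the two lie on the two boundary rays of $\tau_i^{\vee}$ and generate it.

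The second assertion $\sigma_i=(\tau_i^{\ast})^{\vee}$ is the same statement applied to the $\ell$-reflexive pair $(Q^{\ast},M)$, whose dual in the sense of Definition~\ref{DEFDUAL} is $(Q^{\ast})^{\ast}=\ell(Q^{\ast})^{\circ}=Q$ again; concretely, $\tau_i^{\ast}$ is, after positive rescaling, the tangent cone $\varpi_{\mathbf{m}_i}$ of $Q^{\ast}$ at $\mathbf{m}_i$, its two edges $F_i^{\ast}=\operatorname{conv}(\{\mathbf{m}_{i-1},\mathbf{m}_i\})$ and $F_{i+1}^{\ast}=\operatorname{conv}(\{\mathbf{m}_i,\mathbf{m}_{i+1}\})$ have inward-pointing normals $\boldsymbol{\eta}_{F_i^{\ast}}=\mathbf{n}_i$ and $\boldsymbol{\eta}_{F_{i+1}^{\ast}}=\mathbf{n}_{i+1}$ (the involution property recorded above), and therefore $(\tau_i^{\ast})^{\vee}=\varpi_{\mathbf{m}_i}^{\vee}=\mathbb{R}_{\geq0}\mathbf{n}_i+\mathbb{R}_{\geq0}\mathbf{n}_{i+1}=\sigma_i$. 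There is no genuine obstacle in this lemma; the only thing requiring care is the bookkeeping of the cyclic indices (all subscripts read mod $\nu$), so that the edge–vertex incidences of $Q$ and $Q^{\ast}$ and the pairs $(\mathbf{m}_{i-1},\mathbf{m}_i)$ versus $(\mathbf{m}_i,\mathbf{m}_{i+1})$ are matched up correctly — which is exactly what the anticlockwise labelling conventions fixed at the start of the section guarantee.
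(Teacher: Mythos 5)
Your proof is correct, but it takes a genuinely different route from the paper's. The paper proves the lemma by brute force: it writes $\mathbf{m}_i=p_i^{\ast}\mathbf{m}_{i-1}+q_i^{\ast}\mathbf{m}_{i-1}'$ for a basis $\{\mathbf{m}_{i-1},\mathbf{m}_{i-1}'\}$ of $M$, inverts the transpose of the corresponding basis matrix to obtain the dual basis of $N$, and matches the resulting minimal generators of $(\sigma_i^{\ast})^{\vee}$ coordinate-by-coordinate with $\tfrac{\ell}{q_{i-1}}(\mathbf{n}_{i-1}-\mathbf{n}_i)$ and $\tfrac{\ell}{q_i}(\mathbf{n}_{i+1}-\mathbf{n}_i)$. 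You instead observe that $\tau_i$ is, up to the (irrelevant) positive rescaling of its generators, the tangent cone $\varpi_{\mathbf{n}_i}$ of $Q$ at $\mathbf{n}_i$, and then invoke the normal-fan identity $\varpi_{\mathbf{m}}^{\vee}=\mathbb{R}_{\geq0}\boldsymbol{\eta}_F+\mathbb{R}_{\geq0}\boldsymbol{\eta}_{F'}$ (recorded in the paper's definition of $\Sigma_P$) together with $\boldsymbol{\eta}_{F_i}=\mathbf{m}_i$ and, for the second assertion, the involution $\boldsymbol{\eta}_{F_i^{\ast}}=\mathbf{n}_i$. This is cleaner and essentially anticipates the paper's own Proposition~\ref{DELTASIGMAS} (whose proof begins ``Since $\tau_i^{\ast}=\varpi_{\mathbf{m}_i}$\dots''), so your argument in effect absorbs the lemma into the normal-fan formalism rather than recomputing it. What the paper's longer computation buys, and yours does not, is an explicit verification that the generators written in Definition~\ref{AUXCONES} are the \emph{minimal} ones (the parenthetical claim there): your rescaling step deliberately ignores normalisation, which is harmless for the cone equality asserted in the lemma but leaves that side remark unproved. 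Your self-contained fallback argument (both $\mathbf{m}_{i-1},\mathbf{m}_i$ lie in $\tau_i^{\vee}$ and each is orthogonal to one generator of $\tau_i$, hence they span the two boundary rays of the two-dimensional dual cone) is also sound and would stand on its own.
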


\begin{proof}
For each $i\in \left \{  1,\ldots,\nu \right \}  $ the minimal generators of
$\tau_{i}$ are $\tfrac{\ell}{q_{i-1}}\tbinom{n_{1,i-1}-n_{1,i}}{n_{2,i-1}%
	-n_{2,i}}$ and $\tfrac{\ell}{q_{i}}\tbinom{n_{1,i+1}-n_{1,i}}{n_{2,i+1}%
	-n_{2,i}}$  . Since $\sigma_{i}^{\ast}$ is $(p_{i}^{\ast},q_{i}^{\ast})$-cone,
we have $\mathbf{m}_{i}=p_{i}^{\ast}\mathbf{m}_{i-1}+q_{i}^{\ast}%
\mathbf{m}_{i-1}^{\prime},$ where $\left \{  \mathbf{m}_{i-1},\mathbf{m}%
_{i-1}^{\prime}\right \}  $ is a basis of $M.$ The corresponding basis matrix
is
\[
\mathcal{B}:=\tfrac{\ell}{\det \left(  \mathbf{n}_{i-1},\mathbf{n}_{i}\right)
}\left(
\begin{smallmatrix}
n_{2,i-1}-n_{2,i} & \frac{1}{q_{i}^{\ast}}\left(  \tfrac{\det \left(
\mathbf{n}_{i-1},\mathbf{n}_{i}\right)  }{\det \left(  \mathbf{n}%
_{i},\mathbf{n}_{i+1}\right)  }(n_{2,i}-n_{2,i+1})-p_{i}^{\ast}(n_{2,i-1}%
-n_{2,i})\right) \\
n_{1,i}-n_{1,i-1} & \frac{1}{q_{i}^{\ast}}\left(  \tfrac{\det \left(
\mathbf{n}_{i-1},\mathbf{n}_{i}\right)  }{\det \left(  \mathbf{n}%
_{i},\mathbf{n}_{i+1}\right)  }(n_{1,i+1}-n_{1,i})-p_{i}^{\ast}(n_{1,i}%
-n_{1,i-1})\right)
\end{smallmatrix}
\right)  .
\]
Thus, the members of the dual basis of $\left \{  \mathbf{m}_{i},\mathbf{m}%
_{i}^{\prime}\right \}  $ are%
\begin{align*}
(\mathcal{B}^{\intercal})^{-1}\tbinom{1}{0}  &  =\tfrac{\det \left(
\mathbf{n}_{i},\mathbf{n}_{i+1}\right)  }{\ell \left(  \det \left(
\mathbf{n}_{i-1},\mathbf{n}_{i}\right)  +\det \left(  \mathbf{n}_{i}%
,\mathbf{n}_{i+1}\right)  -\det \left(  \mathbf{n}_{i-1},\mathbf{n}%
_{i+1}\right)  \right)  }\left(
\begin{array}
[c]{c}%
\tfrac{\det \left(  \mathbf{n}_{i-1},\mathbf{n}_{i}\right)  \left(
n_{1,i+1}-n_{1,i}\right)  }{\det \left(  \mathbf{n}_{i},\mathbf{n}%
_{i+1}\right)  }-p_{i}^{\ast}\left(  n_{1,i}-n_{1,i-1}\right)  \smallskip \\
-\tfrac{\det \left(  \mathbf{n}_{i-1},\mathbf{n}_{i}\right)  \left(
n_{2,i}-n_{2,i+1}\right)  }{\det \left(  \mathbf{n}_{i},\mathbf{n}%
_{i+1}\right)  }+p_{i}^{\ast}\left(  n_{2,i-1}-n_{2,i}\right)  )
\end{array}
\right) \\
&  =\tfrac{\ell}{q_{i}^{\ast}q_{i-1}}\tbinom{\tfrac{q_{i-1}}{q_{i}}\left(
n_{1,i+1}-n_{1,i}\right)  -p_{i}^{\ast}\left(  n_{1,i}-n_{1,i-1}\right)
}{-\tfrac{q_{i-1}}{q_{i}}\left(  n_{2,i}-n_{2,i+1}\right)  +p_{i}^{\ast
}\left(  n_{2,i-1}-n_{2,i}\right)  )}%
\end{align*}
and%
\[
(\mathcal{B}^{\intercal})^{-1}\tbinom{0}{1}=\tfrac{q_{i}^{\ast}\det \left(
\mathbf{n}_{i},\mathbf{n}_{i+1}\right)  }{\ell \left(  \det \left(
\mathbf{n}_{i-1},\mathbf{n}_{i}\right)  +\det \left(  \mathbf{n}_{i}%
,\mathbf{n}_{i+1}\right)  -\det \left(  \mathbf{n}_{i-1},\mathbf{n}%
_{i+1}\right)  \right)  }\tbinom{-\left(  n_{1,i}-n_{1,i-1}\right)
}{n_{2,i-1}-n_{2,i}}=\tfrac{\ell}{q_{i-1}}\tbinom{n_{1,i-1}-n_{1,i}}%
{n_{2,i-1}-n_{2,i}}.
\]
The minimal generators of the $N$-cone $(\sigma_{i}^{\ast})^{\vee}$ are
$\tfrac{\ell}{q_{i-1}}\tbinom{n_{1,i-1}-n_{1,i}}%
{n_{2,i-1}-n_{2,i}}$ and
\[
q_{i}^{\ast}\mathbf{m}_{i-1}-p_{i}^{\ast}\mathbf{m}_{i-1}^{\prime}=\tfrac
{\ell}{q_{i-1}}\tbinom{\tfrac{q_{i-1}}{q_{i}}\left(  n_{1,i+1}-n_{1,i}\right)
-p_{i}^{\ast}\left(  n_{1,i}-n_{1,i-1}\right)  }{-\tfrac{q_{i-1}}{q_{i}%
}\left(  n_{2,i}-n_{2,i+1}\right)  +p_{i}^{\ast}\left(  n_{2,i-1}%
-n_{2,i}\right)  )}-\tfrac{\ell p_{i}^{\ast}}{q_{i-1}}\tbinom{n_{1,i-1}%
-n_{1,i}}{n_{2,i-1}-n_{2,i}}=\tfrac{\ell}{q_{i}}\tbinom{n_{1,i+1}-n_{1,i}%
}{n_{2,i+1}-n_{2,i}},
\]
i.e., $(\sigma_{i}^{\ast})^{\vee}=\tau_{i}\Rightarrow \sigma_{i}^{\ast}%
=\tau_{i}^{\vee}.$ The proof of the equality $\sigma_{i}=(\tau_{i}^{\ast
})^{\vee}$ is similar.
\end{proof}

\begin{proposition}
\label{DELTASIGMAS}$\Delta_{Q}=\Sigma_{Q^{\ast}}$ and $\Delta_{Q^{\ast}%
}=\Sigma_{Q}.$
\end{proposition}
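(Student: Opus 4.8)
The plan is to compare the two $N$-fans (resp.\ $M$-fans) cone by cone, using the auxiliary cones of Definition \ref{AUXCONES} together with Lemma \ref{SIGMATAU}. Recall that $\Delta_{Q}$ is, by definition, the collection of the $N$-cones $\sigma_{F_{i}}=\sigma_{i}=\mathbb{R}_{\geq0}\mathbf{n}_{i}+\mathbb{R}_{\geq0}\mathbf{n}_{i+1}$, $i\in\{1,\ldots,\nu\}$, together with their faces, while $\Sigma_{Q^{\ast}}$ is the normal fan of the $M$-polygon $Q^{\ast}$, i.e.\ the $N$-fan whose $2$-dimensional cones are the $\varpi_{\mathbf{m}}^{\vee}$, $\mathbf{m}\in\text{Vert}(Q^{\ast})$, together with their faces. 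Symmetrically (with the roles of $N$ and $M$ interchanged in the normal-fan construction), $\Delta_{Q^{\ast}}$ consists of the $M$-cones $\sigma_{F_{i}^{\ast}}=\sigma_{i}^{\ast}$ together with faces, and $\Sigma_{Q}$ of the $M$-cones $\varpi_{\mathbf{n}}^{\vee}$, $\mathbf{n}\in\text{Vert}(Q)$, together with faces. Since both fans in each equality are, by construction, the set of their maximal ($2$-dimensional) cones together with the faces of these, and since in $\mathbb{R}^{2}$ those faces are forced once the maximal cones are given, it will suffice to prove the two identities of cone-sets $\{\varpi_{\mathbf{m}_{i}}^{\vee}\}_{i=1}^{\nu}=\{\sigma_{i}\}_{i=1}^{\nu}$ and $\{\varpi_{\mathbf{n}_{i}}^{\vee}\}_{i=1}^{\nu}=\{\sigma_{i}^{\ast}\}_{i=1}^{\nu}$.

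First I would identify the feasible-direction cones with the auxiliary cones. Because the vertices of $Q$ are labelled anticlockwise, the two edges of $Q$ meeting at $\mathbf{n}_{i}$ are $F_{i-1}=\text{conv}(\{\mathbf{n}_{i-1},\mathbf{n}_{i}\})$ and $F_{i}=\text{conv}(\{\mathbf{n}_{i},\mathbf{n}_{i+1}\})$, so by (\ref{PMCONES}) the cone $\varpi_{\mathbf{n}_{i}}$ of feasible directions of $Q$ at $\mathbf{n}_{i}$ equals $\mathbb{R}_{\geq0}(\mathbf{n}_{i-1}-\mathbf{n}_{i})+\mathbb{R}_{\geq0}(\mathbf{n}_{i+1}-\mathbf{n}_{i})$; replacing the minimal generators of $\tau_{i}$ in Definition \ref{AUXCONES} by the positive multiples $\mathbf{n}_{i-1}-\mathbf{n}_{i}$ and $\mathbf{n}_{i+1}-\mathbf{n}_{i}$ (which span the same cone) gives $\varpi_{\mathbf{n}_{i}}=\tau_{i}$. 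In exactly the same way, using that the edges of $Q^{\ast}$ at $\mathbf{m}_{i}$ are $F_{i}^{\ast}=\text{conv}(\{\mathbf{m}_{i-1},\mathbf{m}_{i}\})$ and $F_{i+1}^{\ast}=\text{conv}(\{\mathbf{m}_{i},\mathbf{m}_{i+1}\})$, one obtains $\varpi_{\mathbf{m}_{i}}=\tau_{i}^{\ast}$.

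Then I would simply dualise and invoke Lemma \ref{SIGMATAU}: from $\varpi_{\mathbf{n}_{i}}=\tau_{i}$ and $\varpi_{\mathbf{m}_{i}}=\tau_{i}^{\ast}$ we get, for all $i\in\{1,\ldots,\nu\}$, $\varpi_{\mathbf{n}_{i}}^{\vee}=\tau_{i}^{\vee}=\sigma_{i}^{\ast}=\sigma_{F_{i}^{\ast}}$ and $\varpi_{\mathbf{m}_{i}}^{\vee}=(\tau_{i}^{\ast})^{\vee}=\sigma_{i}=\sigma_{F_{i}}$. Hence the maximal cones of $\Sigma_{Q^{\ast}}$ are precisely those of $\Delta_{Q}$ and the maximal cones of $\Sigma_{Q}$ are precisely those of $\Delta_{Q^{\ast}}$; together with the remark on faces this yields $\Delta_{Q}=\Sigma_{Q^{\ast}}$ and $\Delta_{Q^{\ast}}=\Sigma_{Q}$.

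I do not expect a real obstacle once Lemma \ref{SIGMATAU} is available; the only delicate points are bookkeeping — keeping straight in which lattice ($N$ or $M$) each fan lives, and checking that the positive scalars $\ell/q_{i-1},\ell/q_{i}$ (resp.\ $\ell/q_{i}^{\ast},\ell/q_{i+1}^{\ast}$) in Definition \ref{AUXCONES} do not change the cone spanned by the listed generators. An alternative route bypassing Lemma \ref{SIGMATAU} is to read the maximal cone of $\Sigma_{Q^{\ast}}$ at $\mathbf{m}_{i}$ directly from the edge--vertex bijections: the edges of $Q^{\ast}$ through $\mathbf{m}_{i}$ are $F_{i}^{\ast}$ and $F_{i+1}^{\ast}$, with inward-pointing normals $\boldsymbol{\eta}_{F_{i}^{\ast}}=\mathbf{n}_{i}$ and $\boldsymbol{\eta}_{F_{i+1}^{\ast}}=\mathbf{n}_{i+1}$, so $\varpi_{\mathbf{m}_{i}}^{\vee}=\mathbb{R}_{\geq0}\mathbf{n}_{i}+\mathbb{R}_{\geq0}\mathbf{n}_{i+1}=\sigma_{F_{i}}$; this makes $\Delta_{Q}=\Sigma_{Q^{\ast}}$ transparent, and $\Delta_{Q^{\ast}}=\Sigma_{Q}$ then follows by applying the same statement to the $\ell$-reflexive pair $(Q^{\ast},M)$, whose dual is $(Q,N)$.
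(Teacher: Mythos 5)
Your proposal is correct and is essentially the paper's own argument: the paper likewise observes that $\tau_{i}^{\ast}=\varpi_{\mathbf{m}_{i}}$ (from (\ref{PMCONES})) and then invokes Lemma \ref{SIGMATAU} to get $\varpi_{\mathbf{m}_{i}}^{\vee}=(\tau_{i}^{\ast})^{\vee}=\sigma_{i}$, hence $\Sigma_{Q^{\ast}}=\Delta_{Q}$, with $\Sigma_{Q}=\Delta_{Q^{\ast}}$ obtained by interchanging the roles of $Q$ and $Q^{\ast}$. Your remarks on the irrelevance of the positive scalars $\ell/q_{i-1},\ell/q_{i}$ and your alternative reading of $\varpi_{\mathbf{m}_{i}}^{\vee}$ directly from the bijections (\ref{LDUALITY1})--(\ref{LDUALITY2}) are fine but add nothing beyond the paper's route (which also notes, as a second option, an appeal to Theorem \ref{AmplenessCORR} with $D=-\ell K_{X(N,\Delta_{Q})}$).
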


\begin{proof}
Since $\tau_{i}^{\ast}=\varpi_{\mathbf{m}_{i}}$ (see (\ref{PMCONES})), Lemma
\ref{SIGMATAU} implies that $\varpi_{\mathbf{m}_{i}}^{\vee}=(\tau_{i}^{\ast
})^{\vee}=\sigma_{i}$ for all $i\in \left \{  1,\ldots,\nu \right \}  .$ Hence,
$\Sigma_{Q^{\ast}}=\Delta_{Q}.$ (Alternatively, one may apply Theorem
\ref{AmplenessCORR} for $\Delta=\Delta_{Q}$ and $D=-\ell K_{X(N,\Delta_{Q})},$
because $P_{-\ell K_{X(N,\Delta_{Q})}}=Q^{\ast}.$) Interchanging the roles of
$Q$ and $Q^{\ast}$ we find $\Sigma_{Q}=\Delta_{Q^{\ast}}$ by the same arguments.
\end{proof}

\begin{proposition}
	\label{K2PROP}The self-intersection number of the canonical divisor of
	$X(N,\Delta_{Q})$ is%
	\begin{equation}
	K_{X(N,\Delta_{Q})}^{2}=\frac{1}{\ell}\sharp \left(  \partial Q^{\ast}\cap
	M\right)  . \label{K2FORMULA}%
	\end{equation}
	Correspondingly, the self-intersection number of the canonical divisor of $X(M,\Delta_{Q^{\ast}})$ is
	\begin{equation}
	K_{X(M,\Delta_{Q^{\ast}})}^{2}=\frac{1}{\ell}\sharp \left(  \partial Q\cap
	N\right)  . \label{K2FORMULA2}%
	\end{equation}	
\end{proposition}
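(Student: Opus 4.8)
The plan is to compute $K_{X(N,\Delta_Q)}^2$ via the $\ell$-reflexive duality and the lattice-change technology already assembled. First I would invoke Proposition \ref{DELTASIGMAS}, which identifies $\Delta_Q = \Sigma_{Q^\ast}$. By Proposition \ref{LDPPROP1}, since $X(N,\Delta_Q)$ is log del Pezzo of index $\ell$, we have $-\ell K_{X(N,\Delta_Q)} \in \text{Div}_{\text{C}}^{\mathbb{T}}(X(N,\Delta_Q))$ and it is ample, with $P_{-\ell K_{X(N,\Delta_Q)}} = Q^\ast$ (this is essentially how $Q^\ast = \ell Q^\circ$ arises in the proof of \ref{LDPPROP1}). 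The key point is then that $-\ell K_{X(N,\Delta_Q)} = D_{Q^\ast}$ under the correspondence of Theorem \ref{AmplenessCORR}, so I can apply Theorem \ref{SIGMAPMINDES} to the $M$-polygon $Q^\ast$: pulling back to the minimal desingularization and using (\ref{DPSQUARE}) gives $D_{Q^\ast}^2 = 2\,\text{area}_M(Q^\ast)$, hence $\ell^2 K_{X(N,\Delta_Q)}^2 = 2\,\text{area}_M(Q^\ast)$.

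Next I would bring in Proposition \ref{QQSTAR}: $(Q^\ast, M)$ is again an $\ell$-reflexive pair, so Proposition \ref{NOBOUNDARYPNTS} applies to it, yielding $\sharp(\partial Q^\ast \cap M) = \frac{2\,\text{area}_M(Q^\ast)}{\ell}$. Combining the two displays, $\ell^2 K_{X(N,\Delta_Q)}^2 = \ell \cdot \sharp(\partial Q^\ast \cap M)$, which upon dividing by $\ell$ is exactly (\ref{K2FORMULA}).

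Alternatively — and this is probably the route the authors intend given the placement right after Proposition \ref{KAQUADRAT} — I would start from (\ref{KSQUARE1}): $K_{X(N,\Delta_Q)}^2 = \frac{1}{\ell} K_{X(\Lambda_Q,\Delta_Q)}^2$. By Theorem \ref{CHLATT}, $(Q,\Lambda_Q)$ is a $1$-reflexive pair, so $X(\Lambda_Q,\Delta_Q)$ is a toric del Pezzo surface of index $1$, i.e.\ Gorenstein with $-K_{X(\Lambda_Q,\Delta_Q)}$ ample Cartier, and its associated polygon is $Q$ (w.r.t.\ $\Lambda_Q$) while the polar $Q^\circ$ sits in $\text{Hom}_{\mathbb{Z}}(\Lambda_Q,\mathbb{Z}) = \frac{1}{\ell}\Lambda_{Q^\ast}$, which under the identification of \ref{CHLATT} is $(Q^\ast,\Lambda_{Q^\ast})$ itself (since $Q^\ast = \ell Q^\circ$ and $\Lambda_{Q^\ast}$ has index $\ell$ in $M$). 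Applying Theorem \ref{SIGMAPMINDES} in the $1$-reflexive case to the polygon $Q^\circ$ (equivalently, using Proposition \ref{LDPPROP1} with $\ell=1$ so that $P_{-K_{X(\Lambda_Q,\Delta_Q)}} = Q^\circ$ relative to $\text{Hom}_{\mathbb{Z}}(\Lambda_Q,\mathbb{Z})$) gives $K_{X(\Lambda_Q,\Delta_Q)}^2 = 2\,\text{area}_{\text{Hom}(\Lambda_Q,\mathbb{Z})}(Q^\circ)$. It then remains a bookkeeping matter in the normalized lattice $\frac{1}{\ell}\Lambda_{Q^\ast}$ to see that $2\,\text{area}_{\text{Hom}(\Lambda_Q,\mathbb{Z})}(Q^\circ) = \sharp(\partial Q^\ast \cap M)$: one passes to $\Lambda_{Q^\ast}$, where $\text{area}$ rescales by $\ell$ twice against the scaling $Q^\ast = \ell Q^\circ$, and invokes Pick/Proposition \ref{NOBOUNDARYPNTS} for the $1$-reflexive pair $(Q^\ast,\Lambda_{Q^\ast})$, whose only interior lattice point is $\mathbf{0}$, so that $2\,\text{area}_{\Lambda_{Q^\ast}}(Q^\ast) = \sharp(\partial Q^\ast \cap \Lambda_{Q^\ast}) = \sharp(\partial Q^\ast \cap M)$ (the last equality because every boundary lattice point of $Q^\ast$ w.r.t.\ $M$ already lies in $\Lambda_{Q^\ast}$ by definition of the latter). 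Dividing by $\ell$ gives (\ref{K2FORMULA}).

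The main obstacle I anticipate is the lattice-rescaling bookkeeping in this second approach: keeping straight the three lattices $N \supset \Lambda_Q$, $M \supset \Lambda_{Q^\ast}$, and $\text{Hom}_{\mathbb{Z}}(\Lambda_Q,\mathbb{Z}) = \frac{1}{\ell}\Lambda_{Q^\ast}$, together with how normalized areas transform under the index-$\ell$ inclusions and under the dilation $Q^\ast = \ell Q^\circ$, so that the two factors of $\ell$ from $K^2 \mapsto \frac{1}{\ell}K^2$ and from $\text{area}(Q^\circ) \mapsto \text{area}(Q^\ast)$ are not double-counted. The first approach sidesteps this entirely and is cleaner, so I would present that one, deriving (\ref{K2FORMULA}) directly from Propositions \ref{DELTASIGMAS}, \ref{QQSTAR}, \ref{NOBOUNDARYPNTS} and Theorem \ref{SIGMAPMINDES}, and note (\ref{KSQUARE1}) gives an independent check. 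Formula (\ref{K2FORMULA2}) follows by the involution $(Q,N) \leftrightarrow (Q^\ast,M)$ of Proposition \ref{QQSTAR}, interchanging the roles of $Q$ and $Q^\ast$ verbatim.
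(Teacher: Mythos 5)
Your first (and preferred) argument is exactly the paper's proof: the paper applies Proposition \ref{NOBOUNDARYPNTS} to the $\ell$-reflexive pair $(Q^{\ast},M)$ and formula (\ref{DPSQUARE}) with $P=Q^{\ast}$, using $\Sigma_{Q^{\ast}}=\Delta_{Q}$ and $-\ell K_{X(N,\Delta_{Q})}=D_{Q^{\ast}}$ to get $\ell\,\sharp(\partial Q^{\ast}\cap M)=2\,\mathrm{area}_{M}(Q^{\ast})=\ell^{2}K_{X(N,\Delta_{Q})}^{2}$, and (\ref{K2FORMULA2}) by symmetry. Your alternative route through (\ref{KSQUARE1}) is a legitimate independent check but is not what the paper does here.
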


\begin{proof}
	Applying Proposition \ref{NOBOUNDARYPNTS} for the $\ell$-reflexive pair
	$(Q^{\ast},M)$ and formula (\ref{DPSQUARE}) (for $P=Q^{\ast}$) we get
	\[
	\ell \, \sharp \left(  \partial Q^{\ast}\cap M\right)  =2\, \text{area}%
	_{M}(Q^{\ast})=(-\ell K_{X(N,\Sigma_{Q^{\ast}})})^{2}=(-\ell K_{X(N,\Delta
		_{Q})})^{2}=\ell^{2}K_{X(N,\Delta_{Q})}^{2}%
	\]
	which gives (\ref{K2FORMULA}). The proof of (\ref{K2FORMULA2}) is similar.
\end{proof}

\noindent $\bullet$ \textbf{Passing to the minimal desingularizations.}
$\ $Let $f:X(N,\widetilde{\Delta}_{Q})\longrightarrow X(N,\Delta_{Q})$ be the
minimal desingularization of $X(N,\Delta_{Q}).$ Consider $\left \{
C_{i}\left \vert i\in \{1,\ldots,\nu \} \right.  \right \}  ,$ the regular and the
negative-regular continued fraction expansions
\begin{equation}
\frac{q_{i}}{q_{i}-p_{i}}=\left[  \! \! \left[  b_{1}^{(i)},b_{2}^{(i)},\ldots,b_{s_{i}%
}^{(i)}\right]  \! \! \right]  ,\  \forall i\in I_{\Delta_{Q}},
\label{KETTENBRUCH1}%
\end{equation}
and
\begin{equation}
\frac{q_{i}}{p_{i}}=\left[  \! \! \left[  b_{1}^{\ast \text{\thinspace}(i)}%
,b_{2}^{\ast \text{\thinspace}(i)},\ldots,b_{t_{i}}^{\ast \text{\thinspace}%
(i)}\right]  \! \! \right] ,\ \text{with}\ \sum_{j=1}^{s_{i}}
(b_{j}^{(i)}-1)=%
\sum_{k=1}^{t_{i}}
(b_{k}^{\ast \,(i)}-1)=s_{i}+t_{i}-1,
\  \forall i\in I_{\Delta_{Q}},
\label{KETTENBRUCH1b}%
\end{equation}
and Hilb$_{N}(\sigma_{i})=\left \{  \left.  \mathbf{u}_{j}^{(i)}\right \vert
j\in \{0,1,\ldots,s_{i}+1\} \right \}  $\ for all $i\in \{1,\ldots,\nu \},$
$\left \{  \overline{C}_{i}\left \vert i\in \{1,\ldots,\nu \} \right.  \right \}
,$
\[
\left \{  \left.  E_{j}^{(i)}\right \vert i\in I_{\Delta_{Q}},\ j\in
\{1,\ldots,s_{i}\} \right \}  ,\  \left \{  \left.  K(E^{(i)})\right \vert i\in
I_{\Delta_{Q}}\right \}  ,\text{ }\left \{  r_{i}\left \vert i\in \{1,\ldots
,\nu \} \right.  \right \}
\]
as in (\ref{DEFCIS}), (\ref{EXPPQCF}), (\ref{UIJS}), (\ref{DEFEIJCS}),
(\ref{TORDISCREP}), and (\ref{DEFRIS}), respectively (where now $\Delta
=\Delta_{Q}$). In the dual sense, consider the regular and the
negative-regular continued fraction expansions
\[
\frac{q_{i}^{\ast}}{q_{i}^{\ast}-p_{i}^{\ast}}=\left[  \! \! \left[
c_{1}^{\ast \text{\thinspace}(i)},c_{2}^{\ast \text{\thinspace}(i)}%
,\ldots,c_{s_{i}^{\ast}}^{\ast \text{\thinspace}(i)}\right]  \! \! \right]
,\  \forall i\in I_{\Delta_{Q^{\ast}}},%
\]
and%
\[
\frac{q_{i}^{\ast}}{p_{i}^{\ast}}=\left[  \! \! \left[  c_{1}^{(i)},c_{2}%
^{(i)},\ldots,c_{t_{i}^{\ast}}^{(i)}\right]  \! \! \right],\ \text{with}\ \sum_{j=1}^{s_{i}^{\ast}}
(c_{j}^{\ast \,(i)}-1)=
\sum_{k=1}^{t_{i}^{\ast}}
(c_{k}^{(i)}-1)=s_{i}^{\ast}+t_{i}^{\ast}-1,
\  \forall i\in
I_{\Delta_{Q^{\ast}}},%
\]
attached to the minimal desingularization, say $\varphi:X(M,\widetilde{\Delta
}_{Q^{\ast}})\longrightarrow X(M,\Delta_{Q^{\ast}}),$ of $X(M,\Delta_{Q^{\ast
}}),$ as well as the other data $\left \{  C_{i}^{\ast}\left \vert
i\in \{1,\ldots,\nu \} \right.  \right \}  ,$ Hilb$_{M}(\sigma_{i}^{\ast
})=\left \{  \left.  \mathbf{u}_{j}^{\ast \text{\thinspace}(i)}\right \vert
j\in \{0,1,\ldots,s_{i}^{\ast}+1\} \right \}  $\ for all $i\in \{1,\ldots,\nu \},$
$\left \{  \overline{C}_{i}^{\ast}\left \vert i\in \{1,\ldots,\nu \} \right.
\right \},$ $\left \{  \left.  E_{j}^{\ast \text{\thinspace}(i)}\right \vert i\in
I_{\Delta_{Q^{\ast}}},\ j\in \{1,\ldots,s_{i}^{\ast}\} \right \}  ,\  \left \{
\left.  K(E^{\ast \text{\thinspace}(i)})\right \vert i\in I_{\Delta_{Q^{\ast}}%
}\right \}  ,\text{ }\left \{  r_{i}^{\ast}\left \vert i\in \{1,\ldots
,\nu \} \right.  \right \}$
\newline which are defined analogously for $\Delta=\Delta_{Q^{\ast}}.$ All the above
accompanying data of $f$ and $\varphi$ will play a crucial role in what
follows.\medskip \ 

\noindent{}$\bullet$ \textbf{Noether's formula}. Since $H^{j}(X(N,\widetilde
{\Delta}_{Q}),\mathcal{O}_{X(N,\widetilde{\Delta}_{Q})})$ is trivial for
$j=1,2,$ the Euler-Poincar\'{e} characteristic
\[
\chi(X(N,\widetilde{\Delta}_{Q}),\mathcal{O}_{X(N,\widetilde{\Delta}_{Q}%
)}):=\sum_{j=0}^{2}\left(  -1\right)  ^{j}\text{dim}_{\mathbb{C}}%
H^{j}(X(N,\widetilde{\Delta}_{Q}),\mathcal{O}_{X(N,\widetilde{\Delta}_{Q})})
\]
of the structure sheaf $\mathcal{O}_{X(N,\widetilde{\Delta}_{Q})}$ equals $1.$
Thus, Noether's formula \cite[p. 154]{Hirzebruch2}:
\[
\chi(X(N,\widetilde{\Delta}_{Q}),\mathcal{O}_{X(N,\widetilde{\Delta}_{Q}%
)})=\frac{1}{12}(K_{X(N,\widetilde{\Delta}_{Q})}^{2}+e(X(N,\widetilde{\Delta
}_{Q})))
\]
can be written as follows:%
\setlength\extrarowheight{3pt}
\begin{equation}
\fbox{$%
\begin{array}
[c]{ccc}
& K_{X(N,\widetilde{\Delta}_{Q})}^{2}+e(X(N,\widetilde{\Delta}_{Q}))=12. &
\end{array}
$} \label{NOETHERSFORMULA}%
\end{equation}
\setlength\extrarowheight{-3pt}
$\bullet$ \textbf{Case 1.} $\fbox{$\ell=1.$}$ In this case, $Q^{\ast}%
=Q^{\circ},$ $l_{F_{i}}=1$ for all $i\in \left \{  1,\ldots,\nu \right \}  $ (see
Proposition \ref{CONDREFL}), and by (\ref{LOCALINDEXFORMULA}) we infer that
\[
q_{i}=\text{gcd}(q_{i},p_{i}-1),\  \forall i\in I_{\Delta_{Q}}\Rightarrow
p_{i}=1,\ s_{i}=q_{i}-1,\  \forall i\in I_{\Delta_{Q}}.
\]
Therefore $X(N,\Delta_{Q})$ is either smooth (whenever $I_{\Delta_{Q}%
}=\varnothing$) or has only Gorenstein singularities (whenever $I_{\Delta_{Q}%
}\neq \varnothing$); cf. Proposition \ref{GORENSTPROP}. Moreover, by
Proposition \ref{CREPANTPROP} $f$ is crepant.

\begin{note}
[Alternative proof of Theorem \ref{12PTTHM}.]\label{ALTPROOF12PTTHM}Combining
the fact that $f$ is crepant with (\ref{K2FORMULA}) the self-intersection
number of the canonical divisor of $X(N,\widetilde{\Delta}_{Q})$ equals%
\begin{equation}
K_{X(N,\widetilde{\Delta}_{Q})}^{2}=K_{X(N,\Delta_{Q})}^{2}=\sharp \left(
\partial Q^{\circ}\cap M\right)  . \label{K2QSTAR}%
\end{equation}
On the other hand, one computes the topological Euler characteristic of
$X(N,\widetilde{\Delta}_{Q})$ by \ref{NoteCAND} (ii) and (\ref{BOUNDARYAREA}%
):
\begin{equation}
e(X(N,\widetilde{\Delta}_{Q}))=\nu+\sum_{i\in I_{\Delta_{Q}}}s_{i}=\nu
+\sum_{i\in I_{\Delta_{Q}}}\left(  q_{i}-1\right)  =\sum_{i=1}^{\nu}%
q_{i}=2\, \text{area}_{N}(Q)=\sharp \left(  \partial Q\cap N\right)  .
\label{EBOUNDARY}%
\end{equation}
Formula (\ref{12PTFORMULA}) follows from (\ref{K2QSTAR}), (\ref{EBOUNDARY})
and (\ref{NOETHERSFORMULA}). Obviously,%

\begin{equation}
\sharp(\partial Q^{\circ}\cap M)-K_{X(N,\widetilde{\Delta}_{Q})}%
^{2}=0=e(X(N,\widetilde{\Delta}_{Q}))-\sharp(\partial Q\cap
N),\label{CHARDIFF1}%
\end{equation}
and, analogously,%
\begin{equation}
\sharp(\partial Q\cap N)-K_{X(M,\widetilde{\Delta}_{Q^{\circ}})}%
^{2}=0=e(X(M,\widetilde{\Delta}_{Q^{\circ}}))-\sharp(\partial Q^{\circ}\cap
M).\label{CHARDIFF2}%
\end{equation}
We shall hereafter call these two couples of differences occuring in
(\ref{CHARDIFF1}) and (\ref{CHARDIFF2}) \textit{characteristic differences}
w.r.t. $Q$ (and w.r.t. $Q^{\ast}%
=Q^{\circ},$ respectively). As we shall verify below in
\S  \ref{CHARDIFFSEC}, these do not vanish whenever $\ell>1,$ and they have an interesting
geometric interpretation. (See (\ref{THETAIQ1F}) and (\ref{THETAIQ1F2}).)  

\end{note}

\noindent{}$\bullet$ \textbf{Case 2.} $\fbox{$\ell>1.$}$ In this case,
$I_{\Delta_{Q}}=\left \{  1,\ldots,\nu \right \}  ,$ and $X(N,\Delta_{Q})$ has
exactly $\nu$ singularities, all of which are non-Gorenstein singularities
(see Proposition \ref{GORENSTPROP}) because by hypothesis and by
(\ref{LOCALINDEXFORMULA}) we conclude
\begin{equation}
l_{F_{i}}=\frac{q_{i}}{\text{gcd}(q_{i},p_{i}-1)}=\ell \geq2\Rightarrow
p_{i}\geq2,\  \forall i\in \left \{  1,\ldots,\nu \right \}  . \label{PIGE2}%
\end{equation}
Analogously, $I_{\Delta_{Q^{\ast}}}=\left \{  1,\ldots,\nu \right \}  ,$
$p_{i}^{\ast}\geq2$ for all $i\in \left \{  1,\ldots,\nu \right \}  ,$ and
$X(M,\Delta_{Q^{\ast}})$ has exactly $\nu$ (non-Gorenstein) singularities.  \smallskip \newline
\noindent $\blacktriangleright$ \textit{Proof of Theorem \ref{G12PTTHM} for}
$\ell>1$. Passing from lattice $N$ to lattice $\Lambda_{Q}$ (and, respectively,  from
$M$ to $\Lambda_{Q^{\ast}}$) we denote by $\widehat{f}:X(\Lambda_{Q}%
,\widehat{\Delta}_{Q})\longrightarrow X(\Lambda_{Q},\Delta_{Q})$ (resp., by
$\widehat{\varphi}:X(\Lambda_{Q^{\ast}},\widehat{\Delta}_{Q^{\ast}%
})\longrightarrow X(\Lambda_{Q^{\ast}},\Delta_{Q^{\ast}})$) the minimal
desingularization of the Gorenstein toric log del Pezzo surface $X(\Lambda
_{Q},\Delta_{Q})$ (resp., of $X(\Lambda_{Q^{\ast}},\Delta_{Q^{\ast}})$). 
 Since orb$_{\Lambda_{Q}}(\sigma_{i})$ is either a nonsingular point (whenever
$\sigma_{i}$ is a basic $\Lambda_{Q}$-cone and $q_{i}=\ell$) or a Gorenstein
cyclic quotient singularity (whenever $\sigma_{i}$ is a non-basic $\Lambda_{Q}$-cone, necessarily of type $(1,\frac{q_{i}}{\ell})$ with\footnote{The number of the lattice points lying in the interior of an edge of an $1$-reflexive polygon is $\le3$. (See Figure \ref{Fig.0}.)} $\frac{q_{i}}{\ell}\in\{2,3,4\}$), formula (\ref{EBOUNDARY}) applied for the lattice $\Lambda_{Q}$ and
the refinement $\widehat{\Delta}_{Q}$ of the  $\Lambda_{Q}$-fan $\Delta_{Q}$ gives%
\begin{equation}
e(X(\Lambda_{Q},\widehat{\Delta}_{Q}))=\sum_{i=1}^{\nu}\frac{q_{i}}{\ell}\underset{\text{(\ref{QIFORMULA})}}{=}\sum_{i=1}^{\nu}\frac{2\, \text{area}_{N}(T_{F_{i}})}{\ell
}=\frac{2\, \text{area}_{N}(Q)}{\ell
}\underset{\text{(\ref{BOUNDARYAREA})}}{=}\sharp(\partial Q\cap N).\label{EDENTAQDACH}%
\end{equation}%
On the other hand, since $\widehat{f}$ is crepant,
\begin{equation}
K_{X(\Lambda_{Q},\widehat{\Delta}%
	_{Q})}^{2}=K_{X(\Lambda_{Q},\Delta_{Q})}^{2}. \label{KSQTWOTIMES}
\end{equation}
Hence,
\[%
\begin{array}
[c]{ll}%
12\underset{\text{(\ref{12PTFORMULA})}}{=}\sharp(\partial Q\cap \Lambda_{Q})+\sharp(\partial
Q^{\circ}\cap \text{Hom}_{\mathbb{Z}}(\Lambda_{Q},\mathbb{Z}))=\sharp(\partial
Q\cap \Lambda_{Q})+\sharp(\partial Q^{\ast}\cap \Lambda_{Q^{\ast}}) \medskip & \\
=e(X(\Lambda_{Q},\widehat{\Delta}_{Q}))+K_{X(\Lambda_{Q},\widehat{\Delta}%
	_{Q})}^{2}=\sharp(\partial Q\cap N)+K_{X(\Lambda_{Q},\Delta_{Q})}^{2}
& \text{(by (\ref{EDENTAQDACH}) and (\ref{KSQTWOTIMES}))}\medskip\\
=\sharp(\partial Q\cap N)+\ell K_{X(N,\Delta_{Q})}^{2}=\sharp(\partial Q\cap
N)+\sharp(\partial Q^{\ast}\cap M) & \text{(by (\ref{KSQUARE1}) and (\ref{K2FORMULA})).}%
\end{array}
\]
One could, of course, use (\ref{12PTFORMULA}), $e(X(\Lambda_{Q^{\ast}},\widehat{\Delta}_{Q\ast
}))=\sharp(\partial Q^{\ast}\cap M)$, the fact that $\widehat{\varphi}$ is crepant (leading to $K_{X(\Lambda_{Q^{\ast}},\widehat{\Delta}%
_{Q^{\ast}})}^{2}=K_{X(\Lambda_{Q^{\ast}},\Delta_{Q^{\ast}})}^{2}$), (\ref{KSQUARE2}) and (\ref{K2FORMULA2}), instead. Thus (\ref{G12PTFORMULA})
is true. \hfill $\square$ \medskip

\noindent{}$\bullet$ \textbf{Consequences of Theorem \ref{G12PTTHM}}. Let
$\ell$ be an integer $\geq1$ and let $(Q,N)$ be an $\ell$-reflexive
pair\textit{.} Maintaining the notation introduced above, formula
(\ref{G12PTFORMULA}) gives significant information about $Q,Q^{\ast},$
$\sharp \left(  \partial Q\cap N\right)  ,$ $\sharp(\partial Q^{\ast}\cap
M),\ell,$ and the combinatorial triples of the corresponding fans $\Delta
_{Q},\Delta_{Q^{\ast}}.$

\begin{corollary}
[Upper bound for the number of vertices]\label{Vert6}$\sharp(\emph{Vert}%
(Q))=\sharp(\emph{Vert}(Q^{\ast}))=\nu \leq6.$
\end{corollary}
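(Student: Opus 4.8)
The statement is an immediate harvest of the Twelve-Point Theorem \ref{G12PTTHM}, and the plan is to combine it with the obvious lower bound for the number of boundary lattice points of a lattice polygon. First I would record that, since $Q$ is a polygon, it has exactly as many edges as vertices, so bijection (\ref{LDUALITY2}) yields
\[
\sharp(\text{Vert}(Q))=\sharp(\text{Edg}(Q))=\sharp(\text{Vert}(Q^{\ast}))=:\nu ,
\]
which disposes of the first two equalities in the statement and fixes the meaning of $\nu$ (automatically $\nu\geq3$ because a polygon has at least three non-collinear vertices).

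Next I would observe that, as $Q$ is an LDP-polygon w.r.t. $N$, its $\nu$ vertices are pairwise distinct points of $\partial Q\cap N$, whence $\sharp(\partial Q\cap N)\geq \nu$. By Proposition \ref{QQSTAR} the dual $(Q^{\ast},M)$ is again an $\ell$-reflexive pair, so $Q^{\ast}$ is in particular an $M$-polygon and the same argument gives $\sharp(\partial Q^{\ast}\cap M)\geq \nu$. Adding these two inequalities and invoking Theorem \ref{G12PTTHM},
\[
2\nu \leq \sharp(\partial Q\cap N)+\sharp(\partial Q^{\ast}\cap M)=12 ,
\]
so $\nu\leq 6$, as claimed.

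In other words, there is essentially nothing to prove once (\ref{G12PTFORMULA}) is available: the only things one must check are that vertices of an LDP-polygon are boundary lattice points (immediate from the definition of a lattice polygon in \S\ref{INTRO}) and that $Q^{\ast}$ qualifies as an $M$-polygon (Proposition \ref{QQSTAR}). Accordingly, I do not expect any genuine obstacle here; the ``hard part'' has already been carried out, namely the second proof of Theorem \ref{G12PTTHM} established just above via Noether's formula (\ref{NOETHERSFORMULA}), the lattice-change Theorem \ref{CHLATT}, and Propositions \ref{KAQUADRAT} and \ref{K2PROP}.
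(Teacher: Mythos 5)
Your proposal is correct and follows essentially the same route as the paper's second proof: both deduce $2\nu\leq\sharp(\partial Q\cap N)+\sharp(\partial Q^{\ast}\cap M)=12$ from Theorem \ref{G12PTTHM}, the only cosmetic difference being that you bound $\sharp(\partial Q\cap N)\geq\nu$ by counting the vertices directly, whereas the paper writes $\sharp(\partial Q\cap N)=\sum_{i=1}^{\nu}\left(\sharp(F_{i}\cap N)-1\right)\geq\nu$ via the edge decomposition. (The paper also records a one-line first proof via the lattice change of Theorem \ref{GLOBALTHM} and the classification Theorem \ref{CLASSIFRP}, which you do not need.)
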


\noindent\textit{First proof.} Since the number of the vertices of $Q$ (resp., of $Q^{\ast}$) does not change by passing from lattice $N$ to lattice $\Lambda_{Q}$ (resp., from $M$ to $\Lambda_{Q^{\ast}}$), the claim is correct by Theorem \ref{CLASSIFRP}.  \newline \medskip
\noindent \textit{Second proof.} (\ref{G12PTFORMULA}) directly implies%
\begin{align*}
12  &  =\sharp \left(  \partial Q\cap N\right)  +\sharp(\partial Q^{\ast}\cap
M)=\frac{1}{\ell}\left(  2\, \text{area}_{N}(Q)+2\, \text{area}_{M}(Q^{\ast
})\right) \\
&  =\frac{2}{\ell}\left(  \sum_{i=1}^{\nu}\text{area}_{N}(T_{F_{i}}%
)+\sum_{i=1}^{\nu}\text{area}_{M}(T_{F_{i}^{\ast}})\right)
=\sum_{i=1}^{\nu}\left(  \sharp \left(  F_{i}\cap N\right)  -1\right)
+\sum_{i=1}^{\nu}\left(  \sharp \left(  F_{i}^{\ast}\cap M\right)  -1\right)
\geq2\nu,
\end{align*}
i.e., $\sharp($Vert$(Q))=\sharp($Vert$(Q^{\ast}))=\nu \leq6.$ \hfill $\square$

\begin{corollary}
[All possible values of $\sharp \left(  \partial Q\cap N\right)  $ and
$\sharp(\partial Q^{\ast}\cap M)$]\label{VALUESBOUNDARY}We have%
\[
(\sharp \left(  \partial Q\cap N\right)  ,\sharp(\partial Q^{\ast}\cap
M))\in \left \{  \left(  3,9\right)
,(4,8),(5,7),(6,6),(7,5),(8,4),(9,3)\right \}  .
\]

\end{corollary}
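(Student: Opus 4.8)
The plan is to combine the Twelve-Point Theorem \ref{G12PTTHM} (formula (\ref{G12PTFORMULA})) with the lower bound for each summand. First I would observe that $Q$ is a lattice polygon with at least $3$ vertices, and each edge $F_{i}$ of $Q$ contains its two endpoints, so $\sharp(F_{i}\cap N)\geq 2$, hence $\sharp(\partial Q\cap N)=\sum_{i=1}^{\nu}(\sharp(F_{i}\cap N)-1)\geq \nu \geq 3$. The same argument applied to the $\ell$-reflexive pair $(Q^{\ast},M)$ (which is again $\ell$-reflexive by Proposition \ref{QQSTAR}) gives $\sharp(\partial Q^{\ast}\cap M)\geq 3$. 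Therefore both $\sharp(\partial Q\cap N)$ and $\sharp(\partial Q^{\ast}\cap M)$ are integers $\geq 3$ whose sum is $12$, which already forces each of them to lie in $\{3,4,5,6,7,8,9\}$ and pins down the pair to one of the seven listed possibilities $(3,9),(4,8),(5,7),(6,6),(7,5),(8,4),(9,3)$.

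The one point that still needs to be addressed is whether all seven pairs actually occur, i.e.\ that the list is not merely an upper envelope but is sharp. For this I would simply point to the families constructed in Examples \ref{EXAMPLESLREF}: the triangle (\ref{EXAMPLE1}) realises $(9,3)$ and, dually, $(3,9)$; the quadrilateral (\ref{EXAMPLE2}) realises $(8,4)$ and $(4,8)$; the pentagon (\ref{EXAMPLE3}) realises $(7,5)$ and $(5,7)$; and the hexagon (\ref{EXAMPLE4}) realises $(6,6)$. Since these examples exist for infinitely many odd $\ell$ (and already for $\ell=1$ via the classification of Theorem \ref{CLASSIFRP}), every entry of the list is attained. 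Strictly speaking, realisability is not required for the statement as phrased (the corollary only asserts membership in the set), so this paragraph can be kept brief or relegated to a remark.

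The argument involves no real obstacle: the only thing to be slightly careful about is that the chain of equalities $\sharp(\partial Q\cap N)=\sum_{i}(\sharp(F_{i}\cap N)-1)$ is precisely the boundary-count identity already used in the proof of Proposition \ref{NOBOUNDARYPNTS}, so it may be invoked rather than re-derived. In the write-up I would present it in the style of the ``second proof'' of Corollary \ref{Vert6}: start from (\ref{G12PTFORMULA}), rewrite $\sharp(\partial Q\cap N)+\sharp(\partial Q^{\ast}\cap M)=12$ as a sum of $2\nu$ nonnegative terms $\sum_{i=1}^{\nu}(\sharp(F_{i}\cap N)-1)+\sum_{i=1}^{\nu}(\sharp(F_{i}^{\ast}\cap M)-1)$ each of which is $\geq 1$, note $\nu\geq 3$, and conclude that neither group of $\nu$ terms can sum to less than $3$ nor (since the total is $12$ and the complementary group is $\geq 3$) to more than $9$. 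This immediately yields the seven admissible pairs.
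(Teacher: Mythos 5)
Your argument is correct and is essentially the paper's own proof: the paper likewise observes that $\sharp(\partial Q\cap N)\geq 3$ and $\sharp(\partial Q^{\ast}\cap M)\geq 3$ (trivially, since each polygon has at least three vertices on its boundary) and then cites (\ref{G12PTFORMULA}) to conclude. Your extra paragraph on realisability is a fine remark but, as you note, not required by the statement.
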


\begin{proof}
Since $\sharp \left(  \partial Q\cap N\right)  \geq3$ and $\sharp(\partial
Q^{\ast}\cap M)\geq3,$ this follows directly from (\ref{G12PTFORMULA}).
\end{proof}

\begin{corollary}
[\textquotedblleft Oddness\textquotedblright of $\ell$]\label{ODDNESS2}The index $\ell$ of $Q$ is always odd.
\end{corollary}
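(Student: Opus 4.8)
The plan is to reduce to the sixteen reflexive polygons via Theorem \ref{GLOBALTHM} and extract from $\ell$-reflexivity a purely numerical obstruction to $\ell$ being even. If $\ell=1$ there is nothing to prove, so assume $\ell>1$. By Theorem \ref{GLOBALTHM} there are a basis $\mathcal{B}^{\diamondsuit}$ of $N$, an index $j\in\{1,\dots,16\}$ and an integer $k\in\{1,\dots,\ell-1\}$ with $\gcd(k,\ell)=1$ such that $Q^{\diamondsuit}:=\Phi_{\mathcal{B}^{\diamondsuit\,-1}}(Q)=\Phi_{\mathcal{A}_{\ell,k}}(\overline{\mathcal{Q}}_{j})$, where $\mathcal{A}_{\ell,k}=\left(\begin{smallmatrix}\ell&0\\k&1\end{smallmatrix}\right)$ and $\overline{\mathcal{Q}}_{1},\dots,\overline{\mathcal{Q}}_{16}$ are the $1$-reflexive representatives of Remark \ref{NORMIERUNG}. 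Since $Q^{\diamondsuit}$ is, just like $Q$, an $\ell$-reflexive $\mathbb{Z}^{2}$-polygon (a standard model of $Q$), every vertex of $Q^{\diamondsuit}$ is primitive and every edge of $Q^{\diamondsuit}$ has local index exactly $\ell$; I would translate these two requirements into divisibility conditions on $k$.

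For a vertex $\tbinom{a}{b}\in\text{Vert}(\overline{\mathcal{Q}}_{j})$ we have $\Phi_{\mathcal{A}_{\ell,k}}(\tbinom{a}{b})=\tbinom{\ell a}{ka+b}$; here $\gcd(a,b)=1$ since $\tbinom{a}{b}$ is primitive, and if $a\neq0$ then primitivity of $\tbinom{\ell a}{ka+b}$ is equivalent to $\gcd(\ell,ka+b)=1$. For an edge $\overline{F}=\text{conv}(\{\mathbf{a},\mathbf{b}\})$ of $\overline{\mathcal{Q}}_{j}$ with primitive direction $\tbinom{\alpha}{\beta}$ and $d:=\sharp(\overline{F}\cap\mathbb{Z}^{2})-1$, reflexivity of $\overline{\mathcal{Q}}_{j}$ (local index $1$) gives $|\det(\mathbf{a},\mathbf{b})|=d$; hence for $F:=\Phi_{\mathcal{A}_{\ell,k}}(\overline{F})$ one gets $|\det(\mathcal{A}_{\ell,k}\mathbf{a},\mathcal{A}_{\ell,k}\mathbf{b})|=\ell d$ and $\sharp(F\cap\mathbb{Z}^{2})-1=d\,\gcd(\ell\alpha,k\alpha+\beta)=d\,\gcd(\ell,k\alpha+\beta)$, the last step because $\gcd(\alpha,k\alpha+\beta)=\gcd(\alpha,\beta)=1$. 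By the local-index formula (in the spirit of (\ref{LOCALINDEXFORMULA}) and Note \ref{NOTELF} (ii)) the local index of $F$ equals $\ell/\gcd(\ell,k\alpha+\beta)$, so $\ell$-reflexivity of $Q^{\diamondsuit}$ forces $\gcd(\ell,k\alpha+\beta)=1$ for every primitive edge-direction $\tbinom{\alpha}{\beta}$ of $\overline{\mathcal{Q}}_{j}$.

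Next I would check, directly against the table in Remark \ref{NORMIERUNG}, that every $\overline{\mathcal{Q}}_{j}$ admits a ``relevant vector'' $\tbinom{c}{d}$ — either a vertex with nonzero first coordinate, or a primitive edge-direction — with $c+d$ \emph{even}. (For the proof one only needs the ten representatives with $\sharp(\partial Q\cap N)\in\{4,6,8\}$, since Corollaries \ref{ODDNESS1} and \ref{VALUESBOUNDARY} dispatch the odd cases; but the property in fact holds for all sixteen.) Suppose now, towards a contradiction, that $\ell$ is even. Then $\gcd(k,\ell)=1$ forces $k$ to be odd, so $kc+d\equiv c+d\pmod 2$; in particular the integer $kc+d$ attached to this relevant vector is even, whence $\gcd(\ell,kc+d)\geq2$ — contradicting one of the two divisibility conditions of the previous paragraph. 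Therefore $\ell$ must be odd.

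The only real labour is the finite parity verification in the third paragraph, which is routine. It becomes transparent once one observes that the boundary lattice points of a reflexive polygon are all primitive (a point $c\mathbf{y}$ with $\mathbf{y}$ primitive and $c\ge2$ would force the interior lattice point $\mathbf{y}\neq\mathbf{0}$), so they fall into the classes $(1,0),(0,1),(1,1)$ of $(\mathbb{Z}/2)^{2}$, and one checks that the relevant vectors of no $\overline{\mathcal{Q}}_{j}$ avoid the class $(1,1)$ entirely. A secondary point needing care is the simplification $\gcd(\ell\alpha,k\alpha+\beta)=\gcd(\ell,k\alpha+\beta)$ in the edge computation, which uses precisely that $\tbinom{\alpha}{\beta}$ is primitive; and one should note that $\mathcal{A}_{\ell,k}$, being invertible over $\mathbb{R}$, sends vertices of $\overline{\mathcal{Q}}_{j}$ to vertices of $Q^{\diamondsuit}$ and edges to edges, so no spurious collinearities arise.
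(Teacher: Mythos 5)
Your proof is correct, but it follows a genuinely different route from the one in the paper. You pull the whole problem back through Theorem \ref{GLOBALTHM}: writing $Q^{\diamondsuit}=\Phi_{\mathcal{A}_{\ell,k}}(\overline{\mathcal{Q}}_{j})$ with $\mathcal{A}_{\ell,k}=\left(\begin{smallmatrix}\ell&0\\k&1\end{smallmatrix}\right)$, you convert primitivity of the vertices and the condition $l_{F}=\ell$ on every edge into the single arithmetic requirement $\gcd(\ell,kc+d)=1$ for each relevant vector $\tbinom{c}{d}$ (a vertex with $c\neq0$, or a primitive edge direction), and you then exclude even $\ell$ by a mod-$2$ inspection of the sixteen tables of Remark \ref{NORMIERUNG}; your gcd simplifications ($\gcd(\ell a,ka+b)=\gcd(\ell,ka+b)$ when $\gcd(a,b)=1$, and likewise for edge directions) are exactly right, and the finite check does go through --- every $\overline{\mathcal{Q}}_{j}$ carries a relevant vector with both coordinates odd, with $\overline{\mathcal{Q}}_{7},\overline{\mathcal{Q}}_{14},\overline{\mathcal{Q}}_{15}$ being the cases where one must pass to an edge direction of class $(1,1)$. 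The paper argues quite differently: it combines Corollaries \ref{ODDNESS1} and \ref{VALUESBOUNDARY} to force $\sharp(\partial Q\cap N)\in\{4,6,8\}$, uses the identity $\sum_{i}\gcd(q_{i},p_{i}-1)=\sharp(\partial Q\cap N)$ to produce, in almost every case, an index $i_{\bullet}$ with $\gcd(q_{i_{\bullet}},p_{i_{\bullet}}-1)=1$ and $q_{i_{\bullet}}=\ell$ even (contradicting $\gcd(p_{i_{\bullet}},q_{i_{\bullet}})=1$), and disposes of the one residual case ($\nu=3$, all three gcd's equal to $2$, $q_{i}=2\ell$) via the divisibility relation $q_{1}q_{2}\mid\widehat{p}_{1}q_{2}+p_{2}q_{1}+q_{3}$ taken from \cite{Dais2}. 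Your argument buys uniformity --- one parity obstruction, no case split on $\nu$ or on $\sharp(\partial Q\cap N)$ --- but leans on the full strength of Theorem \ref{GLOBALTHM} (in particular on the Hermite-normal-form existence of $\mathcal{A}_{\ell,k}$, which the paper delegates to \cite{KaNi}) and on the explicit classification of $1$-reflexive polygons, so it is in substance the Kasprzyk--Nill route; the paper's proof avoids the lattice-change machinery and the list of sixteen polygons altogether, staying within the combinatorial-triple framework of its second proof of Theorem \ref{G12PTTHM}, at the price of the external lemma from \cite{Dais2}. One minor remark: your preliminary reduction to the ten representatives with an even number of boundary points is superfluous, since you end up verifying the parity property for all sixteen anyway.
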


\begin{proof}
Suppose that the index $\ell$ of $Q$ is \textit{even}. By Corollary
\ref{ODDNESS1} $\sharp \left(  \partial Q\cap N\right)  $ has to be even.
Therefore, by Corollary \ref{VALUESBOUNDARY}, $\sharp \left(  \partial Q\cap
N\right)  \in \{4,6,8\}.$ Taking into account that%
\begin{equation}
\frac{2}{\ell}\, \text{area}_{N}(Q)=\dfrac{1}{\ell}\left(
{\displaystyle \sum \limits_{i=1}^{\nu}}
q_{i}\right)  =%
{\displaystyle \sum \limits_{i=1}^{\nu}}
\text{gcd}(q_{i},p_{i}-1)=\sharp \left(  \partial Q\cap N\right)  ,
\label{GCDBOUNDARY}%
\end{equation}
we examine the three cases separately:\smallskip \  \newline(i) If
$\sharp \left(  \partial Q\cap N\right)  =4,$ then $\nu \in \{3,4\}$ and by
(\ref{GCDBOUNDARY}) $\exists i_{\bullet}\in \{1,\ldots,\nu \}:$
gcd$(q_{i_{\bullet}},p_{i_{\bullet}}-1)=1.$ Since $\ell=q_{i_{\bullet}}$ is
even, $p_{i_{\bullet}}$ is even $\geq2.$ This is impossible because
gcd$(p_{i_{\bullet}},q_{i_{\bullet}})=1.\smallskip$ \newline(ii) If
$\sharp \left(  \partial Q\cap N\right)  =8,$ then $\sharp(\partial Q^{\ast
}\cap M)=4,$ which is again impossible (by using the same argument as in case
(i) but this time with $Q^{\ast}$ in the place of $Q$).\smallskip
\  \newline(iii) If $\sharp \left(  \partial Q\cap N\right)  =6,$ then $\nu
\in \{3,4,5,6\}.$ For $\nu \in \{4,5,6\}$ equality (\ref{GCDBOUNDARY}) informs us
that there is an $i_{\bullet}\in \{1,\ldots,\nu \}:$ gcd$(q_{i_{\bullet}%
},p_{i_{\bullet}}-1)=1,$ leading to contradiction (as in case (i)). It remains
to see what happens for $\nu=3$ under the assumption that $\nexists
i_{\bullet}\in \{1,2,3\}:$ gcd$(q_{i_{\bullet}},p_{i_{\bullet}}-1)=1.$ In this
case we have necessarily%
\[
\text{gcd}(q_{1},p_{1}-1)=\text{gcd}(q_{2},p_{2}-1)=\text{gcd}(q_{3}%
,p_{3}-1)=2\text{ and }q_{1}=q_{2}=q_{3}=2\ell,
\]
and consequently $p_{1},p_{2}$ and $p_{3}$ are odd $\geq3.$ By \cite[Lemma
6.2, pp. 232-233]{Dais2} we obtain%
\[
q_{1}q_{2}\mid \widehat{p}_{1}q_{2}+p_{2}q_{1}+q_{3}\Longrightarrow2\ell
\mid \widehat{p}_{1}+p_{2}+1.
\]
Since the socius $\widehat{p}_{1}$ of $p_{1}$ is odd too, the last
divisibility condition is impossible (because $\widehat{p}_{1}+p_{2}+1$ is an
odd integer). By (i), (ii) and (iii) we conclude that $\ell$ is always odd.
\end{proof}

\begin{note} (i) All possible values for the numbers of boundary lattice points described in Corollary \ref{VALUESBOUNDARY} can be taken, as it is shown by examples \ref{EXAMPLESLREF}. \smallskip \newline (ii) If  $\ell=3\lambda$, where  $\lambda$ is a positive odd integer, then it can be proven that  $Q$ has to be a lattice \textit{hexagon}. (See \cite[\S 2.5]{KaNi}.)  
\end{note}

\begin{proposition}
	\label{EXPRESSIONQSTAR}For each $i\in \left \{  1,\ldots,\nu \right \}  $ we have
	\begin{equation}
	q_{i}^{\ast}=\ell^{2}\left(  \frac{1}{q_{i-1}}+\frac{1}{q_{i}}-\frac
	{1}{q_{i-1}q_{i}}\frac{\det \left(  \mathbf{n}_{i-1},\mathbf{n}_{i+1}\right)
	}{\det(N)}\right)  \label{QISTAR1}%
	\end{equation}
	and%
	\begin{equation}
	q_{i}=\ell^{2}\left(  \frac{1}{q_{i}^{\ast}}+\frac{1}{q_{i+1}^{\ast}}-\frac
	{1}{q_{i}^{\ast}q_{i+1}^{\ast}}\frac{\det \left(  \mathbf{m}_{i},\mathbf{m}%
		_{i+2}\right)  }{\det(M)}\right)  . \label{QI1}%
	\end{equation}
	
\end{proposition}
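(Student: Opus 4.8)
The plan is to read $q_i^{\ast}$ off the auxiliary cone $\tau_i$ of Definition \ref{AUXCONES}, rather than from a brute-force computation with the vertices $\mathbf{m}_j=\boldsymbol{\eta}_{F_j}$. First I would combine Lemma \ref{SIGMATAU}, which gives $\sigma_i^{\ast}=\tau_i^{\vee}$, with Proposition \ref{DUALCNEIDENT} and Proposition \ref{PQDESCR1} (iii): since $\tau_i$ is a $(p,q)$-cone with $q=\text{mult}_N(\tau_i)$, its dual $\tau_i^{\vee}$ is a $(q-p,q)$-cone (the case $q=1$ being trivial, as then $\tau_i$ and $\tau_i^{\vee}$ are both basic), so $\text{mult}_M(\tau_i^{\vee})=\text{mult}_N(\tau_i)$; and $\text{mult}_M(\sigma_i^{\ast})=q_i^{\ast}$ because $\sigma_i^{\ast}$ is a $(p_i^{\ast},q_i^{\ast})$-cone. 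Chaining these,
\[
q_i^{\ast}=\text{mult}_M(\sigma_i^{\ast})=\text{mult}_M(\tau_i^{\vee})=\text{mult}_N(\tau_i).
\]
By Definition \ref{AUXCONES} the minimal generators of $\tau_i$ are $\tfrac{\ell}{q_{i-1}}(\mathbf{n}_{i-1}-\mathbf{n}_i)$ and $\tfrac{\ell}{q_i}(\mathbf{n}_{i+1}-\mathbf{n}_i)$, so the definition of multiplicity (\ref{MULTDEF}), bilinearity of $\det$, and $\det(\mathbf{n}_i,\mathbf{n}_i)=0$ give
\[
\text{mult}_N(\tau_i)=\frac{\ell^{2}}{q_{i-1}q_i\,\det(N)}\bigl|\det(\mathbf{n}_{i-1}-\mathbf{n}_i,\ \mathbf{n}_{i+1}-\mathbf{n}_i)\bigr|,\qquad \det(\mathbf{n}_{i-1}-\mathbf{n}_i,\ \mathbf{n}_{i+1}-\mathbf{n}_i)=\det(\mathbf{n}_{i-1},\mathbf{n}_{i+1})-\det(\mathbf{n}_{i-1},\mathbf{n}_i)-\det(\mathbf{n}_i,\mathbf{n}_{i+1}).
\]

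Next I would fix the sign. Because $\text{Vert}(Q)$ is ordered anticlockwise, (\ref{QIFORMULA}) gives $\det(\mathbf{n}_{i-1},\mathbf{n}_i)=q_{i-1}\det(N)$ and $\det(\mathbf{n}_i,\mathbf{n}_{i+1})=q_i\det(N)$, both positive, while by convexity of $Q$ (with $\mathbf{0}$ interior) the triangle $\text{conv}(\{\mathbf{0},\mathbf{n}_{i-1},\mathbf{n}_{i+1}\})$ is contained in $T_{F_{i-1}}\cup T_{F_i}$, so $\det(\mathbf{n}_{i-1},\mathbf{n}_{i+1})\le(q_{i-1}+q_i)\det(N)$ and the last displayed determinant is $\le 0$; hence $|\det(\mathbf{n}_{i-1}-\mathbf{n}_i,\ \mathbf{n}_{i+1}-\mathbf{n}_i)|=\det(\mathbf{n}_{i-1},\mathbf{n}_i)+\det(\mathbf{n}_i,\mathbf{n}_{i+1})-\det(\mathbf{n}_{i-1},\mathbf{n}_{i+1})$. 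Substituting and cancelling $\det(N)$ yields
\[
q_i^{\ast}=\frac{\ell^{2}}{q_{i-1}q_i}\Bigl(q_{i-1}+q_i-\frac{\det(\mathbf{n}_{i-1},\mathbf{n}_{i+1})}{\det(N)}\Bigr)=\ell^{2}\Bigl(\frac{1}{q_{i-1}}+\frac{1}{q_i}-\frac{1}{q_{i-1}q_i}\frac{\det(\mathbf{n}_{i-1},\mathbf{n}_{i+1})}{\det(N)}\Bigr),
\]
which is (\ref{QISTAR1}). As a check I would also verify this directly: substituting $\mathbf{m}_j=\tfrac{\ell}{\det(\mathbf{n}_j,\mathbf{n}_{j+1})}\tbinom{n_{2,j}-n_{2,j+1}}{n_{1,j+1}-n_{1,j}}$ into $q_i^{\ast}=\det(\mathbf{m}_{i-1},\mathbf{m}_i)/\det(M)$ and noting that $\tbinom{n_{2,j}-n_{2,j+1}}{n_{1,j+1}-n_{1,j}}$ is the image of $\mathbf{n}_j-\mathbf{n}_{j+1}$ under $\left(\begin{smallmatrix}0&1\\-1&0\end{smallmatrix}\right)\in\text{SL}_2(\mathbb{R})$ (which is determinant-preserving) reproduces the same expression; the auxiliary-cone route is shorter, though, since it reuses Lemma \ref{SIGMATAU}.

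Finally, (\ref{QI1}) follows by running the identical argument for the $\ell$-reflexive pair $(Q^{\ast},M)$ --- its vertices $\mathbf{m}_1,\dots,\mathbf{m}_{\nu}$ are also anticlockwise and $\det(M)=\det(N)^{-1}$ --- using $\sigma_i=(\tau_i^{\ast})^{\vee}$ (again Lemma \ref{SIGMATAU}), the minimal generators $\tfrac{\ell}{q_i^{\ast}}(\mathbf{m}_{i-1}-\mathbf{m}_i)$, $\tfrac{\ell}{q_{i+1}^{\ast}}(\mathbf{m}_{i+1}-\mathbf{m}_i)$ of $\tau_i^{\ast}$, and the identity $q_j^{\ast}=\det(\mathbf{m}_{j-1},\mathbf{m}_j)/\det(M)$ (from (\ref{MULTDEF}) applied to $\sigma_j^{\ast}$) in place of (\ref{QIFORMULA}). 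I do not expect a genuine obstacle; the only delicate points are (a) pinning down the sign in the determinant expansion, handled by the anticlockwise/convexity convention above, and (b) the index bookkeeping in the dual statement --- the two edges of $Q^{\ast}$ incident to the vertex $\mathbf{m}_i$ are $F_i^{\ast}$ and $F_{i+1}^{\ast}$, of multiplicities $q_i^{\ast}$ and $q_{i+1}^{\ast}$, which is precisely why (\ref{QI1}) involves $q_i^{\ast},q_{i+1}^{\ast}$ and $\det(\mathbf{m}_i,\mathbf{m}_{i+2})$ instead of the indices appearing in (\ref{QISTAR1}).
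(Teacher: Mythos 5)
Your proposal is correct, and your primary route is genuinely different from the paper's. The paper proves (\ref{QISTAR1}) by direct computation: it writes $q_{i}^{\ast}=\det(\mathbf{m}_{i-1},\mathbf{m}_{i})/\det(M)$, substitutes the explicit coordinates $\mathbf{m}_{j}=\tfrac{\ell}{\det(\mathbf{n}_{j},\mathbf{n}_{j+1})}\tbinom{n_{2,j}-n_{2,j+1}}{n_{1,j+1}-n_{1,j}}$, and expands the resulting $2\times2$ determinant into $\det(\mathbf{n}_{i},\mathbf{n}_{i+1})+\det(\mathbf{n}_{i-1},\mathbf{n}_{i})-\det(\mathbf{n}_{i-1},\mathbf{n}_{i+1})$ --- i.e.\ exactly the computation you relegate to a ``check''. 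Your main argument instead reads $q_{i}^{\ast}$ off the auxiliary cone via the chain $q_{i}^{\ast}=\text{mult}_{M}(\sigma_{i}^{\ast})=\text{mult}_{M}(\tau_{i}^{\vee})=\text{mult}_{N}(\tau_{i})$, using Lemma \ref{SIGMATAU} together with the fact (Proposition \ref{DUALCNEIDENT}) that dualization preserves the multiplicity, and then computes $\text{mult}_{N}(\tau_{i})$ from the minimal generators recorded in Definition \ref{AUXCONES}. Both routes terminate in the same three-term determinant identity, so the gain is mostly structural: you never touch the coordinates of the $\mathbf{m}_{j}$, and the factors $\tfrac{\ell}{q_{i-1}}$, $\tfrac{\ell}{q_{i}}$ enter transparently through the edge vectors of $Q$ at $\mathbf{n}_{i}$. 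The dual statement (\ref{QI1}) is handled by symmetry in both treatments.

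One small repair is needed in your sign argument. The containment $\text{conv}(\{\mathbf{0},\mathbf{n}_{i-1},\mathbf{n}_{i+1}\})\subseteq T_{F_{i-1}}\cup T_{F_{i}}$ can fail: if $\det(\mathbf{n}_{i-1},\mathbf{n}_{i+1})<0$ (which does occur, e.g.\ when three consecutive vertices span an angle $>\pi$ at the origin), then $\mathbf{0}$ is a reflex vertex of the quadrilateral $\mathbf{0}\,\mathbf{n}_{i-1}\mathbf{n}_{i}\mathbf{n}_{i+1}$ and the triangle is not contained in the union --- although in that case the inequality $\det(\mathbf{n}_{i-1},\mathbf{n}_{i+1})\leq(q_{i-1}+q_{i})\det(N)$ you extract from it is trivially true, so your conclusion survives. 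The uniform justification is convexity of $Q$ at the vertex $\mathbf{n}_{i}$: since $\mathbf{n}_{i-1},\mathbf{n}_{i},\mathbf{n}_{i+1}$ are consecutive, non-collinear vertices listed anticlockwise, $\det(\mathbf{n}_{i}-\mathbf{n}_{i-1},\,\mathbf{n}_{i+1}-\mathbf{n}_{i})>0$, whence $\det(\mathbf{n}_{i-1}-\mathbf{n}_{i},\,\mathbf{n}_{i+1}-\mathbf{n}_{i})<0$ directly, which pins down the absolute value. The paper sidesteps this issue entirely because $\det(\mathbf{m}_{i-1},\mathbf{m}_{i})>0$ is already guaranteed by the anticlockwise ordering of $\text{Vert}(Q^{\ast})$, so no absolute value ever appears there.
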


\begin{proof}
	Since $\mathbf{m}_{1},\ldots,\mathbf{m}_{\nu}$ and $\mathbf{n}_{1}%
	,\ldots,\mathbf{n}_{\nu}$ are ordered anticlockwise, we have
	\[
	\det(\mathbf{m}_{i-1},\mathbf{m}_{i})>0,\  \  \det \left(  \mathbf{n}%
	_{i-1},\mathbf{n}_{i}\right)  >0,\  \  \det \left(  \mathbf{n}_{i},\mathbf{n}%
	_{i+1}\right)  >0,
	\]
	and (\ref{QIFORMULA}) (applied for the $M$-cone $\sigma_{i}^{\ast}$) gives
	\begin{align*}
	q_{i}^{\ast}  &  =\frac{\det(\mathbf{m}_{i-1},\mathbf{m}_{i})}{\det(M)}%
	=\tfrac{\ell^{2}\det(N)}{\det \left(  \mathbf{n}_{i-1},\mathbf{n}_{i}\right)
		\det \left(  \mathbf{n}_{i},\mathbf{n}_{i+1}\right)  }\det \left(
	\begin{smallmatrix}
	n_{2,i-1}-n_{2,i} & n_{2,i}-n_{2,i+1}\\
	n_{1,i}-n_{1,i-1} & n_{1,i+1}-n_{1,i}%
	\end{smallmatrix}
	\right) \\
	&  =\tfrac{\ell^{2}\det(N)}{\det \left(  \mathbf{n}_{i-1},\mathbf{n}%
		_{i}\right)  \det \left(  \mathbf{n}_{i},\mathbf{n}_{i+1}\right)  }(\det \left(
	\mathbf{n}_{i},\mathbf{n}_{i+1}\right)  +\det \left(  \mathbf{n}_{i-1}%
	,\mathbf{n}_{i}\right)  -\det \left(  \mathbf{n}_{i-1},\mathbf{n}_{i+1}\right)
	)\\
	&  =\ell^{2}\left(  \frac{1}{\frac{\det \left(  \mathbf{n}_{i-1},\mathbf{n}%
			_{i}\right)  }{\det(N)}}+\frac{1}{\frac{\det \left(  \mathbf{n}_{i}%
			,\mathbf{n}_{i+1}\right)  }{\det(N)}}-\frac{\frac{\det \left(  \mathbf{n}%
			_{i-1},\mathbf{n}_{i+1}\right)  }{\det(N)}}{\frac{\det \left(  \mathbf{n}%
			_{i-1},\mathbf{n}_{i}\right)  }{\det(N)}\frac{\det \left(  \mathbf{n}%
			_{i},\mathbf{n}_{i+1}\right)  }{\det(N)}}\right) \\
	&  =\ell^{2}\left(  \frac{1}{q_{i-1}}+\frac{1}{q_{i}}-\frac{1}{q_{i-1}q_{i}%
	}\frac{\det \left(  \mathbf{n}_{i-1},\mathbf{n}_{i+1}\right)  }{\det
		(N)}\right)  ,
	\end{align*}
	for all $i\in \left \{  1,\ldots,\nu \right \}  .$ The proof of equality
	(\ref{QI1}) is similar.
\end{proof}

\begin{corollary}
[Determinantal identities]The multiplicities $q_{1},\ldots,q_{\nu}$ of the
$N$-cones of $\Delta_{Q}$ satisfy the following identity\emph{:}%
\begin{equation}%
{\displaystyle \sum \limits_{i=1}^{\nu}}
\left(  \dfrac{q_{i}}{\ell}+\dfrac{2\ell}{q_{i}}\right)  =%
{\displaystyle \sum \limits_{i=1}^{\nu}}
\frac{\ell}{q_{i}q_{i+1}}\frac{\det \left(  \mathbf{n}_{i},\mathbf{n}%
_{i+2}\right)  }{\det(N)}+12 \label{DETIDENTITY1}%
\end{equation}
In dual terms, the multiplicities $q_{1}^{\ast},\ldots,q_{\nu}^{\ast}$ of
the $M$-cones of $\Delta_{Q^{\ast}}$ satisfy the identity\emph{:}
\begin{equation}%
{\displaystyle \sum \limits_{i=1}^{\nu}}
\left(  \dfrac{q_{i}^{\ast}}{\ell}+\dfrac{2\ell}{q_{i}^{\ast}}\right)  =%
{\displaystyle \sum \limits_{i=1}^{\nu}}
\frac{\ell}{q_{i}^{\ast}q_{i+1}^{\ast}}\frac{\det \left(  \mathbf{m}%
_{i},\mathbf{m}_{i+2}\right)  }{\det(M)}+12 \label{DETIDENTITY2}%
\end{equation}

\end{corollary}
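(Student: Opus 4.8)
The plan is to derive the two determinantal identities directly from Theorem \ref{G12PTTHM} (formula (\ref{G12PTFORMULA})) together with Proposition \ref{EXPRESSIONQSTAR} and the elementary area/boundary formulae already established. The two statements are dual to each other under the involution $(Q,N)\longmapsto(Q^{\ast},M)$, so it suffices to prove (\ref{DETIDENTITY1}); then (\ref{DETIDENTITY2}) follows by interchanging the roles of $Q$ and $Q^{\ast}$ (and of $N$ and $M$), applying the very same computation to the $\ell$-reflexive pair $(Q^{\ast},M)$, whose dual is $(Q^{\ast \ast},N)=(Q,N)$ by Proposition \ref{QQSTAR}.

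First I would rewrite the left-hand side of (\ref{G12PTFORMULA}) in terms of the multiplicities $q_{i}$ and $q_{i}^{\ast}$. By (\ref{BOUNDARYAREA}) applied to $(Q,N)$ and to $(Q^{\ast},M)$, together with (\ref{QIFORMULA}) (which gives $q_{i}=2\,\mathrm{area}_{N}(T_{F_{i}})$) and its dual counterpart $q_{i}^{\ast}=2\,\mathrm{area}_{M}(T_{F_{i}^{\ast}})$, one has
\[
\sharp(\partial Q\cap N)=\frac{2\,\mathrm{area}_{N}(Q)}{\ell}=\frac{1}{\ell}\sum_{i=1}^{\nu}q_{i},\qquad
\sharp(\partial Q^{\ast}\cap M)=\frac{1}{\ell}\sum_{i=1}^{\nu}q_{i}^{\ast}.
\]
Hence (\ref{G12PTFORMULA}) reads $\frac{1}{\ell}\sum_{i=1}^{\nu}(q_{i}+q_{i}^{\ast})=12$. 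Now I substitute the expression for $q_{i}^{\ast}$ from (\ref{QISTAR1}):
\[
\sum_{i=1}^{\nu}q_{i}^{\ast}=\ell^{2}\sum_{i=1}^{\nu}\left(\frac{1}{q_{i-1}}+\frac{1}{q_{i}}-\frac{1}{q_{i-1}q_{i}}\frac{\det(\mathbf{n}_{i-1},\mathbf{n}_{i+1})}{\det(N)}\right).
\]
The cyclic sum $\sum_{i=1}^{\nu}\bigl(\frac{1}{q_{i-1}}+\frac{1}{q_{i}}\bigr)$ telescopes to $2\sum_{i=1}^{\nu}\frac{1}{q_{i}}$, and reindexing the last sum (replacing $i$ by $i+1$, reading indices mod $\nu$) turns $\sum_{i}\frac{1}{q_{i-1}q_{i}}\frac{\det(\mathbf{n}_{i-1},\mathbf{n}_{i+1})}{\det(N)}$ into $\sum_{i}\frac{1}{q_{i}q_{i+1}}\frac{\det(\mathbf{n}_{i},\mathbf{n}_{i+2})}{\det(N)}$. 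Dividing by $\ell$ throughout, one gets
\[
\frac{1}{\ell}\sum_{i=1}^{\nu}q_{i}+\ell\sum_{i=1}^{\nu}\frac{2}{q_{i}}-\ell\sum_{i=1}^{\nu}\frac{1}{q_{i}q_{i+1}}\frac{\det(\mathbf{n}_{i},\mathbf{n}_{i+2})}{\det(N)}=12,
\]
which upon rearranging the $\det$-term to the right is precisely (\ref{DETIDENTITY1}).

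I expect no serious obstacle here: the argument is a pure bookkeeping computation, and the only things to be careful about are (a) the cyclic reindexing conventions ($\mathbf{n}_{\nu+1}=\mathbf{n}_{1}$ etc., and the corresponding conventions for $q_{i}$), so that the telescoping and the shift of the determinant sum are done consistently, and (b) checking that (\ref{QISTAR1}) is indeed available in the generality needed (it is, being Proposition \ref{EXPRESSIONQSTAR}, valid for any $\ell$-reflexive pair). One should also remark that for $\ell=1$ the identities still hold — they reduce, via $q_{i}^{\ast}=\sharp(F_{i}^{\ast}\cap M)-1$ and the $1$-reflexive case analysed in Note \ref{ALTPROOF12PTTHM}, to a restatement of (\ref{12PTFORMULA}) — so no case distinction is needed. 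The genuinely mild subtlety is simply to present the reindexing cleanly enough that the reader sees the passage from the $\det(\mathbf{n}_{i-1},\mathbf{n}_{i+1})$-form to the $\det(\mathbf{n}_{i},\mathbf{n}_{i+2})$-form is merely the cyclic shift $i\mapsto i+1$.
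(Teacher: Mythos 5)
Your proposal is correct and follows essentially the same route as the paper: both rewrite the Twelve-Point formula (\ref{G12PTFORMULA}) as $\frac{1}{\ell}\sum_{i}(q_{i}+q_{i}^{\ast})=12$ via (\ref{BOUNDARYAREA}) and (\ref{QIFORMULA}), substitute (\ref{QISTAR1}) from Proposition \ref{EXPRESSIONQSTAR}, and obtain (\ref{DETIDENTITY1}) after the cyclic reindexing, with (\ref{DETIDENTITY2}) following by duality. The only difference is presentational: you spell out the telescoping and the index shift $i\mapsto i+1$ explicitly, which the paper leaves implicit.
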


\begin{proof}
Formula (\ref{G12PTFORMULA}) can be rewritten via Proposition \ref{EXPRESSIONQSTAR}
in the form%
\begin{align*}
12  &  =\sharp \left(  \partial Q\cap N\right)  +\sharp(\partial Q^{\ast}\cap
M)=\frac{1}{\ell}\left(  2\, \text{area}_{N}(Q)+2\, \text{area}_{M}(Q^{\ast
})\right) \\
&  =\frac{1}{\ell}\sum_{i=1}^{\nu}q_{i}+\frac{1}{\ell}\sum_{i=1}^{\nu}%
q_{i}^{\ast} =\frac{1}{\ell}\sum_{i=1}^{\nu}q_{i}+\sum_{i=1}^{\nu}\ell \left(  \frac
{2}{q_{i}}-\frac{1}{q_{i}q_{i+1}}\frac{\det \left(  \mathbf{n}_{i}%
,\mathbf{n}_{i+2}\right)  }{\det(N)}\right)  .
\end{align*}
Hence, (\ref{DETIDENTITY1}) is true. The proof of (\ref{DETIDENTITY2}) is similar.
\end{proof}

\begin{corollary}
[Dedekind sum identities]If $\ell>1,$ then the Dedekind sums of the pairs $(p_{1},q_{1}%
),\ldots,(p_{\nu},q_{\nu})$ satisfy the identity\emph{:}%
\begin{equation}
12\left(
{\displaystyle \sum \limits_{i=1}^{\nu}}
\text{\emph{DS}}(p_{i},q_{i})\right)  =12-3\nu+%
{\displaystyle \sum \limits_{i=1}^{\nu}}
\frac{1}{q_{i}q_{i+1}}\frac{\det \left(  \mathbf{n}_{i},\mathbf{n}%
_{i+2}\right)  }{\det(N)}. \label{DSFORMULA1}%
\end{equation}
In dual terms, the Dedekind sums of the pairs $(p_{1}^{\ast},q_{1}^{\ast
}),\ldots,(p_{\nu}^{\ast},q_{\nu}^{\ast})$ satisfy the identity\emph{:}
\begin{equation}
12\left(
{\displaystyle \sum \limits_{i=1}^{\nu}}
\text{\emph{DS}}(p_{i}^{\ast},q_{i}^{\ast})\right)  =12-3\nu+%
{\displaystyle \sum \limits_{i=1}^{\nu}}
\frac{1}{q_{i}^{\ast}q_{i+1}^{\ast}}\frac{\det \left(  \mathbf{m}%
_{i},\mathbf{m}_{i+2}\right)  }{\det(M)}. \label{DSFORMULA2}%
\end{equation}

\end{corollary}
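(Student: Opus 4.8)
The plan is to sum the Dedekind-sum formula (\ref{DSFORMULA}) over all $\nu$ vertices of $Q$ and then to identify the two combinatorial sums that appear on the right with quantities attached to the minimal desingularization $f\colon X(N,\widetilde{\Delta}_{Q})\longrightarrow X(N,\Delta_{Q})$. Since $\ell>1$ we are in Case~2, so every $\sigma_{i}$ is a non-basic $(p_{i},q_{i})$-cone with negative-regular continued fraction expansion as in (\ref{KETTENBRUCH1}); applying (\ref{DSFORMULA}) to $(p_{i},q_{i})$ and adding up yields
\[
12\sum_{i=1}^{\nu}\mathrm{DS}(p_{i},q_{i})=\sum_{i=1}^{\nu}\sum_{j=1}^{s_{i}}\bigl(3-b_{j}^{(i)}\bigr)\;+\;\sum_{i=1}^{\nu}\frac{p_{i}+\widehat{p}_{i}}{q_{i}}\;-\;2\nu .
\]
So everything reduces to evaluating the first two summands, and the second one is essentially dual in nature, so that $\mathrm{DS}(p_i^{\ast},q_i^{\ast})$ will be handled in the same way after swapping $(Q,N)$ with $(Q^{\ast},M)$.

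For the first summand I would feed Noether's formula (\ref{NOETHERSFORMULA}) for the smooth surface $X(N,\widetilde{\Delta}_{Q})$ into the intersection-theoretic bookkeeping of \S\ref{GRAPHS}. The $\mathbb{T}$-invariant prime divisors of $X(N,\widetilde{\Delta}_{Q})$ are the $\overline{C}_{i}$ (with $\overline{C}_{i}^{2}=-r_{i}$) and the $E_{j}^{(i)}$ (with $(E_{j}^{(i)})^{2}=-b_{j}^{(i)}$), they add up to $-K_{X(N,\widetilde{\Delta}_{Q})}$ by (\ref{KFORMULA}), and they form a single cycle of $\mathbb{P}^{1}$'s, each meeting its neighbour transversally in one point; hence $e(X(N,\widetilde{\Delta}_{Q}))=\nu+\sum_{i}s_{i}$ equals the number of these divisors, and expanding $K_{X(N,\widetilde{\Delta}_{Q})}^{2}=(-K_{X(N,\widetilde{\Delta}_{Q})})^{2}$ and comparing with (\ref{NOETHERSFORMULA}) gives $\sum_{i}r_{i}+\sum_{i}\sum_{j}b_{j}^{(i)}=3(\nu+\sum_{i}s_{i})-12$, that is,
\[
\sum_{i=1}^{\nu}\sum_{j=1}^{s_{i}}\bigl(3-b_{j}^{(i)}\bigr)=12-3\nu+\sum_{i=1}^{\nu}r_{i}.
\]

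For the second summand, and to dispose of the $\sum_{i}r_{i}$ just produced, I would go back to the defining relation (\ref{CONDri}) of the characteristic integer $r_{i}$, which in Case~2 reads $r_{i}\mathbf{n}_{i}=\mathbf{u}_{s_{i-1}}^{(i-1)}+\mathbf{u}_{1}^{(i)}$. Substituting the explicit formulas (\ref{LATPOINTSSIGMA1}) and (\ref{LATPOINTSSIGMA2}) for $\mathbf{u}_{1}^{(i)}$ and for the Hilbert-basis endpoint $\mathbf{u}_{s_{i-1}}^{(i-1)}$ and then pairing with $\det(\mathbf{n}_{i-1},\,\cdot\,)$ (using $\det(\mathbf{n}_{i-1},\mathbf{n}_{i})=q_{i-1}\det(N)$, cf.\ (\ref{QIFORMULA})) gives the local identity
\[
\frac{1}{q_{i-1}q_{i}}\cdot\frac{\det(\mathbf{n}_{i-1},\mathbf{n}_{i+1})}{\det(N)}=r_{i}-2+\frac{\widehat{p}_{i-1}}{q_{i-1}}+\frac{p_{i}}{q_{i}} .
\]
Replacing $i$ by $i+1$ and summing cyclically over $i$ converts the right-hand side into $\sum_{i}r_{i}-2\nu+\sum_{i}(p_{i}+\widehat{p}_{i})/q_{i}$ and the left-hand side into $\sum_{i}\tfrac{1}{q_{i}q_{i+1}}\tfrac{\det(\mathbf{n}_{i},\mathbf{n}_{i+2})}{\det(N)}$. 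Substituting this and the previous display back into the summed Dedekind formula then yields precisely (\ref{DSFORMULA1}), and (\ref{DSFORMULA2}) follows by repeating the whole argument for the $\ell$-reflexive pair $(Q^{\ast},M)$, whose dual is again $(Q,N)$ by Proposition~\ref{QQSTAR}.

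The computations here are all routine; the one place that needs care is the bookkeeping of socii. The endpoint $\mathbf{u}_{s_{i-1}}^{(i-1)}$ of the Hilbert basis of $\sigma_{i-1}$ carries $\widehat{p}_{i-1}$, not $p_{i-1}$ (see (\ref{LATPOINTSSIGMA2})), so the socii enter the formula for $r_{i}$ in a way that has to be matched exactly against the $\widehat{p}_{i}$ appearing in (\ref{DSFORMULA}) for the cone $\sigma_{i}$ itself; concretely, I expect the main obstacle to be verifying that the cyclic re-indexing turns $\sum_{i}\bigl(\widehat{p}_{i-1}/q_{i-1}+p_{i}/q_{i}\bigr)$ back into $\sum_{i}(p_{i}+\widehat{p}_{i})/q_{i}$ so that all the fractional contributions line up correctly.
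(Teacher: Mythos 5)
Your proof is correct, but it follows a genuinely different route from the paper's. The paper's argument is a two-line comparison: it computes $K_{X(N,\Delta_{Q})}^{2}$ once as $\frac{1}{\ell}\sharp(\partial Q^{\ast}\cap M)=\frac{1}{\ell^{2}}\sum_{i}q_{i}^{\ast}$ and expands via (\ref{QISTAR1}), and once via a generalised Noether formula for the \emph{singular} surface, $K_{X(N,\Delta_{Q})}^{2}=12-\nu+\sum_{i}\bigl(\tfrac{2}{q_{i}}-12\,\mathrm{DS}(p_{i},q_{i})-2\bigr)$, cited from the earlier paper \cite[Corollary 4.10]{Dais1}; equating the two gives (\ref{DSFORMULA1}). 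You instead sum (\ref{DSFORMULA}) directly, dispose of the $\sum_{i}\sum_{j}(3-b_{j}^{(i)})$ term by running the ordinary Noether formula (\ref{NOETHERSFORMULA}) on the smooth resolution (which is exactly the identity (\ref{DAISFORMULA}) that the paper records separately), and then eliminate $\sum_{i}r_{i}$ by a direct vector computation from the defining relation (\ref{CONDri}): your local identity $\tfrac{1}{q_{i-1}q_{i}}\tfrac{\det(\mathbf{n}_{i-1},\mathbf{n}_{i+1})}{\det(N)}=r_{i}-2+\tfrac{\widehat{p}_{i-1}}{q_{i-1}}+\tfrac{p_{i}}{q_{i}}$ checks out (pair $r_{i}\mathbf{n}_{i}=\mathbf{u}_{s_{i-1}}^{(i-1)}+\mathbf{u}_{1}^{(i)}$ against $\det(\mathbf{n}_{i-1},\cdot)$ and use $\det(\mathbf{n}_{i-1},\mathbf{n}_{i})=q_{i-1}\det(N)$), and it is precisely what one gets by equating (\ref{QISTAR1}) with (\ref{QISTAR2}), so you have in effect re-derived that combination without passing through $q_{i}^{\ast}$ or the dual polygon at all. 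What your route buys is self-containedness: no appeal to the external generalised Noether formula and no use of $K^{2}$ of the singular surface or of Proposition \ref{K2PROP}; what the paper's route buys is brevity, since all the heavy lifting is already packaged in (\ref{K2FORMULA}), (\ref{QISTAR1}) and the citation. Your cyclic re-indexing of the socii terms, $\sum_{i}\widehat{p}_{i-1}/q_{i-1}=\sum_{i}\widehat{p}_{i}/q_{i}$, is the one delicate point and you handle it correctly; the dual identity (\ref{DSFORMULA2}) then follows by symmetry exactly as you say.
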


\begin{proof}
By (\ref{K2FORMULA}) and (\ref{QISTAR1}) we obtain%
\begin{equation}
K_{X(N,\Delta_{Q})}^{2}=\frac{1}{\ell}\sharp \left(  \partial Q^{\ast}\cap
M\right)  =\frac{1}{\ell^{2}}\sum_{i=1}^{\nu}q_{i}^{\ast}=\sum_{i=1}^{\nu
}\left(  \frac{2}{q_{i}}-\frac{1}{q_{i}q_{i+1}}\frac{\det \left(
\mathbf{n}_{i},\mathbf{n}_{i+2}\right)  }{\det(N)}\right)  . \label{K2FIRST}%
\end{equation}
Formula (\ref{DSFORMULA}) leads to another version of Noether's formula (see
\cite[Corollary 4.10, p. 99]{Dais1}):
\begin{equation}
K_{X(N,\Delta_{Q})}^{2}=12-\nu+\sum_{i=1}^{\nu}\left(  \frac{2}{q_{i}%
}-12\text{DS}(p_{i},q_{i})-2\right)  . \label{K2SECOND}%
\end{equation}
(\ref{DSFORMULA1}) follows from (\ref{K2FIRST}) and (\ref{K2SECOND}). The
proof of (\ref{DSFORMULA2}) is similar.
\end{proof}

\noindent $\bullet$ \textbf{Suyama's formula}. Let $N\subset \mathbb{R}^{2}$ be a
lattice. If $\mathbf{v}_{1,\ldots,}\mathbf{v}_{\nu}\in N$
is a sequence of \textit{primitive} lattice points with $\mathbf{v}%
_{0}:=\mathbf{v}_{\nu}$ and $\mathbf{v}_{\nu+1}:=\mathbf{v}_{1}$, one denotes by
\[
\text{Rot}\left(  \mathbf{v}_{1,\ldots,}\mathbf{v}_{\nu}\right)  :=\frac
{1}{2\pi}\sum_{i=1}^{\nu}\int_{\text{conv}(\{ \mathbf{v}_{i},\mathbf{v}%
	_{i+1}\})}\frac{-\mathfrak{y}\ d\mathfrak{x}+\mathfrak{x}\ d\mathfrak{y}%
}{\mathfrak{x}^{2}+\mathfrak{y}^{2}}%
\]
the \textit{rotation} (or \textit{winding}) \textit{number} of $\mathbf{v}_{1,\ldots,}\mathbf{v}_{\nu
}$ around $\mathbf{0}.$ Suyama gave a nice formula in \cite[Theorem 6, p. 854]{Suyama}, by means
of which one computes Rot$\left(  \mathbf{v}_{1,\ldots,}\mathbf{v}_{\nu
}\right)  .$ Applying his formula for the (very special) sequence
$\mathbf{n}_{1,\ldots,}\mathbf{n}_{\nu}$ of the vertices of $Q$ for $\ell >1$ (and taking
into account the continued fraction expansion (\ref{KETTENBRUCH1}) of $\frac{q_{i}}%
{q_{i}-p_{i}}$ for all $i\in \{1,...,\nu \}\left(=I_{\Delta}\right)$) we obtain
\begin{equation}
\text{Rot}\left(  \mathbf{n}_{1,\ldots,}\mathbf{n}_{\nu}\right)  =\tfrac
{1}{12}%
{\textstyle \sum \limits_{i=1}^{\nu}}
\left(  3(s_{i}+1)-%
{\textstyle \sum \limits_{j=1}^{s_{i}}}
b_{j}^{(i)}-\tfrac{1}{q_{i-1}q_{i}}\tfrac{\text{det}(\mathbf{n}_{i-1}%
	,\mathbf{n}_{i+1})}{\text{det}(N)}-\tfrac{(q_{i}-p_{i})+(q_{i}-\widehat{p}%
	_{i})}{q_{i}}\right)  .\label{SUYAMFORM}%
\end{equation}
Obviously, by construction,
\begin{equation}
\text{Rot}\left(  \mathbf{n}_{1,\ldots,}\mathbf{n}_{\nu}\right)
=1.\label{ROTGL1}%
\end{equation}

\begin{proposition}
	If $\ell>1,$ then \emph{(\ref{SUYAMFORM})} and \emph{(\ref{ROTGL1})} are equivalent to the known
	formula%
	\begin{equation}%
	{\textstyle \sum \limits_{i=1}^{\nu}}
	r_{i}=3\nu-12-%
	{\textstyle \sum \limits_{i=1}^{\nu}}
	{\textstyle \sum \limits_{j=1}^{s_{i}}}
	(b_{j}^{(i)}-3),\label{DAISFORMULA}%
	\end{equation}
	which follows from a generalised version of Noether's formula. \emph{(See \cite[pp. 99-100]{Dais1}.)}
\end{proposition}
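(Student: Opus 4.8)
The plan is to regard Suyama's formula (\ref{SUYAMFORM}) as a given identity (it is precisely the cited theorem of \cite{Suyama}, specialised to the anticlockwise vertex sequence $\mathbf{n}_{1},\ldots,\mathbf{n}_{\nu}$ of $Q$, which for $\ell>1$ has $I_{\Delta_Q}=\{1,\ldots,\nu\}$ so that every cone contributes a negative-regular expansion as in (\ref{KETTENBRUCH1})), and likewise (\ref{ROTGL1}) is immediate since $\mathbf{0}\in\operatorname{int}(Q)$. Thus the only real content is that, once $\operatorname{Rot}(\mathbf{n}_{1},\ldots,\mathbf{n}_{\nu})=1$ is substituted into (\ref{SUYAMFORM}), the resulting scalar equation is \emph{logically equivalent} to (\ref{DAISFORMULA}); I will establish this by a chain of reversible algebraic manipulations. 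Concretely, multiplying (\ref{SUYAMFORM}) by $12$ and imposing (\ref{ROTGL1}), then writing $\tfrac{(q_i-p_i)+(q_i-\widehat p_i)}{q_i}=2-\tfrac{p_i+\widehat p_i}{q_i}$, one obtains
\[
\sum_{i=1}^{\nu}\Bigl(3(s_i+1)-\sum_{j=1}^{s_i}b_j^{(i)}\Bigr)=12+2\nu+\sum_{i=1}^{\nu}\frac{1}{q_{i-1}q_i}\frac{\det(\mathbf{n}_{i-1},\mathbf{n}_{i+1})}{\det(N)}-\sum_{i=1}^{\nu}\frac{p_i+\widehat p_i}{q_i}.
\]
On the other hand, using $\sum_j(b_j^{(i)}-3)=\sum_j b_j^{(i)}-3s_i$ one checks that (\ref{DAISFORMULA}) is equivalent to $\sum_i r_i=\sum_i\bigl(3(s_i+1)-\sum_j b_j^{(i)}\bigr)-12$. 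Hence the whole statement reduces to the single numerical identity
\[
\sum_{i=1}^{\nu}r_i=2\nu+\sum_{i=1}^{\nu}\frac{1}{q_{i-1}q_i}\frac{\det(\mathbf{n}_{i-1},\mathbf{n}_{i+1})}{\det(N)}-\sum_{i=1}^{\nu}\frac{p_i+\widehat p_i}{q_i}.
\]

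The key step is then to produce a closed form for each $r_i$. Since $\ell>1$ puts every vertex $\mathbf{n}_i$ in the case $i\in I_{\Delta_Q}^{\prime}$ of Definition \ref{DEFRIS}, we have $r_i\mathbf{n}_i=\mathbf{u}_{s_{i-1}}^{(i-1)}+\mathbf{u}_1^{(i)}$ (indices read cyclically). I would substitute $\mathbf{u}_1^{(i)}=\tfrac{1}{q_i}\bigl((q_i-p_i)\mathbf{n}_i+\mathbf{n}_{i+1}\bigr)$ from (\ref{UIJS}) and $\mathbf{u}_{s_{i-1}}^{(i-1)}=\tfrac{1}{q_{i-1}}\bigl(\mathbf{n}_{i-1}+(q_{i-1}-\widehat p_{i-1})\mathbf{n}_i\bigr)$ from (\ref{BJBIGGER1}), pair both sides with $\mathbf{n}_{i+1}$ via the determinant, and divide by $\det(\mathbf{n}_i,\mathbf{n}_{i+1})=q_i\det(N)$ (cf. (\ref{QIFORMULA})), which yields
\[
r_i=\frac{1}{q_{i-1}q_i}\frac{\det(\mathbf{n}_{i-1},\mathbf{n}_{i+1})}{\det(N)}+2-\frac{\widehat p_{i-1}}{q_{i-1}}-\frac{p_i}{q_i}.
\]
Summing over $i\in\{1,\ldots,\nu\}$ and using the trivial cyclic reindexing $\sum_{i}\widehat p_{i-1}/q_{i-1}=\sum_{i}\widehat p_i/q_i$ collapses this to exactly the displayed numerical identity, and since every manipulation above is an equivalence, the proof is complete. (An alternative organisation would route through Dedekind sums: combining (\ref{SUYAMFORM})$=$(\ref{ROTGL1}) with (\ref{DSFORMULA}) produces (\ref{DSFORMULA1}), which is turned into (\ref{DAISFORMULA}) by (\ref{K2FIRST}) and (\ref{K2SECOND}); I prefer the direct route via $r_i$, as it lands on (\ref{DAISFORMULA}) without invoking the self-intersection number of the canonical divisor.)

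I do not anticipate a genuine obstacle here: apart from Suyama's theorem itself, the only non-bookkeeping ingredient is the closed form for $r_i$, and that is a one-line determinant computation from Definition \ref{DEFRIS}. The delicate points are purely notational — keeping all indices cyclic, and recording explicitly that the hypothesis $\ell>1$ is what forces $I_{\Delta_Q}=I_{\Delta_Q}^{\prime}=\{1,\ldots,\nu\}$, so that the single branch of (\ref{CONDri}) applies uniformly and Suyama's formula takes the compact shape (\ref{SUYAMFORM}); without $\ell>1$ some cones may be basic, the $r_i$-bookkeeping splits into the four cases of Definition \ref{DEFRIS}, and the clean equivalence breaks down.
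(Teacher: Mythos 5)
Your proposal is correct, and it reaches (\ref{DAISFORMULA}) by a genuinely different route from the paper. Both arguments reduce, after substituting (\ref{ROTGL1}) into (\ref{SUYAMFORM}) and unwinding $\sum_{j}(b_j^{(i)}-3)$, to the single identity
\[
\sum_{i=1}^{\nu}r_i=2\nu+\sum_{i=1}^{\nu}\frac{1}{q_{i-1}q_i}\frac{\det(\mathbf{n}_{i-1},\mathbf{n}_{i+1})}{\det(N)}-\sum_{i=1}^{\nu}\frac{p_i+\widehat{p}_i}{q_i},
\]
but the paper obtains this by combining (\ref{QISTAR1}) with (\ref{K2FORMULA}) (so that the determinant sum is expressed through $K_{X(N,\Delta_Q)}^{2}$) and then quoting the intersection-theoretic formula $\sum_i 2/q_i=K_{X(N,\Delta_Q)}^{2}+\sum_i r_i-\sum_i\bigl((q_i-p_i)+(q_i-\widehat{p}_i)\bigr)/q_i$ from \cite[Proposition 4.8]{Dais1} --- essentially the route you relegate to your closing parenthesis. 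Your main line instead proves the identity termwise: since $\ell>1$ forces $I_{\Delta_Q}=I_{\Delta_Q}^{\prime}=\{1,\ldots,\nu\}$, the first branch of (\ref{CONDri}) applies to every vertex, and pairing $r_i\mathbf{n}_i=\mathbf{u}_{s_{i-1}}^{(i-1)}+\mathbf{u}_{1}^{(i)}$ with $\mathbf{n}_{i+1}$ via the determinant (using (\ref{UIJS}), (\ref{BJBIGGER1}) and $\det(\mathbf{n}_i,\mathbf{n}_{i+1})=q_i\det(N)$ from (\ref{QIFORMULA})) gives the closed form $r_i=\frac{1}{q_{i-1}q_i}\frac{\det(\mathbf{n}_{i-1},\mathbf{n}_{i+1})}{\det(N)}+2-\frac{\widehat{p}_{i-1}}{q_{i-1}}-\frac{p_i}{q_i}$, after which a cyclic reindexing finishes the argument. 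I checked this closed form on the worked $5$-reflexive pentagon of Section \ref{CHARDIFFSEC} (all five $r_i$ equal $1$ and $\sum_i r_i=5=3\cdot5-12+2$), so your signs are right; in fact your computation silently corrects two sign slips in the paper's intermediate displays, which as printed read $\sum_i(-\cdots)=\sum_i r_i$ and $12=\sum_i(3(s_i+1)-\sum_j b_j^{(i)})+\sum_i r_i$ where the opposite signs are needed (the conclusion (\ref{DAISFORMULA}) is unaffected). What your approach buys is self-containedness --- no appeal to the self-intersection number of the canonical divisor or to the external reference --- at the cost of a slightly longer, but entirely elementary, determinant computation.
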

\begin{proof} By (\ref{QISTAR1}), Proposition \ref{NOBOUNDARYPNTS} and (\ref{QIFORMULA}) (applied for $Q^{\ast}$ and $q_{i}^{\ast}$, respectively), and (\ref{K2FORMULA}) we have
	\[
	\tfrac{1}{q_{i-1}q_{i}}\tfrac{\text{det}(\mathbf{n}_{i-1},\mathbf{n}_{i+1}%
		)}{\text{det}(N)}=-\tfrac{q_{i}^{\ast}}{\ell^{2}}+\tfrac{1}{q_{i-1}}+\tfrac
	{1}{q_{i}}\Rightarrow \sum_{i=1}^{\nu}\tfrac{1}{q_{i-1}q_{i}}\tfrac
	{\text{det}(\mathbf{n}_{i-1},\mathbf{n}_{i+1})}{\text{det}(N)}=-K_{X(N,\Delta
		_{Q})}^{2}+\sum_{i=1}^{\nu}\tfrac{2}{q_{i}}.%
	\]
	By \cite[Proposition 4.8, p. 98]{Dais1} we know that
	\[%
	\begin{array}
	[c]{l}%
	{\textstyle \sum \limits_{i=1}^{\nu}}
	\tfrac{2}{q_{i}}=K_{X(N,\Delta_{Q})}^{2}+%
	{\textstyle \sum \limits_{i=1}^{\nu}}
	r_{i}-%
	{\textstyle \sum \limits_{i=1}^{\nu}}

	\tfrac{(q_{i}-p_{i})+(q_{i}-\widehat{p}_{i})}{q_{i}}\medskip \\
	\Rightarrow%
	{\textstyle \sum \limits_{i=1}^{\nu}}
	\left(  -\tfrac{1}{q_{i-1}q_{i}}\tfrac{\text{det}(\mathbf{n}_{i-1}%
		,\mathbf{n}_{i+1})}{\text{det}(N)}-\tfrac{(q_{i}-p_{i})+(q_{i}-\widehat{p}%
		_{i})}{q_{i}}\right)  =%
	{\textstyle \sum \limits_{i=1}^{\nu}}
	r_{i}.%
	\end{array}
	\]
	Hence, (\ref{SUYAMFORM}) and (\ref{ROTGL1}) give
	\[
	12=%
	{\textstyle \sum \limits_{i=1}^{\nu}}\left(  3(s_{i}+1)-%
	{\textstyle \sum \limits_{j=1}^{s_{i}}}
	b_{j}^{(i)}\right)  +%
	{\textstyle \sum \limits_{i=1}^{\nu}}
	r_{i}\Rightarrow3\nu-12-
	{\textstyle \sum \limits_{i=1}^{\nu}}
	{\textstyle \sum \limits_{j=1}^{s_{i}}}
	(b_{j}^{(i)}-3)=%
	{\textstyle \sum \limits_{i=1}^{\nu}}
	r_{i},
	\] i.e., (\ref{DAISFORMULA}). 
\end{proof}

\noindent $\bullet$ \textbf{Further interrelations of the data of both sides}. The duality established by the bijections
(\ref{LDUALITY1}) and (\ref{LDUALITY2}) implies certain additional number-theoretic identities which involve the combinatorial triples of both sides.   

\begin{proposition}
	\label{QISTAREXPRESSION}If $\ell>1,$ then for each $i\in \left \{  1,\ldots,\nu \right \}  $ we
	have
	\begin{equation}
	q_{i}^{\ast}=\ell^{2}\left(  \frac{q_{i-1}-\widehat{p}_{i-1}+1}{q_{i-1}}%
	+\frac{q_{i}-p_{i}+1}{q_{i}}-r_{i}\right)  \label{QISTAR2}%
	\end{equation}
	and%
	\begin{equation}
	q_{i}=\ell^{2}\left(  \frac{q_{i}^{\ast}-\widehat{p_{i}^{\ast}}+1}{q_{i}%
		^{\ast}}+\frac{q_{i+1}^{\ast}-p_{i+1}^{\ast}+1}{q_{i+1}^{\ast}}-r_{i}^{\ast
	}\right)  . \label{QI2}%
	\end{equation}
	
\end{proposition}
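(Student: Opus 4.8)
The plan is to obtain (\ref{QISTAR2}) by substituting into the determinantal formula (\ref{QISTAR1}) of Proposition \ref{EXPRESSIONQSTAR} an explicit expression for the quantity $\det(\mathbf{n}_{i-1},\mathbf{n}_{i+1})/\det(N)$ in terms of the combinatorial data $p_i,q_i,\widehat{p}_{i-1},q_{i-1}$ and the weight $r_i$. Since $\ell>1$, (\ref{PIGE2}) gives $q_j>1$ for every $j$, so every index lies in $I_{\Delta_Q}^{\prime}$ and the defining relation (\ref{CONDri}) of $r_i$ reads $r_i\mathbf{n}_i=\mathbf{u}_{s_{i-1}}^{(i-1)}+\mathbf{u}_{1}^{(i)}$ throughout.

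First I would insert the closed forms of the two Hilbert-basis points occurring here. Applying (\ref{LATPOINTSSIGMA2}) (equivalently (\ref{BJBIGGER1})) to the $(p_{i-1},q_{i-1})$-cone $\sigma_{i-1}=\mathbb{R}_{\geq0}\mathbf{n}_{i-1}+\mathbb{R}_{\geq0}\mathbf{n}_i$ gives $\mathbf{u}_{s_{i-1}}^{(i-1)}=\frac{1}{q_{i-1}}\bigl(\mathbf{n}_{i-1}+(q_{i-1}-\widehat{p}_{i-1})\mathbf{n}_i\bigr)$, and applying (\ref{LATPOINTSSIGMA1}) (equivalently (\ref{UIJS})) to the $(p_i,q_i)$-cone $\sigma_i=\mathbb{R}_{\geq0}\mathbf{n}_i+\mathbb{R}_{\geq0}\mathbf{n}_{i+1}$ gives $\mathbf{u}_{1}^{(i)}=\frac{1}{q_i}\bigl((q_i-p_i)\mathbf{n}_i+\mathbf{n}_{i+1}\bigr)$. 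Clearing denominators in $r_i\mathbf{n}_i=\mathbf{u}_{s_{i-1}}^{(i-1)}+\mathbf{u}_{1}^{(i)}$ and solving for $q_{i-1}\mathbf{n}_{i+1}$ yields
\[
q_{i-1}\mathbf{n}_{i+1}=-q_i\mathbf{n}_{i-1}+\bigl(q_{i-1}q_ir_i-q_i(q_{i-1}-\widehat{p}_{i-1})-q_{i-1}(q_i-p_i)\bigr)\mathbf{n}_i .
\]
Next I would take $\det(\mathbf{n}_{i-1},\,\cdot\,)$ of both sides: the $\mathbf{n}_{i-1}$-term drops out, and since $\det(\mathbf{n}_{i-1},\mathbf{n}_i)=q_{i-1}\det(N)$ by (\ref{QIFORMULA}) the common factor $q_{i-1}$ cancels, leaving
\[
\frac{\det(\mathbf{n}_{i-1},\mathbf{n}_{i+1})}{\det(N)}=q_{i-1}q_ir_i-q_i(q_{i-1}-\widehat{p}_{i-1})-q_{i-1}(q_i-p_i).
\]

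Substituting this into (\ref{QISTAR1}), the term $-\frac{1}{q_{i-1}q_i}\cdot\frac{\det(\mathbf{n}_{i-1},\mathbf{n}_{i+1})}{\det(N)}$ becomes $-r_i+\frac{q_{i-1}-\widehat{p}_{i-1}}{q_{i-1}}+\frac{q_i-p_i}{q_i}$; adding the two remaining summands $\frac{1}{q_{i-1}}+\frac{1}{q_i}$ of (\ref{QISTAR1}) and regrouping gives precisely $\frac{q_{i-1}-\widehat{p}_{i-1}+1}{q_{i-1}}+\frac{q_i-p_i+1}{q_i}-r_i$ inside the bracket, i.e.\ (\ref{QISTAR2}). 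The dual identity (\ref{QI2}) is proved in exactly the same fashion with the roles of $(Q,N)$ and $(Q^{\ast},M)$ exchanged: one starts from (\ref{QI1}) in place of (\ref{QISTAR1}), uses the defining relation of $r_i^{\ast}$ together with the continued-fraction points $\mathbf{u}_{s_i^{\ast}}^{\ast\,(i)}$ and $\mathbf{u}_{1}^{\ast\,(i+1)}$ attached by Theorem \ref{HIRZVER} to the $M$-cones $\sigma_i^{\ast}$ and $\sigma_{i+1}^{\ast}$ (the cones supporting the two edges $F_i^{\ast},F_{i+1}^{\ast}$ of $Q^{\ast}$ that meet at $\mathbf{m}_i$), and then substitutes the resulting expression for the relevant determinant into (\ref{QI1}). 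None of the individual computations is hard; the one place that demands care is the index bookkeeping in the cyclic ordering — in the dual statement one must reconcile the shift between $q_i^{\ast}$ and $q_{i+1}^{\ast}$ and the choice of determinant $\det(\mathbf{m}_i,\mathbf{m}_{i+2})$ appearing in (\ref{QI1}) with Definition \ref{AUXCONES} and the numbering $F_i^{\ast}=\mathrm{conv}(\{\mathbf{m}_{i-1},\mathbf{m}_i\})$ of the edges of $Q^{\ast}$, so that every $\mathbf{u}$-point is read off the correct cone and the first branch of (\ref{CONDri}) is consistently the one in force (which is legitimate since $q_i^{\ast}>1$ for all $i$ when $\ell>1$).
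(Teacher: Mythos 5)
Your argument is correct, and it reaches (\ref{QISTAR2}) by a genuinely different (and arguably cleaner) route than the paper. The paper does not invoke Proposition \ref{EXPRESSIONQSTAR} at all: it passes to the standard model via $\Phi_{\mathcal{M}_{\sigma_i}\mathcal{B}^{-1}}$, pins down the coordinates of the image of $\mathbf{n}_{i-1}$ as $\bigl((r_i-2)q_{i-1}+\widehat{p}_{i-1},\,-q_{i-1}\bigr)$, and then computes $q_i^{\ast}=\mathrm{mult}_N(\tau_i)$ directly as a $2\times2$ determinant using Lemma \ref{SIGMATAU}. You instead stay coordinate-free: you solve the defining relation $r_i\mathbf{n}_i=\mathbf{u}_{s_{i-1}}^{(i-1)}+\mathbf{u}_{1}^{(i)}$ for $\det(\mathbf{n}_{i-1},\mathbf{n}_{i+1})/\det(N)$ and feed the result into (\ref{QISTAR1}). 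The key input — the closed forms of $\mathbf{u}_{s_{i-1}}^{(i-1)}$ and $\mathbf{u}_{1}^{(i)}$ from (\ref{LATPOINTSSIGMA1})--(\ref{LATPOINTSSIGMA2}) combined with (\ref{CONDri}) — is the same in both proofs, but your organization avoids the change of basis and makes the cancellation leading to the bracket in (\ref{QISTAR2}) transparent; the price is that you rely on the prior Proposition \ref{EXPRESSIONQSTAR}, which the paper's self-contained computation does not need. You are also right that the only delicate point is the cyclic index bookkeeping in the dual case: carrying out your scheme at the vertex $\mathbf{m}_i$ (common to $\sigma_i^{\ast}$ and $\sigma_{i+1}^{\ast}$) produces the determinant $\det(\mathbf{m}_{i-1},\mathbf{m}_{i+1})$, so to land on (\ref{QI2}) one must read the determinant in (\ref{QI1}) as $\det(\mathbf{m}_{i-1},\mathbf{m}_{i+1})$ rather than the literally printed $\det(\mathbf{m}_i,\mathbf{m}_{i+2})$ — a one-step index shift one can confirm on the $5$-reflexive pentagon of \S\ref{CHARDIFFSEC}, where $\det(\mathbf{m}_5,\mathbf{m}_2)=0$ gives $q_1=10$ while $\det(\mathbf{m}_1,\mathbf{m}_3)=5$ would not. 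This is an issue with the displayed form of (\ref{QI1}), not with your method, and your caveat about reconciling the shift is exactly the right thing to flag.
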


\begin{proof}
	Since $\sigma_{i}=\mathbb{R}_{\geq0}\mathbf{n}_{i}+\mathbb{R}_{\geq
		0}\mathbf{n}_{i+1}$ is a $(p_{i},q_{i})$-cone, there exist a basis matrix
	$\mathcal{B}$ of $N$ and a matrix $\mathcal{M}_{\sigma_{i}}\in$ $\text{GL}_{2}\left(\mathbb{Z}\right)  $ such that
	\[
	\Phi_{\mathcal{M}_{\sigma_{i}}\mathcal{B}^{-1}}(\sigma_{i})=\Phi
	_{\mathcal{M}_{\sigma_{i}}}(\sigma_{i}^{\text{st}})=\mathbb{R}_{\geq0}%
	\tbinom{1}{0}+\mathbb{R}_{\geq0}\tbinom{p_{i}}{q_{i}},
	\]
	where $\sigma_{i}^{\text{st}}$ is the standard model of $\sigma_{i}$ w.r.t.
	$\mathcal{B}$ (see Proposition \ref{PQDESCR1} and Figure \ref{ANG}). $\Phi_{\mathcal{M}_{\sigma
			_{i}}\mathcal{B}^{-1}}$ maps $\mathbf{n}_{i}$ onto $\tbinom{1}{0}$ and
	$\mathbf{n}_{i-1}$ onto a point $\tbinom{\mathfrak{n}_{1,i-1}}{\mathfrak{n}%
		_{2,i-1}}\in \mathbb{Z}^{2},$ i.e., $\sigma_{i-1}$ onto the $\mathbb{Z}^{2}%
	$-cone $\mathbb{R}_{\geq0}\tbinom{\mathfrak{n}_{1,i-1}}{\mathfrak{n}_{2,i-1}%
	}+\mathbb{R}_{\geq0}\tbinom{1}{0}$ with%
	\begin{equation}
	-\mathfrak{n}_{2,i-1}=\det \left(
	\begin{smallmatrix}
	\mathfrak{n}_{1,i-1} & 1\\
	\mathfrak{n}_{2,i-1} & 0
	\end{smallmatrix}
	\right)  =\text{mult}_{N}(\sigma_{i-1})=q_{i-1}\Rightarrow \mathfrak{n}%
	_{2,i-1}=-q_{i-1}. \label{NFR2}%
	\end{equation}
	We observe that the point of $\partial \Theta_{\Phi_{\mathcal{M}_{\sigma_{i}%
			}\mathcal{B}^{-1}}(\sigma_{i})}^{\mathbf{cp}}\cap \mathbb{Z}^{2}$ (resp., of
	$\partial \Theta_{\Phi_{\mathcal{M}_{\sigma_{i}}\mathcal{B}^{-1}}(\sigma
		_{i-1})}^{\mathbf{cp}}\cap \mathbb{Z}^{2}$) closest to $\tbinom{1}{0}$ is
	$\tbinom{1}{1}$ (resp., $\frac{1}{q_{i-1}}\left(  \tbinom{\mathfrak{n}%
		_{1,i-1}}{\mathfrak{n}_{2,i-1}}+\left(  q_{i-1}-\widehat{p}_{i-1}\right)
	\tbinom{1}{0}\right)  .$ (Use (\ref{LATPOINTSSIGMA1}) and
	(\ref{LATPOINTSSIGMA2}) for the $\mathbb{Z}^{2}$-cones $\Phi_{\mathcal{M}%
		_{\sigma_{i}}\mathcal{B}^{-1}}(\sigma_{i})$ and $\Phi_{\mathcal{M}_{\sigma
			_{i}}\mathcal{B}^{-1}}(\sigma_{i-1}),$ respectively.) By the linearity of
	$\Phi_{\mathcal{M}_{\sigma_{i}}\mathcal{B}^{-1}}$ we infer that
	\begin{equation}
	\tbinom{\frac{1}{q_{i-1}}(\mathfrak{n}_{1,i-1}+q_{i-1}-\widehat{p}_{i-1})}%
	{-1}+\tbinom{1}{1}=r_{i}\tbinom{1}{0}\Rightarrow \mathfrak{n}_{1,i-1}=\left(
	r_{i}-2\right)  q_{i-1}+\widehat{p}_{i-1}. \label{NFR1}%
	\end{equation}

\begin{figure}[h]
	\includegraphics[height=10cm, width=10.5cm]{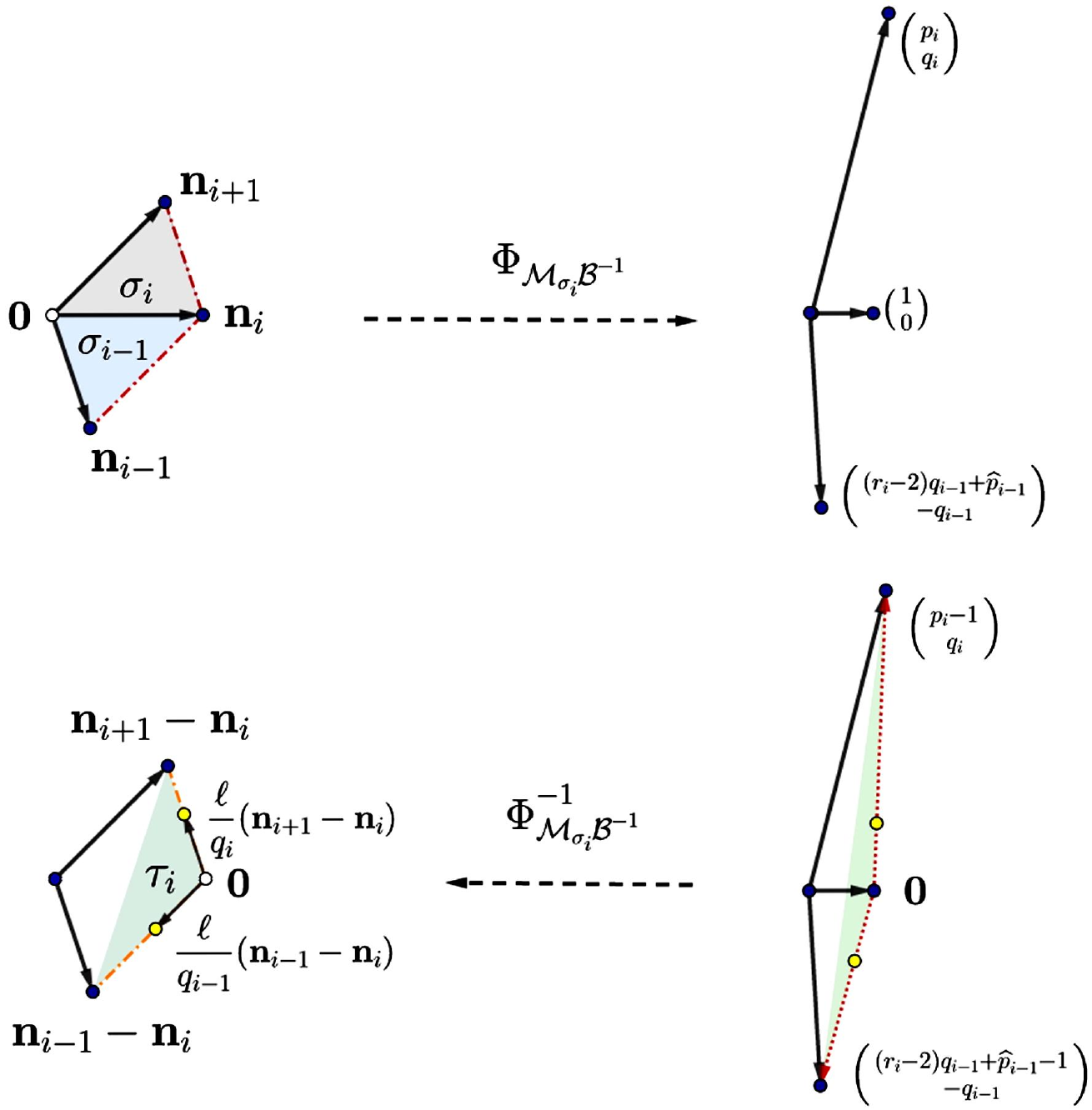}
	\caption{}\label{ANG}%
\end{figure}

	\noindent Using (\ref{NFR2}) and (\ref{NFR1}) we compute the multiplicity of $\tau
	_{i}=(\sigma_{i}^{\ast})^{\vee}$:
	\begin{align*}
	q_{i}^{\ast}  &  =\text{mult}_{M}(\sigma_{i}^{\ast})=\text{mult}_{N}(\tau
	_{i})=\det \left(  \tfrac{\ell}{q_{i}}\left(  \tbinom{p_{i}}{q_{i}}-\tbinom
	{1}{0}\right)  ,\tfrac{\ell}{q_{i-1}}\left(  \tbinom{\mathfrak{n}_{1,i-1}%
	}{\mathfrak{n}_{2,i-1}}-\tbinom{1}{0}\right)  \right)  \smallskip \\
	&  =\det \left(
	\begin{smallmatrix}
	\frac{\ell}{q_{i}}\left(  p_{i}-1\right)  & \frac{\ell}{q_{i-1}}\left(
	\left(  r_{i}-2\right)  q_{i-1}+\widehat{p}_{i-1}-1\right) \\
	\ell & -\ell
	\end{smallmatrix}
	\right)  \smallskip \\
	&  =-\ell^{2}\left(  \frac{\widehat{p}_{i-1}-1}{q_{i-1}}+\frac{p_{i}-1}{q_{i}%
	}+(r_{i}-2)\right)
	\end{align*}
	and obtain (\ref{QISTAR2}). The proof of (\ref{QI2}) is similar.
\end{proof}

\begin{corollary}
[Identities with combinatorial triples]If $\ell>1,$ then the combinatorial
triples
\[
(p_{1},q_{1},r_{1}),\ldots,(p_{\nu},q_{\nu},r_{\nu})
\]
of $\Delta_{Q}$ satisfy the identity\emph{:}
\begin{equation}%
{\displaystyle \sum \limits_{i=1}^{\nu}}
\left(  \dfrac{q_{i}}{\ell}+\dfrac{2\ell}{q_{i}}\right)  =12-2\ell \nu+%
{\displaystyle \sum \limits_{i=1}^{\nu}}
\ell \left(  \frac{p_{i}+\widehat{p}_{i}}{q_{i}}+r_{i}\right)  ,
\label{CTIDENTITY1}%
\end{equation}
and, in the dual sense, the combinatorial triples $(p_{1}^{\ast},q_{1}^{\ast
},r_{1}^{\ast}),\ldots,(p_{\nu}^{\ast},q_{\nu}^{\ast},r_{\nu}^{\ast})$ of
$\Delta_{Q^{\ast}}$ satisfy the identity\emph{:}%
\begin{equation}%
{\displaystyle \sum \limits_{i=1}^{\nu}}
\left(  \dfrac{q_{i}^{\ast}}{\ell}+\dfrac{2\ell}{q_{i}^{\ast}}\right)
=12-2\ell \nu+%
{\displaystyle \sum \limits_{i=1}^{\nu}}
\ell \left(  \frac{p_{i}^{\ast}+\widehat{p_{i}^{\ast}}}{q_{i}^{\ast}}%
+r_{i}^{\ast}\right)  . \label{CTIDENTITY2}%
\end{equation}

\end{corollary}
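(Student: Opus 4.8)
The plan is to feed the Twelve-Point Theorem \ref{G12PTTHM} into the explicit expression for the dual multiplicities $q_i^\ast$ supplied by Proposition \ref{QISTAREXPRESSION}, and then to tidy up the result by a cyclic reindexing. First I would rewrite the left-hand side of (\ref{G12PTFORMULA}) purely in terms of the multiplicities attached to the two fans. By Proposition \ref{NOBOUNDARYPNTS} together with (\ref{QIFORMULA}) one has $\sharp(\partial Q\cap N)=\tfrac{2\,\text{area}_N(Q)}{\ell}=\tfrac{1}{\ell}\sum_{i=1}^{\nu}q_i$ and, applying the same to the $\ell$-reflexive pair $(Q^\ast,M)$, $\sharp(\partial Q^\ast\cap M)=\tfrac{1}{\ell}\sum_{i=1}^{\nu}q_i^\ast$, so (\ref{G12PTFORMULA}) reads
\[
12=\frac{1}{\ell}\sum_{i=1}^{\nu}q_i+\frac{1}{\ell}\sum_{i=1}^{\nu}q_i^\ast .
\]

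Next I would substitute (\ref{QISTAR2}) for each $q_i^\ast$ (here the hypothesis $\ell>1$ enters, as Proposition \ref{QISTAREXPRESSION} requires it), getting
\[
\frac{1}{\ell}\sum_{i=1}^{\nu}q_i^\ast=\ell\sum_{i=1}^{\nu}\left(\frac{q_{i-1}-\widehat{p}_{i-1}+1}{q_{i-1}}+\frac{q_i-p_i+1}{q_i}-r_i\right).
\]
The one delicate point is the reindexing: since the index set $\{1,\dots,\nu\}$ is read cyclically (so that $i\mapsto i-1$ is a bijection of it under the convention $q_0:=q_\nu$, $\widehat{p}_0:=\widehat{p}_\nu$), the term $\sum_{i=1}^{\nu}\tfrac{q_{i-1}-\widehat{p}_{i-1}+1}{q_{i-1}}$ equals $\sum_{i=1}^{\nu}\tfrac{q_i-\widehat{p}_i+1}{q_i}$. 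Hence the displayed sum collapses to $\sum_{i=1}^{\nu}\bigl(\tfrac{2q_i-p_i-\widehat{p}_i+2}{q_i}-r_i\bigr)=2\nu-\sum_{i=1}^{\nu}\bigl(\tfrac{p_i+\widehat{p}_i}{q_i}+r_i\bigr)+\sum_{i=1}^{\nu}\tfrac{2}{q_i}$, so that $\tfrac{1}{\ell}\sum q_i^\ast=2\ell\nu-\ell\sum\bigl(\tfrac{p_i+\widehat{p}_i}{q_i}+r_i\bigr)+\sum\tfrac{2\ell}{q_i}$. Plugging this back into the boxed equality above and rearranging yields exactly
\[
\sum_{i=1}^{\nu}\left(\frac{q_i}{\ell}+\frac{2\ell}{q_i}\right)=12-2\ell\nu+\ell\sum_{i=1}^{\nu}\left(\frac{p_i+\widehat{p}_i}{q_i}+r_i\right),
\]
which is (\ref{CTIDENTITY1}).

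For the dual identity (\ref{CTIDENTITY2}) I would simply invoke the involution $(Q,N)\longleftrightarrow(Q^\ast,M)$ from Proposition \ref{QQSTAR} and apply the identity just proved to the $\ell$-reflexive pair $(Q^\ast,M)$; alternatively one repeats the computation verbatim using (\ref{QI2}) in place of (\ref{QISTAR2}), the only change being that the relevant index shift is now $i\mapsto i+1$ rather than $i\mapsto i-1$, which again disappears after summation over the cycle. I do not expect a genuine obstacle here: the substance is already contained in Theorem \ref{G12PTTHM} and Proposition \ref{QISTAREXPRESSION}, and what remains is bookkeeping — keeping the factors $\ell$ and $\ell^2$ straight and justifying the cyclic reindexing.
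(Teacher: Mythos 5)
Your proposal is correct and follows essentially the same route as the paper: both rewrite (\ref{G12PTFORMULA}) as $12=\tfrac{1}{\ell}\sum_i q_i+\tfrac{1}{\ell}\sum_i q_i^{\ast}$, substitute (\ref{QISTAR2}) for $q_i^{\ast}$, perform the cyclic reindexing $i\mapsto i-1$, and collect terms, with the dual identity handled symmetrically. The algebra and the handling of the factors of $\ell$ check out.
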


\begin{proof}
Formula (\ref{G12PTFORMULA}) can be rewritten via (\ref{QISTAR2}) in the form
{\small
\begin{align*}
12  &  =\sharp \left(  \partial Q\cap N\right)  +\sharp(\partial Q^{\ast}\cap
M)=\frac{1}{\ell}\sum_{i=1}^{\nu}q_{i}+\frac{1}{\ell}\sum_{i=1}^{\nu}%
q_{i}^{\ast}\\
&  =\frac{1}{\ell}\sum_{i=1}^{\nu}q_{i}+\sum_{i=1}^{\nu}\ell \left(
\frac{q_{i-1}-\widehat{p}_{i-1}+1}{q_{i-1}}+\frac{q_{i}-p_{i}+1}{q_{i}}%
-r_{i}\right) =\frac{1}{\ell}\sum_{i=1}^{\nu}q_{i}+\sum_{i=1}^{\nu}\ell \left(
\frac{1-\widehat{p}_{i}}{q_{i}}+\frac{1-p_{i}}{q_{i}}-(r_{i}-2)\right)  .
\end{align*}}
Hence, (\ref{CTIDENTITY1}) is true. The proof of (\ref{CTIDENTITY2}) is similar.
\end{proof}

\noindent Finally, it remains to give the explicit number-theoretic description of
the link between $p_{i}^{\ast},	\widehat{p_{i}}^{\ast}$ and the multiplicity
$q_{i}^{\ast}$, provided that $p_{i}$ and $q_{i}$ are assumed to be known, and, respectively, of the link between $p_{i},\widehat{p}_{i}$ and the multiplicity $q_{i}$, provided that $p_{i}^{\ast}$ and $q_{i}^{\ast}$ are assumed to be known.
\begin{proposition}
	\label{PISTAREXPRESSION}Let $\ell$ be again $>1$. For each $i\in \left \{  1,\ldots,\nu \right \}  $
	consider the regular continued fraction expansion%
	\[
	\sdfrac{\ell}{\frac{\ell}{q_{i}}(p_{i}-1)}=\frac{q_{i}}{p_{i}-1}=\left[
	d_{1}^{\left(  i\right)  },	d_{2}^{\left(  i\right)  }\ldots,d_{\rho}^{\left(
		i\right)  }\right]:=	d_{1}^{\left(  i\right)  }+\frac{1}%
	{	d_{2}^{\left(  i\right)  }+\cfrac{1}{%
		\begin{array}
		[c]{cc}%
		\ddots & \\
		& d_{\rho -1}^{\left(  i\right) }+\dfrac{1}{d_{\rho}^{\left(  i\right) }}%
		\end{array}
}}%
	\]
	of $\cfrac{\ell}{\frac{\ell}{q_{i}}(p_{i}-1)}$ and set {\small
	\[
	\kappa_{i}:=\left \{
	\begin{array}
	[c]{ll}%
	\tfrac{-\varepsilon \ell}{\left[  d_{\rho}^{\left(  i\right)
		},d_{\rho-1}^{\left(  i\right)  },\ldots,d_{2}^{\left(
			i\right)  },d_{1}^{\left(  i\right)  }\right]  },\medskip &
	\text{\emph{if} }d_{1}^{\left(  i\right)  }\geq2,\\
	\tfrac{-\varepsilon \ell}{\left[  d_{\rho}^{\left(  i\right)
		},d_{\rho-1}^{\left(  i\right)  },\ldots,d_{3}^{\left(
			i\right)  },d_{2}^{\left(  i\right)  }+1\right]  }, &
	\text{\emph{if} }d_{1}^{\left(  i\right)  }=1,
	\end{array}
	\right.  \text{ \  \emph{and} \ }\lambda_{i}:=\left \{
	\begin{array}
	[c]{ll}%
	\tfrac{-\frac{\varepsilon \ell}{q_{i}}(p_{i}-1)}{\left[  d_{\rho
		}^{\left(  i\right)  },d_{\rho-1}^{\left(  i\right)  }%
		,\ldots,d_{3}^{\left(  i\right)  },d_{2}^{\left(
			i\right)  }\right]  },\medskip & \text{\emph{if} }d_{2}^{\left(
		i\right)  }\geq2,\\
	\tfrac{-\frac{\varepsilon \ell}{q_{i}}(p_{i}-1)}{\left[  d_{\rho
		}^{\left(  i\right)  },d_{\rho-1}^{\left(  i\right)  }%
		,\ldots,d_{4}^{\left(  i\right)  },d_{3}^{\left(
			i\right)  }+1\right]  }, & \text{\emph{if} }d_{2}^{\left(
		i\right)  }=1,
	\end{array}
	\right.
	\]}
	with $\varepsilon=1$ for $\rho$ even and $\varepsilon=-1$ for $\rho$ odd. Then
	$\kappa_{i},\lambda_{i}\in \mathbb{Z}$ and
	\begin{equation}
	\kappa_{i}\frac{\ell}{q_{i}}(p_{i}-1)-\lambda_{i}\ell=1. \label{KAPPA+LAMDA1}%
	\end{equation}
	Denoting by $\mathfrak{z}_{i}$ the unique positive integer which is smaller
	than $q_{i}^{\ast}$ and satisfies%
	\begin{equation}
	\kappa_{i}\frac{\ell}{q_{i-1}}\left(  \left(  r_{i}-2\right)  q_{i-1}%
	+\widehat{p}_{i-1}-1\right)  +\lambda_{i}\ell \equiv \mathfrak{z}_{i}\left(
	\operatorname{mod}q_{i}^{\ast}\right)  , \label{ZETAI}%
	\end{equation}
	we obtain
	\begin{equation}
	\mathfrak{z}_{i}=\left \{
	\begin{array}
	[c]{ll}%
	q_{i}^{\ast}\left(  1-\tfrac{1}{\left[  d_{\rho}^{\left(  i\right)
		},d_{\rho-1}^{\left(  i\right)  },\ldots,d_{2}^{\left(
			i\right)  },d_{1}^{\left(  i\right)  }\right]  }\right)
	-1,\smallskip & \text{\emph{if} }\rho \text{ \emph{is odd and} }d%
	_{1}^{\left(  i\right)  }\geq2, \medskip \\
	q_{i}^{\ast}\left(  1-\frac{1}{\left[  d_{\rho}^{\left(  i\right)
		},d_{\rho-1}^{\left(  i\right)  },\ldots,d_{3}^{\left(
			i\right)  },d_{2}^{\left(  i\right)  }+1\right]  }\right)  -1, &
	\text{\emph{if} }\rho \text{ \emph{is odd and} }d_{1}^{\left(
		i\right)  }=1, \medskip \\
	\frac{q_{i}^{\ast}}{\left[  d_{\rho}^{\left(  i\right)
		},d_{\rho-1}^{\left(  i\right)  },\ldots,d_{2}^{\left(
			i\right)  },d_{1}^{\left(  i\right)  }\right]  }-1, &
	\text{\emph{if} }\rho \text{ \emph{is even and} }d_{1}^{\left(
		i\right)  }\geq2,\medskip \\
	\frac{q_{i}^{\ast}}{\left[  d_{\rho}^{\left(  i\right)
		},d_{\rho-1}^{\left(  i\right)  },\ldots,d_{3}^{\left(
			i\right)  },d_{2}^{\left(  i\right)  }+1\right]  }-1, &
	\text{\emph{if} }\rho \text{ \emph{is even and} }d_{1}^{\left(
		i\right)  }=1,
	\end{array}
	\right.  \label{ZETAICOMP}%
	\end{equation}
	and
	\[
	\widehat{p_{i}^{\ast}}=q_{i}^{\ast}-\mathfrak{z}_{i},\  \  \ p_{i}^{\ast}%
	=q_{i}^{\ast}-\widehat{\mathfrak{z}_{i}},
	\]
	where $\widehat{\mathfrak{z}_{i}}$ is the socius of $\mathfrak{z}_{i}$ w.r.t.
	$q_{i}^{\ast}.$
\end{proposition}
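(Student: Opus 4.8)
The strategy is to transport the problem to the standard model of the cone $\sigma_i$ via the matrix $\mathcal{M}_{\sigma_i}\mathcal{B}^{-1}$ from Proposition \ref{PQDESCR1}, exactly as in the proof of Proposition \ref{QISTAREXPRESSION}, and then to read off the type $(p_i^\ast,q_i^\ast)$ of the auxiliary cone $\tau_i=(\sigma_i^\ast)^\vee$ (Lemma \ref{SIGMATAU}) by applying the recursive machinery of (\ref{KAPPALAMDA1})--(\ref{KAPPALAMDA2}) of Proposition \ref{PQDESCR1}. First I would record, from the proof of Proposition \ref{QISTAREXPRESSION}, that $\Phi_{\mathcal{M}_{\sigma_i}\mathcal{B}^{-1}}$ sends $\mathbf{n}_i\mapsto\tbinom{1}{0}$, $\mathbf{n}_{i+1}\mapsto\tbinom{p_i}{q_i}$, and $\mathbf{n}_{i-1}\mapsto\tbinom{\mathfrak n_{1,i-1}}{\mathfrak n_{2,i-1}}$ with $\mathfrak n_{2,i-1}=-q_{i-1}$ and $\mathfrak n_{1,i-1}=(r_i-2)q_{i-1}+\widehat p_{i-1}$ (formulas (\ref{NFR2}) and (\ref{NFR1})). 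Under this transformation the auxiliary cone $\tau_i$ becomes $\mathbb{R}_{\ge0}\tfrac{\ell}{q_i}\tbinom{p_i-1}{q_i}+\mathbb{R}_{\ge0}\tfrac{\ell}{q_{i-1}}\tbinom{(r_i-2)q_{i-1}+\widehat p_{i-1}-1}{-q_{i-1}}$; call its two minimal generators $\overline{\mathbf n}_1$ (the first) and $\overline{\mathbf n}_2$ (the second), so $\overline n_{11}=\tfrac{\ell}{q_i}(p_i-1)$, $\overline n_{21}=\ell$, $\overline n_{12}=\tfrac{\ell}{q_{i-1}}\big((r_i-2)q_{i-1}+\widehat p_{i-1}-1\big)$, $\overline n_{22}=-\ell$.

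Then I would compute $q_i^\ast=|\det(\overline{\mathbf n}_1,\overline{\mathbf n}_2)|=\mathrm{mult}_N(\tau_i)$, recovering (\ref{QISTAR2}) as a consistency check, and find $\kappa_i,\lambda_i\in\mathbb Z$ with $\kappa_i\overline n_{11}-\lambda_i\overline n_{21}=1$, i.e. $\kappa_i\tfrac{\ell}{q_i}(p_i-1)-\lambda_i\ell=1$; this is precisely (\ref{KAPPA+LAMDA1}). The explicit closed forms for $\kappa_i,\lambda_i$ in terms of the convergents of the regular continued fraction $\tfrac{q_i}{p_i-1}=[d_1^{(i)},\dots,d_\rho^{(i)}]$ come from the standard identity relating a continued fraction to its reverse together with the case distinction $d_1^{(i)}\geq 2$ versus $d_1^{(i)}=1$ (and similarly $d_2^{(i)}\geq 2$ versus $=1$ for $\lambda_i$): concretely, $[d_1,\dots,d_\rho]\cdot[d_\rho,\dots,d_1]-[d_2,\dots,d_\rho]\cdot[d_\rho,\dots,d_2]$ telescopes to $\pm1$ by the recursion for continuants, and the sign is governed by the parity of $\rho$, which is where the factor $\varepsilon=(-1)^{\rho+1}$ (written $+1$ for $\rho$ even, $-1$ for $\rho$ odd) enters. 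The only subtlety is that when $d_1^{(i)}=1$ the relevant convergent must be taken of the length-$(\rho-1)$ string $[d_\rho,\dots,d_3,d_2+1]$, since $[d_1,d_2,\dots]=[1,d_2,\dots]$ and $\tfrac{q_i}{p_i-1}$ has a unique regular continued fraction expansion of prescribed parity only after this normalization; the same remark applies one level deeper for $\lambda_i$.

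Next, by (\ref{KAPPALAMDA2}) the second parameter of the type of $\tau_i$ is the residue modulo $q_i^\ast$ of $\kappa_i\overline n_{12}-\lambda_i\overline n_{22}=\kappa_i\tfrac{\ell}{q_{i-1}}\big((r_i-2)q_{i-1}+\widehat p_{i-1}-1\big)+\lambda_i\ell$, which is the quantity $\mathfrak z_i$ defined by (\ref{ZETAI}). To evaluate $\mathfrak z_i$ explicitly I would substitute the convergent-expressions for $\kappa_i,\lambda_i$ and for $q_i^\ast$ (using (\ref{QISTAR2}) or equivalently $q_i^\ast=\mathrm{mult}_N(\tau_i)$) and simplify the resulting congruence; the four-way case split in (\ref{ZETAICOMP}) mirrors exactly the four-way split in the definition of $\kappa_i$ and $\lambda_i$, the parity of $\rho$ deciding whether $\mathfrak z_i$ is $\approx q_i^\ast(1-\tfrac1{[\cdots]})$ (odd case, where $\varepsilon=-1$ flips the sign) or $\approx \tfrac{q_i^\ast}{[\cdots]}$ (even case). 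At this point the type of $\tau_i=(\sigma_i^\ast)^\vee$ has been identified as $(\mathfrak z_i,q_i^\ast)$ up to replacing $\mathfrak z_i$ by its socius; since $\sigma_i^\ast=(\sigma_i^\ast)^{\vee\vee}=\tau_i^\vee$, Proposition \ref{DUALCNEIDENT} gives that $\sigma_i^\ast$ is of type $(q_i^\ast-\mathfrak z_i,\,q_i^\ast)$ — again up to socius. Finally, matching this with the normalization convention for $p_i^\ast$ (Note \ref{pcomment}: the ordering $\mathbb{R}_{\ge0}\mathbf m_{i-1}+\mathbb{R}_{\ge0}\mathbf m_i$ of the generators of $\sigma_i^\ast$ is fixed), one reads off $\widehat{p_i^\ast}=q_i^\ast-\mathfrak z_i$ and hence $p_i^\ast=q_i^\ast-\widehat{\mathfrak z_i}$, using $p\widehat p\equiv1\pmod q$ and $0<p_i^\ast,\widehat{p_i^\ast}<q_i^\ast$.

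\textbf{Main obstacle.} The conceptual steps are short, but the real work is the bookkeeping in the continued-fraction manipulations: verifying that the stated $\kappa_i,\lambda_i$ are integers and satisfy (\ref{KAPPA+LAMDA1}) requires the continuant identities together with careful handling of the $d_1^{(i)}=1$ (and $d_2^{(i)}=1$) degeneracies, and then pushing these through the congruence (\ref{ZETAI}) to land on (\ref{ZETAICOMP}) is a genuinely delicate computation where the parity sign $\varepsilon$ and the exact choice of reversed convergent must be tracked consistently. The symmetric statements (\ref{QI2}) and the dual formulas follow by interchanging the roles of $Q$ and $Q^\ast$ (using Proposition \ref{DELTASIGMAS} and the involution $(Q,N)\mapsto(Q^\ast,M)$), so no new idea is needed there; I would simply say "the proof of the dual assertion is similar."
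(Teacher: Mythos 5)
Your proposal follows essentially the same route as the paper's proof: transport $\tau_i$ to its standard model via $\Phi_{\mathcal{M}_{\sigma_i}\mathcal{B}^{-1}}$ using the coordinates (\ref{NFR2}) and (\ref{NFR1}), apply the $(\kappa,\lambda)$-recipe of Proposition \ref{PQDESCR1} together with the continuant identities behind (\ref{KAPPA+LAMDA1}), and close with $\sigma_i^{\ast}=\tau_i^{\vee}$ and Proposition \ref{DUALCNEIDENT}. The only detail worth noting is that the paper evaluates (\ref{ZETAI}) by eliminating $\lambda_i$ outright via (\ref{KAPPA+LAMDA1}) and (\ref{QISTAR2}), reducing the left-hand side to $-\kappa_i q_i^{\ast}/\ell-1$, so the four cases of (\ref{ZETAICOMP}) are governed by $\kappa_i$ and the parity of $\rho$ alone rather than by the joint case split for $\kappa_i$ and $\lambda_i$.
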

\begin{proof}
	$\tau_{i}$ is mapped by $\Phi_{\mathcal{M}_{\sigma_{i}}\mathcal{B}^{-1}}$
	(with $\Phi_{\mathcal{M}_{\sigma_{i}}\mathcal{B}^{-1}}$ as in the proof of
	Proposition \ref{QISTAREXPRESSION}) onto the $\mathbb{Z}^{2}$-cone
	\begin{equation}
	\Phi_{\mathcal{M}_{\sigma_{i}}\mathcal{B}^{-1}}(\tau_{i})=\Phi_{\left(
		\mathcal{BM}_{\sigma_{i}}^{-1}\right)  ^{-1}}(\tau_{i})=\mathbb{R}_{\geq
		0}\tbinom{\frac{\ell}{q_{i-1}}\left(  \left(  r_{i}-2\right)  q_{i-1}%
		+\widehat{p}_{i-1}-1\right)  }{-\ell}+\mathbb{R}_{\geq0}\tbinom{\frac{\ell
		}{q_{i}}\left(  p_{i}-1\right)  }{\ell} \label{PHIMB}%
	\end{equation}
	which is the standard model of $\tau_{i}$ w.r.t. $\mathcal{BM}_{\sigma_{i}%
	}^{-1}$ with mult$_{\mathbb{Z}^{2}}(\Phi_{\mathcal{M}_{\sigma_{i}}%
		\mathcal{B}^{-1}}(\tau_{i}))=$ mult$_{N}(\tau_{i})=q_{i}^{\ast}.$
	(\ref{KAPPA+LAMDA1}) is valid by the definition of $\kappa_{i},\lambda_{i}$
	(see \cite[Remark 3.2, p. 217]{DHH}). Assume that \[\mathbb{R}_{\geq0}%
	\tbinom{\frac{\ell}{q_{i}}\left(  p_{i}-1\right)  }{\ell}+\mathbb{R}_{\geq
		0}\tbinom{\frac{\ell}{q_{i-1}}\left(  \left(  r_{i}-2\right)  q_{i-1}%
		+\widehat{p}_{i-1}-1\right)  }{-\ell}\] (defined by interchanging the ordering
	of the minimal generators of (\ref{PHIMB})) is a $(\mathfrak{z}_{i}%
	,q_{i}^{\ast})$-cone. By Proposition \ref{PQDESCR1} $\mathfrak{z}_{i}$ has to
	be the unique positive integer which is smaller than $q_{i}^{\ast}$ and
	satisfies (\ref{ZETAI}). Using (\ref{KAPPA+LAMDA1}) and (\ref{QISTAR2}) we can
	write the left-hand side of (\ref{ZETAI}) as follows:
	\begin{align*}
	&  \kappa_{i}\frac{\ell}{q_{i-1}}\left(  \left(  r_{i}-2\right)
	q_{i-1}+\widehat{p}_{i-1}-1\right)  +\kappa_{i}\frac{\ell}{q_{i}}(p_{i}-1)-1\\
	&  =\kappa_{i}\ell \left(  \frac{\widehat{p}_{i-1}-1}{q_{i-1}}+\frac{p_{i}%
		-1}{q_{i}}+(r_{i}-2)\right)  -1=-\frac{\kappa_{i}q_{i}^{\ast}}{\ell}-1.
	\end{align*}
	Thus, (\ref{ZETAICOMP}) is true and (\ref{PHIMB}) is a $(\widehat
	{\mathfrak{z}_{i}},q_{i}^{\ast})$-cone (cf. Note \ref{pcomment} and the proof
	of Proposition \ref{PQDESCR2}). Since both $\tau_{i}=(\sigma_{i}^{\ast}%
	)^{\vee}$ and $\Phi_{\mathcal{M}_{\sigma_{i}}\mathcal{B}^{-1}}(\tau_{i})$ are
	$(q_{i}^{\ast}-p_{i}^{\ast},q_{i}^{\ast})$-cones (cf. Proposition
	\ref{DUALCNEIDENT}), we have $q_{i}^{\ast}-p_{i}^{\ast}=\widehat
	{\mathfrak{z}_{i}}.$
\end{proof}

\begin{note}
	Similarly, one shows that $p_{i}=q_{i}-\widehat{\mathfrak{z}_{i}^{\ast}}$ for
	all $i\in \left \{  1,\ldots,\nu \right \}  ,$ where $\mathfrak{z}_{i}^{\ast}$ is
	determined by the dual procedure, and $\widehat{\mathfrak{z}_{i}^{\ast}}$ is
	its socius w.r.t. $q_{i}$.
\end{note}

\section{Geometric interpretation of the characteristic differences whenever $\ell>1$} \label{CHARDIFFSEC}

\noindent Throughout this section we assume that $\ell >1.$ By (\ref{PIGE2}) we have int$(T_{F_{i}})\cap N\neq \varnothing,$
$\forall i\in \left \{  1,\ldots,\nu \right \}  ,$ and int$(Q)\cap N$ consists of
at least $\nu+1\geq4$ non-collinear lattice points. This means that
\setlength\extrarowheight{2pt}
\[
\fbox{$%
	\begin{array}
	[c]{ccc}
	& \mathbf{I}(Q):=\text{conv}\left(  \text{int}(Q)\cap N\right)  &
	\end{array}
	$}%
\]
\setlength\extrarowheight{-2pt}
is an $N$-polygon. Analogously,
\setlength\extrarowheight{2pt}
\[
\fbox{$%
	\begin{array}
	[c]{ccc}
	& \mathbf{I}(Q^{\ast}):=\text{conv}\left(  \text{int}(Q^{\ast})\cap M\right)
	&
	\end{array}
	$}%
\]
\setlength\extrarowheight{-2pt}
is an $M$-polygon. We wish to relate $\sharp \left(  \partial(\mathbf{I}%
(Q^{\ast}))\cap M\right)  $ with $\sharp \left(  \partial Q^{\ast}\cap
M\right)  $ and $\sharp \left(  \partial Q\cap N\right)  .$

\begin{lemma}
	\label{FSTARNEF}The divisor $f^{\star}(-\ell K_{X(N,\Delta_{Q})}%
	)+K_{X(N,\widetilde{\Delta}_{Q})}$ on $X(N,\widetilde{\Delta}_{Q})$ is nef.
	Moreover, using the notation introduced in \emph{(\ref{PEDE}),}%
	\begin{equation}
	P_{f^{\star}(-\ell K_{X(N,\Delta_{Q})})+K_{X(N,\widetilde{\Delta}_{Q})}%
	}=\mathbf{I}(Q^{\ast}). \label{PFSTARIQSTAR}%
	\end{equation}
	
\end{lemma}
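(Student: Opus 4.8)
The statement concerns the nef divisor
$D:=f^{\star}(-\ell K_{X(N,\Delta_{Q})})+K_{X(N,\widetilde{\Delta}_{Q})}$
and claims two things: that $D$ is nef, and that the polygon $P_{D}$ attached
to it (via the support function, as in (\ref{PEDE})) equals $\mathbf{I}(Q^{\ast})$.
The natural strategy is to translate both assertions into the language of
$\Delta$-support functions on the fan $\widetilde{\Delta}_{Q}$ and then read
off the answer from the combinatorics of the Hilbert bases of the cones
$\sigma_{i}$. First I would recall that $-\ell K_{X(N,\Delta_{Q})}$ is ample on
$X(N,\Delta_{Q})$ with $P_{-\ell K_{X(N,\Delta_{Q})}}=Q^{\ast}$ (this is
Proposition \ref{DELTASIGMAS} together with Theorem \ref{AmplenessCORR}, since
$\Delta_{Q}=\Sigma_{Q^{\ast}}$ and $D_{Q^{\ast}}=-\ell K_{X(N,\Delta_{Q})}$).
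Its pullback $f^{\star}(-\ell K_{X(N,\Delta_{Q})})$ is then a nef divisor on the
smooth surface $X(N,\widetilde{\Delta}_{Q})$ whose support function $h$ is the
restriction to $|\widetilde{\Delta}_{Q}|=\mathbb{R}^{2}$ of the support function
of $Q^{\ast}$; in particular $P_{f^{\star}(-\ell K_{X(N,\Delta_{Q})})}=Q^{\ast}$
still, by Theorem \ref{NEFFITY} (the $M$-polygon of a nef divisor is determined
by its support function). On the other hand $K_{X(N,\widetilde{\Delta}_{Q})}=
-\sum_{\varrho}\mathbf{V}_{\widetilde{\Delta}_{Q}}(\varrho)$ by (\ref{KFORMULA}),
so its support function $h_{K}$ is the unique $\widetilde{\Delta}_{Q}$-support
function with $h_{K}(\mathbf{n}_{\varrho})=1$ for every ray generator $\mathbf{n}_{\varrho}$
of $\widetilde{\Delta}_{Q}$, i.e.\ for every element of $\mathrm{Hilb}_{N}(\sigma_{i})$,
$i\in\{1,\dots,\nu\}$. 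Thus $h_{D}=h+h_{K}$, and by (\ref{PEDE})
$$
P_{D}=\{\mathbf{x}\in\mathbb{R}^{2}\mid \langle\mathbf{x},\mathbf{u}\rangle\ge
h_{Q^{\ast}}(\mathbf{u})+1\ \text{for every }\mathbf{u}\in\mathrm{Hilb}_{N}(\sigma_{i}),\ i\}.
$$

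Granting this reformulation, the key computation is to show that this
intersection of half-spaces is exactly $\mathbf{I}(Q^{\ast})=\mathrm{conv}(\mathrm{int}(Q^{\ast})\cap M)$.
I would argue the two inclusions separately. For $\mathbf{I}(Q^{\ast})\subseteq P_{D}$:
every lattice point $\mathbf{m}\in\mathrm{int}(Q^{\ast})\cap M$ satisfies
$\langle\mathbf{m},\mathbf{u}\rangle>h_{Q^{\ast}}(\mathbf{u})$ for every nonzero
$\mathbf{u}\in N$ supporting a facet, and since both sides are integers this gives
$\langle\mathbf{m},\mathbf{u}\rangle\ge h_{Q^{\ast}}(\mathbf{u})+1$; for a general
ray generator $\mathbf{u}=\mathbf{u}_{j}^{(i)}\in\mathrm{Hilb}_{N}(\sigma_{i})$,
which lies in the cone $\sigma_{i}$ and hence is a nonnegative combination of the
two facet normals $\boldsymbol{\eta}_{F_{i-1}},\boldsymbol{\eta}_{F_{i}}$ (up to
the relevant scaling), one still needs the strict inequality
$\langle\mathbf{m},\mathbf{u}_{j}^{(i)}\rangle\ge h_{Q^{\ast}}(\mathbf{u}_{j}^{(i)})+1$,
which should follow because $\mathbf{m}$ is interior and $\mathbf{u}_{j}^{(i)}$ is
primitive — this is where one uses that interior lattice points are at integral
distance $\ge 1$ from the affine hull of every edge, more precisely from the
hyperplane $\langle\,\cdot\,,\mathbf{u}_{j}^{(i)}\rangle=h_{Q^{\ast}}(\mathbf{u}_{j}^{(i)})$
because $\mathbf{u}_{j}^{(i)}$ is a primitive lattice vector. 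Conversely, for
$P_{D}\subseteq\mathbf{I}(Q^{\ast})$: the two facet-normals $\boldsymbol{\eta}_{F_{i-1}},
\boldsymbol{\eta}_{F_{i}}$ of $\sigma_{i}$ lie in $\mathrm{Hilb}_{N}(\sigma_{i})$
(they are $\mathbf{u}_{0}^{(i)}$-type and $\mathbf{u}_{s_{i}+1}^{(i)}$-type
endpoints, cf.\ Proposition \ref{HILBB2DIM}), so the defining inequalities of
$P_{D}$ already include $\langle\mathbf{x},\boldsymbol{\eta}_{F}\rangle\ge h_{Q^{\ast}}(\boldsymbol{\eta}_{F})+1$
for all edges $F$ of $Q^{\ast}$; taking the intersection over all such $F$ gives a
polygon contained in the ``shrunk'' polygon $\{\mathbf{x}\mid\langle\mathbf{x},\boldsymbol{\eta}_{F}\rangle\ge h_{Q^{\ast}}(\boldsymbol{\eta}_{F})+1\}$,
which is contained in $\mathrm{int}(Q^{\ast})$, hence (being a lattice polygon,
since $P_{D}$ is cut out by lattice-point inequalities and $D$ is Cartier with
$P_{D}$ the $M$-polygon of a nef divisor, so $P_{D}$ has lattice vertices by
Theorem \ref{NEFFITY}) contained in $\mathbf{I}(Q^{\ast})$. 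Finally, $P_{D}$ is a
genuine polygon (two-dimensional) because $\mathrm{int}(Q^{\ast})\cap M$ has at
least $\nu+1\ge 4$ non-collinear points by the $\ell>1$ analogue of (\ref{PIGE2}),
so both $P_{D}$ and $\mathbf{I}(Q^{\ast})$ are full-dimensional and the two
inclusions force equality, which is (\ref{PFSTARIQSTAR}).

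It remains to confirm that $D$ is nef, which by Theorem \ref{NEFFITY}(ii) is
equivalent to $h_{D}=h+h_{K}\in\mathrm{UCSF}(N,\widetilde{\Delta}_{Q})$, i.e.\ to
upper-convexity of $h_{D}$. Here $h=h_{Q^{\ast}}|_{\cdot}$ is upper convex
(support function of the polygon $Q^{\ast}$), but $h_{K}$ is emphatically
\emph{not} convex in general, so additivity of support functions does not
immediately help. The clean way is to invoke the equivalence (i)$\Leftrightarrow$(iii)
or (iv) of Theorem \ref{NEFFITY}: $D$ is nef iff the lattice points $\mathbf{l}_{\sigma}$
(the vertices of the ``pieces'' of $h_{D}$) all lie in $P_{D}$, equivalently iff
$P_{D}=\mathrm{conv}\{\mathbf{l}_{\sigma}\mid\sigma\in\widetilde{\Delta}_{Q}(2)\}$.
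Since each maximal cone of $\widetilde{\Delta}_{Q}$ is a sub-cone of some
$\sigma_{i}$, and on $\sigma_{i}$ the functions $h$ and $h_{K}$ are each
piecewise linear with the pieces refined by the resolution fan, one computes
$\mathbf{l}_{\sigma}$ for such a maximal cone directly from the Hilbert-basis
data (\ref{LATPOINTSSIGMA1})–(\ref{LATPOINTSSIGMA2}): on the cone spanned by
$\mathbf{u}_{j}^{(i)},\mathbf{u}_{j+1}^{(i)}$ one has $h(\mathbf{u}_{j}^{(i)})$
and $h_{K}(\mathbf{u}_{j}^{(i)})=1$, so $\mathbf{l}_{\sigma}$ is the unique
$\mathbf{m}\in M$ with $\langle\mathbf{m},\mathbf{u}_{j}^{(i)}\rangle=h(\mathbf{u}_{j}^{(i)})+1$
and likewise for $\mathbf{u}_{j+1}^{(i)}$. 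One then checks (this is the
genuinely computational heart, but it is a finite combinatorial check on the
continued-fraction data of each $\sigma_{i}$) that every such $\mathbf{l}_{\sigma}$
lies in $\mathrm{int}(Q^{\ast})\cap M=\mathbf{I}(Q^{\ast})\cap M\subseteq P_{D}$,
using that interior lattice points sit at integral distance $\ge 1$ from each
edge and, for the ``internal'' Hilbert-basis rays, that the adjacency relation
$\mathbf{u}_{j-1}^{(i)}+\mathbf{u}_{j+1}^{(i)}=b_{j}^{(i)}\mathbf{u}_{j}^{(i)}$
together with $b_{j}^{(i)}\ge 2$ forces the supporting values to increase by at
least $1$ as one moves inward. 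I expect the main obstacle to be precisely this
last verification — showing that the vertices $\mathbf{l}_{\sigma}$ attached to
the \emph{exceptional} rays $\mathbb{R}_{\ge0}\mathbf{u}_{j}^{(i)}$ ($1\le j\le s_{i}$)
do not escape $\mathrm{int}(Q^{\ast})$, which amounts to a careful bookkeeping
with the negative-regular continued fraction expansion of $q_{i}/(q_{i}-p_{i})$
and the primitivity of the $\mathbf{u}_{j}^{(i)}$. Once that is in hand, both
nefness and (\ref{PFSTARIQSTAR}) drop out simultaneously from Theorem \ref{NEFFITY}.
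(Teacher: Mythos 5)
Your setup is sound, and one piece of your argument is genuinely complete: the reduction of $P_{D}$, for $D:=f^{\star}(-\ell K_{X(N,\Delta_{Q})})+K_{X(N,\widetilde{\Delta}_{Q})}$, to the system of inequalities $\left\langle \mathbf{x},\mathbf{u}\right\rangle \geq h_{Q^{\ast}}(\mathbf{u})+1$ over all ray generators $\mathbf{u}$ of $\widetilde{\Delta}_{Q}$ is exactly the paper's starting point, and your proof of the inclusion $\mathbf{I}(Q^{\ast})\subseteq P_{D}$ (strict inequality for interior points, then integrality of $\left\langle \mathbf{m},\mathbf{u}\right\rangle$ and of $h_{Q^{\ast}}(\mathbf{u})$) is correct and does not presuppose nefness. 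The genuine gap is that the first --- and main --- assertion of the lemma, the nefness of $D$, is never established: you reduce it to showing that every lattice point $\mathbf{l}_{\sigma}$ attached to a maximal cone of $\widetilde{\Delta}_{Q}$ lies in $\mathrm{int}(Q^{\ast})$, call this ``the genuinely computational heart,'' and then explicitly leave it as an expectation. That verification is not routine bookkeeping to be deferred; it \emph{is} the content of the statement, and the behaviour at the rays $\mathbb{R}_{\geq0}\mathbf{n}_{i}$ (where the self-intersection data $-r_{i}$ can be $\geq -1$, not just $\leq -2$) is precisely where it could fail without further input. A secondary issue: your reverse inclusion $P_{D}\subseteq\mathbf{I}(Q^{\ast})$ invokes Theorem \ref{NEFFITY} to conclude that $P_{D}$ is a lattice polygon, which presupposes the nefness you only address afterwards; this circularity is repairable by reordering, but only once the nefness check has actually been carried out.

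For comparison, the paper closes the argument differently and more economically. It defines $h^{\prime}$ directly as the support function of the shrunk polytope $\left\{ \mathbf{x}\mid\left\langle \mathbf{x},\mathbf{n}_{\varrho}\right\rangle \geq h_{f^{\star}(D_{Q^{\ast}})}(\mathbf{n}_{\varrho})+1,\ \forall\varrho\right\}$ --- upper convex by construction, being a minimum of linear functionals --- observes that on the smooth fan $\widetilde{\Delta}_{Q}$ it is the support function of $D_{h^{\prime}}=f^{\star}(D_{Q^{\ast}})+K_{X(N,\widetilde{\Delta}_{Q})}$ via \eqref{CORRSF1}--\eqref{HDDEF}, and identifies its polytope with $\mathbf{I}(Q^{\ast})$. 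The verification you are missing is then supplied (in the Note immediately following the lemma) through Theorem \ref{NEFFITY}~(vii) rather than through your proposed vertex chase: for the exceptional curves one has $f^{\star}(-\ell K_{X(N,\Delta_{Q})})\cdot E_{j}^{(i)}=0$ because $E_{j}^{(i)}$ is contracted by $f$, so $D\cdot E_{j}^{(i)}=K_{X(N,\widetilde{\Delta}_{Q})}\cdot E_{j}^{(i)}=b_{j}^{(i)}-2\geq0$ by adjunction and \eqref{BJBIGGER1}; for the strict transforms $\overline{C}_{i}$ one combines the ampleness of $-\ell K_{X(N,\Delta_{Q})}$ downstairs (which yields $(-\ell K_{X(N,\Delta_{Q})})\cdot C_{i}=q_{i}^{\ast}/\ell\geq1$ and forces $r_{i}\leq1$) with the explicit intersection numbers of \cite[Lemma 4.3]{Dais1}. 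If you replace the unperformed check on the $\mathbf{l}_{\sigma}$'s by these two short intersection computations, your proof closes; as written, it does not.
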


\begin{proof}
	Since $\Sigma_{Q^{\ast}}=\Delta_{Q}$ and $-\ell K_{X(N,\Delta_{Q})}%
	=D_{Q^{\ast}},$ Theorem \ref{SIGMAPMINDES} (applied for the lattice
	$M$-polygon $Q^{\ast}$) implies that the pullback $f^{\star}(-\ell
	K_{X(N,\Delta_{Q})})=f^{\star}(D_{Q^{\ast}})$ of $D_{Q^{\ast}}$ via $f$ is the
	unique nef divisor on $X(N,\widetilde{\Delta}_{Q})$ for which $Q^{\ast
	}=P_{D_{Q^{\ast}}}=P_{f^{\star}(D_{Q^{\ast}})}.$ Hence,
	\[
	Q^{\ast}=\left \{  \left.  \mathbf{x}\in \mathbb{R}^{2}\right \vert \left \langle
	\mathbf{x},\mathbf{n}_{\varrho}\right \rangle \geq h_{f^{\star}(D_{Q^{\ast}}%
		)}(\mathbf{n}_{\varrho}\mathbf{)},\forall \varrho \in \widetilde{\Delta}%
	_{Q}\left(  1\right)  \right \}
	\]
	and
	\[
	\mathbf{I}(Q^{\ast})=\left \{  \left.  \mathbf{x}\in \mathbb{R}^{2}\right \vert
	\left \langle \mathbf{x},\mathbf{n}_{\varrho}\right \rangle >h_{f^{\star
		}(D_{Q^{\ast}})}(\mathbf{n}_{\varrho}\mathbf{)},\forall \varrho \in
	\widetilde{\Delta}_{Q}\left(  1\right)  \right \}  .
	\]
	We define the function $h^{\prime}:\mathbb{R}^{2}\longrightarrow \mathbb{R}$ by
	setting
	\[
	h^{\prime}(\mathbf{y}):=\min \left \{  \left \langle \mathbf{x},\mathbf{y}%
	\right \rangle \left \vert \left \langle \mathbf{x},\mathbf{n}_{\varrho
	}\right \rangle \geq h_{f^{\star}(D_{Q^{\ast}})}(\mathbf{n}_{\varrho}%
	\mathbf{)}+1,\forall \varrho \in \widetilde{\Delta}_{Q}\left(  1\right)  \right.
	\right \}  ,\  \forall \mathbf{y}\in \mathbb{R}^{2},
	\]
	(with $h^{\prime}(\mathbf{n}_{\varrho})=h_{f^{\star}(D_{Q^{\ast}})}%
	(\mathbf{n}_{\varrho}\mathbf{)}+1,\forall \varrho \in \widetilde{\Delta}%
	_{Q}\left(  1\right)  $.) This function is an upper convex $\widetilde{\Delta
	}_{Q}$-support function because $h_{f^{\star}(D_{Q^{\ast}})}$ is upper convex
	$\widetilde{\Delta}_{Q}$-support function (by the implication
	(viii)$\Rightarrow$(ii) in Theorem \ref{NEFFITY} for the divisor $f^{\star
	}(D_{Q^{\ast}})$) and $\widetilde{\Delta}_{Q}$ contains \textit{only basic}
	$N$-cones. Thus, by (\ref{CORRSF1}) and (\ref{CORRSF2}) (and by the
	implication (ii)$\Rightarrow$(viii) in Theorem \ref{NEFFITY} for $h^{\prime}%
	$), $h^{\prime}$ determines a unique nef divisor $D_{h^{\prime}}\in$
	Div$_{\text{C}}^{\mathbb{T}}(X(N,\widetilde{\Delta}_{Q})),$ namely%
	\begin{align*}
	D_{h^{\prime}}  &  =-\sum_{\varrho \in \widetilde{\Delta}_{Q}\left(  1\right)
	}h^{\prime}(\mathbf{n}_{\varrho})\mathbf{V}_{\widetilde{\Delta}_{Q}}%
	(\varrho)=-\sum_{\varrho \in \widetilde{\Delta}_{Q}\left(  1\right)
	}h_{f^{\star}(D_{Q^{\ast}})}(\mathbf{n}_{\varrho})\mathbf{V}_{\widetilde
		{\Delta}_{Q}}(\varrho)-\sum_{\varrho \in \widetilde{\Delta}_{Q}\left(  1\right)
	}\mathbf{V}_{\widetilde{\Delta}_{Q}}(\varrho)\bigskip \\
	&  =f^{\star}(D_{Q^{\ast}})+K_{X(N,\widetilde{\Delta}_{Q})}=f^{\star}(-\ell
	K_{X(N,\Delta_{Q})})+K_{X(N,\widetilde{\Delta}_{Q})}%
	\end{align*}
	(according to (\ref{KFORMULA}) for the $N$-fan $\widetilde{\Delta}_{Q}$).
	Since
	$
	\mathbf{I}(Q^{\ast})=\left \{  \left.  \mathbf{x}\in \mathbb{R}^{2}\right \vert
	\left \langle \mathbf{x},\mathbf{n}_{\varrho}\right \rangle \geq h^{\prime
	}(\mathbf{n}_{\varrho}\mathbf{)},\forall \varrho \in \widetilde{\Delta}%
	_{Q}\left(  1\right)  \right \}  ,
	$
	(\ref{PFSTARIQSTAR}) is true.
\end{proof}

\begin{note}
	An alternative proof of the neffity of $f^{\star}(-\ell K_{X(N,\Delta_{Q}%
		)})+K_{X(N,\widetilde{\Delta}_{Q})}$ (from the point of view of intersection
	theory) comes from the fact that 
	$
	f^{\star}(-\ell K_{X(N,\Delta_{Q})})\sim-\ell K_{X(N,\widetilde{\Delta}_{Q}%
		)}+\ell \sum_{i=1}^{\nu}K(E^{(i)})
	$
	(cf. (\ref{TORDISCREP})), which gives
	\[
	f^{\star}(-\ell K_{X(N,\Delta_{Q})})+K_{X(N,\widetilde{\Delta}_{Q})}\sim
	(\ell-1)(-K_{X(N,\widetilde{\Delta}_{Q})})+\ell \sum_{i=1}^{\nu}K(E^{(i)}).
	\]
	Since $-\ell K_{X(N,\Delta_{Q})}\in$ Div$_{\text{C}}^{\mathbb{T}}(X\left(
	N,\Delta \right)  )$ is ample, the implication (i)$\Rightarrow$(v) in Theorem
	\ref{AMPLENESS} (applied for the $N$-fan $\Delta_{Q}$ and the divisor $-\ell
	K_{X(N,\Delta_{Q})}$) and \cite[Lemma 4.7, pp. 97-98]{Dais1} inform us that
	for all $i\in \left \{  1,\ldots,\nu \right \}  ,$%
	\begin{align*}
	(-\ell K_{X(N,\Delta_{Q})})\cdot C_{i}  &  =\ell \left(  C_{i-1}\cdot
	C_{i}+C_{i}^{2}+C_{i}\cdot C_{i+1}\right) \\
	&  =\ell \left(  -r_{i}+\tfrac{q_{i}-p_{i}+1}{q_{i}}+\tfrac{q_{i-1}-\widehat
		{p}_{i-1}+1}{q_{i-1}}\right)  =\ell \left(  -r_{i}+2-(\tfrac{p_{i}+1}{q_{i}%
	}+\tfrac{\widehat{p}_{i-1}+1}{q_{i-1}})\right)  >0\Rightarrow r_{i}\leq1
	\end{align*}
	as it is $\tfrac{p_{i}+1}{q_{i}},\tfrac{\widehat{p}_{i-1}+1}{q_{i-1}}%
	\in(0,1]\cap \mathbb{Q}.$ (Alternatively, by (\ref{QISTAR2}), $(-\ell
	K_{X(N,\Delta_{Q})})\cdot C_{i}=\frac{q_{i}^{\ast}}{\ell}=$ gcd$(q_{i}^{\ast
	},p_{i}^{\ast}-1)\geq1.$) Using \cite[Lemma 4.3, pp.
	93-94]{Dais1} we infer that {\small
		\[
		((\ell-1)(-K_{X(N,\widetilde{\Delta}_{Q})})+\ell \sum_{i=1}^{\nu}(K(E^{(i)}))\cdot \overline{C}_{i} =(\ell-1)\cdot 2+(\ell-1)\left(  -r_{i}\right)
		+\ell \cdot 2 =4 \ell -2 + (\ell-1)\left(  -r_{i}\right)>0 
		\]}
	\hspace{-0.3cm} because $\ell \geq2$ and $-r_i\ge -1$ for all $i\in \left \{  1,\ldots,\nu \right \}.$
	Furthermore, since each of $E_{j}^{(i)}$'s is isomorphic to $\mathbb{P}_{\mathbb{C}}^{1}$,
	adjunction formula and \cite[Lemma 4.3, pp.
	93-94]{Dais1}\ give%
	\[
	K_{X(N,\widetilde{\Delta}_{Q})} \cdot E_{j}^{(i)}= K(E^{(i)})\cdot E_{j}^{(i)}=-2-(E_{j}^{(i)})^{2}=b_{j}^{\left(  i\right)
	}-2,
	\]
	i.e.,
	\[
	\left(  (\ell-1)(-K_{X(N,\widetilde{\Delta}_{Q})})+\ell \sum_{i=1}^{\nu
	}(K(E^{(i)})\right)  \cdot E_{j}^{(i)}=\left(  1-\ell \right)  (b_{j}^{\left(
		i\right)  }-2)+\ell(b_{j}^{\left(  i\right)  }-2)=b_{j}^{\left(  i\right)
	}-2\geq0
	\]
	for all $i\in \left \{  1,\ldots,\nu \right \}  $ and all $j\in \{1,\ldots,s_{i}\}$
	(see (\ref{BJBIGGER1})). From the implication (vii)$\Rightarrow$(viii) in
	Theorem \ref{NEFFITY} (applied for the $N$-fan $\widetilde{\Delta}_{Q}$ and
	the Cartier divisor $f^{\star}(-\ell K_{X(N,\Delta_{Q})})+K_{X(N,\widetilde
		{\Delta}_{Q})}$) we conclude that $f^{\star}(-\ell K_{X(N,\Delta_{Q}%
		)})+K_{X(N,\widetilde{\Delta}_{Q})}$ is indeed nef.
\end{note}

\begin{lemma}
	The area of the $M$-polygon $\mathbf{I}(Q^{\ast})$ is given by the formula
	\begin{equation}
	\text{\emph{area}}_{M}(\mathbf{I}(Q^{\ast}))=\frac{1}{2}\left(  (\ell
	-2)\sharp \left(  \partial Q^{\ast}\cap M\right)  +K_{X(N,\widetilde{\Delta
		}_{Q})}^{2}\right)  . \label{AREAIQSTAR}%
	\end{equation}
	
\end{lemma}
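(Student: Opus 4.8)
The plan is to deduce \eqref{AREAIQSTAR} from Lemma~\ref{FSTARNEF} together with the self-intersection formula \eqref{DSQUAREPD} for nef divisors on a smooth compact toric surface. Set
\begin{equation*}
D:=f^{\star}(-\ell K_{X(N,\Delta_{Q})})+K_{X(N,\widetilde{\Delta}_{Q})}.
\end{equation*}
By Lemma~\ref{FSTARNEF} this divisor is nef on the \emph{smooth} surface $X(N,\widetilde{\Delta}_{Q})$, and by \eqref{PFSTARIQSTAR} its associated polygon is $P_{D}=\mathbf{I}(Q^{\ast})$, so \eqref{DSQUAREPD} gives $2\,\text{area}_{M}(\mathbf{I}(Q^{\ast}))=D^{2}$. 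Hence everything reduces to computing $D^{2}$, which I would do by expanding
\begin{equation*}
D^{2}=f^{\star}(-\ell K_{X(N,\Delta_{Q})})^{2}+2\,f^{\star}(-\ell K_{X(N,\Delta_{Q})})\cdot K_{X(N,\widetilde{\Delta}_{Q})}+K_{X(N,\widetilde{\Delta}_{Q})}^{2}
\end{equation*}
and evaluating the first two summands separately.

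For the first summand I would use $\Delta_{Q}=\Sigma_{Q^{\ast}}$ (Proposition~\ref{DELTASIGMAS}) and the equality $-\ell K_{X(N,\Delta_{Q})}=D_{Q^{\ast}}$, so that $f^{\star}(-\ell K_{X(N,\Delta_{Q})})=f^{\star}(D_{Q^{\ast}})$ is precisely the nef divisor on $X(N,\widetilde{\Delta}_{Q})=X(N,\widetilde{\Sigma_{Q^{\ast}}})$ attached by Theorem~\ref{SIGMAPMINDES} to the $M$-polygon $Q^{\ast}$. Then \eqref{DPSQUARE} (with $P=Q^{\ast}$) gives $f^{\star}(-\ell K_{X(N,\Delta_{Q})})^{2}=2\,\text{area}_{M}(Q^{\ast})$, and formula \eqref{BOUNDARYAREA} of Proposition~\ref{NOBOUNDARYPNTS}, applied to the $\ell$-reflexive pair $(Q^{\ast},M)$, turns this into $\ell\,\sharp(\partial Q^{\ast}\cap M)$; equivalently one may just combine \eqref{K2FORMULA} with $f^{\star}(-\ell K_{X(N,\Delta_{Q})})^{2}=\ell^{2}K_{X(N,\Delta_{Q})}^{2}$, the latter being the definition \eqref{INTNUMBSING} of the self-intersection of the canonical class on the singular surface. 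For the cross term I would apply \eqref{DPTHETAP} with $P=Q^{\ast}$, which gives $f^{\star}(-\ell K_{X(N,\Delta_{Q})})\cdot K_{X(N,\widetilde{\Delta}_{Q})}=-\sharp(\partial Q^{\ast}\cap M)$. Substituting both evaluations into the expansion yields
\begin{equation*}
2\,\text{area}_{M}(\mathbf{I}(Q^{\ast}))=\ell\,\sharp(\partial Q^{\ast}\cap M)-2\,\sharp(\partial Q^{\ast}\cap M)+K_{X(N,\widetilde{\Delta}_{Q})}^{2},
\end{equation*}
which is \eqref{AREAIQSTAR} after division by $2$.

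I do not expect a genuine obstacle here: once Lemma~\ref{FSTARNEF} is in hand the proof is essentially bookkeeping. The points needing care are that the identifications $\Delta_{Q}=\Sigma_{Q^{\ast}}$ and $-\ell K_{X(N,\Delta_{Q})}=D_{Q^{\ast}}$ must be cited explicitly so that Theorem~\ref{SIGMAPMINDES}, which is stated for $D_{P}$ with $P$ an $M$-polygon, can legitimately be invoked with $P=Q^{\ast}$; and that all intersection numbers occurring are the pullback ones of \eqref{INTNUMBSING}, so that passing from $X(N,\Delta_{Q})$ to its minimal desingularization does not alter $K_{X(N,\Delta_{Q})}^{2}$. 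As a consistency check one could instead compute $D^{2}$ directly on $X(N,\widetilde{\Delta}_{Q})$ from the discrepancy relation $f^{\star}(-\ell K_{X(N,\Delta_{Q})})\sim-\ell K_{X(N,\widetilde{\Delta}_{Q})}+\ell\sum_{i=1}^{\nu}K(E^{(i)})$ of \eqref{TORDISCREP} and the intersection tables of \cite{Dais1}, but this route is longer and I would keep it only as a secondary verification.
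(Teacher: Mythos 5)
Your proposal is correct and follows essentially the same route as the paper: apply \eqref{DSQUAREPD} to the nef divisor $f^{\star}(-\ell K_{X(N,\Delta_{Q})})+K_{X(N,\widetilde{\Delta}_{Q})}$, expand the square, evaluate $f^{\star}(-\ell K_{X(N,\Delta_{Q})})^{2}=\ell\,\sharp(\partial Q^{\ast}\cap M)$ via \eqref{K2FORMULA} (equivalently \eqref{DPSQUARE} and \eqref{BOUNDARYAREA}), and evaluate the cross term via \eqref{DPTHETAP} using $\Delta_{Q}=\Sigma_{Q^{\ast}}$ and $-\ell K_{X(N,\Delta_{Q})}=D_{Q^{\ast}}$. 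The points you flag as needing care are precisely the citations the paper makes, so no gap remains.
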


\begin{proof}
	Using formula (\ref{DSQUAREPD}) for the nef divisor $f^{\star}(-\ell
	K_{X(N,\Delta_{Q})})+K_{X(N,\widetilde{\Delta}_{Q})}$ we deduce from
	(\ref{K2FORMULA}):%
	\begin{align*}
	2\, \text{area}_{M}(P_{f^{\star}(-\ell K_{X(N,\Delta_{Q})})+K_{X(N,\widetilde
			{\Delta}_{Q})}})  &  =\left(  f^{\star}(-\ell K_{X(N,\Delta_{Q})}%
	)+K_{X(N,\widetilde{\Delta}_{Q})}\right)  ^{2}\\
	&  =f^{\star}(-\ell K_{X(N,\Delta_{Q})})^{2}+2f^{\star}(-\ell K_{X(N,\Delta
		_{Q})})\cdot K_{X(N,\widetilde{\Delta}_{Q})}+K_{X(N,\widetilde{\Delta}_{Q}%
		)}^{2}\\
	&  =\ell^{2}K_{X(N,\Delta_{Q})}^{2}+2f^{\star}(-\ell K_{X(N,\Delta_{Q})})\cdot
	K_{X(N,\widetilde{\Delta}_{Q})}+K_{X(N,\widetilde{\Delta}_{Q})}^{2}\\
	&  =\ell \, \sharp \left(  \partial Q^{\ast}\cap M\right)  +2f^{\star}(-\ell
	K_{X(N,\Delta_{Q})})\cdot K_{X(N,\widetilde{\Delta}_{Q})}+K_{X(N,\widetilde
		{\Delta}_{Q})}^{2}.
	\end{align*}
	Since $\Sigma_{Q^{\ast}}=\Delta_{Q}$ and $-\ell K_{X(N,\Delta_{Q})}%
	=D_{Q^{\ast}},$ applying (\ref{DPTHETAP}) for the lattice $M$-polygon
	$Q^{\ast}$ we get
	\[
	f^{\star}(-\ell K_{X(N,\Delta_{Q})})\cdot K_{X(N,\widetilde{\Delta}_{Q}%
		)}=-\sharp \left(  \partial Q^{\ast}\cap M\right)  .
	\]
	Hence,
	\begin{equation}
	2\, \text{area}_{M}(P_{f^{\star}(-\ell K_{X(N,\Delta_{Q})})+K_{X(N,\widetilde
			{\Delta}_{Q})}})=(\ell-2)\sharp \left(  \partial Q^{\ast}\cap M\right)
	+K_{X(N,\widetilde{\Delta}_{Q})}^{2}. \label{AREAPFSTAR}%
	\end{equation}
	(\ref{AREAIQSTAR}) follows from (\ref{AREAPFSTAR}) and (\ref{PFSTARIQSTAR}).
\end{proof}

\begin{theorem}
	\label{THETAIQ2}The number of lattice points lying on the boundary of
	$\mathbf{I}(Q^{\ast})$ is given by the formulae
	\setlength\extrarowheight{2pt}
	\begin{equation}
	\fbox{$%
		\begin{array}
		[c]{ccc}
		& \sharp \left(
		\partial Q^{\ast}\cap M\right)  -K_{X(N,\widetilde{\Delta}_{Q})}^{2}=\sharp \left(  \partial(\mathbf{I}(Q^{\ast}))\cap M\right)=e(X(N,\widetilde{\Delta
		}_{Q}))-\sharp \left(  \partial Q\cap N\right). &
		\end{array}
		$} \label{THETAIQ1F}%
	\end{equation}
	\setlength\extrarowheight{-2pt}
\end{theorem}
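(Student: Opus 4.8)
The plan is to read off both equalities of \eqref{THETAIQ1F} from the nef divisor
$D':=f^{\star}(-\ell K_{X(N,\Delta_{Q})})+K_{X(N,\widetilde{\Delta}_{Q})}$ supplied by Lemma~\ref{FSTARNEF}, together with Noether's formula \eqref{NOETHERSFORMULA} and the Twelve-Point Theorem \ref{G12PTTHM}. The left-hand equality is the substantive one; the right-hand equality is then a one-line bookkeeping step.

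First I would prove $\sharp(\partial Q^{\ast}\cap M)-K_{X(N,\widetilde{\Delta}_{Q})}^{2}=\sharp(\partial(\mathbf{I}(Q^{\ast}))\cap M)$. By Lemma~\ref{FSTARNEF}, $D'$ is nef on the smooth complete toric surface $X(N,\widetilde{\Delta}_{Q})$ and $P_{D'}=\mathbf{I}(Q^{\ast})$, which is a genuine two-dimensional $M$-polygon (its vertices are interior lattice points of $Q^{\ast}$, and $\operatorname{int}(Q^{\ast})\cap M$ contains the $\nu+1\ge 4$ non-collinear points noted at the start of the section). Running the same Demazure-vanishing plus Riemann--Roch argument as in the proof of Theorem~\ref{SIGMAPMINDES}, now for the nef divisor $D'$ — using $P_{kD'}=kP_{D'}$ and $H^{0}(X(N,\widetilde{\Delta}_{Q}),\mathcal{O}(kD'))=\bigoplus_{\mathbf{m}\in kP_{D'}\cap M}\mathbb{C}\mathbf{e}(\mathbf{m})$ — gives $\chi(\mathcal{O}(kD'))=\text{Ehr}_{M}(\mathbf{I}(Q^{\ast});k)$, and comparing the coefficient of $k$ with the Riemann--Roch expansion \eqref{RR2} yields $-D'\cdot K_{X(N,\widetilde{\Delta}_{Q})}=\sharp(\partial(\mathbf{I}(Q^{\ast}))\cap M)$. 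On the other hand, since $\Sigma_{Q^{\ast}}=\Delta_{Q}$ (Proposition~\ref{DELTASIGMAS}) and $-\ell K_{X(N,\Delta_{Q})}=D_{Q^{\ast}}$, formula \eqref{DPTHETAP} applied to the lattice $M$-polygon $Q^{\ast}$ gives $f^{\star}(-\ell K_{X(N,\Delta_{Q})})\cdot K_{X(N,\widetilde{\Delta}_{Q})}=-\sharp(\partial Q^{\ast}\cap M)$ — precisely the identity already invoked in the proof of \eqref{AREAIQSTAR}. Combining,
\[
\sharp(\partial(\mathbf{I}(Q^{\ast}))\cap M)=-D'\cdot K_{X(N,\widetilde{\Delta}_{Q})}=-f^{\star}(-\ell K_{X(N,\Delta_{Q})})\cdot K_{X(N,\widetilde{\Delta}_{Q})}-K_{X(N,\widetilde{\Delta}_{Q})}^{2}=\sharp(\partial Q^{\ast}\cap M)-K_{X(N,\widetilde{\Delta}_{Q})}^{2}.
\]

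For the right-hand equality I would simply combine Noether's formula \eqref{NOETHERSFORMULA}, giving $e(X(N,\widetilde{\Delta}_{Q}))=12-K_{X(N,\widetilde{\Delta}_{Q})}^{2}$, with Theorem~\ref{G12PTTHM}, giving $\sharp(\partial Q^{\ast}\cap M)=12-\sharp(\partial Q\cap N)$; subtracting $\sharp(\partial Q\cap N)$ from the former and $K_{X(N,\widetilde{\Delta}_{Q})}^{2}$ from the latter yields $e(X(N,\widetilde{\Delta}_{Q}))-\sharp(\partial Q\cap N)=\sharp(\partial Q^{\ast}\cap M)-K_{X(N,\widetilde{\Delta}_{Q})}^{2}$, which is the remaining outer term of \eqref{THETAIQ1F}.

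A purely lattice-point alternative for the first equality avoids the nef Riemann--Roch step: apply Pick's formula \eqref{PICK} to $\mathbf{I}(Q^{\ast})$, note that $\mathbf{I}(Q^{\ast})\cap M=\operatorname{int}(Q^{\ast})\cap M$, evaluate $\sharp(\operatorname{int}(Q^{\ast})\cap M)$ by Ehrhart reciprocity \eqref{Ehrhartint} together with \eqref{BOUNDARYAREA} for $(Q^{\ast},M)$ to get $\sharp(\operatorname{int}(Q^{\ast})\cap M)=\tfrac{\ell-1}{2}\sharp(\partial Q^{\ast}\cap M)+1$, then substitute the area formula \eqref{AREAIQSTAR}; the $\sharp(\partial Q^{\ast}\cap M)$-terms collapse and one is left with $\sharp(\partial Q^{\ast}\cap M)-K_{X(N,\widetilde{\Delta}_{Q})}^{2}=\sharp(\partial(\mathbf{I}(Q^{\ast}))\cap M)$. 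The only point requiring genuine care in either route is the legitimacy of the intersection-number/Riemann--Roch formulae for a nef but not ample divisor (respectively, the identification $\mathbf{I}(Q^{\ast})\cap M=\operatorname{int}(Q^{\ast})\cap M$ and the non-degeneracy of $\mathbf{I}(Q^{\ast})$); once \eqref{PFSTARIQSTAR} and \eqref{AREAIQSTAR} are in hand, nothing else is hard.
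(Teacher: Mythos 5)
Your proposal is correct, and in fact your ``purely lattice-point alternative'' at the end is precisely the paper's own proof: Pick's formula (\ref{PICK}) applied to $\mathbf{I}(Q^{\ast})$, the identification $\mathbf{I}(Q^{\ast})\cap M=\operatorname{int}(Q^{\ast})\cap M$, Ehrhart reciprocity (\ref{Ehrhartint}) together with (\ref{BOUNDARYAREA}) to get $\sharp(\operatorname{int}(Q^{\ast})\cap M)=\tfrac{1}{2}(\ell-1)\sharp(\partial Q^{\ast}\cap M)+1$, and then substitution of (\ref{AREAIQSTAR}); the second equality is obtained, exactly as you say, from (\ref{G12PTFORMULA}) and (\ref{NOETHERSFORMULA}). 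Your primary route is a genuine (if mild) variant: where the paper extracts the information from the \emph{quadratic} Ehrhart coefficient of $\mathbf{I}(Q^{\ast})$ (via $D'^{2}=2\,\mathrm{area}_{M}(P_{D'})$ inside (\ref{AREAIQSTAR})) and then needs Pick plus reciprocity to convert area data into boundary counts, you read the boundary count off the \emph{linear} coefficient directly, i.e.\ $-D'\cdot K_{X(N,\widetilde{\Delta}_{Q})}=\sharp(\partial(\mathbf{I}(Q^{\ast}))\cap M)$, and then expand $D'\cdot K$ using (\ref{DPTHETAP}). This is shorter and dispenses with (\ref{AREAIQSTAR}) altogether; the one point you rightly flag --- that the Demazure-vanishing/Riemann--Roch comparison of Theorem \ref{SIGMAPMINDES} applies to the nef divisor $D'$ even though it is not a pullback of an ample divisor along $f$ --- is unproblematic, since the proof of (\ref{hiEhrh}) only uses nefness (equivalently basepoint-freeness, Theorem \ref{NEFFITY}) together with $P_{kD'}=kP_{D'}$ and the fact that $P_{D'}=\mathbf{I}(Q^{\ast})$ is a genuine two-dimensional lattice polygon, which Lemma \ref{FSTARNEF} and the standing assumption $\ell>1$ guarantee. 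Both routes ultimately rest on the same key input, namely (\ref{PFSTARIQSTAR}).
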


\begin{proof}
	At first we apply Pick's formula (\ref{PICK}) for the $M$-polygon
	$\mathbf{I}(Q^{\ast})$:
	\begin{equation}
	\sharp \left(  \mathbf{I}(Q^{\ast})\cap M\right)  =\text{area}_{M}%
	(\mathbf{I}(Q^{\ast}))+\tfrac{1}{2}\sharp \left(  \partial(\mathbf{I}(Q^{\ast
	}))\cap M\right)  +1. \label{PICKIQSTAR}%
	\end{equation}
	By (\ref{Ehrhartint}) and (\ref{BOUNDARYAREA}) we obtain
	\begin{equation}
	\sharp \left(  \text{int}(Q^{\ast})\cap M\right)  =\text{area}_{M}(Q^{\ast
	})-\tfrac{1}{2}\sharp \left(  \partial Q^{\ast}\cap M\right)  +1=\tfrac{1}%
	{2}\left(  \ell-1\right)  \sharp \left(  \partial Q^{\ast}\cap M\right)  +1.
	\label{PICKINTQSTAR}%
	\end{equation}
	Obviously,%
	\begin{equation}
	\sharp \left(  \mathbf{I}(Q^{\ast})\cap M\right)  =\sharp \left(  \text{int}%
	(Q^{\ast})\cap M\right)  . \label{EQUALITYIQSTARINT}%
	\end{equation}
	The first of the equalities (\ref{THETAIQ1F}) follows from (\ref{PICKIQSTAR}), (\ref{PICKINTQSTAR}),
	(\ref{EQUALITYIQSTARINT}) and (\ref{AREAIQSTAR}). The second one follows directly from (\ref{G12PTFORMULA}) and (\ref{NOETHERSFORMULA}).
\end{proof}

\begin{note}
	(i) The second term in the left-hand side of (\ref{THETAIQ1F}) can be written (by
	(\ref{K2FORMULA}) and \cite[Corollary 4.6, p. 96]{Dais1}) as
	\[
	-K_{X(N,\widetilde{\Delta}_{Q})}^{2}=-K_{X(N,\Delta_{Q})}^{2}-\sum_{i=1}^{\nu
	}K(E^{(i)})^{2}=-\frac{1}{\ell}\sharp \left(  \partial Q^{\ast}\cap M\right)  -\sum_{i=1}%
	^{\nu}\left(  \frac{p_{i}+\widehat{p}_{i}-2}{q_{i}}\right)  +\sum_{i=1}^{\nu
	}\sum_{j=1}^{s_{i}}(b_{j}^{\left(  i\right)  }-2).
	\]
	(ii) \textit{Dual formulae}. Interchanging the roles of the $\ell$-reflexive pairs\textit{
	}$(Q^{\ast},M)$\textit{ }and\textit{ }$(Q,N),$ and using the minimal
	desingularization $\varphi:X(M,\widetilde{\Delta}_{Q^{\ast}})\longrightarrow
	X(M,\Delta_{Q^{\ast}})$ of $X(M,\Delta_{Q^{\ast}}),$ we obtain
	\setlength\extrarowheight{2pt}
	\begin{equation}
	\fbox{$%
		\begin{array}
		[c]{ccc}
		& \sharp \left(
		\partial Q\cap N\right)  -K_{X(M,\widetilde{\Delta}_{Q^{\ast}})}^{2}=	\sharp \left(\partial(\mathbf{I}(Q))\cap N\right)  =e(X(M,\widetilde{\Delta
		}_{Q^{\ast}}))-\sharp \left(  \partial Q^{\ast}\cap M\right) &
		\end{array}
		$} \label{THETAIQ1F2}%
	\end{equation}
	\setlength\extrarowheight{-2pt}
	
	with
	\[
	-K_{X(M,\widetilde{\Delta}_{Q^{\ast}})}^{2}=-\frac{1}{\ell}\sharp \left(
	\partial Q\cap N\right)  -\sum_{i=1}^{\nu}\left(  \frac{p_{i}^{\ast}%
		+\widehat{p_{i}^{\ast}}-2}{q_{i}^{\ast}}\right)  +\sum_{i=1}^{\nu}\sum
	_{j=1}^{s_{i}^{\ast}}(c_{j}^{\ast \, \left(  i\right)  }-2).
	\]
(The numbers of lattice points counted in (\ref{THETAIQ1F}) and (\ref{THETAIQ1F2}) are not necessarily equal. See example \ref{EXVARTHNOT}.) \newline \smallskip
\noindent (iii) If for each $i\in \left \{  1,\ldots,\nu \right \}  $ we denote by
\[
\mathbf{I}(\Theta_{\tau_{i}}):=\text{conv}(\text{int}\left(  \tau_{i}\right)
\cap N)\subset \Theta_{\tau_{i}}\text{ \ (resp., }\mathbf{I}(\Theta_{\tau
	_{i}^{\ast}}):=\text{conv}(\text{int}\left(  \tau_{i}^{\ast}\right)  \cap
M)\subset \Theta_{\tau_{i}^{\ast}}\text{)}%
\]
the convex hull of the lattice points lying in the interior of the auxiliary
cone $\tau_{i}$ (resp., of $\tau_{i}^{\ast}$) and by $\partial \mathbf{I}%
(\Theta_{\tau_{i}})^{\mathbf{cp}}\subset \partial \Theta_{\tau_{i}}%
^{\mathbf{cp}}$ (resp., by $\partial \mathbf{I}(\Theta_{\tau_{i}^{\ast}%
})^{\mathbf{cp}}\subset \partial \Theta_{\tau_{i}^{\ast}}^{\mathbf{cp}}$) the
part of the boundary of $\mathbf{I}(\Theta_{\tau_{i}})$ (resp., of
$\mathbf{I}(\Theta_{\tau_{i}^{\ast}})$) containing only its compact edges,
then%
\[
\text{Vert}\left(  \mathbf{I}(Q)\right)  =%
{\displaystyle \bigcup \limits_{i=1}^{\nu}}
\text{Vert}(\partial \mathbf{I}(\Theta_{\tau_{i}})^{\mathbf{cp}}+\mathbf{n}%
_{i})\  \  \  \text{and \  \ Vert}\left(  \mathbf{I}(Q^{\ast})\right)  =%
{\displaystyle \bigcup \limits_{i=1}^{\nu}}
\text{Vert}(\partial \mathbf{I}(\Theta_{\tau_{i}^{\ast}})^{\mathbf{cp}%
}+\mathbf{m}_{i}).
\]
Moreover, setting%
\[%
\begin{array}
[c]{c}%
\mathfrak{K}_{Q}^{(i)}:=\left \{  \mathbf{u}+\mathbf{n}_{i}\left \vert
\mathbf{u}\in \text{Hilb}_{N}(\tau_{i})\mathbb{r}\left \{
\begin{array}
[c]{c}%
\text{the two minimal}\\
\text{generators of }\tau_{i}%
\end{array}
\vspace{0.2cm}
\right \}  \right.  \right \}  ,\medskip \\
\mathfrak{K}_{Q^{\ast}}^{(i)}:=\left \{  \mathbf{w}+\mathbf{m}_{i}\left \vert
\mathbf{w}\in \text{Hilb}_{M}(\tau_{i}^{\ast})\mathbb{r}\left \{
\begin{array}
[c]{c}%
\text{the two minimal}\\
\text{generators of }\tau_{i}^{\ast}%
\end{array}
\right \}  \right.  \right \}  ,
\end{array}
\]
and denoting by $\mathbf{u}_{\text{last}}^{(i)}$ (resp., by $\mathbf{w}%
_{\text{last}}^{(i)}$) the last lattice point of $\mathfrak{K}_{Q}^{(i)}$
(resp., of $\mathfrak{K}_{Q^{\ast}}^{(i)}$) and by $\mathbf{u}_{\text{first}%
}^{(i+1)}$ (resp., by $\mathbf{w}_{\text{first}}^{(i+1)}$) the first lattice
point of $\mathfrak{K}_{Q}^{(i+1)}$ (resp., of $\mathfrak{K}_{Q^{\ast}%
}^{(i+1)}$) w.r.t. the anticlockwise direction, then
\[
\partial(\mathbf{I}(Q))\cap N=%
{\textstyle \bigcup \limits_{i=1}^{\nu}}
(\mathfrak{K}_{Q}^{(i)}\cup \mathfrak{L}_{Q}^{(i)})\  \  \  \text{and
	\  \ }\partial(\mathbf{I}(Q^{\ast}))\cap M=%
{\textstyle \bigcup \limits_{i=1}^{\nu}}
(\mathfrak{K}_{Q^{\ast}}^{(i)}\cup \mathfrak{L}_{Q^{\ast}}^{(i)}),
\]
where%
\[
\mathfrak{L}_{Q}^{(i)}:=\left \{
\begin{array}
[c]{ll}%
\text{int}(\text{conv}(\{ \mathbf{u}_{\text{last}}^{(i)},\mathbf{u}%
_{\text{first}}^{(i+1)}\}))\cap N, & \text{if }\mathbf{u}_{\text{last}}%
^{(i)}\neq \mathbf{u}_{\text{first}}^{(i+1)}, \medskip\\
\varnothing, & \text{if }\mathbf{u}_{\text{last}}^{(i)}=\mathbf{u}%
_{\text{first}}^{(i+1)},
\end{array}
\right.
\]
and
\[
\mathfrak{L}_{Q^{\ast}}^{(i)}:=\left \{
\begin{array}
[c]{ll}%
\text{int}(\text{conv}(\{ \mathbf{w}_{\text{last}}^{(i)},\mathbf{w}%
_{\text{first}}^{(i+1)}\}))\cap M, & \text{if }\mathbf{w}_{\text{last}}%
^{(i)}\neq \mathbf{w}_{\text{first}}^{(i+1)},\medskip \\
\varnothing, & \text{if }\mathbf{w}_{\text{last}}^{(i)}=\mathbf{w}%
_{\text{first}}^{(i+1)}.
\end{array}
\right.
\]
	 
\end{note}

\begin{example}
\bigskip Let $Q$ be the $5$-reflexive $\mathbb{Z}^{2}$-pentagon of Figure \ref{Fig.10} with vertices%
\[
\mathbf{n}_{1}=\tbinom{3}{-10},\  \mathbf{n}_{2}=\tbinom{1}{0},\  \mathbf{n}%
_{3}=\tbinom{-1}{5},\  \mathbf{n}_{4}=\tbinom{-2}{5},\  \mathbf{n}_{5}%
=\tbinom{-1}{0},
\]
(i.e., (\ref{EXAMPLE3}) for $\ell=5$). Its dual $Q^{\ast}$ has the vertices%
\[
\mathbf{m}_{1}=\tbinom{-5}{-1},\  \mathbf{m}_{2}=\tbinom{-5}{-2},\  \mathbf{m}%
_{3}=\tbinom{0}{-1},\  \mathbf{m}_{4}=\tbinom{5}{1},\  \mathbf{m}_{5}%
=\tbinom{5}{2}.
\]
For the first cone $\sigma_{1}=\mathbb{R}_{\geq0}\mathbf{n}_{1}+\mathbb{R}%
_{\geq0}\mathbf{n}_{2}$ we have $\left \vert \text{det}(\mathbf{n}%
_{1},\mathbf{n}_{2})\right \vert =10,$ and since $\left(  -3\right)
\cdot3-1\cdot(-10)=1$ and
\[
-3=\left(  -3\right)  \cdot1-1\cdot0=7(\operatorname{mod}\text{ 10}),
\]
Proposition \ref{PQDESCR1} implies that $\sigma_{1}$ is of type $(7,10).$ Working
similarly with all the other cones in $\Delta_{Q}$, we conclude with the table:%
	\setlength\extrarowheight{3pt}
{\small \[
\begin{tabular}
[c]{|c||c|c|c|c|c|}\hline
$i$ & $2$-dim. cones $\sigma_{i}$ in $\Delta_{Q}$ & of type $(p_{i},q_{i})$ &
socius of $p_{i}$ & ($-$)-continued fraction expansion & length\\ \hline \hline
$1$ & $%
\begin{array}
[c]{c}%
\sigma_{1}=\mathbb{R}_{\geq0}\mathbf{n}_{1}+\mathbb{R}_{\geq0}\mathbf{n}_{2}  \smallskip
\end{array}
$ & $(7,10)$ & $\widehat{p}_{1}=3$ & $%
\begin{array}
[c]{c}%
\frac{q_{1}}{q_{1}-p_{1}}=\frac{10}{10-7}=\frac{10}{3}=\left[ \! \left[
4,2,2\right]  \! \right]  \smallskip
\end{array}
$ & $s_{1}=3$\\ \hline
$2$ & $%
\begin{array}
[c]{c}%
\sigma_{2}=\mathbb{R}_{\geq0}\mathbf{n}_{2}+\mathbb{R}_{\geq0}\mathbf{n}_{3}  \smallskip
\end{array}
$ & $(4,5)$ & $\widehat{p}_{2}=4$ & $\frac{q_{2}}{q_{2}-p_{2}}=\frac{5}%
{5-4}=\frac{5}{1}=\left[  \! \left[  5\right]  \!  \right]  \!$ &
$s_{2}=1$\\ \hline
$3$ & $%
\begin{array}
[c]{c}%
 \! \!%
\begin{array}
[c]{c}%
\sigma_{3}=\mathbb{R}_{\geq0}\mathbf{n}_{3}+\mathbb{R}_{\geq0}\mathbf{n}_{4}  \smallskip
\end{array}
\end{array}
$ & $(2,5)$ & $\widehat{p}_{3}=3$ & $%
\begin{array}
[c]{c}%
\frac{q_{3}}{q_{3}-p_{3}}=\frac{5}{5-2}=\frac{5}{3}=\left[  \! \left[
2,3\right]  \! \right]  \smallskip
\end{array}
$ & $s_{3}=2$\\ \hline
$4$ & $ \! \!%
\begin{array}
[c]{c}%
\sigma_{4}=\mathbb{R}_{\geq0}\mathbf{n}_{4}+\mathbb{R}_{\geq0}\mathbf{n}_{5} \smallskip
\end{array}
$ & $(3,5)$ & $\widehat{p}_{4}=2$ & $%
\begin{array}
[c]{c}%
\frac{q_{4}}{q_{4}-p_{4}}=\frac{5}{5-3}=\frac{5}{2}=\left[  \! \left[3,2\right]  \!  \right] \smallskip
\end{array}
$ & $s_{4}=2$\\ \hline
$5$ & $%
\begin{array}
[c]{c}%
\sigma_{5}=\mathbb{R}_{\geq0}\mathbf{n}_{5}+\mathbb{R}_{\geq0}\mathbf{n}_{1} \smallskip
\end{array}
$ & $(7,10)$ & $\widehat{p}_{5}=3$ & $\frac{q_{5}}{q_{5}-p_{5}}=\frac
{10}{10-7}=\frac{10}{3}=\left[  \!  \left[  4,2,2\right]   \! \right]  $ &
$s_{5}=3$\\ \hline
\end{tabular}
\]}
	\setlength\extrarowheight{-3pt}

\begin{figure}[h]
	\includegraphics[height=9.6cm, width=14cm]{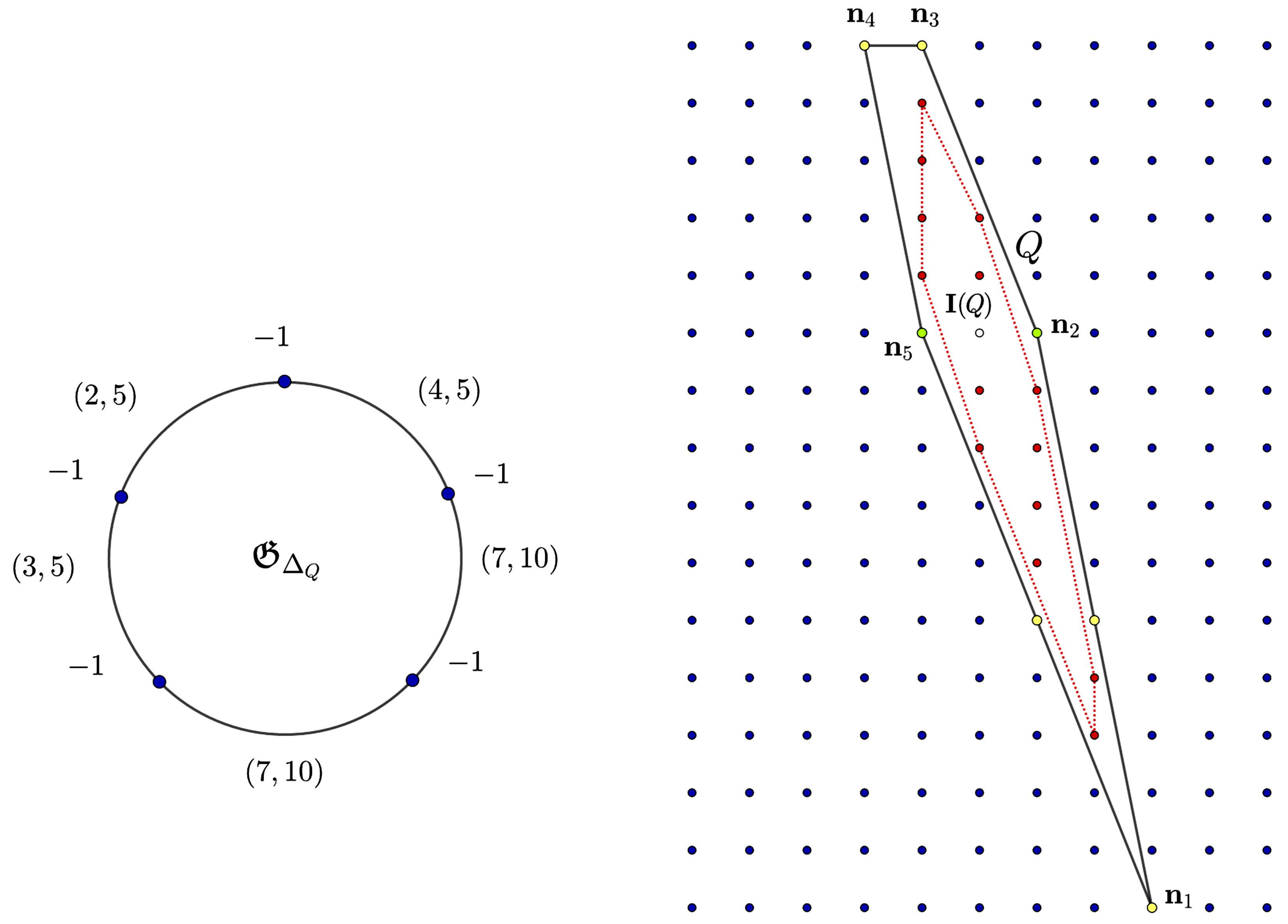}
	\caption{}\label{Fig.10}%
\end{figure}

\noindent Since $e(X(\mathbb{Z}^{2},\widetilde{\Delta}_{Q}))=\sum_{i=1}^{5}%
(s_{i}+1)=2\cdot4+2+2\cdot3=16,$ and
\begin{align*}
-K_{X(\mathbb{Z}^{2},\widetilde
	{\Delta}_{Q})}^{2} &  =-\frac{1}{5}\sharp \left(  \partial
Q^{\ast}\cap \mathbb{Z}^{2}\right)  -\sum_{i=1}^{5}\left(  \tfrac
{p_{i}+\widehat{p}_{i}-2}{q_{i}}\right)  +\sum_{i=1}^{5}\sum_{j=1}^{s_{i}%
}(b_{j}^{\left(  i\right)  }-2)\\
&  =-1-2\left(  \tfrac{7+3-2}{10}\right)  -\left(  \tfrac{4+4-2}{5}\right)
-2\left(  \tfrac{2+3-2}{5}\right)  +2+3+1+1+2=4,
\end{align*}
we have
\[
\sharp(\partial Q^{\ast}\cap \mathbb{Z}^{2})-K_{X(\mathbb{Z}^{2},\widetilde
	{\Delta}_{Q})}^{2}=5+4=9=\sharp(\partial(\mathbf{I}(Q^{\ast}))\cap
\mathbb{Z}^{2})=16-7=e(X(\mathbb{Z}^{2},\widetilde{\Delta}_{Q}))-\sharp
(\partial Q\cap \mathbb{Z}^{2}).
\]
In particular, $\mathbf{I}(Q^{\ast})=$ conv$\left \{  \tbinom{2}{1},\tbinom
{-4}{-1},\tbinom{-1}{-1},\tbinom{2}{0},\tbinom{4}{1}\right \}  ,$ and
\[
\partial(\mathbf{I}(Q^{\ast}))\cap \mathbb{Z}^{2}=\left \{  \tbinom{2}%
{1},\tbinom{-1}{0},\tbinom{-4}{-1},\tbinom{-3}{-1},\tbinom{-2}{-1},\tbinom
{-1}{-1},\tbinom{2}{0},\tbinom{4}{1},\tbinom{3}{1}\right \}  .
\]
(See Figure \ref{Fig.11}.) Analogously, one constructs the following table:
	\setlength\extrarowheight{3pt}
{\small\[
\begin{tabular}
[c]{|c||c|c|c|c|c|}\hline
$i$ & $2$-dim. cones $\sigma_{i}^{\ast}$ in $\Delta_{Q^{\ast}}$ & of type
$(p_{i}^{\ast},q_{i}^{\ast})$ & socius of $p_{i}^{\ast}$ & ($-$)-continued
fraction expansion & length\\ \hline \hline
$1$ & $%
\begin{array}
[c]{c}%
\sigma_{1}^{\ast}=\mathbb{R}_{\geq0}\mathbf{m}_{5}+\mathbb{R}_{\geq
	0}\mathbf{m}_{1} \smallskip
\end{array}
$ & $(2,5)$ & $\widehat{p}_{1}^{\, \ast}=3$ & $%
\begin{array}
[c]{c}%
\frac{q_{1}^{\ast}}{q_{1}^{\ast}-p_{1}^{\ast}}=\frac{5}{5-2}=\frac{5}%
{3}=\left[  \!  \left[  2,3\right]  \!  \right]  \smallskip
\end{array}
$ & $s_{1}^{\ast}=2$\\ \hline
$2$ & $%
\begin{array}
[c]{c}%
\sigma_{2}^{\ast}=\mathbb{R}_{\geq0}\mathbf{m}_{1}+\mathbb{R}_{\geq
	0}\mathbf{m}_{2} \smallskip
\end{array}
$ & $(2,5)$ & $\widehat{p}_{2}^{\, \ast}=3$ & $\frac{q_{2}^{\ast}}{q_{2}^{\ast
	}-p_{2}^{\ast}}=\frac{5}{5-2}=\frac{5}{3}=\left[  \! \left[  2,3\right]
\!  \right]  \!  $ & $s_{2}^{\ast}=2$  \\ \hline
$3$ & $%
\begin{array}
[c]{c}%
\sigma_{3}^{\ast}=\mathbb{R}_{\geq0}\mathbf{m}_{2}+\mathbb{R}_{\geq
	0}\mathbf{m}_{3} \smallskip
\end{array}
$ & $(3,5)$ & $\widehat{p}_{3}^{\, \ast}=2$ & $%
\begin{array}
[c]{c}%
\frac{q_{3}^{\ast}}{q_{3}^{\ast}-p_{3}^{\ast}}=\frac{5}{5-3}=\frac{5}%
{2}=\left[  \! \left[  3,2\right]  \! \right] \smallskip
\end{array}
$ & $s_{3}^{\ast}=2$\\ \hline
$4$ & $ \! \!%
\begin{array}
[c]{c}%
\sigma_{4}^{\ast}=\mathbb{R}_{\geq0}\mathbf{m}_{3}+\mathbb{R}_{\geq
	0}\mathbf{m}_{4} \smallskip
\end{array}
$ & $(4,5)$ & $\widehat{p}_{4}^{\, \ast}=4$ & $%
\begin{array}
[c]{c}%
\frac{q_{4}^{\ast}}{q_{4}^{\ast}-p_{4}^{\ast}}=\frac{5}{5-4}=\frac{5}%
{1}=\left[  \! \left[  5\right]  \! \right]  \smallskip
\end{array}
$ & $s_{4}^{\ast}=1$\\ \hline
$5$ & $%
\begin{array}
[c]{c}%
\sigma_{5}^{\ast}=\mathbb{R}_{\geq0}\mathbf{m}_{4}+\mathbb{R}_{\geq
	0}\mathbf{m}_{5} \smallskip
\end{array}
$ & $(2,5)$ & $\widehat{p}_{5}^{\, \ast}=3$ & $\frac{q_{5}^{\ast}}{q_{5}^{\ast
	}-p_{5}^{\ast}}=\frac{5}{5-2}=\frac{5}{3}=\left[  \! \left[  2,3\right] \smallskip
\! \right]  $ & $s_{5}^{\ast}=2$\\ \hline
\end{tabular}
\]}
	\setlength\extrarowheight{-3pt}

\begin{figure}[h]
	\includegraphics[height=5cm, width=14.8cm]{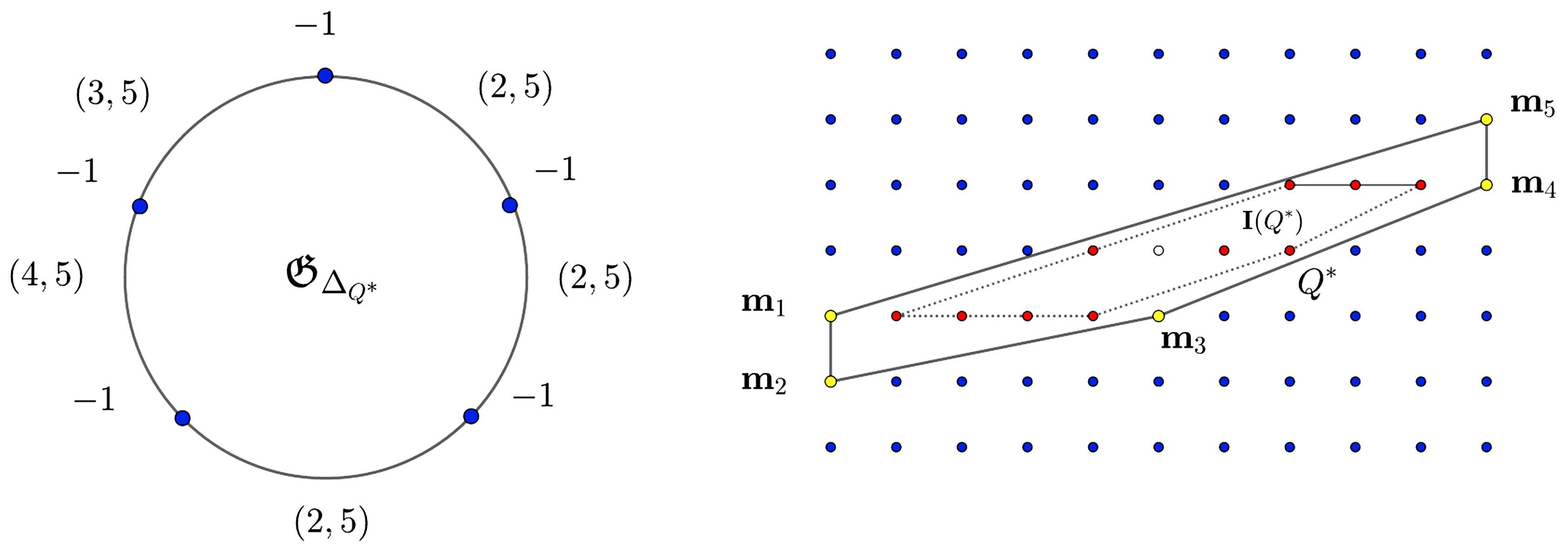}
	\caption{}\label{Fig.11}%
\end{figure}
	
Since $e(X(\mathbb{Z}^{2},\widetilde{\Delta}_{Q^{\ast}}))=\sum_{i=1}^{5}%
(s_{i}^{\ast}+1)=4\cdot3+2=14,$ and
\begin{align*}
-K_{X(\mathbb{Z}^{2},\widetilde{\Delta}_{Q^{\ast}})}^{2} &  =-\frac{1}%
{5}\sharp \left(  \partial Q\cap \mathbb{Z}^{2}\right)  -\sum_{i=1}^{5}\left(
\tfrac{p_{i}^{\ast}+\widehat{p}_{i}^{\ast}-2}{q_{i}^{\ast}}\right)
+\sum_{i=1}^{5}\sum_{j=1}^{s_{i}^{\ast}}(c_{j}^{\ast \, \left(  i\right)  }-2)\\
&  =-\tfrac{7}{5}-4\left(  \tfrac{2+3-2}{5}\right)  -\left(  \tfrac{4+4-2}%
{5}\right)  +4\cdot1+3=2,
\end{align*}
we have
\[
\sharp(\partial Q\cap \mathbb{Z}^{2})-K_{X(\mathbb{Z}^{2},\widetilde{\Delta
	}_{Q^{\ast}})}^{2}=7+2=9=\sharp(\partial(\mathbf{I}(Q))\cap \mathbb{Z}%
^{2})=14-5=e(X(\mathbb{Z}^{2},\widetilde{\Delta}_{Q^{\ast}}))-\sharp(\partial
Q^{\ast}\cap \mathbb{Z}^{2}).
\]
In particular, $\mathbf{I}(Q)=$ conv$\left \{  \tbinom{-1}{4},\tbinom{-1}%
{1},\tbinom{0}{-2},\tbinom{2}{-7},\tbinom{2}{-6},\tbinom{1}{-1},\tbinom{0}%
{2}\right \}  ,$ and
\[
\partial(\mathbf{I}(Q))\cap \mathbb{Z}^{2}=\left \{  \tbinom{-1}{4},\tbinom
{-1}{3},\tbinom{-1}{2},\tbinom{-1}{1},\tbinom{0}{-2},\tbinom{2}{-7},\tbinom
{2}{-6},\tbinom{1}{-1},\tbinom{0}{2}\right \}  .
\]
It is worth mentioning that, though each of $\mathbf{I}(Q^{\ast})$ and
$\mathbf{I}(Q)$ has $9$ lattice points on its boundary, $\mathbf{I}(Q^{\ast})$
is a $\mathbb{Z}^{2}$-pentagon while $\mathbf{I}(Q)$ is a $\mathbb{Z}^{2}%
$-heptagon, and  
\[
\tfrac{19}{2}=\text{area}_{\mathbb{Z}^{2}}(\mathbf{I}%
(Q))\neq \text{area}_{\mathbb{Z}^{2}%
}(\mathbf{I}(Q^{\ast}))=\tfrac{11}{2},\text{ }6=\sharp(\text{int}(\mathbf{I}%
(Q))\cap \mathbb{Z}^{2})\neq \sharp(\text{int}(\mathbf{I}(Q^{\ast}%
))\cap \mathbb{Z}^{2})=2.
\]
In general, the area, as well as the precise location of the vertices and of the
lattices points lying on the boundary or in the interior of $\mathbf{I}(Q)$ and $\mathbf{I}%
(Q^{\ast})$ depend on the types of auxiliary cones. In our example, \ref{AUXCONES} gives%
\begin{align*}
\tau_{1} &  =\mathbb{R}_{\geq0}\tfrac{5}{q_{5}}\left(  \mathbf{n}%
_{5}-\mathbf{n}_{1}\right)  +\mathbb{R}_{\geq0}\tfrac{5}{q_{1}}\left(
\mathbf{n}_{2}-\mathbf{n}_{1}\right)  \\
&  =\mathbb{R}_{\geq0}\tfrac{5}{10}\left(  \tbinom{-1}{0}-\tbinom{3}%
{-10}\right)  +\mathbb{R}_{\geq0}\tfrac{5}{10}\left(  \tbinom{1}{0}-\tbinom
{3}{-10}\right)  =\mathbb{R}_{\geq0}\tbinom{-2}{5}+\mathbb{R}_{\geq0}%
\tbinom{-1}{5}%
\end{align*}
(and similarly for the other four auxiliary cones). From the table
	\setlength\extrarowheight{3pt}
{\small\[
\begin{tabular}
[c]{|c||c|c|c|c|}\hline
$i$ & auxiliary cones $\tau_{i}=(\sigma_{i}^{\ast})^{\vee}$ & of type
$(q_{i}^{\ast}-p_{i}^{\ast},q_{i}^{\ast})$ & ($-$)-continued fraction
expansion & length\\ \hline \hline
$1$ & $%
\begin{array}
[c]{c}%
\tau_{1}=\mathbb{R}_{\geq0}\tbinom{-2}{5}+\mathbb{R}_{\geq0}\tbinom{-1}{5} \smallskip
\end{array}
$ & $(3,5)$ & $%
\begin{array}
[c]{c}%
\frac{q_{1}^{\ast}}{p_{1}^{\ast}}=\frac{5}{2}=\left[  \! \left[  3,2\right]
\! \right]  \! \smallskip
\end{array}
$ & $t_{1}^{\ast}=2$\\ \hline
$2$ & $%
\begin{array}
[c]{c}%
\tau_{2}=\mathbb{R}_{\geq0}\tbinom{1}{5}+\mathbb{R}_{\geq0}\tbinom{-2}{5} \smallskip
\end{array}
$ & $(3,5)$ & $\frac{q_{2}^{\ast}}{p_{2}^{\ast}}=\frac{5}{2}=\left[
\! \left[  3,2\right]  \! \right]  \! $ & $t_{2}^{\ast} \smallskip
=2$\\ \hline
$3$ & $%
\begin{array}
[c]{c}%
\tau_{3}=\mathbb{R}_{\geq0}\tbinom{2}{5}+\mathbb{R}_{\geq0}\tbinom{-1}{0} \smallskip
\end{array}
$ & $(2,5)$ & $%
\begin{array}
[c]{c}%
\frac{q_{3}^{\ast}}{p_{3}^{\ast}}=\frac{5}{3}=\left[  \! \left[  2,3 \right]  \!  
 \right] \smallskip
\end{array}
$ & $t_{3}^{\ast}=2$\\ \hline
$4$ & $ \! \!%
\begin{array}
[c]{c}%
\tau_{4}=\mathbb{R}_{\geq0}\tbinom{1}{0}+\mathbb{R}_{\geq0}\tbinom{1}{-5} \smallskip
\end{array}
$ & $(1,5)$ & $%
\begin{array}
[c]{c}%
\frac{q_{4}^{\ast}}{p_{4}^{\ast}}=\frac{5}{4}=\left[  \! \left[
2,2,2,2\right]  \! \right]  \smallskip
\end{array}
$ & $t_{4}^{\ast}=4$\\ \hline
$5$ & $%
\begin{array}
[c]{c}%
\tau_{5}=\mathbb{R}_{\geq0}\tbinom{-1}{5}+\mathbb{R}_{\geq0}\tbinom{2}{-5} \smallskip
\end{array}
$ & $(3,5)$ & $\frac{q_{5}^{\ast}}{p_{5}^{\ast}}=\frac{5}{2}=\left[
\! \left[  3,2\right]  \! \right]  $ & $t_{5}^{\ast}=2$\\ \hline
\end{tabular}
\]}
\setlength\extrarowheight{-3pt}
\hspace{-0.2cm} we obtain again
\[
\sharp(\partial(\mathbf{I}(Q))\cap \mathbb{Z}^{2})=t_{1}^{\ast}+(t_{2}^{\ast}+t_{3}^{\ast}-1)+(t_{4}^{\ast}-1)+(t_{5}^{\ast}-1)=9
\]
(with the $3$ aces subtracted in order to avoid counting lattice points
twice). Correspondingly, from 
\setlength\extrarowheight{3pt}
{\small\[
\begin{tabular}
[c]{|c||c|c|c|c|}\hline
$i$ & auxiliary cones $\tau_{i}^{\ast}=\sigma_{i}^{\vee}$ & of type
$(q_{i}-p_{i},q_{i})$ & ($-$)-continued fraction expansion &
length\\ \hline \hline
$1$ & $%
\begin{array}
[c]{c}%
\tau_{1}^{\ast}=\mathbb{R}_{\geq0}\tbinom{10}{3}+\mathbb{R}_{\geq0}\tbinom
{0}{-1} \smallskip
\end{array}
$ & $(3,10)$ & $%
\begin{array}
[c]{c}%
\frac{q_{1}}{p_{1}}=\frac{10}{7}=\left[  \!  \left[  2,2,4\right]
\!  \right]  \! \! \smallskip
\end{array}
$ & $t_{1}=3$\\ \hline
$2$ & $%
\begin{array}
[c]{c}%
\tau_{2}^{\ast}=\mathbb{R}_{\geq0}\tbinom{0}{1}+\mathbb{R}_{\geq0}\tbinom
{5}{1} \smallskip
\end{array}
$ & $(1,5)$ & $\frac{q_{2}}{p_{2}}=\frac{5}{4}=\left[  \! \left[
2,2,2,2\right]  \!  \right]  \!  $ & $t_{2}=4$\\ \hline
$3$ & $%
\begin{array}
[c]{c}%
\tau_{3}^{\ast}=\mathbb{R}_{\geq0}\tbinom{-5}{-1}+\mathbb{R}_{\geq0}\tbinom
{5}{2} \smallskip
\end{array}
$ & $(3,5)$ & $%
\begin{array}
[c]{c}%
\frac{q_{3}}{p_{3}}=\frac{5}{2}=\left[  \! \left[  3,2\right]  \! \right]\smallskip
\end{array}
$ & $t_{3}=2$\\ \hline
$4$ & $ \! \!%
\begin{array}
[c]{c}%
\tau_{4}^{\ast}=\mathbb{R}_{\geq0}\tbinom{-5}{-2}+\mathbb{R}_{\geq0}\tbinom
{0}{1}%
\end{array}
$ & $(2,5)$ & $%
\begin{array}
[c]{c}%
\frac{q_{4}}{p_{4}}=\frac{5}{3}=\left[  \! \left[  2,3\right]  \! \right]
\smallskip
\end{array}
$ & $t_{4}=2$\\ \hline
$5$ & $%
\begin{array}
[c]{c}%
\tau_{5}^{\ast}=\mathbb{R}_{\geq0}\tbinom{0}{-1}+\mathbb{R}_{\geq0}%
\tbinom{-10}{-3} \smallskip
\end{array}
$ & $(3,10)$ & $\frac{q_{5}}{p_{5}}=\frac{10}{7}=\left[  \! \left[
2,2,4\right]  \! \right]  $ & $t_{5}=3$\\ \hline
\end{tabular}
\]}
\setlength\extrarowheight{-3pt}
\hspace{-0.2cm}we obtain $\mathfrak{L}_{Q^{\ast}}^{(i)}=\varnothing$ for all $i\in
\{1,2,3,4,5\}$ and
\[
\sharp(\partial(\mathbf{I}(Q^{\ast}))\cap \mathbb{Z}^{2})=\sum_{i=1}^{5}%
(t_{i}-1)\underset{\text{(\ref{KETTENBRUCH1b})}}{=}\sum_{i=1}^{5}\sum_{j=1}^{s_{i}}%
(b_{j}^{\left(  i\right)  }-2)=9.
\]
\end{example}

\begin{example}\label{EXVARTHNOT}
	Taking the $13$-reflexive $\mathbb{Z}^{2}$-quadrilateral
	\[
	Q=\text{ conv}\left \{  \tbinom{3}{-13},\tbinom{-1}{13},\tbinom{-3}{13}%
	,\tbinom{1}{-13}\right \}  \  \text{ which has \ }Q^{\ast}=\text{ conv}\left \{
	\tbinom{-13}{-2},\tbinom{0}{-1},\tbinom{13}{2},\tbinom{0}{1}\right \}
	\]
	as its dual (i.e., (\ref{EXAMPLE2}) and (\ref{EXSTAR2}) for $\ell=13$), we see easily that $X(\mathbb{Z}%
	^{2},\Delta_{Q})$ has two cyclic quotient singularities of type $(3,26)$ and
	two cyclic quotient singularities of type $(17,26),$ and that $X(\mathbb{Z}%
	^{2},\Delta_{Q^{\ast}})$ has two cyclic quotient singularities of type
	$(11,13)$ and two cyclic quotient singularities of type $(7,13).$ (Cf. the \textsc{wve}$^{2}$\textsc{c}-graphs
	in Figures \ref{Fig.5} and \ref{Fig.6}.) Since%
	\[%
	\begin{array}
	[c]{ccc}%
	\tfrac{26}{26-3}=\tfrac{26}{23}=\left[  \! \left[  2,2,2,2,2,2,2,3,2 \right]  \! \right] , &  & \tfrac{26}{26-17}=\tfrac{26}{9}=\left[  \! \left[
	3,9\right]  \! \right]  ,\\
	&  & \\
	\tfrac{13}{13-11}=\tfrac{13}{2}=\left[  \! \left[ 7,2\right]  \! \right]
	, &  & \tfrac{13}{13-7}=\tfrac{13}{6}=\left[  \! \left[   3,2,2,2,2,2\right]  \! \right] ,
	\end{array}
	\]
	formulae  (\ref{THETAIQ1F}) and (\ref{THETAIQ1F2}) give%
	\[%
	\begin{array}
	[c]{c}%
	\sharp(\partial(\mathbf{I}(Q^{\ast}))\cap \mathbb{Z}^{2})=e(X(\mathbb{Z}%
	^{2},\widetilde{\Delta}_{Q}))-\sharp(\partial Q\cap \mathbb{Z}^{2}%
	)=2\cdot10+2\cdot3-8=18,\text{ and\medskip}\\
 \\
	\sharp(\partial(\mathbf{I}(Q))\cap \mathbb{Z}^{2})=e(X(\mathbb{Z}%
	^{2},\widetilde{\Delta}_{Q^{\ast}}))-\sharp(\partial Q^{\ast}\cap
	\mathbb{Z}^{2})=2\cdot3+2\cdot7-4=16,
	\end{array}
	\]
	respectively, i.e., $\sharp(\partial(\mathbf{I}(Q))\cap \mathbb{Z}^{2}%
	)\neq \sharp(\partial(\mathbf{I}(Q^{\ast}))\cap \mathbb{Z}^{2}).$ 
\end{example}

\begin{remark}
	Another method to compute $\sharp \left(  \partial(\mathbf{I}(Q^{\ast}))\cap M\right)$ is to apply Theorem \ref{SIGMAPMINDES} for the
	normal fan
	\[
	\Sigma_{\mathbf{I}(Q^{\ast})}:=\left \{  \text{ the }N\text{-cones }\left \{
	\varpi_{\mathfrak{v}}^{\vee}\left \vert \  \mathfrak{v}\in \text{Vert}%
	(\mathbf{I}(Q^{\ast}))\right.  \right \}  \text{ together with their
		faces}\right \}
	\]
	of $\mathbf{I}(Q^{\ast}),$ where $\varpi_{\mathfrak{v}}:=\left \{
	\lambda(\mathbf{x}-\mathfrak{v})\left \vert \  \lambda \in \mathbb{R}_{\geq
		0},\  \mathbf{x}\in \mathbf{I}(Q^{\ast})\right.  \right \}  $ for all
	$\mathfrak{v}\in$ Vert$(\mathbf{I}(Q^{\ast})),$ and to work with the minimal
	desingularization, say%
	\begin{equation}
	\vartheta:X(N,\widetilde{\Sigma_{\mathbf{I}(Q^{\ast})}})\longrightarrow
	X(N,\Sigma_{\mathbf{I}(Q^{\ast})}) \label{MINDESTHETAVAR}%
	\end{equation}
	of $X(N,\Sigma_{\mathbf{I}(Q^{\ast})}).$ If $F\in$ Edg$(\mathbf{I}(Q^{\ast}))$ and
	$\boldsymbol{\eta}_{F}\in N\mathbb{r}\{ \mathbf{0}\}$ is the (primitive)
	inward-pointing normal of $F,$ then it is easy to see that 
	$
	h_{\mathbf{I}(Q^{\ast})}(\boldsymbol{\eta}_{F})=h_{Q^{\ast}}(\boldsymbol{\eta
	}_{F})+1,
	$
	where $h_{Q^{\ast}}$ and $h_{\mathbf{I}(Q^{\ast})}$ are the support functions
	of $Q^{\ast}$ and $\mathbf{I}(Q^{\ast}),$ respectively (cf.
		(\ref{SUPPFCTP})). Moreoever, $X(N,\Sigma_{\mathbf{I}(Q^{\ast})})$ has at worst
	Gorenstein singularities, and (\ref{MINDESTHETAVAR}) is crepant (with $\widetilde{\Delta}_{Q}=\widetilde{\Sigma_{Q^{\ast}}}$ being a refinement
	of $\widetilde{\Sigma_{\mathbf{I}(Q^{\ast})}}$,  $\{\boldsymbol{\eta}_{F}|{F\in \text{Edg}(\mathbf{I}(Q^{\ast}%
		))}\}\subset{\displaystyle \bigcup \limits_{i=1}^{\nu}}\text{Vert}(\partial \Theta_{\sigma_{i}}^{\mathbf{cp}})$ and $\text{Vert}(\mathbf{I}(Q^{\ast}))$ and $\text{Edg}(\mathbf{I}(Q^{\ast}))$ exactly computable via \cite[\S 3]{DHH} applied for ${\tau
	_{i}}^{\ast}=\sigma_{i}^{\vee}$ for all $i\in \left \{  1,\ldots,\nu \right \}).$ Now writing $\mathbf{I}(Q^{\ast})$ in the form
	\[
	\mathbf{I}(Q^{\ast})=\bigcap \limits_{F\in \text{Edg}(\mathbf{I}(Q^{\ast}))}\left \{  \left.  \mathbf{x}\in \mathbb{R}^{2}\right \vert \left \langle
	\mathbf{x},\boldsymbol{\eta}_{F}\right \rangle \geq h_{\mathbf{I}(Q^{\ast}%
		)}(\boldsymbol{\eta}_{F})\right \}
	\]
	and denoting by 
	$
	D_{\mathbf{I}(Q^{\ast})}:=-\sum \limits_{F\in \text{Edg}(\mathbf{I}(Q^{\ast}%
		))}h_{\mathbf{I}(Q^{\ast})}(\boldsymbol{\eta}_{F})\mathbf{V}_{\Sigma
		_{\mathbf{I}(Q^{\ast})}}(\mathbb{R}_{\geq0}\boldsymbol{\eta}_{F})\in
	\text{Div}_{\text{C}}^{\mathbb{T}}(X(N,\Sigma_{\mathbf{I}(Q^{\ast})}))
	$
	the distinguished ample divisor on $X(N,\Sigma_{\mathbf{I}(Q^{\ast})}),$ we
	obtain the following:
\end{remark}
\begin{proposition}
	The number of lattice points lying on the boundary of $\mathbf{I}(Q^{\ast})$
	is given by the formulae%
	\[
	\sharp \left(  \partial(\mathbf{I}(Q^{\ast}))\cap M\right)  =-D_{\mathbf{I}%
		(Q^{\ast})}\cdot K_{X(N,\Sigma_{\mathbf{I}(Q^{\ast})})}=\sum \limits_{F\in
		\text{\emph{Edg}}(\mathbf{I}(Q^{\ast}))}\left(  D_{\mathbf{I}(Q^{\ast})}%
	\cdot \mathbf{V}_{\Sigma_{\mathbf{I}(Q^{\ast})}}(\mathbb{R}_{\geq
		0}\boldsymbol{\eta}_{F})\right).\]
	\end{proposition}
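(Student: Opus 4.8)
The plan is to identify $\sharp\left(\partial(\mathbf{I}(Q^{\ast}))\cap M\right)$ with an intersection number on the minimal desingularization of $X(N,\Sigma_{\mathbf{I}(Q^{\ast})})$, exactly mimicking the proof of the identity $-f^{\star}(D_P)\cdot K_{X(N,\widetilde{\Sigma_P})}=\sharp(\partial P\cap M)$ in Theorem \ref{SIGMAPMINDES}. First I would record, as noted in the paragraph preceding the statement, that $D_{\mathbf{I}(Q^{\ast})}$ is an ample $\mathbb{T}$-invariant Cartier divisor on $X(N,\Sigma_{\mathbf{I}(Q^{\ast})})$ with $P_{D_{\mathbf{I}(Q^{\ast})}}=\mathbf{I}(Q^{\ast})$ (by Proposition \ref{SUPPFCTPOL} applied to the $M$-polygon $\mathbf{I}(Q^{\ast})$), and that $X(N,\Sigma_{\mathbf{I}(Q^{\ast})})$ has at worst Gorenstein singularities, so that the minimal desingularization $\vartheta:X(N,\widetilde{\Sigma_{\mathbf{I}(Q^{\ast})}})\longrightarrow X(N,\Sigma_{\mathbf{I}(Q^{\ast})})$ is crepant by Proposition \ref{CREPANTPROP}.

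Next I would apply Theorem \ref{SIGMAPMINDES} with $P=\mathbf{I}(Q^{\ast})$: the pullback $\vartheta^{\star}(D_{\mathbf{I}(Q^{\ast})})$ is nef on $X(N,\widetilde{\Sigma_{\mathbf{I}(Q^{\ast})}})$ with $P_{\vartheta^{\star}(D_{\mathbf{I}(Q^{\ast})})}=\mathbf{I}(Q^{\ast})$, and formula (\ref{DPTHETAP}) gives
\[
-\vartheta^{\star}(D_{\mathbf{I}(Q^{\ast})})\cdot K_{X(N,\widetilde{\Sigma_{\mathbf{I}(Q^{\ast})}})}=\sharp\left(\partial(\mathbf{I}(Q^{\ast}))\cap M\right).
\]
Because $\vartheta$ is crepant, $K_{X(N,\widetilde{\Sigma_{\mathbf{I}(Q^{\ast})}})}\sim\vartheta^{\star}(K_{X(N,\Sigma_{\mathbf{I}(Q^{\ast})})})$, and hence by the definition (\ref{INTNUMBSING}) of intersection numbers on singular compact toric surfaces via pullbacks (which is independent of the chosen desingularization), the left-hand side equals $-D_{\mathbf{I}(Q^{\ast})}\cdot K_{X(N,\Sigma_{\mathbf{I}(Q^{\ast})})}$. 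This yields the first claimed equality.

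For the second equality I would expand $-K_{X(N,\Sigma_{\mathbf{I}(Q^{\ast})})}=\sum_{F\in\text{Edg}(\mathbf{I}(Q^{\ast}))}\mathbf{V}_{\Sigma_{\mathbf{I}(Q^{\ast})}}(\mathbb{R}_{\geq0}\boldsymbol{\eta}_{F})$ by the canonical-divisor formula (\ref{KFORMULA}) for the fan $\Sigma_{\mathbf{I}(Q^{\ast})}$ — whose rays are precisely the $\mathbb{R}_{\geq0}\boldsymbol{\eta}_{F}$, $F\in\text{Edg}(\mathbf{I}(Q^{\ast}))$, by construction of the normal fan — and then distribute the intersection pairing with $D_{\mathbf{I}(Q^{\ast})}$ over this sum, giving $\sum_{F}D_{\mathbf{I}(Q^{\ast})}\cdot\mathbf{V}_{\Sigma_{\mathbf{I}(Q^{\ast})}}(\mathbb{R}_{\geq0}\boldsymbol{\eta}_{F})$. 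The only point requiring minor care is that the intersection numbers $D_{\mathbf{I}(Q^{\ast})}\cdot\mathbf{V}_{\Sigma_{\mathbf{I}(Q^{\ast})}}(\varrho)$ are the $\mathbb{Q}$-valued numbers of (\ref{INTNUMBSING}); linearity holds there since it holds after pulling back to a common desingularization. I expect the main (and essentially only) obstacle to be bookkeeping: making sure that the fan $\Sigma_{\mathbf{I}(Q^{\ast})}$ is indeed the normal fan of the $M$-polygon $\mathbf{I}(Q^{\ast})$ so that Theorem \ref{SIGMAPMINDES} applies verbatim, and that the Gorenstein property — needed for crepancy — is correctly invoked; both are already flagged in the remark preceding the proposition, so no serious difficulty arises.
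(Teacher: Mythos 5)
Your proposal is correct and follows essentially the same route as the paper: apply formula (\ref{DPTHETAP}) of Theorem \ref{SIGMAPMINDES} to $P=\mathbf{I}(Q^{\ast})$ and the minimal desingularization $\vartheta$, use crepancy (via the Gorenstein property noted in the preceding remark) together with (\ref{INTNUMBSING}) to descend to $-D_{\mathbf{I}(Q^{\ast})}\cdot K_{X(N,\Sigma_{\mathbf{I}(Q^{\ast})})}$, and expand the canonical divisor via (\ref{KFORMULA}) for the second equality. The extra care you take over linearity of the $\mathbb{Q}$-valued intersection pairing is sound but not needed beyond what the paper already records.
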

\begin{proof}
	Applying formula (\ref{DPTHETAP}) of Theorem \ref{SIGMAPMINDES} (for
	$P=\mathbf{I}(Q^{\ast})$ and (\ref{MINDESTHETAVAR})) we deduce that
	\[
	\sharp \left(  \partial(\mathbf{I}(Q^{\ast}))\cap M\right)  =-\vartheta^{\star
	}(D_{\mathbf{I}(Q^{\ast})})\cdot K_{X(N,\widetilde{\Sigma_{\mathbf{I}(Q^{\ast
				})}})}=-D_{\mathbf{I}(Q^{\ast})}\cdot K_{X(N,\Sigma_{\mathbf{I}(Q^{\ast})})},
	\]
	because $K_{X(N,\widetilde{\Sigma_{\mathbf{I}(Q^{\ast})}})}\sim \vartheta
	^{\star}(K_{X(N,\Sigma_{\mathbf{I}(Q^{\ast})})}).$ (See (\ref{INTNUMBSING}).) The second formula follows
	from \ref{NoteCAND} (i).
\end{proof}

\section{Families of combinatorial mirror pairs in the lowest dimension}\label{BATYRMS}

\noindent Batyrev's combinatorial mirror symmetry construction \cite{{Batyrev2}} is completely
efficient whenever the \textquotedblleft ambient spaces\textquotedblright \ are
toric Fano varieties with at worst Gorenstein singularities of (complex)
dimension $\geq4$ or at least of dimension $3.$ In the latter case, the
general members of the linear system defined by their anticanonical divisors
are \textit{K3-surfaces}. In the \textit{lowest} dimension $2$ (i.e., when the
\textquotedblleft ambient spaces\textquotedblright \ are Gorenstein log del
Pezzo surfaces), the corresponding general members are \textit{elliptic
	curves}. The generalisation (in dimension $2$) which takes place by passing
from Gorenstein log del Pezzo surfaces (defined by $1$-reflexive polygons) to
log del Pezzo surfaces defined by $\ell$-reflexive polygons leaves little room
for the determination of \textquotedblleft combinatorial mirrors\textquotedblright, and as yet only up to \textit{ homeomorphism}: The corresponding
general members are smooth projective curves with Hodge diamond having (as
unique non-trivial number) their \textit{genus} (also called \textit{sectional
	genus}) at the left and at the right corner. This genus is  $>1$ whenever
$\ell>1.$

\begin{definition}
	Let $(Q,N)$ be an $\ell$-reflexive pair and $M:=$ Hom$_{\mathbb{Z}%
	}(N,\mathbb{Z}).$ \noindent Since the Cartier divisor $-\ell K_{X(N,\Delta
		_{Q})}$ is very ample  on $X(N,\Delta_{Q})$ (with $\Delta_{Q}=\Sigma_{Q^{\ast}}$) the complete linear system
	$\left \vert -\ell K_{X(N,\Delta_{Q})}\right \vert $ induces the closed embedding%
\[
\xymatrix{\mathbb{T}_{N} \hspace{0.2cm} \ar@{^{(}->}[rr] \ar@/_1.7pc/[rrrr] & &X(N,\Delta_{Q}) \hspace{0.2cm} \ar@{^{(}->}[rr] & & \mathbb{P}_{\mathbb{C}}^{\sharp(Q^{\ast}\cap M)-1}}
\]
	with (the composition mapping)
	\[
	\mathbb{T}_{N}\ni t\longmapsto \lbrack...:z_{\mathbf{m}}:...]_{\mathbf{m}\in
		Q^{\ast}\cap M}\in \mathbb{P}_{\mathbb{C}}^{\sharp(Q^{\ast}\cap M)-1}%
	,\ z_{\mathbf{m}}:=\mathbf{e}(\mathbf{m})(t),
	\]
	where $\mathbf{e}(\mathbf{m}):\mathbb{T}_{N}\rightarrow \mathbb{C}^{\times}$ is the
	character associated with the lattice point $\mathbf{m}$, for all $\mathbf{m}\in Q^{\ast}\cap M.$ The image of
	$X(N,\Delta_{Q})$ in $\mathbb{P}_{\mathbb{C}}^{\sharp(Q^{\ast}\cap M)-1}$ can
	be viewed as the projective variety Proj$(\textsf{S}_{Q^{\ast}}),$ where
	\[
	\textsf{S}_{Q^{\ast}}:=\mathbb{C}[C(Q^{\ast})\cap(M\times \mathbb{Z}%
	)]={\displaystyle \bigoplus \limits_{\kappa=0}^{\infty}}\left(
	{\displaystyle \bigoplus \limits_{\mathbf{m}\in Q^{\ast}\cap M}}\mathbb{C\cdot
		\,}\mathbf{e}(\mathbf{m})\xi^{\kappa}\right)  \
	\]
	$($with $C(Q^{\ast}):=\{(\lambda y_{1},\lambda y_{2},\lambda)\left \vert
	\lambda \in \mathbb{R}_{\geq0}\text{ and }(y_{1},y_{2})\in Q^{\ast}\right.  \})$
	is the semigroup algebra which is naturally graded by setting deg$(\mathbf{e}%
	(\mathbf{m})\xi^{\kappa}):=\kappa.$ (For a detailed exposition see
	\cite[Theorem 2.3.1, p. 75; Proposition 5.4.7, pp. 237-238; Theorem 5.4.8, pp.
	239-240, and Theorem 7.1.13, pp. 325-326]{CLS}.) Hyperplanes $\mathcal{H}%
	\subset \mathbb{P}_{\mathbb{C}}^{\sharp(Q^{\ast}\cap M)-1}$ give curves
	Proj$(\textsf{S}_{Q^{\ast}})\cap \mathcal{H}$ which are linearly equivalent to $-\ell
	K_{X(N,\Delta_{Q})}.$ For \textit{generic} $\mathcal{H}$'s the intersection
	$\mathcal{C}_{Q}:=$ Proj$(\textsf{S}_{Q^{\ast}})\cap \mathcal{H}$ is (by Bertini's
	Theorem) a smooth connected projective curve in the non-singular locus of
	Proj$(\textsf{S}_{Q^{\ast}})\cong X(N,\Delta_{Q}).$ The genus of $g(\mathcal{C}_{Q})$
	of $\mathcal{C}_{Q}$ is called \textit{the\ sectional genus} of $X(N,\Delta
	_{Q})$ and will be denoted simply as $g_{Q}\emph{\medskip}.$
\end{definition}

\begin{lemma}
	The\textit{\ sectional genus} of $X(N,\Delta_{Q}) (=X(N,\Sigma_{Q^{\ast}}))$ is%
	\begin{equation}
	g_{Q}=\frac{1}{2}(\ell-1)\sharp(\partial Q^{\ast}\cap M)+1.\label{SECGENUSF}%
	\end{equation}
	
	\begin{proof} By  \cite[Proposition 10.5.8, p. 509]{CLS},  
	 $g_{Q}=\sharp($int$( Q^{\ast})\cap M).$ So it suffices to apply (\ref{PICKINTQSTAR}).
\end{proof}
	
\end{lemma}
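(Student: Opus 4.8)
The plan is to reduce the claim to the single fact that the sectional genus $g_Q$ equals the number of interior lattice points of $Q^{\ast}$, and then to evaluate that count by the Ehrhart reciprocity machinery already assembled in the excerpt. First I would invoke the description of the embedding just given: $-\ell K_{X(N,\Delta_Q)}$ is very ample (Note \ref{PROJECTIVE} (i) together with the ampleness of the anticanonical multiple for a log del Pezzo surface of index $\ell$), so a generic hyperplane section $\mathcal{C}_Q$ is a smooth connected curve in the smooth locus of $X(N,\Delta_Q)$, linearly equivalent to $-\ell K_{X(N,\Delta_Q)}$. Then I would quote the standard toric identification of the sectional genus of a polarised toric variety with the number of interior lattice points of the associated polytope — here the polytope $P_{-\ell K_{X(N,\Delta_Q)}}=Q^{\ast}$ (recall $\Delta_Q=\Sigma_{Q^{\ast}}$ and $-\ell K_{X(N,\Delta_Q)}=D_{Q^{\ast}}$ from Proposition \ref{DELTASIGMAS} and the discussion of $D_P$). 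This is exactly the cited \cite[Proposition 10.5.8, p.\ 509]{CLS}, so $g_Q=\sharp(\mathrm{int}(Q^{\ast})\cap M)$.

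Next I would compute $\sharp(\mathrm{int}(Q^{\ast})\cap M)$. Since $(Q^{\ast},M)$ is an $\ell$-reflexive pair (Proposition \ref{QQSTAR}), Proposition \ref{NOBOUNDARYPNTS} applied to it gives $2\,\mathrm{area}_M(Q^{\ast})=\ell\,\sharp(\partial Q^{\ast}\cap M)$. Feeding this into the Ehrhart interior-point formula (\ref{Ehrhartint}) evaluated at $k=1$,
\begin{align*}
\sharp(\mathrm{int}(Q^{\ast})\cap M)
&=\mathrm{Ehr}_M^{\mathrm{int}}(Q^{\ast};1)
=\mathrm{area}_M(Q^{\ast})-\tfrac12\sharp(\partial Q^{\ast}\cap M)+1\\
&=\tfrac{\ell}{2}\sharp(\partial Q^{\ast}\cap M)-\tfrac12\sharp(\partial Q^{\ast}\cap M)+1
=\tfrac12(\ell-1)\sharp(\partial Q^{\ast}\cap M)+1.
\end{align*}
This is precisely (\ref{PICKINTQSTAR}), which the excerpt has already established in the geometric-interpretation section, so in the write-up I would simply cite (\ref{PICKINTQSTAR}) rather than rederive it. Combining the two displayed facts yields (\ref{SECGENUSF}).

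I do not expect a serious obstacle here: the lemma is essentially a bookkeeping corollary of results proved earlier in the paper. The only point that deserves a word of care is the passage from "the generic hyperplane section has genus $g_Q$" to "$g_Q=\sharp(\mathrm{int}(Q^{\ast})\cap M)$": one must make sure the polytope entering \cite[Proposition 10.5.8]{CLS} is $Q^{\ast}$ and not $\ell Q^{\circ}$ rescaled or $Q$ itself. But this is settled by Proposition \ref{DELTASIGMAS} ($\Delta_Q=\Sigma_{Q^{\ast}}$) and the identity $P_{-\ell K_{X(N,\Delta_Q)}}=Q^{\ast}$ used repeatedly in \S\ref{FIRSTPROOF} and \S\ref{CHARDIFFSEC}, so there is no genuine difficulty. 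A minor alternative, if one wished to avoid the black-box genus formula, would be to compute $g_Q$ via the adjunction formula on $X(N,\widetilde{\Sigma_{Q^{\ast}}})$, using $2g_Q-2=f^{\star}(-\ell K)\cdot\bigl(f^{\star}(-\ell K)+K_{X(N,\widetilde{\Sigma_{Q^{\ast}}})}\bigr)$ together with (\ref{DPSQUARE}), (\ref{DPTHETAP}) and (\ref{K2FORMULA}); this reproduces $2g_Q-2=(\ell-1)\sharp(\partial Q^{\ast}\cap M)$ directly, but it is strictly more work than the Ehrhart route, so I would relegate it to at most a parenthetical remark.
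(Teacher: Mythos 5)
Your proposal is correct and follows essentially the same route as the paper: both invoke \cite[Proposition 10.5.8]{CLS} to identify $g_{Q}$ with $\sharp(\text{int}(Q^{\ast})\cap M)$ and then evaluate that count via (\ref{PICKINTQSTAR}), whose derivation from (\ref{Ehrhartint}) and Proposition \ref{NOBOUNDARYPNTS} you reproduce verbatim. The adjunction-formula alternative you mention is a fine cross-check but, as you say, unnecessary.
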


\begin{remark}The $\mathbb{C}$-vector space of the global sections of the canonical sheaf
	over $\mathcal{C}_{Q}$ is%
	\[
	H^{0}(\mathcal{C}_{Q},\omega_{\mathcal{C}_{Q}})\cong H^{0}(X(N,\Sigma
	_{\mathbf{I}(Q^{\ast})}),\mathcal{O}_{X(N,\Sigma_{\mathbf{I}(Q^{\ast})})}(D_{\mathbf{I}%
		(Q^{\ast})}))
	\]
	and has dimension $h^{0}(\mathcal{C}_{Q},\omega_{\mathcal{C}_{Q}}):=$
	dim$_{\mathbb{C}}(H^{0}(\mathcal{C}_{Q},\omega_{\mathcal{C}_{Q}}))=$
	dim$_{\mathbb{C}}(H^{1}(\mathcal{C}_{Q},\mathcal{O}_{\mathcal{C}_{Q}}%
	))=g_{Q}$ (by adjunction). Moreover,%
	\[
	\mathcal{C}_{Q}^{2}=(-\ell K_{X(N,\Delta_{Q})})^{2}=\ell^{2}%
	K_{X(N,\Delta_{Q})}^{2}\underset{\text{(\ref{K2FORMULA})}}{=}\ell \, \sharp(\partial
	Q^{\ast}\cap M).
	\]
\end{remark}

\begin{definition}\label{DEFMIRPAR}
	Let $(Q,N),$ $(Q^{\ast},M)$ be $\ell$-reflexive pairs (with $M:=$
	Hom$_{\mathbb{Z}}(N,\mathbb{Z})$). We shall say that $(Q,N)$ \textit{has the
		topological mirror property} if for any general member $\mathcal{C}_{Q}$ of
	the linear system	$\left \vert -\ell K_{X(N,\Delta_{Q})}\right \vert $
	and any general member $\mathcal{C}_{Q^{\ast}}$ of the linear system $\left \vert -\ell K_{X(M,\Delta_{Q^{\ast}})}\right \vert $
 we have
	\[
	g_{Q}:=g(\mathcal{C}_{Q})=g(\mathcal{C}_{Q^{\ast}})=:g_{Q^{\ast}}.
	\]
	In this case, we shall say that $(\mathcal{C}_{Q},\mathcal{C}_{Q^{\ast}})$ is a \textit{combinatorial mirror pair} and we may think of $\mathcal{C}_{Q}$ as \textit{combinatorial mirror partner }of
	$\mathcal{C}_{Q^{\ast}}$ and vice versa.
\end{definition}

\begin{note}
	If $\ell >1,$ then by the Twelve-Point Theorem \ref{G12PTTHM} and by (\ref{SECGENUSF}) the equality $g_{Q}=g_{Q^{\ast}}$ implies%
	\begin{equation}
	\left.
	\begin{array}
	[c]{l}%
	\sharp(\partial Q\cap N)=\sharp(\partial Q^{\ast}\cap M)\medskip \\
	\sharp(\partial Q\cap N)+\sharp(\partial Q^{\ast}\cap M)=12
	\end{array}
	\right \}  \Rightarrow \sharp(\partial Q\cap N)=\sharp(\partial Q^{\ast}\cap
	M)=6. \label{SECHSPACK}
	\end{equation}
	And conversely, from (\ref{SECHSPACK}) we get obviously $g_{Q}=g_{Q^{\ast}}.$ 
\end{note}

\begin{proposition}\label{LISTSWITHTMS}
	Let $\ell$ be an odd integer $\geq3.$ Then the families of $\ell$-reflexive
	pairs $(Q,\mathbb{Z}^{2})$ constructed by the $\mathbb{Z}^{2}$-polygons $Q$ of
	the following tables have the topological mirror property.
	\setlength\extrarowheight{5pt}
	{\small
\[
\begin{tabular}
[c]{|c||c|c|}\hline
\emph{No.} & \emph{The} $\mathbb{Z}^{2}$\emph{-triangles} & \emph{under the
	restrictions}\\ \hline \hline
\emph{(i)} & $%
\begin{array}
[c]{c}%
\emph{conv}\{ \binom{0}{1},\binom{2\ell}{3},\binom{-3\ell}{-5}\} \smallskip
\end{array}
$ & $\ell \geq7,\ 3\nmid \ell \ $\emph{and }$5\nmid \ell$\\ \hline
\emph{(ii)} & $%
\begin{array}
[c]{c}%
\emph{conv}\{ \binom{0}{1},\binom{2\ell}{5},\binom{-3\ell}{-8}\} \smallskip
\end{array}
$ & $\ell \geq7,\ 5\nmid \ell \  \emph{and}\ 13\nmid \ell$\\ \hline
\emph{(iii)} & $%
\begin{array}
[c]{c}%
\emph{conv}\{ \binom{0}{1},\binom{2\ell}{7},\binom{-3\ell}{-11}\} \smallskip
\end{array}
$ & $\ell \geq5\  \emph{and}\ j\nmid \ell,\  \forall j\in \{3,7,11\}$\\ \hline
\emph{(iv)} & $%
\begin{array}
[c]{c}%
\emph{conv}\{ \binom{0}{1},\binom{2\ell}{9},\binom{-3\ell}{-14}\} \smallskip
\end{array}
$ & $\ell \geq11\  \emph{and}\ j\nmid \ell,\  \forall j\in \{3,5,7\}$\\ \hline
\emph{(v)} & $%
\begin{array}
[c]{c}%
\emph{conv}\{ \binom{0}{1},\binom{2\ell}{11},\binom{-3\ell}{-17}\} \smallskip
\end{array}
$ & $\ell \geq13\  \emph{and}\ j\nmid \ell,\  \forall j\in \{3,5,7,11,17\}$\\ \hline
\emph{(vi)} & $%
\begin{array}
[c]{c}%
\emph{conv}\{ \binom{0}{1},\binom{2\ell}{13},\binom{-3\ell}{-20}\} \smallskip
\end{array}
$ & $\ell \geq17\  \emph{and}\ j\nmid \ell,\  \forall j\in \{3,5,7,11,13\}$\\ \hline
\emph{(vii)} & $%
\begin{array}
[c]{c}%
\emph{conv}\{ \binom{0}{1},\binom{2\ell}{15},\binom{-3\ell}{-23}\} \smallskip
\end{array}
$ & $\ell \geq11\  \emph{and}\ j\nmid \ell,\  \forall j\in \{3,5,7,23\}$\\ \hline
\emph{(viii)} & $%
\begin{array}
[c]{c}%
\emph{conv}\{ \binom{0}{1},\binom{2\ell}{17},\binom{-3\ell}{-26}\} \smallskip
\end{array}
$ & $\ell \geq11\  \emph{and}\ j\nmid \ell,\  \forall j\in \{3,5,13\}$\\ \hline
$\cdots$ & $%
\begin{array}
[c]{c}%
\cdots \cdots \smallskip
\end{array}
$ & $\cdots \cdots$\\ \hline
\end{tabular}
\]}
	\setlength\extrarowheight{-5pt}
	\vspace{0.5cm}
	\setlength\extrarowheight{5pt}
{\small
	\[
	\begin{tabular}
		[c]{|c||c|c|}\hline
		\emph{No.} & \emph{The} $\mathbb{Z}^{2}$\emph{-quadrilaterals} & \emph{under
			the restrictions}\\ \hline \hline
		\emph{(i)} & $%
		\begin{array}
		[c]{c}%
		\emph{conv}\{ \binom{0}{-1},\binom{\ell}{2},\binom{0}{1},\binom{-2\ell}%
		{-3}\} \smallskip
		\end{array}
		$ & $\ell \geq5\ $\emph{and} $3\nmid \ell$\\ \hline
		\emph{(ii)} & $%
		\begin{array}
		[c]{c}%
		\emph{conv}\{ \binom{0}{-1},\binom{\ell}{3},\binom{0}{1},\binom{-2\ell}%
		{-5}\} \smallskip
		\end{array}
		$ & $\ell \geq7,\ 3\nmid \ell \ $\emph{and} $5\nmid \ell$\\ \hline
		\emph{(iii)} & $%
		\begin{array}
		[c]{c}%
		\emph{conv}\{ \binom{0}{-1},\binom{\ell}{4},\binom{0}{1},\binom{-2\ell}%
		{-7}\} \smallskip
		\end{array}
		$ & $\ell \geq11\  \emph{and}\ j\nmid \ell,\  \forall j\in \{3,5,7\}$\\ \hline
		\emph{(iv)} & $%
		\begin{array}
		[c]{c}%
		\emph{conv}\{ \binom{0}{-1},\binom{\ell}{5},\binom{0}{1},\binom{-2\ell}%
		{-9}\} \smallskip
		\end{array}
		$ & $\ell \geq7,\ 3\nmid \ell \ $\emph{and} $5\nmid \ell$\\ \hline
		\emph{(v)} & $%
		\begin{array}
		[c]{c}%
		\emph{conv}\{ \binom{0}{-1},\binom{\ell}{6},\binom{0}{1},\binom{-2\ell}{-11}\}\smallskip
		\end{array}
		$ & $\ell \geq13\  \emph{and}\ j\nmid \ell,\  \forall j\in \{3,5,7,11\}$\\ \hline
		$\cdots$ & $%
		\begin{array}
		[c]{c}%
		\cdots \cdots \smallskip
		\end{array}
		$ & $\cdots \cdots$\\ \hline
	\end{tabular}
	\]}
	\setlength\extrarowheight{-5pt}
	\newpage
{\small
	\setlength\extrarowheight{5pt}
	\[
	\begin{tabular}
	[c]{|c||c|c|}\hline
	\emph{No.} & \emph{The} $\mathbb{Z}^{2}$\emph{-pentagons} & \emph{under the
		restrictions}\\ \hline \hline
	\emph{(i)} & $%
	\begin{array}
	[c]{c}%
	\emph{conv}\{ \binom{0}{-1},\binom{\ell}{1},\binom{\ell}{3},\binom{0}{1}%
	,\binom{-\ell}{-2}\} \smallskip
	\end{array}
	$ & $\ell \geq5\ $\emph{and} $3\nmid \ell$\\ \hline
	\emph{(ii)} & $%
	\begin{array}
	[c]{c}%
	\emph{conv}\{ \binom{0}{-1},\binom{\ell}{2},\binom{\ell}{4},\binom{0}{1}%
	,\binom{-\ell}{-3}\} \smallskip
	\end{array}
	$ & $\ell \geq7,\ 3\nmid \ell \ $\emph{and} $5\nmid \ell$\\ \hline
	\emph{(iii)} & $%
	\begin{array}
	[c]{c}%
	\emph{conv}\{ \binom{0}{-1},\binom{\ell}{3},\binom{\ell}{5},\binom{0}{1}%
	,\binom{-\ell}{-4}\}\smallskip
	\end{array}
	$ & $\ell \geq11\  \emph{and}\ j\nmid \ell,\  \forall j\in \{3,5,7\}$\\ \hline
	\emph{(iv)} & $%
	\begin{array}
	[c]{c}%
	\emph{conv}\{ \binom{0}{-1},\binom{\ell}{4},\binom{\ell}{6},\binom{0}{1}%
	,\binom{-\ell}{-5}\} \smallskip
	\end{array}
	$ & $\ell \geq11\  \emph{and}\ j\nmid \ell,\  \forall j\in \{3,5,7\}$\\ \hline
	\emph{(v)} & $%
	\begin{array}
	[c]{c}%
	\emph{conv}\{ \binom{0}{-1},\binom{\ell}{5},\binom{\ell}{7},\binom{0}{1}%
	,\binom{-\ell}{-6}\}\smallskip
	\end{array}
	$ & $\ell \geq13\  \emph{and}\ j\nmid \ell,\  \forall j\in \{3,5,7,11\}$\\ \hline
	$\cdots$ & $%
	\begin{array}
	[c]{c}%
	\cdots \cdots \smallskip
	\end{array}
	$ & $\cdots \cdots$\\ \hline
	\end{tabular}
	\]}
	\setlength\extrarowheight{-5pt}
	\vspace{0.4cm}
	\setlength\extrarowheight{5pt}
{\small
	\[
	\begin{tabular}
	[c]{|c||c|c|}\hline
	\emph{No.} & \emph{The} $\mathbb{Z}^{2}$\emph{-hexagons} & \emph{under the
		restrictions}\\ \hline \hline
	\emph{(i)} & $%
	\begin{array}
	[c]{c}%
	\emph{conv}\{ \binom{0}{-1},\binom{\ell}{1},\binom{\ell}{2},\binom{0}{1}%
	,\binom{-\ell}{-1},\binom{-\ell}{-2}\} \smallskip
	\end{array}
	$ & $-\! \! \!-$\\ \hline
	\emph{(ii)} & $%
	\begin{array}
	[c]{c}%
	\emph{conv}\{ \binom{0}{-1},\binom{\ell}{2},\binom{\ell}{3},\binom{0}{1}%
	,\binom{-\ell}{-2},\binom{-\ell}{-3}\} \smallskip
	\end{array}
	$ & $\ell \geq7\ $\emph{and} $3\nmid \ell$\\ \hline
	\emph{(iii)} & $%
	\begin{array}
	[c]{c}%
	\emph{conv}\{ \binom{0}{-1},\binom{\ell}{3},\binom{\ell}{4},\binom{0}{1}%
	,\binom{-\ell}{-3},\binom{-\ell}{-4}\}\smallskip
	\end{array}
	$ & $\ell \geq13\ $\emph{and} $3\nmid \ell$\\ \hline
	\emph{(iv)} & $%
	\begin{array}
	[c]{c}%
	\emph{conv}\{ \binom{0}{-1},\binom{\ell}{4},\binom{\ell}{5},\binom{0}{1}%
	,\binom{-\ell}{-4},\binom{-\ell}{-5}\} \smallskip
	\end{array}
	$ & $\ell \geq21\ $\emph{and} $5\nmid \ell$\\ \hline
	\emph{(v)} & $%
	\begin{array}
	[c]{c}%
	\emph{conv}\{ \binom{0}{-1},\binom{\ell}{5},\binom{\ell}{6},\binom{0}{1}%
	,\binom{-\ell}{-5},\binom{-\ell}{-6}\}\smallskip
	\end{array}
	$ & $\  \ell \geq31,\ 3\nmid \ell \ $\emph{and} $5\nmid \ell$\\ \hline
	$\cdots$ & $%
	\begin{array}
	[c]{c}%
	\cdots \cdots \smallskip
	\end{array}
	$ & $\cdots \cdots$\\ \hline
	\end{tabular}
	\]}
\setlength\extrarowheight{-5pt}
\vspace{0.2cm} 
\newline
\noindent \emph{(The sectional genus equals 3$\ell-2.$ The tables are to be continued by following the
	same pattern: One increases gradually the ordinates of the corresponding vertices, as well as the lower bounds for $\ell$, and excludes suitable primes from being divisors of $\ell$.)}
\end{proposition}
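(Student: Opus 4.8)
The plan is to verify that each listed family indeed consists of $\ell$-reflexive polygons with $\sharp(\partial Q\cap\mathbb{Z}^2)=6$, and then to invoke the criterion $(\ref{SECHSPACK})$, according to which for $\ell>1$ the equality $g_Q=g_{Q^{\ast}}$ is equivalent to $\sharp(\partial Q\cap\mathbb{Z}^2)=\sharp(\partial Q^{\ast}\cap M)=6$; by Theorem \ref{G12PTTHM} it suffices to show $\sharp(\partial Q\cap\mathbb{Z}^2)=6$. So the statement reduces to two purely combinatorial checks for each entry of the four tables: (a) the polygon $Q$ is $\ell$-reflexive of index $\ell$ (i.e. $\mathbf{0}\in\mathrm{int}(Q)$, the vertices are primitive, and every edge has local index exactly $\ell$), and (b) $\sharp(\partial Q\cap\mathbb{Z}^2)=6$.

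First I would treat the triangle family as the representative case and present the computation once, then indicate that the quadrilateral, pentagon and hexagon families are handled identically. For the $n$-th triangle $Q=\mathrm{conv}\{\binom{0}{1},\binom{2\ell}{a_n},\binom{-3\ell}{-a_n-a_{n-1}'}\}$ (with the pattern visible in the table: the middle coordinates run through $3,5,7,9,11,\dots$ and the last through $5,8,11,14,17,\dots$), the key quantities are the three second-coordinate-weighted determinants $\det(\mathbf{n}_i,\mathbf{n}_{i+1})$ for consecutive vertices. Using $(\ref{QIFORMULA})$ one computes $q_i=\det(\mathbf{n}_i,\mathbf{n}_{i+1})/\det(\mathbb{Z}^2)=\det(\mathbf{n}_i,\mathbf{n}_{i+1})$, and then $\sharp(F_i\cap\mathbb{Z}^2)-1=\gcd$ of the coordinate differences of the endpoints of $F_i$. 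The local index is $l_{F_i}=q_i/(\sharp(F_i\cap\mathbb{Z}^2)-1)$ by $(\ref{LOCALINDEXFORMULA})$ (equivalently by Definition \ref{DEFLREFLEXIVE}), and one checks directly that each of the three edges has both endpoints differing by a vector of the form $\ell\cdot(\text{primitive})$, so that $\sharp(F_i\cap\mathbb{Z}^2)-1=2$ for each edge (this is exactly where the divisibility restrictions $j\nmid\ell$ enter: they guarantee that the gcd of the relevant coordinate differences equals precisely $\ell$ rather than a proper multiple, forcing each edge to contribute exactly two boundary points minus one), hence $\sharp(\partial Q\cap\mathbb{Z}^2)=\sum_{i=1}^{3}(\sharp(F_i\cap\mathbb{Z}^2)-1)=6$, and $l_{F_i}=q_i/2=\ell$. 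That $\mathbf{0}\in\mathrm{int}(Q)$ and that the vertices are primitive is immediate by inspection. The lower bounds on $\ell$ in the tables are needed only to ensure the polygon is genuinely non-degenerate (three non-collinear vertices, positive area) and that no extra lattice point sneaks onto an edge; I would verify these constraints case by case using the continued-fraction machinery of \S\ref{LCONES} exactly as in examples \ref{EXGRAPHS}.

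Once $\sharp(\partial Q\cap\mathbb{Z}^2)=6$ is established, Theorem \ref{G12PTTHM} gives $\sharp(\partial Q^{\ast}\cap M)=12-6=6$, so $(\ref{SECHSPACK})$ yields $g_Q=g_{Q^{\ast}}$, i.e. $(Q,\mathbb{Z}^2)$ has the topological mirror property in the sense of Definition \ref{DEFMIRPAR}. The sectional genus is then computed once and for all from $(\ref{SECGENUSF})$: $g_Q=\tfrac12(\ell-1)\cdot 6+1=3\ell-2$, confirming the parenthetical remark. The verification that the quadrilateral, pentagon, and hexagon tables follow the same pattern is mechanical: in every case the polygon is built so that each edge has endpoints differing by $\ell$ times a primitive vector, giving $\nu$ edges each contributing one boundary point beyond its endpoints, hence $\sharp(\partial Q\cap\mathbb{Z}^2)=\nu+\nu=2\nu$; but an $\ell$-reflexive polygon must satisfy $\sharp(\partial Q\cap\mathbb{Z}^2)+\sharp(\partial Q^{\ast}\cap M)=12$ with both summands $\geq\nu$, which forces $\sharp(\partial Q\cap\mathbb{Z}^2)=6$ only when the construction is calibrated so, and one checks the calibration directly.

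The main obstacle is bookkeeping rather than conceptual: one has to confirm, for each infinite family and under the stated divisibility hypotheses, that (i) the three or four or five or six listed points really are the vertices of the convex hull (no point lies in the interior of the hull of the others) and are primitive, (ii) the origin lies strictly inside, and (iii) the gcd computations on the edges give exactly $\ell$ — this last point is the crux, since it is precisely what the conditions $j\nmid\ell$ are engineered to guarantee, and one must check that the finite list of excluded primes $j$ is exactly the set of primes that could otherwise divide the relevant edge-gcd. A clean way to organize (iii) is: for an edge with endpoints $\mathbf{v}$ and $\mathbf{w}$ where $\mathbf{w}-\mathbf{v}=\ell\mathbf{e}$ for some $\mathbf{e}\in\mathbb{Z}^2$, one has $\sharp(F\cap\mathbb{Z}^2)-1=\ell\cdot\gcd(\mathbf{e})$, so it suffices to show $\gcd(\mathbf{e})=1$; and the components of $\mathbf{e}$ are small explicit integers (or small expressions in $\ell$), so $\gcd(\mathbf{e})=1$ is either automatic or fails exactly for $\ell$ divisible by one of the excluded primes. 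I expect no subtlety beyond this; the remaining verifications (non-degeneracy, primitivity, $\mathbf{0}$ interior) are routine determinant and sign checks which I would not write out in full.
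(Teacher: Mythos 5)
Your top-level reduction is exactly the paper's: by the note preceding the proposition (i.e.\ by \eqref{SECHSPACK}, which combines Theorem \ref{G12PTTHM} with \eqref{SECGENUSF}), the topological mirror property for $\ell>1$ is equivalent to $\sharp(\partial Q\cap\mathbb{Z}^{2})=6$, and the paper's entire proof is the one-line observation that this count is straightforward to verify for each listed polygon. So the strategy is right. However, the mechanism you describe for carrying out the check is wrong in a way that would derail the verification if followed literally. You claim that each edge has endpoints differing by a vector of the form $\ell\cdot(\text{primitive})$, so that each edge contributes exactly $2$ boundary points, and later that the general pattern is $\sharp(\partial Q\cap\mathbb{Z}^{2})=2\nu$. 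Neither is true. For the first triangle the edge vectors are $\binom{2\ell}{2}$, $\binom{-5\ell}{-8}$, $\binom{3\ell}{6}$, none of which is $\ell$ times an integer vector; their gcds are $2$, $1$, $3$ (using $\ell$ odd and $3\nmid\ell$), so the edges contribute $2+1+3=6$, not $2+2+2$. For the first quadrilateral the contributions are $1+1+2+2$, for the first pentagon $1+2+1+1+1$, and for the hexagons $1+1+1+1+1+1$ --- so the formula $2\nu$ fails for every $\nu\neq 3$, and your own premise $\mathbf{w}-\mathbf{v}=\ell\mathbf{e}$ with $\mathbf{e}\in\mathbb{Z}^{2}$ (which would give $\ell\cdot\gcd(\mathbf{e})$ lattice steps per edge, hence roughly $6\ell$ boundary points) is simply false for these polygons: $\ell$-reflexivity puts the edges at integral \emph{distance} $\ell$ from the origin, it does not make the edge vectors divisible by $\ell$.

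A second, smaller inaccuracy: the divisibility restrictions $j\nmid\ell$ are not only (and not primarily) about edge gcds. For the first triangle, $3\nmid\ell$ is needed both for the primitivity of the vertex $\binom{2\ell}{3}$ and for $\gcd(3\ell,6)=3$, while $5\nmid\ell$ is needed solely for the primitivity of $\binom{-3\ell}{-5}$. The correct organization of the check is: (a) verify each listed vertex $\mathbf{n}$ is primitive, i.e.\ $\gcd$ of its coordinates is $1$ (this is where most of the excluded primes come from); (b) compute $q_{i}=|\det(\mathbf{n}_{i},\mathbf{n}_{i+1})|$ and $g_{i}=\gcd(\mathbf{n}_{i+1}-\mathbf{n}_{i})$ for each edge and confirm $q_{i}/g_{i}=\ell$ (this gives $\ell$-reflexivity via \eqref{LOCALINDEXFORMULA}); and (c) confirm $\sum_{i}g_{i}=6$. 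Done this way the computation goes through for every entry of the four tables, and \eqref{SECGENUSF} then gives $g_{Q}=\tfrac{1}{2}(\ell-1)\cdot 6+1=3\ell-2$ as you state.
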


\begin{proof} It is straightforward to check that the number of lattice points lying on the boundary of each of these $\mathbb{Z}^{2}$-polygons equals $6$. \end{proof}

\begin{note}
(i) There are lots of examples of $\ell$-reflexive pairs $(Q,\mathbb{Z}^{2})$ which have the topological mirror property but they are not self-dual. For instance, for the $5$-reflexive $\mathbb{Z}^{2}$-triangle (from the third
row of the first table in Proposition \ref{LISTSWITHTMS})
\[
Q:=\text{conv}\left \{  \tbinom{0}{1},\tbinom{10}{7},\tbinom{-15}{-11}\right \}
\text{ \ with \ }Q^{\ast}=\text{conv}\left \{  \tbinom{3}{-5},\tbinom{-18}%
{25},\tbinom{4}{-5}\right \}  ,\text{ }%
\]
we have $[Q]_{\mathbb{Z}^{2}}=[Q^{\ast}]_{\mathbb{Z}^{2}}.$ On the other hand,
for the $11$-reflexive $\mathbb{Z}^{2}$-triangle (from the fourth row of the
first table in  Proposition \ref{LISTSWITHTMS})%
\[
Q:=\text{conv}\left \{  \tbinom{0}{1},\tbinom{22}{9},\tbinom{-33}{-14}\right \}
\text{ \ with \ }Q^{\ast}=\text{conv}\left \{  \tbinom{4}{-11},\tbinom{-23}%
{55},\tbinom{5}{-11}\right \}  ,\text{ }\smallskip
\]
we have $[Q]_{\mathbb{Z}^{2}}\neq \lbrack Q^{\ast}]_{\mathbb{Z}^{2}}%
=[Q^{\ast^{\prime}}]_{\mathbb{Z}^{2}},$ where $Q^{\ast^{\prime}}:=$
conv$\left \{  \tbinom{0}{1},\tbinom{22}{17},\tbinom{-33}{-26}\right \}  $ (from
the eighth row of the first table in  Proposition \ref{LISTSWITHTMS}).
\newline (ii) Setting $
\text{RP}_{\nu}(\ell;N)_{\text{t.m.p.}}:=\left \{  \left.  \left[  Q\right]  _{N}\in
\text{RP}(\ell;N)_{\text{t.m.p.}}\right \vert \sharp(\text{Vert}(Q))=\nu \right \},
$ for $\nu \in \{3,4,5,6\}$, where
\[ \text{RP}(\ell;N)_{\text{t.m.p.}}:=\left \{  \left.
\left[  Q\right]  _{N}\in \text{RP}(\ell;N)\right \vert (Q,N) \ \text{has the topological mirror property}\right \},\]
we find via the database \cite{Br-Kas} that the number $\sharp($RP$(\ell;N)_{\text{t.m.p.}})$ is by no means negligible:
\setlength\extrarowheight{2pt}
\[
\begin{tabular}
[c]{|c||c|c|c|c|c|c|c|c|c|c|c|c|c|c|c|c|c|c|}\hline
$\ell$ & $1$ & $3$ & $5$ & $7$ & $9$ & $11$ & $13$ & $15$ & $17$ & $19$ & $21$
& $23$ & $25$ & $27$ & $29$ & $31$ & $33$ & $35$\\ \hline \hline
$\sharp($RP$_{3}(\ell;N)_{\text{t.m.p.}})$ & $5$ & $0$ & $1$ & $2$ & $0$ & $6$
& $8$ & $0$ & $12$ & $14$ & $0$ & 18 & $5$ & $0$ & $24$ & $26$ & $0$ &
$2$\\ \hline
$\sharp($RP$_{4}(\ell;N)_{\text{t.m.p.}})$ & $7$ & $0$ & $1$ & $3$ & $0$ & $7$
& $9$ & $0$ & $13$ & $15$ & $0$ & 19 & $5$ & $0$ & $25$ & $27$ & $0$ &
$3$\\ \hline
$\sharp($RP$_{5}(\ell;N)_{\text{t.m.p.}})$ & $3$ & $0$ & $1$ & $2$ & $0$ & $4$
& $5$ & $0$ & $7$ & $8$ & $0$ & 10 & $5$ & $0$ & $13$ & $14$ & $0$ &
$4$\\ \hline
$\sharp($RP$_{6}(\ell;N)_{\text{t.m.p.}})$ & $1$ & $1$ & $1$ & $2$ & $1$ & $2$
& $3$ & $1$ & $3$ & $4$ & $2$ & 4 & $3$ & $2$ & $5$ & $6$ & $2$ & $3$\\ \hline
$\sharp($RP$(\ell;N)_{\text{t.m.p.}})$ & $16$ & $1$ & $4$ & $9$ & $1$ & $19$ &
$25$ & $1$ & $35$ & $41$ & $2$ & 51 & $18$ & $2$ & $67$ & $73$ & $2$ &
$12$\\ \hline
sectional genus & $1$ & $7$ & $13$ & $19$ & $25$ & $31$ & $37$ & $43$ & $49$ &
$55$ & $61$ & $67$ & $73$ & $79$ & $85$ & $91$ & $97$ & $103$\\ \hline
\end{tabular}
\  \  \
\]
\setlength\extrarowheight{-2pt}
In fact, $\sharp($RP$(\ell;N)_{\text{t.m.p.}})$ can take relative high values,
as we see from the following table for the biggest 10 values of $\ell<200$ with
$j\nmid \ell,\  \forall j\in \{3,5,7,11,13\}.$
\setlength\extrarowheight{2pt}
\[
\begin{tabular}
[c]{|c||c|c|c|c|c|c|c|c|c|c|}\hline
$\ell$ & $157$ & $163$ & $167$ & $173$ & $179$ & $181$ & $191$ & $193$ & $197$
& $199$\\ \hline
$\sharp($RP$(\ell;N)_{\text{t.m.p.}})$ & $409$ & $425$ & $435$ & $451$ & $467$
& $473$ & $499$ & $505$ & $515$ & $521$\\ \hline
sectional genus & $469$ & $487$ & $499$ & $517$ & $535$ & $541$ & $571$ &
$577$ & $589$ & $595$\\ \hline
\end{tabular}
\]
\setlength\extrarowheight{-2pt}
\end{note}
\section{Concluding remarks and questions}\label{FINALE}
\noindent \textbf{(i)} About the role of\,\footnote{$\mathbf{I}(Q)$ is often called\textit{ the
		adjoint polygon of} $Q.$} $\mathbf{I}(Q)$ (i.e., of the convex hull of
interior lattice points of an \textit{arbitrary} lattice polygon $Q$) for the
description of geometric properties of curves on the toric compact surface
defined by $Q$ the reader is referred to Koelman \cite[Chapters 2-4]{Koelman}, Schicho \cite[\S 3]{Schicho},
Castryck \cite[ \S 2-\S 3]{Castryck}, and Castryck \& Cools \cite{Castryck-Cools1},  \cite{Castryck-Cools2}. In our case, we can assume that the curves
$\mathcal{C}_{Q}$ are nothing but Zariski closures $\overline{Z_{\mathfrak{f}%
}}$ of affine hypersurfaces $Z_{\mathfrak{f}}\subset \mathbb{T}_{N}$ for
Laurent polynomials $\mathfrak{f}$ having $Q^{\ast}$ as their Newton polygon. It would
be interesting, for reflexive $\ell$-polygons $Q,$ to investigate
if (beyond the topological equivalence) there is a deeper relation between (e.g., certain complex structures on)
$\overline{Z_{\mathfrak{f}}}\subset X(N,\Delta_{Q})$ and $\overline
{Z_{\mathfrak{g}}}\subset X(M,\Delta_{Q^{\ast}})$ on the \textquotedblleft
other side\textquotedblright. For given combinatorial mirror partners (as
defined in \ref{DEFMIRPAR}) what would be the connection between their \textquotedblleft
strict\textquotedblright \ mirrors (which turn out to be particular 3-dimensional Landau--Ginzburg models) from the point of view
of the \textit{homological mirror symmetry} for curves of high
genus? (Cf. Efimov \cite{Efimov}.)\medskip

\noindent \textbf{(ii)} Let $Q\subset \mathbb{R}^{d}$ be a $d$-dimensional ($1$-)reflexive
lattice \textit{polytope} w.r.t. a lattice $N$ (of rank $d$), $Q^{\circ
}\subset \mathbb{R}^{d}$ its polar w.r.t. $M:=$ Hom$_{\mathbb{Z}}%
(N,\mathbb{Z}),$ and
{\small
\[
\left \{
\begin{array}
[c]{c}%
i\text{-dimensional }\\
\text{faces of }Q
\end{array}
\right \}  \ni F\longmapsto F^{\circ
}:=\{ \left.  \mathbf{x}\in Q^{\circ
}\right \vert \left \langle \mathbf{x},\mathbf{y}\right \rangle =-1,\forall
\mathbf{y}\in F\} \in \left \{
\begin{array}
[c]{c}%
(d-1-i)\text{-dimensional }\\
\text{faces of }Q^{\circ}%
\end{array}
\right \}
\]}
the bijection induced by the polarity. Furthermore, let us denote by
Vol$_{N}(F)$ the normalised volume of $F$ w.r.t. the lattice $N$ and by
Vol$_{M}(F^{\circ
})$ the normalised volume of $F^{\circ
}$ w.r.t. $M.$ The
following generalisations of the Twelve-Point formula (\ref{12PTFORMULA}) in dimensions
$d\geq3$ are known: If $d=3,$ then
\begin{equation}
\fbox{$%
	\begin{array}
	[c]{ccc}
	&
	\begin{array}
	[c]{c}%
	\\%
	{\displaystyle \sum \limits_{F\text{ edges of }Q}}
	\text{Vol}_{N}(F)\cdot \text{Vol}_{M}(F^{\circ
	})=24. \medskip\\
	\end{array}
	& \ \ \
	\end{array}
	$}\label{24FORMULA}
\end{equation}
(See, e.g., \cite[Part A, Theorem 7.2.1]{Batyrev3}, \cite[Theorem 5.1.16]{Haase-Nill-Paffenholz}, \cite[Theorem 1.1]{G-H-S}, and \cite[Corollary 5.4]{Batyrev-Schaller}.)
If $d=4,$ then%
{\small
\begin{equation}
\fbox{$\
	\begin{array}
	[c]{l}%
	\\
	12\,(\sharp \left(  \partial Q\cap N\right)  +\sharp \left(  \partial Q^{\circ
	}\cap M\right)  )
	=2(\text{Vol}_{N}(Q)+\text{Vol}_{M}(Q^{\circ}))-\sum \limits_{\substack{F\text{
				faces of }Q\\ \text{with dim}(F)\in \{1,2\} \text{ }}}\text{Vol}_{N}%
	(F)\cdot \text{Vol}_{M}(F^{\circ})\medskip\\
	\end{array}
	\ $} \label{D4FORMULA}
\end{equation}}(See \cite[Corollary 5.6]{Batyrev-Schaller}.) If $d\geq5,$ Ehr$_{N}(Q;k):=\sharp(kQ\cap N)\in \mathbb{Q}[k]$ the
\textit{Ehrhart polynomial} of $Q,$ and
{\small
\[
\mathfrak{Ehr}_{N}(Q;t):=\sum_{k=0}^{\infty}\text{Ehr}_{N}(Q;k)t^{k}=\frac
{1}{\left(  1-t\right)  ^{d+1}}\left(
{\displaystyle \sum \limits_{j=0}^{d}}
\psi_{j}(Q)t^{j}\right)
\]}
its \textit{Ehrhart series}, then the so-called \textit{stringy
	Libgober-Wood identity} (applied by Batyrev \& Schaller in \cite[Theorem 5.2]{Batyrev-Schaller}) gives
\begin{equation}
\fbox{$\
\begin{array}
[c]{c}%
\\%
{\displaystyle \sum \limits_{j=0}^{d}}
\psi_{j}(Q)\left(  2j-d\right)  ^{2}=\dfrac{1}{3}\left(  d\, \text{Vol}%
_{N}(Q)+ \displaystyle \sum \limits_{\substack{F\text{ faces of }Q \smallskip \\ \text{with dim}%
		(F)=d-2\text{ }}}2(\text{Vol}_{N}(F)\cdot \text{Vol}_{M}(F^{\circ}))\right),  \bigskip \\
\end{array}
\ $}\label{DDIMFORMULA}
\end{equation} i.e., a formula which is no longer symmetric w.r.t. to $Q$ and $Q^{\circ}$. 
In particular, if $Q$ happens to be a
\textit{smooth}\footnote{See \cite [Definition 2.4.2 (b) and Theorem 2.4.3, p.
	87]{CLS}.\smallskip} (also known as \textit{Delzant}) polytope and $d\geq3,$ we have
\begin{equation}
\fbox{$%
	\begin{array}
	[c]{ccc}
	&
	\begin{array}
	[c]{c}%
	\\%
	{\displaystyle \sum \limits_{F\text{ edges of }Q}}
	\text{Vol}_{N}(F)=12f_{2}+\left(  5-3d\right)  f_{1},\medskip\\
	\end{array}
	&
	\end{array}
	$}\label{DELZANT}\end{equation}
where $\mathbf{f}=(f_{0},f_{1},...,f_{d})$ is the $\mathbf{f}$-vector of $Q.$
(See Godinho, von Heymann \& Sabatini \cite[Theorem 1.2]{G-H-S}.)\medskip

\noindent \textbf{(iii)} Let $Q\subset \mathbb{R}^{d}$ be a $d$-dimensional $\ell$-reflexive
\textit{polytope}\footnote{This means that $Q$ has the origin in its (strict)
	interior, all the vertices of $Q$ are \textit{primitive} w.r.t. $N,$ and the
	\textit{local indices} of $Q$ w.r.t. all facets of $Q$ (defined in analogy to
	\ref{DEFLOCIND} (ii)) are equal to $\ell.$} w.r.t. a lattice $N$ (of rank
$d$), and $Q^{\circ}\subset \mathbb{R}^{d}$ its polar w.r.t. $M:=$
Hom$_{\mathbb{Z}}(N,\mathbb{Z}).$ If we assume that $\ell>1,$ is it possible
to generalise Theorems \ref{CHLATT} and \ref{GLOBALTHM}, as well as the
formulae in \textbf{(ii)} and other properties (as those described in
\S \ref{FIRSTPROOF}-\S \ref{CHARDIFFSEC} in the $d=2$ case) for $Q$ and
its\textit{ }dual $Q^{\ast}:=\ell Q^{\circ}$ \textit{whenever} $d\geq3?$ It
should be clear from the outset that there are certain particularities,
restrictions and limitations (with some of them already mentioned in
\cite[\S3]{KaNi}) which have to be taken into account in order to deal
with realistic conjectures: For instance,\smallskip \  \newline(a) in contrast
to what happens in dimension $d=2$ (see Corollary \ref{ODDNESS2}), already in
dimension $d=3$ there are $\ell$-reflexive polytopes also for every
\textit{even} integer $\ell \geq2.\smallskip$\newline(b) Theorem \ref{CHLATT}
and formula (\ref{24FORMULA}) fail (in general) to hold in dimension $d=3.$ An appropriate
modification is believed to be the following:\smallskip \newline 
\noindent\textbf{Conjecture A}. (\cite[\S 3.5]{KaNi}) \textit{Suppose that} $d=3,$
$\Lambda_{\text{Edg}(Q)}$ (resp., $\Lambda_{\text{Edg}(Q^{\ast})}$) \textit{is
	the sublattice of} $N$ (resp., \textit{of} $M$) \textit{generated by the set
}Edg$(Q)$ \textit{of the edges of} $Q$ (resp., \textit{by the set}
Edg$(Q^{\ast})$ \textit{of the edges of }$Q^{\ast}$) \textit{and that}
$(Q,\Lambda_{\text{Edg}(Q)})$ \textit{is an} $1$-\textit{reflexive pair}.
\textit{Then} $(Q^{\ast},\Lambda_{\text{Edg}(Q^{\ast})})$ (which is to be identified with $(Q^{\circ}%
,\text{Hom}_{\mathbb{Z}}(\Lambda_{\text{Edg}(Q)},\mathbb{Z}))$) \textit{is an}
$1$-\textit{reflexive pair too, and} (\ref{24FORMULA}) \textit{is true} (if one
replaces in it $F^{\circ}$ by $F^{\ast}$). \smallskip \newline
\noindent (c) The corresponding
modification of Theorem \ref{CHLATT} for $d\geq4$ gives\footnote{If $Q$ happens to be smooth, then one could use in the Cojecture B formula (\ref{DELZANT}) instead of (\ref{D4FORMULA}) and (\ref{DDIMFORMULA}).\smallskip}: \smallskip \newline
\noindent \textbf{Conjecture B}. \textit{Suppose that} $d\geq4,$ $\Lambda_{\mathcal{F}%
	_{d-2}(Q)}$ (resp., $\Lambda_{\mathcal{F}_{d-2}(Q^{\ast})}$) \textit{is the
	sublattice of} $N$ (resp., \textit{of} $M$) \textit{generated by the set
}$\mathcal{F}_{d-2}(Q)$ \textit{of the faces of} $Q$ (resp., \textit{by the
	set} $\mathcal{F}_{d-2}(Q^{\ast})$ \textit{of the faces of }$Q^{\ast}$)
\textit{of codimension} $2,$ \textit{and that} $(Q,\Lambda_{\mathcal{F}%
	_{d-2}(Q)})$ \textit{is an} $1$-\textit{reflexive pair}. \textit{Then}
$(Q^{\ast},\Lambda_{\mathcal{F}_{d-2}(Q^{\ast})})$ \textit{is an}
$1$-\textit{reflexive pair too, formula} (\ref{D4FORMULA}) \textit{is true for} $d=4$ (if one
replaces in it $F^{\circ}$ by $F^{\ast}$ and  $Q^{\circ}$ by $Q^{\ast}$), \textit{and
formula} (\ref{DDIMFORMULA}) \textit{is true for} $d\geq5$ (for both $Q$ and $Q^{\ast}$). \smallskip \newline
\noindent (d) Since the \textquotedblleft cyclic covering trick\textquotedblright \  of Theorem \ref{WAHLREID} is independent of the dimension (and is a standard tool for reducing log terminal and log canonical singularities of a $\mathbb{Q}$-Gorenstein variety, to canonical and, respectively, log canonical singularities of index $1$,  cf. \cite[Proposition 4-5-3, pp. 186-191]{Matsuki}), in order to tackle the above conjectures, one should come up with analogues of Lemma \ref{TRICKYLEMMA}, Theorem \ref{GLOBALTHM}, and Proposition \ref{KAQUADRAT}, being valid in dimension $d\ge3$. If  $d\ge3$, the singularities of $X(N,\Delta_{Q})$ are not
necessarily isolated, and one has to construct carefully a suitable
stratification of the singular locus. In addition, even the nature of
singularities may differ (as it is known that in dimensions $\geq3$ there
exist toric singularities which are not quotient singularities). Nevertheless,
toric singularities are  \textquotedblleft relatively mild\textquotedblright \ singularities and it seems to be not very difficult to deal with them. On the other hand, the analogues of (\ref{KSQUARE1}) in high dimensions should relate various (\textit{usual, orbifold or stringy}) \textit{Chern classes} of $X(N,\Delta_{Q})$ and $X(\Lambda_{\mathcal{F}_{d-2}(Q)},\Delta_{Q})$. (Furthermore, it would be desirable if one could keep all the required arguments independent of particular desingularizations of $X(N,\Delta_{Q})$.)\medskip

\noindent \textbf{(iv)} Recently, log del Pezzo surfaces have also attracted increasing interest in the framework of the so-called \textit{homological mirror symmetry for Fano varieties} in dimension $d=2$.  (See, e.g., \cite{Akhtar etal}, \cite{Cor-He} and \cite{KaNiPr}, and the references therein.) It was proposed that log del Pezzo surfaces with cyclic quotient singularities admit $\mathbb{Q}$-Gorenstein toric degenerations corresponding (under mirror symmetry) to maximally mutable Laurent polynomials in two variables, and that the quantum period of such a surface coincides with the classical period of its mirror partner. Thus, \textit{the combinatorics of mutation} and \textit{toric deformations} (which are closely related to geometric properties of LDP-polygons\footnote{In \cite{Akhtar etal}, \cite{Cor-He} and \cite{KaNiPr} the LDP-polygons are called \textit{Fano polygons}.}) play an important role in the conception of this new approach. It comes into question whether the toric log del Pezzo surfaces associated with $\ell$-\textit{reflexive} polygons (perhaps with \textit{prescribed} singularities) are of particular value for these investigations.


\begin{thebibliography}{99}                                                                                               %

\bibitem {Akhtar etal}\textsc{Akhtar, M., Coates, T., Corti, A., Heuberger, L., Kasprzyk, A., Oneto, A., Petracci, A., Prince, T. \& Tveiten, K.:} \textit{Mirror symmetry and the classification of orbifold del Pezzo surfaces}, Proceedings of the American Mathematical Society \textbf{144} (2016), 513-527. 

\bibitem {Batyrev1}\textsc{Batyrev, V.V.:} \textit{High-dimensional toric
varieties with ample anticanonical divisor}, PhD Thesis (in Russian), Moscow
State University, 1985.

\bibitem {Batyrev2}\bysame, \textit{Dual polyhedra and mirror symmetry for
Calabi-Yau hypersurfaces in toric varieties}, Journal of Algebraic Geometry \textbf{3}
(1994), 493-535.

\bibitem {Batyrev3}\bysame, \textit{Hodge theory of hypersurfaces in toric
varieties and recent developments in quantum physics}, Habilitationsschrift,
Universit\"{a}t Essen, (1994). [Only a brief version of Part A of it has been
appeared in \cite{Batyrev2}.]

\bibitem {Batyrev-Nill}\textsc{Batyrev, V.V. \& Nill, B.: }\textit{Combinatorial
aspects of mirror symmetry}. In: \textquotedblleft Integer Points in
Polyhedra-Geometry, Number Theory, Representation Theory, Algebra,
Optimization, Statistics\textquotedblright \ (M. Beck et al., eds.),
Contemporary Math., Vol. \textbf{452}, A.M.S., 2008, pp. 35-66.

\bibitem {Batyrev-Schaller}\textsc{Batyrev, V.V. \& Schaller, K.: }\textit{Stringy Chern classes of singular toric varieties and their applications}, Communications in Number Theory and Physics \textbf{11} (2017), no. 1, 1-40.

\bibitem {Beauville}\textsc{Beauville, A.}: \textit{Complex Algebraic Surfaces, }second ed.,  LMS Student Texts, Vol. \textbf{34}, Cambridge Un. Press, 1996.


\bibitem {Br-Kas}\textsc{Brown, G. \& Kasprzyk, A.M.}: \textit{The graded ring
database homepage, }online access via \url{http://www.grdb.co.uk/}.

\bibitem {BOK}\textsc{Burns, J.M. \& O'Keeffe, D.:} \textit{Lattice polygons in
the plane and the number} $12$, Bulletin of the Irish Mathematical Society
\textbf{57} (2006), 65-68.

\bibitem {Castryck}\textsc{Castryck, W.: }\textit{Moving out the edges of a lattice polygon, }Discrete and Computational Geometry \textbf{47}
(2012), 496-518.

\bibitem {Castryck-Cools1}\textsc{Castryck, W. \& Cools, F.: }\textit{A minimal set of generators for the canonical ideal of a non-degenerate curve, }Journal of the Australian Mathematical Society \textbf{98}
(2015), 311-323.

\bibitem {Castryck-Cools2}\bysame, \textit{Linear pencils encoded in the Newton polygon, }International Mathematics Research Notices, issue \textbf{10}
(2017), 2998-3049.

\bibitem {CRS}\textsc{Cencelj, M., Repov\v{s}, D. \& Skopenkov, M.: }\textit{A
short proof of the twelve-point theorem, }Mathematical Notes \textbf{77}
(2005), no. 1, 108-111; translation from Mat. Zametki \textbf{77} (2005), 117-120.

\bibitem {Cor-He}\textsc{Corti, A. \& Heuberger, L.: }\textit{Del Pezzo surfaces
with $\frac{1}{3}(1,1)$ points,} Manuscripta Mathematika \textbf{153} (2017), 71-118.

\bibitem {Cox-Katz}\textsc{Cox, D. \& Katz, S.:} \textit{Mirror Symmetry and
Algebraic Geometry}, Math. Surveys and Monographs, Vol. \textbf{68}, American
Mathematical Society, 1999.

\bibitem {CLS}\textsc{Cox, D., Little, J. \& Schenck, H:} \textit{Toric
Varieties}, Graduate Studies in Mathematics, Vol. \textbf{124}, American
Mathematical Society, 2011.

\bibitem {Dais1}\textsc{Dais, D.I.}: \textit{Geometric combinatorics in the
study of compact toric surfaces. }In ``Algebraic and Geometric
Combinatorics''\ (edited by C. Athanasiadis et al.), Contemporary Math.,
Vol. \textbf{423}, A.M.S., 2007, pp. 71-123.

\bibitem {Dais2}\bysame, \textit{Classification of toric log del Pezzo
surfaces having Picard number} $1$ \textit{and index} $\leq3,$ Results in
Mathematics \textbf{54} (2009), 219-252.

\bibitem {DHH}\textsc{Dais, D.I., Haus, U.-U. \& Henk, M.:} \textit{On crepant
resolutions of 2-parameter series of Gorenstein cyclic quotient singularities}%
, Results in Mathematics \textbf{33} (1988), 208-265.

\bibitem {Douai}\textsc{Douai, A.} \textit{Global spectra, polytopes and stacky invariants}, Mathematische Zeitschrift \textbf{288} (2018), 889-913.


\bibitem {Efimov}\textsc{Efimov, A.I.} \textit{Homological mirror symmetry for curves of higher genus}%
, Advances in Mathematics \textbf{230} (2012), 493-530.

\bibitem {Ehrhart}\textsc{Ehrhart, E.}: \textit{Polyn\^{o}mes arithm\'{e}tiques
et M\'{e}thode des Poly\`{e}dres en Combinatoire}, International Series of
Numerical Mathematics, Vol. \textbf{35}, Birkh\"{a}user, 1977.

\bibitem {Ewald}\textsc{Ewald, G.:} \textit{Combinatorial Convexity and
Algebraic Geometry}, GTM, Vol. \textbf{168}, Springer-Verlag, 1996.

\bibitem {EWALD-WESSELS}\textsc{Ewald, G. \& Wessels, U.:} \textit{On the
ampelness of invertible sheaves in complete projective toric varieties},
Results in Math. \textbf{19} (1991), 275-278.

\bibitem {Fischer}\textsc{Fischer, G.}: \textit{Complex Analytic Geometry}, Lecture Notes in Mathematics, Vol. \textbf{538}, Springer-Verlag, 1976.

\bibitem {Fulton}\textsc{Fulton, W.}: \textit{Introduction to Toric Varieties},
Annals of Math. Studies, Vol. \textbf{131}, Princeton University Press, 1993.

\bibitem {FultonITH}\bysame, \textit{Intersection Theory}, 2nd ed.,
Springer-Verlag, 1998.

\bibitem{G-H-S}\textsc{Godinho, L., von Heymann, F. \& Sabatini, S.:} \textit{12, 24 and beyond}, Advances in Mathematics \textbf{319} (2017), 472-521.

\bibitem {Gray}\textsc{Gray, J.J.:} \textit{The Riemann-Roch theorem and
geometry}, $1854$-$1914$. In: \textquotedblleft Proceedings of the
International Congress of Mathematicians\textquotedblright \ (Berlin, 1998);
Documenta Math., Extra Vol. \textbf{III} (1998), 811-822.

\bibitem {GR_KAS}\textsc{Grinis, R. \& Kasprzyk A.M.:} \textit{Normal forms of convex lattice polytopes}, preprint, 2013, \href{https://arxiv.org/abs/1301.6641}{\textrm{arXiv:1301.6641}}. 

\bibitem {G-L}\textsc{Gruber, P.M. \& Lekkerkerker, G.C.: }\textit{Geometry of
Numbers}, 2nd ed., North-Holland Math. Library, Vol. \textbf{37}, 1987.

\bibitem {Haase-Nill-Paffenholz}\textsc{Haase, Ch., Nill, B. \& Paffenholz, A.}: \textit{Lecture Notes on Lattice Polytopes}, Fall School on Polyhedral Combinatorics, T.U. Darmstadt, 2012.

\bibitem {Hartshorne}\textsc{Hartshorne R.}: \textit{Stable reflexive sheaves}, Mathematische Annalen \textbf{254} (1980), 121-176.

\bibitem {Hensley}\textsc{Hensley, D.}: \textit{Lattice vertex polytopes with
interior lattice points}, Pacific Journal of Mathematics \textbf{105} (1983), 183-191.

\bibitem {Hibi-Book}\textsc{Hibi, T.:} \textit{Algebraic Combinatorics on
Convex Polytopes}, Carslaw Pub., 1992.

\bibitem {Hibi}\bysame, \textit{Dual polytopes of rational convex polytopes},
Combinatorica \textbf{12} (1992), 237-240.

\bibitem {Higashitani-Masuda}\textsc{Higashitani, A. \& Masuda, M.}:
\textit{Lattice multipolygons}, Kyoto Journal of Mathematics \textbf{57}
($2017$), $807$-$828$.

\bibitem {Hille-Skarke}\textsc{Hille, L. \& Skarke, H.:} \textit{Reflexive
polytopes in dimension} $2,$ \textit{and certain relations in} SL$_{2}%
(\mathbb{Z}),$ Journal of Algebra and its Applications \textbf{1} (2002), 159-173.

\bibitem {Hirzebruch1}\textsc{Hirzebruch, F.:} \textit{\"{U}ber
vierdimensionale Riemannsche Fl\"{a}chen mehrdeutiger analytischer Funktionen
von zwei komplexen Ver\"{a}nderlichen}, Mathematische Annalen \textbf{126} (1953), 1-22.
[See also: \textquotedblleft Gesammelte Abhandlungen\textquotedblright, Band
\textbf{I}, Springer-Verlag, 1987, pp. 11-32.]

\bibitem {Hirzebruch2}\bysame, \textit{Topological Methods in Algebraic
Geometry}, third ed., Grundlehren der mathematischen Wissenschaften, Bd. \textbf{131},
Springer-Verlag, 1978.

\bibitem {Houzel}\textsc{Houzel, C.:} \textit{G\'{e}om\'{e}trie analytique locale} \textbf{II}, \textit{Th\'{e}orie des morphismes finis}, S\'{e}minaire Henri Cartan, Vol. \textbf{13} (1960-1961), no. 2 , Talk no. 19, pp. 1-22. 

\bibitem {Hulek}\textsc{Hulek, K.:} \textit{Der Satz von Riemann-Roch}. From
the \textquotedblleft Essays\textquotedblright \ added as appendix in the
second german edition of the classical book by H. Weyl \textquotedblleft Die
Idee der Riemannschen Fl\"{a}che\textquotedblright \ (1913), Teubner,
Stuttgart, 1997, pp. 217-229.

\bibitem {Karmazyn}\textsc{Karmazyn, J.: }\textit{Hirzebruch-Jung fraction calculator}, online access via \url{http://www.jhkarmazyn.staff.shef.ac.uk/website/HirzebruchJungHTML.html}

\bibitem {Kasprzyk}\textsc{Kasprzyk, A.M.: }\textit{Toric Fano varieties and
convex polytopes}, PhD Thesis, University of Bath, 2006, electronically
available at the address: \url{http://people.bath.ac.uk/masgks/Theses/kasprzyk.pdf}

\bibitem {KKN}\textsc{Kasprzyk, A.M., Kreuzer, M. \& Nill, B.:} \textit{On the
combinatorial classification of toric log del Pezzo surfaces}, LMS Journal of
Comp. Math. \textbf{13} (2010), 33-46.

\bibitem {KaNi}\textsc{Kasprzyk, A.M. \& Nill, B.:} \textit{Reflexive polytopes
of higher index and the number 12}, Electronic Journal of Combinatorics
\textbf{19} (2012) \href{http://www.combinatorics.org/ojs/index.php/eljc/article/view/v19i3p9}{\textrm{no. 3, \#P9}}.

\bibitem {KaNiPr}\textsc{Kasprzyk, A.M., Nill, B. \& Prince, T.:} \textit{Minimality and mutation-equivalence of polygons}, Forum of Mathematics, Sigma, \textbf{5} (2017) \href{https://www.cambridge.org/core/journals/forum-of-mathematics-sigma/article/minimality-and-mutationequivalence-of-polygons/7A51841FD8742360873C613EF6F1BF75}{\textrm{e 18}}. 

\bibitem {Kaup-Kaup}\textsc{Kaup, B., \& Kaup, L.:} \textit{Holomorphic Functions of Several Variables}, de Gruyter Studies in Mathematics, Vol. \textbf{3}, Walter de Gruyter, 1983.


\bibitem {Koelman}\textsc{Koelman, R.J.}: \textit{The number of moduli of
families of curves on toric varieties}, Katholieke Universiteit te Nijmegen,
PhD Thesis, 1991, \texttt{ISBN 90-9004155-9}.

\bibitem {Lag-Zieg}\textsc{Lagarias, J.C. \& Ziegler, G.M.}: \textit{Bounds for
lattice polytopes containing a fixed number of interior points in a
sublattice}, Canadian Jour. of Math. \textbf{43} (1991), 1022-1035.

\bibitem {Matsuki}\textsc{Matsuki, K.:} \textit{Introduction to the Mori Program}, Universitext, Springer-Verlag, 2002.

\bibitem {Morishita}\textsc{Morishita, M.:} \textit{Knots and Primes. An Introduction to Arithmetic Topology}, Universitext, Springer-Verlag, 2012.

\bibitem {Mumford}\textsc{Mumford, D.:} \textit{The topology of normal
singularities of an algebraic surface and a criterion of simplicity}, Publications Math\'{e}matiques de l' I.H.\'{E}.S. \textbf{9} (1961), 5-22.

\bibitem {Nill}\textsc{Nill, B.}: \textit{Gorenstein toric Fano varieties}, PhD
Thesis, Universit\"{a}t T\"{u}bingen, 2005, electronically available at the
address: \url{https://d-nb.info/976200414/34}

\bibitem {Noether}\textsc{Noether, M.:} \textit{Zur Theorie des eindeutigen
Entsprechens algebraischer Gebilde}, Mathematische Annalen \textbf{2} (1870),\ 293-316;
ibid. \textbf{8} (1875), 495-533.

\bibitem {Oda}\textsc{Oda, T.}:\textit{\ Convex Bodies and Algebraic Geometry.
An Introduction to the Theory of Toric Varieties}. Ergebnisse der Mathematik und ihrer
Grenzgebiete, dritte Folge, Bd. \textbf{15}, Springer-Verlag, 1988.

\bibitem {Pick}\textsc{Pick, G.}: \textit{Geometrisches zur Zahlenlehre},
Sitzungsberichte Lotos, Naturw. Z. (Prague) \textbf{19} (1899), 311-319.

\bibitem {Piene}\textsc{Piene, R.:} \textit{A proof of Noether's formula for
the arithmetic genus of an algebraic surface}, Compositio Mathematica \textbf{38}
(1979), 113-119.

\bibitem {P-RV}\textsc{Poonen, B. \& Rodriguez-Villegas, F.: }\textit{Lattice
polygons} \textit{and the number} $12$, American Mathematical Monthly \textbf{107} (2000), 238-250.

\bibitem {Rabin}\textsc{Rabinowitz, S.}: \textit{A census of convex lattice
polygons with at most one interior lattice point}, Ars Combinatoria, Vol.
\textbf{28} (1989), 83-96.

\bibitem {Reid}\textsc{Reid, M.:}\textit{Canonical 3-folds}, in: Journ\'{e}es de G\'{e}ometrie Alg\'{e}brique d'Angers, (A. Beauville, ed.), Sijthoff and Noordhoff, Alphen aan den Rijn, 1980, pp. 273-310.

\bibitem {Reid-YPG}\bysame \textit{Young person's guide to canonical singularities}. In : \textquotedblleft Algebraic Geometry, Bowdoin 1985", (S.J. Bloch ed.), Proc. of Symposia in Pure Mathematics, A.M.S., Vol. \textbf{46}, Part I, 1987, pp. 345-416.

\bibitem {Sakai}\textsc{Sakai, F.:} \textit{Anticanonical models of rational
surfaces}, Mathematische Annalen \textbf{269} (1984), 389-410.

\bibitem {Sato}\textsc{Sato, H.:} \textit{Towards the classification of
higher-dimensional toric Fano varieties}, T\^{o}hoku Math. J. \textbf{52}
(2000), 383-413.

\bibitem {Schicho}\textsc{Schicho, J.:} \textit{Simplification of surface parametrizations-a lattice polygon approach}, Journal of Symbolic Computation \textbf{36}
(2003), 535-554.

\bibitem {Scott}\textsc{Scott, P.R.}: \textit{On convex lattice polygons},
Bulletin of the Australian Mathematical Society \textbf{15} (1976), 395-399.

\bibitem {Suyama}\textsc{Suyama, Y.}: \textit{The rotation number of primitive vector sequences},
Osaka Journal of Mathematics \textbf{52} (2015), 849-859.


\bibitem {Wahl}\textsc{Wahl, K.:} \textit{Equations defining rational singularities}, Ann. Sci. \'{E}cole Norm. Sup. (4)  \textbf{10} (1977), no. 2, 231-263.

\bibitem {Watanabe}\textsc{Watanabe, K.:} \textit{Certain invariant subrings
are Gorenstein} I, II, Osaka Journal of Math. \textbf{11} (1974), 1-8 and 379-388.

\bibitem {Zivaljevic}\textsc{\v{Z}ivaljevi\'{c}, R.T.}: \textit{Rotation number of a
unimodular cycle: An elementary approach}, Discrete Mathematics \textbf{313}
($2013$), $2253$-$2261$.
\end{thebibliography}
\end{document}